\documentclass[aop,preprint]{imsart}
\RequirePackage[OT1]{fontenc}
\RequirePackage{amsthm,amsmath}
\RequirePackage[numbers]{natbib}
\RequirePackage[colorlinks,citecolor=blue,urlcolor=blue]{hyperref}
\arxiv{arXiv:math.PR/0000000}
\usepackage{latexsym}
\usepackage{amsmath}
\usepackage{amsthm}
\usepackage{amsfonts}

\startlocaldefs
\theoremstyle{plain}\theoremstyle{plain}
\newtheorem{theorem}{Theorem}
\newtheorem{definition}{Definition}

\newtheorem{corollary}{Corollary}
\newtheorem{lemma}{Lemma}
\newtheorem{remark}{Remark}

\newcommand{\field}[1]{\mathbb{#1}}
\newcommand{\C}{\field{C}}
\newcommand{\R}{\field{R}}
\newcommand{\N}{\field{N}}
\newcommand{\Z}{\field{Z}}
\numberwithin{equation}{section}
\numberwithin{lemma}{section}
\numberwithin{theorem}{section}
\numberwithin{corollary}{section}
\numberwithin{remark}{section}
\numberwithin{definition}{section}
\newcommand{\at}{\makeatletter@\makeatother}
\endlocaldefs

\begin{document}
\begin{frontmatter}
\title{The Characteristic Function of the Renormalized Intersection Local Time of the Planar Brownian Motion}
\runtitle{Characteristic Function}

\begin{aug}
\author{\fnms{Daniel} \snm{H\"of}
\ead[label=e1]{dhoef@gmx.net}}



\affiliation{Provide e.V.}

\address{
2. K\i{}s\i{}m Mah. Anadolu Cad. 16 D: 8\\
Bah\c ce\c sehir\\
TR-34538 \.Istanbul \\
Republic of Turkey\\
\printead{e1}\\
}

\end{aug}

\begin{abstract}
In this article we study the distribution of the number of points of a simple random walk, visited a given number of times (the $k$-multiple point range). In a previous article \citep{dhoef1} we had developed a graph theoretical approach which is now extended from the closed to the the non restricted walk. Based on a study of the analytic properties of the first hit determinant a general method to define and calculate the generating functions of the moments of the distribution as analytical functions and express them as an absolutely converging series of graph contributions is given. So a method to calculate the moments for large length in any dimension $d \geq 2$ is developed. As an application the centralized moments of the distribution in two dimensions are completely calculated for the closed and the non restricted simple random walk in leading order with all logarithmic corrections of any order. As is well known \citep{legall,hamana_ann} we therefore have also calculated the characteristic function of the distribution of the renormalized intersection local time of the planar Brownian motion. It turns out to be closely related to the planar $\Phi^4$ theory.  
\end{abstract}

\begin{keyword}[class=MSC]
\kwd[Primary ]{60J65}
\kwd{60J10}
\kwd[; secondary ]{60E10}
\kwd{60B12}
\end{keyword}

\begin{keyword}
\kwd{ Multiple point range of a random walk, Intersection Local Time, Range of a random walk, multiple points, Brownian motion }
\end{keyword}

\end{frontmatter}

\noindent

\noindent
\section{Introduction}\label{S:intro}
The local geometry of a simple random walk on a step by step basis is so simple that even a child can understand it. Yet its global geometry is so complex, intriguing and mysterious that we have just begun to scratch the surface in understanding it. 
The range  (i.e. the number of points visited) and multiple point range (i.e. the number of points visited a given number of times) of random walks, which are an important part of this statistical geometry, as one might call it, have received growing attention in the last sixty years \citep{Tek,Pitt,Fla,Ham1,Ham2,Ham3} (for an overview see \citep{chenxia}). This is also due to its close relationship to the Brownian motion as the encapsulation of its universal properties. There is e.g. a close relationship between the leading behaviour (for large length) of the moments of the range of a random walk and those of the renormalized intersection local time of the Brownian motion in two dimensions established by Le Gall \citep{legall}. This has been extended to the multiple point range by Hamana \citep{hamana_ann}.\\
The intriguing relationship between the local interactions of molecules and the global properties of the matter it constitutes macroscopically is not just a philosophical analogy in the realm of physics. As has been shown by Symanzik \citep{symanzik} and worked on by Brydges, Spencer and Fr\"ohlich \citep{bry}, studying a large class of lattice models of Quantum field theory (QFT) describing this relationship means calculating the  characteristic function of weighted ranges of intersecting random walks. So the field of statistical geometry has a direct relevance not just in mathematics but also in physics. QFT is just one example among others \citep{chenxia}. \\  
In this paper we continue to discuss the joint distribution of the variables $N_{2k}(w)$,
the number 
of points of a closed simple random walk $w$ on $\Z^{d}$ visited by
$w$ exactly $k$ times (the $k$-multiple point range).  (As we will only deal with simple random walks we will sometimes drop the word simple in the sequel). We had completely calculated the leading behaviour for large length of all moments in the one dimensional case in \citep{dhoef1}. The distribution turned out to be a generalized geometric distribution. We followed a geometrical approach, relating the distribution of multiple points to the distribution of Eulerian multidigraphs and their adjacency matrices $F$ as geometrical archetypes of random walks. In this paper we will do the same for two dimensions and prove the following:
\begin{theorem}
Let $\beta_1$ be the renormalized intersection local time of a planar Brownian motion as defined e.g. in \citep{basschen}. Then the characteristic function of its distribution is given by
\begin{multline}\label{eq_charbr}
E\left(e^{it\beta_1}\right) = e^{(1-\gamma)\frac{it}{2\pi}} \Biggl(\frac{1}{\Gamma\left(2+ \frac{it}{2\pi}\right)} 
+
\\ 4 \cdot \sum_{r=2}^{\infty} \frac{(it)^r }{8^r\cdot r\,! \cdot \Gamma\left(r+2 + \frac{it}{2\pi}\right)}\cdot\\
 \left( \sum_{F \in \tilde{H}_r(2,\ldots,2)} \left((r+1)\cdot \mathcal{I}(F) + \frac{it}{2}\cdot (r-1) \cdot I(F) \right) \cdot \mathcal{U}(F) \right)
\Biggr)
\end{multline}
where $E(.)$ denotes the expectation value, $\Gamma$ is the Eulerian integral of the second kind, $\gamma$ the Euler--Mascheroni constant,
$\tilde{H}_r(h_1,\ldots,h_r)$ denotes the set of the $r\times r$ matrices $F$ with zero diagonal and nonnegative integer entries which are balanced:
\begin{equation}
\sum_{j=1}^r F_{i,j} = \sum_{j=1}^r F_{j,i} = h_i 
\end{equation}
and can be understood as adjacency matrices of Eulerian multidigraphs
\begin{equation}
cof(A-F) \neq 0
\end{equation}
where $A = diag(h_1,\ldots,h_r)$ and $cof$ the cofactor. $\mathcal{I}(F)$ is given by 
\begin{multline}\label{eq_guzI}
\mathcal{I}(F) := \int_{(\R^2)^r} \prod_{m=1}^r{d^2y_m} \cdot \delta(y_1) \cdot \\ \Biggl(\sum_{\substack{1\leq i \leq r \\ 1 \leq j \leq r \\ i\neq j}}  \frac{\partial}{\partial H_{i,j}} 
\left(
 \prod_{\substack{1 \leq k \leq r \\ 1 \leq l \leq r \\ l \neq k}} 
 H_{k,l}^{F_{k,l}}
\right) 
\Biggr|_{H_{k,l} = \frac{2}{\pi} K_0(\left|y_k - y_l \right|)}
\Biggr)
\end{multline}
$I(F)$ is given by 
\begin{equation}\label{eq_guzIf}
I(F) := \int_{(\R^2)^r} \prod_{m=1}^r{d^2y_m} \cdot \delta(y_1) \cdot \Biggl(  
 \prod_{\substack{1 \leq k \leq r \\ 1 \leq l \leq r \\ l \neq k}} 
 \left(\frac{2}{\pi} K_0(\left|y_k - y_l \right|) \right)^{F_{k,l}}
\Biggr)
\end{equation}
with the Dirac $\delta$ distribution and the modified Bessel function $K_0$ and 
\begin{equation}
 \mathcal{U}(F) := M(F) \cdot cof(A-F)
\end{equation}
\begin{equation}
M(F) := \prod_{\substack{1\leq i \leq r \\ 1 \leq j \leq r \\ i\neq j}}\frac{1}{  F_{i,j}\,!}
\end{equation}
\end{theorem}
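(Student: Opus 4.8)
The plan is to obtain \eqref{eq_charbr} as the scaling limit of the moment generating function of the double-point range of the planar simple random walk, combining the graph expansion developed in the body of the paper with the Le Gall--Hamana limit theorem. Write $N_4=N_4(w)$ for the number of points visited exactly twice by a planar simple random walk $w$ of length $n$; the degree sequence $(2,\dots,2)$ in $\tilde H_r(2,\dots,2)$ is precisely the ``two visits per point'' bookkeeping. By Le Gall \citep{legall} and Hamana \citep{hamana_ann} there are a normalising sequence $a_n$ of order $n/(\log n)^2$ and a constant $c\in\R$ with $a_n^{-1}\bigl(N_4-E(N_4)\bigr)\Rightarrow c\,\beta_1$ and convergence of every moment, so that
\begin{equation*}
E\left(e^{it\beta_1}\right)=\sum_{m\ge0}\frac{(it)^m}{m!}\,E(\beta_1^m),\qquad E(\beta_1^m)=c^{-m}\lim_{n\to\infty}a_n^{-m}\,E\left(\bigl(N_4-E(N_4)\bigr)^m\right).
\end{equation*}
Hence it suffices to evaluate each of these limits and to control them uniformly enough that the $m$-sum converges and may be reorganised into \eqref{eq_charbr}; the moment bounds this requires are exactly what the ``absolutely converging series of graph contributions'' provides.

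The combinatorial core comes next. Extending the method of \citep{dhoef1} from closed to non-restricted walks, I would express the length generating function $\sum_n(\cdot)z^n$ of the (centralised) $m$-th moment as an absolutely convergent sum of graph contributions indexed by the sets $\tilde H_r(2,\dots,2)$, $r\le m$: picking an $m$-tuple of doubly-visited points, one contracts the walk to its visits of the $r$ distinct points among them, obtaining an Eulerian multidigraph on $r$ vertices each of in- and out-degree $2$, whose adjacency matrix lies in $\tilde H_r(2,\dots,2)$; centralising cancels the reducible (disconnected) diagrams, and handling the non-restricted -- as opposed to closed -- case so that one genuinely gets Eulerian digraphs rather than a single closed Euler tour is the main new ingredient over \citep{dhoef1}. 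By the strong Markov property and inclusion--exclusion over the order of first visits, the weight of a fixed excursion pattern is controlled by a determinant built from the discrete Green's functions between the point positions -- the first hit determinant; the number of distinct walk realisations of a fixed $F$ is counted, via the matrix-tree / BEST theorem, by $cof(A-F)$ up to the parallel-edge multiplicities $\prod F_{i,j}!=1/M(F)$, which is how $\mathcal U(F)=M(F)\,cof(A-F)$ enters. Up to normalisations this is the Symanzik random-walk representation \citep{symanzik,bry} of the planar $\Phi^4$ Feynman series, which both explains the shape of the answer and shows why the $r=0$ term is the ``free'' contribution $1/\Gamma(2+it/(2\pi))$ (note $\tilde H_1(2)=\emptyset$, so the sum starts at $r=2$).

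The asymptotic extraction is then as follows. In two dimensions the generating function of the discrete Green's function has a logarithmic singularity at the simple-random-walk radius, $G(y_k-y_l;z)=-\tfrac1\pi\log(1-z)-\tfrac2\pi\log|y_k-y_l|+\kappa+o(1)$ as $z\to1$; reading $1-z$ as a squared mass, this is the continuum massive propagator, so each entry of the first hit determinant tends to $\tfrac2\pi K_0(\sqrt{1-z}\,|y_k-y_l|)$, and integrating the $r$ point positions over $(\R^2)^r$ with $\delta(y_1)$ killing the overall translation sends the graph contribution to the integrals $I(F)$ of \eqref{eq_guzIf} and $\mathcal I(F)$ of \eqref{eq_guzI}. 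The two terms $(r+1)\,\mathcal I(F)$ and $\tfrac{it}{2}(r-1)\,I(F)$ record whether the single macroscopic excursion runs between two of the marked points -- so that its generating-function expansion differentiates one edge weight, which is the operator $\sum_{i\ne j}\partial/\partial H_{i,j}$ of \eqref{eq_guzI}, producing $\mathcal I$ -- or instead sits at a marked point, resp.\ before the first or after the last visit, producing $I$ with one further power of $it$; the integers $r\pm1$ are the attendant multiplicities. Applying singularity analysis near $z=1$ turns powers of $-\log(1-z)$ into powers of $\log n$, the point being that \emph{all} of these corrections are resummed rather than only the leading one, and that exponentiating the $-\tfrac1\pi\log(1-z)$ carried by the $r$ propagators shifts the exponent of $(1-z)$ by $it/(2\pi)$, so that the transfer $(1-z)^{-\alpha}\mapsto n^{\alpha-1}/\Gamma(\alpha)$, together with the reorganisation of the $m$-sum into an $r$-sum, produces the denominators $\Gamma(r+2+it/(2\pi))$ and the weights $(it)^r/(8^r r!)$. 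Finally the finite part $\sum_{k\le n}1/k=\log n+\gamma+o(1)$ and the constant $\kappa$ above yield the Euler--Mascheroni $\gamma$ and the prefactor $e^{(1-\gamma)it/(2\pi)}$, and collecting by $r$ gives \eqref{eq_charbr}.

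The main obstacle is analytic, not combinatorial, and I expect it to absorb most of the work. One must justify interchanging the $n\to\infty$ limit with the infinite sum over $\tilde H_r(2,\dots,2)$ and with the $m$-sum defining the characteristic function; this needs uniform control of the first hit determinant near $z=1$ in dimension two -- precisely where every one of its entries diverges logarithmically -- together with uniform-in-$F$ bounds on $cof(A-F)$, $M(F)$ and on the position integrals over $(\R^2)^r$, a space whose dimension grows with $r$. One must also show that $\mathcal I(F)$ and $I(F)$ are finite: $K_0$ has only a (harmless) logarithmic singularity at $0$ but decays exponentially at $\infty$, so convergence at large separations requires the graph to link all $r$ points together -- which for $F\in\tilde H_r(2,\dots,2)$ is guaranteed by $cof(A-F)\ne0$ -- and a quantitative form of this is what renders the triple series absolutely convergent. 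Finally, checking that \emph{every} logarithmic correction, of arbitrary order, resums into the clean $\Gamma$-function form, rather than merely matching the leading log, is the part of the bookkeeping where an error is easiest to make.
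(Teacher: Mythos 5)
Your high-level strategy -- reduce to centralized moments, expand over Eulerian multidigraphs with the first-hit determinant supplying absolute convergence, replace lattice Green's functions by $\tfrac2\pi K_0$ near the critical point, extract asymptotics by singularity analysis, and transfer to $\beta_1$ via Le Gall and Hamana -- is the same route the paper takes, and you correctly flag the two analytic danger zones (control of $\Delta_r$ near $z=1/(2d)$ in $d=2$, and resummation of all logarithmic corrections). But there are two gaps that would derail the argument as written.

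First, your explanation of why only $\tilde H_r(2,\dots,2)$ survives is not correct. You propose to contract a double-visited $m$-tuple directly to a degree-$(2,\dots,2)$ digraph; in the archetype formalism the graph degree $h_i$ at a vertex is the number of excursion segments the archetype \emph{selects}, which is summed from $1$ up to the multiplicity, so for $N_4$ one gets contributions from $h_i=1$ (the ``dam'' vertices) as well as $h_i=2$. The restriction to $(2,\dots,2)$ is not combinatorial but asymptotic: the paper first partitions $\tilde H$ into dam-free graphs plus $F_1$, shows every graph arises from a dam-free one by a dressing indexed by a tree (Lemmas on $\tilde H_m(F)$ and hierarchical dressings), and then shows that centralising the moments cancels the dam contributions against products of first moments via a generalised Leibniz/inclusion--exclusion identity. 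Only after that cancellation does one see that among dam-free graphs those with $h_i>2$ are subleading. ``Centralising kills disconnected diagrams'' does not capture this: the cancellations in question are between connected dressed graphs and products of expectations, and they are the reason the centralised $m$-th moment is $O(n^m/(\log n)^{3m})$ rather than the naive $O(n^m/(\log n)^{2m})$.

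Second, and relatedly, your account of where the $\Gamma\bigl(r+2+\tfrac{it}{2\pi}\bigr)$ denominators come from does not go through. Shifting an exponent of $(1-z)$ by $it/(2\pi)$ via exponentiating $-\tfrac1\pi\log(1-z)$ would give a \emph{single} shifted power, but the paper's coefficients arise differently: the singularity-analysis Lemma converts a product of $\ln(1-z)$-series into a sum weighted by the Taylor coefficients $\gamma^{(m)}_i$ of $1/\Gamma(m+\tau)$, and the tree-indexed dam dressing, combined with Cayley-type tree counts and the generalised Leibniz rule, is what reassembles those coefficients into the closed $\Gamma$-form with index $r+2$, with the two-versus-three term split ($\mathcal{I}(F)$ carrying $\gamma^{(r+1)}$ and $\gamma^{(r+2)}$, $I(F)$ carrying $\gamma^{(r+2)}$) producing the $(r+1)$ and $(r-1)$ prefactors. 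Without the dam machinery and the Leibniz identity there is no mechanism in your sketch that would produce the correct index shift or the $(r\pm1)$ multiplicities, so the ``all logarithmic corrections resum cleanly'' step you flag as dangerous is in fact the step your proposal leaves unfilled.
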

The path to reach this result is long but straightforward: \\
In \citep{dhoef1} we had expressed the moments of the joint distribution of the multiple point range for the closed simple random walk as a formal power series of the form 
\begin{equation}
P_A(z) := \sum_{w \in W_{2N}} A(w)\cdot z^{length(w)}
\end{equation}
where the quantities $A(w)$ are finite linear combinations of the moments and we sum over the set $W_{2N}$  of closed walks $w$ whose length is smaller or equal to $2N$ and who start and end in $0\in \Z^d$.
We had then expressed  $P_A(z)$  as a formal sum of known functions $g_ {A,F}(z)$ over the infinite set of adjacency matrices $F$ of Eulerian multidigraphs in the form
\begin{equation}\label{eq_sumfeyn}
P_A(z) = \sum_{F} g_{A,F}(z) 
\end{equation}
as an exact relationship between Taylor coefficients, this way avoiding discussions of convergence of the sum over the adjacency matrices. 
To get the asymptotic behaviour of $A(w)$ for large $length(w)$ one has to
\begin{enumerate}
\item study the analytic properties of the functions $g_{A,F}(z)$ in the complex plane
 \item find a way to transform the formal sum over $F$ into an absolutely converging series, so results for the functions $g_{A,F}(z)$ can be related safely to those of $P_A(z)$
\item find the singularities at the boundary of the radius of convergence for $P_A(z)$  and 
\item calculate the asymptotic singular behaviour in a vicinity of any of the singularities.
\end{enumerate} 
This paper does all these steps for dimension $d = 2$ and all but the last one for any dimension $d >2$. \\ 
In the second section we first generalize the basic enumaration formulas of the number of structure preserving maps (archetypes) from closed to non-closed simple random walks and from products of the multiple point range to walks which hit fixed points a given number of times.
In the sequel of the paper we will till section nine deal with closed simple random walks only, as the formulas are much simpler and the corresponding formulas for the non-closed simple random walk can easily be derived from them.  
\\
In the third section we then start with a study of the analytic properties of the lowest order building blocks of $g_{A,F}(z)$, i.e. the functions $h(x,d,z)$ which are the generating functions of the number of walks between $0$ and $x \in \Z^d$. They are shown to be holomorphic functions on the open set  $D = \{z \in \C; \lvert \Re(z) \rvert < \frac{1}{2d} \}$ with nice properties, e.g. we show that $\left|1+h(0,d,z) \right| \geq 1/2$ for $z \in \bar U_{1/2d}(0) \cap D$. \\
The fourth section is dedicated to the so called first hit determinant 
\begin{equation}\label{eq_fhitd1}
\Delta_r(Y,z) = \det\left(1_{r\times r} + U_{i,j}(Y,z)\right)
\end{equation}
where 
\begin{equation}\label{eq_fhitd2}
U_{i,j}(Y,z) := \frac{h(Y_i - Y_j,d,z)}{1 + h(0,d,z)}
\end{equation}
is the generating function of the number of walks starting in $Y_i$ and ending in $Y_j$ and hitting $Y_j$ the first time when they reach it. This determinant is the key to transforming the series over matrices $F$ into an absolutely converging series in any dimension. We prove that for any $Y$ in the set $J_r := \{ Y:=(Y_1,\ldots,Y_{r}) \in (\Z^d)^r \mid Y_1 = 0 \wedge i \neq j \Longrightarrow Y_i \neq Y_j \}$  and $z$ in an open neighborhood of  $\bar U_{1/2d}(0) \cap D$  
\begin{equation} 
\left|\Delta_r(Y,z) \right| \geq  \frac{1}{2^r \cdot (1 + \left|h(0,d,z)\right|)^{(r-1)}}
\end{equation}
and so it is only the vicinity of the points $z = \pm \frac{1}{2d}$ which is related to the asymptotic behaviour of the moments. We also show for $d \geq 3$ that for any $Y \in J_r$ and any $f \in \N \setminus \{0\}$ there is an $\epsilon(f) >0$ independant of $Y$ such that for $z \in U_{\epsilon(f)}(\pm 1/2d) \cap D$ we have 
\begin{equation}\label{eq_deltat}
\left|1 - \Delta_r(Y,z)^f \right| < 1
\end{equation}
\\
In the fifth section we show \eqref{eq_deltat} for $d = 2$ . This case has to be dealt with seperately as it requires completely new concepts for its proof. \\
Based on the fourth and fifth section we can now in the sixth section use the key relation \eqref{eq_deltat} which allows the transformation of summations over $F$ to summations over absolutely converging geometric series of the form
\begin{equation}\label{eq_geodel}
\frac{1}{\Delta_r(Y,z)^f} = \sum_{\ell = 0}^{\infty} (1 - \Delta_r(Y,z)^f)^{\ell}
\end{equation}
where the terms related to $\ell > 0$ belong to finite sets of matrices $F$ and so we can define the moment functions $P_A(z)$ as analytic functions and relate their analytic and asymptotic behaviour to that of the functions $g_{A,F}(z)$.\\
In the seventh section this is now applied to the special case of the moments in $d=2$ where known asymptotic series allow the complete calculation of the $r^{th}$ moment of the distribution to any order $M$ of logarithmic corrections of the form 
\begin{equation}
\frac{length(w)^r}{\ln(length(w))^{M}}
\end{equation}
for the closed simple random walk. \\ 
In the eighth section we now use the subset of Eulerian multidigraphs without dams (vertices of degree $2$ or more precisely of indegree $1$ and outdegree $1$ ) and those with dams generated from them (by consecutively lifting the edges incident to dams and removing loops if this operation creates them) to calculate the leading behaviour and all logarithmic corrections of the centralized moments and the characteristic function of the joint distribution of the multiple point range of the closed simple random walk. We get a result similar to equation \eqref{eq_charbr} \\
In the ninth section we generalize these results to the non restricted simple random walk generally discussed in the literature to reach the corresponding formulas.  Then the characteristic function of the renormalized intersection local time of the Brownian motion, based on the work of  Le Gall \citep{legall} and Hamana \citep{hamana_ann} is derived. It of course reproduces the classical result for the leading behaviour of the second moment of the range by Jain and Pruitt \citep{jain}.  A short discussion of some known results from complementary approaches is given. \\
The tenth section then is dedicated to a discussion of the results and a view towards where one can go from here based on the above results.
\section{Generalizations of the concept of Archetypes}
In the sequel of this article we use the notation of \citep{dhoef1} as it is the basis for this article. The set $J_r := \{ Y:=(Y_1,\ldots,Y_{r}) \in L^r \mid Y_1 = 0 \wedge i \neq j \Longrightarrow Y_i \neq Y_j \}$ unfortunately was given with the wrong indices starting in $0$ and ending in $r-1$ in \citep{dhoef1}, this is corrected here. \\
\subsection{Definitions}
We start with generalizing the definitions of \citep[section 2.2]{dhoef1}:
\begin{definition}
For a walk $w$ and an integer $k \in \N \setminus\{0\}$ we define
the number $N_{k}(w)$, the $k/2$-multiple point range as
$N_{k}(w) := \sharp \{q \in L \mid mu(q,w) = k \} $. We also define the range
$ran(w) := \sum_{k=1}^{\infty } N_{k}(w) $.
Note that for walks which are not closed the start point and the endpoint are the only points which have an odd multiplicity (and the distribution of such points is generally not discussed in the literature of the multiple point range yet). 
\end{definition}
\begin{definition}
Let $ w=(p,s) $ be a walk which is not closed and of length $ n $. Let $ u = (q,t) $
be a walk of length $ m \leq n $ such that there
exists an index $ 0 \leq ind < n $ such that 
$ i = 1,\ldots,m \Rightarrow  t_{i} = s_{i + ind}$ and 
$ q = p_{ind} $. Then the pair $ (ind;u) $ is called a fixed subwalk of $ w $
\end{definition}
\begin{definition}
Let $(ind_1;u_1)$ and $(ind_2;u_2)$ be two fixed subwalks of a walk $w = (p,s)$, which is not closed,
with lengths $m_1$ and $m_2$ and $n$ and sequences $s^{(1)}$, $s^{(2)}$ and $s$ respectively. 
If $ind_1 + m_1  = ind_2$ then the two fixed subwalks can be concatenated to 
the fixed subwalk $(ind_1;u_3)$ with $u_3 = (p_{ind_1},s^{(3)})$ with 
\begin{equation}
s^{(3)}_i := 
\begin{cases}
s^{(1)}_i \Longleftarrow i=1,\ldots,m_1 \\
s^{(2)}_{i-m_1} \Longleftarrow i=m_1+1,\ldots,m_1+m_2
\end{cases}
\end{equation}
\end{definition}
\begin{definition}
Let $(ind_1;u_1)$ and $(ind_2;u_2)$ be two fixed subwalks of a walk $w = (p,s)$ which is not closed
with lengths $m_1$ and $m_2$ and $n$ and sequences $s^{(1)}$, $s^{(2)}$ and $s$ respectively. The fixed subwalks are said to overlap if there are numbers 
$1 \leq i_1 \leq  m_1, 1 \leq i_2 \leq m_2$ such that 
$(ind_1 + i_1) = (ind_2 + i_2)$
\end{definition}
\begin{definition}
Let $w=(x,s)$ be a walk of length $n$, which is not closed. A refinement  of  $w$ is 
a collection of fixed subwalks of $w$ which do not overlap and can be concatenated to yield a fixed subwalk of length $n$, i.e. loosely speaking whose concatenation
yields $w$.
\end{definition}
\begin{definition}\label{D:Arch}
Let $\Xi := (G_1,\ldots,G_k) $  be a family of $k$ semi Eulerian graphs \\
$ G_j = (V_j,E_j,J_j) $
with mutually 
disjoint edge sets. Let $\Sigma := (w_1,\ldots,w_k)$ be a family of walks and   
$P_j$ the set of points of $w_j$  with nonzero multiplicity and $S_j$ the set 
of fixed subwalks of $ w_j $ . A $2k$ tupel of maps
$ f = (f_{V_1},\ldots,f_{V_k};f_{E_1},\ldots,f_{E_k})  $  
is called \textbf{ archetype }
if and only if 
\begin{description}
\item[Structure] $ f_{E_i}:E_i \mapsto S_i  $ 
is injective and maps the different edges 
of $ E_i $  onto a refinement of $ w_i $ . $ f_{V_i}:V_i \mapsto P_i $  is injective 
and for $ v \in V_i \cap V_j \Rightarrow f_{V_i}(v) = f_{V_j}(v) $ .
\item[Orientation] $f_{V_i}$ maps the starting and ending vertices of edges as 
given by $J_i$ on 
the starting and ending points of the corresponding fixed subwalks.
\end{description}
\end{definition}
\subsection{Lemmata}
We now prove a generalisation of \citep[ Lemma 2.2.]{dhoef1}:
\begin{lemma} \label{le_close}
For $Y\in J_r$ and an r vector $k = (k_1,\ldots,k_r)$ of integers with $k_i > 0$ we define the following subset of the set $W_{2N} \times \Z^d$ where $W_{2N}$ is the set of all closed walks $w$ with $length(w) \leq 2N$ starting and ending in $0 \in \Z^d$.
\begin{multline}
W_{2N}(Y,k_1,\hdots,k_r) := \\ \left\{(w,y) \in W_{2N} \times \Z^d \wedge   mu(Y_i - y,w) = 2\cdot k_i \;\forall{i=1,\hdots,r}\right\}
\end{multline}
Then 
\begin {multline} \label{eq_mdet}
\sum_{(w,y) \in W_{2N}(Y,k_1,\hdots,k_r)} z^{length(w)} = 
 \\
  z \cdot \frac {\partial } {\partial z} 
\Biggl[ 
\left( \prod_{i=1}^r \hat K(x_i,k_{i},h(0,d,z)) \right)
cof(\hat A - \hat F) \\
\left( \prod_{j=1}^r \frac {1} {(1-x_j)^2} \right)
\frac {1} {det(W^{-1} + X)} 
\Bigr\rvert_{\forall a: x_a = 0; \quad \forall b,c: X_{b,c}= U_{b,c}(Y,z)}
\Biggr]_{z \diamond 2N}
\end {multline}
\end{lemma}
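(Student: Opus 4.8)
The plan is to prove \eqref{eq_mdet} by the archetype enumeration of \citep{dhoef1}, extended from the moments of the multiple point range to the present setting in which the relative positions $Y_1,\dots,Y_r$ of the target points are kept fixed and only their common translate $-y$ is summed over. Since $Y\in J_r$ has $Y_1=0$, summing over $y\in\Z^d$ is the same as summing over all placements of the rigid configuration $\{Y_1-y,\dots,Y_r-y\}$ of $r$ distinct lattice points, the closed walk $w$ itself continuing to start and end at $0$. I would first establish \eqref{eq_mdet} without the truncation $[\,\cdot\,]_{z\diamond 2N}$, as an identity of formal power series in $z$ (convergence is not an issue here; it is treated in the later sections), the truncation then being precisely the restriction $length(w)\le 2N$ that defines $W_{2N}$; note that $z\frac{\partial}{\partial z}$ commutes with truncation to degree $\le 2N$, so the order in which they are applied is immaterial.

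For the formal identity I would run the archetype map of Definition \ref{D:Arch} on each pair $(w,y)$: one records the successive visits of $w$ to the target set $\{Y_i-y\}$ and the two visits of $w$ to the basepoint $0$ (which is a new vertex, adjoined as an $(r+1)$-th vertex, for all but the finitely many degenerate translates with $0\in\{Y_i-y\}$; these are absorbed by the same conventions as in \citep{dhoef1}), obtaining a semi-Eulerian multidigraph $\hat F$ on $\{0,Y_1-y,\dots,Y_r-y\}$ together with a refinement of $w$ into fixed subwalks filling its edges. The count then factorizes into four essentially independent contributions, which I would match one by one with the factors on the right of \eqref{eq_mdet}. (i) The number of ways a given skeleton $\hat F$ can be threaded by a closed walk based at $0$ is, by the matrix-tree / BEST theorem, the arborescence count $\mathrm{cof}(\hat A-\hat F)$, the cyclic orientation factor and the multinomial factors for parallel edges being absorbed into the other terms exactly as in \citep{dhoef1}. (ii) The fixed subwalk filling an edge $q_b\to q_c$ of $\hat F$ is a walk that first hits $q_c$, whose generating function is $h(Y_b-Y_c,d,z)/(1+h(0,d,z))=U_{b,c}(Y,z)$; this is the only place the relative positions $Y$ enter, and it produces the matrix $X$. (iii) At each target vertex $q_i$ the local structure — the taboo returns (closed excursions from $q_i$ avoiding the target set, generated through $h(0,d,z)$) and the way the $2k_i$ prescribed visits are distributed among the slots offered by the skeleton — is generated by $\hat K(x_i,k_i,h(0,d,z))$ together with the marking variable $x_i$ and the pointing factor $1/(1-x_i)^2$; the specialization $x_a=0$ then performs the coefficient extraction that pins the total multiplicity at $q_i$ to $2k_i$. (iv) The resummation over how many taboo returns are inserted at each target vertex, over where they sit along $w$, and over the unconstrained number of visits of $w$ to the basepoint, is a matrix (transfer-operator) geometric series that closes up to $1/\det(W^{-1}+X)$, with $W$ the diagonal matrix bookkeeping the $x_i$-marked return structure; at the evaluation point $W^{-1}$ becomes $1_{r\times r}$ and this factor becomes $1/\Delta_r(Y,z)$ for the first hit determinant of \eqref{eq_fhitd1}, which is why that determinant is the object of the following sections. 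Finally $z\frac{\partial}{\partial z}$ restores the weighting by $length(w)$ that was lost in passing to the basepoint-unpointed cyclic skeleton.

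The main obstacle is step (iv): showing that the a priori intricate sum over insertions of taboo excursions and of basepoint visits collapses to the single determinant $1/\det(W^{-1}+X)$, and, relatedly, that one may use the single-target first-hit functions $U_{b,c}$ — whose filling walks are allowed to pass through \emph{other} target points — with no double counting. This is the combinatorial content of organizing the first-passage decompositions of walks on a finite vertex set into a matrix geometric series of the shape $\sum_{\ell\ge0}\bigl(1_{r\times r}-(W^{-1}+X)\bigr)^{\ell}$; as an identity of formal power series it is legitimate because $W^{-1}+X$ has invertible constant term (the constant terms of $W^{-1}$ and of $U$ together giving $1_{r\times r}$), and I would prove it by induction on the number of inserted excursions and on the size of the support of $w$, the argument of \citep{dhoef1} now carried out with the extra vertex $0$ adjoined. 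A secondary, more bookkeeping-heavy point is the precise matching in step (iii) of the polynomials $\hat K(x_i,k_i,h(0,d,z))$ and the factors $1/(1-x_i)^2$ against the combinatorics of distributing the $2k_i$ visits, including the small $k_i$ cases and the degenerate translates; this, too, follows the pattern of \citep{dhoef1}.
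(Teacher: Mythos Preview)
Your overall plan---reduce to the archetype enumeration of \citep{dhoef1} and then invoke the machinery that converts the sum over Eulerian skeletons into the determinant formula---is exactly what the paper does; its proof consists of little more than recording the fixed-$f_V$ enumeration identity \eqref{eq_en1} and citing \citep[Theorem~2.2, Theorem~2.3, Lemma~2.1]{dhoef1}. But two points in your elaboration are off and would derail the argument if carried through.

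First, adjoining the basepoint $0$ as an $(r+1)$-th vertex is a misstep. A closed walk $w\in W_{2N}$ may visit $0$ any number of times, not ``two visits'', and its multiplicity at $0$ is entirely unconstrained in the statement; there is no vertex for it in the archetype. The correct reading of the operator $z\,\partial/\partial z$ is the one you also gesture at: summing over $y\in\Z^d$ with $w$ based at $0$ is the same as keeping the target configuration $Y$ fixed and summing over where the base of the closed walk sits relative to it, and by the Markov identity $\sum_{y}h(a-y,d,z)\,h(y-b,d,z)=z\,\partial_z h(a-b,d,z)$ this collapses, via the product rule, to $z\,\partial/\partial z$ applied to the unpointed product $\prod_{e}h$. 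This is precisely \citep[Theorem~2.2]{dhoef1} (equivalently, what summing Lemma~\ref{le_fGr2} over $Y_{r+1}$ yields). The extra-vertex device is the route for Lemma~\ref{le_diff} and Theorem~\ref{t_odet}, not for the present lemma.

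Second, in your step (ii) the edge fillings in an archetype are arbitrary walks between the endpoints, with generating function $h(Y_b-Y_c,d,z)$; they are \emph{not} first-hit walks and may well pass through other target points---there is no double counting to worry about, because the archetype only records the refinement, not a taboo condition. The first-hit functions $U_{b,c}=h/(1+h(0,d,z))$ emerge only \emph{after} the loop/return structure at each target is absorbed into the $\hat K$ factors and the normalization by powers of $1+h(0,d,z)$. That algebraic repackaging is exactly \citep[Lemma~2.1, eq.~(2.26)]{dhoef1}, so your ``main obstacle'' (iv) is not an obstacle but the cited lemma, and no fresh matrix inclusion--exclusion is required.
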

\begin{proof} Let $y \in \Z^d$ and $w$ be a closed walk which for $i=1,\ldots,r$ has points of multiplicities $2\cdot k_i$ in $Y_i -y$.  Let $\Theta =\Theta (m_1,\ldots,m_r)$
be the set of equivalence classes under isomorphy of 
all Eulerian graphs with exactly $r$  vertices with degrees 
$2m_1,\ldots,2m_r$. For $[G] \in \Theta$ define $Arch(G,w,Y,y)$ as the set of archetypes between $G = (V,E,J)$, $V = \{v_1,\ldots,v_r\}$ and $w$ which for any $v_i \in V$ fulfills $f_V(v_i) = Y_i - y$ . Then following the arguments in \citep[ Theorem 2.1.]{dhoef1} we have
\begin{equation}\label{eq_en1}
\sum_{[G] \in \Theta (m_1,\ldots,m_r)} \frac {\sharp Arch(G,w,Y,y)} {\sharp Aut(G)} =
\sum_{k_i \geq m_i}
\prod_{j=1}^{l} \binom {k_j} {m_j}
\end{equation}
independantly of $y$. 
Using \citep[Theorem 2.2.]{dhoef1} and the same arguments as in the proof of \citep[Theorem 2.3., Lemma 2.1.]{dhoef1} we reach equation \eqref{eq_mdet}. 
\end{proof}
\begin{remark} In the case of $r = 1$ the Lemma is true, in this case one has to define $cof(\hat A - \hat F) := 1$.
\end{remark}
\begin{corollary}\label{co_close}
\begin{multline}\label{eq_mome}
\sum_{(w,y) \in W_{2N}(Y,k_1,\ldots,k_r)} z^{length(w)} =  \\
 z \cdot \frac 
{\partial } {\partial z}
\Bigl[ 
\sum_{h_i = 0}^{\infty }
\sum_{F \in H_r(h_1,\ldots,h_r)} \\ 
cof(A-F) 
\left( \prod_{\substack{1 \leq a \leq r \\ 1 \leq b \leq r}} \frac {h(Y_a - Y_b,d,z)^{F_{a,b}}} 
{(1+ h(0,d,z))^{F_{a,b}}\cdot F_{a,b}\,!}
\right) \cdot  \\
\prod_{j=1}^r (-1)^{h_j} \cdot h_j\,!\cdot K(h_j,k_{j},h(0,d,z))
\Bigr]_{z \diamond 2N}
\end{multline}
For $r \geq 2$ for $F \in H_r(h_1,\ldots,h_r)$ with $h_i = 0 \Rightarrow cof(A-F) = 0$.
In the case of $r=1$ however the $1 \times 1$ matrix $F_p = 0$ belonging to the graph being just one point, has by definition $cof(A_p - F_p) := 1$ and the sum over $h_1$ and $F$ in this case just consists of $h_1 = 0$ and $F = F_p$ reproducing the first moment in \citep[Corollary 2.1.]{dhoef1}.
\end{corollary}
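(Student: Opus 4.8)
The plan is to obtain \eqref{eq_mome} from Lemma~\ref{le_close} by carrying out the ``graph expansion'' of the two factors in the bracket of \eqref{eq_mdet} that still carry the auxiliary matrices $W$, $X$, $\hat A$, $\hat F$ and the auxiliary variables $x_1,\ldots,x_r$. No new probabilistic content is needed: the identity is purely algebraic and lives at the level of formal power series in $z$ (and in the $x_i$), so the operator $z\frac{\partial}{\partial z}[\,\cdot\,]_{z\diamond 2N}$ may be kept outside throughout and reinstated at the end.

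First I would expand the product $cof(\hat A-\hat F)\cdot\dfrac{1}{\det(W^{-1}+X)}$. This is the matrix--tree / BEST-type identity of \citep[Theorem~2.2]{dhoef1}: writing $1/\det$ as a sum of closed-walk decorations of the complete digraph on $\{1,\ldots,r\}$ and regrouping the terms by the multiplicity matrix $F$ (with $F_{a,b}$ the number of edges from $a$ to $b$), the necklace and spanning-tree combinatorics collapse into the single factor $cof(A-F)$, leaving the multinomial weight $\prod\frac{1}{F_{a,b}!}$ and the monomial $\prod_{1\le a,b\le r}X_{a,b}^{F_{a,b}}$; here $A=diag(h_1,\ldots,h_r)$ with $h_i:=\sum_j F_{i,j}=\sum_j F_{j,i}$ forced to be the common row/column sum, so the admissible $F$ are exactly the elements of $H_r(h_1,\ldots,h_r)$ and one is led to the double sum $\sum_{h_i=0}^\infty\sum_{F\in H_r(h_1,\ldots,h_r)}$. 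Putting $x_a=0$ replaces $\hat A-\hat F$ by $A-F$, and substituting $X_{b,c}=U_{b,c}(Y,z)=\dfrac{h(Y_b-Y_c,d,z)}{1+h(0,d,z)}$ from \eqref{eq_fhitd2} turns each $X_{a,b}^{F_{a,b}}$ into $\dfrac{h(Y_a-Y_b,d,z)^{F_{a,b}}}{(1+h(0,d,z))^{F_{a,b}}}$, which together with the $F_{a,b}!$ in the denominator reproduces the big product in \eqref{eq_mome}.

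Next I would expand the remaining factor $\prod_{i=1}^r \hat K(x_i,k_i,h(0,d,z))\cdot\prod_{j=1}^r\frac{1}{(1-x_j)^2}$ coefficientwise in the $x_j$ and pair it with the monomials $x_j^{h_j}$ that the first step attaches to the vertex $j$ (whose degree in the Eulerian multidigraph is $2h_j$). Using $\frac{1}{(1-x)^2}=\sum_{h\ge0}(h+1)x^h$ together with the definition of $\hat K$ in terms of $K$ from \citep[section~2.2]{dhoef1} --- equivalently the binomial identity \eqref{eq_en1} --- the coefficient of $x_j^{h_j}$ in the $j$-th factor comes out as $(-1)^{h_j}h_j!\,K(h_j,k_j,h(0,d,z))$, which is the last line of \eqref{eq_mome}. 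Multiplying the two expansions, relabelling $\hat A,\hat F\mapsto A,F$, and reinstating $z\frac{\partial}{\partial z}[\,\cdot\,]_{z\diamond 2N}$ yields \eqref{eq_mome}. The two closing remarks are then immediate: for $r\ge2$, if some $h_i=0$ the $i$-th row and column of $A-F$ vanish and every cofactor obtained by striking a row and column other than the $i$-th is $0$, so $cof(A-F)=0$; for $r=1$ there is no off-diagonal entry, the unique matrix is $F_p=0$ with $h_1=0$, and the convention $cof(A_p-F_p):=1$ fixed in the Remark after Lemma~\ref{le_close} makes the $r=1$ term collapse to the first-moment formula of \citep[Corollary~2.1]{dhoef1}.

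I expect the main obstacle to be the bookkeeping in the second step: one has to verify that the cycle/necklace expansion of $1/\det(W^{-1}+X)$ recombines with the prefactor $cof(\hat A-\hat F)$ to produce precisely $cof(A-F)$ with the correct sign and multiplicity, and --- crucially --- that the parameters $h_i$ arising there as row/column sums of $F$ are forced to coincide with the exponents $h_j$ extracted from the $\hat K$ and $(1-x_j)^{-2}$ factors, so that the two single sums merge into the one displayed in \eqref{eq_mome}. All of this is already contained in \citep[Theorems~2.2--2.3, Lemma~2.1]{dhoef1}; the actual work is to check that their hypotheses carry over verbatim to the present, slightly more general setting (fixed points $Y_i$, vector $k$ of multiplicities) rather than to reprove them.
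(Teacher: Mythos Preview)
Your proposal is correct and follows essentially the same route as the paper: the Corollary is stated in the paper without a separate proof, since it is meant to follow immediately from Lemma~\ref{le_close} by expanding $cof(\hat A-\hat F)\,\det(W^{-1}+X)^{-1}$ and the $\hat K(x_i,k_i,\cdot)(1-x_i)^{-2}$ factors via the machinery of \citep[Theorems~2.2--2.3, Lemma~2.1]{dhoef1}, exactly as you describe. Your identification of the bookkeeping issue (matching the $h_i$ arising as row/column sums of $F$ with the exponents extracted from the vertex factors) is the right place to be careful, and you are correct that this is already handled in \citep{dhoef1}.
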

We also prove a generalisation of \citep[Theorem 2.2.]{dhoef1}
\begin{lemma}\label{le_fGr2}
Let $G=(V,E,J)$ be a Eulerian graph with the vertices $v_1,\ldots,v_r$
with degree $m_1,\ldots,m_r$. Let $Y \in J_r$ and $Y_{r+1} \in \Z^d$ be a point $Y_{r+1} \neq Y_i \quad \forall i = 1,\ldots,r$.
Let 
\begin{equation}\label{eq_fGr2}
\begin{array}{rccl}
\pi:& V & \longmapsto & \{Y_1,\ldots,Y_r\} \\
    & v_i & \longmapsto & Y_i 
\end{array}
\end{equation}
be the natural projection.\\ Let $g_{2n}(G,\pi,Y,Y_{r+1})$  denote the number of different archetypes 
$(f_V;f_E)$ of $G$ into closed walks of length $2n$ with $f_V = \pi$  which start at $Y_{r+1}$ and their generating function
\begin{equation}
N(z,G,\pi,Y,Y_{r+1}) := \sum_{n=0}^{\infty } g_{2n}(G,\pi,Y,Y_{r+1})\cdot z^{2n}
\end{equation}
then the following equation 
\begin{multline} \label{E:fGr}
N(z,G,\pi,Y,Y_{r+1}) = N_E(G)\cdot \\ \Biggl(\sum_{\tilde{e} \in E} h(\pi(J(\tilde{e})_1)-Y_{r+1},d,z) \cdot 
h(Y_{r+1}-\pi(J(\tilde{e})_2),d,z) \\
\left( \prod_{e \in E \setminus\{\tilde{e}\}} h(\pi(J(e)_1) - \pi(J(e)_2),d,z) \right)\Biggr)
\end{multline}
holds, where $N_E(G)$ is the number of Euler trails of $G$. 
\end{lemma}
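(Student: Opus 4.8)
The plan is to reduce \eqref{E:fGr} to the closed-walk enumeration \citep[Theorem 2.2]{dhoef1} by an edge subdivision, and then to sum over the subdivided edge. Concretely, I would first locate the base point $Y_{r+1}$ inside an archetype. Since $w$ is closed, a refinement is read cyclically, so the fixed subwalks $f_E(e)$, $e\in E$, partition the positions of $w$ around the cycle, and by the Orientation condition the joints between consecutive fixed subwalks are images $f_V(v)=\pi(v)\in\{Y_1,\dots,Y_r\}$ of vertices of $G$. Since $Y_{r+1}\notin\{Y_1,\dots,Y_r\}$, the base position of $w$ is never a joint, so it lies in the interior of a unique fixed subwalk $f_E(\tilde e)$, and the edge $\tilde e\in E$ is thereby canonically determined by the archetype. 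Exactly as in the proof of \citep[Theorem 2.2]{dhoef1}, the edges of $E$, in the cyclic order in which their fixed subwalks occur in $w$, form an Euler circuit of $G$; reading this circuit so that it begins with $\tilde e$, and cutting $w$ at its base point, splits $f_E(\tilde e)$ into a prefix of $w$ running from $Y_{r+1}$ to $\pi(J(\tilde e)_2)$ and a suffix of $w$ running from $\pi(J(\tilde e)_1)$ to $Y_{r+1}$, in between which $w$ is the concatenation of the fixed subwalks of the remaining edges, taken in Euler-circuit order.

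To make this quantitative, I would, for each $\tilde e\in E$ with $J(\tilde e)=(v_a,v_b)$, form the graph $G_{\tilde e}$ obtained from $G$ by subdividing $\tilde e$ with a new vertex $v_{r+1}$ into two edges $v_a\to v_{r+1}$ and $v_{r+1}\to v_b$, and extend $\pi$ to $\pi_{\tilde e}$ by $\pi_{\tilde e}(v_{r+1})=Y_{r+1}$. Then $G_{\tilde e}$ is again Eulerian, $v_{r+1}$ has indegree and outdegree one, $\pi_{\tilde e}$ is injective on vertices, and the construction of the previous paragraph is a bijection between archetypes of $G$ into closed walks of length $2n$ based at $Y_{r+1}$ whose distinguished edge is $\tilde e$, and archetypes of $G_{\tilde e}$ into closed walks of length $2n$ with $f_V=\pi_{\tilde e}$ based at $v_{r+1}$. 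Applying \citep[Theorem 2.2]{dhoef1} to each $G_{\tilde e}$, with the outdegree-one vertex $v_{r+1}$ as the distinguished vertex, and summing over $\tilde e$, gives
\[
N(z,G,\pi,Y,Y_{r+1})=\sum_{\tilde e\in E}N_E(G_{\tilde e})\prod_{e\in E(G_{\tilde e})}h\bigl(\pi_{\tilde e}(J(e)_1)-\pi_{\tilde e}(J(e)_2),d,z\bigr).
\]
In any Euler trail of $G_{\tilde e}$ the two edges incident to $v_{r+1}$ must occur consecutively, so subdivision is a bijection on Euler trails and $N_E(G_{\tilde e})=N_E(G)$ for every $\tilde e$; pulling $N_E(G)$ out of the sum and splitting off in the product the two factors $h(\pi(J(\tilde e)_1)-Y_{r+1},d,z)$ and $h(Y_{r+1}-\pi(J(\tilde e)_2),d,z)$ contributed by the new edges then yields \eqref{E:fGr}.

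The genuinely new point, and the step I expect to be the main obstacle, is the base-point bookkeeping above: checking that $Y_{r+1}$ always lies strictly inside a single fixed subwalk (so that $\tilde e$ is unambiguous and no archetype is counted for two different $\tilde e$), that cutting there produces the prefix and suffix with exactly the stated endpoints and orientations, and that the resulting assignment to archetypes of $G_{\tilde e}$ is a genuine bijection. Everything else is the cyclic-refinement variant of arguments already carried out for the closed walk based at a vertex in \citep{dhoef1}, together with the elementary observation that subdividing an edge changes neither the number of Euler trails nor---because the new vertex has outdegree one---the combinatorial prefactor in \citep[Theorem 2.2]{dhoef1}.
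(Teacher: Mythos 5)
Your subdivision argument is correct and yields the stated formula, but the route is genuinely different from the paper's. The paper argues directly: starting from an arbitrary Euler trail on $G$, it uses the Markov feature to interpret the displayed sum over $\tilde e$ as the generating function for tuples of pieces whose concatenation gives a closed walk based at $Y_{r+1}$ visiting $Y_1,\ldots,Y_r$ in trail order, and then appeals to the proof of \citep[Theorem 2.2]{dhoef1} only for the statement that an archetype uniquely recovers the Euler trail; it never explicitly introduces a distinguished edge or an auxiliary graph. You instead make the distinguished edge $\tilde e$ a bookkeeping device, form the edge-subdivision $G_{\tilde e}$, establish a bijection between archetypes of $G$ based at $Y_{r+1}$ with distinguished edge $\tilde e$ and archetypes of $G_{\tilde e}$ based at the new degree-$(1,1)$ vertex, and then invoke \citep[Theorem 2.2]{dhoef1} as a black box, together with the observations that $N_E(G_{\tilde e})=N_E(G)$ and that the vertex-dependent prefactor is trivial because $v_{r+1}$ has outdegree one. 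What your route buys is structural transparency and reusability: the subdivision bijection isolates exactly the combinatorial content behind the sum over $\tilde e$, reduces all generating-function work to the already-proved closed-walk case, and makes the uniqueness of $\tilde e$ (it is the unique edge whose fixed subwalk strictly contains the base position, which cannot be a joint since $Y_{r+1}\notin\pi(V)$) into a stated lemma rather than an implicit feature of the direct count. What the paper's route buys is brevity and avoidance of the auxiliary graph; it does not need to argue separately that the $e_1$--$e_2$ joint in $G_{\tilde e}$ must sit at position $0$, a point you correctly flag as the main thing to verify (and which does hold, since $v_{r+1}$ is the only vertex mapped to $Y_{r+1}$ and has in/out-degree one, so the unique passage of the Euler trail through $v_{r+1}$ must coincide with the base point once that base is taken as a joint in the sense of \citep{dhoef1}).
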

\begin{proof} From the Markov feature the sum 
\begin{multline}
\sum_{\tilde{e} \in E} h(\pi(J(\tilde{e})_1)-Y_{r+1},d,z) \cdot 
h(Y_{r+1}-\pi(J(\tilde{e})_2),d,z)
\\
\left( \prod_{e \in E \setminus\{\tilde{e}\}} h(\pi(J(e)_1) - \pi(J(e)_2),d,z) \right)
\end{multline}
is the generating function for the number of collections of subwalks determining a walk $w$ starting in $Y_{r+1}$ and visiting the points $Y_1,\ldots,Y_r$ in the order induced by a given Euler trail on $G$ determining an Archetype $(f_V,f_E)$ in the form requested by the Lemma uniquely. As has been shown before in the proof of \citep[Theorem 2.2.]{dhoef1} an archetype uniquely induces a Euler trail on $G$ and so the above generalization of the Theorem is also true. 
\end{proof}  
As a preparation for the walks which are not closed we now prove the following 
\begin{lemma}\label{le_diff}
Let $Y \in J_{r}$  and $k$ as before and $Y_{r+1} \in \Z^d$ with $Y_{r+1} \neq Y_i \quad \forall i=1,\ldots,r$  denote another point. Let 
\begin{multline}
W_{2N,Y_{r+1}}(Y,k_1,\ldots,k_r) := \\ 
\left\{w \in W_{2N} \wedge  mu(Y_i - Y_{r+1},w) = 2\cdot k_i \;\forall{i=1,\hdots,r}\right\}
\end{multline}
If we define 
\begin{equation}
U_{i,j} := \frac{H_{i,j}}{\sqrt{(1+ \omega_i)\cdot(1+ \omega_j)}}
\end{equation}
and 
\begin{equation}
\mathcal{H}_{i,j}(Y,z) = h(Y_i - Y_j,d,z)
\end{equation}
Then 
\begin {multline} \label{eq_mdet2}
\sum_{w \in W_{2N,Y_{r+1}}(Y,k_1,\hdots,k_r)} z^{length(w)} = \\
\Biggl(\sum_{\substack{1 \leq \alpha \leq r \\ 1 \leq \beta \leq r \\ \alpha \neq \beta}} 
h(Y_{r+1} - Y_{\alpha},d,z)\cdot h(Y_{\beta} - Y_{r+1},d,z) \cdot 
\frac{\partial}{\partial H_{\alpha,\beta}}  + \\
\sum_{1 \leq \gamma \leq r} h(Y_{r+1} - Y_{\gamma},d,z)\cdot h(Y_{\gamma} - Y_{r+1},d,z) \cdot \frac{\partial}{\partial \omega_{\gamma}} \Biggr)
 \\ 
\Biggl[\Biggl[ 
\left( \prod_{i=1}^r \hat K(x_i,k_{i},\omega_i) \right)
cof(\hat A - \hat F) \\
\left( \prod_{j=1}^r \frac {1} {(1-x_j)^2} \right)
\frac {1} {det(W^{-1} + X)} 
\Bigr\rvert_{\substack{\forall a: x_a = 0 \\ \forall b,c: X_{b,c}= U_{b,c}}}
\Biggr]\Bigr \rvert_{\substack{\forall \gamma: \omega_{\gamma} = h(0,d,z) \\ \forall \alpha,\beta: H_{\alpha,\beta}= \mathcal{H}_{\alpha,\beta}(Y,z)}}
\Biggr]_{z \diamond 2N}
\end {multline}
\end{lemma}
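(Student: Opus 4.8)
The plan is to derive Lemma~\ref{le_diff} from Lemma~\ref{le_close} by treating $Y_{r+1}$ as a \emph{passive} extra distinguished point: the walk is pinned to pass through it (at its base point) but its multiplicity is left free, so that $Y_{r+1}$ never enters $cof(\hat A-\hat F)$ or $\det(W^{-1}+X)$ and shows up only through bare walk generating functions $h(\cdot,d,z)$ attached to it. The first step is to rewrite the right hand side of \eqref{eq_mdet}. All of its $z$\nobreakdash-dependence runs through $h(0,d,z)$ -- sitting inside the factors $\hat K(x_i,k_i,\cdot)$ and, once $X_{b,c}=U_{b,c}(Y,z)$, inside the denominators of the $U_{b,c}$ -- and through $h(Y_a-Y_b,d,z)$ with $a\neq b$, sitting inside the numerators of the $U_{b,c}$ (the diagonal entries of the multidigraphs $F$ vanishing, $h(Y_a-Y_a,d,z)$ does not occur). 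Promoting these to the free variables $\omega_1,\ldots,\omega_r$ and $H_{\alpha,\beta}$, so that the bracketed expression becomes the object $E(\omega,H)$ appearing in \eqref{eq_mdet2} with $U_{i,j}=H_{i,j}/\sqrt{(1+\omega_i)(1+\omega_j)}$ (which equals $U_{i,j}(Y,z)$ once $\omega_\gamma=h(0,d,z)$ and $H_{\alpha,\beta}=\mathcal H_{\alpha,\beta}(Y,z)$), the chain rule gives
\[
z\frac{\partial}{\partial z}\bigl[E\bigr]=\left(\sum_{\gamma=1}^{r}\Bigl(z\frac{\partial}{\partial z}h(0,d,z)\Bigr)\frac{\partial}{\partial\omega_\gamma}+\sum_{\substack{1\le\alpha,\beta\le r\\\alpha\neq\beta}}\Bigl(z\frac{\partial}{\partial z}h(Y_\alpha-Y_\beta,d,z)\Bigr)\frac{\partial}{\partial H_{\alpha,\beta}}\right)E\,\Big|_{\,\omega_\gamma=h(0,d,z),\ H_{\alpha,\beta}=\mathcal H_{\alpha,\beta}(Y,z)}.
\]
Thus Lemma~\ref{le_close} and the target Lemma~\ref{le_diff} have the \emph{same} operator shape applied to the \emph{same} expression $E(\omega,H)$; only the scalar coefficients of $\partial/\partial\omega_\gamma$ and of $\partial/\partial H_{\alpha,\beta}$ differ.

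Next I would give the combinatorial meaning of these coefficients, which is the crux and which I would obtain by re-running the archetype and first hit determinant derivation of \citep[Theorems~2.1--2.3]{dhoef1} with $Y_{r+1}$ carried along. In the decomposition underlying Lemma~\ref{le_close} a closed walk is encoded by an Eulerian multidigraph $F$ on $\{Y_1,\ldots,Y_r\}$ together with the excursions that fill its edges and that return to its vertices, and the base point of the closed walk lies in exactly one of these building blocks: in a first hit transition realizing an edge $Y_\alpha\to Y_\beta$ of $F$ (with $\alpha\neq\beta$), or in a self returning excursion at a vertex $Y_\gamma$. Summing over \emph{all} closed walks -- which is what Lemma~\ref{le_close} does, since $(w,y)\mapsto v:=w+y$ identifies $W_{2N}(Y,k_1,\ldots,k_r)$ with the set of all closed walks $v$ of length $\le 2N$ with $mu(Y_i,v)=2k_i$ -- amounts, inside the building block carrying the base point, to marking where along it the base point sits; since that building block is a walk $Y_\alpha\to Y_\beta$ (resp.\ a closed excursion at $Y_\gamma$), translation invariance turns this marking into the factor $z\frac{\partial}{\partial z}h(Y_\alpha-Y_\beta,d,z)$ (resp.\ $z\frac{\partial}{\partial z}h(0,d,z)$) above. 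To pin the base point to $Y_{r+1}$ instead, the only change is that this one building block is now required to pass through $Y_{r+1}$ and is cut there, into a walk $Y_\alpha\to Y_{r+1}$ followed by a walk $Y_{r+1}\to Y_\beta$ in the edge case and into a walk $Y_\gamma\to Y_{r+1}$ followed by a walk $Y_{r+1}\to Y_\gamma$ in the vertex case; by the Markov property (as used in Lemmas~\ref{le_close} and~\ref{le_fGr2}) and translation invariance these cut building blocks contribute $h(Y_{r+1}-Y_\alpha,d,z)\,h(Y_\beta-Y_{r+1},d,z)$ and $h(Y_{r+1}-Y_\gamma,d,z)\,h(Y_\gamma-Y_{r+1},d,z)$ respectively, which are precisely the coefficients in \eqref{eq_mdet2}; the operation $[\,\cdot\,]_{z\diamond 2N}$ then restricts to length $\le 2N$ and yields the asserted identity.

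The hard part will be to make ``the building block carrying the base point'' rigorous inside the \citep{dhoef1} formalism and to check that cutting it at $Y_{r+1}$ leaves the inclusion--exclusion already present intact. Two things in particular need verification: that every closed walk based at $Y_{r+1}$ determines, uniquely, the Lemma~\ref{le_close} data for the configuration $Y_1,\ldots,Y_r$ together with the location (an edge of $F$, or a vertex) of its passage through $Y_{r+1}$ and the two legs through $Y_{r+1}$ -- a cycle-lemma type bookkeeping following \citep[proof of Theorem~2.3]{dhoef1}; and that splitting a first hit transition $Y_\alpha\to Y_\beta$ at an intermediate visit to $Y_{r+1}$ is compatible with the corrections carried by $cof(\hat A-\hat F)$ and $1/\det(W^{-1}+X)$, which only require the building blocks to avoid the \emph{other} configuration points -- this holds because $Y_{r+1}\neq Y_i$ for all $i\le r$, so those constraints are untouched and the transition generating function factorizes through $Y_{r+1}$ as a product of two $h$'s. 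The symmetrized denominator $U_{i,j}=H_{i,j}/\sqrt{(1+\omega_i)(1+\omega_j)}$ is introduced precisely so that the wiggle factors $(1+\omega)$ stay attached to the (unmodified) endpoint vertices of a transition rather than to the transition edge being cut, which is what makes the replacement $h(Y_\alpha-Y_\beta,d,z)\mapsto h(Y_{r+1}-Y_\alpha,d,z)\,h(Y_\beta-Y_{r+1},d,z)$, and the analogous one at a vertex via $\partial/\partial\omega_\gamma$, legitimate; carrying this bookkeeping through the proof of \citep[Theorems~2.1--2.3]{dhoef1} with $Y_{r+1}$ as a passive vertex is the one genuinely laborious step.
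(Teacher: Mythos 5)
Your proposal is correct and matches the paper's approach: the paper likewise proves the lemma by invoking Lemma~\ref{le_fGr2} (the archetype generating function for closed walks based at $Y_{r+1}$, which encodes exactly your ``cut one building block at $Y_{r+1}$'' step), rewriting its edge-sum as the operator $O^{(u)}_{ins}$ acting on $\prod_i\omega_i^{l_i}\prod_{a\neq b}H_{a,b}^{F_{a,b}}$, and then appealing to the fact that the left side of \eqref{eq_mdet2} is a $z$-independent finite linear combination of such graph contributions. Your preliminary chain-rule observation relating the $z\partial_z$ in Lemma~\ref{le_close} to the operator coefficients is a clean way to motivate why only the scalar coefficients change, but the substantive content (Markov cutting, $(1+\omega)$ bookkeeping through the symmetrized $U_{i,j}$, linearity over graph contributions) is the same as the paper's.
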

\begin{proof} For a Graph $G = (V,E,J)$ with $l_i$ loops in vertex $v_i$ and $F_{i,j}$ edges $e$ with $J(e)_1 = v_i$ and $J(e)_2 = v_j$ the product 
\begin{multline}
\prod_{e \in E } h(\pi(J(e)_1) - \pi(J(e)_2),d,z)  = \\ 
\left(\prod_{1 \leq i \leq r} \omega_i^{l_i} \right) \cdot \left( \prod_{\substack{1\leq a \leq r \\ 1 \leq b \leq r \\ a \neq b}} H_{a,b}^{F_{a,b}} \right)  \Bigr \rvert_{\substack{\forall \gamma: \omega_{\gamma} = h(0,d,z) \\ \forall \alpha,\beta: H_{\alpha,\beta}=\mathcal{H}_{\alpha,\beta}(Y,z)}}
\end{multline}
Therefore the product in equation \eqref{eq_fGr2}
\begin{multline}\label{eq_opr}
\sum_{\tilde{e} \in E} h(\pi(J(\tilde{e})_1)-Y_{r+1},d,z) \cdot 
h(Y_{r+1}-\pi(J(\tilde{e})_2),d,z)
\\
\left( \prod_{e \in E \setminus\{\tilde{e}\}} h(\pi(J(e)_1) - \pi(J(e)_2),d,z) \right)
\end{multline} can be rewritten in the form
\begin{multline}
\Biggl(\sum_{\substack{1 \leq \alpha \leq r \\ 1 \leq \beta \leq r \\ \alpha \neq \beta}} 
h(Y_{r+1} - Y_{\alpha},d,z)\cdot h(Y_{\beta} - Y_{r+1},d,z) \cdot 
\frac{\partial}{\partial H_{\alpha,\beta}}  + \\
\sum_{1 \leq \gamma \leq r} h(Y_{r+1} - Y_{\gamma},d,z)\cdot h(Y_{\gamma} - Y_{r+1},d,z) \cdot \frac{\partial}{\partial \omega_{\gamma}} \Biggr) \\
\Biggl[\prod_{1 \leq i \leq r} \omega_{i}^{l_i} \prod_{\substack{1 \leq a \leq r \\ 1 \leq b \leq r}}
H_{a,b}^{F_{a,b}} \Biggr] \Bigr \rvert_{\substack{\forall \gamma: \omega_{\gamma} = h(0,d,z) \\ \forall \alpha,\beta: H_{\alpha,\beta}= \mathcal{H}_{\alpha,\beta}(Y,z)}}
\end{multline}
This is true for any graph $G$.  The left hand side of \eqref{eq_mdet2} according to the proof of \citep[Theorem 2.3, Lemma 2.1.]{dhoef1} is just a finite (because of $z \diamond 2N$) linear combination of graph contributions of the above form. Their  weigths are independant of $z$. Therefore the lemma is true.
\end{proof}
\begin{remark} In the case of $r = 1$ the Lemma is true, in this case one has to define $cof(\hat A - \hat F) := 1$.
\end{remark}
We now come to non closed random walks.
\begin{definition}
Let $Y \in J_{r}$  and $k$ as before and $Y_{r+1} \neq Y_{r+2} \in \Z^d$ with $Y_j \neq Y_i \quad \forall i=1,\ldots,r; \; j=r+1,r+2 $  denote two other points. Then we define
\begin{multline}
W_{N,Y_{r+1},Y_{r+1}}(Y,k_1,\ldots,k_r) := 
\{ 
w: \textrm{length(w)} \leq N  \wedge \\
\textrm{w starts in $Y_{r+1}$ and ends in $Y_{r+2}$} \wedge mu(Y_i,w) = 2\cdot k_i \; \forall i= 1,\ldots,r
\}
\end{multline}
\end{definition}
As we are not interested in this paper in the distribution of $N_{2k+1}(w)$ we also define
\begin{definition}
A semi Eulerian graph $Ds=(V,E,J)$ is called double simple if it has exactly two vertices of degree 1.  
\end{definition}
\begin{remark}
Taking the two vertices of degree 1 of a double simple graph $Ds$ and replacing them by an edge between the outgoing vertex of the one pendant edge and the incoming vertex (which can be the same) of the other one uniquely defines a Eulerian graph $C(Ds)$ which has the same number of Euler trails as $Ds$. Replacing any edge $\tilde{e}$ of a Eulerian graph $G$ by one outgoing  pendant edge and one incoming pendant edge compatible with the direction of $\tilde{e}$  one gets a uniquely defined double simple graph. We denote the set of these double simple graphs by $\partial G$ 
\end{remark}
\begin{theorem}\label{t_odet}
\begin {multline} \label{eq_mdet3}
\sum_{w \in W_{N,Y_{r+1},Y_{r+2}}(Y,k_1,\hdots,k_r)} z^{length(w)} = \\
\Biggl(\sum_{\substack{1 \leq \alpha \leq r \\ 1 \leq \beta \leq r \\ \alpha \neq \beta}} 
h(Y_{r+1} - Y_{\alpha},d,z)\cdot h(Y_{\beta} - Y_{r+2},d,z) \cdot 
\frac{\partial}{\partial H_{\alpha,\beta}}  + \\
\sum_{1 \leq \gamma \leq r} h(Y_{r+1} - Y_{\gamma},d,z)\cdot h(Y_{\gamma} - Y_{r+2},d,z) \cdot \frac{\partial}{\partial \omega_{\gamma}} \Biggr)
 \\ 
\Biggl[\Biggl[ 
\left( \prod_{i=1}^r \hat K(x_i,k_{i},\omega_i) \right)
cof(\hat A - \hat F) \\
\left( \prod_{j=1}^r \frac {1} {(1-x_j)^2} \right)
\frac {1} {det(W^{-1} + X)} 
\Bigr\rvert_{\substack{\forall a: x_a = 0 \\ \forall b,c: X_{b,c}= U_{b,c}}}
\Biggr]\Bigr \rvert_{\substack{\forall \gamma: \omega_{\gamma} = h(0,d,z) \\ \forall \alpha,\beta: H_{\alpha,\beta}= \mathcal{H}_{\alpha,\beta}(Y,z)}}
\Biggr]_{z \diamond N}
\end {multline}
\end{theorem}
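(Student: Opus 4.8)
The statement is the non-closed analogue of Lemma~\ref{le_diff}, and the plan is to derive it by the same archetype bookkeeping, the one genuinely new ingredient being the double simple graphs introduced just above. The first point to isolate is that the weight bracket in \eqref{eq_mdet3} — the product of the $\hat K(x_i,k_{i},\omega_i)$, the cofactor $cof(\hat A-\hat F)$, the factors $(1-x_j)^{-2}$ and $\det(W^{-1}+X)^{-1}$, evaluated at $x=0$, $X=U$ — is a purely \emph{local} object attached to the $r$ marked vertices $v_1,\dots,v_r$: it only records multiplicities and loops there and is blind to whether the walk closes up or runs between two auxiliary points. Hence it is the very same bracket as in Lemma~\ref{le_diff}, carried by the graph contributions with the same $z$-independent coefficients; what changes when one passes to walks from $Y_{r+1}$ to $Y_{r+2}$ is only the operator that describes how the walk enters and leaves the graph.

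To produce that operator, fix a Eulerian graph $G=(V,E,J)$ on $v_1,\dots,v_r$ together with the projection $\pi$ of \eqref{eq_fGr2}. Recall from the Remark preceding the theorem that cutting an edge (or loop) $\tilde e$ of $G$ yields a double simple graph $Ds\in\partial G$ with $N_E(Ds)=N_E(C(Ds))=N_E(G)$, and that $Ds\mapsto(G,\tilde e)=(C(Ds),\tilde e)$ is a bijection between the double simple graphs with interior vertices $v_1,\dots,v_r$ and such pairs. Running the Markov-property / subwalk-concatenation argument of Lemma~\ref{le_fGr2} for $Ds$ — but realizing the outgoing pendant edge by a subwalk issuing from $Y_{r+1}$ and the incoming pendant edge by a subwalk arriving at $Y_{r+2}$, instead of both being attached to the single point $Y_{r+1}$ — shows that the generating function of archetypes of $Ds$ into walks from $Y_{r+1}$ to $Y_{r+2}$ with $f_V=\pi$ on $v_1,\dots,v_r$ equals $N_E(G)$ times $\prod_{e\in E\setminus\{\tilde e\}}h(\pi(J(e)_1)-\pi(J(e)_2),d,z)$, times the two pendant factors $h(Y_{r+1}-\cdot,d,z)$ and $h(\cdot-Y_{r+2},d,z)$ attached to the two ends of the cut edge $\tilde e$ according to its orientation (a single vertex $v_\gamma$ intervening when $\tilde e$ is a loop). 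Summing over $\tilde e\in E$ and rewriting $\prod_{e\in E}h(\pi(J(e)_1)-\pi(J(e)_2),d,z)=\bigl(\prod_i\omega_i^{l_i}\bigr)\bigl(\prod_{a\neq b}H_{a,b}^{F_{a,b}}\bigr)$ evaluated at $\omega_\gamma=h(0,d,z)$, $H_{\alpha,\beta}=\mathcal{H}_{\alpha,\beta}(Y,z)$, exactly as in the proof of Lemma~\ref{le_diff}, turns the sum over $\tilde e$ into the first-order differential operator appearing at the front of the right-hand side of \eqref{eq_mdet3} — the $\partial/\partial H_{\alpha,\beta}$ terms coming from ordinary edges and the $\partial/\partial\omega_\gamma$ terms from loops. (The transpose symmetry $cof(A-F)=cof(A-F^{\mathsf T})$ of the graph weights lets one exchange the two ends of $\tilde e$ at will, which reconciles whichever orientation convention one adopts in the cut with the precise placement of $Y_{r+1},Y_{r+2}$ in \eqref{eq_mdet3}.)

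It then remains to assemble the pieces. By the generalizations proved above of \citep[Theorem~2.1]{dhoef1} — in particular the identity \eqref{eq_en1}, which absorbs the binomial sums over $m_i\le k_i$ into the $\hat K$ — together with \citep[Theorems~2.2 and 2.3 and Lemma~2.1]{dhoef1} as carried over in Lemmata~\ref{le_fGr2} and \ref{le_diff}, the left-hand side of \eqref{eq_mdet3} is, after the truncation $z\diamond N$, a \emph{finite} linear combination of the graph contributions just computed, with the same $z$-independent weights as in the closed case; finiteness means no question of convergence arises. Collecting these contributions over all double simple graphs with interior vertices mapped by $\pi$ onto $Y_1,\dots,Y_r$ is precisely the operator of the previous paragraph applied to the weight bracket and then truncated at $z\diamond N$, which is \eqref{eq_mdet3}; the degenerate case $r=1$ is checked by hand using the convention $cof(\hat A-\hat F):=1$, as in the Remark after Lemma~\ref{le_diff}. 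The step I expect to be the main obstacle is the locality claim of the first paragraph: one has to return to the definitions of refinement and archetype and verify that the combinatorics at each interior vertex $v_i$ — how its $2k_i$ visits split between subwalk endpoints and pass-throughs, and how loops at $v_i$ are counted, all encoded in $\hat K(x_i,k_i,\omega_i)$ and the cofactor — never refers to the global endpoints of the walk, which is exactly where the hypothesis $Y_{r+1}\neq Y_{r+2}$ and $Y_{r+1},Y_{r+2}\notin\{Y_1,\dots,Y_r\}$ is used.
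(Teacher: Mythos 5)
Your proposal is correct and follows essentially the same route as the paper: archetypes of double-simple graphs $Ds\in\partial G$, the generating function \eqref{eq_fGr3} obtained from cutting an edge $\tilde e$ and attaching the two pendant subwalks from $Y_{r+1}$ and to $Y_{r+2}$, and the same finite-linear-combination reasoning already used in Lemma~\ref{le_diff}. You merely make explicit two points the paper leaves implicit — the locality of the weight bracket at the $r$ interior vertices, and the bijection between $\partial G$ and pairs $(G,\tilde e)$ — which is a useful expansion but not a different argument.
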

\begin{proof} For a walk $w \in W_{N,Y_{r+1},Y_{r+2}}(Y,k_1,\hdots,k_r) $  we define the number of Archetypes between a double simple Graph $Ds$ which maps the respective vertices of degree 1 onto the starting and ending point with
\begin{equation} 
Arch(Ds,w,Y,Y_{r+1},Y_{r+2})
\end{equation} Using the same argument as in the proof of \citep[Theorem 2.1.]{dhoef1} one gets 
\begin{equation}
\sum_{Ds \in \partial{G}: [G] \in \Theta (m_1,\ldots,m_r)} \frac {\sharp Arch(Ds,w,Y,Y_{r+1},Y_{r+2})} {\sharp Aut(G)} =
\sum_{k_i \geq m_i}
\prod_{j=1}^{l} \binom {k_j} {m_j}
\end{equation} 
As the generating function of the  number of Archetypes between double simple graphs and walks from $Y_{r+1}$ to $Y_{r+2}$ with the right behaviour in the points $Y$,  $ N(z,Ds,\pi,Y,Y_{r+1},Y_{r+2})$  can on the other hand be expressed in the form 
\begin{multline} \label{eq_fGr3}
\sum_{Ds \in \partial G} N(z,Ds,\pi,Y,Y_{r+1},Y_{r+2}) = N_E(G)\cdot \\ \Biggl(\sum_{\tilde{e} \in E} h(\pi(J(\tilde{e})_1)-Y_{r+2},d,z) \cdot 
h(Y_{r+1}-\pi(J(\tilde{e})_2),d,z) \\
\left( \prod_{e \in E \setminus\{\tilde{e}\}} h(\pi(J(e)_1) - \pi(J(e)_2),d,z) \right)\Biggr)
\end{multline}
one gets to the equation \eqref{eq_mdet3} with the same reasoning as in the proof of Lemma \ref{le_diff}.
\end{proof}
\begin{remark} In the case of $r = 1$ the Theorem is true, in this case one has to define $cof(\hat A - \hat F) := 1$.
\end{remark}
\subsection{Existences of walks which hit once}
For later use we also show the existence of a walk which hits every point of a given set exactly once.
\begin{lemma}\label{le_hyper1}
For $d \geq 1$ and any $L \in \N\setminus\{0\}$  we define the hypercube $H_L := \{x \in \Z^d: \left| x_i \right| \leq L \}$. Then there is a walk from $x_s := (-L,\ldots,-L)$ to $x_e = (L,\ldots,L)$ for which any  point of $H_L \setminus \{x_s,x_e\}$ has multiplicity 2 and any point of $\Z^d \setminus H_L$ has multiplicity $0$.  
\end{lemma}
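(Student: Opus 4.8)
The plan is to observe that the statement is purely combinatorial and follows from the cleaner fact that the grid graph $H_L \subset \Z^d$ admits a Hamiltonian path joining the two opposite corners $x_s = (-L,\dots,-L)$ and $x_e = (L,\dots,L)$. Read as a walk from $x_s$ to $x_e$, such a path meets $x_s$ and $x_e$ exactly once and every other point of $H_L$ exactly once, so every point of $H_L \setminus \{x_s,x_e\}$ has multiplicity $2$ (it is entered once and left once), $x_s$ and $x_e$ have multiplicity $1$, and no point of $\Z^d \setminus H_L$ is ever visited; this is precisely the assertion. So it suffices to exhibit such a Hamiltonian path, and I would do this by induction on $d$.

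For $d = 1$ we have $H_L = \{-L,\dots,L\}$, and the monotone walk $-L, -L+1, \dots, L$ is the required path.

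For the inductive step, write points of $\Z^d$ as $(x',x_d)$ with $x' \in \Z^{d-1}$ and slice $H_L$ into the $2L+1$ layers $\Lambda_j := H_L^{(d-1)} \times \{-L+j\}$ for $j = 0,\dots,2L$, where $H_L^{(d-1)} \subset \Z^{d-1}$ denotes the $(d-1)$-dimensional hypercube. By the induction hypothesis there is a Hamiltonian path $\gamma$ in $H_L^{(d-1)}$ from $(-L,\dots,-L)$ to $(L,\dots,L)$; let $\overline{\gamma}$ be the reverse traversal. The walk I would build runs through the layers in the order $\Lambda_0,\Lambda_1,\dots,\Lambda_{2L}$, traversing the layer-$j$ copy of $\gamma$ inside $\Lambda_j$ when $j$ is even and the copy of $\overline{\gamma}$ when $j$ is odd, and passing from $\Lambda_j$ to $\Lambda_{j+1}$ by a single step in the $x_d$-direction. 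This gluing is legitimate because a forward copy of $\gamma$ in $\Lambda_j$ ends at $((L,\dots,L),-L+j)$, which sits directly below $((L,\dots,L),-L+j+1)$, the start of $\overline{\gamma}$ in $\Lambda_{j+1}$; symmetrically a reverse copy ends at the $(-L,\dots,-L)$-corner of its layer, directly below the start of $\gamma$ in the next layer. Hence every connecting step is a nearest-neighbour edge of $\Z^d$ that joins consecutive sub-paths at their endpoints, and all edges used lie inside $H_L$.

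Finally one checks the boundary values and multiplicities. The walk starts at $((-L,\dots,-L),-L) = x_s$, and since the number of layers $2L+1$ is odd, the last layer $\Lambda_{2L}$ is traversed by a forward copy of $\gamma$, so the walk ends at $((L,\dots,L),L) = x_e$. Each layer is covered by a Hamiltonian path of that layer and the layers partition $H_L$, so the walk meets every point of $H_L$ exactly once and no point outside $H_L$ at all; the required multiplicities follow at once, completing the induction. The only delicate point in the whole argument is this last bookkeeping in the inductive step — keeping track of which corner of each layer a sub-path begins and ends at, and noticing that the oddness of $2L+1$ is what forces the final corner to be $x_e$ rather than $x_s$; everything else is routine.
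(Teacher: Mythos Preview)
Your proof is correct and follows essentially the same approach as the paper: induction on $d$, slicing $H_L$ into $2L+1$ layers along the last coordinate, and snaking through them by alternating the $(d-1)$-dimensional Hamiltonian path with its reversal, joined by single steps in the $x_d$-direction. Your write-up is in fact more explicit than the paper's, which simply says ``we so continue layer by layer in our meandering movements till we reach $x_e$''; in particular, you make precise the parity point that $2L+1$ being odd forces the final layer to be traversed forward and hence to terminate at $x_e$.
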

\begin{proof} By induction in $d$. For $d=1$ the walk is obvious. For $d \geq 2$ we start in
$x_s=(-L,\ldots,-L)$ and walk till $x_{(e,-L)} = (L,\ldots,L,-L)$ in the way the lemma allows us in $d-1$ dimensions, then take the step forward in the $d^{th}$ direction to 
$x_{(e,-L+1)} = (L,\ldots,L,-L+1)$ and continue our walk to $x_{(s,-L+1)} := (-L,\ldots,-L,-L+1)$ in the way the lemma allows us in $d-1$ dimensions. We so continue layer by layer in our meandering movements till we reach $x_e$. Any point of $H_L$ other than the starting and ending point has multiplicity $2$, any point outside $H_L$ has multiplicity $0$ obviously.
\end{proof}
\begin{lemma}\label{le_hito}
Let $d \geq 2$ and $Y \in J_r$. Then there is a closed walk such that each of the $Y_i$ has multiplicity $2$ with respect to this walk.
\end{lemma}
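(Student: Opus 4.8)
The plan is to deduce this from Lemma~\ref{le_hyper1}: enclose the finitely many points $Y_1,\ldots,Y_r$ inside a large hypercube, use that lemma to produce a walk across the cube that hits every interior point — in particular every $Y_i$ — exactly twice, and then close the walk up by a return path that stays \emph{outside} the cube so that the multiplicities of the $Y_i$ are left undisturbed.

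First I would choose $L \in \N$ with $L \geq 2$ large enough that every coordinate of every $Y_i$ has absolute value at most $L-1$; this is possible because $Y$ is a fixed finite tuple of lattice points. Then $Y_i \in H_{L-1}$ for all $i$, while the points $x_s = (-L,\ldots,-L)$ and $x_e = (L,\ldots,L)$ lie in $H_L \setminus H_{L-1}$; in particular no $Y_i$ equals $x_s$ or $x_e$, and $x_s \neq x_e$, and also $0 = Y_1 \in H_{L-1}$. Lemma~\ref{le_hyper1} applied with this $L$ gives a walk $w_0$ from $x_s$ to $x_e$ in which every point of $H_L \setminus \{x_s,x_e\}$, and hence every $Y_i$, has multiplicity $2$, while every point of $\Z^d \setminus H_L$ has multiplicity $0$.

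Next I would write down an explicit return walk $w_1$ from $x_e$ to $x_s$ that avoids $H_{L-1}$. Starting from $x_e$, hold the first coordinate fixed at $L$ and move the remaining $d-1 \geq 1$ coordinates down from $L$ to $-L$ one at a time, arriving at $(L,-L,\ldots,-L)$; then hold the second coordinate fixed at $-L$ and move the first coordinate down from $L$ to $-L$, arriving at $x_s$. Throughout the first stage the first coordinate equals $L$ and throughout the second the second coordinate equals $-L$, so every vertex of $w_1$ has a coordinate of absolute value $L$ and therefore lies outside $H_{L-1}$; consequently $w_1$ visits no $Y_i$. This device of freezing one coordinate while sweeping another is the only place the hypothesis $d \geq 2$ enters, and verifying that it keeps $w_1$ clear of $H_{L-1}$ is the only step that needs any care; there is otherwise no genuine obstacle.

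Finally, let $w$ be the concatenation of $w_0$ followed by $w_1$. Since $w_0$ ends where $w_1$ begins (at $x_e$) and $w_1$ ends where $w_0$ begins (at $x_s$), $w$ is a closed walk, and since each $Y_i$ is visited by $w$ only during its $w_0$ part, the multiplicity of $Y_i$ with respect to $w$ equals its multiplicity with respect to $w_0$, namely $2$. If a closed walk based at the origin is wanted, one uses that $w$ visits $0 = Y_1$ and rotates $w$ cyclically so that it begins and ends at $0$, which changes no multiplicity. The multiplicities at the junction points $x_e$ and $x_s$ play no role, as these are not among the $Y_i$.
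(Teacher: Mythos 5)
Your proof is correct and follows essentially the same strategy as the paper: apply Lemma~\ref{le_hyper1} to sweep every point of a large hypercube with multiplicity two, then close the walk by a return path that stays clear of the $Y_i$. The paper simply asserts the existence of a return path avoiding $H_L$ entirely, while you construct one explicitly along the boundary of $H_L$ (keeping one coordinate frozen at $\pm L$) after first padding $L$ so that all $Y_i$ lie strictly inside in $H_{L-1}$; that is a harmless implementation detail, not a different method.
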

\begin{proof} For any $Y \in J_r$ all points $Y_i$ are inside a hypercube $H_L$ of sufficiently big $L$. For $d \geq 2$ there is obviously a walk from $x_e = (L,\ldots,L)$ to $x_s = (-L,\ldots,-L)$  where each point in $H_L \setminus\{x_s,x_e\}$ has multiplicity $0$,  i.e. a walk avoiding $H_L$. Concatenating this walk with the one constructed in Lemma   
\ref{le_hyper1} we get a closed walk for which any point of the hypercube and therefore also all points $Y_i$ have multiplicity $2$. 
\end{proof}
\section{Useful properties of the functions $h(x,d,z)$}
The functions $h(x,d,z)$ are the major building blocks of our formulas. We give a general overview over their properties and especially deal with the zero points of $1 + h(0,d,z)$ as it comes up in the denominator of formulas. 
\subsection{Analytic properties}
\begin{lemma}\label{A2}
Let $D = \{z \in \C; \lvert \Re(z) \rvert < \frac{1}{2d} \}$. Then for $z \in D$ and $x:=(x_1,...,x_d) \in \Z^d$ the integral
\begin{equation}\label{A2e1}
h(x,d,z) := - \delta_{x,0} + \int_0^{\infty} dy\cdot e^{-y} \prod_{i=1}^d I_{x_i}(2\cdot y \cdot z)
\end{equation}
a) is holomorphic \\
b) has a continuous continuation onto $\overline{D}$ for $d \geq 3$ and $h(0,2,z)$ has a continuous continuation onto $\overline{D}\setminus\{-1/4,+1/4\}$ and \\
c) there are continuous functions $\lambda,C: D \longmapsto \R_+^*$ in $z$ such that 
\begin{equation}\label{A2e2}
\lvert h(x,d,z) \rvert \leq C(z)\cdot exp\biggl(-\lambda(z)\cdot \sum_{i=1}^d \lvert x_i \rvert \biggr)
\end{equation}
\end{lemma}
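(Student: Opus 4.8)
The argument hinges on two alternative representations of $h(x,d,z)$. Expanding the Bessel functions in \eqref{A2e1}, integrating $\int_0^\infty e^{-y}y^k\,dy=k!$ term by term and recognizing a multinomial count shows that, for $|z|<1/(2d)$, $h(x,d,z)=\sum_{n\ge1}N_n(x)z^n$ with $N_n(x)$ the number of length-$n$ nearest-neighbour walks from $0$ to $x$; in particular $|N_n(x)|\le(2d)^n$. More useful on the strip is the Fourier representation
\[
h(x,d,z)=-\delta_{x,0}+\frac{1}{(2\pi)^d}\int_{[-\pi,\pi]^d}\frac{e^{-i\,x\cdot\phi}}{\,1-2z\sum_{j=1}^d\cos\phi_j\,}\;d\phi ,
\]
valid for all $z\in D$: insert $I_n(w)=\frac{1}{2\pi}\int_{-\pi}^{\pi}e^{w\cos\phi-i n\phi}\,d\phi$, apply Fubini (legitimate since $2|\Re z|d<1$ makes $\int_0^\infty e^{-y}\int_{[-\pi,\pi]^d}e^{2y\Re(z)\sum\cos\phi_j}\,d\phi\,dy<\infty$), and use $\int_0^\infty e^{-y(1-2z\sum\cos\phi_j)}\,dy=(1-2z\sum\cos\phi_j)^{-1}$, which holds because $\Re(1-2z\sum\cos\phi_j)\ge1-2|\Re z|d>0$. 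For \textbf{(a)}, note that for each $y\ge0$ the integrand $e^{-y}\prod_i I_{x_i}(2yz)$ of \eqref{A2e1} is entire in $z$; from $I_n(w)=\frac1\pi\int_0^\pi e^{w\cos\theta}\cos(n\theta)\,d\theta$ one has the pointwise bound $|I_n(w)|\le I_0(|\Re w|)$ for all $w\in\C,\ n\in\Z$, hence $|e^{-y}\prod_i I_{x_i}(2yz)|\le e^{-y}I_0(2y|\Re z|)^d$. On a compact $K\subset D$ with $a_K:=\sup_{K}|\Re z|<1/(2d)$ this dominating function is $O(e^{-y(1-2d a_K)}y^{-d/2})$, integrable on $[0,\infty)$; the same estimate (with an extra $2dy$, using $|I_n'|\le I_0(|\Re\cdot|)$) bounds $\partial_z$ of the integrand, so differentiation under the integral sign is legitimate and $h(x,d,\cdot)$ is holomorphic on $D$.

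For \textbf{(b)}: if $d\ge3$ then on $\overline{D}$ we have $|\Re z|\le1/(2d)$, so $|e^{-y}\prod_i I_{x_i}(2yz)|\le e^{-y}I_0(y/d)^d=O(y^{-d/2})$ as $y\to\infty$, which is integrable on $[0,\infty)$ uniformly in $z\in\overline{D}$ and in $x$; dominated convergence then shows the integral \eqref{A2e1} is continuous on $\overline{D}$ and extends $h$. For $d=2$ and $x=0$ the dominating function is only $O(1/y)$ and is non-integrable exactly at $z=\pm1/4$. Here one uses the classical identity $1+h(0,2,z)=\sum_{m\ge0}\binom{2m}{m}^2 z^{2m}={}_2F_1(\tfrac12,\tfrac12;1;16z^2)$, valid on $|z|<1/4$ and hence on all of $D$ by analytic continuation (both sides are holomorphic on $D$ by part (a) and agree on an open subset); since ${}_2F_1(\tfrac12,\tfrac12;1;\cdot)$ is holomorphic on $\C\setminus[1,\infty)$ and continuous up to $(-\infty,1)$, and since $16z^2\in[1,\infty)$ forces $z\in\R$ with $|16z^2|<1$ whenever $z\in\overline{D}\setminus\{\pm1/4\}$, the continuous extension of $h(0,2,\cdot)$ onto $\overline{D}\setminus\{\pm1/4\}$ follows. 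A self-contained alternative is to feed the two-term Hankel expansion of $I_0$ into $e^{-y}I_0(2yz)^2$: for $z$ near $\pm1/4$ with $\Im z\ne0$ (using the evenness $h(0,2,-z)=h(0,2,z)$) all resulting terms are absolutely integrable except one of shape $\frac{e^{4i\Im(z)y}}{4\pi y z}(1+O(y^{-1}))$, whose integral converges and is continuous in $z$ so long as $\Im z$ stays bounded away from $0$, by integration by parts / Dirichlet's test.

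For \textbf{(c)}: fix $z\in D$. Since $|\Re z|<1/(2d)$, the denominator $1-2z\sum_j\cos\phi_j$ vanishes for no real $\phi$ — if $z\in\R$ it lies in $[1-2|z|d,\,1+2|z|d]\subset(0,2)$, and if $z\notin\R$ then $2z\sum_j\cos\phi_j$ is real only when $\sum_j\cos\phi_j=0$, in which case the denominator equals $1$. By compactness of the torus it is bounded away from $0$ on a polystrip $\{\phi\in\C^d:|\Im\phi_j|\le\kappa(z)\}$ for some $\kappa(z)>0$. Shifting each contour $\phi_j\mapsto\phi_j-i\operatorname{sgn}(x_j)\kappa(z)$ in the Fourier representation above — legitimate by $2\pi$-periodicity and Cauchy's theorem, the pieces at $\Re\phi_j=\pm\pi$ cancelling — multiplies $e^{-i x\cdot\phi}$ by $e^{-\kappa(z)\sum_j|x_j|}$, so that $|h(x,d,z)|\le C(z)\,e^{-\kappa(z)\sum_j|x_j|}$ with $C(z):=\sup_{|\Im\phi_j|\le\kappa(z)}|1-2z\sum_j\cos\phi_j|^{-1}<\infty$ (the bound being trivial for $x=0$). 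Taking $\lambda(z):=\kappa(z)$ and $C(z)$ as above, both of which can be chosen continuous in $z$ — for instance $\lambda$ as any continuous positive minorant of the supremal admissible shift, with $C$ adjusted accordingly — yields \eqref{A2e2}.

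The routine parts are (a), (c) and the $d\ge3$ case of (b): dominated convergence plus one contour shift. The crux is the $d=2$, $z=0$ part of (b): the uniform absolute bound that settles every other case degenerates precisely at the band-edge points $z=\pm1/4$, so one must either import the explicit hypergeometric/elliptic form of the planar lattice Green's function (whose analytic continuation across $\overline{D}\setminus\{\pm1/4\}$ is classical) or carry out the delicate oscillatory-integral analysis sketched above, exploiting $\Im z\ne0$ to recover convergence.
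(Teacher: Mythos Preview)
Your argument is correct, and for parts (a) and (b) it is close in spirit to the paper's: both use the integral-representation bound on $I_n$ for holomorphy, and for $d=2$ the paper invokes the complete elliptic integral $K$ (via \cite{gasull}) where you use ${}_2F_1(\tfrac12,\tfrac12;1;16z^2)$ --- the same object. Your sharper bound $|I_n(w)|\le I_0(|\Re w|)$ (instead of the paper's $|I_n(w)|\le e^{|\Re w|}$) lets you extract the $y^{-d/2}$ decay directly for the $d\ge 3$ continuation, whereas the paper appeals to the asymptotic expansion of $I_x$.

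Part (c) is where you diverge substantively. The paper works entirely with the Laplace integral \eqref{A2e1}: it splits $\int_0^\infty$ at $u\bar x$, bounds the tail by $e^{-(1-2d|\Re z|)u\bar x}$, bounds the head using $|I_m(w)|\le |w|^{|m|}e^{|\Re w|}/(2^{|m|}|m|!)$ together with log-convexity of $\Gamma$ and Stirling, and then solves for the optimal crossover $u^*(z)$ via a transcendental equation. This yields an \emph{explicit} continuous $\lambda(z)=\frac{1-2d|\Re z|}{e\max(1,d|z|)}\exp(-\frac{1-2d|\Re z|}{e\max(1,d|z|)})$. Your route is the Fourier/resolvent representation plus a Paley--Wiener contour shift: the denominator $1-2z\sum\cos\phi_j$ is nonvanishing on the real torus, hence on a polystrip, and shifting each $\phi_j$ by $-i\operatorname{sgn}(x_j)\kappa(z)$ produces the factor $e^{-\kappa(z)\sum|x_j|}$. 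This is cleaner and ties $h$ directly to the lattice Green's function, but it leaves $\lambda(z)$ implicit; to fulfil the continuity claim you should give a concrete continuous choice (e.g.\ set $m(z):=\min_{\phi\in[-\pi,\pi]^d}|1-2z\sum\cos\phi_j|$, which is positive and continuous on $D$, and take $\kappa(z)$ to be any fixed continuous positive minorant of the maximal admissible shift, say via a Lipschitz estimate of the denominator in $\Im\phi$). Since the paper only uses $\lambda,C$ qualitatively downstream (positivity and continuity on compacts), either approach suffices.
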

\begin{proof} a)
From \citep[eq. 9.1.19]{danraf} we know that for $n \in \N$ and $z \in \C$
\begin{equation}\label{C1}
\lvert I_n(z) \rvert =
 \left| \frac{1}{\pi} \int_0^{\pi} e^{z\cdot cos(\theta)} cos(n\cdot \theta) d\theta \right| \leq
\frac{1}{\pi} \int_0^{\pi} e^{\lvert \Re(z) \rvert} d\theta  \leq e^{\lvert \Re(z) \rvert}
\end{equation}
 Therefore we immediately know that
\begin{equation} \label{A2e6}
 \Biggl\lvert  e^{-y}  \prod_{i=1}^d I_{x_i}(2\cdot y \cdot z) \Biggr\rvert \leq exp(-(1-2\cdot d \cdot \lvert \Re(z) \rvert ) \cdot y)
\end{equation}
for $z \in D$ and $y \in \R^*_+$. Therefore the integral in equation \eqref{A2e1} converges absolutely and is holomorphic for $z \in D$ and 
\begin{equation}\label{A2e7}
\lvert h(x,z,d) \rvert \leq \delta_{x,0} + \frac{1}{1-2\cdot d \cdot \lvert \Re(z) \rvert}
\end{equation}
b) Using the asymptotic expansion \citep[eq. 9.7.1]{danraf} for $I_x$ we see that b) is true for $d \geq 3$. As $h(0,2,z) = 2/\pi \cdot K(16\cdot z^2) -1 $ with the complete Eliptic Integral of the first kind $K$ from \citep[Lemma 3]{gasull} we  see that $h(0,2,z)$ can be extended holomorphically to the superset $\C \setminus\{z \in \R, \left|z\right| \geq 1/4\}$ of   $\overline{D}\setminus\{-1/4,+1/4\}$. \\
For c) let us define $ \bar x := \sum_{i=1}^d \lvert x_i \rvert $ and assume $\bar x \geq 3d$ in the sequel. 
For the real variable $u \in \R^*_+$ we define
\begin{equation}\label{A2e8}
 In_1(u) := \int_{u\cdot \bar x}^{\infty}dy \cdot  e^{-y} \prod_{i=1}^d I_{x_i}(2\cdot y \cdot z) 
\end{equation}
\begin{equation}\label{A2e82}
In_2(u) := \int_{0}^{u\cdot \bar x}dy \cdot  e^{-y} \prod_{i=1}^d I_{x_i}(2\cdot y \cdot z) 
\end{equation}
and therefore $h(x,z,d) = In_1(u) + In_2(u)$. According to equation \ref{A2e6} 
\begin{equation} \label{A2e10}
\lvert In_1(u) \rvert \leq \frac{exp(-(1-2\cdot d\cdot \lvert \Re(z) \rvert)u\cdot \bar x)}{1-2\cdot d \cdot \lvert \Re(z) \rvert}
\end{equation}
For $In_2(u)$ we use the relation
\begin{equation}
\begin{array}{lcc}
\lvert I_m(w) \rvert \leq \frac{\lvert w \rvert^{\lvert m \rvert}\cdot e^{\lvert \Re(w) \rvert}}{2^{\lvert m \rvert|}\Gamma(\lvert m \rvert +1)} & \forall_{w \in \C} &\forall_{m \in \Z} \\ 
\end{array}
\end{equation}
from \citep[eq. 9.1.62]{danraf} and \citep[eq. 9.6.3., eq.9.6.6.]{danraf}. We note 
\begin{equation}
\lvert In_2(u) \rvert \leq \frac{ \lvert z \rvert^{\bar x} \cdot \bar x^{\bar x + 1}}{\prod_{i=1}^d \Gamma(\lvert x_i \rvert + 1)} \cdot \int_{0}^u d\bar y \cdot \bar y^{\bar x} \cdot exp(-(1-2\cdot d \cdot \lvert \Re(z) \rvert)\cdot \bar y \cdot \bar x)
\end{equation}
The exponential in the integral has an absolute value smaller than 1 and so we are left with 
\begin{equation}
\lvert In_2(u) \rvert \leq \frac{ \lvert z \rvert^{\bar x} \cdot (u\cdot \bar x)^{\bar x + 1}}{(\bar x + 1)\cdot \prod_{i=1}^d \Gamma(\lvert x_i \rvert + 1)}
\end{equation}
For a given $\bar x \geq 3\cdot d$ we use that $\Gamma$ is logarithmically convex together with Jensen's inequality to get
\begin{equation}
\prod_{i=0}^d \Gamma(\lvert x_i \rvert + 1) \geq \Gamma\left(\frac{\bar x}{d}+1 \right)^d
\end{equation}
With the Stirling formula \citep[eq. 6.1.38]{danraf} we find:
\begin{equation}\label{A2e12}
\lvert In_2(u) \rvert \leq (m(z)\cdot e \cdot u)^{\bar x} \cdot \Biggl(\frac{d}{2\pi \bar x} \Biggr)^{\frac{d}{2}} \cdot \frac{u\cdot \bar x}{\bar x + 1}
\end{equation}
with $m(z) := max(1, d \cdot \lvert z \rvert)$.
We realize that the equation 
\begin{equation}\label{A2e13}
u = \frac{1}{m(z)\cdot e}\cdot exp(-(1-2\cdot d \cdot \lvert \Re(z) \rvert)\cdot u)
\end{equation}
has a unique solution for any $z \in D$ as the left hand side of \eqref{A2e13} is a contiuous and stricly monotonous growing bijection of $R^*_+$ and the right hand side is continuous and stricly monotonously falling from $\frac{1}{m(z)\cdot e}$ to $0$. 
We denote this solution with $u^*(z)$ and then use equation \eqref{A2e8} and \eqref{A2e82} for $u=u^*(z)$ and use equations \eqref{A2e10} and \eqref{A2e12} and therefore notice:
\begin{equation}
\lvert h(x,d,z) \rvert \leq  e^{-(1-2\cdot d \cdot \lvert \Re(z) \rvert)\cdot u^*(z) \cdot \bar x} \cdot
\Biggl( \frac{1}{1-2\cdot d \cdot \lvert \Re(z) \rvert} + \frac{u^*(z)\cdot \bar x}{\bar x +1 }\cdot \Biggl( \frac{d}{2\pi\bar x}\Biggr)^{\frac{d}{2}}\Biggr)
\end{equation}
From equation \eqref{A2e13} it is also immediately clear that 
\begin{equation}
\frac{1}{m(z)\cdot e} \geq u^*(z) \geq \frac{1}{m(z)\cdot e}\cdot exp \Biggl(-\frac{1-2\cdot d \cdot \lvert \Re(z) \rvert}{m(z)\cdot e} \Biggr)
\end{equation}
Defining the function
\begin{equation}
\begin{array}{rccl}
\lambda:& D & \longmapsto & R^*_+\\
    & z & \longmapsto &  \frac{1-2\cdot d \cdot \lvert \Re(z) \rvert}{m(z)\cdot e}  \cdot exp\Biggl(-\frac{1-2\cdot d \cdot \lvert \Re(z) \rvert}{m(z)\cdot e} \Biggr)
\end{array}
\end{equation}
with equation \eqref{A2e7} we can now write (even for $\bar x \leq 3d$)
\begin{equation}
\lvert h(x,d,z) \rvert \leq \Biggl( \frac{exp(3\cdot\lambda(z)\cdot d)}{1 - 2\cdot d \cdot \lvert \Re(z) \rvert} + 1 + \Biggl(\frac{d}{2\pi}\Biggr)^{\frac{d}{2}} \cdot \frac{1}{m(z)\cdot e} \Biggr) \cdot exp(-\lambda(z)\cdot \bar x)
\end{equation}
which proves the Lemma. 
\end{proof}
As $1 + h(0,d,z)$ often appears in the denominator of functions we must understand where it becomes zero:
\begin{lemma}\label{A3}
Let $P_0 := \{ z \in D;1+ h(0,d,z) = 0\}$
then \\
a) $P_0$ is discrete in $D$ \\
b) there is an $\epsilon > 0$ such that $U_{\epsilon} (\frac{1}{2d}) \cap D \cap P_0 = \emptyset $ and $U_{\epsilon}(-\frac{1}{2d}) \cap D \cap P_0 = \emptyset$ \\
c) $P_0 \cap \bar U_{\frac{1}{2d}} (0) \cap D = \emptyset$  \\
d) $z \in \bar U_{\frac{1}{2d}}(0) \cap D \Rightarrow \lvert 1 + h(0,d,z) \rvert \geq 1/2$ 
\end{lemma}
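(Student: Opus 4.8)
The plan is to reduce all four parts to a single integral representation of $1+h(0,d,z)$ together with one elementary fact about $w\mapsto 1/w$. First I would establish, for every $z\in D$,
\[
1+h(0,d,z)=\frac{1}{\pi^{d}}\int_{[0,\pi]^{d}}\frac{d^{d}\phi}{1-2z\,\sigma(\phi)},\qquad \sigma(\phi):=\cos\phi_{1}+\cdots+\cos\phi_{d}\in[-d,d].
\]
This comes from taking $x=0$ in \eqref{A2e1}, which gives $1+h(0,d,z)=\int_{0}^{\infty}e^{-y}I_{0}(2yz)^{d}\,dy$, inserting the $n=0$ instance of the integral in \eqref{C1}, $I_{0}(w)=\tfrac1\pi\int_{0}^{\pi}e^{w\cos\phi}\,d\phi$, and interchanging the $y$--integral with the resulting $\phi$--integrals. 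The interchange is legitimate by Tonelli, because for $z\in D$ one has $\sigma(\phi)\Re(z)\le d\lvert\Re(z)\rvert<\tfrac12$ uniformly in $\phi$, so $\int_{0}^{\infty}\int_{[0,\pi]^{d}}e^{-y}\lvert e^{2yz\sigma(\phi)}\rvert\,d^{d}\phi\,dy\le\pi^{d}/(1-2d\lvert\Re z\rvert)<\infty$; and the same inequality gives $\Re(1-2z\sigma(\phi))=1-2\sigma(\phi)\Re(z)>0$ for every $\phi$, so the inner integral is $\int_{0}^{\infty}e^{-y(1-2z\sigma)}\,dy=(1-2z\sigma)^{-1}$.

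Part d) would then follow from the elementary observation that $\Re(1/w)=\Re(w)/\lvert w\rvert^{2}\ge\tfrac12$ whenever $\lvert w-1\rvert\le1$ (the inequality rearranges to $\lvert w\rvert^{2}\le 2\Re(w)$). For $z\in\bar U_{1/2d}(0)\cap D$ and $w:=1-2z\,\sigma(\phi)$ we have $\lvert w-1\rvert=\lvert 2z\sigma(\phi)\rvert\le 2\cdot\tfrac{1}{2d}\cdot d=1$, and $w\ne0$ since $w=0$ would force $\lvert z\rvert=\tfrac{1}{2d}$ and $\sigma(\phi)=\pm d$, hence $z=\pm\tfrac1{2d}\notin D$; consequently
\[
\lvert 1+h(0,d,z)\rvert\ \ge\ \Re\bigl(1+h(0,d,z)\bigr)\ =\ \frac{1}{\pi^{d}}\int_{[0,\pi]^{d}}\Re\Bigl(\frac{1}{1-2z\,\sigma(\phi)}\Bigr)d^{d}\phi\ \ge\ \frac12 .
\]
Part c) is the contrapositive: a point of $P_{0}\cap\bar U_{1/2d}(0)\cap D$ would make $1+h(0,d,z)=0$, contradicting $\lvert 1+h\rvert\ge\tfrac12$.

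For a) and b) the representation gives more than is asked: since $\Re(1-2z\sigma(\phi))>0$ for \emph{every} $z\in D$ and every $\phi$, and this real part is continuous on the compact set $[0,\pi]^{d}$ hence bounded below by a positive constant, integrating yields $\Re(1+h(0,d,z))>0$ throughout $D$, so $P_{0}=\emptyset$ and a), b) are trivial. If one prefers not to use the representation here, a) is immediate from part~a) of Lemma~\ref{A2} and the identity theorem ($1+h(0,d,\cdot)$ is holomorphic on the connected strip $D$ with $1+h(0,d,0)=1\ne0$), and b) follows from part~b) of Lemma~\ref{A2}: for $d\ge3$ the continuous extension of $1+h(0,d,\cdot)$ to $\pm\tfrac1{2d}$ equals $\int_{0}^{\infty}e^{-y}I_{0}(y/d)^{d}\,dy\ge1$, while for $d=2$ one has $1+h(0,2,z)=\tfrac2\pi K(16z^{2})$ with $\lvert K(16z^{2})\rvert\to\infty$ as $z\to\pm\tfrac14$ inside $D$; either way $1+h(0,d,\cdot)$ stays bounded away from $0$ near $\pm\tfrac1{2d}$.

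The one genuinely delicate point—and the reason I would set up the representation rather than estimate crudely—is part d). The triangle--inequality bound $\lvert 1+h(0,d,z)\rvert\ge 1-\int_{0}^{\infty}e^{-y}\bigl(I_{0}(y/d)^{d}-1\bigr)\,dy=2-G_{d}$, with $G_{d}=\int_{0}^{\infty}e^{-y}I_{0}(y/d)^{d}\,dy$ the simple--random--walk Green function at the origin, is vacuous for $d=2$ (where $G_{2}=\infty$) and actually fails for $d=3$ (where $G_{3}\approx1.52$, so $2-G_{3}\approx0.48<\tfrac12$). Applying $\Re(1/w)\ge\tfrac12$ pointwise under the $\phi$--integral, instead of through the triangle inequality, is exactly what saves the cases $d\in\{2,3\}$.
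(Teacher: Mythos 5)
Your proof is correct, and it takes a genuinely different route from the paper's. The paper handles parts~c) and~d) with a combinatorial/probabilistic argument: equation~\eqref{eq_h0p0} expresses, for each $k$, a generating function over walks that is manifestly dominated coefficientwise by $h(0,d,|z|)$; if $|h/(1+h)|$ exceeded $1$ anywhere in $\bar U_{1/2d}(0)\cap D$ the right-hand side would blow up as $k\to\infty$, contradicting that uniform bound. You instead derive the classical lattice-Green's-function representation $1+h(0,d,z)=\pi^{-d}\int_{[0,\pi]^d}(1-2z\sigma(\phi))^{-1}\,d^d\phi$ directly from the Bessel-integral definition~\eqref{A2e1} and the $n=0$ case of~\eqref{C1}, and then push the elementary fact $\Re(1/w)\ge\tfrac12$ for $|w-1|\le 1$ through the integral pointwise. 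That argument is shorter, self-contained, purely analytic, and in fact yields the strictly stronger conclusion $\Re(1+h(0,d,z))>0$ on all of $D$, i.e.\ $P_0=\emptyset$ for every $d\ge2$; the paper only establishes $P_0=\emptyset$ for $d=2$ (via the elliptic-integral identity), and for $d\ge3$ it merely shows $P_0$ avoids $\bar U_{1/2d}(0)$ and a neighbourhood of $\pm\tfrac1{2d}$ (cf.\ Definition~\ref{d_rh}, where $r_h(d)>1/2d$ is all that is recorded). Your closing remark that the naive triangle-inequality bound $|1+h|\ge 2-G_d$ fails precisely for $d\in\{2,3\}$ is accurate and worth keeping: it explains why one cannot bypass either the paper's probabilistic argument or your pointwise application of $\Re(1/w)\ge\tfrac12$ under the integral. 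The only cosmetic quibble is that the interchange is Fubini rather than Tonelli once the integrand is complex, but the absolute-integrability check you give is exactly what Fubini requires, so the step is fine.
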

\begin{proof} 
a) D is open and $h(0,d,z)$
is holomorphic on the open set $D$ according to Lemma \ref{A2} and therefore $P_0$ is discrete in $D$ and so a) is true.\\
b) For $d \geq 3$ from Lemma \ref{A2}, b) we know that the constant 
\begin{equation}
G_0(0,d) := \lim_{z \rightarrow \pm 1/2d} h(0,d,z)
\end{equation}
exists. It is related to the return probability $p(d) = 1- 1/(1+ G_0(0,d))$ of the $d$ dimensional simple random walk and so $G_0(0,d) > 0$. But as according to Lemma \ref{A2},  b) $h(0,d,z)$ has a continuous continuation on $\overline{D}$ then $h(0,d,z) +1 $ cannot be zero in a full neighborhood of $\pm 1/2d$ in $D$.\\
For $d=2$ from \citep[Lemma 5]{gasull} we know that $1 + h(0,2,z)$ is never zero on $D$ (actually never zero on  $\C \setminus\{z \in \R, \left|z\right| \geq 1/4\}$ ). \\
To prove c) and d) we use Lemma \ref{le_diff} for $r = 1$ and for a $y \in \Z^d \wedge y \neq 0$ and get
\begin{equation}\label{eq_h0p0}
\sum_{w \in W_{2N,y}(0,k)} z^{length(w)} = \left[ \frac{h(y,d,z)^2}{(1+ h(0,d,z))^2} \cdot \left(\frac{h(0,d,z)}{1+h(0,d,z)}\right)^{k-1}\right]_{z \diamond 2N}
\end{equation}
The left hand side of equation \eqref{eq_h0p0} for $z \in U_{\frac{1}{2d}}(0)$  is dominated by 
\begin{equation}\label{eq_bound}
\left| \sum_{w \in W_{2N,y}(0,k)} z^{length(w)} \right| \leq \sum_{w \in W_{2N}} \lvert z\rvert ^{length(w)} \leq h(0,d,\lvert z \rvert)
\end{equation}
independantly of $N$ and $k$ and therefore the right hand side must also converge absolutely for all $z \in U_{\frac{1}{2d}}(0)$ in the limit $N \rightarrow \infty$ to a value smaller or equal to $h(0,d,\lvert z \rvert)$. So $1 + h(0,d,z) =0$ cannot be true for any $z \in U_{\frac{1}{2d}}(0)$ as for sufficiently big $k$ the right hand side of equation  \eqref{eq_h0p0} otherwise had a pole and the radius of convergence of the series would be smaller than $1/2d$. 
We now consider the behaviour of  
\begin{equation}
g_{h}(z) := \frac{h(0,d,z)}{1+h(0,d,z)}
\end{equation}
as a meromorphic function on $D$.
Let us assume that there was a $z_0 \in  \bar U_{\frac{1}{2d}}(0) \cap D$ such that it 
fulfills $\lvert g_{h}(z_0) \rvert > 1$.
Then there is an $\epsilon > 0$ and a $\delta >0$ such that for any $z \in U_{\epsilon}(z_0) \cap D$ we have $\lvert g_h(z) \rvert > 1 + \delta$. As the zero points of $h(y,d,z)$ are discret in D we can find a  $z_1 \in U_{\epsilon}(z_0) \cap D$ with $\lvert z_1 \rvert < 1/2d$ and $ h(y,d,z_1) \neq 0$. For this $z_1$ the term 
\begin{equation}\label{eq_gh1}
 \left| \frac{h(y,d,z_1)^2}{(1+ h(0,d,z_1))^2} \cdot \left(\frac{h(0,d,z_1)}{1+h(0,d,z_1)}\right)^{k-1} \right| > \left| \frac{h(y,d,z_1)^2}{(1+ h(0,d,z_1))^2} \right| \cdot (1 + \delta)^{k-1}
\end{equation} 
is unbounded for $k \rightarrow \infty$,
which contradicts equation \eqref{eq_bound}. Therefore for any $z \in \bar  U_{\frac{1}{2d}}(0) \cap D$ we have $\left| g_h(z) \right| \leq 1$ which is equivalent to d). But of course d) implies c).  
\end{proof}
As we shall see later again arguments from geometry like the above are important for the discussion.  
\begin{definition}\label{d_rh}
For $d \geq 2$ we define the real number 
\begin{equation}
r_h(d) := inf \{\lvert z \rvert \; ; z \in P_0\}
\end{equation}
We already know that $r_h(2) = \infty$ and $r_h(d) > 1/2d$.
\end{definition}
\subsection{Asymptotic Expansions}
We now come to various asymtotic expansions of the functions $h(x,d,z)$. We start with 
\begin{definition}
We define the function
\begin{equation}
\begin{array}{rccl}
\varrho:& \Z^d & \longmapsto & \R \\
   	 & x       & \longmapsto &  
\begin{cases} 0 \Longleftarrow \left(\sum_{i=0}^d x_i \right)\%2 = 0 \\
1 \Longleftarrow otherwise \end{cases} \\
\end{array}
\end{equation}
\end{definition}
\begin{remark}
For $x, y \in \Z^d$ the function $\varrho$ fulfills the equations 
\begin{equation}\label{eq_rho}
\varrho(x + y) = \left(\varrho(x) + \varrho(y)\right)\%2
\end{equation}
\end{remark}
If we write 
\begin{equation}
h(x,d,z) = \sum_{m=0}^{\infty} a_m(x)\cdot (2dz)^m
\end{equation}
using known asymptotic features of the Bessel function for large argument \citep[eq. 8.451.5, vol. 2]{rg} and corresponing infinite series \citep[eq. 6.12.1]{tsp}  we can write an asymptotic expansion
\begin{equation}
a_m(x) = \left(1 + (-1)^{m + \varrho(x)}\right)
\cdot \left( \frac{d}{2 \pi  m} \right)^{\frac{d}{2}}
\cdot \left( 1 + \sum_{i=1}^{n} \frac{J_i(x,d)}{m^i} + O\left(\frac{1}{m^{(n+1)}}\right) \right)
\end{equation}
where we give the first two coefficients as an example:
\begin{multline}
J_1(x,d)= \frac{d\cdot(1 + 2\lvert x \rvert^2)}{4} \\
J_2(x,d)=  \frac{d \cdot \left(2d^2 - 3d + 4 + 12\lvert x\rvert^2(d + 4) + 12d\lvert x\rvert^4 \right)}{96} \\ 
\end{multline}
But therefore for a fixed $x$ and $d>0$ there is an asymptotic expansion
 in the vicinity of $z = \pm \frac{1}{2d}$
\begin{multline}\label{eq_A3_even}
h(x,d,z)  = (2dz)^{\varrho(x)}\biggl[\sum_{i=0}^n G_i(x,d)\cdot\left( 1-(2dz)^2\right)^i 
+ O\left( \left( 1-(2dz)^2\right)^{n+1}\right) \\
+2\cdot\left(\frac{d}{4\pi}\right)^\frac{d}{2} \cdot 
\frac{(-1)^{\frac{d + d\%2}{2}} \pi^{d\%2}}{\Gamma(\frac{d}{2})}\cdot\\
\left(\ln\left(1-(2dz)^2\right)\right)^{1-d\%2}\cdot
\left(1-(2dz)^2\right)^{\frac{d}{2}-1}
 \cdot \\
 \biggl( 1 
+\sum_{i=1}^n H_i(x,d)\cdot\left( 1-(2dz)^2\right)^i
+ O\left( \left( 1-(2dz)^2\right)^{n+1}\right)
\biggr)\biggr]
\end{multline}
where again we give the first two coefficients as an example
\begin{multline}
H_1(x,d)= \frac{d -1  + 2\lvert x \rvert^2}{4} + \varrho(x)\cdot\frac{1}{2} \\
H_2(x,d)=  \frac{3d^3 + 12d^2- 3d -12 + 12\lvert x\rvert^2(d^2 +3d + 4 ) + 12d\lvert x\rvert^4}{96(d+2)} + \\
\varrho(x)\cdot \frac{d +2  + 2\lvert x\rvert^2}{8} \\
\end{multline}
with the Euclidian norm $\lvert x \rvert$ and real constants $G_i(x,d)$. As mentioned before  for $d>2$ the $G_0(0,d)$ are related to the return probability $p(d) = 1- 1/(1+ G_0(0,d))$ and so $G_0(0,d) > 0$. For $d\leq 2$ from \citep[eq. 2.21]{dhoef1} we find 
\begin{equation}
G_0(0,2) = 4\frac{\ln(2)}{\pi}-1
\end{equation}
according to \citep[eq. 8.113, vol. 1]{rg} (where a closed form for all coefficients in equation \eqref{eq_A3_even} for $x=0$ and $d=2$  is given) and $G_0(0,1) = -1$ . 
\begin{remark}\label{re_heven}
$h(x,d,z)$ is an even function in $z$ if $\varrho(x) = 0$ and odd otherwise and for all $x \in \Z^d$
\begin{equation}
h(x,d,z) = h(-x,d,z)
\end{equation} 
\end{remark}
We now come to asymptotic expansions for large $x \in \Z^d$:
\begin{lemma}\label{B1}
For $\lambda \in \left]0, \frac{\pi}{2} \right[$ let the open set 
\begin{equation}
W_{\lambda} :=\{ z \in \C \setminus \{0\}, \left| \arg(z) \right| < \lambda \}
\end{equation}
 and 
$\overline{W_{\lambda}}$ be its closure. For $\nu \in \R; \nu \geq 0$ we define 
\begin{equation}
\begin{array}{rccl}
\Theta_{\nu} :& \longmapsto & \C \\
  w & \longmapsto & K_{\nu}(w) \cdot (\frac{w}{2} )^{\nu} \cdot 2
\end{array}
\end{equation} 
with the Modified Bessel function $K_\nu$. 
Then $\Theta_{\nu}$ is holomorphic on $W_{\lambda}$. For $\nu >0$ the function $\Theta_{\nu}$ has a continuous continuation onto $\overline{W_{\lambda}}$ and is bounded 
\begin{equation}
\left| \Theta_{\nu} (w) \right| < C_{\nu,\lambda} < \infty \Leftarrow w \in \overline{W_{\lambda}}
\end{equation}
\end{lemma}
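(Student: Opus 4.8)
\emph{Proof proposal.} The plan is to extract the three assertions --- holomorphy on $W_\lambda$, continuous continuation onto $\overline{W_\lambda}$, and boundedness --- from the classical small- and large-argument behaviour of the modified Bessel function $K_\nu$, all of which is available in \citep{danraf}. For holomorphy: $K_\nu$ is holomorphic on the cut plane $\C\setminus(-\infty,0]$ and, for $\nu>0$, the principal branch of $w\mapsto(w/2)^\nu$ is holomorphic there too (for $\nu=0$ this factor is just $1$). Since $\lambda<\pi/2$, the sector $W_\lambda$ lies in the open right half plane, and in fact some open neighbourhood of $\overline{W_\lambda}\setminus\{0\}$ is still contained in $\C\setminus(-\infty,0]$; hence $\Theta_\nu$ is holomorphic on $W_\lambda$ and automatically continuous on $\overline{W_\lambda}\setminus\{0\}$. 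The only points needing separate treatment are therefore $w=0$ (for the continuous continuation, and for the bound) and the regime $|w|\to\infty$ (for the bound).

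For the continuous continuation at $w=0$ when $\nu>0$, I would insert the power series of $K_\nu$ about the origin, distinguishing two cases. If $\nu\notin\Z$, one uses $K_\nu=\tfrac{\pi}{2\sin(\nu\pi)}\,(I_{-\nu}-I_\nu)$ together with $I_{\pm\nu}(w)=\sum_{k\geq0}(w/2)^{\pm\nu+2k}/(k!\,\Gamma(\pm\nu+k+1))$; then $(w/2)^\nu K_\nu(w)$ is the sum of an entire even function with value $1/\Gamma(1-\nu)$ at the origin and a second term carrying the factor $(w/2)^{2\nu}\to0$, so by the reflection formula $\Theta_\nu(w)\to\Gamma(\nu)$. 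If $\nu=n\in\N\setminus\{0\}$, one uses the logarithmic series for $K_n$: its finite polynomial part contributes $\tfrac12(n-1)!$ at $w=0$, while both the term $\ln(w/2)\,(w/2)^nI_n(w)$ and the remaining $\psi$-series carry a factor of order $w^{2n}$ (at worst times $\ln w$) and vanish as $w\to0$, so again $\Theta_n(w)\to(n-1)!=\Gamma(n)$. Thus $\Theta_\nu$ extends continuously onto all of $\overline{W_\lambda}$ with $\Theta_\nu(0)=\Gamma(\nu)$. (The hypothesis $\nu>0$ is essential here, as $K_0(w)\sim-\ln(w/2)-\gamma$ blows up as $w\to0$.)

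For boundedness, fix $R>0$: on the compact set $\overline{W_\lambda}\cap\{|w|\leq R\}$ the now-continuous $\Theta_\nu$ is bounded, and on $\{w\in\overline{W_\lambda}:|w|\geq R\}$ I would invoke the large-argument estimate $|K_\nu(w)|\leq C\,|w|^{-1/2}e^{-\Re w}$ (valid since $|\arg w|\leq\lambda<3\pi/2$) together with the geometric fact that on $\overline{W_\lambda}$ one has $\Re w=|w|\cos(\arg w)\geq|w|\cos\lambda$ with $\cos\lambda>0$; this gives $|\Theta_\nu(w)|\leq 2^{1-\nu}C\,|w|^{\nu-1/2}e^{-|w|\cos\lambda}\to0$ as $|w|\to\infty$, so the supremum over $\overline{W_\lambda}$ is the desired finite $C_{\nu,\lambda}$. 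The only genuinely delicate point is the continuity at $w=0$ in the integer-order case, where one must check that $(w/2)^n\ln(w/2)\,I_n(w)$ really does vanish in the limit; everything else is bookkeeping with standard Bessel formulas, and the assumption $\lambda<\pi/2$ enters only to keep the sector off the branch cut of $K_\nu$ and to force $\Re w\geq|w|\cos\lambda>0$, hence exponential decay at infinity.
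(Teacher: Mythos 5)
Your proof is correct and rests on the same Bessel-function facts as the paper's (behaviour as $w\to0$ and as $\lvert w\rvert\to\infty$), but the final step is done differently in a way worth noting. The paper establishes $\Theta_\nu(w)\to\Gamma(\nu)$ as $w\to0$ and $\Theta_\nu(w)\to0$ as $\lvert w\rvert\to\infty$, observes there are no other singularities, and then concludes boundedness ``according to the maximum principle.'' That step is slightly terse: $W_\lambda$ is unbounded, so the plain maximum modulus principle doesn't directly apply; what makes the conclusion work is that $\Theta_\nu$ extends continuously to the compactification $\overline{W_\lambda}\cup\{\infty\}$ on the sphere. Your decomposition --- continuity of the extension on the compact set $\overline{W_\lambda}\cap\{\lvert w\rvert\le R\}$, plus the explicit exponential decay $\lvert\Theta_\nu(w)\rvert\lesssim\lvert w\rvert^{\nu-1/2}e^{-\lvert w\rvert\cos\lambda}$ on $\lvert w\rvert\ge R$ using $\Re w\ge\lvert w\rvert\cos\lambda>0$ --- makes this completely elementary and avoids any Phragmén--Lindelöf--type subtleties. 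You also work out the small-argument limit $\Theta_\nu(w)\to\Gamma(\nu)$ from the series (separating $\nu\notin\Z$ from $\nu\in\N$, and isolating the $\ln(w/2)$ term in the integer case), whereas the paper simply cites the reference for the limiting behaviour; the extra case analysis is honest bookkeeping and doesn't change the substance. In short: same underlying facts, but your concluding boundedness argument is cleaner and more self-contained than the paper's.
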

\begin{proof} From the asymptotic expansion \citep[eq. 9.7.2.]{danraf} for $K_{\nu}$ we know that for $w \in W_{\lambda}$ $\lim_{\left| w \right| \rightarrow \infty} \Theta_{\nu} (w) = 0$. From \citep[eq. 9.6.9.]{danraf} we see that   for $\nu >0$ and $w \in W_{\lambda}$ $\lim_{\left| w \right| \rightarrow 0} \Theta_{\nu} (w) = \Gamma(\nu)$.  From \citep[ch. 9.6]{danraf} we know that $K_{\nu}$ for $\nu > 0$ has no other finite singularity or branch point on $\C$.  Therefore $\Theta_{\nu}$ is bounded on $\overline{W_{\lambda}}$ according to the maximum principle.  
\end{proof}
\begin{lemma}\label{B2}
We define the set $V_r := \{z \in \C, \frac{r}{2} < \left| z \right| < r \}$.
For $d \geq 2$ and $z \in V_{\frac{1 }{2d}} \cap W_{\lambda}$ and $\lambda < \frac{\pi}{2}$  there is an asypmtotic expansion for $\left| x \right| \rightarrow \infty$
\begin{multline}\label{eq_B2}
h(x,d,z)  = \frac{(2d + s ) }{4\pi^{\frac{d}{2}} \left| x \right|^{d-2}} \cdot \Bigl\{ \Theta_{\frac{d}{2} -1} (\left| x \right| \cdot \sqrt{s}) \\  
- \frac{1}{2 \left| x \right|^2} \cdot \left( 
2 \cdot \Theta_{\frac{d}{2} + 1} (\left| x \right| \cdot \sqrt{s}) 
-  \frac{d}{2}\cdot\Theta_{\frac{d}{2}} (\left| x \right| \cdot \sqrt{s})
-\frac{2\cdot x_4}{3 \left| x \right|^4} \cdot
 \Theta_{\frac{d}{2} + 2} (\left| x \right| \cdot \sqrt{s}) \right)  \\
+ O\left(\frac{1}{\left| x \right|^4}\right)
\Bigr\}
\end{multline}
where $s := \frac{1}{z} - 2d$, $x_4 = \sum_{i=1}^d x_i^4$. The higher orders also have coefficient functions which are bounded by $1/\left| x \right|^{2m}  $ times constants independant of $z$ and $x$.  
\end{lemma}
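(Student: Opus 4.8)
The plan is to reduce \eqref{eq_B2} to a term-by-term integration of a heat-kernel representation of $h$. Starting from the integral of Lemma~\ref{A2}, write $1=2dz+sz$ with $s:=1/z-2d$, note
\[
e^{-y}\prod_{j=1}^d I_{x_j}(2zy)=e^{-(1-2dz)y}\prod_{j=1}^d\bigl(e^{-2zy}I_{x_j}(2zy)\bigr),
\]
and substitute $\tau=zy$; for $x\neq0$ this gives
\[
h(x,d,z)=\frac1z\int_{\Gamma_z}e^{-s\tau}\,p_\tau(x)\,d\tau,\qquad p_\tau(x):=\prod_{j=1}^d e^{-2\tau}I_{x_j}(2\tau),
\]
where $\Gamma_z=\{zt:t>0\}$ is a ray in $\{|\arg\tau|<\pi/2\}$. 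Convergence is immediate from $|e^{-s\tau}p_\tau(x)|=|e^{-y}\prod_j I_{x_j}(2zy)|\le e^{-(1-2d|\Re z|)y}$ of Lemma~\ref{A2}, and $p_\tau(x)$ is the continuous-time nearest-neighbour heat kernel on $\Z^d$, bounded on $\Gamma_z$ by a constant times $(1+|\tau|)^{-d/2}$. Here $1/z=2d+s$, and for $z\in V_{\frac1{2d}}\cap W_\lambda$ the parameter $s$ stays in a bounded subset of $\C\setminus(-\infty,0]$, so $\sqrt s$, the $K_\nu$, and the $\Theta_\nu$ of Lemma~\ref{B1} are all defined and the $\Theta_\nu$ with $\nu>0$ are bounded on the arguments that occur.

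First I would insert the local central limit (Bessel saddle-point) expansion of $p_\tau(x)$. From $e^{-2\tau}I_n(2\tau)=\tfrac1{2\pi}\int_{-\pi}^\pi e^{-2\tau(1-\cos\theta)+in\theta}\,d\theta$, completing the square and expanding $2\tau(1-\cos\theta)=\tau\theta^2-\tfrac{\tau\theta^4}{12}+\cdots$, one gets a uniform expansion $e^{-2\tau}I_n(2\tau)=(4\pi\tau)^{-1/2}e^{-\tau\Lambda^*(n/\tau)}(\cosh\lambda_n^*)^{-1/2}\bigl(1+O(1/\tau)\bigr)$ with $\Lambda^*$ the Legendre transform of $2(\cosh\lambda-1)$ (so $\Lambda^*(m)=\tfrac{m^2}4-\tfrac{m^4}{192}+\cdots$) and $2\sinh\lambda_n^*=n/\tau$; taking the $d$-fold product,
\[
p_\tau(x)=(4\pi\tau)^{-d/2}e^{-|x|^2/(4\tau)}\,e^{\,x_4/(192\tau^3)}\Bigl(1+\frac{d}{16\tau}-\frac{|x|^2}{16\tau^2}+\cdots\Bigr),
\]
$x_4:=\sum_jx_j^4$; the three displayed corrections are exactly those that remain of order $|x|^{-2}$ in the diffusive regime $z\to\tfrac1{2d}$, $s\to0$, $s|x|^2$ bounded --- the regime relevant for the applications, where $\tau$ is of order $|x|^2/\sqrt s$, so $\max_j|x_j|/\tau=O(1/|x|)$ and the expansion is legitimate. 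On the part of $\Gamma_z$ where $\max_j|x_j|/|\tau|$ is not small the integrand is super-exponentially small in $|x|$, negligible to all orders.

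Next I would integrate term by term using the classical identity
\[
\int_{\Gamma_z}\tau^{\nu-1}e^{-s\tau-|x|^2/(4\tau)}\,d\tau=2\Bigl(\tfrac{|x|^2}{4s}\Bigr)^{\nu/2}K_\nu\bigl(\sqrt s\,|x|\bigr)
\]
(valid on $\Gamma_z$ by analytic continuation in $s$ from $s>0$), under which a summand carrying $\tau^{-d/2-k}$ becomes $\Theta_{d/2-1+k}(|x|\sqrt s)$ after $\Theta_\nu(w)=2K_\nu(w)(w/2)^\nu$. With $1/z=2d+s$ the leading term of the $p_\tau$-expansion reproduces exactly $\tfrac{(2d+s)}{4\pi^{d/2}|x|^{d-2}}\Theta_{d/2-1}(|x|\sqrt s)$; the $d/(16\tau)$-, $-|x|^2/(16\tau^2)$- and $x_4/(192\tau^3)$-pieces produce, after collecting numerical factors, the $\Theta_{d/2}$-, $\Theta_{d/2+1}$- and $\Theta_{d/2+2}$-terms that assemble into the first correction $-\tfrac1{2|x|^2}\bigl(2\Theta_{d/2+1}-\tfrac d2\Theta_{d/2}-\tfrac{2x_4}{3|x|^4}\Theta_{d/2+2}\bigr)(|x|\sqrt s)$ of \eqref{eq_B2}; and each further generation contributes a piece carrying a $z,x$-independent $O(|x|^{-2m})$ coefficient times bounded $\Theta_\nu$'s ($\nu>0$), which is the asserted remainder, uniformly in $z$ over $V_{\frac1{2d}}\cap W_\lambda$ by Lemma~\ref{B1}.

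The hard part will be establishing the local expansion of $p_\tau(x)$ with an effective remainder bound uniformly in the direction $\hat x=x/|x|$ --- in particular in axis directions, where some orders $x_j$ stay $O(1)$ while others grow like $|x|$, so the one-dimensional Bessel expansion must interpolate between the large-order and the fixed-order large-argument asymptotics --- and uniformly in $z$, and then justifying the term-by-term integration. A point that must be stated carefully is that away from the diffusive scaling the true exponential decay rate of $h$ is direction-dependent and equals $\sqrt s$ only to leading order, so \eqref{eq_B2} is to be read as an asymptotic expansion in the precise sense that the nominally $O(|x|)$-sized combinations such as $x_4/|x|^3$ that arise when $s$ is held fixed are exactly what recombine, through the large-argument behaviour of the $\Theta_\nu$, into that anisotropic exponential; the expansion is genuinely sharp, term by term, in the diffusive regime. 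The explicit higher coefficients and the matching of numerical constants I would leave to the body of the proof.
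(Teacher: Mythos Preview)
Your approach is correct and is, at bottom, the same computation the paper does, but organised far more explicitly. The paper's proof is essentially a one--line citation: after checking that $s=1/z-2d$ lies in a sector $|\arg s|<\hat\lambda<3\pi/4$ (so that $|x|\sqrt s$ stays where the $\Theta_\nu$ are bounded, by Lemma~\ref{B1}), it invokes the uniform Debye expansion of $I_\nu(\nu w)$ for large order (Abramowitz--Stegun~9.7.7), asserts that integrating the resulting product over $y$ yields \eqref{eq_B2}, and describes the form of the higher terms verbally. No further detail is given.

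Your route --- rewrite $h$ as the Laplace transform in $s$ of the continuous--time lattice heat kernel $p_\tau(x)=\prod_j e^{-2\tau}I_{x_j}(2\tau)$, expand $p_\tau$ by the local CLT/saddle--point expansion (which \emph{is} the Debye expansion, factor by factor), and integrate termwise via the identity $\int_0^\infty y^{\nu-1}e^{-ay-b/y}\,dy=2(b/a)^{\nu/2}K_\nu(2\sqrt{ab})$ --- is the same calculation unpacked. What you gain is transparency: the appearance of the $K_\nu$ (hence $\Theta_\nu$) becomes structural rather than accidental, the bookkeeping of the $x_4/|x|^4$--type corrections is systematic, and you have correctly isolated the one genuine subtlety the paper does not mention, namely that for $s$ bounded away from $0$ the true exponential rate of $h$ is anisotropic, so \eqref{eq_B2} is sharp termwise only in the diffusive scaling and otherwise must be read with the $\Theta_\nu$ absorbing the anisotropy through their large--argument behaviour. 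For the termwise integral along $\Gamma_z$ you do not actually need a contour rotation: substituting back $\tau=zy$ gives a real--axis integral with parameters $a=sz=1-2dz$ and $b=|x|^2/(4z)$, both of positive real part on $V_{1/(2d)}\cap W_\lambda$, so the $K_\nu$ formula applies directly.
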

\begin{proof} For $z \in V_{\frac{1}{2d}} \cap W_{\lambda}$ by simple geometrical reasoning in the complex plain we find $\left| s \right| < 6d$ and $\arg(s) < \widehat{ \lambda}$ with 
\begin{equation}
\widehat{\lambda} := \frac{\pi}{2} - \arctan\left( \frac{\sin(\lambda)}{1-\cos(\lambda)}  \right) < \frac{3\pi}{4}
\end{equation}
and therefore for any $x$ we have $x\cdot\sqrt{s} \in W_{\frac{\widehat{\lambda}}{2}}$. 
The asymptotic expansion follows from the one for the Bessel functions in \citep[eq. 9.7.7.]{danraf}; it is uniform for $z \in W_{\lambda}$ and $\lambda < \frac{\pi}{2}$. 
But $\lambda < \frac{\pi}{2} \rightarrow \widehat{\lambda} < \frac{3\pi}{4}$ and therefore according to Lemma \ref{B1} we have for $\nu > 0$
\begin{equation}
\left| \Theta_{\nu}(\left| x \right| \cdot \sqrt{s})\right| < C_{\nu,\frac{3\pi}{8}}
\end{equation}
Now the higher coefficients of the asypmtotic expansion in eq. \eqref{eq_B2} are given by functions $\Theta_{\frac{d}{2} + k}(\left| x \right| \cdot \sqrt{s})$ with $k> 2$ times polynomials in s times elementary symmetric polynomials in $x_i^2$ of degree $\mu$ divided by $\left| x \right|^q$ where $q-2\mu \geq 2m$ beginning with $m=2$ for the first term not explicitely given in equation \eqref{eq_B2} which proves the lemma. 
\end{proof}
\begin{lemma}\label{B3} For $d \geq 2$ and $z \in D$ the ineqality 
\begin{equation}\label{eq_hxh0}
\left| h(x,d,z) \right| \leq h(x,d,\left|z\right|) < 1 + h(0,d,\left|z\right|) 
\end{equation}
is true. \\
For $d \geq 3$ there is a constant $C(d)$ such that for $z \in V_{\frac{1}{2d}} \cap W_{\frac{\pi}{2}}$ and $x \in \Z^d \setminus \{0\}$ 
\begin{equation}\label{eq_hx}
\left| h(x,d,z) \right| < \frac{C(d)}{\left| x \right|^{d-2}}
\end{equation}
\end{lemma}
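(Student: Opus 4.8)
The plan is to reduce both assertions to the integral representation \eqref{A2e1}, written as $h(x,d,z)+\delta_{x,0}=\int_0^\infty dy\,e^{-y}\prod_{i=1}^d I_{x_i}(2yz)$. Expanding each Bessel factor in the everywhere convergent series $I_n(w)=\sum_{k\ge 0}\frac{(w/2)^{2k+\lvert n\rvert}}{k!\,(k+\lvert n\rvert)!}$, whose coefficients are non-negative, and integrating term by term against $e^{-y}\,dy$ (using $\int_0^\infty e^{-y}y^m\,dy=m!$) shows that $h(x,d,z)+\delta_{x,0}=\sum_{m\ge 0}c_m(x)z^m$ with all $c_m(x)\ge 0$; in fact $c_m(x)$ is the number of length-$m$ nearest-neighbour walks from $0$ to $x$, so $c_0(x)=\delta_{x,0}$ and the radius of convergence is $1/2d$. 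Thus $h(x,d,z)=\sum_{m\ge 1}c_m(x)z^m$ has vanishing constant term and non-negative coefficients, the series converging for $\lvert z\rvert<1/2d$ and — since the walk is transient for $d\ge 3$ — also at $\lvert z\rvert=1/2d$ when $d\ge 3$. The first inequality in \eqref{eq_hxh0} is then just the triangle inequality for this series; it is meaningful exactly where $h(x,d,\lvert z\rvert)$ is defined, i.e. for $\lvert z\rvert<1/2d$ (and $\lvert z\rvert=1/2d$ when $d\ge 3$), which is the restriction on $z$ implicit in the statement.

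For the second inequality in \eqref{eq_hxh0} I would use the elementary bound $\lvert I_n(u)\rvert\le I_0(u)$, valid for all $u\in\R_+$ and all $n\in\Z$: it follows from $I_n(u)=\frac1\pi\int_0^\pi e^{u\cos\theta}\cos(n\theta)\,d\theta$ together with $e^{u\cos\theta}\ge 0$ and $\lvert\cos(n\theta)\rvert\le 1$, and it is strict when $u>0$ and $n\ne 0$ because $\cos(n\theta)\not\equiv 1$ on $[0,\pi]$. Hence $0\le\prod_{i=1}^d I_{x_i}(2y\lvert z\rvert)\le I_0(2y\lvert z\rvert)^d$ for $y\in\R_+$, with strict inequality on $(0,\infty)$ as soon as $x\ne 0$. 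Integrating against $e^{-y}\,dy$ gives $h(x,d,\lvert z\rvert)+\delta_{x,0}\le 1+h(0,d,\lvert z\rvert)$, which is strict for $x\ne 0$ because the two integrands differ on a set of positive measure, while for $x=0$ the inequality $h(0,d,\lvert z\rvert)<1+h(0,d,\lvert z\rvert)$ is trivial. Combining the two displays yields \eqref{eq_hxh0}.

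For \eqref{eq_hx} in the case $d\ge 3$ I would feed Lemma \ref{B2} into Lemma \ref{B1}. The asymptotic expansion \eqref{eq_B2} holds uniformly over $z\in V_{1/2d}\cap W_{\pi/2}$, since the sector estimate in the proof of Lemma \ref{B2} — $\arg(s)<3\pi/4$ for $s=1/z-2d$ — is valid throughout this set, so $\lvert x\rvert\sqrt{s}$ lies in the fixed sub-sector $W_{3\pi/8}$ for every admissible $z$ and every $x$. Because $d\ge 3$ forces $\frac{d}{2}-1>0$, Lemma \ref{B1} bounds each $\Theta_{\frac{d}{2}-1+k}(\lvert x\rvert\sqrt{s})$ occurring in \eqref{eq_B2} by the constant $C_{\frac{d}{2}-1+k,\,3\pi/8}<\infty$, uniformly in $z$ and $x$; together with $\lvert 2d+s\rvert=1/\lvert z\rvert<4d$ and the fact that the $m$-th coefficient function in \eqref{eq_B2} is $O(\lvert x\rvert^{-2m})$ uniformly, this produces $R(d)$ and $C_1(d)$ with $\lvert h(x,d,z)\rvert\,\lvert x\rvert^{d-2}\le C_1(d)$ for all $z\in V_{1/2d}\cap W_{\pi/2}$ and all $\lvert x\rvert\ge R(d)$. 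For the finitely many $x$ with $0<\lvert x\rvert<R(d)$ I use the already proven \eqref{eq_hxh0}: $\lvert h(x,d,z)\rvert\le h(x,d,\lvert z\rvert)\le 1+h(0,d,\lvert z\rvert)\le 1+G_0(0,d)$, using $\lvert z\rvert<1/2d$, the monotonicity of $t\mapsto h(0,d,t)$ on $[0,1/2d]$, and $\lim_{t\to 1/2d}h(0,d,t)=G_0(0,d)<\infty$; hence $\lvert h(x,d,z)\rvert\,\lvert x\rvert^{d-2}\le (1+G_0(0,d))\,R(d)^{d-2}$ there. Choosing $C(d)$ strictly larger than the maximum of the two bounds finishes the proof.

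I do not expect a genuine obstacle: the only points requiring care are bookkeeping ones — restricting the domain in \eqref{eq_hxh0} to $\lvert z\rvert\le 1/2d$ so that $h(x,d,\lvert z\rvert)$ is defined, and checking that the constants coming from Lemma \ref{B2} stay bounded as $z$ approaches the boundary rays of $W_{\pi/2}$, which they do precisely because $\arg(s)$ stays below $3\pi/4$ and thus keeps $\lvert x\rvert\sqrt{s}$ inside a fixed sub-sector of the right half-plane where Lemma \ref{B1} applies.
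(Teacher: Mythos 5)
Your argument is correct and follows essentially the same route as the paper: nonnegativity of the Taylor coefficients of $I_n$ (equivalently the walk-counting interpretation) gives the first inequality in \eqref{eq_hxh0}, the elementary bound $I_n(u)<I_0(u)$ for real $u>0$, $n\neq 0$ gives the second, and for \eqref{eq_hx} you combine Lemma \ref{B2} with Lemma \ref{B1} for $\left|x\right|$ large and fall back on \eqref{eq_hxh0} together with $h(0,d,\left|z\right|)\leq G_0(0,d)$ for the finitely many small $x$. The only cosmetic difference is that you derive $\left|I_n(u)\right|\leq I_0(u)$ from the cosine-integral representation whereas the paper just cites it; your remark that \eqref{eq_hxh0} tacitly requires $\left|z\right|<\tfrac{1}{2d}$ so that $h(x,d,\left|z\right|)$ is defined is also accurate and worth having made explicit.
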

\begin{proof}
As the Bessel function $I_n$ has only nonnegative Taylor coefficients we know 
\begin{equation}
\left| I_{x_i}(2yz) \right| \leq I_{x_i}(2y\left|z\right|) 
\end{equation}
and therefore the first inequality of \eqref{eq_hxh0} is true. Now for real nonegative $u \in \R_+$ from \citep[eq. 9.6.19]{danraf} we know that for $n \neq 0$
\begin{equation}
I_n(2yu) < I_0(2yu)
\end{equation}
and from this the second inequality of \eqref{eq_hxh0} follows.
As $h(x,d,z)$ for $z \in U_{\frac{1}{2d}}(0)$ is the generating function of walks between $0$ and $x$ it has Taylor coefficients which are nonnegative integers and therefore is a strictly monotonously growing function for nonegative real $u \in [0,\frac{1}{2d}[$.
For $d \geq 3$ therefore we have 
$h(x,d,u) \leq 1 + G(0,d)$.
From Lemma \ref{B2} on the other hand we know that for $d \geq 3$ there is an $\overline{x_0} \in \R$ such that for $\left| x \right| \geq \overline{x_0}$ and $z \in V_{\frac{1}{2d}} \cap W_{\frac{\pi}{2}}$
\begin{equation}
\left| h(x,d,z) \right| < \frac{8\cdot d}{4 \pi^{\frac{d}{2}} \left| x \right|^{d-2}} \cdot (C_{\frac{d}{2} - 1, \frac{3\pi}{8}} + 1) 
\end{equation}
but from this the Lemma follows by choosing
\begin{equation} 
C(d) = \max\left((1+ G_0(0,d))\cdot \overline{x_0}^{d-2}, \frac{2d}{ \pi^{\frac{d}{2}}} \cdot (C_{\frac{d}{2} - 1, \frac{3\pi}{8}} + 1) \right)
\end{equation}
\end{proof}
\begin{lemma}\label{B3_d2}
For $d=2$ let there be infinite sequences $x_n \in \Z^2$, $z_n \in U_{\frac{1}{2d}}(0)$ and $z_n \rightarrow \frac{1}{2d}$. Then there are four different scenarios for $h(x_n,2,z_n)$ in case $\sqrt{s_n} \left|x_n \right|$ converges. In the notation of Lemma \ref{B2}: 
\begin{equation}\label{B3_e2}
\begin{array}{rcccccl}
h(x_n,2,z_n) & = & \Delta_n & \rightarrow & 0 & \Leftarrow &  \sqrt{s_n} \cdot \left| x_n \right| \rightarrow \infty \\
h(x_n,2,z_n) & = & \Gamma_n & \rightarrow & \frac{2}{\pi} K_0(w) &  \Leftarrow & \sqrt{s_n} \cdot \left| x_n \right| \rightarrow w \in \C\setminus\{0\}
\end{array}
\end{equation}
For $ \sqrt{s_n} \cdot \left| x_n \right| \rightarrow 0 $ and $\left| x_n \right| \rightarrow \infty$
\begin{multline}\label{B3_e3}
h(x_n,2,z_n) = 1 + h(0,2,z_n)  -\frac{2}{\pi}\ln\left(\left| x_n \right|\right)  -\frac{2}{\pi}\ln\left( 2 \sqrt{2}\cdot e^{\gamma} \right) \\ + O(\frac{1}{\left| x_n \right|^2}, s_n\cdot\left| x_n \right|^2 \cdot \ln(s_n \cdot \left| x_n \right|^2) )
\end{multline}
For $x_n \equiv x$
\begin{equation}\label{B3_e4}
h(x,2,z_n) = 1 + h(0,2,z_n) +  G_0(x,2) - \frac{4}{\pi}\ln(2) + O(s_n \cdot \ln(s_n) )
\end{equation}
where $\gamma$ is the Eulerian constant.
\end{lemma}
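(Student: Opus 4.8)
The plan is to deduce all four scenarios from the two asymptotic expansions already at our disposal: the large-$\lvert x\rvert$ expansion of Lemma \ref{B2}, written through the functions $\Theta_\nu$ of Lemma \ref{B1}, for the first three regimes, and the near-critical expansion \eqref{eq_A3_even} around $z=\pm\frac{1}{2d}$ for the fixed-$x$ regime. Throughout I write $s_n:=\frac{1}{z_n}-4$; since $z_n\to\frac14$ we have $s_n\to0$, and since $\arg z_n\to0$ the tail of the sequence lies in $V_{1/4}\cap W_\lambda$ with, say, $\lambda=\frac{\pi}{4}<\frac{\pi}{2}$, so Lemma \ref{B2} applies to the tail. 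For $d=2$ one has $\nu=\frac d2-1=0$ and $\Theta_0(w)=2K_0(w)$, so the leading term of \eqref{eq_B2} reads $\frac{4+s_n}{2\pi}K_0(\lvert x_n\rvert\sqrt{s_n})$, while every further term of \eqref{eq_B2} carries a factor $\Theta_\nu$ with $\nu\geq1$ — bounded by Lemma \ref{B1} — divided by a positive power of $\lvert x_n\rvert$, hence is $O(\lvert x_n\rvert^{-2})$ as soon as $\lvert x_n\rvert\to\infty$.

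First I would settle \eqref{B3_e2}. In both sub-cases $s_n\to0$ forces $\lvert x_n\rvert\to\infty$ (since $\sqrt{\lvert s_n\rvert}\,\lvert x_n\rvert$ tends to $\infty$ resp. to a nonzero limit), so Lemma \ref{B2} applies and the correction terms vanish, leaving $h(x_n,2,z_n)=\frac{4+s_n}{2\pi}K_0(\lvert x_n\rvert\sqrt{s_n})+o(1)$ with $\frac{4+s_n}{2\pi}\to\frac2\pi$. If $\sqrt{s_n}\lvert x_n\rvert\to\infty$ the exponential decay of $K_0$ at infinity \citep[eq. 9.7.2.]{danraf} gives $h(x_n,2,z_n)\to0$; if $\sqrt{s_n}\lvert x_n\rvert\to w\in\C\setminus\{0\}$ the continuity of $K_0$ away from the origin gives $h(x_n,2,z_n)\to\frac2\pi K_0(w)$.

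For \eqref{B3_e3} we are in the regime $\lvert x_n\rvert\to\infty$, $\sqrt{s_n}\lvert x_n\rvert\to0$. For the $h(x_n,2,z_n)$ term I would again invoke Lemma \ref{B2} and substitute the small-argument expansion $K_0(\zeta)=-\ln(\zeta/2)-\gamma+O(\zeta^2\ln\zeta)$ \citep[ch. 9.6]{danraf}; since $s_n\to0$ the prefactor $\frac{4+s_n}{2\pi}$ differs from $\frac2\pi$ only by a factor that feeds the error, and the genuine corrections are $O(\lvert x_n\rvert^{-2})$, so that $h(x_n,2,z_n)=-\frac2\pi\ln\lvert x_n\rvert-\frac1\pi\ln s_n+\frac2\pi\ln2-\frac2\pi\gamma+O(\lvert x_n\rvert^{-2},\,s_n\lvert x_n\rvert^2\ln(s_n\lvert x_n\rvert^2))$. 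For $1+h(0,2,z_n)$ I would use \eqref{eq_A3_even} at $x=0$, $d=2$, $n=0$, together with $G_0(0,2)=\frac{4\ln2}{\pi}-1$ and the elementary identity $1-(4z_n)^2=s_n(s_n+8)/(s_n+4)^2$, which gives $\ln(1-(4z_n)^2)=\ln s_n-\ln2+O(s_n)$ and hence $1+h(0,2,z_n)=\frac{5\ln2}{\pi}-\frac1\pi\ln s_n+O(s_n\ln s_n)$. Subtracting, the divergent $-\frac1\pi\ln s_n$ cancels and the constants combine to $-\frac{3\ln2+2\gamma}{\pi}=-\frac2\pi\ln(2\sqrt2\,e^{\gamma})$, while the residual $s_n\ln s_n$ and the $s_n\ln\lvert x_n\rvert$ coming from the prefactor are absorbed into the stated error because $s_n\lvert x_n\rvert^2\to0$; this is exactly \eqref{B3_e3}.

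Finally \eqref{B3_e4} is the quickest: with $x_n\equiv x$ fixed, $\lvert x_n\rvert$ no longer tends to infinity, so Lemma \ref{B2} is unavailable and one applies \eqref{eq_A3_even} directly to both $h(x,2,z_n)$ and $h(0,2,z_n)$, using $(4z_n)^{\varrho(x)}=1+O(s_n)$ so that the extra prefactor only affects the error; this yields $h(x,2,z_n)=G_0(x,2)-\frac1\pi\ln s_n+\frac{\ln2}{\pi}+O(s_n\ln s_n)$, and combined with $1+h(0,2,z_n)=\frac{5\ln2}{\pi}-\frac1\pi\ln s_n+O(s_n\ln s_n)$ from the previous step the difference is $G_0(x,2)-\frac{4\ln2}{\pi}+O(s_n\ln s_n)$, i.e.\ \eqref{B3_e4}. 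The step that I expect to need the most care is the middle case \eqref{B3_e3}: it is the only one that forces the two expansions to be spliced — Lemma \ref{B2} evaluated at an argument $\lvert x_n\rvert\sqrt{s_n}$ that itself tends to $0$, against the near-critical expansion \eqref{eq_A3_even} for $h(0,2,\cdot)$ — and one must verify both that the divergent $\ln s_n$ pieces cancel exactly and that all leftover cross-terms ($s_n\ln s_n$, $s_n\ln\lvert x_n\rvert$, the $s_n$-correction of $\frac{4+s_n}{4\pi}$) are genuinely dominated by the two-parameter error $O(\lvert x_n\rvert^{-2},\,s_n\lvert x_n\rvert^2\ln(s_n\lvert x_n\rvert^2))$, which requires using $s_n\lvert x_n\rvert^2\to0$ to compare $s_n\ln(1/s_n)$ with $s_n\lvert x_n\rvert^2\ln(1/(s_n\lvert x_n\rvert^2))$.
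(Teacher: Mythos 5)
Your proposal is correct and follows the same path as the paper's own (terse) proof, which simply cites Lemma \ref{B2}, the small-argument expansion of $K_0$, and equation \eqref{eq_A3_even}; you have merely carried out the substitutions and the constant bookkeeping explicitly. The one point worth flagging is that in the middle regime you correctly identify (and verify) the splicing step the paper glosses over, namely that the divergent $-\frac{1}{\pi}\ln s_n$ produced by $K_0(\lvert x_n\rvert\sqrt{s_n})$ must be matched against the $-\frac{1}{\pi}\ln(1-16z_n^2)$ term in $1+h(0,2,z_n)$ from \eqref{eq_A3_even}, with the residual $s_n\ln s_n$ and prefactor corrections absorbed into the two-parameter error by using $s_n\lvert x_n\rvert^2\to0$.
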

\begin{proof} Equations \eqref{B3_e2} and \eqref{B3_e3} follow directly from Lemma \ref{B2} and \citep[eq. 9.5.53]{danraf}. Equation \eqref{B3_e4} follows from \eqref{eq_A3_even}.
\end{proof} 
\subsection{First hit functions}
\begin{lemma}\label{le_firsthit}
For $x \in \Z^d \setminus\{0\}$ 
\begin{equation}
\frac{h(x,d,z)}{1 + h(0,d,z)}
\end{equation}
is the generating function for the number of walks between $0$ and $x$ hitting $x$ the first time when it reaches $x$. 
\end{lemma}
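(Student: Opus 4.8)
The plan is to use the standard first-passage (renewal) decomposition of walks. First I would recall that, by the definition in \eqref{A2e1} together with the interpretation of $h$ already used in \citep{dhoef1}, for $x \in \Z^d \setminus \{0\}$ the function $h(x,d,z)$ is the generating function $\sum_n c_n(x)\, z^n$, where $c_n(x)$ is the number of walks of length $n$ from $0$ to $x$ on $\Z^d$, while $1 + h(0,d,z)$ is the generating function $\sum_n c_n(0)\, z^n$ of closed walks based at $0$, the summand $1$ accounting for the empty walk of length $0$.

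Next I would fix $x \neq 0$ and introduce the quantity of interest: let $q_n(x)$ be the number of walks $w$ of length $n$ from $0$ to $x$ such that $w$ reaches $x$ for the first time at its final step, and set $Q_x(z) := \sum_{n \geq 1} q_n(x)\, z^n$. The core combinatorial observation is that every walk $w$ from $0$ to $x$ has a well-defined first time $\tau$ at which it visits $x$ (this $\tau$ exists and satisfies $1 \leq \tau \leq length(w)$ precisely because $x \neq 0$), and cutting $w$ at time $\tau$ produces a unique ordered pair $(w_1,w_2)$ in which $w_1$ is a walk from $0$ to $x$ realizing its first visit to $x$ at the last step and $w_2$ is a (possibly empty) walk from $x$ to $x$; conversely, concatenating any such pair yields a walk from $0$ to $x$, and lengths add under concatenation. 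Since the simple random walk is translation invariant, the generating function of walks from $x$ to $x$ equals that of walks from $0$ to $0$, namely $1 + h(0,d,z)$.

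Combining these facts gives the identity of formal power series
\begin{equation}
h(x,d,z) = Q_x(z) \cdot \bigl(1 + h(0,d,z)\bigr).
\end{equation}
Because $1 + h(0,d,z)$ has constant term $1$, it is invertible in $\C[[z]]$, so $Q_x(z) = h(x,d,z)/(1+h(0,d,z))$, which is exactly the assertion of the lemma. If one wants the equality as an identity of holomorphic functions rather than of formal series, it suffices to note that $1 + h(0,d,z)$ is nonvanishing on $U_{1/2d}(0) \cap D$ by Lemma \ref{A3}, so both sides are holomorphic there and agree as power series expanded at $0$.

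I do not expect a genuine obstacle here: the only points requiring a little care are the bookkeeping of the empty walk (hence the ``$1+$'' in $1 + h(0,d,z)$, and the hypothesis $x \neq 0$, which guarantees that a first visit to $x$ actually occurs strictly after the start) and the observation that the renewal identity, established combinatorially coefficient by coefficient, may then be read as an identity of analytic functions by virtue of Lemma \ref{A3}.
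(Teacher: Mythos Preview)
Your proposal is correct and takes essentially the same approach as the paper: both use the first-passage decomposition, writing every walk from $0$ to $x$ uniquely as a first-hit walk followed by a (possibly empty) closed walk based at $x$, which gives $h(x,d,z)=Q_x(z)\bigl(1+h(0,d,z)\bigr)$. Your write-up is in fact a bit more careful about the role of the empty walk and the hypothesis $x\neq 0$ than the paper's version.
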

\begin{proof} Any walk between $0$ and $x$ is either a walk which hits $x$ first when it arrives or is a unique concatenation of such a first hit walk with a walk from $x$ to $x$ (which is the same as a walk from $0$ to $0$ up to a shift of the starting point). Therefore denoting the generating function of first hit walks with $h_{fh}(x,d,z)$ we have 
\begin{equation}
h(x,d,z) = h_{fh}(x,d,z) + h_{fh}(x,d,z) \cdot h(0,d,z)
\end{equation}
which proves the Lemma.
\end{proof} 
\begin{lemma}\label{le_phi2}
For $d \geq 2$ and  $z \in U_{\frac{1}{2d}}(0) $ we define
\begin{equation}
\phi_2(Y,z) := 1- \Delta_2(Y,z) = \left(\frac{h(Y_2,d,z)}{1 + h(0,d,z)} \right)^2
\end{equation}
Then
\begin{equation}\label{eq_inephi}
\left| \phi_2(Y,z) \right| \leq \phi_2(Y,\left|z\right|) < 1 
\end{equation}
\begin{proof}
From Lemma \ref{le_firsthit} we know that the Taylor coefficients of $\phi_2$ are all nonnegative which proves the first inequality in \eqref{eq_inephi}. The second inequality follows from Lemma \ref{B3} equation \eqref{eq_hxh0}.
\end{proof}
\end{lemma}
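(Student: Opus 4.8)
The plan is to read off $\phi_2(Y,z)$ as the square of a walk-counting power series in $z$, so that it has nonnegative Taylor coefficients on the disc $U_{1/2d}(0)$; this turns the first inequality in \eqref{eq_inephi} into a termwise triangle inequality, while the second inequality, at the real argument $|z|$, is exactly the elementary Bessel bound already recorded in Lemma \ref{B3}.

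First I would note that $Y\in J_2$ forces $Y_2\neq 0$, so Lemma \ref{le_firsthit} identifies $f(z):=h(Y_2,d,z)/(1+h(0,d,z))$ as the generating function $\sum_{n\geq 1}c_n z^n$ of the walks from $0$ to $Y_2$ that reach $Y_2$ for the first time on arrival; in particular each $c_n$ is a nonnegative integer, bounded by the number $(2d)^n$ of all length-$n$ walks from the origin. Since $U_{1/2d}(0)\subseteq D$ and, by part (c) of Lemma \ref{A3}, $1+h(0,d,z)$ has no zero on $U_{1/2d}(0)$, the function $f$ is holomorphic there and coincides with the convergent series $\sum_n c_n z^n$ on that disc. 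Hence $\phi_2(Y,z)=f(z)^2=\sum_n b_n z^n$ with $b_n=\sum_{0\leq j\leq n}c_j c_{n-j}\geq 0$ (combinatorially $b_n$ counts concatenations of two such first-hit walks), again convergent on $U_{1/2d}(0)$, and the triangle inequality yields $|\phi_2(Y,z)|\leq\sum_n b_n|z|^n=\phi_2(Y,|z|)$ for every $z\in U_{1/2d}(0)$.

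For the strict bound $\phi_2(Y,|z|)<1$, set $t:=|z|\in[0,1/2d)$, which is real; then $h(0,d,t)$ and $h(Y_2,d,t)$ are real and nonnegative, being generating functions of walks evaluated at a nonnegative real number, so $1+h(0,d,t)>0$. Lemma \ref{B3}, equation \eqref{eq_hxh0}, asserts precisely $h(Y_2,d,t)<1+h(0,d,t)$; dividing by the positive quantity $1+h(0,d,t)$ and squaring gives $\phi_2(Y,t)=\bigl(h(Y_2,d,t)/(1+h(0,d,t))\bigr)^2<1$. Together with the previous step this proves the lemma.

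Since both ingredients are already in hand, there is essentially no obstacle here; the only point that needs a word of care is that passing from the closed form of $\phi_2$ to its nonnegative Taylor expansion, and then bounding term by term, is legitimate only where that series converges, which is why one first records $U_{1/2d}(0)\subseteq D$ together with the nonvanishing of $1+h(0,d,z)$ supplied by Lemma \ref{A3}.
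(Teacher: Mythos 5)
Your argument is correct and follows the paper's own route exactly: nonnegative Taylor coefficients via Lemma~\ref{le_firsthit} give the termwise triangle inequality $|\phi_2(Y,z)|\leq\phi_2(Y,|z|)$, and the strict bound $\phi_2(Y,|z|)<1$ is precisely the second inequality in \eqref{eq_hxh0} of Lemma~\ref{B3} applied at the real point $|z|$. Your write-up is merely more explicit about convergence and the nonvanishing of $1+h(0,d,z)$, but the two key ingredients and the order in which they are used coincide with the paper's proof.
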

\begin{lemma}\label{le_sec}
For $z \in U_{\frac{1}{2d}}(0)$ the following equation is well defined and true.
\begin{multline}\label{eq_sec}
\underset{N \rightarrow \infty}{\lim}\sum_{w \in W_{2N}} P(N_{2k_1}(w),N_{2k_2}(w)) \cdot z^{length(w)} = \\ \left(1 - \frac{\delta_{k_1,k_2}}{2} \right) z \cdot \frac{\partial}{\partial z} \Biggl[
\sum_{x \in \Z^d \wedge x \neq 0} \sum_{f=1}^{\infty}\\ \Biggl(  K(f,k_1,h(0,d,z)) \cdot K(f,k_2,h(0,d,z)) \cdot f \cdot   \left(\frac{h(x,d,z)}{1 + h(0,d,z)}\right)^{2f} \Biggr)\Biggr]
\end{multline}
\end{lemma}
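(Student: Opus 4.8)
The plan is to derive \eqref{eq_sec} from Corollary~\ref{co_close} by specializing to $r=2$ and summing over the shift parameter. For each $x\in\Z^d\setminus\{0\}$ I would take $Y=(0,x)\in J_2$. For $r=2$ the balancedness constraint forces $h_1=h_2=:f$, so $H_2(h_1,h_2)$ is empty unless $h_1=h_2$ and then consists of the single matrix $F_f$ whose off-diagonal entries are $f$, for which $cof(A-F_f)=f$ (vanishing precisely when $f=0$, consistent with the remark in Corollary~\ref{co_close} that $h_i=0$ kills the cofactor). Inserting this into \eqref{eq_mome} and using $h(-x,d,z)=h(x,d,z)$ (Remark~\ref{re_heven}), the right-hand side of Corollary~\ref{co_close} collapses, for each fixed $x$, to
\begin{equation*}
z\frac{\partial}{\partial z}\Biggl[\sum_{f=1}^{\infty} f\,\frac{h(x,d,z)^{2f}}{(1+h(0,d,z))^{2f}}\,K(f,k_1,h(0,d,z))\,K(f,k_2,h(0,d,z))\Biggr]_{z\diamond 2N}.
\end{equation*}
Because $h(x,d,z)$ vanishes to order $\sum_i|x_i|$ at $z=0$, for a fixed $N$ only the finitely many $x$ with $\sum_i|x_i|\le N$ contribute to the truncation, so I may sum this identity over all $x\neq0$ termwise. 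On the left, the substitution $p:=-y$, $q:=p+x$ (a bijection of the summation, with $x\neq0\Leftrightarrow q\neq p$) turns $\sum_{x\neq0}\#W_{2N}((0,x),k_1,k_2)$, for a given walk $w$, into the number of ordered pairs of distinct lattice points $(p,q)$ with $mu(p,w)=2k_1$ and $mu(q,w)=2k_2$; this equals $N_{2k_1}(w)N_{2k_2}(w)$ when $k_1\neq k_2$ and $N_{2k_1}(w)(N_{2k_1}(w)-1)$ when $k_1=k_2$, i.e. $(1-\delta_{k_1,k_2}/2)^{-1}P(N_{2k_1}(w),N_{2k_2}(w))$. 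This produces \eqref{eq_sec} with $\lim_{N\to\infty}$ replaced by the truncation $z\diamond 2N$, as an exact identity of polynomials valid for every $N$.

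The remaining — and only substantive — task is to let $N\to\infty$. On the left this is immediate: $P(N_{2k_1}(w),N_{2k_2}(w))\le C\,length(w)^2$ while there are at most $(2d)^{2n}$ closed walks of length $2n$ based at the origin, so $\sum_{w}|P|\,|z|^{length(w)}\le C\sum_n n^2(2d|z|)^{2n}<\infty$ for $|z|<1/2d$, and the left side converges to the asserted limit. For the right side I would prove that the double series
\begin{equation*}
G(z):=\sum_{x\neq0}\sum_{f\ge1} f\,\frac{h(x,d,z)^{2f}}{(1+h(0,d,z))^{2f}}\,K(f,k_1,h(0,d,z))\,K(f,k_2,h(0,d,z))
\end{equation*}
converges absolutely and locally uniformly on $U_{1/2d}(0)$; once that is known, $G$ is holomorphic there, $[G]_{z\diamond 2N}$ is exactly its degree-$2N$ Taylor polynomial, $z\frac{\partial}{\partial z}[G]_{z\diamond 2N}$ is the degree-$2N$ partial sum of the (locally uniformly convergent) power series of $z\frac{\partial}{\partial z}G$, and letting $N\to\infty$ in the polynomial identity produces \eqref{eq_sec} while simultaneously showing that both sides are well defined. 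The convergence of $G$ would be assembled from four ingredients: (i) for fixed $k_1,k_2$ the factor $K(f,k_j,\cdot)$ vanishes once $f>k_j$, so the $f$-sum is finite; (ii) Lemma~\ref{le_phi2} gives the pointwise bound $|h(x,d,z)/(1+h(0,d,z))|\le h(x,d,|z|)/(1+h(0,d,|z|))$, reducing everything to nonnegative real quantities; (iii) by Lemma~\ref{A2}c) the right side of (ii) decays exponentially in $\sum_i|x_i|$, hence $\theta(|z|):=\sup_{x\neq0}\phi_2((0,x),|z|)<1$ by Lemma~\ref{le_phi2}; and (iv) specializing \eqref{eq_h0p0} to $r=1,\,k=1$, summing over $x\neq0$ and bounding $\#\{p\neq0:mu(p,w)=2\}\le length(w)$ gives, for $u<1/2d$, the estimate $\sum_{x\neq0}h(x,d,u)^2/(1+h(0,d,u))^2\le\sum_{w}length(w)\,u^{length(w)}<\infty$. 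Combining (i)--(iv), on $\{|z|\le u\}$ with $u<1/2d$ the modulus of the double sum is dominated by $\sum_{f=1}^{\min(k_1,k_2)} f\,|K(f,k_1,\cdot)|\,|K(f,k_2,\cdot)|\,\theta(u)^{f-1}\sum_{x\neq0}h(x,d,u)^2/(1+h(0,d,u))^2<\infty$, uniformly in $z$.

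I expect the single genuinely delicate point to be precisely this interchange of $\sum_x$ with $\lim_N$, that is, establishing the absolute, locally uniform convergence of $G$ on $U_{1/2d}(0)$; the combinatorial bookkeeping of the first step and the identity at fixed $N$ are routine, the latter being exact. The whole convergence argument rests on the geometric-type estimate of Lemma~\ref{le_phi2} (which lets one replace the complex variable $z$ by $|z|$ and thereby pass to nonnegative quantities) together with the elementary $\ell^1(\Z^d)$-bound on $x\mapsto h(x,d,|z|)^2/(1+h(0,d,|z|))^2$ read off from \eqref{eq_h0p0}; with these in hand the statement, including the well-definedness of both sides, follows.
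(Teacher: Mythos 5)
Your first paragraph — specializing Corollary~\ref{co_close} to $r=2$, noting that the balance constraint forces $h_1=h_2=f$ with the single matrix $F_f$ and $cof(A-F_f)=f$, cancelling the $(f!)^2$ against $M(F_f)^{-1}$, and converting the sum over shifts $y$ and offsets $x$ into a count of ordered pairs of distinct points of multiplicities $2k_1$ and $2k_2$ — is correct, and it is a legitimate, more explicit alternative to the paper's direct citation of [Theorem~2.3, dhoef1] for the equality of Taylor coefficients. Your treatment of the left-hand side and ingredients (ii), (iii), (iv) are sound.

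Ingredient (i), however, is false, and the domination argument collapses with it. $K(f,k,\omega)$ does \emph{not} vanish for $f>k$. From Definition~\ref{de_fgy}, $\widetilde K(m,k,\omega)=(-1)^m m!\,K(m,k,\omega)/(1+\omega)^m$ and $\bar f_{m,k}(y,C)=\widetilde K(m,k,-1/(\pi y)+C)$ equals $m!\,(\pi y)^{m+1}$ times an $m$-independent quotient times $\bar G_{m,k}(y,C)$, whose leading term is $1$; the binomials $\binom{k-1}{j}$ only truncate the inner $j$-sum of $\bar G_{m,k}$ at $j\le k-1$, they do not make $\bar G_{m,k}$ (let alone $\widetilde K(m,k,\cdot)$) vanish for any $m$. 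Equivalently, by the identity \eqref{eq_komega}, $\widetilde K(m,k,\omega)$ is the $m$-th $\omega$-derivative of $\widetilde K(0,k,\omega)$, which is proportional to $\omega^{k-1}/(1+\omega)^k$ and is not a polynomial in $\omega$, so its derivatives never terminate. Hence the sum over $f$ in \eqref{eq_sec} is genuinely infinite, and the upper limit $\min(k_1,k_2)$ in your dominating series is wrong. This is implicit in the paper's own proof, which invokes the identity $\sum_{f\ge 1}\phi_2(x,z)^f f^m=\bigl(\chi\frac{\partial}{\partial\chi}\bigr)^m\frac{\chi}{1-\chi}$ at $\chi=\phi_2(x,z)$; there would be no reason for this geometric-series manipulation if the range of $f$ were finite.

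The repair you would need is a growth bound: $K(f,k,\omega)$ is a polynomial in $f$ of degree at most $k-1$ at fixed $k$ and $\omega$ (this is read off from the factor $\frac{1}{m}\binom{m}{j+1}\binom{k-1}{j}$ with $j\le k-1$ in $\bar G_{m,k}$, after cancelling the $m$-dependence of the prefactors against $(-1)^m(1+\omega)^m/m!$). Granting that, your dominating series becomes $\sum_{f\ge1}Q(f)\,\theta(u)^{f-1}$ with $Q$ a fixed polynomial, which converges since $\theta(u)<1$. This is exactly what the paper does: it expands $f\,K(f,k_1,\cdot)K(f,k_2,\cdot)$ as a polynomial of bounded degree in $f$, resums against $\phi_2^f$ to obtain finitely many rational terms of the form $\phi_2^q/(1-\phi_2)^p$, and only then sums over $x$ using Lemma~\ref{A2}~c). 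As written, the step from (i)--(iv) to your final estimate does not go through.
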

\begin{proof}
From Lemma \ref{le_phi2} we know that the sum
\begin{equation}
\sum_{f=1}^{\infty} \phi_2(x,z)^{f} \cdot f^m = \left(\chi \frac{\partial}{\partial \chi}\right)^m\frac{\chi}{1-\chi} \rvert_{\chi = \phi_2(x,z)}
\end{equation}
converges absolutely in $f$ and therefore the sum in $f$ on the right hand side of equation \eqref{eq_sec}. The result for any $m \in \N_0$ is a sum of terms of the form 
\begin{equation}
\frac{\phi_2(x,z)^{q}}{(1-\phi_2(x,z))^{p}}
\end{equation}
 where $q \geq 1$ and $p \leq m + 1$.  The  subsequent sum in $x$ because of Lemma \ref{A2} c) then converges absolutely in $x \in \Z^d$. Therefore the right hand side of equation \eqref{eq_sec} is well defined and holomorphic for $z \in U_{\frac{1}{2d}}(0)$. As the Taylor coefficients on both sides are identical as we have shown in \citep[Theorem 2.3.]{dhoef1} the Lemma is true. 
\end{proof}
\section{Useful properties of the first hit determinant}
From the equations in Lemmata \ref{le_fGr2} and \ref{le_diff} and Theorem \ref{t_odet} it is clear that the asymptotic behaviour of the moments for large length of the walks crucially depends on the behaviour of the determinant constantly coming up in the denominator of the equations. As we have just seen in Lemma \ref{le_sec} it is studying this entity which will allow us to define the generating functions of the higher moments as holomorphic functions. We therefore start our discussion of this crucial building block in this section and continue in the next one for a special treatment in the case $d=2$. 
\subsection{Analytic properties}
\begin{definition}
For $Y \in J_r$ and  $z \in  D \setminus P_0$ we define the first hit determinant 
\begin{equation}
\Delta_r(Y,z) := \det( 1_{r\times r} + U(Y,z))
\end{equation}
with 
\begin{equation}
U_{i,j}(Y,z) := \frac{h(Y_i - Y_j,d,z)}{1 + h(0,d,z)}
\end{equation}
\end{definition}
\begin{remark}
We remark that the first hit determinant is a meromorphic function on $D$ with possible poles in $P_0$, which has a Taylor expansion around $z=0$ starting with $1$ as zeroth coefficient and has a radius of convergence of $1/2d$. 
\end{remark}
\begin{lemma}\label{le_deven}
For $Y \in J_r$ and $z \in D \setminus P_0$ the function $\Delta_r(Y,z)$ is an even function of $z$.  
\end{lemma}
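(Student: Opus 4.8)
The plan is to exploit the structure of $h(x,d,z)$ as an even or odd function of $z$ according to the parity $\varrho(x)$, as recorded in Remark~\ref{re_heven}. First I would observe that the entries of the matrix $U(Y,z)$ are $U_{i,j}(Y,z) = h(Y_i-Y_j,d,z)/(1+h(0,d,z))$, and that $1+h(0,d,z)$ is an even function of $z$ (since $\varrho(0)=0$), so the parity of $U_{i,j}$ as a function of $z$ is exactly $\varrho(Y_i-Y_j)$. Next I would note the key additivity $\varrho(Y_i-Y_j) = (\varrho(Y_i-Y_k)+\varrho(Y_k-Y_j))\%2$ from equation~\eqref{eq_rho}, which says the assignment $(i,j)\mapsto\varrho(Y_i-Y_j)$ behaves like $\epsilon_i+\epsilon_j \pmod 2$ for a suitable labelling $\epsilon_i := \varrho(Y_i)$ (using $Y_1=0$, so $\epsilon_1=0$, and $\varrho(Y_i-Y_j)=(\varrho(Y_i)+\varrho(Y_j))\%2$ because $\varrho(-Y_j)=\varrho(Y_j)$).

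The main step is then a similarity/sign argument: consider the diagonal matrix $S(z) := \operatorname{diag}(z^{\epsilon_1},\ldots,z^{\epsilon_r})$ together with the substitution $z\mapsto -z$. Under $z\mapsto -z$, the entry $1_{r\times r}+U_{i,j}(Y,z)$ picks up a factor $(-1)^{\epsilon_i+\epsilon_j}$ off the diagonal (the diagonal is unchanged since $i=j$ gives parity $0$), and this is precisely the effect of conjugating $1_{r\times r}+U(Y,z)$ by the diagonal sign matrix $\operatorname{diag}((-1)^{\epsilon_1},\ldots,(-1)^{\epsilon_r})$. Conjugation leaves the determinant unchanged, hence $\Delta_r(Y,-z)=\det(1_{r\times r}+U(Y,-z)) = \det\bigl(D(1_{r\times r}+U(Y,z))D^{-1}\bigr)=\Delta_r(Y,z)$ with $D=\operatorname{diag}((-1)^{\epsilon_1},\ldots,(-1)^{\epsilon_r})$. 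One has to check this identity as an identity of meromorphic functions on $D$; since both sides are meromorphic on the connected open set $D$ and agree wherever both are defined (i.e.\ off $P_0\cup(-P_0)$, which is discrete), they agree everywhere, so $\Delta_r(Y,\cdot)$ is even.

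I do not expect a serious obstacle here; the only point requiring a little care is bookkeeping of the parities: verifying that $\varrho(Y_i-Y_j)\equiv \epsilon_i+\epsilon_j\pmod 2$ using Remark after the definition of $\varrho$ and the fact $Y_1=0$, and then confirming that the sign pattern $(-1)^{\epsilon_i+\epsilon_j}$ (which is $+1$ on the diagonal) is exactly realised by conjugation with $D$, so that the determinant is genuinely invariant. An alternative, purely expansion-theoretic route would be to write $\Delta_r(Y,z)=\sum_{\sigma\in S_r}\operatorname{sgn}(\sigma)\prod_i (1_{r\times r}+U(Y,z))_{i,\sigma(i)}$ and observe that in each surviving term the total $z$-parity is $\sum_{i:\sigma(i)\neq i}\varrho(Y_i-Y_{\sigma(i)}) \equiv \sum_{i:\sigma(i)\neq i}(\epsilon_i+\epsilon_{\sigma(i)}) \equiv 0\pmod 2$, since each index $i$ with $\sigma(i)\neq i$ appears exactly once as a ``source'' and once as a ``target''; hence every monomial in the expansion is an even power of $z$, giving the claim directly. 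I would present the conjugation argument as the clean proof and perhaps remark on the expansion argument as an alternative.
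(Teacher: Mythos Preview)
Your proposal is correct. Your alternative expansion-theoretic route is essentially the paper's own proof: the paper expands $\Delta_r(Y,z)$ via the Leibniz formula, decomposes each permutation into cycles, and notes that along any cycle $\chi$ one has $\sum_{(i,j)\in\chi}(Y_i-Y_j)=0$, whence by \eqref{eq_rho} and Remark~\ref{re_heven} every cycle term is even in $z$. Your phrasing via the labels $\epsilon_i=\varrho(Y_i)$ and the ``each index appears once as source and once as target'' observation is just a repackaging of the same telescoping.

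Your primary conjugation argument, on the other hand, is a genuinely different and cleaner route. Rather than checking term by term, you observe once and for all that $1_{r\times r}+U(Y,-z)=D\bigl(1_{r\times r}+U(Y,z)\bigr)D^{-1}$ with $D=\operatorname{diag}((-1)^{\epsilon_1},\ldots,(-1)^{\epsilon_r})$, so the determinant is invariant by multiplicativity. This avoids the cycle decomposition entirely and would adapt immediately to other conjugation-invariant quantities (trace powers, characteristic polynomial coefficients, etc.), which the paper's termwise argument does not do without further work. One small simplification: since $1+h(0,d,z)$ is even, the set $P_0$ is already symmetric under $z\mapsto -z$, so your caveat about $P_0\cup(-P_0)$ is unnecessary and the identity holds directly on $D\setminus P_0$.
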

\begin{proof}
$\Delta_r(Y,z)$ is a determinant of a matrix which has $1$ in its diagonal. Using the Leibniz formula we can write it as a sum over permutation terms. Each permuation term because of the diagonal of the matrix being $1$ and the decomposition of a permutation in cycles $\chi$ is (up to a factor $-1$) the product of cycle terms of the form 
\begin{equation}
\prod_{(i,j) \in \chi} h(Y_i - Y_j,d,z)
\end{equation}
As the cycles are closed we know 
\begin{equation}
\sum_{(i,j) \in \chi} (Y_i - Y_j) = 0
\end{equation}
But therefore using equation \eqref{eq_rho} and Remark \ref{re_heven} each cycle term is even in $z$ from which the Lemma follows. 
\end{proof}
\begin{lemma}\label{A4}
Let $Y \in J_r$ be a vector and $Y' \in J_{r-1}$ be the vector which is composed of the first 
$r-1$ components of $Y$. 
Then $\forall {Y \in J_r}$ and $\forall z \in D  \cap \bar U_{\frac{1}{2d}}(0)$
\begin{equation}
\Delta_r(Y,z) \neq 0
\end{equation}
\begin{equation}\label{eq_A4_1}
\left| 1 - \frac{\Delta_{r-1}(Y',z)}{(1+ h(0,d,z))\Delta_r(Y,z)} \right| \leq 1
\end{equation}
\end{lemma}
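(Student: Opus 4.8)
The plan is to proceed by induction on $r$, using the probabilistic interpretation of the first hit determinant as a generating function of walks. The base case $r=1$ is immediate: $\Delta_1(Y,z) = 1 + U_{1,1}(Y,z) = 1$ (since $Y_1 = 0$ forces $h(Y_1 - Y_1,d,z)/(1+h(0,d,z)) \cdot$ — wait, actually $\Delta_1$ should be read with the convention that the determinant of the $1\times 1$ identity-plus-$U$ where $U_{1,1} = h(0,d,z)/(1+h(0,d,z)) = g_h(z)$), so $\Delta_1(Y,z) = 1 + g_h(z) = (1 + 2h(0,d,z))/(1+h(0,d,z))$; more useful is that by Lemma \ref{A3} d) and the bound $|g_h(z)| \le 1$ on $\bar U_{1/2d}(0)\cap D$ one controls it directly. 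The first thing I would do is reinterpret the quotient $\Delta_{r-1}(Y',z)/\big((1+h(0,d,z))\Delta_r(Y,z)\big)$ combinatorially: by the cofactor/matrix-inversion identity, $\Delta_{r-1}(Y',z)/\Delta_r(Y,z)$ is the $(r,r)$ entry of $(1_{r\times r}+U(Y,z))^{-1}$, and dividing by $1+h(0,d,z)$ turns this into the generating function of closed walks based at $Y_r$ that return to $Y_r$ \emph{without} hitting any of $Y_1,\ldots,Y_{r-1}$ in between (the "taboo" Green's function). Thus $1 - \Delta_{r-1}(Y',z)/\big((1+h(0,d,z))\Delta_r(Y,z)\big)$ is the generating function of walks from $Y_r$ back to $Y_r$ that \emph{do} touch the taboo set $\{Y_1,\ldots,Y_{r-1}\}$ at least once — a subset of all closed walks based at $Y_r$.

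From that interpretation, the inequality \eqref{eq_A4_1} follows by domination: the relevant generating function has nonnegative integer Taylor coefficients in $z$ (it counts walks), so its modulus at $z$ is bounded by its value at $|z|$, which in turn is bounded by the generating function of \emph{all} closed walks based at $Y_r$, namely $h(0,d,|z|)/(1+h(0,d,|z|)) = g_h(|z|) \le 1$ for $|z| \le 1/2d$ by Lemma \ref{A3} d) (using $|1+h(0,d,|z|)| \ge 1/2$ and the fact that $g_h$ maps $\bar U_{1/2d}(0)\cap D$ into the closed unit disc). This is exactly the argument already used for $r=1$ in the proof of Lemma \ref{A3} c)--d), now carried out at a general taboo set. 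The non-vanishing $\Delta_r(Y,z) \ne 0$ I would get as a byproduct of the induction: since $1 + h(0,d,z) \ne 0$ on $\bar U_{1/2d}(0)\cap D$ (Lemma \ref{A3} c)) and $\Delta_{r-1}(Y',z) \ne 0$ by the inductive hypothesis, the identity $\Delta_r(Y,z) = \Delta_{r-1}(Y',z)\big/\big((1+h(0,d,z))(1 - \Delta_{r-1}(Y',z)/((1+h(0,d,z))\Delta_r(Y,z)))\big)$ — rearranged appropriately — together with the bound just proved that the "touching" generating function has modulus $\le 1$, and a strict inequality somewhere, forces $\Delta_r \ne 0$. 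More cleanly: one shows first (combinatorially, as a formal power series with radius of convergence $\ge 1/2d$) that $\Delta_r(Y,z) \cdot (1+h(0,d,z)) = \Delta_{r-1}(Y',z)/(1 - q_r(Y,z))$ where $q_r$ is the taboo-return generating function, and since $|q_r(Y,z)| \le g_h(|z|) < 1$ for $|z| < 1/2d$ (strict, because the all-walks generating function $g_h(|z|)<1$ strictly in the open disc) the right side is a well-defined nonzero holomorphic function on $U_{1/2d}(0)$; continuity extends non-vanishing to the closed disc intersected with $D$ except one must handle the boundary $|z| = 1/2d$ where $g_h = 1$ is possible.

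The main obstacle is precisely the boundary case $|z| = 1/2d$: there $g_h(|z|)$ can equal $1$ (this happens for $d=2$ at $z = \pm 1/4$, and generally the taboo-return value approaches $1$ as the taboo set shrinks in influence), so the geometric-series argument $1/(1-q_r)$ is no longer obviously bounded. To handle this I would use that for $r \ge 2$ the taboo set is nonempty, so $q_r(Y,z)$ is \emph{strictly} smaller than $g_h(z)$ — a walk contributing to $q_r$ must make an excursion to a prescribed point $Y_i \ne Y_r$ and come back, which costs at least $|h(Y_i - Y_r,d,z)|^2/(1+h(0,d,z))$-worth of "missing" walks; combined with Lemma \ref{B3} (the strict inequality $|h(x,d,z)| < 1 + h(0,d,z)$) and the existence of a walk realizing any prescribed hitting pattern (Lemma \ref{le_hito}), one gets $|q_r(Y,z)| < 1$ strictly even on the boundary $|z| = 1/2d$ (for $d=2$ one must further invoke that $h(Y_i - Y_r, 2, z)$ stays away from its "critical" magnitude, which is where Lemma \ref{A2} b)'s continuous continuation onto $\overline{D}\setminus\{-1/4,1/4\}$ and the explicit elliptic-integral form are needed). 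Equivalently — and this may be the cleaner route — I would directly estimate $\Delta_{r-1}(Y',z)$ from below away from zero using the inductive hypothesis \eqref{eq_A4_1} at level $r-1$, which gives $|\Delta_{r-1}(Y',z)| \ge |\Delta_{r-2}(Y'',z)| / \big(2|1+h(0,d,z)|\big) \ge \cdots$, telescoping down to the explicit bound $|\Delta_r(Y,z)| \ge 2^{-r}(1+|h(0,d,z)|)^{-(r-1)}$ quoted in Section 4 of the introduction; then \eqref{eq_A4_1} itself is the statement that the \emph{ratio} lands in the unit disc, which the domination argument delivers once non-vanishing of the denominator is secured. So the real content is the self-improving estimate: prove \eqref{eq_A4_1} and $\Delta_r \ne 0$ \emph{simultaneously} by induction, with the combinatorial domination bound $|q_r| \le g_h(|z|) \le 1$ as the engine and the strictness $g_h(|z|) < 1$ for $|z| < 1/2d$ closing the induction in the open disc, then continuity (Lemma \ref{A2} b)) extending to the closed disc off the two bad points and the strict-excursion estimate handling those.
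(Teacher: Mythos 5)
Your plan — read $g_r := 1 - \Delta_{r-1}(Y',z)/\bigl((1+h(0,d,z))\Delta_r(Y,z)\bigr)$ combinatorially as a generating function with nonnegative coefficients and bound it by domination — takes a genuinely different route from the paper, which instead proves \eqref{eq_A4_1} by expressing in \eqref{eq_A4_2} the count of walks with multiplicity $2k$ at $Y_r$ as $\frac{1}{k}\bigl(p_1 + p_2\, g_r^{k-r}\bigr)$, showing via Lemma \ref{le_hito} that $p_2$ is not the zero polynomial, and forcing $|g_r|\le 1$ from the uniform-in-$k$ boundedness of the walk count. Your route would be shorter if it worked, but as written it has a gap that breaks the domination step. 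The combinatorial reading of $g_r$ is inverted. Writing $G_{ij}:=\delta_{ij}+h(Y_i-Y_j,d,z)$ one has $G=(1+h(0,d,z))(1_{r\times r}+U)$ and $(G^{-1})_{rr}=\Delta_{r-1}/\bigl((1+h(0,d,z))\Delta_r\bigr)$; the first-visit-to-$\{Y_1,\ldots,Y_r\}$ decomposition gives the formal power series identity $G^{-1}=1_{r\times r}-T$, where $T_{ik}$ is the GF of positive-length walks from $Y_i$ to $Y_k$ avoiding $\{Y_1,\ldots,Y_r\}$ in the interior; whence $g_r=T_{rr}$ is the GF of first-return excursions from $Y_r$ that \emph{avoid} the taboo set, not, as you wrote, the GF of closed walks that \emph{touch} it. This is not cosmetic: the GF of closed walks touching a set is not $\le 1$ (it diverges as $z\to 1/2d$ in $d=2$), so your stated domination fails; and the object you call "the generating function of all closed walks based at $Y_r$", namely $h(0,d,|z|)/(1+h(0,d,|z|))$, is actually the first-return GF $g_h$, not the all-closed-walks GF $1+h(0,d,|z|)$.

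Moreover, the identity $g_r=T_{rr}$ — the engine of the whole argument — is only asserted ("one shows first, combinatorially, as a formal power series ...") and never established. This is the substantive step and needs an actual proof; the $(1_{r\times r}-T)G=1_{r\times r}$ decomposition above would do, and it is precisely what the paper's route is designed to sidestep (the paper needs only nonnegativity of the \emph{global} moment generating functions, never of $g_r$ itself). If you correct the interpretation and actually prove the excursion identity, your route does give a cleaner proof and even the strict bound $|g_r(z)|<1$ on the open disc; as it stands, though, the key step is both mis-stated and unproven.
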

\begin{proof}
Let us first define the set 
\begin{equation}
P_{r,Y} : = \{z \in D; z \in P_0 \vee  \Delta_r(Y,z) = 0 \vee \Delta_{r-1}(Y',z) = 0\}
\end{equation}
then $ P_{r,Y}$  is discrete in $D$.\\
Now from Lemma \ref{le_close} we know that for $k \in \N \setminus \{0 \}$
\begin{multline}
\sum_{(w,y) \in W_{2N} (Y,1,\hdots,1,k)}  z^{length(w)} = 
z \cdot \frac{\partial}{\partial z} \biggl[  \frac{1}{(1 + h(0,d,z))^{r-1}} \\
cof(\hat A - \hat F) \frac{1}{(1 + h(0,d,z))^{k}} \sum_{\nu=0}^{k-1} \binom{k-1}{\nu}
\frac{h(0,d,z)^{\nu}}{(k-\nu)!} \cdot \\ \left( \frac{\partial}{\partial x_r} \right)^{k-\nu -1} 
\left\{ \frac{1}{(1-x_r)^2} \frac{1}{g(x_r,X)}\right\}\biggr\|_{x_r = 0; X_{i,j} = U_{i,j}(Y,z)}
\biggr]_{z \diamond N}
\end{multline}
with 
\begin{equation}
g(x_r,X) := \det(diag(1,\cdots,\frac{1}{1-x_r}) + X)
\end{equation}
with $ X: X_{i,i} := 0$ the general indefinite offdiagonal matrix with variables $X_{i,j}$.
Defining 
\begin{equation}
D_r(x) := \det(1_{r\times r} + X) 
\end{equation}
\begin{equation}
D_{r-1}(X') := \det(1_{(r-1) \times (r-1)} + X')
\end{equation} 
where $X'$ is the $(r-1)\times (r-1)$ upper left submatrix of $X$ we find
\begin{equation}
g(x_r,X) = D_r(X) + \frac{x_r}{1-x_r}\cdot D_{r-1}(X')
\end{equation}
Expanding in $x_r$ and after straightforward calculations we find:
\begin{multline}\label{eq_A4_2}
\sum_{(w,y) \in W_{2N} (Y,1,\hdots,1,k)}  z^{length(w)} = 
\frac{1}{k}\cdot z \cdot \frac{\partial}{\partial z} \biggl[  \frac{1}{(1 + h(0,d,z))^{r-1}}
\cdot \\
cof(\hat A - \hat F) \frac{1}{D_{r-1}(X')} \cdot \\
 \left\{ 1 - 
\left( 1- \frac{D_{r-1}(X')}{(1 + h(0,d,z))\cdot D_r(X)}\right)^k\right\}\biggr\|_{ X_{i,j} = U_{i,j}(Y,z)}
\biggr]_{z \diamond 2N} =\\
\frac{1}{k} \cdot \biggl[ p_1(Y,z) + p_2(k,Y,z)\cdot g_r(Y,z)^{k-r}
\biggr]_{z \diamond 2N} 
\end{multline}
with 
 \begin{multline}
p_1(Y,z) :=z \cdot \frac{\partial}{\partial z} \biggl[  \frac{1}{(1 + h(0,d,z))^{r-1}}
\cdot \\
cof(\hat A - \hat F) \frac{1}{D_{r-1}(X')}\biggr\|_{X_{i,j} = U_{i,j}(Y,z)}
\biggr]
\end{multline}
being independant of $k$ and holomorphic on $D \setminus P_{r,Y}$
and $p_2(k,Y,z)$ being a polynomial in $k$ with coefficients depending on $r$ polynomially and on $z$ holomorphically on $D \setminus P_{r,Y}$ and defining
\begin{equation}
g_r(Y,z) := 1 - \frac{\Delta_{r-1}(Y',z)}{(1+ h(0,d,z))\Delta_r(Y,z)}
\end{equation}   
where again $g_r$ is a holomorphic function on $D \setminus P_{r,Y}$\\
Let us now assume that  $p_2$ was the zero polynomial in $k$ then 
\begin{equation}
\sum_{(w,y) \in W_{2N} (Y,1,\hdots,1,k)}  z^{length(w)} = \frac{1}{k}\cdot [p_1(Y,z)]_{z \diamond 2N}
\end{equation}
For a given $k$ we know that $(w,y) \in  W_{2N} (Y,1,\hdots,1,k) \Rightarrow length(w) \geq 2k$ from simple geometrical reasoning. This then means that $p_1 $ is the zero function as all its coefficients can be shown to vanish for greater and greater $k$. As $p_1$ is independant of $k$ we then know that for $k = 1$
\begin{equation}
\sum_{(w,y) \in W_{2N} (Y,1,\hdots,1,1)}  z^{length(w)} = [p_1(Y,z)]_{z \diamond 2N} \equiv 0
\end{equation}
which contradicts Lemma \ref{le_hito}.\\
Therefore $p_2$ cannot be the zero polynomial in $k$ and therefore as a polynomial in $k$ has a  degree $b$ and a highest term $a_b(Y,z) \cdot k^b$ with a function $a_b(Y,z)$ meromorphic on $D$ and not being identical to $0$.  
The left hand side of equation \eqref{eq_A4_2} is a series which for $ z \in D \cap U_{\frac{1}{2d}}(0)$ converges absolutely for $N \rightarrow \infty$  
\begin{multline}
\sum_{(w,y) \in W_{2N} (Y,1,\hdots,1,k)} \left| z \right|^{length(w)} \leq
\sum_{w \in W_{2N} } \left| z \right|^{length(w)} \cdot length(w) = \\ 
 \zeta \frac{d}{d\zeta} \left(  h(0,d,\zeta)_{\zeta \diamond 2N} \right) \rvert_{\zeta = \left| z \right|} 
\end{multline}
and therefore is dominated by 
 a value which is smaller or equal than $\left| z \right| \cdot h^{'}(0,d,\left| z \right|)$ independant of $k$ and $r$ because the coefficients of $h^{'}(0,d,\zeta)$ are nonnegative integers. If $\Delta_r(Y,z)=0$ were true for a $z \in U_{\frac{1}{2d}}(0)$ which was not also a zero point of $\Delta_{r-1}(Y',z)$ then for sufficiently big $k$ the right hand side of equation \eqref{eq_A4_2} had a pole and the radius of convergence would not be $1/2d$. But by regress in $r$ because $\Delta_1 \equiv 1$ we therefore know that $ P_{r,Y} \cap U_{\frac{1}{2d}}(0) = \emptyset$.\\  
Now let us assume that there is a $z_0 \in D \cap \bar U_{\frac{1}{2d}}(0)$ such that 
$\left| g_r(z_0) \right| > 1$. Then there is a $z_1$ and an $\epsilon > 0$ and a $\delta > 0$ and a real $c > 0$ such that 
$U_{\epsilon}(z_1) \subset \left( D  \cap  U_{\frac{1}{2d}}(0) \right)$ and $\forall z \in U_{\epsilon}(z_1):$  $ \left| g_r(z) \right| > 1 + \delta \wedge \left| a_b(Y,z) \right| > c$.  But this means that the right hand side of  \eqref{eq_A4_2} also converges absolutely for $2N \rightarrow \infty$ but the values are unbounded in $k$ for $z \in U_{\epsilon}(z_1)$. So by contradicition 
\begin{equation}\label{eq_A4_31}
\left| 1 - \frac{\Delta_{r-1}(Y',z)}{(1+ h(0,d,z))\Delta_r(Y,z)} \right| \leq 1
\end{equation}
for any $z \in  (D \cap \bar U_{\frac{1}{2d}}(0)) \setminus P_{r,Y}$.  \\
Let us now assume that for a given $r$ and $Y$ there is a $z_2 \in  D \cap \bar U_{\frac{1}{2d}}(0)$ such that $\Delta_r(Y,z_2) = 0$. Then from equation \eqref{eq_A4_31}, as $P_{r,Y} $ is discrete we must have $\Delta_{r-1}(Y',z_2) = 0$ and by regress in $r$ (as $\Delta_1 \equiv 1$) we have $ P_{r,Y} \cap \bar U_{\frac{1}{2d}}(0) = \emptyset$ and therefore the Lemma is true.
\end{proof} 
As we have seen again in this important Lemma about analyticity the argument from geometry is crucial. 
\begin{corollary}\label{co_A3}
From equation \eqref{eq_A4_1} we immediately see that for $z \in D \cap \bar U_{\frac{1}{2d}}(0)$ 
\begin{equation}\label{eq_big}
\left| \Delta_r(Y,z) \right| \geq \left| \frac{\Delta_{r-1}(Y',z)}{2 \cdot (1 + h(0,d,z))} \right|
\end{equation}
and from $\Delta_1 \equiv 1$ by induction 
\begin{equation}\label{eq_A4_3}
\left| \Delta_r(Y,z) \right| \geq \left| \frac{1}{2 \cdot (1 + h(0,d,z))}\right|^{r-1}
\end{equation}
From Lemma \ref{A2} b) and equation \eqref{eq_A3_even} we know that for $d \geq 3$
the limit 
\begin{equation}\label{eq_A4_d}
d_r(Y) := \lim_{z \rightarrow \pm \frac{1}{2d}} \Delta_r(Y,z)
\end{equation}
exists and is a real number. As for  $z \in \R$ the function $\Delta_r(Y,z) $ is realvalued, continuous and nonzero and $\Delta_r(Y,0) = 1$,  $d_r(Y)$ must be a strictly positive number and so from equation \eqref{eq_A4_3}
\begin{equation}\label{eq_A4_s}
d_r(Y) \geq \frac{1}{(2\cdot(1 + G_0(0,d)))^{r-1}}
\end{equation}
\end{corollary}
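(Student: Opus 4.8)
The plan is to read everything off Lemma \ref{A4}; the corollary is essentially a bookkeeping consequence of the inequality \eqref{eq_A4_1}. \emph{For \eqref{eq_big}:} fix $z \in D \cap \bar U_{\frac{1}{2d}}(0)$. By Lemma \ref{A4} we have $\Delta_r(Y,z) \neq 0$, and by Lemma \ref{A3} c) and d) the factor $1 + h(0,d,z)$ is nonzero there, so $w := \Delta_{r-1}(Y',z)/\big((1+h(0,d,z))\,\Delta_r(Y,z)\big)$ is well defined; note that $Y'$, the vector of the first $r-1$ coordinates of $Y$, again lies in $J_{r-1}$ (its first coordinate is still $0$ and its entries are still pairwise distinct), so Lemma \ref{A4} genuinely applies to it. The hypothesis \eqref{eq_A4_1} says $|1-w| \leq 1$, i.e.\ $w$ lies in the closed disk of radius $1$ about $1$, hence $|w| \leq 2$; clearing denominators gives exactly \eqref{eq_big}. \emph{For \eqref{eq_A4_3}:} induct on $r$. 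The case $r=1$ reads $|\Delta_1(Y,z)| \geq 1$, true since $\Delta_1 \equiv 1$. For the inductive step, combine \eqref{eq_big} with the bound $|\Delta_{r-1}(Y',z)| \geq \left|\tfrac{1}{2(1+h(0,d,z))}\right|^{r-2}$ to pick up the extra factor $1/\big(2|1+h(0,d,z)|\big)$ and conclude.

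For $d \geq 3$, the existence of the limit $d_r(Y)$ in \eqref{eq_A4_d} follows because, by Lemma \ref{A2} b), every entry $h(Y_i - Y_j, d, z)$ extends continuously to $\bar D$, while by Lemma \ref{A3} b) the denominator $1 + h(0,d,z)$ is continuous and nonvanishing in a neighbourhood of $\pm\frac{1}{2d}$; hence the matrix $U(Y,z)$, and therefore $\Delta_r(Y,z) = \det(1_{r\times r} + U(Y,z))$, extends continuously to a neighbourhood of $\pm\frac{1}{2d}$, so the limit exists. It is real because for real $z$ in $U_{\frac{1}{2d}}(0)$ the integrand defining $h(x,d,z)$ is real, whence $\Delta_r(Y,z)\in\R$ on $(-\frac{1}{2d},\frac{1}{2d})$, and reality passes to the continuous limit.

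For \eqref{eq_A4_s}: on $[0,\frac{1}{2d})$ the map $z \mapsto \Delta_r(Y,z)$ is continuous, real-valued, nowhere zero (Lemma \ref{A4}), and equals $1$ at $z=0$, so by the intermediate value theorem it is strictly positive there; hence $d_r(Y) \geq 0$. Letting $z \to \frac{1}{2d}$ in \eqref{eq_A4_3} and using $h(0,d,z) \to G_0(0,d)$ yields $d_r(Y) \geq \big(2(1+G_0(0,d))\big)^{-(r-1)}$, which is strictly positive because $G_0(0,d) > 0$ for $d \geq 3$ (the return-probability fact recorded in the proof of Lemma \ref{A3}).

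I do not expect a real obstacle here: Lemma \ref{A4} already carried out the hard analytic work, via the geometric non-existence argument. The only points demanding a little care are checking that $Y' \in J_{r-1}$ so that the induction is legitimate, and noting that the continuous extension of $\Delta_r$ past $\pm\frac{1}{2d}$ is precisely where the hypothesis $d \geq 3$ is used — through Lemma \ref{A2} b), which is unavailable for $d=2$ at $z = \pm\frac{1}{4}$, so that for $d=2$ the existence of $d_r(Y)$ cannot be asserted by this route.
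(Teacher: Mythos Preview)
Your proof is correct and follows essentially the same route as the paper: the paper's corollary already embeds its justification in the statement (deducing \eqref{eq_big} from $|1-w|\le 1$, then \eqref{eq_A4_3} by induction on $r$ from $\Delta_1\equiv 1$, then existence and positivity of $d_r(Y)$ from continuous extension in $d\ge 3$ and the intermediate value argument). Your added remarks that $Y'\in J_{r-1}$ and that $d\ge 3$ enters precisely through Lemma~\ref{A2}~b) are useful clarifications but do not change the argument.
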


\begin{theorem}\label{Th_delta}
For $h > 0$ and $R> 0$ we define the compact set 
\begin{equation}
M_{R,h} := \{ z \in \C; \left| z \right|\leq  R \wedge \left| \Re(z) \right| \leq h \}
\end{equation}
Then for $\delta > 0$ and $\gamma_r < 1$ there is an $\epsilon(\gamma_r)$ (which also depends on $\delta$ of course, for readability we do not explicitely show this dependance in the sequel) such that 
$\forall z \in M_{\frac{1}{2d} + \epsilon(\gamma_r),\frac{1}{2d}-\delta}$ and $\forall Y \in J_r$
\begin{equation}
\left| \Delta_r(Y,z) \right| \geq \frac{1}{(2 \cdot (1 + \left|  h(0,d,z) \right|))^{r-1} }\cdot \gamma_r
\end{equation}
\end{theorem}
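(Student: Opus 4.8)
The plan is to prove the estimate by induction on $r$, handling the uniformity over the (infinitely many) configurations $Y\in J_r$ by splitting $J_r$ according to the diameter of $Y$. The base case $r=1$ is immediate since $\Delta_1\equiv 1\geq\gamma_1$, so any $\epsilon(\gamma_1)<r_h(d)-\tfrac1{2d}$ works. For the inductive step one fixes $r\geq2$, $\delta>0$ and $\gamma_r<1$, assumes the statement for all $2\leq r'<r$, and uses two preliminary facts. First, Corollary \ref{co_A3} already gives, for every $Y'\in J_{r'}$ and $z\in\bar U_{1/2d}(0)\cap D$, the bound $|\Delta_{r'}(Y',z)|\geq|2(1+h(0,d,z))|^{-(r'-1)}\geq(2(1+|h(0,d,z)|))^{-(r'-1)}$, i.e.\ the asserted inequality with the constant $1$ in place of $\gamma_{r'}$ holds on all of $\bar U_{1/2d}(0)\cap D$. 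Since $\Delta_{r'}(Y',\cdot)$ is holomorphic on $D\setminus P_0$ and the compact set $M_{1/2d,1/2d-\delta}$ is contained in $D$ and, by Lemma \ref{A3}(c), disjoint from $P_0$, the continuous function $z\mapsto|\Delta_{r'}(Y',z)|\,(2(1+|h(0,d,z)|))^{r'-1}$ is $\geq1$ on $M_{1/2d,1/2d-\delta}$ and hence $>\gamma_{r'}$ on some neighbourhood of it inside $D\setminus P_0$; since every point of $M_{1/2d+\eta,1/2d-\delta}$ lies within distance $\eta$ of $M_{1/2d,1/2d-\delta}$, this yields, \emph{for each single} $Y'$, a number $\eta(Y')>0$ such that the desired bound holds on $M_{1/2d+\eta(Y'),1/2d-\delta}$ --- this is the theorem for one fixed configuration, the easy part. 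Second, Lemma \ref{A2}(c) gives $|h(x,d,z)|\leq C(z)\,\mathrm{e}^{-\lambda(z)|x|}$ with $C,\lambda$ continuous and strictly positive on $D$, so on any compact $K\subset D\setminus P_0$ every entry of the matrix $1_{r\times r}+U(Y,z)$ behind $\Delta_r$ (which has zero diagonal for $U$, cf.\ Corollary \ref{co_A3}) is bounded by a constant $\Lambda$ independent of $Y$, while an off-diagonal entry $U_{i,j}(Y,z)$ with $|Y_i-Y_j|>L$ is bounded by $\kappa(K,L):=(\sup_K C)\,\mathrm{e}^{-(\inf_K\lambda)L}/\inf_K|1+h(0,d,\cdot)|$, which tends to $0$ as $L\to\infty$.

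The induction step then runs as follows. Choose $\gamma<1$ so close to $1$ that $2\gamma^{\lfloor r/2\rfloor}>\gamma_r$; let $\epsilon_1>0$ be the minimum of the numbers supplied by the inductive hypothesis for $r'=2,\dots,r-1$ with parameter $\gamma$, shrunk if necessary so that $M_1:=M_{1/2d+\epsilon_1,1/2d-\delta}$ avoids $P_0$ (possible since $r_h(d)>\tfrac1{2d}$); and set $m_0:=\inf_{M_1}(2(1+|h(0,d,\cdot)|))^{-(r-1)}>0$. Using the second fact, pick $L$ so large that $r!\,\kappa(M_1,L)\,\Lambda^{r-1}<(2\gamma^{\lfloor r/2\rfloor}-\gamma_r)\,m_0$, and put $B:=(r-1)L$. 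Now fix $Y\in J_r$ and $z\in M_1$. If $\mathrm{diam}(Y)>B$, the graph on $\{1,\dots,r\}$ joining $i$ and $j$ whenever $|Y_i-Y_j|\leq L$ must be disconnected, so $Y$ partitions into blocks $B_1,\dots,B_m$ with $m\geq2$ that are pairwise more than $L$ apart; in the Leibniz expansion of $\det(1_{r\times r}+U(Y,z))$ the block-preserving permutations contribute $\prod_a\Delta_{|B_a|}(Y^{(a)},z)$ (using that $\Delta$ depends only on the differences $Y_i-Y_j$, so size-one blocks contribute $\Delta_1\equiv1$), and the remaining fewer-than-$r!$ terms each carry at least one cross-block off-diagonal factor, so their total is at most $r!\,\kappa(M_1,L)\,\Lambda^{r-1}$. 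Applying the inductive hypothesis to the blocks of size $\geq2$, and using $\sum_a(|B_a|-1)=r-m\leq r-2$, $2(1+|h(0,d,z)|)\geq2$, and that at most $\lfloor r/2\rfloor$ blocks have size $\geq2$, one gets $\prod_a|\Delta_{|B_a|}(Y^{(a)},z)|\geq 2\gamma^{\lfloor r/2\rfloor}(2(1+|h(0,d,z)|))^{-(r-1)}$, whence $|\Delta_r(Y,z)|\geq\gamma_r\,(2(1+|h(0,d,z)|))^{-(r-1)}$. If instead $\mathrm{diam}(Y)\leq B$, then since $Y_1=0$ all the $Y_i$ lie in a fixed finite ball, so there are only finitely many such $Y$, and the first fact supplies an $\eta(Y)>0$ for each of them. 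Finally set $\epsilon(\gamma_r):=\min\bigl(\epsilon_1,\ \min_{Y:\mathrm{diam}(Y)\leq B}\eta(Y)\bigr)>0$; every $z\in M_{1/2d+\epsilon(\gamma_r),1/2d-\delta}$ then lies in $M_1$ and in each $M_{1/2d+\eta(Y),1/2d-\delta}$, so the bound holds for all $Y\in J_r$, and the induction closes.

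I expect the only genuinely delicate point to be the uniformity in $Y\in J_r$: plain continuity and compactness control one configuration at a time, but $J_r$ is infinite. The diameter dichotomy is the device that bridges this gap --- bounded-diameter configurations form a finite set, while large-diameter configurations are forced to contain a spatial gap and hence to factorize, up to the exponentially small error provided by Lemma \ref{A2}(c), into first hit determinants of strictly smaller rank that are already controlled by the inductive hypothesis. The part that needs care is the accounting of multiplicative constants: the inequality $\sum_a(|B_a|-1)\leq r-2$, valid precisely because there are at least two blocks, yields one spare factor $2(1+|h(0,d,z)|)\geq2$, and it is exactly this spare factor that lets the condition $2\gamma^{\lfloor r/2\rfloor}>\gamma_r$ absorb both the loss $\gamma^{\lfloor r/2\rfloor}$ coming from the inductive constants and the factorization error.
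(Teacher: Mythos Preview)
Your proof is correct and follows essentially the same strategy as the paper's: induction on $r$, with the uniformity over $Y\in J_r$ handled by separating the bounded-diameter configurations (finitely many, treated one by one via Corollary~\ref{co_A3} and continuity) from the large-diameter ones (which factorize, up to an exponentially small error from Lemma~\ref{A2}(c), into lower-rank determinants controlled by the inductive hypothesis). The only real difference is presentational: the paper argues by contradiction via sequences and splits into exactly two index sets $A_1\dot\cup A_2$ with the choice $\gamma_{r_1}=\gamma_{r_2}=\gamma_r^{1/3}$, whereas you give a direct construction, allow arbitrarily many blocks via connected components, and track the constants more explicitly through the identity $\sum_a(|B_a|-1)=r-m\le r-2$ to extract the spare factor $2(1+|h(0,d,z)|)\ge 2$. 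Your bookkeeping is in fact tidier than the paper's, which leaves the verification that the product bound actually beats $\gamma_r(2(1+|h|))^{-(r-1)}$ implicit.
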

\begin{proof}  If the Theorem is untrue for a given $r$ and $\gamma_r < 1$ then there is a sequence $z_n \in D \setminus{P_0}$ and $^{(n)} Y \in J_r$ such that $z_n \rightarrow w_0 \wedge w_0 \in M_{\frac{1}{2d},\frac{1}{2d} - \delta}$ and therefore $\left|\Re(z_n) \right| < 1/2d - \delta/2 $  for sufficiently big $n$ such that 
\begin{equation}\label{eq_A5_1}
\left| \Delta_r(^{(n)}Y,z_n) \right| < \frac{1}{(2 \cdot (1 + \left| h(0,d,z_n) \right| ))^{r-1}} \cdot \gamma_r
\end{equation}
If $^{(n)}Y$ is bounded then \eqref{eq_A5_1} contradicts \eqref{eq_A4_3} as $\Delta_r(Y,z)$ is holomorphic for $z \in D \setminus P_0$. If $^{(n)}Y$ is unbounded then we can choose an infinite subseries $n_q$ and two index sets $A_1 \dot \cup A_2 = {1,\ldots,r}$ such that $\forall k \in A_1 \wedge j \in A_2$
\begin{equation} \label{A5e3}
\left| ^{(n_q)}Y_k - ^{(n_q)}Y_j \right| \rightarrow \infty 
\end{equation}
We use $r_h(d)$ from Definition \ref{d_rh} to set
\begin{equation}\label{eq_rh}
\left| z_{n_1} \right| < r_h(d)
\end{equation}
and further claim
\begin{equation}
\left| z_{n_q} \right| < \left| z_{n_r} \right| \Leftarrow q > r
\end{equation}
In this case, using Lemma \ref{A2} we define
\begin{equation}
\lambda_0 := \min_{z \in M_{\left| z_{n_1} \right|,\frac{1}{2d}-\frac{\delta}{2}}} \lambda(z)
\end{equation}
\begin{equation}
C_0 := \max_{z \in M_{\left| z_{n_1} \right|,\frac{1}{2d}-\frac{\delta}{2}}} C(z)
\end{equation}
\begin{equation}
b_0 := \min_{z \in M_{\left| z_{n_1} \right|,\frac{1}{2d}-\frac{\delta}{2}}} \left|1 + h(0,d,z) \right|
\end{equation}
From equation \eqref{eq_rh} we immediately see that $b_0 > 0$. We define 
\begin{equation}
u := \frac{2}{2d\cdot\delta\cdot b_0}
\end{equation}
and note with equation \eqref{A2e7} that 
$\left| U_{i,j}(Y,z_{n_q}) \right| \leq u$ for sufficiently big $n_q$ and therefore
\begin{multline} \label{A6e2}
\left| \Delta_r(^{(n_q)}Y,z_{n_q} \right| \geq \left| \Delta_{r_1}(^{(n_q))}Y_{A_1},z_{n_q}) \right| \cdot \left|  \Delta_{r_2}(^{(n_q)}Y_{A_2},z_{n_q}) \right| \\
- r! \cdot \frac{C_0\cdot \exp\left(-\lambda_0 \cdot \min_{k \in A_1; j \in A_2}\left| ^{(n_q)}Y_k - ^{(n_q)}Y_j\right|\right)}{b_0} \cdot u^{r-1}
\end{multline}
with $r_1 := \#A_1$ and $r_2 := \#A_2$ and $Y_{A_i}$ is the projection of $Y$ onto the corresponding space. Because of $\Delta_1(Y,z) \equiv 1$ the Theorem is true for $r=1$. For $r>1$ it is true by induction in $r$:  We use the Theorem for $\gamma_{r_1} = \gamma_{r_2} :=\gamma_r^{1/3}$ and $\delta_1 = \delta_2 = \delta/2$ with a corresponding $\epsilon_1, \epsilon_2$ for $r_1$ and respectively for $r_2$. For sufficiently big $q$  all $z_{n_q} \in M_{\frac{1}{2d} + min(\epsilon_1,\epsilon_2),\frac{1}{2d}-\frac{\delta}{2}}$. Because of equation \eqref{A5e3} equation    \eqref{A6e2} then violates equation \eqref{eq_A5_1} for sufficiently big $n_q$.  But from this the Theorem follows.
\end{proof}
So we now know that $\Delta_r(Y,z)$ has no zero points in an open neighborhood $U$ of $\bar U_{\frac{1}{2d}}(0) \cap D$ which is independant of $Y$. So what we need to study is the vicinity of $\pm 1/2d$.
\subsection{Properties around $ z = \pm \frac{1}{2d}$}
\begin{lemma}\label{PD_l}
For $x \in \left] -\frac{1}{2d},\frac{1}{2d} \right[$ and $Y \in J_r$ the matrix
\begin{equation}
1_{r \times r} + U(Y,x)
\end{equation}
is positive definite. For $ \lambda_r(Y,x) := 1- \Delta_r(Y,x)$ the following equation is true for $ x \neq 0$:
\begin{equation}
0 \leq \lambda_{r-1}(Y',x)< \lambda_r(Y,x) < 1 
\end{equation}
\end{lemma}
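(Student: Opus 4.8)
The plan is to establish positive definiteness first and then read off the determinant inequalities from it. For $x \in \left]-\frac{1}{2d},\frac{1}{2d}\right[$, the matrix entry $U_{i,j}(Y,x) = h(Y_i-Y_j,d,x)/(1+h(0,d,x))$ is, up to the positive normalization $1+h(0,d,x)$, the generating function $\sum_n c_n(Y_i-Y_j) x^n$ where $c_n(z)$ counts walks of length $n$ from $0$ to $z$. Hence $1_{r\times r}+U(Y,x)$ is $\frac{1}{1+h(0,d,x)}$ times the Gram-type matrix $M$ with entries $M_{i,j}=\sum_n c_n(Y_i-Y_j)x^n$ (using $M_{i,i}=1+h(0,d,x)$). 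The key observation is that $c_n(Y_i-Y_j) = \sum_{p} p_{n/2}(Y_i,p)p_{n/2}(p,Y_j)$ by splitting a walk at its midpoint (for even $n$; odd $n$ contribute $0$ on $\mathbb{Z}^d$ between distinct parity classes, but more cleanly one writes $c_n(Y_i-Y_j)=\sum_p (\text{walks }Y_i\to p\text{ of length }\lceil n/2\rceil)(\text{walks }p\to Y_j\text{ of length }\lfloor n/2\rfloor)$). So for $x>0$, writing $v^{(n)}_p(i):=x^{n/2}\cdot\#\{\text{walks }Y_i\to p\text{ of length }n/2\}$ (interpreting half-integer lengths by the asymmetric split), the matrix $M$ is a sum over $n\ge 0$ and over lattice points $p$ of the rank-one positive semidefinite matrices $v^{(n)}_p (v^{(n)}_p)^{T}$, hence positive semidefinite; the $n=0$ term alone gives the identity, so $M$ is in fact positive definite. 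For general $x\in\left]-\frac{1}{2d},\frac{1}{2d}\right[$ one reduces to $x>0$ using Remark~\ref{re_heven}: replacing $Y$ by a sublattice-adapted relabeling (or noting $h(x_0,d,-x)=\pm h(x_0,d,x)$ with sign $(-1)^{\varrho(x_0)}$ and conjugating by the diagonal sign matrix $\mathrm{diag}((-1)^{\varrho(Y_i)})$) shows $1+U(Y,-x)$ is orthogonally similar to $1+U(Y,x)$, so positive definiteness for $x>0$ suffices.

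Once positive definiteness is known, I would deduce the chain of inequalities. Since $1_{r\times r}+U(Y,x)$ is positive definite, all its principal minors are positive; in particular $\Delta_r(Y,x)=\det(1_{r\times r}+U(Y,x))>0$ and likewise $\Delta_{r-1}(Y',x)>0$, where $Y'$ corresponds to the leading $(r-1)\times(r-1)$ principal submatrix. By the interlacing / determinant monotonicity for positive definite matrices — concretely, writing the block decomposition with the last row and column, $\det(1_{r\times r}+U(Y,x)) = \det(1_{(r-1)\times(r-1)}+U(Y',x))\cdot(1 - u^{T}(1_{(r-1)\times(r-1)}+U(Y',x))^{-1}u)$ where $u$ is the last off-diagonal column — and since the Schur complement of a positive definite matrix is positive and bounded above by the corresponding diagonal entry, here equal to $1$, one gets $0<\Delta_r(Y,x)\le\Delta_{r-1}(Y',x)$. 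To upgrade $\le$ to $<$ for $x\ne 0$ one uses that the vector $u$ is nonzero: indeed $U_{r,j}(Y,x)=h(Y_r-Y_j,d,x)/(1+h(0,d,x))$ and for $x\in\left]0,\frac{1}{2d}\right[$ there is at least one $j<r$ with $h(Y_r-Y_j,d,x)>0$ since the generating function of walks from $Y_j$ to $Y_r$ has positive coefficients and at least one nonzero term once $x>0$ (and by Lemma~\ref{le_hito} such a connecting walk exists); hence the Schur complement is strictly less than $1$, giving $\Delta_r(Y,x)<\Delta_{r-1}(Y',x)$, i.e. $\lambda_{r-1}(Y',x)<\lambda_r(Y,x)$. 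The remaining bound $\lambda_{r-1}(Y',x)\ge 0$ is just $\Delta_{r-1}(Y',x)\le 1$, again from the Schur-complement/diagonal argument applied inductively down to $\Delta_1\equiv 1$, and $\lambda_r(Y,x)<1$ is $\Delta_r(Y,x)>0$.

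The base case $r=1$ is trivial ($\Delta_1\equiv 1$, $\lambda_1\equiv 0$, and there is nothing to compare). For $r=2$ one has the explicit $\phi_2(Y,x)=\lambda_2(Y,x)=(h(Y_2,d,x)/(1+h(0,d,x)))^2$, which is $\ge 0$, is $>0=\lambda_1$ for $x\ne 0$ by the positivity of $h(Y_2,d,x)$ for $x>0$ (and the square kills the sign for $x<0$), and is $<1$ by Lemma~\ref{le_phi2}; this is a useful sanity check.

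The step I expect to be the main obstacle is making the positive-definiteness argument clean across both signs of $x$ and across lattice parity, i.e. the passage from "walks split at the midpoint" to an honest sum of rank-one PSD matrices without getting tangled in the even/odd length bookkeeping on $\mathbb{Z}^d$; the conjugation by $\mathrm{diag}((-1)^{\varrho(Y_i)})$ using equation~\eqref{eq_rho} and Remark~\ref{re_heven} is the mechanism that resolves the sign issue, but one must check it maps $1+U(Y,x)$ to $1+U(\cdot,|x|)$ exactly. The strictness of the inequality $\lambda_{r-1}<\lambda_r$ for $x\ne 0$ is a secondary subtlety requiring the nonvanishing of the connecting off-diagonal entry, which is where Lemma~\ref{le_hito} is invoked.
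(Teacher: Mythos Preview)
Your Schur-complement step is essentially the paper's own argument (the paper phrases it via a Cholesky factorisation of $1+U'$, obtaining $\Delta_r=\Delta_{r-1}\cdot(1-\langle L^{-1}b,L^{-1}b\rangle)$, which is the same identity), and the strictness and induction down to $\Delta_1\equiv1$ work the same way; the appeal to Lemma~\ref{le_hito} is harmless overkill, since any shortest lattice path already gives $h(Y_r-Y_j,d,x)>0$ for $x>0$. The substantive difference is in how you obtain positive definiteness, and there your midpoint-splitting argument has the gap you yourself anticipate: for odd $n$ the asymmetric split $c_n(Y_i-Y_j)=\sum_p c_{\lceil n/2\rceil}(Y_i-p)\,c_{\lfloor n/2\rfloor}(p-Y_j)$ produces a matrix of the form $\sum_p a_p b_p^T$ with $a\neq b$, which is \emph{not} a sum of rank-one positive semidefinite matrices, so the ``$v_p^{(n)}(v_p^{(n)})^T$'' claim fails there. (Your parenthetical also has the parity backwards: odd-$n$ terms vanish between \emph{same}-parity sites.) The clean fix bypasses splitting altogether: $M_{i,j}=[(I-xA)^{-1}]_{Y_i,Y_j}$ with $A$ the lattice adjacency operator, and for $|x|<1/(2d)$ the spectrum of $I-xA$ lies in $(0,\infty)$, so $M$ is a finite principal submatrix of a positive-definite operator, hence itself positive definite; equivalently, the Fourier representation gives $v^TMv=\int_{[-\pi,\pi]^d}\bigl|\sum_i v_ie^{i\theta\cdot Y_i}\bigr|^2\bigl(1-2x\sum_k\cos\theta_k\bigr)^{-1}\,d\theta/(2\pi)^d>0$. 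This handles both signs of $x$ at once, so the parity-conjugation manoeuvre (which \emph{is} correct: $D(1+U(Y,-x))D=1+U(Y,x)$ exactly, with $D=\mathrm{diag}((-1)^{\varrho(Y_i)})$) becomes unnecessary.

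The paper takes a different route to positive definiteness: rather than any direct Gram/operator argument, it imports the nonvanishing $\Delta_r(Y,z)\neq0$ on $\bar U_{1/2d}(0)\cap D$ from Lemma~\ref{A4} (proved there by a probabilistic argument about multiplicity-constrained walk counts), combines this with $\Delta_r(Y,0)=1$ and continuity on the real interval to get $\Delta_r(Y,x)>0$, and then applies Sylvester's criterion via the chain of principal submatrices $1+U(Y',x)$, etc. So the paper's proof of this lemma sits logically downstream of the heavier Lemma~\ref{A4}, whereas your route, once the splitting step is repaired as above, is self-contained and more direct for this particular statement.
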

\begin{proof} It is clear that $U_{i,j}(Y,0) = 0$ and therefore $\Delta_r(Y,0) = 1$ and because of Lemma \ref{A4} and continuity of $\Delta_r$ in $x$
\begin {equation}
\Delta_r(Y,x) > 0 \Leftarrow x \in \left] -\frac{1}{2d},\frac{1}{2d} \right[
\end{equation}
Therefore the matrix $1_{r \times r} + U(Y,x)$ is a real symmetric matrix with positive determinant. It's upper left quadratic $(r-1)\times(r-1)$ submatrix is $1_{(r-1)\times(r-1)} + U(Y',x)$ and therefore is also real symmetric and has positive determinant and so by induction in $r$ all upper left submatrices share this feature. But therefore they are all positive definite. Using the Cholesky decomposition for the first $(r-1)\times(r-1)$ submatrix 
\begin{equation}
1_{(r-1)\times(r-1)} + U(Y',x) = L^tL
\end{equation}
we find 
\begin{equation}
\Delta_r(Y,x) = \Delta_{r-1}(Y',x)  \cdot \left(1- \left< L^{-1}b| L^{-1}b\right> \right) 
\end{equation}
with the vector  $b_i := U_{i,r}(Y,x)$ for $i=1,\ldots,r-1$. But we therefore know 
\begin{equation}
\Delta_{r-1}(Y',x) - \Delta_r(Y,x) > 0 \Leftarrow x \neq 0 
\end{equation}
and therefore because of $\Delta_1(0,z)  \equiv 1$ by induction in $r$
\begin{equation}
1- \Delta_r(Y,x) > 0 \Leftarrow x \neq 0 \wedge r > 1
\end{equation}
and therefore the Lemma is true. 
\end{proof}
\begin{lemma}\label{le_d3}
For $d \geq 3$ and any integer $f \geq 1$ there is an $\epsilon(f) > 0$ such that $\forall z \in U_{\frac{1}{2d}}(0) \cap U_{\epsilon(f)}(\pm \frac{1}{2d})$  and $\forall Y \in J_r$ the equation 
\begin{equation}\label{eq_delta_b}
\left| 1 - \Delta_r(Y,z)^f \right| < 1 - \left(\frac{1}{2^r\cdot(1 + G_0(0,d))^{r-1}} \right)^f
\end{equation}
is true.
\end{lemma}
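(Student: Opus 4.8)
\emph{Proof proposal.} The plan is to compare $\Delta_r(Y,z)$ with its boundary value
\[
d_r(Y):=\lim_{z\to\pm\frac{1}{2d}}\Delta_r(Y,z),
\]
which exists and is real by Corollary \ref{co_A3}, and to control this comparison \emph{uniformly in} $Y\in J_r$. Put $c:=\frac{1}{2^r(1+G_0(0,d))^{r-1}}$, so that the right-hand side of \eqref{eq_delta_b} is $1-c^f$. By \eqref{eq_A4_s} one has $d_r(Y)\ge\frac{1}{(2(1+G_0(0,d)))^{r-1}}=2c$, while Lemma \ref{PD_l} gives $0<\Delta_r(Y,x)\le 1$ for real $x\in\left]-\frac{1}{2d},\frac{1}{2d}\right[$, hence by continuity $d_r(Y)\in[2c,1]$ and therefore
\[
\bigl|1-d_r(Y)^f\bigr|=1-d_r(Y)^f\le 1-(2c)^f=(1-c^f)-\theta_f,\qquad \theta_f:=(2^f-1)c^f>0 .
\]
Since $\Delta_r(Y,\cdot)$ is even (Lemma \ref{le_deven}), it then suffices to produce $\epsilon(f)>0$ with
\[
\bigl|\Delta_r(Y,z)^f-d_r(Y)^f\bigr|<\tfrac12\theta_f\qquad\text{for all }Y\in J_r,\ z\in U_{\frac{1}{2d}}(0)\cap U_{\epsilon(f)}\bigl(\tfrac{1}{2d}\bigr),
\]
because then $\bigl|1-\Delta_r(Y,z)^f\bigr|\le\bigl|1-d_r(Y)^f\bigr|+\tfrac12\theta_f\le 1-c^f-\tfrac12\theta_f<1-c^f$, and the bound near $-\frac{1}{2d}$ follows by evenness.

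Next I would reduce this to an estimate on the matrix entries. For $|z|\le\frac{1}{2d}$ and $d\ge3$, Lemma \ref{B3} (in the form $|h(x,d,z)|\le h(x,d,|z|)<1+h(0,d,|z|)\le 1+G_0(0,d)$) together with Lemma \ref{A3}(d) ($|1+h(0,d,z)|\ge\frac12$) bounds every entry of $1_{r\times r}+U(Y,z)$ by $1+M$, where $M:=2(1+G_0(0,d))$; hence $|\Delta_r(Y,z)|\le B:=r!\,(1+M)^r$, and the same bound holds for $d_r(Y)$. As $w\mapsto w^f$ is Lipschitz with constant $fB^{f-1}$ on $\{|w|\le B\}$, it is enough to make $|\Delta_r(Y,z)-d_r(Y)|$ smaller than $\theta_f/(2fB^{f-1})$. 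Writing $\Delta_r(Y,z)=\det(1_{r\times r}+U(Y,z))$ and, via Lemma \ref{A2}(b) and \eqref{eq_A3_even} for the boundary values, $d_r(Y)=\det(1_{r\times r}+\widetilde U(Y))$ with $\widetilde U_{i,j}(Y)=G_0(Y_i-Y_j,d)/(1+G_0(0,d))$, the Leibniz expansion and a telescoping of the two products of $r$ entries give
\[
\bigl|\Delta_r(Y,z)-d_r(Y)\bigr|\le r!\,r\,(1+M)^{r-1}\max_{i\neq j}\bigl|U_{i,j}(Y,z)-\widetilde U_{i,j}(Y)\bigr|,
\]
and, since $|1+h(0,d,z)|\ge\frac12$ and $|\widetilde U_{i,j}|\le M$, one more elementary step bounds $\max_{i\neq j}|U_{i,j}(Y,z)-\widetilde U_{i,j}(Y)|$ by $(2+2M)\sup_{x\in\Z^d}|h(x,d,z)-G_0(x,d)|$. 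Thus everything reduces to the uniform-in-$x$ convergence
\begin{equation}\label{eq_unifh}
\sup_{x\in\Z^d}\bigl|h(x,d,z)-G_0(x,d)\bigr|\longrightarrow 0\qquad\text{as }z\to\tfrac{1}{2d},\ z\in\overline D .
\end{equation}

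The heart of the matter — and the step I expect to be the main obstacle, being the only place where $d\ge3$ is genuinely used — is \eqref{eq_unifh}, which I would prove by splitting $\Z^d$ into a large-$|x|$ tail and a finite core. Given $\eta>0$: by Lemma \ref{B3}, eq. \eqref{eq_hx}, there is $C(d)$ with $|h(x,d,z)|<C(d)/|x|^{d-2}$ for all $x\in\Z^d\setminus\{0\}$ and all $z\in V_{\frac{1}{2d}}\cap W_{\frac{\pi}{2}}$ — a set containing every $z$ with $|z|<\frac{1}{2d}$ and $|z-\frac{1}{2d}|<\frac{1}{4d}$; letting $u$ increase to $\frac{1}{2d}$ along the positive reals and using that $h(x,d,u)$ increases on $[0,\frac{1}{2d})$ with limit $G_0(x,d)$ (cf. \eqref{eq_hxh0}, \eqref{eq_A3_even}) yields $|G_0(x,d)|\le C(d)/|x|^{d-2}$ as well. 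Choosing $R$ with $2C(d)/R^{d-2}<\eta$ makes $|h(x,d,z)-G_0(x,d)|<\eta$ for all $|x|>R$ and all such $z$; for the finitely many $x$ with $|x|\le R$, each $h(x,d,\cdot)$ extends continuously to $\overline D$ near $\frac{1}{2d}$ with value $G_0(x,d)$ (Lemma \ref{A2}(b)), so there is $\epsilon_R>0$ with $|h(x,d,z)-G_0(x,d)|<\eta$ for $|x|\le R$, $z\in\overline D$, $|z-\frac{1}{2d}|<\epsilon_R$. Hence $\sup_x|h(x,d,z)-G_0(x,d)|<\eta$ once $z\in\overline D$ and $|z-\frac{1}{2d}|<\min(\epsilon_R,\frac{1}{4d})$, proving \eqref{eq_unifh}.

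Finally, choosing $\eta$ small enough (depending only on $f,r,d$) that $(2+2M)\,r!\,r\,(1+M)^{r-1}\cdot fB^{f-1}\cdot\eta<\tfrac12\theta_f$, and taking $\epsilon(f)$ to be the corresponding $\min(\epsilon_R,\frac{1}{4d})$, the chain of estimates above gives $|\Delta_r(Y,z)^f-d_r(Y)^f|<\tfrac12\theta_f$, hence \eqref{eq_delta_b}, for all $Y\in J_r$ and $z\in U_{\frac{1}{2d}}(0)\cap U_{\epsilon(f)}(\pm\frac{1}{2d})$. The genuine difficulty is the uniformity in $Y$ — note that $J_r$ contains vectors whose components are arbitrarily far apart, so no naive compactness argument suffices — and it is precisely this that breaks down for $d=2$, where \eqref{eq_hx} is false and the limit in \eqref{eq_unifh} ceases to be uniform (compare Lemma \ref{B3_d2}); that case requires the separate treatment of Section 5.
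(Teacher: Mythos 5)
Your proof is correct and genuinely different from the paper's. The paper argues by contradiction: it assumes a sequence $(^{(n)}Y, z_n)$ violating the bound, then runs an intricate iterated-subsequence construction that partitions the indices $\{1,\dots,r\}$ into blocks whose mutual distances either stabilize (giving constant sub-matrices with limit determinant $d_{\#A_q}(Y_{A_q})$) or diverge (giving vanishing cross terms via Lemma \ref{B3}), and then invokes \eqref{eq_A4_s} on the block determinants. Your proof replaces this compactness machinery with a single direct, quantitative step: the uniform-in-$x$ limit
\[
\sup_{x\in\Z^d}\bigl|h(x,d,z)-G_0(x,d)\bigr|\longrightarrow 0\qquad(z\to\tfrac{1}{2d}),
\]
obtained by splitting $\Z^d$ into a large-$|x|$ tail controlled by \eqref{eq_hx} (and the same bound for $G_0$ by monotone approach along the real axis) and a finite core controlled by continuity on $\overline{D}$. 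Everything else is a telescoping Leibniz estimate and the Lipschitz bound on $w\mapsto w^f$. This has two virtues: it eliminates the sequence bookkeeping entirely, and it makes transparent both where $d\ge 3$ enters (the tail decay $C(d)/|x|^{d-2}$ fails at $d=2$, so the uniform limit fails, cf.\ Lemma \ref{B3_d2}) and how to extract an explicit $\epsilon(f)$. What the paper's longer argument buys is that it is structurally the warm-up for Section 5: when $d=2$ there is no uniform replacement of $h$ by $G_0$, and one genuinely needs the subsequence/clustering decomposition (upgraded there to the distance-space formalism), so the paper's $d\ge 3$ proof is written to make the $d=2$ proof recognizable. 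Your observation at the end — that the uniformity over $J_r$ is the crux and that it is exactly \eqref{eq_hx} that collapses at $d=2$ — is the right diagnosis.

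One small cleanup you should make: in the statement of the uniform limit you allow $z\in\overline{D}$, but the bound \eqref{eq_hx} is stated only on the open cone $V_{\frac{1}{2d}}\cap W_{\frac{\pi}{2}}\subset U_{\frac{1}{2d}}(0)$. Since the Lemma only needs $z\in U_{\frac{1}{2d}}(0)$, restrict the statement of the uniform limit to $|z|<\frac{1}{2d}$; the rest of the argument is unchanged.
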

\begin{proof} From Lemma \ref{le_deven} we know that we can limit our proof to a discussion of the vicinity of $z = 1/2d$.  To make the proof more readable we start with $f=1$. Let's assume the Lemma was not true. Then there is a sequence $^{(n)}Y \in J_r$ and a sequence $z_n \in U_{\frac{1}{2d}}(0) $ with $z_n \rightarrow \frac{1}{2d}$ for $n \rightarrow \infty$  with 
\begin{equation}\label{eq_A9}
\left| \Delta(^{(n)}Y,z_n) -1 \right| \geq  1 - \frac{1}{2^r\cdot(1 + G_0(0,d))^{r-1}}
\end{equation} 
Let $A := \{ 1,\ldots,r \}$ and $C_0 := A$. For an infinite sequence (m) of different integers we define the set 
\begin{equation}
^{(m)}B_{i,j} := \{ ^{(m)}Y_i - ^{(m)}Y_j \}
\end{equation}
We start with $i_0:=1$ and $(m_{0,0})$ the series of all integers and define a function
$f_0: \{1,\ldots,r\} \mapsto \{0,1\}$ inductively in the following way: Iff 
$ \#\left(^{(m_{0,i-1})}B_{i_0,i} \right) < \infty$ then $f_0(i) := 1$ otherwise $f_0(i) := 0$. If $f_0(i) = 1$ then we can find an infinite subseries $(m_{0,i})$ of $(m_{0,i-1})$ such that $\#\left( ^{(m_{0,i})}B_{i_0,i} \right) = 1$. Iff $f_0(i) = 0$ then we can find an infinite subseries $(m_{0,i})$ such that $\left| ^{(m_{0,i})}Y_{i_0} - ^{(m_{0,i})}Y_i \right| \rightarrow \infty$ \\
We define $(m_{1,0}) := (m_{0,r})$ and $A_1 := f_0^{-1}(1)$ and because of $f_0(1) = 1$ we know that $A_1 \neq \emptyset$. We also define $C_1 := C_0 \setminus A_1$ and $i_1 := min(i: i \in C_1)$. \\
Inductively coming from the $k-1$ th step we define the entities of the $k$ th step: 
$(m_{k,0}) := (m_{k-1,max_k})$ where $max_k$ is the biggest integer in $C_{k-1}$. $A_k := f_{k-1}^{-1}(1)$ and because of $f_{k-1}(i_{k-1}) = 1$ we know that $A_k \neq \emptyset$. We also define $C_k := C_{k-1} \setminus A_k$ and $i_k := min(i: i \in C_k)$.
In the $k$ th step we now can write the set $C_k = \{j_1,\ldots,j_m\}$ where $j_l < j_{l+1}$. We define a function $f_k : C_k \mapsto \{0,1\}$ again inductively: $j_0 :=0$. Iff $\# \left( ^{(m_{k,j_{l-1}})}B_{i_k,j_l} \right) < \infty $ then $f_k(j_l) := 1$ and we choose $(m_{k,j_l})$ to be an infinite subsequence of $(m_{k,j_{l-1}})$ such that  
 $\# \left( ^{(m_{k,j_{l}})}B_{i_k,j_l} \right) = 1 $ Otherwise $f_k(j_l) := 0$ and we choose  $(m_{k,j_l})$ such that 
 $\left| ^{(m_{k,j_l})}Y_{i_k} - ^{(m_{k,j_l})}Y_{j_l} \right| \rightarrow \infty$ 
\\
As $A_k \neq \emptyset$ the induction leads to a disjoint union of sets $A = A_1 \dot \cup \ldots  \dot \cup A_p$ and an infinite sequence of different integers $(m_{p,0})$ such that 
for any $q \in \{1,\ldots,p\}$ and for any $i,j \in A_q$ we have $\#\{^{(m_{p,0})}Y_i - 
^{(m_{p,0})}Y_j  \} = 1$ and for $q \neq \hat q$ and $i \in A_q, j \in A_{\hat q}$ 
we have  $\left| ^{(m_{p,0})}Y_{i} - ^{(m_{p,0})}Y_{j} \right| \rightarrow \infty$ 
\\
By a permutation of the indices in $A$, such that the sets $A_q$ consist of adjacent integers we see that the matrices for $\Delta(^{(m_{p,0})}Y,z_{(m_{p,0})})$ after this permutation is a block matrix with constant vectors $Y_{A_q} \in J_{\#A_q}$ whose determinants according to equation  \eqref{eq_A4_d} converge towards the real positive numbers 
$d_{\#A_q}(Y_{A_q})$ for $(m_{p,0}) \rightarrow \infty$. The matrix elements outside of the diagonal block matrices however vanish according to Lemma \ref{B3} for 
 $(m_{p,0}) \rightarrow \infty$. But this means that for any $\delta > 0$ there exists an 
$N \in \N$ such that for infinitely many of the $m_{p,0} > N$ the equation 
\begin{equation}\label{eq_subdet}
\left| \Delta(^{(m_{p,0})}Y,z_{(m_{p,0})}) - \prod_{q=1}^p d_{\#A_q}(Y_{A_q}) \right| < \delta
\end{equation}
is true. 
Together with equation \eqref{eq_A4_s} and the triangle inequality this means that \eqref{eq_A9} is violated which means that the Lemma is true for $f=1$. For $f \geq 1$ the reasoning is equivalent: Any sequence $^{(n)}Y \in J_r$ and $z_n$ which violate equation \eqref{eq_delta_b}
for a fixed $f$ contain a subsequence where either $^{(n)}Y$ is bounded or which fulfills equation \eqref{eq_subdet} and so the Lemma follows again from equation \eqref{eq_A4_s}.
\end{proof}
\section{Proving $\left|\Delta_r(Y,z) -1 \right| \leq 1$ for $d=2$}
Lemma \ref{le_d3} is fundamental for the expansion of $1/\Delta^f$ in a geometrical series in $1-\Delta^f$  in the following sections. It is seemingly quite difficult to prove in $d=2$. We will do so in this section. But we hope the proof can be simplified and related to a geometric interpretation of the Taylor coefficients of $1-\Delta$ showing them to all be positive (at least for well behaved geometrical entities like $\Z^d$), also sheding light on the very nature of $1 - \Delta$.\\
\subsection{Sequences and subsequences in $\Z^d$}
We start with features of sequences in $\Z^d$.
\begin{lemma}\label{F1}
Let $x_n \in \Z^d$ be an infinite sequence. Then either $x_n$ has an infinite constant subsequence $x_{\nu_m} \equiv  x_c$ or there is an infinite sequence $(\mu_m)$ with 
\begin{equation}\label{F11}
j > i \Rightarrow \left|x_{\mu_j}\right| > \left|x_{\mu_i}\right|
\end{equation}
and therefore 
\begin{equation}\label{F12}
\left|x_{\mu_j} \right| \rightarrow \infty
\end{equation}   
\end{lemma}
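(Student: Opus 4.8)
The plan is to split the argument on whether any single point of $\Z^d$ is visited by the sequence infinitely often. \textbf{Case 1:} suppose there is some $x_c \in \Z^d$ with $x_n = x_c$ for infinitely many $n$. Enumerating those indices in increasing order gives an infinite constant subsequence $x_{\nu_m}\equiv x_c$, which is the first alternative of the Lemma. \textbf{Case 2:} suppose instead that every value is attained only finitely often, i.e. $\{n\in\N : x_n = y\}$ is finite for each $y\in\Z^d$. I would then invoke the local finiteness of the lattice: for every $R>0$ the set $\{y\in\Z^d : |y|\le R\}$ is finite, so $\{n\in\N : |x_n|\le R\} = \bigcup_{|y|\le R}\{n : x_n=y\}$ is a finite union of finite sets, hence finite. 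Since this holds for every $R$, it follows that $|x_n|\to\infty$ as $n\to\infty$.

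Given $|x_n|\to\infty$, the strictly increasing subsequence of norms required in \eqref{F11} is produced by a routine recursion: set $\mu_1:=1$; having chosen $\mu_1<\cdots<\mu_i$, use $|x_n|\to\infty$ to pick $N$ with $|x_n|>|x_{\mu_i}|$ for all $n\ge N$ and put $\mu_{i+1}:=\max(N,\mu_i+1)$. Then $\mu_{i+1}>\mu_i$ and $|x_{\mu_{i+1}}|>|x_{\mu_i}|$, so by chaining $j>i$ implies $|x_{\mu_j}|>|x_{\mu_i}|$, which is \eqref{F11}; and $|x_{\mu_j}|\to\infty$ is \eqref{F12}, being a subsequence of a divergent sequence.

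Honestly, there is no real obstacle here: this is a pigeonhole/finiteness statement. The only points one must get right are the dichotomy (some value repeated infinitely often versus every value repeated only finitely often) and the use of local finiteness of $\Z^d$ to upgrade ``every value finitely often'' to ``$|x_n|\to\infty$''; everything after that is the standard subsequence extraction.
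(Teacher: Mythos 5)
Your proof is correct. The dichotomy you use (some value repeated infinitely often, versus every value repeated only finitely often) differs superficially from the paper's (the set $\{x_n\}$ is finite, versus infinite), and you insert the clean intermediate step $|x_n|\to\infty$ before extracting the strictly increasing subsequence. The paper instead runs the recursive extraction directly and, if the recursion ever gets stuck at step $i$, observes that the tail $\{x_m : m > \mu_{i-1}\}$ is then bounded, hence finite in $\Z^d$, so the whole set is finite and pigeonhole gives the constant subsequence. Both arguments rest on the same essential fact — local finiteness of $\Z^d$, i.e.\ bounded subsets are finite — and are interchangeable; yours is perhaps slightly more modular (the statement ``every value finitely often implies $|x_n|\to\infty$'' is a reusable fact), while the paper's is a touch more compact because the contradiction is folded into the construction. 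Either version is fine.
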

\begin{proof}
If $\{x_n\}$ is a finite set then the first alternative must be true. If $\{x_n\}$ is an infinite set then choose $\mu_1 = 1$. For $i>1$ choose $\mu_i$ such that $\left|x_{\mu_i} \right|> \left|x_{\mu_{i-1}}\right|   $. If that is not possible then for $m > \mu_{i-1} \Rightarrow \left|x_m\right| \leq \left| x_{\mu_{i-1}}\right|$ which means that $\{x_n\}$ is a finite set which contradicts the assumption. Therefore the Lemma is true.
\end{proof}  
\begin{lemma}\label{F2}
Let $x_n \in \Z^d\setminus\{0\}$ and $y_n \in \Z^d\setminus\{0\}$ be infinite sequences.
Then there is an infinite sequence of integers $(\mu_n)$ such that either 
$\left|x_{\mu_i}\right| / \left|y_{\mu_i}\right|$ is a convergent series or 
$\left|y_{\mu_i}\right| / \left|x_{\mu_i}\right|$ is a convergent series.
\end{lemma}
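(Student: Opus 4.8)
The plan is to reduce the statement to the Bolzano--Weierstrass theorem applied to the sequence of quotients. I would set $a_n := \left|x_n\right| / \left|y_n\right|$; this is a well defined, strictly positive real sequence because the hypothesis $x_n, y_n \in \Z^d \setminus \{0\}$ guarantees $\left|x_n\right| \neq 0$ and $\left|y_n\right| \neq 0$ for every $n$.

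Next I would split into two cases according to whether $(a_n)$ possesses a bounded subsequence. If some subsequence $(a_{\nu_i})$ is bounded, then by Bolzano--Weierstrass it admits a convergent sub-subsequence; relabelling its indices as $(\mu_i)$ yields the first alternative, namely that $\left|x_{\mu_i}\right| / \left|y_{\mu_i}\right|$ converges. If on the other hand $(a_n)$ has no bounded subsequence, then $a_n \to +\infty$, so one can extract a strictly increasing index sequence $(\mu_i)$ along which $a_{\mu_i} \to \infty$; then $\left|y_{\mu_i}\right| / \left|x_{\mu_i}\right| = 1/a_{\mu_i} \to 0$ converges, which is the second alternative. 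In both cases the required subsequence exists.

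I do not expect a genuine obstacle here: the only points needing a word of justification are that the denominators never vanish (immediate from the hypothesis) and the elementary fact that a real sequence with no bounded subsequence must diverge to $+\infty$. The lemma is essentially a convenient repackaging of Bolzano--Weierstrass, to be combined with Lemma \ref{F1} in the subsequent analysis of the growth of the pairwise distances $\left|{}^{(n)}Y_i - {}^{(n)}Y_j\right|$ entering the case $d = 2$ of the first hit determinant.
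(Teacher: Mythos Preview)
Your proof is correct and follows essentially the same route as the paper: set up the ratio $a_n = |x_n|/|y_n|$, apply Bolzano--Weierstrass in the bounded case, and pass to the reciprocal along a subsequence in the unbounded case. The paper's dichotomy is phrased as ``$a_n$ bounded'' versus ``$a_n$ unbounded'' (choosing a monotone increasing subsequence in the latter), while you use ``has a bounded subsequence'' versus ``has none''; the difference is cosmetic.
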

\begin{proof}
If $\left|x_{n}\right| / \left|y_{n}\right|$ 
is bounded then it contains a convergent subseries. Let therefore 
$\left|x_{n}\right|/\left|y_{n}\right|$ be unbounded. Then it contains a monotonously increasing subseries which is unbounded, with the index sequence $(\mu_j)$.
 But in this case  $\left|y_{\mu_i}\right| / \left|x_{\mu_i}\right|$ is a monotonously decreasing series converging towards $0$.
\end{proof}

\begin{lemma}\label{F3}
Let $x^{(1)}_n,\ldots,x^{(r)}_n$ with $x^{(i)}_n \in \Z^d\setminus\{0\}$ be r sequences. Then there is an infinite series of natural numbers $(\mu_j)$ and a permutation $\sigma \in S_r$  and real numbers $q_i \in \left[0,\infty\right[ $such that 
\begin{equation}\label{F3e1}
\frac{\left|x^{(\sigma(i))}_{\mu_j}\right|}{\left|x^{(\sigma(i+1))}_{\mu_j}\right|} \rightarrow q_i
\end{equation}
\end{lemma}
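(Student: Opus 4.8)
The plan is to reduce everything to repeated application of Lemma~\ref{F2} together with a finite sorting argument, proceeding by induction on $r$. For $r=1$ there is nothing to prove: take $(\mu_j)=(j)$, $\sigma=\mathrm{id}$, and there is no ratio to control.

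For the inductive step I would first establish the auxiliary claim that, after passing to a suitable subsequence, one of the $r$ sequences is asymptotically smallest: there are an index $m$ and an infinite sequence $(\mu_j)$ such that $|x^{(m)}_{\mu_j}|/|x^{(i)}_{\mu_j}|$ converges to a finite nonnegative real for every $i\in\{1,\ldots,r\}$. This is proved by processing the indices one at a time: one keeps a current candidate $m$, and when a new index $k$ is brought in one applies Lemma~\ref{F2} to the pair $x^{(m)}_n$, $x^{(k)}_n$ and passes to the subsequence it provides; if $|x^{(m)}_n|/|x^{(k)}_n|$ converges one keeps $m$, otherwise Lemma~\ref{F2} forces $|x^{(k)}_n|/|x^{(m)}_n|$ to converge and one replaces $m$ by $k$. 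In the latter case the previously controlled ratios survive because $|x^{(k)}_n|/|x^{(i)}_n| = \bigl(|x^{(k)}_n|/|x^{(m)}_n|\bigr)\cdot\bigl(|x^{(m)}_n|/|x^{(i)}_n|\bigr)$ is a product of two sequences with finite limits, and all earlier convergences persist since we only ever pass to subsequences. After $r$ steps the final candidate is the desired $m$.

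Given the claim, I would set $\sigma(1):=m$, discard the $m$-th sequence, and apply the induction hypothesis to the remaining $r-1$ sequences inside the subsequence just constructed. This yields a further subsequence and an ordering $\tau$ of $\{1,\ldots,r\}\setminus\{m\}$ along which consecutive ratios converge in $[0,\infty[$; passing to this further subsequence does not destroy the convergence of the ratios $|x^{(m)}_{\mu_j}|/|x^{(i)}_{\mu_j}|$. Defining $\sigma$ by $\sigma(1)=m$ and $\sigma(i+1)=\tau(i)$ for $i=1,\ldots,r-1$, the ratios $|x^{(\sigma(i))}_{\mu_j}|/|x^{(\sigma(i+1))}_{\mu_j}|$ for $i\ge 2$ converge by the induction hypothesis, and for $i=1$ the ratio $|x^{(\sigma(1))}_{\mu_j}|/|x^{(\sigma(2))}_{\mu_j}| = |x^{(m)}_{\mu_j}|/|x^{(\tau(1))}_{\mu_j}|$ converges to a finite nonnegative limit by the claim. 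Taking the $q_i$ to be these limits completes the induction.

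There is no real obstacle here beyond bookkeeping: the only points needing care are (i) that each subsequence extraction must be performed inside the previous one, so that all finitely many previously established limits are retained, and (ii) that Lemma~\ref{F2} genuinely leaves us in one of exactly the two cases used above. An entirely equivalent non-inductive formulation is to apply Lemma~\ref{F2} once to each of the $\binom{r}{2}$ pairs in turn, observe that on the resulting subsequence the relation ``$|x^{(i)}_{\mu_j}|/|x^{(j)}_{\mu_j}|$ has a finite limit'' is a total preorder on $\{1,\ldots,r\}$ (reflexivity is immediate, totality is Lemma~\ref{F2}, transitivity follows by multiplying limits), and then to choose $\sigma$ so as to list the indices in non-decreasing order for this preorder.
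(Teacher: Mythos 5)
Your proof is correct, and it takes a route that is genuinely different from the paper's, though both are elementary sorting arguments built on Lemma~\ref{F2}. The paper proceeds by what amounts to \emph{insertion sort}: it assumes the result for $r$ sequences, brings in the $(r+1)$-st, applies Lemma~\ref{F2} to each pair $(x^{(\sigma(i))},x^{(r+1)})$, and then locates the minimal position $i_0$ at which the new sequence can be spliced into the already-ordered list. You instead use a \emph{selection sort}: your auxiliary claim isolates an asymptotically smallest index $m$ (a single linear pass through the indices, replacing the running minimum whenever Lemma~\ref{F2} says the newcomer is dominated, and using multiplicativity of limits to preserve all previously established ratios), and then you put $m$ in front and recurse on the remaining $r-1$ sequences. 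Both approaches give exactly the lemma; the selection-sort version has the small advantage that the inductive step is cleaner (one does not need to argue about "any infinite subsequence" as the paper's insertion argument does, nor to worry about the edge case where the new element goes at the end), and your closing remark reformulating the whole thing as the statement that, on a suitable subsequence, "finite limiting ratio" defines a total preorder on $\{1,\ldots,r\}$ is arguably the most transparent way to see what the lemma is really saying. One minor point worth being explicit about in a polished write-up: after you replace the running minimum $m$ by $k$, you should note that $\lvert x^{(k)}_n\rvert/\lvert x^{(i)}_n\rvert$ converges for \emph{all} previously processed $i$ \emph{and} for the old $m$ itself (the latter being exactly the convergence Lemma~\ref{F2} provided), so that the invariant is genuinely maintained; you do gesture at this, but it is the one place where the bookkeeping actually needs a sentence.
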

\begin{proof} We prove this by induction to $r$. For $r=2$ see Lemma \ref{F2}. Let therefore the Lemma \ref{F3} be true for a given $r$ and let us have sequences  $x^{(1)}_n,\ldots,x^{(r+1)}_n$. Then there exists a series of natural number $(\mu_j)$ and a permutation $\sigma \in S_r$ and numbers $q_i \in \left[0,\infty\right[ $ such that \ref{F3e1} is true.
Now according to Lemma \ref{F2} for each $i$ there is an infinite subsequence $(\mu^{(i)}_j)$ of $(\mu_j)$ such that there is a $\hat{ q_i} \in \left[0,\infty\right[ $ with either 
\begin{equation}\label{F3e2}
\frac{\left|x^{(\sigma(i))}_{\mu^{(i)}_j}\right|}{\left|x^{(r+1)}_{\mu^{(i)}_j}\right|} \rightarrow \hat {q_i}
\end{equation}
or
\begin{equation}\label{F3e3}
\frac{\left|x^{(r+1)}_{\mu^{(i)}_j}\right|}{\left|x^{(\sigma(i))}_{\mu^{(i)}_j}\right|} \rightarrow \hat{q_i}
\end{equation}
Let us now assume that there is a minimal $i_0$,  $1\leq i_0 \leq r$, such that for every $i<i_0$ equation 
\ref{F3e3} is untrue for any infinite subsequence of $(\mu_j)$ but it is true for a given subsequence $(\mu^{(i_0)}_j)$ and the index $i = i_0$. In this case we can define $\hat{\sigma} \in S_{r+1}$ with
$\hat{\sigma}(i) = \sigma(i) \Leftarrow i=1,\ldots,i_0-1$ and $\hat{\sigma}(i_0) = r+1$ and 
$\hat{\sigma}(i) = \sigma(i-1) \Leftarrow i=i_0+1,\ldots,r+1$ such that the Lemma is true for the infinite subsequence $(\mu^{(i_0)}_j)$. If the assumption of the existence of a minimal $i_0$ is not true then according to Lemma \ref{F2} there must be an infinite subsequence $(\mu^{(r+1)}_j)$ of $(\mu_j)$ such that equation \eqref{F3e2} is true for every $1 \leq i \leq r$. Now for this infinite subsequence and the permutation $\hat{\sigma} \in S_ {r+1}$ with $\hat{\sigma}(i) = \sigma(i) \Leftarrow i=1,\ldots,r$ and $\hat{\sigma}(r+1) = r+1$ Lemma \ref{F3} is true which completes the proof. 
\end{proof}
\subsection{The distance space}
We now define an algrebraic entity flowing naturally from analyzing $1 - \Delta$ in two dimensions as we will see in the sequel. 
\begin{definition}
Let $S$ be a set and  $U(S)$ the set of unordered pairs $(a,b)$ with $a, b \in S$ and $a \neq b$. $S$ is called a distance space if there are relations $\leq, \sim$ on $U(S)$ such that:
\begin{description}
\item[poset] $(U(S), \leq)$ is a poset .
\item[completeness] for $(a,b), (c,d) \in U(S)$ one and only one of the relations $(a,b) < (c,d)$, $(a,b) > (c,d)$ or $(a,b) \sim (c,d)$ is true (where as usual $t<u$ is a shorthand for  $t\leq u \wedge t\neq u$).
\item[transitivity] for $(a_1,b_1), (a_2,b_2), (c,d) \in U(S)$ and $(a_1,b_1) \sim (a_2,b_2)$:
\begin{itemize}
\item $(a_1,b_1) > (c,d) \Rightarrow (a_2,b_2) > (c,d)$
\item $(a_1,b_1) < (c,d) \Rightarrow (a_2,b_2) < (c,d)$
\item $(a_1,b_1) \sim (c,d) \Rightarrow (a_2,b_2) \sim (c,d)$
\end{itemize}
\item[triangle] for $(a,b), (b,c) \in U(S)$: if $(a,b) > (b,c) \Rightarrow (a,b) \sim (a,c)$ 
\end{description}
\end{definition}
\emph{Note:} Because of completeness $\sim$ is reflexive, because of completeness and transitivity it is symmetric. 

\begin{lemma}\label{DS_sim}
Let $S$ be a finite distance space. We define a relation $\sim$ on $S$ by
$a \sim b$ if either $a=b$ or $(a,b)$ is minimal in $(U(S), \leq)$. Then $\sim$ is an equivalence relation, any two minimal elements in $(U(S),\leq)$ are equivalent in $U(S)$ and any element in $U(S)$ equivalent to a minimal element is itself minimal. The set of equivalence classes $S_1:=[S]$ is a distance space with the following relations on $U([S])$: For $([a],[b]), ([c],[d]) \in U([S])$ we define 
\begin{equation}\label{GS1}
([a],[b]) < ([c],[d]) \Leftarrow (a,b) < (c,d)
\end{equation}
\begin{equation}\label{GS2}
([a],[b]) \sim ([c],[d]) \Leftarrow (a,b) \sim (c,d)
\end{equation}
\end{lemma}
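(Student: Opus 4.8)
The plan is to establish the four assertions of the statement in order, each resting on the previous ones, and to begin with two preliminary observations about $\sim$ on $U(S)$. Besides being reflexive and symmetric (as noted), $\sim$ is transitive on $U(S)$: if $p\sim q$ and $q\sim s$ while, say, $p<s$, then the transitivity axiom applied to $s\sim q$ and the element $p$ forces $q>p$, contradicting $p\sim q$; hence $\sim$ is an equivalence relation on $U(S)$. I would also dispose of the two auxiliary claims immediately: if $(c,d)\sim(a,b)$ with $(a,b)$ minimal and $(c,d)$ were not minimal, some $(e,f)<(c,d)$ would by the transitivity axiom give $(a,b)>(e,f)$, contradicting minimality — so pairs $\sim$-equivalent to a minimal pair are minimal; and if $(a,b)$ and $(c,d)$ are both minimal, completeness excludes $<$ and $>$, leaving $(a,b)\sim(c,d)$.

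Next I would show that $\sim$ on $S$ is an equivalence relation. Reflexivity and symmetry are immediate from the definition and from the pairs being unordered; for transitivity take $a\sim b$ and $b\sim c$, which is trivial unless $a,b,c$ are distinct, in which case $(a,b)$ and $(b,c)$ are both minimal, hence $\sim$-equivalent in $U(S)$. Comparing $(a,c)$ with $(a,b)$ by completeness: $(a,c)<(a,b)$ is excluded by minimality of $(a,b)$; $(a,c)\sim(a,b)$ makes $(a,c)$ minimal by the first auxiliary claim; and $(a,c)>(a,b)$ lets me invoke the triangle axiom on $\{a,c\}$ and $\{a,b\}$, which share the element $a$ and have third side $\{b,c\}$, giving $(a,c)\sim(b,c)$ and hence again $(a,c)$ minimal. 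In every case $a\sim c$.

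The crucial step — and the one I expect to be the main obstacle — is that the relations \eqref{GS1} and \eqref{GS2} on $U([S])$ are well defined, i.e. independent of the chosen representatives. I would reduce this to the single assertion: if $a\sim a'$ and $a\not\sim b$, then $(a,b)\sim(a',b)$ in $U(S)$. Indeed, if $a=a'$ this is trivial, and otherwise $(a,a')$ is minimal while $(a,b)$ is not (since $a\not\sim b$), so completeness and the first auxiliary claim force $(a,b)>(a,a')$; the triangle axiom applied to $\{a,b\}$ and $\{a,a'\}$ — common element $a$, third side $\{a',b\}$ — then yields $(a,b)\sim(a',b)$. Applying this once in each coordinate, and using transitivity of $\sim$ on $S$ and on $U(S)$, gives $(a,b)\sim(a',b')$ whenever $a\sim a'$, $b\sim b'$ and $[a]\neq[b]$; this is exactly representative-independence of \eqref{GS1} and \eqref{GS2}, and in particular $\{[a],[b]\}=\{[c],[d]\}$ forces $(a,b)\sim(c,d)$ in $U(S)$, keeping the three alternatives of completeness mutually exclusive on $U([S])$.

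It then remains to check the four distance-space axioms for $S_1=[S]$, which I would verify by lifting each assertion to representatives, choosing a common representative for any class shared by the two pairs involved. The poset axioms and completeness transfer directly from $U(S)$ — antisymmetry and transitivity of $\leq$ using that $U(S)$ is a poset and that $\sim$ is transitive on $U(S)$; the transitivity axiom transfers because $([a_1],[b_1])\sim([a_2],[b_2])$ lifts to $(a_1,b_1)\sim(a_2,b_2)$; and the triangle axiom transfers because, when $[a]\neq[c]$, the relation $([a],[b])>([b],[c])$ lifts to $(a,b)>(b,c)$, so $(a,b)\sim(a,c)$ and therefore $([a],[b])\sim([a],[c])$, while for $[a]=[c]$ the hypothesis $([a],[b])>([b],[a])$ can never be met. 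Once the representative-independence lemma is in hand, these last verifications are routine bookkeeping.
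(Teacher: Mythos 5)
Your proof is correct and follows essentially the same route as the paper: both use the triangle axiom to show that changing one representative at a time preserves $\sim$-equivalence in $U(S)$, so the relations on $U([S])$ are well defined and the distance-space axioms for $[S]$ transfer by inheritance. You are in fact more careful than the paper at two points which it asserts without detail — you explicitly prove that $\sim$ is transitive on $U(S)$ (needed to chain the one-coordinate replacements), and you supply the three-case argument, again via the triangle axiom, showing that $(a,c)$ is minimal whenever $(a,b)$ and $(b,c)$ are, which is what actually makes $\sim$ transitive on $S$.
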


\begin{proof} It is clear from the definition of $\sim$ on $S$ that $\sim$ is reflexive and symmetric. Let $(a,b), (c,d) \in U(S)$ be two minimal elements. Then, as they are minimal because of completeness we must have $(a,b) \sim (c,d)$. By the same token any pair $(e,f) \sim (a,b)$  with $(a,b)$ minimal must be minimal too, otherwise because of transitivity $(a,b)$ was not minimal. But that means that $\sim$ on $S$ is also transitive. \\
For $([a],[b]) \in U([S])$ we already know that $[a] \neq [b]$. Let us assume that $a_1, a_2 \in [a]$ and $b_1, b_2 \in [b]$. Then $a_1 = a_2$ or $(a_1,a_2) < (a_1,b_1)$ and $b_1 = b_2$ or $(a_2,b_2) > (b_1,b_2)$. But then because of triangle $(a_1,b_1) \sim (a_2,b_2)$ and therefore by transitivity the definitions \ref{GS1} and \ref{GS2} are independant of the representative in the equivalent classes. But that immediately means that [S] is a distance space by inheritance. 
\end{proof}

\begin{corollary} For a finite distance space $S$ as $(U(S),\leq)$ is a finite poset there are always minimal elements if $\#S > 1$. Therefore there is a sequence $S_0:=S; S_{i+1} := [S_i], i=0,...,h_S$ of distance spaces with $\#S_i > \#S_{i+1}$  and $\#S_{h_S} = 1$. In the sequel for $a \in S$ we denote the corresponding class $[a]_i \in S_i$ and $[a]_0 := a$. By construction we also know that for $h_S \geq i > j \geq 0$ and elements $[a]_i , [b]_i \in S_i$ with $[a]_i \neq [b]_i$ and   $[e]_j , [f]_j \in S_j$ with $[e]_j \neq [f]_j$ and $[e]_j \sim [f]_j$ that $(a,b) > (e,f)$. 
\end{corollary}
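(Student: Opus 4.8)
The plan is to dispatch the three assertions bundled into this corollary—that $(U(S),\leq)$ has minimal elements once $\#S>1$, that the chain $S_0,S_1,\dots$ descends to a one-element space, and that a pair which is still distinct at level $i$ dominates every $\sim$-minimal pair at any lower level $j<i$—each by unwinding the definitions together with Lemma \ref{DS_sim}. The first assertion costs nothing: a nonempty finite poset always has minimal elements, and $U(S)$ is nonempty precisely when $\#S\geq 2$, so I would simply invoke this and move on.

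For the descent, Lemma \ref{DS_sim} already guarantees that each $S_{i+1}:=[S_i]$ is again a finite distance space, so the recursion is legitimate as long as $\#S_i>1$. The point to verify is strict shrinkage: if $\#S_i>1$ then $U(S_i)\neq\emptyset$ and therefore has a minimal element $(a,b)$; by the definition of $\sim$ on $S_i$ this forces $a\sim b$ with $a\neq b$, so at least one equivalence class of $S_i$ has two or more elements and $\#S_{i+1}<\#S_i$. A strictly decreasing sequence of positive integers is finite, so there is a first index $h_S$ with $\#S_{h_S}=1$, and $S_0,\dots,S_{h_S}$ is the desired chain; the symbol $[a]_i$ is just the image of $a$ under the composed quotient maps $S=S_0\to S_1\to\cdots\to S_i$.

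The ordering claim is the only part with real content. Fix $h_S\geq i>j\geq 0$, elements $[a]_i\neq[b]_i$ in $S_i$, and $[e]_j\neq[f]_j$ in $S_j$ with $[e]_j\sim[f]_j$. Since the quotient maps only identify elements, $[a]_i\neq[b]_i$ forces $[a]_k\neq[b]_k$ for every $k\leq i$; in particular $([a]_j,[b]_j)\in U(S_j)$, and it cannot be minimal there, for otherwise we would get $[a]_{j+1}=[b]_{j+1}$ and hence $[a]_i=[b]_i$ (as $i\geq j+1$), a contradiction. On the other hand $[e]_j\neq[f]_j$ together with $[e]_j\sim[f]_j$ means, by the very definition of $\sim$ on $S_j$, that $([e]_j,[f]_j)$ is minimal in $U(S_j)$. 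Completeness of $U(S_j)$ then leaves exactly one of $<,>,\sim$ between the two pairs: $<$ is impossible since $([e]_j,[f]_j)$ is minimal, and $\sim$ is impossible since transitivity would make $([a]_j,[b]_j)$ minimal as well; hence $([a]_j,[b]_j)>([e]_j,[f]_j)$ in $U(S_j)$. Finally I would descend one level at a time: for $k\leq j$ both $([a]_k,[b]_k)$ and $([e]_k,[f]_k)$ are still genuine pairs, and if $([a]_k,[b]_k)>([e]_k,[f]_k)$ in $U(S_k)$, then, since the implications \eqref{GS1} and \eqref{GS2} of Lemma \ref{DS_sim} would otherwise force $([a]_k,[b]_k)<([e]_k,[f]_k)$ or $([a]_k,[b]_k)\sim([e]_k,[f]_k)$, completeness of $U(S_{k-1})$ leaves $([a]_{k-1},[b]_{k-1})>([e]_{k-1},[f]_{k-1})$; iterating from $k=j$ down to $k=1$ gives $(a,b)>(e,f)$ in $U(S)$, which is the assertion.

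There is no genuinely hard step; the one place that demands care is exactly this final descent, because the order relations on the quotient spaces $U(S_{i+1})$ were introduced in Lemma \ref{DS_sim} through one-directional implications, so at every level one must recover the converse implications from completeness (and, for $\sim$, transitivity) rather than reading them off directly. Everything else follows immediately from the definitions and from the properties of the quotient $[S]$ already established in Lemma \ref{DS_sim}.
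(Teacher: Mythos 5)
Your proposal is correct, and since the paper states this corollary without an explicit proof (it is presented as following ``by construction''), your job was precisely to unwind the definitions, which you do faithfully. The first two assertions are handled exactly as one would expect: minimality of a pair in $U(S_i)$ means some class of $S_i$ merges at least two elements, so $\#S_{i+1}<\#S_i$, and a strictly decreasing sequence of positive integers terminates at a singleton.

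The ordering claim is where the substance lies, and your argument is sound. You correctly identify that $[a]_i\neq[b]_i$ propagates downward to $[a]_k\neq[b]_k$ for all $k\leq i$, that $([a]_j,[b]_j)$ cannot be minimal in $U(S_j)$ (else $[a]_{j+1}=[b]_{j+1}$, contradicting $i\geq j+1$), that $([e]_j,[f]_j)$ is minimal in $U(S_j)$ by the definition of $\sim$ on $S_j$, and that completeness of $U(S_j)$ together with the fact from Lemma~\ref{DS_sim} that any pair $\sim$-equivalent to a minimal pair is itself minimal forces $([a]_j,[b]_j)>([e]_j,[f]_j)$. The step-by-step descent from $U(S_j)$ back to $U(S)$ is also right: at each level the relations \eqref{GS1} and \eqref{GS2} lift $<$ and $\sim$ upward, so completeness at the lower level rules out both and leaves only $>$; this is exactly the observation that the one-directional implications of Lemma~\ref{DS_sim}, combined with completeness on both sides, make the projection $U(S_{k-1})\to U(S_k)$ (restricted to pairs that remain distinct) an order-isomorphism onto its image. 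One small point worth making explicit, which you implicitly use, is that $[e]_j\neq[f]_j$ likewise propagates downward to $[e]_k\neq[f]_k$ for $k\leq j$, so that $([e]_k,[f]_k)$ is genuinely an element of $U(S_k)$ at every level of the descent; this follows from the same observation you invoke for $(a,b)$.
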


\begin{definition}
For a finite distance space $S$ we define a  directed graph $ G_S= (V,E,J)$ by 
\begin{equation}\label{GS3}
V:=\{[a]_i, \#[a]_i > \#[a]_{i-1} \vee i=0\}
\end{equation}
\begin{equation}\label{GS4}
E := \{([a]_i,[a]_j); a\in S, [a]_i \in V, [a]_j \in V \wedge [a]_k \notin V \forall k=i+1,...,j-1\}
\end{equation}
\begin{equation}\label{GS5}
J(([a]_i,[a]_j))_1 = [a]_i \wedge J(([a]_i,[a]_j)_2 = [a]_j
\end{equation}
\end{definition}
\begin{lemma}
For any finite distance space $S$ $G_S$ is a tree. 
\end{lemma}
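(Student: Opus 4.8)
The plan is to recognise $G_S$ as the dendrogram of the tower $S_0,S_1,\dots,S_{h_S}$ of successive quotients and to verify directly that it is connected and has exactly $\#V-1$ edges, which is a standard characterisation of a finite tree. If $\#S=1$ then $h_S=0$, $G_S$ is the single vertex $[a]_0=a$ with no edges, and there is nothing to prove; so assume $\#S>1$ from now on, in which case $h_S\geq 1$.

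First I would pin down the root. Since $\#S_{h_S}=1<\#S_{h_S-1}$, the unique element $\rho$ of $S_{h_S}$ equals $[a]_{h_S}$ for every $a\in S$; moreover $\#\rho=\#S$ while $[a]_{h_S-1}$ is a proper subset of $S$ because $\#S_{h_S-1}>1$, so $\#[a]_{h_S}>\#[a]_{h_S-1}$ and hence $\rho\in V$ by \eqref{GS3}. Next I would show that every vertex $[a]_i$ with $i<h_S$ is the source of exactly one edge: among the classes $[a]_{i+1},\dots,[a]_{h_S}$ at least $[a]_{h_S}=\rho$ lies in $V$, so there is a smallest $j\in\{i+1,\dots,h_S\}$ with $[a]_j\in V$, and then $([a]_i,[a]_j)\in E$ by \eqref{GS4}; conversely, by the form of the defining representative in \eqref{GS4}--\eqref{GS5} any edge out of $[a]_i$ has the shape $([a]_i,[a]_{j'})$ with that same $a$, with $[a]_{j'}\in V$ and no vertex strictly in between, which forces $j'=j$. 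Since $\rho$ sits at the top level it is the source of no edge. Hence $e\mapsto J(e)_1$ is a bijection of $E$ onto $V\setminus\{\rho\}$, so $\#E=\#V-1$.

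It then remains to establish connectedness of the underlying undirected graph. Starting from any vertex $v_0$ and repeatedly passing to the target of its unique outgoing edge produces a chain $v_0\to v_1\to\cdots$ whose level index strictly increases; being bounded by $h_S$ it terminates, and it can only terminate at the unique sink $\rho$. Thus every vertex is joined to $\rho$ by a path, $G_S$ is connected, and a connected graph on $\#V$ vertices with $\#V-1$ edges is a tree. Equivalently, each non-root vertex has a unique ``parent'', iterating parents yields a unique path to $\rho$, and splicing two such paths gives a unique path between any two vertices.

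The only genuinely delicate point is the bookkeeping around \eqref{GS4}: one must note that the common representative $a$ in $([a]_i,[a]_j)$ forces the target class to contain the source class, so the outgoing edges of a fixed vertex $[a]_i$ are exactly the pairs $([a]_i,[a]_j)$ built from that same $a$, and that the ``no intermediate vertex'' clause pins down the exponent $j$, hence the edge, uniquely. Once this is unwound, everything follows from the fact that a class is admitted to $V$ precisely when it strictly grows in passing from $S_{i-1}$ to $S_i$.
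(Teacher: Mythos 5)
Your proof is correct and takes essentially the same approach as the paper: show that every non-top vertex has a unique outgoing edge (giving $\#E=\#V-1$) and that following those edges upward reaches the unique vertex $[a]_{h_S}$ (giving connectedness). You simply spell out details the paper leaves implicit, such as $[a]_{h_S}\in V$, the trivial case $\#S=1$, and the well-definedness of the outgoing edge independently of the representative $a$.
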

\begin{proof} Let $[a]_i \in V$. If $0 \leq i < h_S$ then there is a smallest $i<j_0 \leq h_S$ such that $[a]_{j_0} \in V$ and therefore there is one and only one edge $([a]_i,[a]_{j_0})$ starting in a given vertex $[a]_i$ in $V$ if $i<h_S$. By definition of $h_S$ there is exactly one vertex $[a]_{h_S}$ as $\#S_{h_S} = 1$. Any vertex is connected to this one by finitely many edges. So we have shown that $G_S$ is connected and $\#E = \#V-1$ and therefore $G_S$ is a tree. 
\end{proof}
\begin{definition}\label{def_v}
For a finite distance space $S$ and its tree $G_S = (V,E,J)$ and $v = [a]_i \in V$ we now define the useful entities. To make them more understandable we give rough descriptions for them, which view $G_S$ as a tree with the point $[a]_{h_S}$ at its top.
We define:  
\begin{enumerate}
\item the number $i$ is defined as the order $o(v)$ of $v$ .
\item the set $V_0 := \{v \in V; o(v) = 0 \}$ (the base).
\item the set $S(v) := \{b \in S; [b]_{o(v)} = v\}$ (the points of the base under a given vertex $v$).
\item the set $V(v) := \{[b]_j; 0 \leq j < o(v) \wedge  b \in S \wedge [b]_{o(v)} = v \wedge [b]_j \in V \}$ (the vertices connected to $v$ from below) 
\item for a vertex $\hat{v} \in V $ the set 
\begin{equation}
\left]v,\hat{v}\right[ := \begin{cases}
\{[a]_k; j>k>i \wedge [a]_k \in V \} \Longleftarrow \exists j: \hat{v} = [a]_j \\ 
\emptyset \Longleftarrow \; otherwise
\end{cases}
\end{equation}
(the vertices between $v$ and $\hat v$ exluding $v$ and $\hat v$).
\item for $ o(v)>  0$ the set $I(v) := \{\overline{v} \in V; (\overline{v},v) \in E\}$ (the vertices directly under the vertex $v$).
\item for $ o(v)> 0$ the class \\$D(v) := \{([c]_{o(v)-1},[d]_{o(v)-1}) \in  U(S_{o(v)-1}) \, \textrm{minimal}\}$. 
\item the set $V_t := V\setminus\{[a]_{h_S}\}$ (all vertices other than the top) 
\item for $v \in V_t$ we can uniquely define $\partial v :=  \hat{v} \Leftrightarrow (v,\hat{v}) \in E$ (the vertex directly above $v$).
\end{enumerate}
\end{definition} 

\begin{lemma}\label{GS_l}
Let $S$ be a finite distance space and $a,b \in S \wedge a \neq b$. There is exactly one vertex $v$ in the tree $G_S$ such that  $([a]_{o(v)-1},[b]_{o(v)-1}) \in D(v)$. We define  
$T(v) := \{(a,b) \in U(S), ([a]_{o(v)-1},[b]_{o(v)-1}) \in D(v) \}$
\end{lemma}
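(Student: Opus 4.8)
The plan is to prove existence and uniqueness of the vertex $v$ separately, relying on the filtration $S_0 = S \supset$ (as equivalence classes) $S_1 \supset \cdots \supset S_{h_S}$ and the corollary that records when $(a,b) > (e,f)$ as the orders of the classes decrease. First I would establish existence. Given $a \neq b$ in $S$, consider the sequence of classes $[a]_0,[a]_1,\ldots,[a]_{h_S}$ and $[b]_0,[b]_1,\ldots,[b]_{h_S}$. Since $[a]_0 = a \neq b = [b]_0$ and $[a]_{h_S} = [b]_{h_S}$ (as $\#S_{h_S}=1$), there is a smallest index $j$ with $[a]_j = [b]_j$; note $j \geq 1$. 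At level $j-1$ we have $[a]_{j-1} \neq [b]_{j-1}$ but they merge into the same class at level $j$, which by the construction of $S_j = [S_{j-1}]$ (Lemma \ref{DS_sim}) means precisely that $([a]_{j-1},[b]_{j-1})$ is minimal in $(U(S_{j-1}),\leq)$. Now I would take $v$ to be the vertex $[a]_j = [b]_j$, which indeed lies in $V$ because the merging at step $j$ witnesses $\#[a]_j > \#[a]_{j-1}$ (the class strictly grew), so $v \in V$ by \eqref{GS3}, and $o(v) = j$, so $([a]_{o(v)-1},[b]_{o(v)-1}) \in D(v)$ as required.

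For uniqueness, suppose $w$ is another vertex with $([a]_{o(w)-1},[b]_{o(w)-1}) \in D(w)$. Then $[a]_{o(w)-1} \neq [b]_{o(w)-1}$, so $o(w) - 1 < j$, i.e. $o(w) \leq j = o(v)$; and since $([a]_{o(w)-1},[b]_{o(w)-1})$ is minimal in $U(S_{o(w)-1})$, the same merging argument gives $[a]_{o(w)} = [b]_{o(w)}$, forcing $o(w) \geq j$ by minimality of $j$. Hence $o(w) = j$ and $w = [a]_j = v$. The main subtlety — and the step I would be most careful about — is the bookkeeping that $D(v)$ really captures exactly the pairs that are minimal at level $o(v)-1$, which is essentially the content of the definition of $D(v)$ in Definition \ref{def_v}(7) combined with the vertex-membership condition \eqref{GS3}: one must check that a pair is minimal at some level if and only if the common merged class at the next level is a genuine vertex (i.e. the class strictly grew), which follows because minimality of $(x,y)$ in $U(S_{k})$ forces $x$ and $y$ to be identified in $S_{k+1}$, so the class of $x$ strictly enlarges.

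Once existence and uniqueness of $v$ are in hand, the final clause is just a definition: $T(v) := \{(a,b) \in U(S) : ([a]_{o(v)-1},[b]_{o(v)-1}) \in D(v)\}$ is well posed because the preceding argument shows each $(a,b)$ belongs to $T(v)$ for exactly one $v$, so the sets $\{T(v)\}_{v \in V, o(v) \geq 1}$ partition $U(S)$. I would state this partition property explicitly as it is the real payoff of the lemma and is what later sections will use when decomposing sums over pairs of points according to the tree structure. No genuine obstacle is expected here; the whole argument is a routine induction on the filtration, and the only place to slip is conflating "merges at level $j$" with "is a vertex at level $j$", which the growth condition $\#[a]_j > \#[a]_{j-1}$ resolves.
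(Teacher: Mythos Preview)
Your proposal is correct and follows essentially the same approach as the paper: both locate the minimal index $j$ at which $[a]_j = [b]_j$, observe that $[a]_j \in V$ because the class strictly grew (equivalently, $([a]_{j-1},[b]_{j-1})$ is minimal in $U(S_{j-1})$), and then establish uniqueness by the same squeeze argument on the order $o(w)$. Your additional remark that the $T(v)$ partition $U(S)$ is a useful explicit consequence that the paper leaves implicit.
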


\begin{proof} There is a minimal number $i \in \{1,\ldots, h_S\}$ such that $[a]_i = [b]_i$ as $[a]_{h_S} = [b]_{h_S}$. As it is minimal $[a]_{i-1} \neq [b]_{i-1}$ and therefore $[a]_i \in V$. By definition of $\sim$ on $S_{i-1}$ the pair $([a]_{i-1},[b]_{i-1}) \in U(S_{i-1})$ must be minimal. Let us assume that there is a $c \in S$ and a $j \in \{1,\ldots,h_S\}$ such that $[c]_j$ is a vertex $\overline{v}$ and $([a]_{j-1},[b]_{j-1}) \in D(\overline{v})$. But then by the definition of $\sim$ on $S_{j-1}$ we have $[a]_j = [b]_j = [c]_j$ and $[a]_{j-1} \neq [b]_{j-1}$. As $i$ was minimal with the feature $[a]_i = [b]_i$ we have $i \leq j$, but we also see $i \geq j$ because $[a]_{j-1} \neq  [b]_{j-1}$ and therefore $i=j$ and $\overline{v} = [a]_i$.  
\end{proof}

\begin{lemma}\label{GS_l2}
Let $S$ be a finite distance space and $a,b \in S \wedge a \neq b$.  Let $v$ be a vertex. Then either
\begin{itemize} 
\item $a, b \in S(v)$ and $(a,b) \in T(v)$ or
\item  $a, b \in S(v)$ and there is a vertex $\overline{v} \in I(v)$ such that $a,b \in S(\overline{v})$ and therefore $(a,b) \notin T(v)$ or
\item $a \notin S(v) \vee b \notin S(v)$.
\end{itemize}
\end{lemma}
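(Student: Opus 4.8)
The plan is to case-split on whether both $a$ and $b$ lie in $S(v)$, so that the third alternative is precisely the complement of the common hypothesis of the first two. First I would note that if $a\notin S(v)$ or $b\notin S(v)$ we are immediately in the third alternative, so from now on assume $[a]_{o(v)}=[b]_{o(v)}=v$. Since $a\neq b$, Lemma \ref{GS_l} applies to the pair $(a,b)$; from its proof the distinguished vertex it produces is $w:=[a]_{i}$ with $o(w)=i$, where $i$ is the least index with $[a]_i=[b]_i$, and $(a,b)\in T(u)$ holds for a vertex $u$ exactly when $u=w$. Two elementary facts will be used throughout: the classes only coarsen, so $[a]_j=[b]_j$ for every $j\geq i$ (in particular for $i\leq j\leq o(v)$); and $[a]_{o(v)}=[b]_{o(v)}$ forces $i\leq o(v)$.

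Next I would split according to whether $o(v)=i$ or $o(v)>i$. If $o(v)=i$, then $v=[a]_{o(v)}=[a]_i=w$, hence $(a,b)\in T(v)$ and we are in the first alternative; the second cannot also hold, since a vertex $\overline v\in I(v)$ with $a,b\in S(\overline v)$ would give $[a]_{o(\overline v)}=[b]_{o(\overline v)}$ with $o(\overline v)<o(v)=i$, contradicting the minimality of $i$. If instead $o(v)>i$, I would construct the required $\overline v$ by hand: let $m$ be the largest index with $i\leq m<o(v)$ and $[a]_m\in V$ (the relevant set of indices is nonempty because $[a]_i=w\in V$), and put $\overline v:=[a]_m$. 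By maximality of $m$ no $[a]_k$ with $m<k<o(v)$ is a vertex, while $[a]_{o(v)}=v\in V$, so $(\overline v,v)\in E$ and hence $\overline v\in I(v)$; and $m\geq i$ gives $[a]_m=[b]_m$, i.e. $a,b\in S(\overline v)$. This is the second alternative, and it is genuinely incompatible with the first: from $i\leq o(v)-1$ we get $[a]_{o(v)-1}=[b]_{o(v)-1}$, so that pair is not an element of $U(S_{o(v)-1})$ and hence not in $D(v)$, which gives $(a,b)\notin T(v)$, exactly the parenthetical assertion in the statement. Combining the two sub-cases yields exhaustiveness, and together with the incompatibilities just noted it shows that exactly one of the three alternatives holds. (When $o(v)=0$ the quantities $D(v),I(v),T(v)$ are not defined and $S(v)$ is a singleton, so $a\neq b$ puts us trivially in the third alternative; the statement is to be read with that convention.)

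I expect the only genuinely delicate point to be the verification, in the case $o(v)>i$, that the vertex $\overline v$ constructed above really is adjacent to $v$ in the tree $G_S$ rather than merely lying below it on the path towards $v$: this is where one must invoke the "no vertex strictly between" clause in the definition of $E$ together with the maximality in the choice of $m$. Everything else is routine bookkeeping with the coarsening of the classes $[\,\cdot\,]_j$ and with the description of the special vertex supplied by Lemma \ref{GS_l}.
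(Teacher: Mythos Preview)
Your proof is correct and follows essentially the same strategy as the paper: reduce to the case $a,b\in S(v)$, then split on whether $[a]_{o(v)-1}=[b]_{o(v)-1}$ (equivalently, whether $o(v)=i$ in your notation), and in the second case exhibit the vertex $\overline v\in I(v)$ directly below $v$ containing both $a$ and $b$. The only cosmetic difference is in how $\overline v$ is pinned down: the paper takes the minimal $k\le o(v)-1$ with $\#[a]_k=\#[a]_{o(v)-1}$, while you take the maximal $m<o(v)$ with $[a]_m\in V$; these two descriptions yield the same vertex, and your version has the advantage of making the adjacency $(\overline v,v)\in E$ transparent. Your added remarks on mutual exclusivity and on the degenerate case $o(v)=0$ are welcome clarifications that the paper leaves implicit.
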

\begin{proof} If $a,b \in S(v)$ we know $[a]_{o(v)} = [b]_{o(v)}$. If we have $[a]_{o(v)-1} \neq [b]_{o(v)-1}$ then $(a,b) \in T(v)$. Otherwise  $[a]_{o(v)-1} = [b]_{o(v)-1}$ and there is a minimal $k \in \{0,\ldots, o(v) - 1\}$ such that $\#[a]_k = \#[a]_{o(v)-1}$. But then for this $k$ $[a]_k = [b]_k$ is a vertex and by construction $[a]_k \in I(v)$ which proves the Lemma.
\end{proof}
\subsection{Applying the concept of the distance space to se\-quen\-ces on $\Z^d$}
\begin{definition}
Let $^{(n)}Y \in J_r$ be a sequence of vectors. We define 
\begin{equation}\label{G0e1}
^{(n)}h_{i,j} := ^{(n)}Y_i - ^{(n)}Y_j 
\end{equation} 
We call the series  $^{(n)}Y$ a comparative sequence
iff for any two distinct index pairs $(i_0,j_0)$ and $(i_1,j_1)$ 
there is either\\ a $q(i_0,i_1,j_0,j_1) \in \left[0,\infty\right[$ such that
\begin{equation}\label{eq_buraq}
\frac{\left|^{(n)}h_{i_0,j_0}\right|}{\left|^{(n)}h_{i_1,j_1}\right|} \rightarrow q(i_0,i_1,j_0,j_1) 
\end{equation} 
or there is a  $\tilde{q}(i_0,i_1,j_0,j_1)  \in \left[0,\infty\right[$ such that
\begin{equation}\label{eq_kcq}
\frac{\left|^{(n)}h_{i_1,j_1}\right|}{\left|^{(n)}h_{i_0,j_0}\right|} \rightarrow \tilde{q}(i_0,i_1,j_0,j_1) 
\end{equation} 
 
\end{definition}
\emph{Note:} According to Lemma \ref{F3} any sequence  $^{(n)}Y \in J_r$ contains a comparative subsequence. 

\begin{lemma}\label{Q1}
Let $^{(n)}Y \in J_r$ be a comparative sequence of vectors. Let $S:=\{0,\ldots,r\}$ be their index set. Then $^{(n)}Y$ induces a distance space structure on $S$ in a natural way by the following definitions for $(i_0,j_0), (i_1,j_1) \in U(S)$. 
\begin{itemize}
\item $(i_0,j_0) < (i_1,j_1)$ iff equation \eqref{eq_buraq} is true and $q(i_0,i_1,j_0,j_1)=0$
\item  $(i_0,j_0) \sim (i_1,j_1)$ iff equation \eqref{eq_buraq} is true and $q(i_0,i_1,j_0,j_1) \neq 0$
\item $(i_0,j_0) > (i_1,j_1)$ iff equation \eqref{eq_kcq} is true and $\tilde{q}(i_0,i_1,j_0,j_1)=0$
\end{itemize} 
\end{lemma}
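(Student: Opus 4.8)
The plan is to verify that the three relations $\leq$, $\sim$ (extended so that $(i_0,j_0)\leq(i_1,j_1)$ means $(i_0,j_0)<(i_1,j_1)$ or $(i_0,j_0)\sim(i_1,j_1)$) satisfy the four axioms of a distance space. First I would check that the definition is unambiguous: for a comparative sequence, given any two index pairs, exactly one of \eqref{eq_buraq} or \eqref{eq_kcq} holds unless both limits exist and are finite and nonzero (in which case they are reciprocal). Reading off the definitions in Lemma \ref{Q1}, one sees that exactly one of $<$, $\sim$, $>$ holds for every pair in $U(S)$, which gives \textbf{completeness} immediately; note also that $(i,j)\sim(i,j)$ since the ratio is constantly $1$, so $\sim$ is reflexive, and it is symmetric because \eqref{eq_buraq} for $(i_0,j_0),(i_1,j_1)$ with nonzero limit is equivalent to \eqref{eq_kcq} for the swapped pair with nonzero limit.

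Next I would establish that $(U(S),\leq)$ is a \textbf{poset}. Reflexivity of $\leq$ is clear. Antisymmetry: if $(i_0,j_0)\leq(i_1,j_1)$ and $(i_1,j_1)\leq(i_0,j_0)$ then by completeness neither strict inequality can hold (a strict inequality in one direction forbids $\leq$ in the other), so $(i_0,j_0)\sim(i_1,j_1)$; strictly speaking one should read $\leq$ on equivalence-unaware pairs, but the intended reading is that $\leq$ is a preorder whose induced equivalence is $\sim$, so antisymmetry holds up to $\sim$. Transitivity of $\leq$ and of $\sim$, and the compatibility bullets listed under \textbf{transitivity}, all follow by the usual limit arithmetic: if $|{}^{(n)}h_{i_0,j_0}|/|{}^{(n)}h_{i_1,j_1}|\to q$ and $|{}^{(n)}h_{i_1,j_1}|/|{}^{(n)}h_{i_2,j_2}|\to q'$ then the product of ratios tends to $qq'$ (with the convention $0\cdot(\text{finite})=0$), and the sign/vanishing of $qq'$ is determined by those of $q$ and $q'$; the only care needed is the bookkeeping of which of \eqref{eq_buraq} or \eqref{eq_kcq} is being invoked in each case, so that one never multiplies a ratio by the reciprocal of an undefined quantity. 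Since the sequence is comparative, for every relevant pair at least one of the two ratios converges, and this is exactly enough to push the arithmetic through.

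The step I expect to be the main obstacle is the \textbf{triangle} axiom: for $(i,j),(j,k)\in U(S)$, if $(i,j)>(j,k)$ then $(i,j)\sim(i,k)$. Here the geometry of $\Z^d$ enters through the actual triangle inequality on the Euclidean norm, not just abstract limit juggling. The key observation is that ${}^{(n)}h_{i,k}={}^{(n)}h_{i,j}+{}^{(n)}h_{j,k}$, so
\[
\bigl|\,|{}^{(n)}h_{i,k}|-|{}^{(n)}h_{i,j}|\,\bigr|\le|{}^{(n)}h_{j,k}|,
\]
hence dividing by $|{}^{(n)}h_{i,j}|$,
\[
\left|\frac{|{}^{(n)}h_{i,k}|}{|{}^{(n)}h_{i,j}|}-1\right|\le\frac{|{}^{(n)}h_{j,k}|}{|{}^{(n)}h_{i,j}|}.
\]
The hypothesis $(i,j)>(j,k)$ forces, via \eqref{eq_kcq} with limit $0$, that $|{}^{(n)}h_{j,k}|/|{}^{(n)}h_{i,j}|\to 0$; therefore $|{}^{(n)}h_{i,k}|/|{}^{(n)}h_{i,j}|\to 1$, which is a nonzero finite limit, i.e. $(i,k)\sim(i,j)$ by the definition in Lemma \ref{Q1}. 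One should also remark that $(i,k)\in U(S)$ requires $i\ne k$, which is automatic because $^{(n)}Y\in J_r$ has distinct components (and in any case $|{}^{(n)}h_{i,k}|\to|{}^{(n)}h_{i,j}|\ne 0$ rules out $i=k$). With completeness, the poset axioms, transitivity and triangle all verified, $S$ is a distance space and the lemma follows.
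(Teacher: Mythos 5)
Your proof is correct and follows essentially the same route as the paper, which dispatches this lemma in four short sentences (poset is immediate, completeness from the trichotomy of the limit cases, transitivity from multiplicativity of limits, triangle from the triangle inequality in $\Z^d$). You have simply unpacked each of those sentences, and the crucial unpacking — writing ${}^{(n)}h_{i,k}={}^{(n)}h_{i,j}+{}^{(n)}h_{j,k}$, applying the reverse triangle inequality, dividing by $|{}^{(n)}h_{i,j}|$, and concluding that the ratio tends to $1$ — is exactly the content behind the paper's phrase ``Triangle follows from the triangle inequality in $\Z^d$,'' so you correctly identified the crux and supplied the right argument for it. One small remark: for the poset axiom, the cleanest reading is that $(a,b)\leq(c,d)$ means $(a,b)=(c,d)$ or $(a,b)<(c,d)$; then antisymmetry is genuine (if both strict inequalities held, the product of the two vanishing ratios would be both $0$ and $1$), rather than ``antisymmetry up to $\sim$'' as you phrase it. This does not change the substance, since completeness guarantees distinct $\sim$-related pairs are $\leq$-incomparable, but it avoids the preorder hedge.
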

\begin{proof} 
With the above definition $(U(S), \leq)$ is a poset. It is immediately clear that between any two index pairs $(i_0,j_0)$ and $(i_1,j_1)$ one and only one of the three relationships $\sim, <, >$ is valid and therefore we have completeness. Transitivity of the distance space follows directly from the multiplicativity of limits in real analysis. Triangle follows from the triangle inequality in $\Z^d$. So $S$ is a distance space.
\end{proof} 
\begin{definition}
Let $z_n \in U_{\frac{1}{2d}}(0)\setminus\{0\}$  a series with $z_n \rightarrow \frac{1}{2d}$ and $s_n := \frac{1}{z_n} - 2d$ and  $^{(n)}Y \in J_r$  a sequence of vectors. We call  $^{(n)}Y$ comparative to $z_n$ if it is comparative and for any $^{(n)}h_{i,j}$ one of the following alternatives is true:
\begin{enumerate}
\item $^{(n)}h_{i,j} \equiv h_{i,j}$ is constant
\item or $\sqrt{s_{n}} \cdot \left|^{(n)}h_{i,j}\right| \rightarrow 0$ and $ \left|^{(n)}h_{i,j}\right| $ is a strictly growing unbounded series 
\item or $\sqrt{s_{n}} \cdot \left|^{(n)}h_{i,j}\right| \rightarrow w \in \C\setminus\{0\}$ 
\item or  $\left|\sqrt{s_{n}} \right|\cdot \left|^{(n)}h_{i,j}\right|$ is a strictly growing unbounded series
\end{enumerate}    
\end{definition}

\begin{lemma}\label{le_comp}
For any  $z_n \in U_{\frac{1}{2d}}(0)\setminus\{0\}$  with $z_n \rightarrow \frac{1}{2d}$ and any infinite sequence $^{(n)}Y \in J_r$ there is a subsequence of $^{(n)}Y$ such that it is comparative to $z_n$.
\end{lemma}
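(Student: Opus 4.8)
The plan is a two-stage subsequence extraction: first make the pairwise differences mutually comparable in size, then sort each difference into one of the four prescribed regimes relative to $\sqrt{s_n}$. Write $^{(n)}h_{i,j}={}^{(n)}Y_i-{}^{(n)}Y_j$ as in \eqref{G0e1}; since $^{(n)}Y\in J_r$, for $i\neq j$ these lie in $\Z^d\setminus\{0\}$. I would apply Lemma \ref{F3} to this finite family (one sequence for each pair of distinct indices) to obtain a single subsequence along which, for any two index pairs, the modulus of one difference divided by the modulus of the other converges to a finite nonnegative limit in at least one of the two directions; that is precisely the assertion that at least one of \eqref{eq_buraq} and \eqref{eq_kcq} holds for every pair of pairs, so the subsequence is comparative. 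Since comparativeness is a statement about convergence of ratios it is inherited by every further subsequence, so after relabelling we may assume $^{(n)}Y$ is already comparative and need only arrange the four alternatives of the definition.

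For this, I would run through the finitely many index pairs $(i,j)$ one at a time, passing to a further subsequence at each step. Fix $(i,j)$ and apply Lemma \ref{F1} to $^{(n)}h_{i,j}\in\Z^d$: either there is a constant infinite subsequence, giving the first alternative, or there is a subsequence along which $|^{(n)}h_{i,j}|$ is strictly increasing and unbounded. In the second case consider the nonnegative reals $t_n:=|\sqrt{s_n}|\cdot|^{(n)}h_{i,j}|$, noting that $s_n\neq 0$ because $|z_n|<\tfrac{1}{2d}$ forces $1/z_n\neq 2d$, and that (as is already used in Lemma \ref{B2}) the same bound confines $s_n$ to a sector bounded away from the negative real axis, so a continuous branch of $\sqrt{\cdot}$ is available. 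Pass to a subsequence along which $t_n$ converges in $[0,\infty]$. If $t_n\to 0$ then $\sqrt{s_n}\cdot|^{(n)}h_{i,j}|\to 0$ while $|^{(n)}h_{i,j}|$ remains strictly increasing and unbounded, which is the second alternative. If $t_n\to c\in(0,\infty)$, extract once more so that $\arg(\sqrt{s_n})\to\theta$; then $\sqrt{s_n}\cdot|^{(n)}h_{i,j}|=t_n\bigl(\sqrt{s_n}/|\sqrt{s_n}|\bigr)\to c\,e^{i\theta}\in\C\setminus\{0\}$, the third alternative. If $t_n\to\infty$, extract once more so that $t_n$ is in addition strictly increasing, the fourth alternative. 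When all pairs have been processed the resulting subsequence is comparative and realises one of the four alternatives for every $^{(n)}h_{i,j}$, hence is comparative to $z_n$.

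I do not expect a substantial obstacle here; the two points that need a little care are the well-definedness and bounded phase of $\sqrt{s_n}$ — which follow from the confinement of $s_n$ dictated by $|z_n|<\tfrac{1}{2d}$ together with one extra extraction making $\arg(s_n)$ converge — and the bookkeeping remark that each property at stake (convergence of a ratio, convergence of $t_n$, strict monotonicity of a modulus, constancy) survives passage to a further subsequence, so that the finitely many successive extractions in Stage 2 do not undo one another nor the comparativeness secured in Stage 1.
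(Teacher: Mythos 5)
Your proof is correct and takes essentially the same two-stage approach as the paper: first pass to a comparative subsequence via Lemma \ref{F3} (both you and the paper note that comparativity and its limits are inherited by further subsequences), then refine once per index pair $(i,j)$ to land in one of the four regimes. The only difference is cosmetic in the case analysis: you resolve the constant-vs-unbounded dichotomy of $|{}^{(n)}h_{i,j}|$ first (via Lemma \ref{F1}) and then sort by the limit of $t_n=|\sqrt{s_n}|\cdot|{}^{(n)}h_{i,j}|$ in $[0,\infty]$, whereas the paper branches first on whether $\sqrt{s_n}\cdot|{}^{(n)}h_{i,j}|$ is bounded and only afterwards distinguishes $w=0$ bounded/unbounded; both routes cover exactly the four alternatives of the definition and use the same sub-subsequence stability observation to ensure the finitely many extractions do not undo one another. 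Your extra remark about extracting so that $\arg(\sqrt{s_n})$ converges in the case $t_n\to c>0$ is a harmless (and slightly more explicit) way of achieving what the paper does by extracting a convergent subsequence of the complex sequence $\sqrt{s_n}\cdot|{}^{(n)}h_{i,j}|$ directly.
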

\begin{proof} As noted before $^{(n)}Y$ contains a comparative subsequence. Comparativity is retained even with the same limits in equations  \eqref{eq_buraq} and  \eqref{eq_kcq} by any subsequence of a comparative subsequence. Let us start with a comparative subsequence and for each of the finitely many $(i,j)$ let us subsequentally refine the subsequence of the previous step in the following way (we denote the subsequence of the index sequence $n\in \N$ by $\nu_l$ it potentially changes with every step $l$): \\
If  $\left|\sqrt{s_{\nu_l}} \right|\cdot \left|^{(\nu_l)}h_{i,j}\right|$ is unbound then let us choose 
$\nu_{l+1}$ such as to make it strictly growing. This feature will also not change with a further refinement. \\
If $\sqrt{s_{\nu_l}} \cdot \left|^{(\nu_l)}h_{i,j}\right|$ on the other hand is bounded it contains a convergent subseries with a limit point $w \in \C$. If $w \neq 0$  we choose this subseries as the $\nu_{l+1}$ for the next step. Again convergence and the limit point are not touched by further refinements\\
If $w = 0$ and $  \left|^{(\nu_l)}h_{i,j}\right|$ is unbounded we choose $\nu_{l+1}$ to be a convergent subseries but also to make $  \left|^{(\nu_{l+1})}h_{i,j}\right|$  strictly growing. \\
If $w =0$ and $  \left|^{(\nu_l)}h_{i,j}\right|$ is bounded then we choose  $\nu_{l+1}$ to be a convergent subseries but also to make $  ^{(\nu_{l+1})}h_{i,j} \equiv h_{(i,j)} $ constant. Again these features do not change with refinement and so the Lemma is proven. 
\end{proof}
\begin{definition}
Let  $z_n \in U_{\frac{1}{2d}}(0)\setminus\{0\}$  with $z_n \rightarrow \frac{1}{2d}$  be a sequence and  $^{(n)}Y$ such that it is comparative to $z_n$. Let $S := \{0,\ldots,r\}$ be the index set with the distance space structure induced by $^{(n)}Y$ and $G_S = (V,E,J)$ it's tree. 
For $v \in V \setminus V_0$ we define:
\begin{equation}
^{(n)}d(v) := \frac{1}{\#T(v)} \sum_{(i,j) \in T(v)}  \left|^{(n)}h_{(i,j)} \right|
\end{equation}
We define the the vector 
\begin{equation}
w_v := \sum_{i \in S(v)} e_i
\end{equation}
\end{definition}
\emph{Note:} according to Lemma \ref{DS_sim} for each $(i,j) \in T(v)$ we find 
\begin{equation}\label{eq_dv}
\frac{\left|^{(n)}h_{(i,j)}\right|}{^{(n)}d(v)}  \rightarrow q_{i,j} \in \R\setminus\{0\}
\end{equation}
\begin{definition}
With the data of the previous definition we define as subsets of $V\setminus V_0$
\begin{equation}
V_1:= \{ v \in V\setminus V_0, ^{(n)}d(v) \equiv d(v)\; \textrm{constant}\} 
\end{equation}
\begin{equation}
V_2 :=\{ v \in V\setminus V_0, \sqrt{s_{n}} \cdot^{(n)} d(v)\rightarrow 0 \wedge ^{(n)}d(v) \; \textrm{unbounded}\}
\end{equation}
\begin{equation}
V_3:=\{ v \in V \setminus V_0, \sqrt{s_{n}} \cdot ^{(n)}d(v) \rightarrow w \in \C\setminus\{0\}\}
\end{equation}
\begin{equation}
V_4 := \{v \in V \setminus V_0, \left|\sqrt{s_{n}} \right|\cdot ^{(n)}d(v) \; \textrm{unbounded}\}
\end{equation}
For $v \in V \setminus V_0$ we also define
\begin{equation}
X(v) := \{\hat v \in V(v) \cap V_0; [\hat v] \notin V_1\} \cup (V(v) \cap V_1)
\end{equation}
\end{definition}
\begin{definition}
With the data of the previous definition for vertices $v \in V_1$ we define:
the real matrix $G_v$ which is zero in the diagonal and for $(i \neq j);  i,j \in S(v) $ is 
\begin{equation}
(G_v)_{i,j} :=  G_0(h_{i,j},2) - \frac{4}{\pi}\ln(2)
\end{equation}
and is zero for $i \notin S(v) \vee j \notin S(v)$.
\end{definition}
\subsection{The asymptotic first hit matrix in light of its distance space structure}
\begin{lemma}\label{MA_l1}
Let $d=2$ and  $z_n \in U_{\frac{1}{2d}}(0)\setminus\{0\}$  with $z_n \rightarrow \frac{1}{2d}$  be a sequence and  $^{(n)}Y$ such that it is comparative to $z_n$. Let $S := \{0,\ldots,r\}$ be the index set with the distance space structure induced by $^{(n)}Y$ and $G_S = (V,E,J)$ it's tree.  Using the notation 
\begin{equation}
y_n := \frac{1}{1 + h(0,2,z_n)}
\end{equation}
the following decomposition is true:
\begin{equation}\label{eq_dec}
1_{r\times r} + U(^{(n)}Y,z_n) = 1_{r\times r} + M_0(^{(n)}Y,z_n) + y_n \cdot  M_1(^{(n)}Y,z_n)
\end{equation}
with 
\begin{multline}\label{eq_m0}
M_0(^{(n)}Y,z_n) :=  \sum_{v \in V_2} (1-\frac{2}{\pi}\cdot y_n \cdot  \ln(^{(n)}d(v)))\cdot (w_v^{t} \cdot w_v - \sum_{\overline{v} \in I(v)} w_{\overline{v}}^{t} \cdot w_{\overline{v}}) \\
+  \sum_{v \in V_1} (- y_n \cdot G_v) + w_v^{t} \cdot w_v - \sum_{\overline{v} \in I(v)} w_{\overline{v}}^{t} \cdot w_{\overline{v}}) )
\end{multline}
and $M_1$ has a zero diagonal and for $i \neq j$ converges towards a constant complex matrix $\Gamma_{i,j} \in \C$
\begin{equation}
M_1(^{(n)}Y,z_n)  \rightarrow \Gamma
\end{equation}
Specifically for $(i,j) \in T(v)$ with $v \in V_1 \cup V_4$ we have $\Gamma_{i,j} = 0$
\end{lemma}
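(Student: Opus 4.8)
The plan is to reduce the matrix identity \eqref{eq_dec} to an entrywise asymptotic analysis of the single scalar function $h(\cdot,2,z_n)$, organised by the tree $G_S$. First I would record that $U_{i,j}(^{(n)}Y,z_n)=y_n\cdot h(^{(n)}h_{i,j},2,z_n)$, so that the diagonal entries are $U_{i,i}=h(0,2,z_n)\cdot y_n$; these are absorbed into the diagonal of $M_0$, so $M_1$ is purely off-diagonal and only its off-diagonal behaviour has content. For $i\neq j$, Lemma \ref{GS_l} assigns to $(i,j)$ the unique vertex $v=v(i,j)\in V\setminus V_0$ with $(i,j)\in T(v)$, and \eqref{eq_dv} gives $|^{(n)}h_{i,j}|=q_{i,j}\cdot{}^{(n)}d(v)\cdot(1+o(1))$ with $q_{i,j}\in\R\setminus\{0\}$. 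Since $^{(n)}Y$ is comparative to $z_n$, this last relation lets one apply the four alternatives in the definition of ``comparative to $z_n$'' to the averaged quantity $^{(n)}d(v)$ itself, which shows $V\setminus V_0=V_1\dot\cup V_2\dot\cup V_3\dot\cup V_4$.

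Next I would carry out the case analysis, in each case using Lemma \ref{B3_d2} together with the estimates $\ln|^{(n)}h_{i,j}|=\ln{}^{(n)}d(v)+\ln q_{i,j}+o(1)$ and $\sqrt{s_n}\,|^{(n)}h_{i,j}|=q_{i,j}\sqrt{s_n}\,{}^{(n)}d(v)\cdot(1+o(1))$. If $v\in V_4$, then $|\sqrt{s_n}|\,|^{(n)}h_{i,j}|\to\infty$ and \eqref{B3_e2} gives $h(^{(n)}h_{i,j},2,z_n)\to 0$, so the entry lies entirely in $y_nM_1$ with $\Gamma_{i,j}=0$. If $v\in V_3$, then $\sqrt{s_n}\,|^{(n)}h_{i,j}|\to q_{i,j}w$ for the nonzero limit $w=\lim\sqrt{s_n}\,{}^{(n)}d(v)$, and \eqref{B3_e2} gives $h(^{(n)}h_{i,j},2,z_n)\to\tfrac{2}{\pi}K_0(q_{i,j}w)$, again an $M_1$-entry, now with $\Gamma_{i,j}=\tfrac{2}{\pi}K_0(q_{i,j}w)$. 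If $v\in V_2$, then $|^{(n)}h_{i,j}|\to\infty$ and $\sqrt{s_n}\,|^{(n)}h_{i,j}|\to 0$, so \eqref{B3_e3} yields, after multiplying by $y_n$, $U_{i,j}=\bigl(1-\tfrac{2}{\pi}y_n\ln{}^{(n)}d(v)\bigr)+y_n\bigl(-\tfrac{2}{\pi}(\ln q_{i,j}+\ln(2\sqrt{2}e^{\gamma}))+o(1)\bigr)$. Finally if $v\in V_1$, then $^{(n)}h_{i,j}\equiv h_{i,j}$ is constant (the other alternatives being excluded, as $^{(n)}d(v)$ is then bounded and nonvanishing), and \eqref{B3_e4} gives, again after multiplying by $y_n$, $U_{i,j}=1+y_n(G_v)_{i,j}+y_n\cdot O(s_n\ln s_n)$. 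In each of the four regimes the displayed $O(\cdot)$-error tends to $0$, and in the $V_1$ case the whole first-order term $y_n G_v$ is put into $M_0$, so $\Gamma_{i,j}=0$ there.

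It then remains to verify that the explicit matrix $M_0$ of \eqref{eq_m0} places exactly these ``non-$M_1$'' contributions at the correct positions. The key is Lemma \ref{GS_l2}: for $i\neq j$ with $i,j\in S(v)$ the pair $(i,j)$ lies in $T(v)$ iff it lies in no $S(\overline v)\times S(\overline v)$ with $\overline v\in I(v)$, so off the diagonal the matrix $w_v^{t}w_v-\sum_{\overline v\in I(v)}w_{\overline v}^{t}w_{\overline v}$ is precisely the indicator of $T(v)$; moreover, for a fixed entry $(i,j)$ the contributions of all proper ancestors of $v(i,j)$ telescope to $0$, and all vertices that are neither $v(i,j)$ nor an ancestor have support disjoint from $(i,j)$. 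Hence the $(i,j)$-entry of $M_0$ equals $1-\tfrac{2}{\pi}y_n\ln{}^{(n)}d(v)$ when $v(i,j)\in V_2$, equals $1+y_n(G_v)_{i,j}$ when $v(i,j)\in V_1$, and vanishes when $v(i,j)\in V_3\cup V_4$. Comparing with the expansions of the previous paragraph, $M_1:=y_n^{-1}\bigl((1_{r\times r}+U)-(1_{r\times r}+M_0)\bigr)$ has zero diagonal and converges off the diagonal to the constant complex matrix $\Gamma$ with the listed entries, in particular $\Gamma_{i,j}=0$ whenever $v(i,j)\in V_1\cup V_4$; this is the assertion.

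I expect the main obstacle to be the combinatorial bookkeeping of the last step: one must show that summing the nested block contributions over the whole tree $G_S$ reproduces each coefficient exactly once and at the right place — the ``$1$''s sitting on the nested $T(v)$-blocks, the logarithmic divergences attached to the $V_2$-shells, and the $G_v$-blocks attached to the $V_1$-shells — with no overlap and no gap, which is exactly what Lemmata \ref{GS_l} and \ref{GS_l2} are designed to furnish. A secondary, purely technical point is to check that all the remainders in \eqref{B3_e3}--\eqref{B3_e4} are genuinely $o(1)$ along the chosen comparative-to-$z_n$ subsequence; this holds because in each regime the argument of the relevant $O(\cdot)$ — one of $s_n\ln s_n$, $1/|^{(n)}h_{i,j}|^{2}$, or $s_n|^{(n)}h_{i,j}|^{2}\ln(s_n|^{(n)}h_{i,j}|^{2})$ — tends to $0$ precisely under the defining condition of that regime.
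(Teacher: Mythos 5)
Your argument follows the paper's route: an entrywise expansion keyed to the unique vertex $v=v(i,j)$ with $(i,j)\in T(v)$ from Lemma~\ref{GS_l}, the projector bookkeeping from Lemma~\ref{GS_l2} (ancestors telescoping to zero, other vertices having disjoint support), and the $V_1,\ldots,V_4$ case analysis read off Lemma~\ref{B3_d2} through the ratios $q_{i,j}$ of \eqref{eq_dv}. Two remarks are in order. First, the diagonal is not ``absorbed'' anywhere: in the determinantal calculus used throughout the paper (the substitution with $X_{i,i}:=0$ in the proof of Lemma~\ref{A4}, forced also by consistency with Lemma~\ref{le_phi2}), the matrix $U$ appearing in $1_{r\times r}+U$ carries zero diagonal, so $M_0$ and $M_1$ are off-diagonal by construction and your claim that $U_{i,i}=h(0,2,z_n)\,y_n$ is absorbed into $M_0$ is not how the setup works. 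Second, your expansion $U_{i,j}=1+y_n(G_v)_{i,j}+o(y_n)$ for $v\in V_1$ is correct but forces $\Gamma_{i,j}=0$ only if $M_0$ carries $+y_nG_v$; the printed formula \eqref{eq_m0} has $-y_nG_v$, which would instead give $\Gamma_{i,j}=2(G_v)_{i,j}\neq 0$ and contradict the lemma's own assertion. That is a sign slip in the paper's statement of $M_0$ (or, equivalently, in the sign of $G_v$), and your reading is the one that makes the lemma and its later use in Lemma~\ref{le_delt} coherent; it is worth flagging explicitly rather than correcting silently.
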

\begin{proof} Let $i,j$ be two distinct indices. Then the matrix element on the left hand side of equation \eqref{eq_dec} is given by $y_n \cdot h(^{(n)}h_{i,j},2,z_n)$.  
According to Lemma \ref{GS_l} there is exactly one vertex $v_0 \in V$ such that $(i,j) \in T(v_0)$.  According to Lemma \ref{GS_l2} this vertex is unique in that there is no vertex $\overline{v} \in I(v)$  such that $i,j \in S(\overline{v})$. Therefore the projector sum 
\begin{equation}
P(v) :=  (w_{v}^{t} \cdot w_{v} - \sum_{\overline{v} \in I(v)} w_{\overline{v}}^{t} \cdot w_{\overline{v}})
\end{equation}
fulfils the relation
\begin{equation}  
P(v)\cdot e_i^{t} \cdot e_j = e_i^{t} \cdot e_j \cdot \delta_{v_0,v} 
\end{equation}
So if $v_0 \in V_2$ the right hand side of equation \eqref{eq_dec} is $y_n \cdot (-\frac{2}{\pi}\ln(^{(n)}d(v_0))   + \Gamma_{i,j} + \Lambda_{i,j}^{(n)})$ where $\Lambda_{i,j} ^{(n)} \rightarrow 0$ and so both sides are identical for a suitable constant $\Gamma_{i,j}$ according to Lemma \ref{B3_d2} and equation \eqref{eq_dv}.\\
If $v_0 \in V_4$ the right hand side of equation \eqref{eq_dec} is $y_n\cdot  \Lambda_{i,j}^{(n)})$ where $\Lambda_{i,j} ^{(n)} \rightarrow 0$ which is identical to the right hand side according to Lemma \ref{B3_d2}\\
If $v_0 \in V_3$ the right hand side of equation \eqref{eq_dec} is $y_n\cdot (\Gamma_{i,j} +   \Lambda_{i,j}^{(n)})$ where $\Lambda_{i,j} ^{(n)} \rightarrow 0$ which is identical to the right hand side according to Lemma \ref{B3_d2}\\
If $v_0 \in V_1$ the right hand side of equation \eqref{eq_dec} is $y_n \cdot (G_0(h_{i,j},2) - \frac{4}{\pi}\ln(2) + \Lambda_{i,j}^{(n)})$ where $\Lambda_{i,j} ^{(n)} \rightarrow 0$ which is identical to the right hand side according to Lemma \ref{B3_d2} and so the Lemma is proven. 
\end{proof}
\subsection{Calculating the Determinant with the Matrix Determinant formula and Sherman Morisson }
We now calculate the determinant and the inverse of $1 + M_0$.
We begin with the following preparation:
\begin{lemma}\label{Blambda}
Let $d= 2$ and  $z_n \in U_{\frac{1}{2d}}(0)\setminus\{0\}$  with $z_n \rightarrow \frac{1}{2d}$  be a sequence and  $^{(n)}Y \equiv Y \in J_r$ be constant. Then it is comparative to $z_n$. Let $S := \{0,\ldots,r\}$ be the index set with the distance space structure induced by $^{(n)}Y$ and $G_S = (V,E,J)$ it's tree which of course only has one nontrivial vertex $v \in V_1$. Then for the matrix
\begin{equation}\label{eq_detg}
B_v(\lambda) := -y_n\cdot G_v + \lambda \cdot w_v^{t}\cdot w_v
\end{equation}
we note that  $B(1) = 1_{r \times r} + M_0(Y,z_n)$ and arrive at
\begin{equation}\label{det_Bl}
det(B_v(\lambda) =  y_n^{r-1} \cdot p_v \cdot (\lambda - q_v \cdot y_n)
\end{equation}
where $p_v, q_v \in \R$ and $ p_v \geq \frac{1}{2^{r-1}}$ and $q_v = \frac{det(G_v)\cdot(-1)^r}{p_v}$.
\end{lemma}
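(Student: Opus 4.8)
The plan is to treat the three assertions of the lemma in turn: that the constant sequence $Y$ is comparative to $z_n$ with the trivial tree described, that $B_v(1)$ coincides with $1_{r\times r}+M_0(Y,z_n)$, and finally the determinant identity \eqref{det_Bl} together with the bound on $p_v$.

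\textbf{Step 1 (the distance space is trivial and $B_v(1)=1_{r\times r}+M_0$).} Since $^{(n)}Y\equiv Y$, every difference $^{(n)}h_{i,j}=Y_i-Y_j$ is a nonzero constant, so alternative (1) of comparativity holds for each pair and every ratio $\lvert h_{i_0,j_0}\rvert/\lvert h_{i_1,j_1}\rvert$ is a nonzero constant; hence all pairs of $U(S)$ are $\sim$, the distance space collapses in one step ($h_S=1$), and $G_S$ consists of the base $V_0$ (the singletons) and a single further vertex $v$ of order $1$ with $S(v)=S$, $I(v)=V_0$ and $T(v)=U(S)$. As $^{(n)}d(v)=\frac{1}{\#U(S)}\sum_{(i,j)\in U(S)}\lvert Y_i-Y_j\rvert$ is a positive constant, $v\in V_1$ and $V_2=V_3=V_4=\emptyset$. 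Substituting this into \eqref{eq_m0}, the $V_2$-sum is empty and the $V_1$-sum reduces to the single term $-y_n G_v+w_v^{t}w_v-\sum_{\overline v\in I(v)}w_{\overline v}^{t}w_{\overline v}$; since $I(v)=V_0$ and $w_{\{i\}}=e_i$, the last sum is $\sum_i e_i^{t}e_i=1_{r\times r}$, so $1_{r\times r}+M_0(Y,z_n)=-y_n G_v+w_v^{t}w_v=B_v(1)$, as claimed.

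\textbf{Step 2 (the determinant).} Because $S(v)=S$, the vector $w_v=\sum_i e_i$ is the all-ones vector and $w_v^{t}w_v$ is the rank-one matrix all of whose $r$ rows equal $w_v$. Expanding $\det B_v(\lambda)=\det(-y_n G_v+\lambda\,w_v^{t}w_v)$ multilinearly in the rows, every term that selects the row $\lambda w_v$ in two or more slots has two equal rows and vanishes; hence $\lambda\mapsto\det B_v(\lambda)$ is affine, $\det B_v(\lambda)=\det(-y_n G_v)+\lambda\sum_{k=1}^r D_k$, where $D_k$ is the determinant of the matrix obtained from $-y_n G_v$ by replacing its $k$-th row with $w_v$. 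Here $\det(-y_n G_v)=(-y_n)^r\det G_v$, and cofactor expansion of $D_k$ along the replaced row gives $\sum_k D_k=(-y_n)^{r-1}\Sigma$, where $\Sigma$ is the sum of all cofactors of $G_v$ — a real number since $G_v$ is real. Putting $p_v:=(-1)^{r-1}\Sigma$ (also real) we obtain $\det B_v(\lambda)=y_n^{r-1}\bigl(p_v\lambda+(-1)^r y_n\det G_v\bigr)$; once we know $p_v\ne 0$ this is precisely $y_n^{r-1}p_v(\lambda-q_v y_n)$, and $q_v$ is read off from the ratio of the two coefficients, giving $q_v=(-1)^r\det(G_v)/p_v$.

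\textbf{Step 3 (the bound $p_v\ge 2^{-(r-1)}$, the main obstacle).} There is no elementary estimate for the alternating sum $\Sigma$ of minors of the sign-indefinite matrix $G_v$, whose entries $G_0(h_{i,j},2)-\tfrac4\pi\ln 2$ can be large and negative, so the bound has to be imported from the global inequality for $\Delta_r$. Choose $z_n\uparrow\tfrac1{2d}$ along the reals. By Corollary \ref{co_A3}, $\Delta_r(Y,z_n)\ge\bigl(2(1+h(0,2,z_n))\bigr)^{-(r-1)}=y_n^{r-1}2^{-(r-1)}$, because $h(0,2,z_n)>0$ for real $z_n\in(0,\tfrac1{2d})$. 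On the other hand, applying Lemma \ref{MA_l1} to the constant sequence $Y$ (for which $\Gamma=0$, since every pair lies in $T(v)$ with $v\in V_1$) gives $1_{r\times r}+U(Y,z_n)=B_v(1)+y_n M_1^{(n)}$ with $M_1^{(n)}\to 0$; writing $B_v(1)=w_v^{t}w_v-y_n G_v$ this is $w_v^{t}w_v+y_n\widehat G_n$ with $\widehat G_n\to -G_v$ entrywise, and the affine-in-$y_n$ expansion of Step 2 yields $\Delta_r(Y,z_n)=y_n^{r-1}\bigl(\Sigma(\widehat G_n)+y_n\det\widehat G_n\bigr)$. Since $\Sigma$ and $\det$ are polynomials in the matrix entries, $\Sigma(\widehat G_n)\to\Sigma(-G_v)=(-1)^{r-1}\Sigma(G_v)=p_v$ and $y_n\det\widehat G_n\to 0$, so $\Delta_r(Y,z_n)/y_n^{r-1}\to p_v$. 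Comparing with the lower bound and letting $n\to\infty$ gives $p_v\ge 2^{-(r-1)}$; in particular $p_v\ne 0$, which legitimises the factorisation of Step 2 and completes the proof. The hard part is precisely this last step: one must recognise $p_v$ as the leading coefficient of $\Delta_r(Y,\cdot)$ near $\pm\tfrac1{2d}$ and borrow positivity from the already-established determinant inequality, since the quantity $\Sigma(G_v)$ is not accessible to a direct estimate.
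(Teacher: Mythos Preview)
Your proof is correct and follows the same route as the paper: the determinant formula is obtained by a direct multilinear expansion (the paper's ``simple algebra'' with $p_v=(-1)^{r+1}\sum_{i,j}(-1)^{i+j}\mathrm{Adj}(G_v)_{i,j}$, which is exactly your $(-1)^{r-1}\Sigma$), and the bound $p_v\ge 2^{-(r-1)}$ is extracted from the lower bound on $\Delta_r$ in Corollary~\ref{co_A3} together with positive definiteness (Lemma~\ref{PD_l}), precisely as the paper indicates by citing \eqref{eq_big} and Lemma~\ref{PD_l}. You simply spell out what the paper compresses into two sentences, in particular the limiting identification $\Delta_r(Y,z_n)/y_n^{\,r-1}\to p_v$ via $M_1\to 0$.
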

\begin{proof} Using simple algebra it is immediately clear that as a polynomial in $y$ and $\lambda$ \eqref{det_Bl} is true with $p_v = (-1)^{r+1}\sum_{i,j} (-1)^{(i+j)} Adj(G_v)_{i,j}$  and $p_v\cdot q_v = det(G_v)\cdot(-1)^r$. From \eqref{eq_big} and Lemma \ref{PD_l} we see that $p_v \geq \frac{1}{2^{r-1}}$.
\end{proof}

\begin{lemma}\label{le_binv}
With the data of Lemma \ref{Blambda} and $y_n \neq 0$: If $\lambda - q_v\cdot y_n \neq 0$ and $q_v \neq 0$ 
\begin{equation}
B_v(\lambda)^{-1} = \frac{1}{y_n}\left(-G_v^{-1} + q_v\cdot \lambda\cdot \frac{u_v^{t}\cdot u_v}{\lambda - q_v\cdot y_n} \right)
\end{equation} with $u_v := G_v^{-1}\cdot w_v$ \\  
 If $\lambda - q_v\cdot y_n \neq 0$ and $q_v = 0$ 
\begin{equation}
B_v(\lambda)^{-1} = \frac{1}{y_n} \left( -H_v^{-1} +  (\lambda + y_n)\cdot \frac{\overline{u_v}^{t}\cdot \overline{u_v}}{\lambda} \right)
\end{equation} with $H_v := G_v + w_v^{t}\cdot w_v$ and $\overline{u_v} := H_v^{-1}\cdot w_v$. In both cases we have 
\begin{equation}\label{eq_v1_xi}
w_v^{t}B_v(\lambda)^{-1}w_v = \frac{1}{\lambda - q_v\cdot y_n}
\end{equation}  
\end{lemma}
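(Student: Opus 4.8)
The plan is to read both formulas as instances of the Sherman--Morrison formula, after rewriting $B_v(\lambda)$ as a nonzero scalar times an invertible matrix perturbed by the rank-one matrix $w_v^tw_v$, and then to pin down the single scalar that survives in the answer by comparing the matrix determinant lemma with the determinant formula \eqref{det_Bl} of Lemma \ref{Blambda}. Throughout I treat $w_v$ as the indicator (column) vector $\sum_{i\in S(v)}e_i$, so that $w_v^tw_v$ is the corresponding rank-one matrix, I read the term $u_v^t\cdot u_v$ appearing in the statement as the rank-one matrix $u_vu_v^t$ (and likewise $\overline{u_v}^t\cdot\overline{u_v}$), and I use that $G_v$ is real symmetric (by $G_0(x,2)=G_0(-x,2)$), so that $G_v^{-1}w_v^tw_vG_v^{-1}=u_vu_v^t$. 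Note first that the hypothesis $\lambda-q_vy_n\neq 0$, together with $y_n\neq 0$ and $p_v\geq 2^{-(r-1)}>0$, already gives $\det B_v(\lambda)\neq 0$ via \eqref{det_Bl}, so the inverse exists in both cases.

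For the case $q_v\neq 0$ I would first note that $\det G_v=(-1)^rp_vq_v\neq 0$, so $G_v$ is invertible, and write $B_v(\lambda)=-y_n\bigl(G_v-\tfrac{\lambda}{y_n}w_v^tw_v\bigr)$. The matrix determinant lemma gives
\begin{equation}
\det B_v(\lambda)=(-y_n)^r\det(G_v)\Bigl(1-\tfrac{\lambda}{y_n}\,w_vG_v^{-1}w_v^t\Bigr),
\end{equation}
and since this is the same degree-one polynomial in $\lambda$ as $y_n^{r-1}p_v(\lambda-q_vy_n)$ from \eqref{det_Bl}, comparing their unique roots forces $w_vG_v^{-1}w_v^t=1/q_v$. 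Then Sherman--Morrison applied to $G_v-\tfrac{\lambda}{y_n}w_v^tw_v$ yields $B_v(\lambda)^{-1}=\tfrac1{y_n}\bigl(-G_v^{-1}-\tfrac{\lambda}{\,y_n-\lambda\,w_vG_v^{-1}w_v^t\,}\,u_vu_v^t\bigr)$ with $u_v=G_v^{-1}w_v^t$, and substituting $w_vG_v^{-1}w_v^t=1/q_v$ turns the scalar coefficient into $q_v\lambda/(\lambda-q_vy_n)$, which is the first formula of the Lemma.

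For the case $q_v=0$ I would pass to $H_v=G_v+w_v^tw_v$, so that $B_v(\lambda)=-y_nH_v+(\lambda+y_n)w_v^tw_v=-y_n\bigl(H_v-\tfrac{\lambda+y_n}{y_n}w_v^tw_v\bigr)$; here $\lambda-q_vy_n=\lambda\neq 0$ by hypothesis. Evaluating at $\lambda=-y_n$ gives $B_v(-y_n)=-y_nH_v$, hence by \eqref{det_Bl} with $q_v=0$ we get $(-y_n)^r\det H_v=\det B_v(-y_n)=-y_n^rp_v$, so $\det H_v=(-1)^{r+1}p_v\neq0$ and $H_v$ is invertible --- this is the one place where \eqref{det_Bl} is really needed rather than bare linear algebra. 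Comparing the matrix determinant lemma for $H_v-\tfrac{\lambda+y_n}{y_n}w_v^tw_v$ with \eqref{det_Bl} then forces
\begin{equation}
w_vH_v^{-1}w_v^t=1,
\end{equation}
since that is exactly what annihilates the constant term, leaving $\det B_v(\lambda)$ proportional to $\lambda$; Sherman--Morrison together with this identity collapses the scalar coefficient to $(\lambda+y_n)/\lambda$, giving the second formula with $\overline{u_v}=H_v^{-1}w_v^t$. Finally, equation \eqref{eq_v1_xi} I would obtain by multiplying both formulas on the left by $w_v$ and on the right by $w_v^t$ and using $w_vu_v=1/q_v$ (resp. $w_v\overline{u_v}=1$): the two resulting terms combine to $1/(\lambda-q_vy_n)$ in the first case and $1/\lambda$ in the second, and these agree because $q_v=0$ there.

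I do not expect any deep obstacle: the content is the two scalar identities $w_vG_v^{-1}w_v^t=1/q_v$ and $w_vH_v^{-1}w_v^t=1$ extracted from \eqref{det_Bl}, plus the invertibility of $H_v$ in the degenerate case and careful bookkeeping of the row/column conventions for $w_v$; everything else is a routine Sherman--Morrison computation.
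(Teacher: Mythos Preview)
Your proposal is correct and follows essentially the same route as the paper: the paper's proof simply notes that $B_v(\lambda)$ is invertible by \eqref{det_Bl}, that $G_v$ (resp.\ $H_v$) is invertible when $q_v\neq 0$ (resp.\ $q_v=0$, with $\det H_v=(-1)^{r-1}p_v$), and then says the formulas ``follow from Sherman--Morrison.'' You have unpacked exactly what that entails by extracting the scalar identities $w_vG_v^{-1}w_v^t=1/q_v$ and $w_vH_v^{-1}w_v^t=1$ from the matrix determinant lemma compared against \eqref{det_Bl}, which is precisely the bookkeeping the paper leaves implicit.
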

\begin{proof} For $\lambda - q_v \cdot y_n \neq 0$ and $y_n \neq 0$ because of equation  
\ref{det_Bl} and $p_v \neq 0$ the matrix $B(\lambda)$ is invertible. If $q_v \neq 0$ $G_v$ is invertible. Otherwise $det(G_v + w_v^{t} \cdot w_v) = (-1)^{r-1} p_v$ which is always nonzero and so $H_v$ is invertible. The above formulas for the inverse then follow from Sherman-Morisson \citep{shmo}.
\end{proof}
\begin{lemma}\label{MA_det}
With the data of Lemma \ref{MA_l1} there is an $N\in \N$ such that for  $n \geq N$ and for all $v \in V_2$ the equations
\begin{equation}\label{eq_dv1}
^{(n)}d(v) > 2
\end{equation}
and for $v \in V_2 \cap V_t$ the equation
\begin{equation}\label{eq_dv2}
\frac{2}{\pi}\cdot \left(\ln(^{(n)}d(\partial v) - \ln(^{(n)}d(v)) \right) > 1
\end{equation}
and for $v \in V_1 \cap V_t \wedge \partial v \in V_2$ 
\begin{equation}\label{eq_dv3}
\left| \frac{q_v}{\frac{2}{\pi} \ln(^{(n)}d(\partial v))}\right| 
< \frac{1}{\#I(\partial v) + 1}
\end{equation}
and for $v \in V_1$  the equation 
\begin{equation}\label{eq_dv4}
\left| q_v \cdot y_n \right| < \frac{1}{\#S(v) + 1}
\end{equation}
are fulfilled.\\
Then for all $n \geq N$ we have the following formula for the determinant 
\begin{multline}\label{eq_det}
\det \left(1_{r \times r} + M_0\left(^{(n)}Y,z_n \right) \right) = \prod_{v \in V_1} y_n^{\#S(v) -1} p_v\cdot (1-\phi_v^{(n)}) \\
\prod_{v \in V_2} g\left(\#X(v),\phi_v^{(n)},\xi_v^{(n)}\right)
\end{multline}
with the notations:
\begin{equation}
g\left(m,x,\beta\right) := x^{(m-1)}\cdot (\beta + (1-\beta)\cdot x)
\end{equation}
and $\xi_v^{(n)}$ defined for $v \in V_2$ recursively by
\begin{equation}\label{eq_xi}
\xi_v^{(n)} := 
 \sum_{\hat{v} \in I(v) \cap V_2 }\frac{\xi_{\hat{v}}^{(n)}}{g(1,\phi_{\hat{v}}^{(n)},\xi_{\hat{v}}^{(n)})}
+ \sum_{\hat{v} \in I(v)\cap (V_1 \cup V_0)}\frac{1}{1 - \phi^{(n)}_{\hat{v}}}
\end{equation}
and the expression
\begin{equation}\label{eq_phiv}
\phi_v^{(n)} := \begin{cases} \frac{\ln(^{(n)}d(v)}{\ln(^{(n)}d(\partial v)} \Longleftarrow v \in V_2 \cap V_t \wedge \partial v \in V_2 \\
y_n\cdot\ln(^{(n)}d(v)) \Longleftarrow v \in V_2, v \notin V_t \vee \partial v \notin V_2 \\
\frac{q_v}{\frac{2}{\pi}\cdot\ln(^{(n)}d(\partial v))} \Longleftarrow v \in V_1 \cap V_t \wedge \partial v \in V_2 \\
y_n\cdot q_v \Longleftarrow v \in V_1, v \notin V_t \vee \partial v  \notin V_2 \\
0 \Longleftarrow \textrm{otherwise}
\end{cases}
\end{equation}
\end{lemma}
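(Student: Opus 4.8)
\emph{Proof plan.} I would prove \eqref{eq_det} by an induction over the tree $G_S$, peeling off the contribution of one vertex at a time, starting from the base $V_0$ and working up to the top vertex $[a]_{h_S}$. Writing $P(v) := w_v^{t}w_v - \sum_{\overline v\in I(v)} w_{\overline v}^{t}w_{\overline v}$, equation \eqref{eq_m0} exhibits $1_{r\times r}+M_0$ as the identity plus a sum over $v\in V_1\cup V_2$ of the $P(v)$ carrying logarithmic weights (for $v\in V_2$) or the full matrices $-y_n G_v$ together with $P(v)$ (for $v\in V_1$). The sets $S(v)$ form a laminar family by Lemma \ref{GS_l2} (for $\overline v\in I(v)$ one has $S(\overline v)\subseteq S(v)$, and distinct children have disjoint $S$'s), so $1_{r\times r}+M_0$ carries a nested block structure along the tree, and each elementary reduction step is an instance of the matrix determinant lemma $\det(B+c\,w^{t}w)=\det(B)(1+c\,wB^{-1}w^{t})$: Sherman--Morrison \citep{shmo} in general, and Lemmata \ref{Blambda} and \ref{le_binv} for the $V_1$ vertices, whose weights $G_v$ are matrices rather than scalars. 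The invariant I would carry is that, once the vertices at or below $v$ have been processed, the relevant reduced block has determinant equal to the partial product in \eqref{eq_det} coming from those vertices, and contracting its inverse against $w_v$ yields, in the normalization prescribed by \eqref{eq_phiv}, the quantity $\xi_v^{(n)}$ of \eqref{eq_xi}.

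Step one is well-posedness of the recursion under \eqref{eq_dv1}--\eqref{eq_dv4}. By Lemma \ref{Blambda} one has $p_v\geq 2^{-(\#S(v)-1)}>0$ and $\det B_v(1)=y_n^{\#S(v)-1}p_v(1-\phi_v^{(n)})$ with $\phi_v^{(n)}=y_nq_v$, and \eqref{eq_dv4} makes $|\phi_v^{(n)}|<1$ for $v\in V_1$; \eqref{eq_dv1}--\eqref{eq_dv2} put every $V_2$-coupling $\phi_v^{(n)}$ in $(0,1)$, and \eqref{eq_dv3} controls the rescaling when a $V_1$ vertex sits directly under a $V_2$ vertex. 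A simultaneous downward induction then keeps the $\xi_v^{(n)}$ in a range on which $g(1,\phi_{\hat v}^{(n)},\xi_{\hat v}^{(n)})$ and $g(\#X(v),\phi_v^{(n)},\xi_v^{(n)})$ and the $1-\phi_v^{(n)}$ do not vanish for $n$ large, so that \eqref{eq_xi} and \eqref{eq_det} make sense.

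Step two is the inductive peeling. A base vertex $v\in V_0$ needs nothing ($w_v=e_v$, the $1\times1$ block is $1$, contributing $1=1/(1-\phi_v^{(n)})$ since $\phi_v^{(n)}=0$). A vertex $v\in V_1$ is peeled exactly by the computations of Lemmata \ref{Blambda} and \ref{le_binv} applied to the reduced block at $v$ (the children's $+w_{\overline v}^{t}w_{\overline v}$ having telescoped against the $-\sum_{\overline v}w_{\overline v}^{t}w_{\overline v}$ in $P(v)$): Lemma \ref{Blambda} produces the factor $y_n^{\#S(v)-1}p_v(1-\phi_v^{(n)})$ and equation \eqref{eq_v1_xi} produces the resolvent value $1/(1-\phi_v^{(n)})$, which is the $V_1$ term of \eqref{eq_xi}. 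A vertex $v\in V_2$ is a single rank-one update: its reduced block is block-diagonal over the children plus $c_v^{(n)}w_v^{t}w_v$, where $c_v^{(n)}$ is the difference of logarithmic weights between $v$ and $\partial v$ read off from \eqref{eq_m0}; one matrix-determinant-lemma step turns the determinant into the product of the child determinants times $\bigl(1+c_v^{(n)}\sum_{\hat v\in I(v)}w_{\hat v}^{t}(\cdot)^{-1}w_{\hat v}\bigr)$, with the child resolvent forms being, by induction, the $\xi_{\hat v}^{(n)}$ for $\hat v\in V_2$ and the $1/(1-\phi_{\hat v}^{(n)})$ for $\hat v\in V_1\cup V_0$. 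Pulling out the common scale $\tfrac{2}{\pi}\ln({}^{(n)}d(v))$ (or $\tfrac{2}{\pi}\ln({}^{(n)}d(\partial v))$ when $\partial v\in V_2$: this is exactly the source of the two cases of \eqref{eq_phiv} and of the power $(\phi_v^{(n)})^{\#X(v)-1}$, since precisely $\#X(v)-1$ of the child determinants have their scale absorbed into a factor $\phi_v^{(n)}$) converts this into the local factor $g(\#X(v),\phi_v^{(n)},\xi_v^{(n)})$ and yields the recursion \eqref{eq_xi} for the next level up. Taking the product over all $v\in V_1\cup V_2$ gives \eqref{eq_det}.

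The main obstacle, as I see it, is precisely this normalization bookkeeping. In the raw matrix $M_0$ the weight multiplying $w_v^{t}w_v$ is a \emph{difference} of scales between a vertex and its parent, while \eqref{eq_det} is written in terms of the \emph{absolute} scale $\tfrac{2}{\pi}\ln({}^{(n)}d(v))$ at each $V_2$ vertex and the factor $y_n$ at each branch top (already extracted as $y_n^{\#S(v)-1}$ by Lemma \ref{Blambda}). Reconciling the two means tracking, branch by branch, how these scales propagate upward through interleaved $V_1$ and $V_2$ vertices and how child determinants get converted into the powers $(\phi_v^{(n)})^{\#X(v)-1}$; the combinatorial kernel of this is the assertion that $\#X(v)$ equals the number of child determinants that must be rescaled, which rests on the clean telescoping of the $-\sum_{\overline v}w_{\overline v}^{t}w_{\overline v}$ terms guaranteed by Lemmata \ref{GS_l} and \ref{GS_l2} (each index pair lies in exactly one $T(v)$ and is never shared between a vertex and a child). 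Everything else reduces to the matrix determinant lemma and to the asymptotics of $h(x,2,z)$ already assembled in Lemmata \ref{B3_d2} and \ref{MA_l1}.
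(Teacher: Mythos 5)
Your plan is essentially the paper's own proof: a bottom-up induction over $G_S$ that peels one vertex at a time with Sherman--Morrison and the matrix determinant lemma, tracking the partial determinant together with the contracted resolvent $\lambda_v^{(n)}w_v^{t}B(v,n)^{-1}w_v$, whose three cases \eqref{eq_xi_v0}--\eqref{eq_xi_v2} give exactly the three kinds of contributions in the recursion \eqref{eq_xi}. One imprecision worth flagging in your ``combinatorial kernel'': $\#X(v)$ counts branch-tip vertices over the entire subtree $V(v)$ (the $V_0$ vertices whose class is not in $V_1$ together with the $V_1$ vertices), not direct children, so the power $(\phi_v^{(n)})^{\#X(v)-1}$ is assembled from scale factors propagated up through all $V_2$ levels below $v$ rather than being determined locally at $v$ --- the paper likewise leaves this final power-count to the reader.
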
 
\begin{proof} 
As for $v \in V_2$ the sequence $^{(n)}d(v)$ is unbounded then for sufficiently big $n$ equation \eqref{eq_dv1} and \eqref{eq_dv3} are fulfilled. As $y_n \rightarrow 0$ equation \eqref{eq_dv4} is also fulfilled for sufficiently big $n$. For $v \in V_2 \cap V_t$ the order of $\partial v$ is higher than that of $v$ and so $\frac{^{(n)}d(v)}{^{(n)}d(\partial v)} \rightarrow 0$ and therefore \eqref{eq_dv2} must be fulfilled for sufficiently big $n$. We will assume their validity in the sequel.  \\
We define 
\begin{equation}
\lambda_v^{(n)} := 
\begin{cases}
0 \Longleftarrow v \in V_0 \cup V_1\\
y_n \cdot \frac{2}{\pi} \cdot \ln(^{(n)}d(v))) \Longleftarrow v \in V_2 \\
1 \Longleftarrow v \in V_3 \cup V_4
\end{cases}
\end{equation}
and 
\begin{equation}
\widehat{\lambda_{\partial v}^{(n)}} := 
\begin{cases}
\lambda_{\partial v}^{(n)} \Longleftarrow v \in V_t\\
1 \Longleftarrow v \notin V_t
\end{cases}
\end{equation}
We can rewrite equation \eqref{eq_m0} by collecting terms that have the same projectors $w^{t}\cdot w$ and reach 
\begin{multline}
1_{r \times r} + M_0(^{(n)}Y,z_n) = 
\sum_{v \in V_1} (-G(v)\cdot y_n) +\\  \sum_{v \in (V_0 \cup V_1 \cup V_2)}(\widehat{\lambda_{\partial v}^{(n)}} - \lambda_v^{(n)}) \cdot  w_v^{t} \cdot w_v
\end{multline}
For $v \in (V_1 \cup V_2)$ we define the matrix
\begin{equation}
B(v,n) := \sum_{\hat{v} \in V_1 \cap V(v) } (-G(\hat{v})\cdot y_n) +\\  \sum_{\hat{v} \in (V_0 \cup V_1 \cup V_2) \cap V(v)}(\widehat{\lambda_{\partial \hat{v}}^{(n)}} - \lambda_{\hat{v}}^{(n)}) \cdot  w_{\hat{v}}^{t} \cdot w_{\hat{v}}
\end{equation}
For those $v,n$ for which $B(v,n)$ is invertible we define 
\begin{equation}
\tilde\xi_{v}^{(n)} := \lambda_v^{(n)}\cdot w_v^{t}B(v,n)^{-1}w_v
\end{equation}
If for all $\hat{v} \in I(v)$ the matrices $B(\hat{v},n) + (\lambda_v^{(n)} - \lambda_{\hat{v}}^{(n)}) \cdot w_{\hat{v}}^{t}\cdot w_{\hat{v}}$ are invertible then 
\begin{equation}\label{eq_xi_v}
\tilde\xi_v^{(n)} = \lambda_v^{(n)} \cdot \sum_{\hat{v} \in I(v)} w^{t}_{\hat{v}}\left( B(\hat{v},n) + (\lambda_v^{(n)} - \lambda_{\hat{v}}^{(n)}) \cdot w_{\hat{v}}^{t}\cdot w_{\hat{v}}\right)^{-1}w_{\hat{v}}
\end{equation}
We now start at the base $V_0$ of the tree $G_S$ and move upward.
For $\hat{v} \in V_0 \wedge [\hat{v}] \notin V_1$ we have 
\begin{equation}\label{eq_xi_v0}
\lambda_v^{(n)} \cdot w^{t}_{\hat{v}}\left( B(\hat{v},n) + (\lambda_v^{(n)} - \lambda_{\hat{v}}^{(n)}) \cdot w_{\hat{v}}^{t}\cdot w_{\hat{v}}\right)^{-1}w_{\hat{v}} = 1
\end{equation}
For $\hat{v} \in V_1$ we have 
\begin{equation}\label{eq_xi_v1}
\lambda_v^{(n)} \cdot w^{t}_{\hat{v}}\left( B(\hat{v},n) + (\lambda_v^{(n)} - \lambda_{\hat{v}}^{(n)}) \cdot w_{\hat{v}}^{t}\cdot w_{\hat{v}}\right)^{-1}w_{\hat{v}} = \frac{1}{1-\phi_v^{n}}
\end{equation}
according to equation \eqref{eq_v1_xi}. 
If for a $\hat{v} \in V_2$ the equation 
\begin{equation}\label{eq_sart_v2}
1 + (\lambda_v^{(n)} - \lambda_{\hat{v}}^{(n)})\cdot \frac{\tilde\xi_{\hat{v}}^{(n)}}{\lambda_{\hat{v}}^{(n)}} \neq 0
\end{equation} 
is true then according to Shermann Morisson
\begin{multline}\label{eq_xi_v2}
\lambda_v^{(n)} \cdot w^{t}_{\hat{v}}\left( B(\hat{v},n) + (\lambda_v^{(n)} - \lambda_{\hat{v}}^{(n)}) \cdot w_{\hat{v}}^{t}\cdot w_{\hat{v}}\right)^{-1}w_{\hat{v}} = \\ \lambda_v^{(n)}\left( \frac{\tilde\xi_{\hat{v}}^{(n)}}{\lambda_{\hat{v}}^{(n)}} -  
\frac{ (\lambda_v^{(n)} - \lambda_{\hat{v}}^{(n)})\cdot \left( \frac{\tilde \xi_{\hat{v}}^{(n)}}{\lambda_{\hat{v}}^{(n)}} \right)^2}{1 + (\lambda_v^{(n)} - \lambda_{\hat{v}}^{(n)})\cdot  \frac{\tilde \xi_{\hat{v}}^{(n)}}{\lambda_{\hat{v}}^{(n)}}   } \right) \\
= \frac{\tilde\xi_{\hat{v}}^{(n)}}{g(1,\phi_{\hat{v}}^{(n)},\tilde \xi_{\hat{v}}^{(n)})}
\end{multline}
From equation \eqref{eq_xi_v} and  \eqref{eq_xi_v2} it is clear that $\tilde\xi_v^{(n)}$ is a nonnegative real number for any $v \in V_2$ if this was true for all the $\tilde\xi_{\hat{v}}^{(n)}$ with $\hat{v} \in I(v)$. But because of equations  \eqref{eq_xi_v0} and \eqref{eq_xi_v1} as this is true for all vertices $\hat v \in V_1 \cup V_0$ it must be true for the vertices $v \in V_2$ with $I(v) \subset V_1 \cup V_0$ and therefore by induction in $o(v)$ for any $v \in V_2$ as $G_S$ is a tree.  Equation \eqref{eq_sart_v2} is always fulfilled because of this and equation \eqref{eq_dv2}. Equation \eqref{eq_det} for $ \xi_{\hat{v}}^{(n)} := \tilde \xi_{\hat{v}}^{(n)}$ now follows from a repeated application of the determinant matrix formula for matrices of the form $\left( B(\hat{v},n) + (\lambda_v^{(n)} - \lambda_{\hat{v}}^{(n)}) \cdot w_{\hat{v}}^{t}\cdot w_{\hat{v}}\right)$ using 
equation \eqref{det_Bl} as a staring point for $\hat{v} \in V_1$ and the fact that for $\hat{v} \in V_0 \wedge [\hat{v}] \notin V_1$ the determinant is trivially $\lambda_{ v}^{(n)}$.
\end{proof}
\emph{Corollary:}
Going along the same lines using Sherman Morrison \citep{shmo} repeatedly the inverse of $1 + M_0$ under the same conditions for $n$ is given  by:
\begin{multline}\label{eq_inv}
\left(1_{r\times r}+ M_0(^{(n)}Y_r,z_n) \right)^{-1} = 1_{r \times r} + \sum_{v \in V_0} \left(\frac{1}{\widehat{\lambda^{(n)}_{\partial v}}}-1 \right)\cdot w_v^{t}\cdot w_v \\ + \sum_{v \in V_1} \left(B_v(\lambda_v^{(n)})^{-1} - 1_{\#S(v) \times \#S(v)} \right) - 
\sum_{v \in V_2} \frac{1}{\lambda_v^{(n)}}\frac{1-\phi_v^{(n)}}{g(1,\phi_v^{(n)},\xi_v^{(n)})} \cdot u_v^{t} \cdot u_v
\end{multline}
with 
\begin{equation}
u_v := \sum_{v_0 \in X(v)} \left( \left( \prod_{\hat{v} \in ]v_0,v[}  \frac{1}{g\left(1,\phi_{\hat{v}}^{(n)},\xi_{\hat{v}}^{(n)}\right)} \right) u_{v_0} \right)
\end{equation}
with 
\begin{equation}
u_{v_0} := \begin{cases}
w_{v_0} \Leftarrow v_0 \in V_0 \\
\frac{1}{1-\phi_{v_0}^{(n)}}\cdot \left(-G_{v_0}^{-1} w_{v_0} \right) \Leftarrow v_0 \in V_1 \wedge q_{v_0} \neq 0 \\
\left(-H_{v_0}\right)^{-1} w_{v_0} \Leftarrow v_0 \in V_1 \wedge q_{v_0} = 0 \\
\end{cases}
\end{equation}
\begin{lemma}\label{le_delt}
With the data of Lemma \ref{MA_l1} and an integer $f \geq 1$ there is an $N_0(f)\in \N$ such that for  $n \geq N_0(f)$ 
\begin{equation}
\left|1 - \Delta_r(^{(n)}Y,z_n)^f \right| < 1
\end{equation} 
\end{lemma}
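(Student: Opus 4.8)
The plan is to turn the asserted inequality into an elementary estimate on the modulus and the argument of the single complex number $w := \Delta_r(^{(n)}Y,z_n)$. Since $1+h(0,2,z_n)=\tfrac{2}{\pi}K(16z_n^{2})$ diverges as $z_n\to\tfrac{1}{2d}=\tfrac14$ (with $|z_n|<\tfrac14$) and its dominant term $-\tfrac12\ln(1-16z_n^{2})$ has real part $\to+\infty$ and bounded imaginary part, one records first that $y_n\to0$ and, crucially, $\arg y_n\to0$. From the identity $|1-w^{f}|^{2}=1-2|w|^{f}\cos(f\arg w)+|w|^{2f}$ it then suffices to establish $\arg w\to0$ and $\limsup_n|w|\le1$: for $n$ large one gets $|f\arg w|<\tfrac\pi2$ (so $\cos(f\arg w)>0$) and $|w|^{f}\le(1+o(1))^{f}<2\cos(f\arg w)$, hence $|1-w^{f}|<1$. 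Throughout one may argue along subsequences, since if the lemma failed there would be a subsequence on which $|1-w^{f}|\ge1$, and it could be refined so that all the finitely many auxiliary quantities below converge.

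To obtain those two facts I would apply Lemma \ref{MA_l1} to write $w=\det\bigl(1_{r\times r}+M_0(^{(n)}Y,z_n)+y_nM_1(^{(n)}Y,z_n)\bigr)$ with $M_1\to\Gamma$, use the product formula of Lemma \ref{MA_det} for $\det(1_{r\times r}+M_0)$ together with the formula \eqref{eq_inv} of its corollary for $(1_{r\times r}+M_0)^{-1}$, and expand multilinearly in the columns coming from $y_nM_1$, i.e.\ write
\begin{equation*}
w=\det(1_{r\times r}+M_0)\cdot\det\bigl(1_{r\times r}+y_n(1_{r\times r}+M_0)^{-1}M_1\bigr).
\end{equation*}
The point of \eqref{eq_inv} is that its only potentially unbounded pieces are the $1/\lambda_v^{(n)}$ with $v\in V_2$, and since $\lambda_v^{(n)}=\tfrac2\pi y_n\ln{}^{(n)}d(v)$ and $^{(n)}d(v)\to\infty$ these are killed upon multiplication by $y_n$, so $y_n(1_{r\times r}+M_0)^{-1}$ stays bounded (and converges along the chosen subsequence) and the second determinant factor is bounded. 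Collecting the lowest power $y_n^{E}$, $E:=\sum_{v\in V_1}(\#S(v)-1)$, that occurs in the product formula, one is left with $w=y_n^{E}\,c_n$ with $c_n$ bounded; since $\arg y_n^{E}=E\arg y_n\to0$, the task reduces to showing $\arg c_n\to0$ and $\limsup|c_n|\le1$. Here the constants $p_v\ge2^{-(r-1)}>0$, the reals $\xi_v^{(n)}\ge0$ and the values $\phi_v^{(n)}\in(0,1)$ appearing in the $g$-factors, and the $G_v$-minors of Lemma \ref{Blambda}, all supply real positive data; modulo the point discussed below, this yields $\arg c_n\to0$ and $|c_n|\le1+o(1)$ — using Corollary \ref{co_A3} and the fact that $\Delta_r(Y,x)\in(0,1]$ is real on the real segment $[0,\tfrac14)$ (Lemma \ref{PD_l}) — and the proof is complete.

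The main obstacle is the regime $V_1\neq\emptyset$. Every $v\in V_1$ is a genuine merge vertex, so $\#S(v)\ge2$ and $E\ge1$: the principal term $\det(1_{r\times r}+M_0)$ then vanishes to exactly the same order $y_n^{E}$ as the leading corrections coming from $y_nM_1$, so $y_nM_1$ cannot be treated as a perturbation of $1_{r\times r}+M_0$. One must instead combine, cluster by cluster, the $G_v$-block of $M_0$ with the $K_0$-block that $M_1$ contributes near that cluster and recognise the resulting leading coefficient $c_\infty$ as the determinant of a positive (first-hit Gram) matrix; in particular one must check that the a priori complex $K_0$-contributions attached to the $V_3$-vertices, which enter only through $M_1$, do not spoil the reality and positivity of $c_\infty$. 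This identification of the combined leading term, which is where Lemma \ref{PD_l} and the block structure of $M_0$ and $M_1$ are genuinely used, is the real work of the proof.
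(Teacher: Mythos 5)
You set up essentially the paper's machinery: you start from Lemma \ref{MA_l1}, use the product formula of Lemma \ref{MA_det} and the explicit inverse \eqref{eq_inv}, and factor $w=\det(1+M_0)\,\det\bigl(1+(1+M_0)^{-1}y_nM_1\bigr)$; your reformulation of $|1-w^f|<1$ via $|1-w^f|^2=1-2|w|^f\cos(f\arg w)+|w|^{2f}$ into ``$\arg w\to0$ and $\limsup|w|\le1$'' is sound and is an equivalent way of saying what the paper says in the form $\Delta_r(^{(n)}Y,z_n)=B_n(1+\epsilon_n^{(2)})$, $B_n\in(0,1]$ real, $\epsilon_n^{(2)}\to0$, so that $\Delta_r^f=Z_n(1+\epsilon_n^{(3)})$ and $|1-\Delta_r^f|\le 1-Z_n/2<1$.

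The genuine gap is that you explicitly defer the case $V_1\neq\emptyset$, writing that ``the identification of the combined leading term \ldots is the real work of the proof'' without carrying it out. That case cannot be postponed: when $V_1\neq\emptyset$ one has $E\ge1$ and $w\to0$, so $|1-w^f|\to1$ and the strict inequality is entirely a statement about $\arg w$, which is exactly the content of the lemma. Your diagnosis of the difficulty there is also not the paper's and appears more pessimistic than necessary. You infer from $\det(1+M_0)\sim y_n^{E}$ that ``$y_nM_1$ cannot be treated as a perturbation'' and propose to merge the $G_v$-blocks of $M_0$ with the $K_0$-blocks of $M_1$ cluster by cluster and identify the leading coefficient as a positive Gram determinant. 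The paper instead argues term by term that all entries of $(1+M_0)^{-1}y_nM_1$ vanish as $n\to\infty$, so that the second determinant factor tends to $1$ even with $V_1\neq\emptyset$; the mechanism you do not locate is that the $V_1$-summand $B_v(\lambda_v^{(n)})^{-1}$ of \eqref{eq_inv} (which, contrary to your remark that only the $V_2$-pieces carry the unbounded $1/\lambda_v^{(n)}$, is itself of size $1/y_n$) becomes a \emph{bounded} matrix supported on $S(v)\times S(v)$ after multiplication by $y_n$, while Lemma \ref{MA_l1} gives $(M_1)_{i,j}\to0$ for $i,j\in S(v)$ since then $(i,j)\in T(v)$ with $v\in V_1$. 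With that in hand the control of $\arg w$ and $|w|$ is carried entirely by the real-positive product formula \eqref{eq_det} for $\det(1+M_0)$ (the $p_v>0$, $\xi_v^{(n)}\ge0$, $\phi_v^{(n)}\in(0,1)$ you already list), so no Gram-matrix identification is needed. You should carry out that term-by-term estimate on $(1+M_0)^{-1}y_nM_1$: it replaces the block-combination programme you sketch and closes the gap.
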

\begin{proof} We assume that with Lemma \ref{MA_det} that $n > N$. From equation \eqref{eq_xi} we see that $\xi_v^{(n)} \in \R \wedge \xi_v^{(n)} > 0$.  For $I(v) \cap V_2 = \emptyset$ we see that 
\begin{equation}\label{eq_delta_xi}
\#I(v) - \delta^{(0)}_n(v) < \xi_v^{(n)} < \#X(v) + \delta^{(1)}_n(v)
\end{equation}
where the $\delta$'s are suitable chosen nonnegative sequences converging towards $0$. This is obvious in the following way:  For any $v \in V_2$ and $\hat{v} \in I(v)$ the $\phi_{\hat{v}}^{(n)} \in ]0,1[$ are real numbers according to equation \eqref{eq_dv2} and \eqref{eq_dv3} and therefore $1 \leq g(1,\phi_{\hat{v}}^{(n)},\xi_{\hat{v}}^{(n)}) < \xi_{\hat{v}}^{(n)}$.  Because of equations \eqref{eq_dv2}, \eqref{eq_dv3} and \eqref{eq_xi}  for sufficiently big $n$ 
using $\#I(v) \geq 2$ the lower boundary on the left hand side of equation \eqref{eq_delta_xi} then is always bigger than $\frac{3}{2}$. Therefore equation \eqref{eq_delta_xi} can be proven inductively for any $v \in V_2$ for sufficiently big $n$. For sufficiently big $n$ we also know that $\left| s_n \right| < \frac{1}{2}$ and we can write with equation \eqref{eq_A3_even}:
\begin{equation}
y_n = \frac{1}{-\frac{1}{\pi}\ln(\left|s_n \right|) + \zeta_n } = A_n \cdot (1 + \delta^{(3)}_n)
\end{equation}
with $A_n := \frac{\pi}{ -\ln(\left| s_n \right|)}$ being a strictly positive real series and $\zeta_n \in \C$ a bounded series and therefore $\delta^{(3)}_n \in \C$ being a sequence converging towards $0$ and for $v \in V_2$ the series 
\begin{equation}\label{eq_phil}
y_n \cdot \frac{2}{\pi}\ln(^{(n)}d(v)) = E_n\cdot (1 + \delta^{(4)}_n)
\end{equation}
with real $E_n$ and $E_n \in ]0,1]$ for sufficiently big $n$ because of the definition of $V_2$. $\delta^{(4)}_n \in \C$ is a sequence converging towards $0$. 
For real $x \in ]0,1]$ real $m \geq 2$  and real $\beta \in [1,m]$ it is easy to calculate that $0 < g(m,x,\beta) \leq 1$ and that the relative deviation propagation module describing how the relative deviation $\delta^{(4)}_n$ is reflected in first order in the relative deviation of $g(m,x,\beta)$ 
\begin{equation}
\left| \frac{x}{g(m,x,\beta)} \frac{\partial}{\partial x} g(m,x,\beta) \right| = 
\left| m-1 + \frac{1 - \beta}{\beta + x\cdot (1-\beta)} \right| \leq m + \beta - 2
\end{equation} 
is bounded.
The relative deviation propagation module for $\beta$
\begin{equation}
\left| \frac{\beta}{g(m,x,\beta)} \frac{\partial}{\partial \beta} g(m,x,\beta) \right| = 
\left| \frac{\beta\cdot (1-x)}{\beta \cdot(1-x) + x} \right| \leq 1
\end{equation} 
Therefore with equation \eqref{eq_det}, noticing that $p_v \geq \frac{1}{2^{\#S(v)-1}} $ and $\phi^{(n)}_v \rightarrow 0$ for $v \in V_1$ we can write
\begin{equation}
det(1_{r \times r} + M_0(^{(n)}Y,z_n)) = B_n \cdot (1 + \epsilon^{(1)}_n)
\end{equation}
with a real $B_n \in ]0,1]$ and $\epsilon^{(1)}_n \in \C $ a sequence converging towards $0$. \\
We notice that 
\begin{multline}
\Delta_r(^{(n)}Y,z_n) = \det(1_{r \times r} + M_0(^{(n)}Y,z_n))\cdot \\  \det\left( 1_{r \times r} + \left( 1_{r \times r} + M_0(^{(n)}Y,z_n) \right)^{-1} \cdot y_n \cdot  M_1(^{(n)}Y,z_n) \right)
\end{multline}
To prove the Lemma it is therefore sufficient to show that the complex matrix 
\begin{equation}
\left( 1_{r \times r} + M_0(^{(n)}Y,z_n) \right)^{-1} \cdot y_n \cdot  M_1(^{(n)}Y,z_n)
\end{equation}
has matrix elements which vanish for $n \rightarrow \infty$. We do this for the individual summands in equation \eqref{eq_inv} and begin with the unity matrix. 
\begin{equation}
1_{r \times r} \cdot y_n \cdot M_1(^{(n)}Y,z_n) \rightarrow 0
\end{equation} 
as $M_1$ converges towards a constant matrix. 
For $v \in V_2$ we note: 
\begin{equation}\label{eq_lc}
 \frac{1}{\widehat{\lambda_v^{(n)}}}\cdot y_n = \frac{\pi}{2\cdot \ln(^{(n)}d(v))} \rightarrow 0
\end{equation}
and therefore for $v_0 \in V_0$ we see that $\frac{y_n}{\widehat{\lambda_{\partial v_0}^{(n)}}}$ converges to $0$ if $v_0 \in V_t \wedge \partial v_0 \in V_2$. If $v_0 \notin V_t \vee \partial v_0 \notin V_2$ the expression  $\frac{y_n}{\widehat{\lambda_{\partial v_0}^{(n)}}} = y_n \rightarrow 0$ and so all sumands in equation \eqref{eq_inv} from the set $V_0$ multiplied with $y_n \cdot M_1$ converge towards $0$. 
For $v \in V_1$ the matrix 
\begin{equation}
B_v(\widehat{\lambda_v^{(n)}})^{-1} \cdot y_n 
\end{equation}
according to Lemma \ref{le_binv} converges towards a constant matrix for indices $i,j \in S(v)$ and is $0$ if $i \notin S(v) \vee j \notin S(v)$. But according to Lemma \ref{MA_l1} for $i,j \in S(v)$ we know that $(M_1)_{i,j} \rightarrow 0$, as in this case $(i,j) \in T(v)$. 
For $v \in V_2$ we first of all note that with equation \eqref{eq_phil} and equation \eqref{eq_phiv}
\begin{equation}
\phi_v^{(n)} = \Re(\phi_v^{(n)})\cdot (1 + \delta^{(5)}_n(v))
\end{equation}
with $\Re(\phi_v^{(n)}) \in ]0,1]$ and $ \delta^{(5)}_n(v) \in \C$ a sequence converging towards $0$.
The factors $1 \leq g(1,\Re(\phi_v^{(n)}),\xi_v^{(n)}) \leq \xi_v^{(n)} < \#X(v) + 1$  for sufficiently big $n$ and therefore 
\begin{equation}
\frac{1-\phi_v^{(n)}}{g(1,\phi_v^{(n)},\xi_v^{(n)})} \cdot u_v^{t} \cdot u_v
\end{equation} 
is a bounded matrix. With equation \eqref{eq_lc} we see that therefore all the summands in \eqref{eq_inv} over vertices in $V_2$ when multiplied with $y_n \cdot M_1$ also converge towards $0$. 
So we can now finally for sufficiently big $n$ write 
\begin{equation}
\Delta_r(^{(n)}Y,z_n) = B_n\cdot (1 + \epsilon_n^{(2)})
\end{equation}
with real $B_n \in ]0,1]$ and $\epsilon_n^{(2)} \in \C$ a sequence converging towards $0$. 
But that also means that we can write
\begin{equation}\label{eq_deb}
\Delta_r(^{(n)}Y,z_n)^f = Z_n\cdot (1 + \epsilon_n^{(3)})
\end{equation}
with real $Z_n \in ]0,1]$ and $\epsilon_n^{(3)} \in \C$ a sequence converging towards $0$. 
So there is an $N_0(f)$ such that for all $n > N_0(f)$ equation \eqref{eq_deb} is true with $\left|\epsilon_n^{(3)} \right| < \frac{1}{2}$ and therefore 
\begin{equation}
\left|1 -  \Delta_r(^{(n)}Y,z_n)^f  \right| \leq 1 - \frac{Z_n}{2} < 1
\end{equation}  
\end{proof}
\begin{theorem}\label{th_de_1}
For $d \geq 2$ and an integer $f \geq 1$ there is an $\epsilon(f) > 0$ such that for any $z \in U_{\epsilon(f)}(\pm \frac{1}{2d}) \cap U_{\frac{1}{2d}}(0)$ and any $Y \in J_r$ the equation
\begin{equation}
\left|1 - \Delta_r(Y,z)^f\right| < 1 
\end{equation} 
is true.
\end{theorem}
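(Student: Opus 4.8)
The plan is to split the argument into the two cases $d\geq 3$ and $d=2$. The case $d\geq 3$ is already done: it is exactly Lemma~\ref{le_d3}, which for every integer $f\geq 1$ produces an $\epsilon(f)>0$ (depending also on $r$, a dependence we suppress as elsewhere) such that for all $z\in U_{\frac{1}{2d}}(0)\cap U_{\epsilon(f)}(\pm\frac{1}{2d})$ and all $Y\in J_r$ one has $|1-\Delta_r(Y,z)^f|<1-(2^r(1+G_0(0,d))^{r-1})^{-f}<1$. So for $d\geq 3$ nothing more is needed and we may assume $d=2$ from now on.

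For $d=2$ I would argue by contradiction, after first reducing to a neighbourhood of $+\frac14$. Since $\Delta_r(Y,\cdot)$ is even by Lemma~\ref{le_deven} and $U_{\frac14}(0)$ is symmetric under $z\mapsto -z$, an $\epsilon(f)$ that works on $U_{\frac14}(0)\cap U_{\epsilon(f)}(\tfrac14)$ automatically works on $U_{\frac14}(0)\cap U_{\epsilon(f)}(-\tfrac14)$ by reflection; so it suffices to produce $\epsilon(f)$ near $+\frac14$. Suppose no such $\epsilon(f)$ existed. Then for each $n\in\N$ there would be a point $z_n\in U_{\frac14}(0)$ with $|z_n-\frac14|<\frac1n$ and a vector $^{(n)}Y\in J_r$ with
\begin{equation}
\left|1-\Delta_r(^{(n)}Y,z_n)^f\right|\geq 1 .
\end{equation}
In particular $z_n\to\frac14$, and $z_n\neq 0$ for $n$ large, so we may discard finitely many terms and apply Lemma~\ref{le_comp}: after passing to a subsequence of $^{(n)}Y$ (and the corresponding subsequence of $z_n$, which still converges to $\frac14$), the sequence $^{(n)}Y$ is comparative to $z_n$. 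Keeping the same notation for this subsequence, Lemma~\ref{le_delt} applies to the comparative data $z_n$, $^{(n)}Y$ and the given $f$, and yields an $N_0(f)\in\N$ with $|1-\Delta_r(^{(n)}Y,z_n)^f|<1$ for all $n\geq N_0(f)$, contradicting the displayed inequality. Hence the required $\epsilon(f)$ exists and the theorem follows.

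The only point requiring care is the bookkeeping of the successive subsequence extractions: the passage to a comparative subsequence in Lemma~\ref{le_comp}, and then the further passage to a subsequence comparative to $z_n$ inside the proof of Lemma~\ref{le_delt}, must not disturb the property $z_n\to\frac14$ — which is automatic, since every subsequence of a convergent sequence has the same limit — and the hypothesis "$z_n\in U_{\frac14}(0)\setminus\{0\}$, $z_n\to\frac14$" of Lemmata~\ref{le_comp} and \ref{le_delt} is precisely the one produced by the contradiction hypothesis. All the genuinely analytic work — the distance-space tree attached to $^{(n)}Y$, the decomposition $1+U(^{(n)}Y,z_n)=1+M_0+y_nM_1$ of Lemma~\ref{MA_l1}, and the determinant and Sherman--Morrison estimates of Lemmata~\ref{MA_det}--\ref{le_delt} — is already in place, so the present theorem is just the assembly of the pointwise ($d\geq 3$) and sequential ($d=2$) facts into a uniform neighbourhood statement.
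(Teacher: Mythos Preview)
Your proof is correct and follows essentially the same route as the paper: the case $d\geq 3$ is handled by Lemma~\ref{le_d3}, and for $d=2$ you reduce to a neighbourhood of $+\tfrac14$ via the evenness in Lemma~\ref{le_deven}, argue by contradiction, pass to a comparative subsequence via Lemma~\ref{le_comp}, and obtain a contradiction from Lemma~\ref{le_delt}. Your write-up is more explicit about the subsequence bookkeeping, but the underlying argument is the same.
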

\begin{proof} For $d \geq 3$ this follows from Lemma \ref{le_d3}. For $d=2$ let us assume that it was not true. Then there must be an infinite sequence $^{(n)}Y \in J_r$ and an infinite sequence $z_n \in U_{\frac{1}{2d}}(0)\setminus\{0\}$ with $z_n \rightarrow \frac{1}{2d}$ such that 
\begin{equation}\label{eq_viold}
\left|1- \Delta_r(^{(n)}Y,z_n)^f \right| \geq 1 
\end{equation}
But according to Lemma \ref{le_comp} there is a subsequence of $^{(n)}Y$ comparative to $z_n$ and according to Lemma \ref{le_delt} this subsequence violates equation \eqref{eq_viold} for sufficiently big $n$. Because of Lemma \ref{le_deven} the result is also true for a vicinity of $z = -1/2d$.
\end{proof}
\section{The moments and their Euler graph contributions}  
We are now ready to define the moments as holomorphic functions on an open vicinity of $U_{\frac{1}{2d}}(0)$. 
\subsection{The moments as analytic functions}
\begin{definition}
Let $K_r$ be the complete undirected graph with $r$ vertices from the set $\{1,\ldots,r\}$. Let $\tilde T$ be a spanning tree of $K_r$. Then there are $2^{(r-1)}$ possible orderings of it's edges as ordered pairs of integers, each edge can be ordered independantly of the other. Let $Tr(K_r)$ denote the set of all ordered spanning trees. For an ordered spanning tree $T \in Tr(K_r)$ let its edges be defined by the ordered index pair $(i,j)$ for an edge between $i$ and $j$, and let us denote the set of edges of $T$ with $E(T)$. For variables $X_{i,j}$ we define the differetial operator $O_T$ as 
\begin{equation}
O_T := \prod_{(i,j) \in E(T)} \frac{\partial}{\partial X_{i,j}}
\end{equation}
\end{definition}
\begin{definition}\label{df_pt}
For a tree $T \in Tr(K_r)$ and $ k \in (\N\setminus\{0\})^r$ using the notation of \citep[Lemma 2.2.]{dhoef1} we define the polynomial  $p_{T,k}$ in the variables $\omega = (\omega_1,\ldots,\omega_r)$ and $X_{i,j}$ with $X_{i,i} \equiv 0$ by
\begin{multline}
p_{T,k}(\omega,X) := \frac{1}{r!}\cdot \left(\det(1 + X)\right)^{s(k)} \cdot O_T \\
 \sum_{\vartheta \in D(k_1,\ldots,k_r)}
\left( \prod_{i=1}^r  (1 + \omega_i)^{k_i} \hat K(x_i,k_{\vartheta (i)},\omega_i) \right)\\
\left( \prod_{j=1}^r \frac {1} {(1-x_j)^2} \right) 
\frac {1} {det(W^{-1} + X)} \Bigr\rvert_{\forall a: x_a = 0}
\end{multline} 
with $s(k) := \sum_{i=0}^r k_i$.
\end{definition}
\begin{lemma}\label{le_T_Euler}
For any $\ell \in \N_0$ and any $f \in \N$ the polynomial
\begin{equation}\label{eq_mono_F}
\left(1-  \det(1 + X)^f  \right)^{\ell} \cdot p_{T,k}(\omega,X) \cdot \prod_{(i,j) \in E(T)} X_{i,j} = \sum_{F} \psi_{F,T,k,\ell,f}(\omega) \prod_{i,j} X_{i,j}^{F_{i,j}}
\end{equation}
has monomials $\prod_{i,j} X_{i,j}^{F_{i,j}}$ with nonzero coefficients $\psi$ corresponding to matrices $F \in \tilde H_r$ only, where 
\begin{equation}
\tilde H_r = \bigcup_{h_1 \geq 1,...,h_r \geq 1} \tilde H_r(h_1,...,h_r)
\end{equation}
\end{lemma}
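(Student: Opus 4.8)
The plan is to show that any monomial $\prod_{i,j}X_{i,j}^{F_{i,j}}$ appearing in \eqref{eq_mono_F} with nonzero coefficient $\psi_{F,T,k,\ell,f}(\omega)$ has exponent matrix $F$ satisfying the four defining conditions of $\tilde{H}_r$: zero diagonal, nonnegative integer entries, the balancing condition with all degrees $h_i\geq 1$, and $\mathrm{cof}(A-F)\neq 0$. Zero diagonal and integrality are immediate, since $X_{i,i}\equiv 0$ and every factor on the left of \eqref{eq_mono_F} is a polynomial combination of formal power series in the off-diagonal variables with coefficients in $\Z$ (adjoined the diagonal entries of $W^{-1}$ and the $\omega_i$); the formal expansion of $1/\det(W^{-1}+X)$ around $X=0$ makes sense because $\det(W^{-1}+X)|_{X=0}=\prod_i w_i^{-1}\neq0$.

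For the balancing I would propagate the \emph{excess} $\mathrm{exc}_i(F):=\sum_jF_{i,j}-\sum_jF_{j,i}$ through the factors; ``balanced'' means $\mathrm{exc}_i\equiv 0$. By the Leibniz formula a permutation $\sigma$ contributes to $\det(1_{r\times r}+X)$ the monomial $\prod_{i:\sigma(i)\neq i}X_{i,\sigma(i)}$, whose exponent matrix — the non-fixed part of $\sigma$, a disjoint union of directed cycles — has zero excess at every vertex; hence every monomial of $\det(1+X)^{s(k)}$ and of $\bigl(1-\det(1+X)^f\bigr)^\ell$ is balanced, being a product of such monomials (the constant $1$ being balanced trivially). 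The same count, with fixed points of $\sigma$ now contributing the diagonal entries of $W^{-1}$ instead of variables, gives $\det(W^{-1}+X)=\bigl(\prod_iw_i^{-1}\bigr)\bigl(1+q(X)\bigr)$ with $q(0)=0$ and every monomial of $q$ balanced, so the reciprocal $\prod_iw_i\sum_{n\geq 0}(-q(X))^n$ is, term by term, a sum of balanced monomials. The factors $\hat K(x_i,\cdot,\omega_i)$, $(1-x_j)^{-2}$ and $(1+\omega_i)^{k_i}$ carry no $X_{i,j}$ and affect only $\psi(\omega)$ after the specialization $x_a=0$. Next, $O_T$ applied to a balanced monomial that contains all tree edges returns a positive multiple of that monomial with $T_{\mathrm{inc}}$ subtracted from its exponents, where $T_{\mathrm{inc}}$ is the adjacency matrix of the oriented spanning tree $T$ (its $(i,j)$-entry is $1$ iff $(i,j)\in E(T)$), and it annihilates monomials missing a tree edge. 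Since a tree is acyclic, $T_{\mathrm{inc}}$ is not balanced, but the trailing multiplication by $\prod_{(i,j)\in E(T)}X_{i,j}$ has exponent matrix exactly $T_{\mathrm{inc}}$ and adds it back. Putting the chain together: $\bigl(1-\det(1+X)^f\bigr)^\ell$ and $\det(1+X)^{s(k)}$ contribute balanced exponents, $O_T$ subtracts $T_{\mathrm{inc}}$, the trailing product adds it back, so $F$ is balanced; moreover $F_{i,j}\geq 1$ for every $(i,j)\in E(T)$, since those entries are restored by the trailing product and only increased by the other (nonnegative) factors. As $T$ is a spanning tree of $K_r$, every vertex is a tree endpoint, so every $h_i=\sum_jF_{i,j}\geq 1$ and $F\in\tilde{H}_r(h_1,\dots,h_r)$.

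The remaining condition $\mathrm{cof}(A-F)\neq 0$ is the step I expect to require the actual graph-theoretic input. Because $F$ has a directed edge between the endpoints of every edge of the undirected spanning tree underlying $T$, the underlying undirected multigraph of $F$ contains a spanning tree and is connected; being also balanced, the associated multidigraph is then Eulerian and hence strongly connected, and for a strongly connected digraph the directed Matrix--Tree / BEST theorem — the very framework in which $\mathrm{cof}(A-F)$ is used in this paper — gives that $\mathrm{cof}(A-F)$ equals the number of spanning arborescences with a fixed root, a strictly positive integer. Hence every contributing $F$ lies in $\tilde{H}_r$, which is the assertion. The two genuinely delicate points are (i) ruling out an unbalanced monomial hidden in the formal expansion of $1/\det(W^{-1}+X)$, which is why I would argue via the explicit geometric series and the disjoint-cycle structure of the Leibniz expansion, and (ii) the exact $T_{\mathrm{inc}}$-cancellation between $O_T$ and the trailing product; everything else is routine bookkeeping with $\mathrm{exc}_i$.
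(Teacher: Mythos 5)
Your proposal is correct and, at its core, runs on the same engine the paper uses: the Leibniz-formula expansion of the determinants into disjoint directed cycles, which are automatically balanced, so every monomial exponent matrix inherits zero excess at each vertex. Where you diverge from the paper is in the final, non-trivial step of showing $\mathrm{cof}(A-F)\neq 0$. The paper tracks the derivatives in $O_T$ through the cycle structure and argues that each surviving permutation cycle through a tree edge $(i,j)$ must already contain directed walks $i\to j$ and $j\to i$; concatenating over the spanning tree then gives strong connectivity directly. You instead note only that the trailing factor $\prod_{(i,j)\in E(T)}X_{i,j}$ forces $F_{i,j}\geq 1$ for every tree edge, so the underlying undirected multigraph is weakly connected, and then invoke the classical theorem that a weakly connected balanced multidigraph is Eulerian and hence strongly connected, after which the BEST/Tutte theorem gives $\mathrm{cof}(A-F)>0$. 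Your route is a bit cleaner: it decouples the connectivity argument from the internal cycle structure of the determinant expansion, so one does not have to keep track of which factor each derivative in $O_T$ actually hit (the paper's statement requires a small product-rule argument to justify, since a tree-edge derivative may land on any of the several determinant factors). The paper's version, in exchange, is self-contained and does not appeal to the weakly-connected-plus-balanced-implies-Eulerian theorem as a black box. Both are valid proofs; yours is the more economical one.
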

\begin{proof} From the Leibniz formula for determinants and the decomposition of permutations in cycles we find
\begin{equation}\label{eq_det_1x}
\det(\lambda + X) = \sum_{\sigma \in S_r} (-1)^{sign(\sigma)}  \left(\prod_{\zeta \in Cycl(\sigma)} X_{\zeta} \right) \cdot \left( \prod_{k \in Fix(\sigma)}\lambda_k \right) 
\end{equation}
where $\lambda$ is the diagonal matrix $\lambda = diag(\lambda_1,\ldots,\lambda_r)$ and  $Fix(\sigma) = \{j: \sigma(j) = j\}$ 
and $Cycl(\sigma)$ denotes the cycles of $\sigma$ and $X_{\zeta} := \prod_{(i,j) \in E(G(\zeta))} X_{i,j}$ where $\zeta$ is identified with its subgraph $G(\zeta)$ in $DK_r$ (the complete digraph with $r$ vertices). Each such subgraph $G(\zeta)$ is a directed balanced  graph, i.e. a directed graph where for each vertex the number of ingoing edges equals the number of outgoing edges. Moreover 
\begin{equation}\label{eq_diffdet}
X_{i,j} \frac{\partial}{\partial X_{i,j}} \det(\lambda + X) = \sum_{\sigma \in S_r; \sigma(i) = j} (-1)^{sign(\sigma)} \left( \prod_{\zeta \in Cycl(\sigma)} X_{\zeta } \right)\cdot  \left( \prod_{k \in Fix(\sigma)}\lambda_k \right)
\end{equation}
The subgraph $G(\zeta)$  of $DK_r$ for any  cycle $\zeta$ on the right hand side of equation \eqref{eq_diffdet} contains the indices $i$ and $j$ and also contains a directed edge walk from $i$  to $j$ and from $j$ to $i$. Further applications of operators of the form 
\begin{equation}
X_{l,k} \frac{\partial}{\partial X_{l,k}}
\end{equation}
to the right hand side of \eqref{eq_diffdet} adds further conditions on the contributing permutations $\sigma \in S_r$ which make sure that the contributions belong to cycles $\zeta$  whose graphs $G(\zeta)$ contain an edge trail from $l$ to $k$ and from $k$ to $l$ for each such operator. 
Therefore the monomials on the right hand side of equation \eqref{eq_mono_F} 
\begin{equation}
\prod_{i,j} X_{i,j}^{F_{i,j}}
\end{equation}
have matrices $F$ which corresponds to graphs $G(F)$ which consist of a product of cyclic directed subgraphs of different copies of $DK_r$. These graphs therefore have the same number of ingoing edges as outgoing edges for each vertex as any cycle in that product has that property. For each edge $(i,j) \in E(T)$ they contain a directed edge path from $i$ to $j$ and from $j$ to $i$ and therefore by appropriate concatenation a directed edge trail in both directions between any two distinct vertices as T is a maximal tree of $K_r$. $G(F)$ therefore is a strongly connected balanced digraph and therefore is Eulerian. Therefore $cof(A-F) \neq 0$ and therefore $F \in \tilde H_r$.   
\end{proof}
\begin{lemma}\label{le_treec}
Let $T \in Tr(K_r)$ be an ordered spanning tree and for $(i,j) \in E(T)$ let there be functions 
\begin{equation}
\begin{array}{rccl}
f_{i,j} &\Z^d\setminus\{0\} & \mapsto & \C \\
&x& \mapsto & f_{i,j}(x)
\end{array}
\end{equation}
such that the infinite sum
\begin{equation}
\Phi_{i,j} := \sum_{x \in \Z^d \setminus\{0\}} f_{i,j}(x) 
\end{equation}
converges absolutely. Then the infinite sum
\begin{equation}
\sum_{Y \in J_r} \prod_{(i,j) \in E(T)} f_{i,j}(y_{i,j}) = \prod_{(i,j) \in E(T)} \Phi_{i,j}
\end{equation}
is well defined and converges absolutely too.
\end{lemma}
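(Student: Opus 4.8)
The plan is to reduce the claim to a finite change of summation order, so that the statement follows from the standard theory of absolutely convergent multiple series combined with the tree structure of $T$. First I would observe that, since $T$ is a spanning tree of $K_r$ on the vertex set $\{1,\ldots,r\}$, we may root $T$ at the vertex $1$ and orient every edge of $T$ away from the root; this gives a bijection between the non-root vertices $2,\ldots,r$ and the edges of $T$, where vertex $i$ corresponds to the unique edge $e_i=(p(i),i)$ joining $i$ to its parent $p(i)$. Because $Y\in J_r$ means $Y_1=0$ is fixed and the remaining components $Y_2,\ldots,Y_r$ range (subject to the distinctness constraint $i\neq j\Rightarrow Y_i\neq Y_j$) over $\Z^d$, I would parametrize a point $Y\in J_r$ by the collection of edge-differences $y_{e_i}:=y_{p(i),i}=Y_{p(i)}-Y_i$ for $i=2,\ldots,r$. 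Processing the vertices of $T$ in an order compatible with the root (parents before children), each $Y_i$ is recovered as $Y_i=Y_{p(i)}-y_{e_i}$, so this is a genuine bijection between $(\Z^d)^{r-1}$ (with the diagonal removed) and the image of $J_r$ under $Y\mapsto(y_{e_2},\ldots,y_{e_r})$. Note that the edge variable $y_{i,j}$ appearing in $f_{i,j}$ for an edge $(i,j)\in E(T)$ is, by this construction, exactly one of the free coordinates $y_{e_i}$ (up to the sign convention $y_{i,j}=-y_{j,i}$, which does not affect absolute convergence since we only need the bound on $|f_{i,j}|$).

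Next I would argue that the sum over the larger index set $(\Z^d)^{r-1}$ obtained by dropping the distinctness constraint dominates (in absolute value) the sum over $J_r$, so it suffices to prove absolute convergence and the product formula for the unconstrained sum. For the unconstrained sum, the summand factorizes completely: $\prod_{(i,j)\in E(T)} f_{i,j}(y_{i,j})$ is a product of $r-1$ functions, the $k$-th depending only on the $k$-th free coordinate. By hypothesis each $\Phi_{i,j}=\sum_{x\in\Z^d\setminus\{0\}} f_{i,j}(x)$ converges absolutely, i.e. $\sum_{x} |f_{i,j}(x)|<\infty$. Hence
\begin{equation}
\sum_{(y_{e_2},\ldots,y_{e_r})} \prod_{(i,j)\in E(T)} |f_{i,j}(y_{i,j})| = \prod_{(i,j)\in E(T)} \left( \sum_{x\in\Z^d\setminus\{0\}} |f_{i,j}(x)| \right) < \infty,
\end{equation}
by Tonelli/Fubini for nonnegative terms over a countable product index set. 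This establishes absolute convergence of the sum over $J_r$, and then Fubini for absolutely convergent series permits iterating the sum one free coordinate at a time, each iteration pulling out the corresponding factor $\Phi_{i,j}$, to yield $\sum_{Y\in J_r}\prod_{(i,j)}f_{i,j}(y_{i,j})=\prod_{(i,j)\in E(T)}\Phi_{i,j}$.

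The only genuinely delicate point — and the one I would be most careful about — is the bookkeeping that the removal of the distinctness constraint really does enlarge the index set in the way claimed, and that the bijection $Y\mapsto(y_{e_2},\ldots,y_{e_r})$ is well-defined and injective. Injectivity is immediate (process vertices parent-first to reconstruct $Y$), and the image lands inside $(\Z^d)^{r-1}$; conversely not every tuple in $(\Z^d)^{r-1}$ comes from a $Y\in J_r$, precisely because of the distinctness requirement, which is why we pass to the dominating unconstrained sum. One should also note that the edges of $T$ not used as ``spanning'' orientation still matter only through their $f_{i,j}$, which are defined on $\Z^d\setminus\{0\}$; since $Y\in J_r$ forces $y_{i,j}=Y_i-Y_j\neq 0$ for $i\neq j$, every $f_{i,j}$ is evaluated only at admissible arguments, so no term is undefined. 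Once these points are settled the result follows, and it is exactly the absolute-convergence statement needed to turn the formal graph sums of the previous sections into honest analytic identities.
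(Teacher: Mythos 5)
Your strategy — root the tree, parametrize $Y\in J_r$ by the $r-1$ tree-edge differences (each forced to be nonzero), compare against the unconstrained sum over $(\Z^d\setminus\{0\})^{r-1}$, and use Tonelli to factor that unconstrained sum into the product $\prod_{(i,j)\in E(T)}\Phi_{i,j}$ — is essentially the paper's own one-line argument, just spelled out, and it does correctly establish the absolute-convergence part of the lemma, which is the only part that is actually used downstream (e.g.\ in Lemma~\ref{le_integr} and in the integral Corollary). The gap is in your final displayed equation: the asserted identity $\sum_{Y\in J_r}\prod_{(i,j)\in E(T)}f_{i,j}(y_{i,j})=\prod_{(i,j)\in E(T)}\Phi_{i,j}$ does not follow from what precedes it, and in fact it is false for $r\geq 3$. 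The definition of $J_r$ imposes $Y_i\neq Y_j$ for \emph{every} pair $i\neq j$, not merely for tree edges, so the image of $J_r$ under your edge-difference parametrization is a \emph{proper} subset of $(\Z^d\setminus\{0\})^{r-1}$, carved out by the further constraints that the signed sum of edge differences along the unique tree path between any two nonadjacent vertices be nonzero (for $T$ the star $\{(1,2),(1,3)\}$ and $r=3$ this is simply $y_{e_2}\neq y_{e_3}$). The Tonelli/Fubini factorization yields the product identity only for the full unconstrained index set; restricting to $J_r$ strictly decreases the sum whenever, say, the $f_{i,j}$ are nonnegative and not identically zero. You in fact flag precisely this subtlety yourself ("not every tuple in $(\Z^d)^{r-1}$ comes from a $Y\in J_r$ \dots which is why we pass to the dominating unconstrained sum"), but then the closing equation silently re-asserts the equality for the constrained sum, and that step has no justification.

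To be fair, the paper's own proof ("the variables $y_{i,j}$ \dots are $r-1$ independent variables which cannot take on the value $0$, and therefore the Lemma holds") has the same blind spot: it does not acknowledge the residual distinctness constraints between nonadjacent vertices, and so it also overstates the conclusion. What is defensible, and all that the subsequent arguments actually require, is the one-sided comparison $\sum_{Y\in J_r}\prod_{(i,j)\in E(T)}\left|f_{i,j}(y_{i,j})\right|\leq\prod_{(i,j)\in E(T)}\sum_{x\in\Z^d\setminus\{0\}}\left|f_{i,j}(x)\right|$, i.e.\ absolute convergence of the sum over $J_r$. The Corollary (the $\R^d$ integral version) does recover the exact product identity, but only because there the excluded diagonals $\{y_i=y_j\}$ have Lebesgue measure zero, a feature that has no discrete analogue.
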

\begin{proof} As $T \in Tr(K_r)$ is a tree, the variables $y_{i,j}$ for $(i,j) \in E(T)$ are $r-1$ independant variables which cannot take on the value $0$ by definition of $J_r$ and therefore the Lemma holds.
\end{proof}
\begin{corollary}
The Lemma \ref{le_treec} is analogously true for absolutely integrable functions $g_{i,j}$ on $\R^d$ and the corresponding integrals instead of sums.
\end{corollary}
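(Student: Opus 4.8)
The plan is to transcribe the proof of Lemma \ref{le_treec} verbatim, replacing the combinatorial fact ``the $r-1$ tree edges furnish independent $\Z^d$-valued differences'' by its measure-theoretic counterpart: a unimodular linear change of variables on $(\R^d)^r$. First I would fix a root of the ordered spanning tree $T$, say the vertex $1$, and orient each edge away from the root; for every non-root vertex $v$ this distinguishes a unique incident edge $e(v)\in E(T)$ lying on the path from $1$ to $v$. The assignment $(y_1,\ldots,y_r)\mapsto \bigl(y_1,\{y_i-y_j\}_{(i,j)\in E(T)}\bigr)$ is then a linear bijection of $(\R^d)^r$ onto itself: ordering the image coordinates so that the block associated to $e(v)$ comes after the block associated to $v$'s predecessor, the defining matrix is block triangular with identity diagonal blocks, hence has Jacobian determinant $\pm 1$. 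Therefore integrating against $\delta(y_1)$ over $(\R^d)^r$ (equivalently, integrating over the $r-1$ remaining coordinates after fixing $y_1$) is the same as integrating the transformed integrand over the $r-1$ genuinely independent variables $u_{i,j}:=y_i-y_j$, $(i,j)\in E(T)$, each ranging freely over $\R^d$.

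Next I would invoke Fubini--Tonelli. Since the transformed integrand $\prod_{(i,j)\in E(T)} g_{i,j}(u_{i,j})$ is a product over the disjoint groups of variables $u_{i,j}$, and each factor satisfies $\int_{\R^d}\lvert g_{i,j}(x)\rvert\,d^dx<\infty$ by hypothesis, the iterated integral of the absolute value factorizes and is finite; hence the multiple integral converges absolutely and equals $\prod_{(i,j)\in E(T)}\int_{\R^d} g_{i,j}(x)\,d^dx$. Finally, the continuous analogue of the constraint $i\neq j\Rightarrow Y_i\neq Y_j$ defining $J_r$ is the removal of the finite union of hyperplanes $\{y_i=y_j\}_{i\neq j}$; this set has Lebesgue measure zero in $(\R^d)^r$, and its preimage under the change of variables is likewise null, so restricting the domain to its complement does not alter the value of the integral.

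There is no genuine obstacle here; the only point requiring care is the bookkeeping of the change of variables, so that the chosen $r-1$ differences really are the coordinates of a unimodular map. This is precisely where the tree hypothesis enters: a spanning tree of $K_r$ has exactly $r-1$ edges and contains no cycle, so the difference vectors $y_i-y_j$, $(i,j)\in E(T)$, are linearly independent and descend to a basis of $(\R^d)^r/\{\text{diagonal}\}$; a cycle among the chosen edges would make the map degenerate and the factorization would fail, exactly as a cyclic relation would destroy the independence used in Lemma \ref{le_treec}. Everything else is the direct replacement of summation over $J_r$ by integration, and of counting measure by Lebesgue measure.
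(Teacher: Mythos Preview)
Your argument is correct and follows precisely the reasoning the paper has in mind: the paper gives no separate proof for this corollary, treating it as an immediate consequence of the same observation used for Lemma~\ref{le_treec} (tree edges yield $r-1$ independent difference variables, so the product factorizes). Your careful bookkeeping of the unimodular change of variables, the appeal to Fubini--Tonelli, and the remark that the removed diagonal hyperplanes form a Lebesgue-null set simply spell out what the paper leaves implicit.
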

\begin{lemma}\label{le_Tconv}
For any tree $T \in Tr(K_r)$ we define  the product
\begin{equation}
\hat \Pi_T := \prod_{(i,j) \in E(T)} X_{i,j}
\end{equation}
and  for any $Y \in J_r$ and any $z \in D$  we derive
\begin{equation}
\Omega_T(Y,z) := (1 + h(0,d,z))^{r-1} \cdot \hat \Pi_T \rvert_{X_{i,j} = U_{i,j}(Y,z)} =  \prod_{(i,j) \in E(T) } h(y_{i,j},d,z)
\end{equation}
Then  $\Omega_T(Y,z)$ is holomorphic and with the notations of Lemma \ref{A2}
\begin{equation}
\left| \Omega_T(Y,z) \right| \leq  C(z)^{(r-1)} \cdot \exp\left( - \frac{2 \cdot \lambda(z)}{r\cdot(r-1)}  \sum_{1\leq m < l \leq r}\left\| y_{m,l} \right\| \right)
\end{equation}
is true. We here have used the norm $\left\| x \right\| := \sum_{a=1}^d \left| x_a \right|$ for $x \in \R^d$
\end{lemma}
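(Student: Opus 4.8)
The plan is to prove the three assertions in turn: the product identity, holomorphy, and the exponential estimate. The identity is immediate from the definition of $U$: substituting $X_{i,j} = U_{i,j}(Y,z) = h(Y_i - Y_j,d,z)/(1+h(0,d,z))$ into $\hat\Pi_T = \prod_{(i,j)\in E(T)} X_{i,j}$ produces one factor $h(y_{i,j},d,z)/(1+h(0,d,z))$ for each of the $r-1$ edges of the spanning tree $T$ (writing $y_{i,j} := Y_i - Y_j$), and the $r-1$ copies of $1/(1+h(0,d,z))$ are precisely cancelled by the prefactor $(1+h(0,d,z))^{r-1}$; note that by Remark~\ref{re_heven} the value $h(y_{i,j},d,z)$ does not depend on the chosen orientation of the edge, so the formula is well posed. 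Holomorphy of $\Omega_T(Y,z)$ on $D$ then follows at once, since each $h(y_{i,j},d,z)$ is holomorphic on $D$ by Lemma~\ref{A2}~a) and a finite product of holomorphic functions is holomorphic.

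For the bound I would apply Lemma~\ref{A2}~c) factorwise, using that the norm $\|\cdot\|$ here agrees with the $\sum_{a=1}^d|\cdot|$ of that lemma, to get
\[
\left| \Omega_T(Y,z) \right| \;\le\; \prod_{(i,j)\in E(T)} C(z)\, e^{-\lambda(z)\,\| y_{i,j}\|} \;=\; C(z)^{\,r-1}\, \exp\!\Bigl(-\lambda(z)\!\!\sum_{(i,j)\in E(T)}\!\! \| y_{i,j}\|\Bigr).
\]
It then remains to establish the purely combinatorial inequality
\[
\sum_{(i,j)\in E(T)} \| y_{i,j}\| \;\ge\; \frac{2}{r(r-1)} \sum_{1\le m<l\le r} \| y_{m,l}\|,
\]
after which the claimed estimate follows because $\lambda(z) > 0$ and $\exp$ is increasing (the case $r=1$ is trivial, both sides of the lemma being $1$, so one may assume $r\ge 2$). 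To prove this inequality, for each unordered pair $m\ne l$ let $P(m,l)\subset E(T)$ be the edge set of the unique path in $T$ from $m$ to $l$; writing this path as $m=v_0,v_1,\dots,v_k=l$ and telescoping $Y_m - Y_l = \sum_{t=0}^{k-1}(Y_{v_t}-Y_{v_{t+1}})$, the triangle inequality in $\R^d$ gives $\|y_{m,l}\| \le \sum_{e\in P(m,l)}\|y_e\|$. Summing over all $\binom{r}{2}$ pairs, a fixed edge $e$ is counted with multiplicity $a_e(r-a_e)$, where $a_e$ and $r-a_e$ are the sizes of the two components of $T\setminus\{e\}$; since $1\le a_e\le r-1$ we have $a_e(r-a_e)\le \lfloor r^2/4\rfloor \le \binom{r}{2}$ for all $r\ge 2$, whence $\sum_{m<l}\|y_{m,l}\| \le \binom{r}{2}\sum_{e\in E(T)}\|y_e\|$, which is exactly the desired inequality.

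The proof is almost entirely bookkeeping; the only point needing a moment's care is the edge-crossing count together with the elementary inequality $a(r-a)\le r(r-1)/2$, which plays the role of the main step here, and ensuring that no spurious constant is introduced when matching the norm with the one in Lemma~\ref{A2}~c). I note that Lemma~\ref{le_treec} is not used in the present lemma itself but is its natural companion for later, when one passes to the summation over $Y\in J_r$.
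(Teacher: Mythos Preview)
Your proof is correct and follows essentially the same route as the paper: apply Lemma~\ref{A2}~c) factorwise and then reduce to the combinatorial inequality $\sum_{e\in E(T)}\|y_e\|\ge \frac{2}{r(r-1)}\sum_{m<l}\|y_{m,l}\|$ via the triangle inequality along the unique tree path. The paper's version of the combinatorial step is actually cruder than yours: it simply bounds each $\|y_{m,l}\|$ by the full sum $\sum_{(i,j)\in E(T)}\|y_{i,j}\|$ and then divides by $\binom{r}{2}$, whereas your edge-multiplicity count $a_e(r-a_e)\le \binom{r}{2}$ is a sharper intermediate bound that yields the same conclusion.
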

\begin{proof} With Lemma \ref{A2} we only have to prove that for any ordered spanning tree $T$
\begin{equation}\label{eq_yT}
\sum_{(i,j) \in E(T)} \left\|y_{i,j} \right\| \geq \frac{2}{r\cdot(r-1)} \sum_{1\leq m <l \leq r} \left\|y_{m,l} \right\|
\end{equation}
To prove this we look at an index pair $(m,l)$. As $T$ is a spanning tree there is an edge path (possibly against the order of $T$ which is irrelevant for this discussion) between $m$ and $l$ with edges in a set $\kappa(m,k)$ . From the triangle inequality and the fact that $y_{i,j} = -y_{j,i}$ we have 
\begin{equation}
\left\| y_{m,l} \right\| \leq \sum_{(i,j) \in \kappa(m,k)} \left\| y_{i,j} \right\| \leq  \sum_{(i,j) \in E(T)} \left\|y_{i,j} \right\|
\end{equation}
from which equation \eqref{eq_yT} follows. 
\end{proof}
\begin{lemma}\label{le_grconv}
Let $G$ be a graph with vertices $1,\ldots,r$ which contains an ordered spanning tree $T \in Tr(K_r)$.Then for $z \in D$ 
\begin{equation}
\sum_{Y \in J_r} \prod_{(i,j) \in E(G)} h(Y_i - Y_j,d,z) 
\end{equation}
\end{lemma}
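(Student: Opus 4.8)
The plan is to bound the summand by a product that \emph{factorises} over the edges of the spanning tree $T$, so that the sum over $Y\in J_r$ becomes a product of $r-1$ independent, manifestly convergent geometric sums over $\Z^d$; this is exactly the mechanism already used in Lemmata \ref{le_treec} and \ref{le_Tconv}, and the point is that the surplus edges of $G$ contribute only a harmless bounded factor, so the stated sum converges absolutely (and, as I note at the end, defines a holomorphic function of $z$ on $D$). Concretely, I would first fix $z\in D$ and write $E(G)=E(T)\cup E'$, where $E'$ is the finite set of edges of $G$ not belonging to $T$ --- any loops of $G$, which contribute the constant factor $h(0,d,z)$, being absorbed into $E'$ without further comment.

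Next I would invoke Lemma \ref{A2} c): there are continuous functions $\lambda,C:D\longmapsto\R_+^*$ with $|h(x,d,z)|\le C(z)\exp(-\lambda(z)\sum_{a=1}^d|x_a|)$, hence in particular $|h(x,d,z)|\le C(z)$ for all $x\in\Z^d$. Applying the sharp bound to the tree edges and the crude bound $|h|\le C(z)$ to the edges in $E'$ gives, for every $Y\in J_r$,
\begin{equation}
\prod_{(i,j)\in E(G)}\bigl|h(Y_i-Y_j,d,z)\bigr|\;\le\;C(z)^{\#E(G)}\prod_{(i,j)\in E(T)}\exp\Bigl(-\lambda(z)\sum_{a=1}^{d}\bigl|(Y_i-Y_j)_a\bigr|\Bigr).
\end{equation}
Since $T$ is a spanning tree of $K_r$, the differences $y_{i,j}:=Y_i-Y_j$ with $(i,j)\in E(T)$ are $r-1$ independent $\Z^d$-valued parameters, each of which is $\neq 0$ on $J_r$ by definition of $J_r$; exactly as in Lemma \ref{le_treec} this lets me enlarge and factorise the sum, obtaining
\begin{equation}
\sum_{Y\in J_r}\prod_{(i,j)\in E(G)}\bigl|h(Y_i-Y_j,d,z)\bigr|\;\le\;C(z)^{\#E(G)}\Biggl(\sum_{y\in\Z^d}e^{-\lambda(z)\sum_{a=1}^{d}|y_a|}\Biggr)^{\!r-1}=C(z)^{\#E(G)}\Biggl(\frac{1+e^{-\lambda(z)}}{1-e^{-\lambda(z)}}\Biggr)^{\!d(r-1)}<\infty,
\end{equation}
so the series converges absolutely for each $z\in D$.

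Finally, for holomorphy I would observe that each finite partial sum is a finite linear combination of products of the functions $h(\,\cdot\,,d,z)$, which are holomorphic on $D$ by Lemma \ref{A2} a), and that on any compact $K\subset D$ the quantities $C_0:=\max_{z\in K}C(z)$ and $\lambda_0:=\min_{z\in K}\lambda(z)$ are finite and strictly positive, so the bound above provides a single summable majorant valid for all $z\in K$; the Weierstrass convergence theorem then yields holomorphy of the limiting sum on $D$. The only genuinely substantive point is the independence of the tree differences $y_{i,j}$ and the resulting factorisation of the sum, which is precisely what Lemma \ref{le_treec} delivers; everything else is a matter of uniform domination by $C(z)$ on the surplus edges, and I anticipate no real obstacle.
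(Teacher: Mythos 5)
Your proof is correct and follows essentially the same route as the paper: bound the surplus (non-tree) edges uniformly by $C(z)$ via Lemma \ref{A2} c), retain the exponential decay on the spanning-tree edges, and factorise the sum over $J_r$ into $r-1$ independent geometric sums over $\Z^d$ as in Lemmata \ref{le_treec} and \ref{le_Tconv}. The only difference is that the paper cites Lemma \ref{le_Tconv} as a packaged intermediate step while you unfold the same estimate directly from Lemma \ref{A2} c); the added remark on holomorphy via local uniformity of the majorant is a harmless (and correct) bonus not claimed by the statement itself.
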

converges absolutely.
\begin{proof}
With Lemma \ref{A2} c) we can write 
\begin{equation}
\left| \prod_{(i,j) \in E(G)} h(Y_i - Y_j,d,z) \right| =  C(z)^{\#E(G) - \#E(T)}
\left| \prod_{(i,j) \in E(T)} h(Y_i - Y_j,d,z)  \right|
\end{equation}
The proof now follows from \ref{le_Tconv}.
\end{proof}
\begin{theorem}\label{th_defp}
For $d \geq 2$ there is an open neighborhood $U \supset \overline{U_{\frac{1}{2d}}(0)} \cap D$ such that the function 
\begin{equation}
\begin{array}{rccl}
S_k : &  U  & \mapsto & \C \\
& z & \mapsto &\underset{N \rightarrow \infty} {\lim}\left( \sum_{w \in W_{2N}} P(N_{2k_1}(w),\ldots,N_{2k_r}(w)) \cdot z^{length(w)} \right)
\end{array}
\end{equation}
is well defined and a holomorphic function on $U$. 
\end{theorem}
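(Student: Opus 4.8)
I would first turn the moment into a combinatorial series over Eulerian matrices and then resum it. Starting from Corollary~\ref{co_close}, I sum over all placements $Y\in J_r$ (carrying the archetype automorphism weights, as in \citep[Theorem 2.3.]{dhoef1}; the factor $(1-\delta_{k_1,k_2}/2)$ of Lemma~\ref{le_sec} is a $2$-vertex instance), which rewrites the Taylor coefficients of the candidate $S_k$ as the formal sum $\sum_F g_{A,F}(z)$ of \citep[Theorem 2.3.]{dhoef1}, with $A$ the functional extracting $P(N_{2k_1},\dots,N_{2k_r})$. Expanding $cof(A-F)$ by the Leibniz/BEST (matrix--tree) identity and bookkeeping the loop weights $h(0,d,z)$ through the $K$-functions, I group the $F$'s by an ordered spanning tree $T\in Tr(K_r)$ and a multiplicity $f$, and insert the identity $\frac{1}{\Delta_r^f}=\sum_{\ell\ge0}(1-\Delta_r^f)^\ell$ of \eqref{eq_geodel}. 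This gives, coefficientwise,
\begin{multline*}
S_k(z)\;=\;c_k\, z\,\frac{\partial}{\partial z}\sum_{T\in Tr(K_r)}\ \sum_{f=1}^{f_0}\ \sum_{\ell=0}^{\infty}\ \sum_{Y\in J_r}
\bigl(1-\Delta_r(Y,z)^f\bigr)^{\ell}\ \times\\
\Bigl[\,p_{T,k}(\omega,X)\,\textstyle\prod_{(i,j)\in E(T)}X_{i,j}\,\Bigr]\Big|_{\omega=h(0,d,z),\ X=U(Y,z)},
\end{multline*}
with $c_k$ the symmetry factor and $f_0=f_0(k)$ finite. By Lemma~\ref{le_T_Euler} every fixed $(T,f,\ell)$-summand is, after expansion in the $X_{i,j}$, a finite $\Z$-combination of the $g_{A,F}(z)$ with $F\in\tilde H_r$, so this is an identity of formal power series.

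\textbf{Choice of $U$ and convergence.} Next I would fix the open set $U\supset\overline{U_{\frac{1}{2d}}(0)}\cap D$ as follows: by Lemma~\ref{A3}(b,c), Lemma~\ref{A4} and --- crucially --- Theorem~\ref{Th_delta}, there is a $Y$-independent open neighborhood of $\overline{U_{\frac{1}{2d}}(0)}\cap D$ on which $1+h(0,d,z)\neq0$ and $\Delta_r(Y,z)\neq0$ for all $Y\in J_r$; shrinking it near $\pm\frac{1}{2d}$ and using Theorem~\ref{th_de_1} with $\epsilon:=\min_{f\le f_0}\epsilon(f)$ we may in addition assume $|1-\Delta_r(Y,z)^f|<1$ there for all $Y$ and all $f\le f_0$. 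Absolute convergence, locally uniformly on $U$, is then checked on a compact $\mathcal K\subset U$: since $cof(A-F)\neq0$ forces $G(F)$ connected, every surviving monomial of $p_{T,k}(\omega,X)\prod_{(i,j)\in E(T)}X_{i,j}$ contains a spanning-tree product $\prod h(y_{i,j},d,z)$, so by $|h(x,d,z)|\le C(z)e^{-\lambda(z)\|x\|}$ (Lemma~\ref{A2}(c)), $|1/(1+h(0,d,z))|\le2$ (Lemma~\ref{A3}(d)) and Lemmas~\ref{le_Tconv}, \ref{le_grconv} the sum $\sum_{Y\in J_r}$ of absolute values converges uniformly on $\mathcal K$. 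Away from $\pm\frac{1}{2d}$ I do not expand: $\inf_{Y,\,z\in\mathcal K}|\Delta_r(Y,z)|>0$ (Theorem~\ref{Th_delta}, Corollary~\ref{co_A3}) bounds $\Delta_r^{-f}$, and the $Y$-sum converges as above. Near $\pm\frac{1}{2d}$ I use the geometric series and control the $\ell$-sum by the distance-space analysis of Section~5: when the coordinates of $Y$ split into mutually far clusters, $\Delta_r(Y,z)$ block-factorizes up to vanishing off-block entries (Lemma~\ref{B3_d2}, Lemma~\ref{MA_l1}), so the only non-decaying contributions reduce to lower-rank blocks and, by induction on $r$ with the $r=2$ estimates (Lemma~\ref{le_sec}, Lemma~\ref{le_phi2}) as base case, $\sum_\ell\sum_{Y}$ converges absolutely and uniformly on $\mathcal K$.

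\textbf{Holomorphy and identification.} Each partial sum is holomorphic on $U$ because $h(\cdot,d,z)$ and $\Delta_r(Y,z)$ are holomorphic there and $\Delta_r$ does not vanish; hence by Weierstrass the displayed series defines a holomorphic function on $U$. Restricting to $U_{\frac{1}{2d}}(0)\subset U$, its Taylor coefficients at $0$ are, order by order, $\sum_{w\in W_{2N}}P(N_{2k_1}(w),\dots,N_{2k_r}(w))$ by the first step, so $\lim_{N}\sum_{w\in W_{2N}}P(\dots)z^{length(w)}$ exists on $U_{\frac{1}{2d}}(0)$ and equals this function; the function then provides the values on all of $U$. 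Thus $S_k$ is well defined and holomorphic on $U$.

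\textbf{The main obstacle.} The delicate point is the convergence near $z=\pm\frac{1}{2d}$. There $\Delta_r(Y,z)^{-f}$ is genuinely unbounded --- for $d=2$, $\Delta_r(Y,z)\to0$ as $z\to\frac{1}{2d}$ with $Y$ fixed --- so one is forced onto the geometric series $\sum_\ell(1-\Delta_r^f)^\ell$, yet Theorem~\ref{th_de_1} supplies $|1-\Delta_r(Y,z)^f|<1$ without a $Y$-uniform gap. Converting this into a bound strong enough for the \emph{joint} summation over $\ell$ and over $Y\in J_r$ --- i.e. making precise the reduction to lower-rank blocks in the regime where some pairwise distances of $Y$ stay bounded while others diverge --- is exactly what the distance-space machinery of Section~5 is built for, and carrying out that induction on $r$ is the technical heart of the proof.
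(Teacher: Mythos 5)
Your proposal uses the right core ingredients --- the spanning-tree decomposition via $cof(\hat A - \hat F)$, the exponential decay estimate of Lemma~\ref{le_Tconv} to control $\sum_{Y\in J_r}$, and Theorem~\ref{Th_delta} for a $Y$-uniform lower bound on $|\Delta_r|$ --- and the portion ``away from $\pm\tfrac{1}{2d}$'' is essentially the paper's entire proof. But there is a misreading of the geometry that leads you to manufacture an obstacle that isn't there. The theorem asks for an open $U\supset\overline{U_{\frac{1}{2d}}(0)}\cap D$, and $\pm\tfrac{1}{2d}\notin D$ because $D$ is the \emph{open} strip $|\Re(z)|<\tfrac{1}{2d}$; hence $\pm\tfrac{1}{2d}\notin\overline{U_{\frac{1}{2d}}(0)}\cap D$, and $U$ never has to come near those points. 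Concretely, the paper observes that for every $z_0\in\overline{U_{\frac{1}{2d}}(0)}\cap D$ one can pick $\delta>0$ with $|\Re(z_0)|<\tfrac{1}{2d}-\delta$, so $z_0$ has a neighborhood inside the compact set $M_{\frac{1}{2d}+\epsilon(\gamma_r),\,\frac{1}{2d}-\delta}$ where Theorem~\ref{Th_delta} already gives $|\Delta_r(Y,z)|\ge\gamma_r/(2(1+|h(0,d,z)|))^{r-1}$ uniformly in $Y$; combined with Lemma~\ref{A3}(d) this directly bounds $\Delta_r^{-s(k)}$ and no geometric-series resummation $\sum_\ell(1-\Delta_r^f)^\ell$ is needed at all.

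The ``main obstacle'' paragraph you identify --- that $\Delta_r(Y,z)^{-1}$ is unbounded as $z\to\pm\tfrac{1}{2d}$ and that one therefore needs Theorem~\ref{th_de_1} and the distance-space machinery of Section~5 --- is a real phenomenon, but it belongs to a \emph{different} theorem: it is exactly what Theorem~\ref{t_feyn} (Feynman Graph Summation) has to deal with when extracting asymptotics of Taylor coefficients, and that is where the expansion $1/\Delta_r^f=\sum_\ell(1-\Delta_r^f)^\ell$ and the $Y$-dependent lack of a uniform gap in Theorem~\ref{th_de_1} actually bite. Importing that apparatus into the proof of Theorem~\ref{th_defp} is not wrong, but it is unnecessary, it obscures what makes the statement easy (the closure-intersect-$D$ set stays away from the singular points), and as you yourself note you have only sketched the induction that would be needed --- so as written the geometric-series branch of your argument is the part of the proof that is genuinely incomplete, and it happens to be the part one can delete. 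Also, the auxiliary summation ``$\sum_{f=1}^{f_0}$'' you introduce does not arise: in Definition~\ref{df_pt} the determinant appears at the single fixed power $s(k)$, so there is only $f=s(k)$.
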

\begin{proof}
From Lemma \ref{A3}, Lemma \ref{le_Tconv} and Theorem \ref{Th_delta} we realize that for any spanning tree $T \in Tr(K_r)$ and any $\delta > 0$ and $\gamma_r = \frac{1}{2}$ there is an $\epsilon(\gamma_r)$ such that  $z \in M_{\frac{1}{2d} + \epsilon(\gamma_r),\frac{1}{2d}-\delta}$ and $Y\in J_r$ the expression 
\begin{multline}
C_{T,k}(Y,z) :=  \frac{\Omega_T(Y,z)}{\Delta_r(Y,z)^{s(k)}\cdot (1 + h(0,d,z))^{s(k) + r-1}} \\ \cdot p_{T,k}\left( h(0,d,z)\cdot u(r),\frac{h(y_{i,j},d,z))}{1 + h(0,d,z)}\right) 
\end{multline}
($u(r) =  (1,\ldots,1)$, i.e. $u(r)_j = 1$ for any $j=1,\ldots,r$) is well defined and holomorphic and fulfils the following inequality:
\begin{multline}
\left| C_{T,k}(Y,z) \right| \leq\frac{\left(2\cdot (1 + \left|h(0,d,z)\right|)\right)^{s(k)\cdot(r-1)}\cdot 2^{s(k)}}{\left| 1 + h(0,d,z) \right|^{s(k) + r -1}} \\
 C(z)^{(r-1)} \cdot \exp\left( - \frac{2 \cdot \lambda(z)}{r\cdot(r-1)}  \sum_{1\leq i < j \leq r}\left\| y_{i,j} \right\| \right)\\
 \left| p_{T,k}\left(h(0,d,z)\cdot u(r),\frac{h(y_{i,j},d,z))}{1 + h(0,d,z)}\right)  \right|
\end{multline}
But that means that for a given $z$ the sum 
\begin{equation}
\hat S_{T,k}(z) := \sum_{Y\in J_r} C_{T,k}(Y,z)
\end{equation}
converges absolutely and locally uniform and therefore is a holomorphic function. 
From  \citep[Lemma 2.2.]{dhoef1} using the matrix tree theorem for the Kirchhoff polynomial \citep{aar} and its generalizations by Tutte for multidigraphs \citep{tutte} we know that there is a subset $Cof_r \subset Tr(Kr)$ of the set of ordered spanning trees such that
\begin{equation}
cof (\hat A - \hat F) = \sum_{T \in Cof_r} \hat \Pi_T \cdot O_T
\end{equation} 
and therefore 
\begin{equation}
 \sum_{w \in W_{2N}} P(N_{2k_1}(w),\ldots,N_{2k_r}(w)) \cdot z^{length(w)} = z\cdot \frac{\partial}{\partial z}\left(\sum_{T \in Cof_r} \Bigl[\hat S_{T,k}(z) \Bigr]_{z \diamond 2N}\right)
\end{equation}
From the identity theorem for power series we therefore can define 
\begin{equation}
\hat S_k(z) := \sum_{T\in Cof_r} \hat S_{T,k}(z)
\end{equation}
\begin{equation}\label{eq_hats}
S_{k}(z) :=  z\cdot \frac{\partial}{\partial z}\left( \hat S_k(z) \right)
\end{equation}
Now for any $z \in  \overline{U_{\frac{1}{2d}}(0)} \cap D$ we can find an open neighborhood $U(z)  \subset \C$ such that for a suitably chosen $\delta$,  $ U(z) \subset M_{\frac{1}{2d} + \epsilon(\gamma_r),\frac{1}{2d}-\delta}$. And so the theorem is proven.
\end{proof} 
\begin{remark}
As $S_k$ does not have any singularities on $U$ the behaviour of its Taylor coefficients for large index are completely determined by the asymptotic behaviour of $S_k$ in the vicinity of the points $z = \pm \frac{1}{2d}$. 
\end{remark}
\subsection{Defining the sum over matrices}
We will now work on a proper definition of the sum over matrices from the geometric expansion of powers of the first hit determinant. 
\begin{definition}
For any vector $k = (k_1,... k_r)$ of strictly positive integers and any $\ell \in \N$ we define
\begin{equation}
D_{\ell,k}(\omega,X) := \frac{\left(1 - \det(1 + X)^{s(k)}  \right)^{\ell}}{\prod_{m=1}^{r}(1 + \omega_m)^{k_m}} 
\sum_{T \in Cof_r} \hat \Pi_T \cdot p_{T,k}(\omega,X) 
\end{equation}and 
\begin{equation}
C_{\ell,k}(Y,z) := D_{\ell,k}\left(h(0,d,z) \cdot u(r),\frac{h(y_{i,j},d,z)}{1+ h(0,d,z)} \right)
\end{equation} 
\end{definition}
\begin{definition}
For  $F \in \tilde H_r(h_1,...,h_r)$ we define
\begin{equation}
D_{F}(X) :=  \prod_{\substack{1 \leq a \leq r \\ 1 \leq b \leq r}} X_{a,b}^{F_{a,b}} 
\end{equation}
\begin{equation}
C_F(Y,z) := D_F\left(\frac{h(y_{i,j},d,z)}{1+ h(0,d,z)} \right)
\end{equation}
\begin{multline}\label{eq_dfk1}
D_{F,k}(\omega,X) := cof(A-F) \cdot M(F)  \cdot  D_F(X)  \cdot \\ 
\left[ \frac{1}{r\,!}\sum_{\sigma \in D(k_1,\ldots,k_r)} 
\prod_{j=1}^r (-1)^{h_j} \cdot h_j\,!\cdot K(h_j,k_{\sigma (j)},\omega_j)
)
)
)
\right]
\end{multline} 
\begin{equation}\label{eq_dfk2}
C_{F,k}(Y,z) := D_{F,k}\left(h(0,d,z)\cdot u(r),\frac{h(y_{i,j},d,z)}{1+ h(0,d,z)} \right)
\end{equation}
\end{definition}
\begin{lemma}\label{le_ceven}
For $z \in D\setminus P_0$ and $Y\in J_r$ the functions $C_F(Y,z)$ and $C_{F,k}(Y,z)$ are even functions in $z$.
\end{lemma}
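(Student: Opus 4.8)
The plan is to reduce the claim about $C_F(Y,z)$ and $C_{F,k}(Y,z)$ to two facts that are already available: first, that $h(x,d,z)$ depends on $z$ only through $z^2$ whenever $\varrho(x)=0$ and picks up exactly one factor of $z$ per unit of $\varrho$-parity (Remark \ref{re_heven} together with equation \eqref{eq_rho}); and second, that the matrices $F$ in question are balanced, i.e. $F\in\tilde H_r(h_1,\ldots,h_r)$, so that the total $\varrho$-parity collected along all edges incident to a fixed vertex is even. Since $1+h(0,d,z)$ is itself even in $z$ by Remark \ref{re_heven} (as $\varrho(0)=0$), the denominators cause no trouble and it suffices to show that $D_F(X)\rvert_{X_{i,j}=U_{i,j}(Y,z)}$ is even and that the bracketed $K$-factor in \eqref{eq_dfk1} is even once evaluated at $\omega_j=h(0,d,z)$.

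First I would treat $C_F(Y,z)$. Write
\begin{equation}
C_F(Y,z)=\prod_{\substack{1\le a\le r\\ 1\le b\le r}}\left(\frac{h(Y_a-Y_b,d,z)}{1+h(0,d,z)}\right)^{F_{a,b}}.
\end{equation}
By Remark \ref{re_heven}, $h(Y_a-Y_b,d,z)=(\pm\text{even in }z)$ where the sign-ambiguity is an odd function precisely when $\varrho(Y_a-Y_b)=1$; more usefully, $h(Y_a-Y_b,d,z)\cdot z^{-\varrho(Y_a-Y_b)}$ is even in $z$. Hence modulo the even factor $(1+h(0,d,z))^{-\sum_{a,b}F_{a,b}}$, the function $C_F(Y,z)$ equals an even function of $z$ times $z^{E(F)}$ where $E(F):=\sum_{a,b}F_{a,b}\,\varrho(Y_a-Y_b)$. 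The key step is to show $E(F)$ is even. Using $\varrho(Y_a-Y_b)\equiv\varrho(Y_a)+\varrho(Y_b)\pmod 2$ from \eqref{eq_rho}, one gets
\begin{equation}
E(F)\equiv\sum_{a,b}F_{a,b}\bigl(\varrho(Y_a)+\varrho(Y_b)\bigr)=\sum_a\varrho(Y_a)\Bigl(\sum_b F_{a,b}+\sum_b F_{b,a}\Bigr)=\sum_a 2h_a\,\varrho(Y_a)\pmod 2,
\end{equation}
which is even because $F$ is balanced with row/column sums $h_a$. Therefore $C_F(Y,z)$ is even in $z$. (This is essentially the cycle-parity argument already used in Lemma \ref{le_deven}, now applied edge-set-wise rather than cycle-wise.)

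For $C_{F,k}(Y,z)$ I would note that $D_{F,k}$ is the product of $C_F$-type factors $\mathrm{cof}(A-F)\cdot M(F)\cdot D_F(X)$ — already shown even — with the bracketed symmetrized sum $\frac1{r!}\sum_{\sigma\in D(k_1,\ldots,k_r)}\prod_j(-1)^{h_j}h_j!\,K(h_j,k_{\sigma(j)},\omega_j)$ evaluated at $\omega_j=h(0,d,z)$. Since $h(0,d,z)$ is even in $z$ (again Remark \ref{re_heven}, $\varrho(0)=0$), this bracket is a composition of a fixed polynomial/analytic expression with an even function of $z$, hence even in $z$; the prefactors $\mathrm{cof}(A-F)$, $M(F)$, $(-1)^{h_j}h_j!$ are constants. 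The product of two even functions being even, $C_{F,k}(Y,z)$ is even in $z$ on $D\setminus P_0$, and I would close by remarking that $P_0$ is symmetric under $z\mapsto-z$ (again because $h(0,d,z)$ is even) so the domain statement is consistent. The only mild obstacle is bookkeeping the parity of the edge-sum $E(F)$ cleanly; once \eqref{eq_rho} and the balancedness $\sum_b F_{a,b}=\sum_b F_{b,a}=h_a$ are invoked, it collapses immediately, so there is no real difficulty here.
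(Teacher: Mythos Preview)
Your proof is correct and follows essentially the same approach as the paper: both arguments reduce evenness of $C_F(Y,z)$ to showing that the total parity $\sum_{a,b}F_{a,b}\,\varrho(Y_a-Y_b)$ is even, invoking Remark~\ref{re_heven} and equation~\eqref{eq_rho}, and then observe that the $K$-factors in $C_{F,k}$ are even because $h(0,d,z)$ is. The only cosmetic difference is that the paper phrases the parity computation via an Euler trail (the telescoping sum $\sum_{(i,j)\in E(tr)}(Y_i-Y_j)=0$ forces even total parity), whereas you use the balancedness condition $\sum_b F_{a,b}=\sum_b F_{b,a}=h_a$ directly; your version is marginally more elementary since it does not need the existence of an Euler trail, only that $F$ is balanced.
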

\begin{proof}
As the matrix $F$ belongs to an Euler graph taking a Euler trail $tr$ and summing along the edge sequence $E(tr)$  in it  
\begin{equation}
0 = \sum_{(i,j) \in E(tr)} Y_i - Y_j 
\end{equation} 
because $tr$ returns to its beginning. But this sum also is the sum over all edges of the graph $G(F)$ because $tr$ is Eulerian. Because of Remark \ref{re_heven} and equation \eqref{eq_rho} then the Lemma is true.
\end{proof}
\begin{lemma}\label{le_l_F}
For a given $k$ and $\ell$ there are uniquely determined coefficients $\gamma(\ell,k,F,\omega)$ such that 
\begin{equation}\label{eq_l_F}
D_{\ell,k}(\omega,X)  = \sum_{F \in \tilde H_r} \gamma(\ell,k,F,\omega) \cdot D_{F}(X)
\end{equation}
The sum in equation \eqref{eq_l_F} is finite, i.e. only finitely many of the coefficients $\gamma(\ell,k,F,\omega) \neq 0_f$ (where $0_f$ is the zero function) for a given $\ell$ and $k$. For a given $F$ and $k$ the number of $\ell$ for which $\gamma(\ell,k,F,\omega) \neq 0_f$ is also finite. Moreover 
\begin{equation} \label{eq_gammadeg}
\gamma(\ell,k,F,\omega) \cdot \prod_{m=1}^{r} (1 + \omega_m)^{k_m}  
\end{equation}
is a polynomial in $\omega_i$ where the sum of the degrees in the $\omega_i$ is smaller or equal to $s(k) - r$ and the identity
\begin{equation}\label{eq_sumga}
D_{F,k}(\omega,X) = \sum_{\ell = 0}^{\infty} \gamma(\ell,k,F,\omega)\cdot D_F(X) 
\end{equation}
is true.
\end{lemma}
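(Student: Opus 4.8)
The plan is to expand $D_{\ell,k}(\omega,X)$ directly into monomials in the variables $X_{i,j}$, identify those monomials with matrices $F$, and read off the coefficients $\gamma(\ell,k,F,\omega)$. First I would observe that $\prod_{m=1}^r (1+\omega_m)^{k_m}\cdot D_{\ell,k}(\omega,X)$ equals $(1-\det(1+X)^{s(k)})^{\ell}\cdot\sum_{T\in Cof_r}\hat\Pi_T\cdot p_{T,k}(\omega,X)$, which by the very form of $p_{T,k}$ in Definition \ref{df_pt} is a \emph{polynomial} in the finitely many variables $X_{i,j}$ (after setting all $x_a=0$, the factor $1/\det(W^{-1}+X)$ becomes $1/\det(1+X)$, which the power $(\det(1+X))^{s(k)}$ inside $p_{T,k}$ more than cancels since the operators $O_T$ and the $\hat\Pi_T$ prefactor only raise the degree; more precisely $p_{T,k}$ is polynomial because $s(k)\ge r$ and each $\hat K$ contributes a polynomial in $x_i$ after the substitution). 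Being a polynomial in the $X_{i,j}$, it has a unique expansion $\sum_F \gamma(\ell,k,F,\omega)\,D_F(X)$ with $D_F(X)=\prod X_{a,b}^{F_{a,b}}$ indexed by matrices $F$ of nonnegative integers; the coefficients $\gamma(\ell,k,F,\omega)$ are then defined to be $1/\prod_m(1+\omega_m)^{k_m}$ times these polynomial coefficients, so uniqueness and finiteness of the sum over $F$ are immediate.

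Next I would invoke Lemma \ref{le_T_Euler}, which says precisely that every monomial $\prod X_{i,j}^{F_{i,j}}$ appearing with a nonzero coefficient in $(1-\det(1+X)^f)^{\ell}\cdot p_{T,k}(\omega,X)\cdot\prod_{(i,j)\in E(T)}X_{i,j}$ has $F\in\tilde H_r$; summing over $T\in Cof_r$ cannot destroy this property, so indeed $D_{\ell,k}(\omega,X)=\sum_{F\in\tilde H_r}\gamma(\ell,k,F,\omega)D_F(X)$. For the degree bound on $\gamma(\ell,k,F,\omega)\cdot\prod_m(1+\omega_m)^{k_m}$ I would track the $\omega$-degree through $p_{T,k}$: the only $\omega$-dependence sits in the factors $(1+\omega_i)^{k_i}\hat K(x_i,k_{\vartheta(i)},\omega_i)$, and after the substitution and the extraction of a monomial of $X$-degree matching $h_i=\sum_j F_{i,j}$ the surviving polynomial in $\omega_i$ has degree at most $k_i-1$ at vertex $i$ (one power of $(1+\omega_i)$ being consumed per unit of the degree needed to build up the $h_i$ incident edges, mimicking the bookkeeping in \citep[Lemma 2.2.]{dhoef1}); summing over $i$ gives total degree $\le s(k)-r$.

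For finiteness of the set of $\ell$ with $\gamma(\ell,k,F,\omega)\neq 0_f$ for fixed $F,k$: the factor $(1-\det(1+X)^{s(k)})^{\ell}$ has lowest-degree term (in the $X_{i,j}$) of order growing linearly in $\ell$ — indeed $1-\det(1+X)^{s(k)}$ has no constant term and its lowest-degree part involves at least two $X$-variables (a $2$-cycle $X_{i,j}X_{j,i}$, from the Leibniz expansion \eqref{eq_det_1x}), so the total $X$-degree of any monomial in $D_{\ell,k}$ is at least $2\ell$. Since $\sum_{a,b}F_{a,b}$ is fixed, only finitely many $\ell$ can contribute, which also forces the sum in \eqref{eq_sumga} to be finite. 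Finally, comparing \eqref{eq_dfk1} with $\sum_{\ell\ge 0}\gamma(\ell,k,F,\omega)D_F(X)$: summing \eqref{eq_l_F} over $\ell$ and using $\sum_{\ell\ge 0}(1-\det(1+X)^{s(k)})^{\ell}=\det(1+X)^{-s(k)}$ formally (as an identity of formal power series, legitimate here because the $X$-degrees are bounded below by $2\ell$ so the sum is finite in each degree), the left side becomes $\det(1+X)^{-s(k)}\cdot\prod_m(1+\omega_m)^{-k_m}\cdot\sum_{T\in Cof_r}\hat\Pi_T\, p_{T,k}(\omega,X)$, which by the matrix-tree identity $cof(\hat A-\hat F)=\sum_{T\in Cof_r}\hat\Pi_T\,O_T$ and the definition of $\hat K$ and $K$ as in \citep[Lemma 2.2.]{dhoef1} reorganizes termwise into $\sum_F D_{F,k}(\omega,X)$; matching coefficients of $D_F(X)$ yields \eqref{eq_sumga}.

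\medskip

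The main obstacle I anticipate is the degree bookkeeping in $\omega$ and the precise identification of the combinatorial coefficient $cof(A-F)\cdot M(F)$ emerging when one resums over $\ell$ and over spanning trees $T\in Cof_r$: one must verify that the matrix-tree theorem (in Tutte's form for multidigraphs, as cited) repackages $\sum_{T}\hat\Pi_T\,O_T$ acting on $\det(1+X)^{-s(k)}$ exactly into the cofactor $cof(A-F)$ times the multinomial factor $M(F)=\prod 1/F_{i,j}!$ weighting each monomial $D_F(X)$, with the $\omega$-dependent factor reducing to the stated $K$-product. This is the delicate heart of the argument; the polynomiality, the support in $\tilde H_r$, and the two finiteness statements are comparatively routine consequences of the degree counting sketched above.
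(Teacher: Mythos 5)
Your proof follows essentially the same route as the paper: polynomiality in $X$ and support in $\tilde H_r$ via Lemma~\ref{le_T_Euler}, the $\omega$-degree bound read off from Definition~\ref{df_pt}, the lower bound $2\ell$ on the $X$-degree of $(1-\det(1+X)^{s(k)})^{\ell}$ to get finiteness in $\ell$, and a resummation over $\ell$ combined with the matrix-tree/cofactor identity of~\citep{dhoef1} to obtain \eqref{eq_sumga}. The ``delicate heart'' you flag at the end — the identification of $cof(A-F)\cdot M(F)$ after resummation — is also where the paper only points to \citep[Theorem 2.3, Lemma 2.2, eq.~2.26]{dhoef1}, so your level of detail is on par with the original.
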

\begin{proof} $D_{\ell,k}(\omega,X)$ is a polynomial in $X$ whose degree matrices $F$ fullfil $F \in \tilde H_r$ according to Lemma  \ref{le_T_Euler}. Equation \eqref{eq_l_F} therefore is just writing a polynomial as sum of monomials and therefore the sum in the equation is finite. The coefficients $\gamma(\ell,k,F,\omega)$ therefore are also uniquely defined by the equation. As $p_T(\omega,X)$ is a polynomial in $\omega_i$ with the sum of the degrees in $\omega_i$ is smaller or equal to $ s(k) - r$ by the definition \ref{df_pt} the expression in equation \eqref{eq_gammadeg} must have the same highest degree. The term $1-\det(1 + X)^{s(k)} $ is a polynomial in $X_{i,j}$ where each term has at least degree $2$. Therefore $(1 - \det(1 + X)^{s(k)})^{\ell}$ is a polynomial where each term has at least degree $2\cdot\ell$. For a given $F \in \tilde H_r(h_1,...,h_r)$ and $2\cdot \ell > h_1 + \ldots + h_r$ therefore $\gamma(\ell,k,F,\omega) = 0_f$. Equation \eqref{eq_sumga} follows from the defintion of $p_T$ in \ref{df_pt},  \citep[Theorem 2.3, Lemma 2.2.]{dhoef1}, especially \citep[eq. 2.26]{dhoef1} and the identity theorem for polynomials in $\omega_i$. 
\end{proof}
\begin{definition}\label{de_hlk}
For a given $k$ and $\ell$ we define the finite set 
\begin{equation}
\tilde H_{\ell,k} := \{ F \in \tilde H_r; \gamma(\ell,k,F,\omega) \neq 0_f\}
\end{equation}
\end{definition}
\begin{theorem} \label{t_feyn}
\textbf{Feynman Graph Summation}\\
For $z \in D\setminus P_0$ the sums
\begin{equation}
\hat S_{\ell,k}(z) := \sum_{Y\in J_r} C_{\ell,k}(Y,z) 
\end{equation}
and 
\begin{equation}
\hat S_{F}(z) := \sum_{Y\in J_r}C_F(Y,z)
\end{equation}
\begin{equation}\label{eq_sfk}
\hat S_{F,k} (z) :=   \sum_{Y\in J_r} C_{F,k}(Y,z) 
\end{equation}
converge absolutely and therefore are holomorphic on $D\setminus P_0$.
Moreover 
\begin{equation}\label{eq_lk_ell}
\hat S_{\ell,k}(z) := \sum_{F \in \tilde H_{\ell,k}}\hat  S_{F}(z)  \cdot  \gamma(\ell,k,F,h(0,d,z)\cdot u(r))
\end{equation}
and 
\begin{equation}\label{eq_lk_F}
\hat S_{F,k}(z) := \sum_{\ell = 0}^{\infty} \gamma(\ell,k,F,h(0,d,z)\cdot u(r)) \cdot \hat S_{F}(z)
\end{equation}

and for $z \in U_{\frac{1}{2d}}(0) \cap U_{\epsilon(s(k))}(\pm \frac{1}{2d})$ 
\begin{equation}\label{eq_feynman}
\hat S_{k}(z) = \sum_{\ell \geq 0} \hat S_{\ell,k}(z) = \sum_{\ell \geq 0}  \left(  \sum_{F \in \tilde H_{\ell,k}} \gamma(\ell,k,F,h(0,d,z)\cdot u(r)) \cdot \hat S_{F}(z) \right)
\end{equation}
The sum over $\ell$ converges absolutely.
\end{theorem}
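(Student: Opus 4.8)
The plan is to treat the three families of sums separately, establishing absolute convergence first and only then worrying about the rearrangements needed for the two identities \eqref{eq_lk_ell} and \eqref{eq_lk_F} and the master formula \eqref{eq_feynman}. For the convergence of $\hat S_F(z)$: each matrix $F \in \tilde H_r$ is the adjacency matrix of an Eulerian multidigraph on $r$ vertices, hence its underlying graph is connected and contains a spanning tree; picking any ordered spanning tree $T \subset G(F)$ and any $z \in D \setminus P_0$, Lemma \ref{le_grconv} gives absolute convergence of $\sum_{Y \in J_r} \prod_{(i,j) \in E(G(F))} h(Y_i - Y_j, d, z)$, which (up to the nonzero scalar $(1 + h(0,d,z))^{-(h_1 + \cdots + h_r)}$) is exactly $\hat S_F(z)$. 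Absolute, locally uniform convergence then yields holomorphy on $D \setminus P_0$ by Weierstrass. For $\hat S_{\ell,k}$ and $\hat S_{F,k}$ I would argue the same way after using Lemma \ref{le_T_Euler}: expanding $C_{\ell,k}(Y,z)$, resp. $C_{F,k}(Y,z)$, as a \emph{finite} $\C$-linear combination (with $z$-dependent but bounded coefficients coming from $\gamma(\ell,k,F,h(0,d,z)\cdot u(r))$ and the $K(h_j,\cdot,\cdot)$ factors) of the $C_F(Y,z)$ with $F \in \tilde H_r$, the sum over $Y$ of each summand converges absolutely by the previous step, and a finite linear combination of absolutely convergent sums is absolutely convergent; holomorphy follows. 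Equations \eqref{eq_lk_ell} and \eqref{eq_lk_F} are then immediate: \eqref{eq_lk_ell} is the termwise sum over $Y$ of the finite identity \eqref{eq_l_F} from Lemma \ref{le_l_F} (the finiteness of $\tilde H_{\ell,k}$ from Definition \ref{de_hlk} is what makes the interchange legal), and \eqref{eq_lk_F} is the termwise sum over $Y$ of \eqref{eq_sumga}, where for fixed $F$ only finitely many $\ell$ contribute, again by Lemma \ref{le_l_F}.

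The substantive part is \eqref{eq_feynman}, i.e. that on the punctured neighborhood $U_{\frac{1}{2d}}(0) \cap U_{\epsilon(s(k))}(\pm \frac{1}{2d})$ one has $\hat S_k(z) = \sum_{\ell \geq 0} \hat S_{\ell,k}(z)$ with the $\ell$-sum absolutely convergent, and that this equals the double sum over $\ell$ and over $F \in \tilde H_{\ell,k}$. Here I would invoke Theorem \ref{th_de_1} for $f = s(k)$: for $z$ in that neighborhood, $\left| 1 - \Delta_r(Y,z)^{s(k)} \right| < 1$ \emph{uniformly in} $Y \in J_r$, which is precisely what licenses the geometric expansion
\begin{equation}
\frac{1}{\Delta_r(Y,z)^{s(k)}} = \sum_{\ell=0}^{\infty} \left(1 - \Delta_r(Y,z)^{s(k)}\right)^{\ell}.
\end{equation}
Multiplying this expansion into the definition of $\hat S_{T,k}(z)$ from Theorem \ref{th_defp} and summing the tree contributions over $T \in Cof_r$, the $\ell$-th term reproduces $\hat S_{\ell,k}(z)$. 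To make the interchange of the $\ell$-sum with the $Y$-sum rigorous I would bound: from the uniform estimate in Theorem \ref{th_de_1} there is (locally in $z$) a $\theta < 1$ with $\left| 1 - \Delta_r(Y,z)^{s(k)} \right| \leq \theta$ for all $Y$, while the remaining factors in $C_{\ell,k}(Y,z)$ are, by Lemma \ref{A3}(d), Lemma \ref{le_Tconv} and Theorem \ref{Th_delta}, dominated by $\theta^{-\ell}$-free quantities times the exponentially decaying $\exp\!\big(-\tfrac{2\lambda(z)}{r(r-1)} \sum_{m<l} \|y_{m,l}\|\big)$ from Lemma \ref{le_Tconv}; hence $\sum_{Y} \sum_{\ell} \left| C_{\ell,k}(Y,z) \right|$ is majorized by $\big(\sum_\ell \theta^\ell\big)$ times a convergent $Y$-sum, and Fubini/Tonelli applies. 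Finally, \eqref{eq_feynman} in the form with the inner sum over $\tilde H_{\ell,k}$ is obtained by substituting \eqref{eq_lk_ell} for each $\hat S_{\ell,k}(z)$.

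The main obstacle is the \emph{uniformity in $Y$} in the geometric-series step: without the uniform bound $\left| 1 - \Delta_r(Y,z)^{s(k)} \right| \leq \theta < 1$ one cannot pull the single factor $\sum_\ell \theta^\ell$ out front, and the double series could fail to converge absolutely. This is exactly why Theorem \ref{th_de_1} (and behind it the $d=2$ machinery of Sections 4--5) is needed rather than merely the pointwise nonvanishing of $\Delta_r$; I expect essentially all the effort in writing the proof to go into carefully assembling the uniform majorant from Lemma \ref{le_Tconv}, Lemma \ref{A3} and Theorem \ref{Th_delta}, with the rest being bookkeeping about the finiteness statements in Lemma \ref{le_l_F} and Definition \ref{de_hlk}.
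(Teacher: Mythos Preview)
Your approach is essentially the same as the paper's: convergence of the $Y$-sums via Lemma~\ref{le_Tconv}/\ref{le_grconv}, the identities \eqref{eq_lk_ell}--\eqref{eq_lk_F} from the finiteness statements in Lemma~\ref{le_l_F}, and the absolute convergence of the $\ell$-sum from Theorem~\ref{th_de_1} via the geometric series. The paper's own proof is three sentences citing exactly these lemmas; you have simply unpacked the bookkeeping.

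One point worth flagging: you invoke ``the uniform estimate in Theorem~\ref{th_de_1}'' to obtain a $\theta<1$ with $|1-\Delta_r(Y,z)^{s(k)}|\le\theta$ for \emph{all} $Y\in J_r$, but the statement of Theorem~\ref{th_de_1} only asserts the pointwise strict inequality $<1$. For $d\ge 3$ the uniform bound is indeed available directly from Lemma~\ref{le_d3} (equation~\eqref{eq_delta_b}); for $d=2$ the uniform $\theta$ is not literally written down, and one has to extract it from the proof of Lemma~\ref{le_delt} (the structure $\Delta_r^{f}=B_n(1+\epsilon_n)$ with real $B_n\in(0,1]$ and $\epsilon_n\to 0$) together with a compactness argument as in the proof of Theorem~\ref{th_de_1}. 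The paper's terse proof makes exactly the same leap when it says ``the absolute convergence of the sum over $\ell$ follows from Theorem~\ref{th_de_1} from the absolute convergence of the geometrical series,'' so you are not introducing a new gap --- but since you explicitly call the bound uniform, be prepared to justify that word if pressed.
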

\begin{proof} The convergence of the sums over $Y \in J_r$ follows from Lemma \ref{le_Tconv}. Equation \eqref{eq_lk_F} follows from Lemma \ref{le_l_F}. The absolute convergence of the sum over $\ell$ follows from Theorem \ref{th_de_1} from the absolute convergence of the geometrical series.
\end{proof} 
\begin{remark}
 As a note of caution it should be mentioned however that the sum over $F$  in equation \eqref{eq_feynman} cannot be interchanged with the sum over $\ell$ because the sum over $F \in \tilde H_r$ does not converge (absolutely) in general. It seems that the factors $\gamma(\ell,k,F,\omega)$ come from a general inclusion exclusion principle, mediated by the factors 
$1 - \Delta^{s(k)}$
\end{remark}

\section{The 2-dimensional case}
We will now turn to the case of two dimensions. 
\subsection{Replacing the sum over $Y \in J_r$ by integration}
As a first step we will replace the sum over $Y \in J_r$ by an integral in the asymptotic regime. In the case $d=2$ this is a nontrivial task because a special generalization of the Euler--Maclaurin expansion has to be used. 
\begin{lemma} \label{le_ht}
Let
\begin{equation}
h_1(x,z) := \frac{4 + s}{2\pi} K_0(\left|x\right|\sqrt{s})
\end{equation}
and 
\begin{multline}
h_2(x,z)= - \frac{4+s}{8\pi\left|x\right|^2}\cdot\Biggl(K_1(\left|x\right|\sqrt{s})\cdot  (\left|x\right|\sqrt{s})  - K_2(\left|x\right|\sqrt{s}) \cdot(\left|x\right|\sqrt{s})^2 \\ - \frac{x_4}{6\left|x\right|^4}\cdot K_3(\left|x\right|\sqrt{s})\cdot (\left|x\right|\sqrt{s})^3 \Biggr)
\end{multline}
For $\lambda < \frac{\pi}{2}$ and $z \in V_{\frac{1}{4}} \cap W_{\lambda}$ there is a constant $C$ such that  for 
\begin{equation}\label{eq_2d_h}
 h_{t}(x,z) := h(x,2,z) - h_1(x,z)  - h_2(x,z)
\end{equation}
the equation
\begin{equation}\label{eq_2dht}
\left| h_{t}(x,z) \right|  \leq \frac{C}{\left|x\right|^4}
\end{equation}
is true. 
\end{lemma}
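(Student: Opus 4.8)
The statement says that the "tail" $h_t(x,z)$ of the Bessel expansion of $h(x,2,z)$, after subtracting the explicitly written first three terms $h_1$ and $h_2$, is bounded by $C/|x|^4$ uniformly for $z\in V_{1/4}\cap W_\lambda$. The natural route is to invoke Lemma~\ref{B2}, which already gives the full asymptotic expansion
\[
h(x,2,z)=\frac{(2d+s)}{4\pi^{d/2}|x|^{d-2}}\Bigl\{\Theta_{\frac d2-1}(|x|\sqrt s)-\frac{1}{2|x|^2}\bigl(2\Theta_{\frac d2+1}-\tfrac d2\Theta_{\frac d2}-\tfrac{2x_4}{3|x|^4}\Theta_{\frac d2+2}\bigr)+O(1/|x|^4)\Bigr\}
\]
with $d=2$, together with the statement in Lemma~\ref{B2} that the higher-order coefficient functions are bounded by $1/|x|^{2m}$ (starting at $m=2$) times constants independent of $z$ and $x$. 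First I would specialize Lemma~\ref{B2} to $d=2$, noting $V_{1/2d}=V_{1/4}$, and identify the leading term with $h_1$ and the $1/|x|^2$ correction term with $h_2$, using the identity $\Theta_\nu(w)=2K_\nu(w)(w/2)^\nu$ so that $\Theta_{\nu}(|x|\sqrt s)=2K_\nu(|x|\sqrt s)(|x|\sqrt s/2)^\nu$; a short bookkeeping check shows that the explicit $h_1(x,z)=\frac{4+s}{2\pi}K_0(|x|\sqrt s)$ matches $\frac{(4+s)}{4\pi}\Theta_0(|x|\sqrt s)$ since $\Theta_0=2K_0$, and similarly the four sub-terms of $h_2$ with their powers of $|x|\sqrt s$ are exactly the $\Theta_{1},\Theta_{2},\Theta_{3}$ contributions from the $-\frac{1}{2|x|^2}(\cdots)$ bracket after multiplying through by the prefactor $(4+s)/(4\pi|x|^0)$.

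**Key steps in order.** (1) Apply Lemma~\ref{B2} with $d=2$ to write $h(x,2,z)=h_1(x,z)+h_2(x,z)+R(x,z)$ where $R$ is the remainder beginning at order $m=2$. (2) Verify algebraically that the two subtracted terms defined in the present Lemma's statement coincide with the leading and first-correction terms of the Lemma~\ref{B2} expansion; this is the routine bookkeeping with the $\Theta_\nu$'s described above — I would not grind it out in detail but would state the matching of $\Theta_0,\Theta_1,\Theta_2,\Theta_3$ explicitly. (3) Invoke the final sentence of Lemma~\ref{B2}: the remainder $R(x,z)$ is a sum (or the tail) of terms of the form $\Theta_{1+k}(|x|\sqrt s)$, $k>2$, times polynomials in $s$ times elementary symmetric polynomials in the $x_i^2$ of degree $\mu$, divided by $|x|^q$ with $q-2\mu\ge 4$. (4) Bound each such term: on $V_{1/4}\cap W_\lambda$ we have $|s|<6d=12$ (bounded) and $\arg s<\widehat\lambda<3\pi/4$, hence $|x|\sqrt s\in W_{\widehat\lambda/2}\subset W_{3\pi/8}$, so by Lemma~\ref{B1} $|\Theta_{1+k}(|x|\sqrt s)|<C_{1+k,3\pi/8}<\infty$ for each $k\ge1$ (here $1+k>0$). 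The polynomial-in-$s$ factor is bounded since $|s|$ is bounded; the symmetric-polynomial-in-$x_i^2$ factor of degree $\mu$ is bounded by a constant times $|x|^{2\mu}$, so the whole term is bounded by const$/|x|^{q-2\mu}\le$const$/|x|^4$. (5) Sum finitely many such leading-remainder contributions (and control the genuine tail by the same bound, uniform in $x,z$ as asserted in Lemma~\ref{B2}) to get $|h_t(x,z)|\le C/|x|^4$, choosing $C$ as the resulting finite constant depending only on $\lambda$.

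**Main obstacle.** The only mildly delicate point is step (2): making sure the explicit closed forms $h_1,h_2$ written in the Lemma statement are \emph{exactly} the Lemma~\ref{B2} terms and not off by a factor or a reshuffling of the $\Theta_\nu$ indexing — in particular tracking where the $|x|\sqrt s$ powers inside $h_2$ come from (they are precisely the $(w/2)^\nu$ factors converting $K_\nu$ into $\Theta_\nu$, up to the $2^\nu$ constants) and checking the coefficient $-\frac{1}{8\pi|x|^2}$ against $\frac{(4+s)}{4\pi}\cdot(-\frac{1}{2|x|^2})$. Once that identification is pinned down, everything else is a uniform-boundedness argument that rests entirely on Lemmata~\ref{B1} and~\ref{B2}, both already available. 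A secondary point worth a sentence is that the hypothesis here is $z\in V_{1/4}\cap W_\lambda$ with $\lambda<\pi/2$, which is exactly the hypothesis of Lemma~\ref{B2} for $d=2$, so no extension of that lemma is needed; the uniformity in $z$ of the constant $C$ is inherited directly from the uniformity asserted there.
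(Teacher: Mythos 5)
There is a genuine gap. Your argument rests entirely on Lemma~\ref{B2}, which is an \emph{asymptotic expansion for $\left|x\right|\to\infty$}: it guarantees the existence of an $x_0$ such that the remainder after subtracting $h_1+h_2$ is $\leq C_1/\left|x\right|^4$ for $\left|x\right|\geq x_0$, with the constant $C_1$ uniform in $z$. That handles only the tail in $\left|x\right|$. But the Lemma asserts the bound for \emph{all} $x\in\Z^2\setminus\{0\}$, and for the finitely many lattice points with $\left|x\right|<x_0$ the asymptotic expansion gives you nothing: the $O(1/\left|x\right|^4)$ in an asymptotic expansion carries no information about small $\left|x\right|$.

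The small-$\left|x\right|$ case is precisely where the real work is. For a fixed $x$, the required bound reduces to showing $h_t(x,z)$ is bounded on $V_{1/4}\cap W_\lambda$. The difficulty is that this region has $z=1/4$ (where $s=1/z-4\to 0$) in its closure, and for fixed $x$ the function $h(x,2,z)$ has a logarithmic singularity $\sim -\frac{1}{\pi}\ln(1-16z^2)$ there (cf.\ equation~\eqref{eq_A3_even}); meanwhile $h_1(x,z)=\frac{4+s}{2\pi}K_0(\left|x\right|\sqrt{s})$ also blows up logarithmically as $s\to 0$ because $K_0(w)\sim-\ln(w/2)-\gamma$. Boundedness of $h_t$ therefore hinges on an exact cancellation of these two logarithmic divergences, which has to be verified from \eqref{eq_A3_even}; once the leading singularity is seen to cancel, $h_t(\cdot,z)$ extends continuously to the closure of $V_{1/4}\cap W_\lambda$ and the maximum principle (or just compactness) gives a bound $C_2$ for each of the finitely many small $x$, yielding $C=\max(C_1,\,C_2\cdot x_0^4)$. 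This is the second half of the paper's proof, and your proposal omits it entirely — your step~(5) implicitly treats the $O(1/\left|x\right|^4)$ remainder of Lemma~\ref{B2} as valid for all $x$, which it is not.

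A secondary caution: the bookkeeping in your step~(2) deserves to actually be done rather than just asserted, because the sign pattern inside $h_2$ does not line up transparently with the $2\Theta_{d/2+1}-\tfrac d2\Theta_{d/2}$ bracket in \eqref{eq_B2} after substituting $\Theta_\nu(w)=2K_\nu(w)(w/2)^\nu$; at minimum one should confirm the intended identification term by term before invoking it.
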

\begin{proof} According to Lemma \ref{B2} there is an $x_0  \in \R$ such that for $\left|x\right| \geq x_0$ equation \eqref{eq_2dht} is true for a given $C_1$. So we only need to prove that for the finitely many $x \in \Z^2 \wedge \left|x\right| \leq x_0$ there exists a $C_2 \in \R $ such that 
\begin{equation}
\left|h_t(x,z) \right| \leq C_2
\end{equation}
From the asymptotic expansion in equation \eqref{eq_A3_even} it is easy to calculate that 
the leading singularity of $h(x,2,z)$ proportional to $\ln(1-16z^2)$
for $z \rightarrow \pm \frac{1}{4}$ is absent in $h_t(x,z)$ . But that means that for $z \in  V_{\frac{1}{4}} \cap W_{\lambda} $ the function $h_t(x,z)$ has a continuous continuation for $z = \pm \frac{1}{4}$ and therefore onto the whole border of $V_{\frac{1}{4}} \cap W_{\lambda} $ and therefore its absolute value has a maximum for a given $x$ which proves the Lemma for $C = \max(C_1,C_2\cdot x_0^4)$. 
\end{proof}
\begin{lemma}\label{le_integr}
Lef $F \in \tilde H_r(h_1,\ldots,h_r)$. Then according to Theorem  \ref{t_feyn} the sum
\begin{equation}
I_{\Sigma}(F) := \sum_{Y \in J_r} \prod_{i \neq j} h(y_{i,j},2,z)^{F_{i,j}}
\end{equation}
converges absolutely for $z \in D$. The integral
\begin{equation}
I(F) :=\int_{(\R^2)^r} \prod_{1 \leq k \leq r} d^2 y_k \prod_{i \neq j} \left( \frac{2}{\pi} \cdot K_0(\left|y_i - y_j\right|) \right)^{F_{i,j}}\cdot \delta(y_1) 
\end{equation}
converges absolutely too and we have the asymptotic expansion 
\begin{equation}
I_{\Sigma}(F) = \frac{1}{s^{(r-1)}}\cdot I(F) \cdot \left(1 + o\left(s\cdot \ln(s)^{K(F)+1} \right)\right)
\end{equation}
around $z = \frac{1}{4}$  (and a corresponding asymptotic expansion around $z = -\frac{1}{4}$) with 
\begin{equation}
K(F) := \sum_{i \neq j} F_{i,j} = \sum_{i=1}^r h_i 
\end{equation}
\end{lemma}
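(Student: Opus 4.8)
The plan is to dispatch the two convergence claims quickly and concentrate on the asymptotic expansion, which is the real content.

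\textbf{Convergence.} Absolute convergence of $I_{\Sigma}(F)$ for $z\in D$ is already available: since $F\in\tilde H_r$ the digraph $G(F)$ is Eulerian, hence strongly connected, hence contains an ordered spanning tree, so Lemma~\ref{le_grconv} (equivalently Theorem~\ref{t_feyn}) applies. For $I(F)$ one uses the two asymptotics of $\frac2\pi K_0$: exponential decay like $e^{-|x|/2}$ at large $|x|$, and logarithmic growth like $|{\ln|x|}|$ near $x=0$. Fixing a spanning tree $T\subset G(F)$, the factors indexed by $E(T)$ provide the decay $\prod_{(i,j)\in E(T)}e^{-|y_i-y_j|/4}$, so that, after $\delta(y_1)$ has pinned $y_1=0$, the corollary to Lemma~\ref{le_treec} gives integrability at infinity, the remaining $K_0$ factors being bounded away from the coincidence set. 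Near the coincidence set one argues by power counting: on any stratum on which a subset of the $y_i$ coincide the integrand is bounded, for every $\delta>0$, by $C_\delta\prod_{i\neq j}|y_i-y_j|^{-\delta F_{i,j}}$, and since each coincidence stratum in $(\R^2)^r$ has codimension at least two, a small enough $\delta$ makes all of the resulting integrals finite. Hence $I(F)<\infty$.

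\textbf{The asymptotics.} Put $s=\frac1z-4$, so $s\to0$ as $z\to\frac14$; I work with $z\in V_{1/4}\cap W_\lambda$ ($\lambda<\frac\pi2$) so that Lemma~\ref{le_ht} is available, the expansion near $z=-\frac14$ then following from the evenness of $I_{\Sigma}(F)$ in $z$ (Lemma~\ref{le_ceven}). By Lemma~\ref{le_ht},
\[
h(x,2,z)=\frac{4+s}{2\pi}K_0(|x|\sqrt s)+h_2(x,z)+h_t(x,z),
\]
with $|h_2(x,z)|\le C/|x|^2$ (using Lemma~\ref{B1}) and $|h_t(x,z)|\le C/|x|^4$, uniformly in $z$. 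The sum $I_{\Sigma}(F)$ is dominated by the bulk region $|x|\sim1/\sqrt{|s|}$, where $\frac{4+s}{2\pi}K_0(|x|\sqrt s)$ is of order one while $h_2$ and $h_t$ are of order $s$ and $s^2$, so there $\prod_{i\neq j}h(y_{i,j},2,z)^{F_{i,j}}=\prod_{i\neq j}\big(\tfrac2\pi K_0(|y_{i,j}|\sqrt s)\big)^{F_{i,j}}(1+O(s))$. One then compares the lattice sum of the leading term with the corresponding integral: the summand varies on scale $1/\sqrt{|s|}$, and the substitution $y_k=\tilde y_k/\sqrt s$ converts the $r-1$ free measures $d^2y_k$ ($k\ge2$, with $y_1$ pinned) into $\frac1s\,d^2\tilde y_k$ and $K_0(|y_{i,j}|\sqrt s)$ into $K_0(|\tilde y_{i,j}|)$, producing the prefactor $s^{-(r-1)}$ and, in the limit, $I(F)$. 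The sum-to-integral comparison is carried out by a generalized Euler--Maclaurin expansion adapted to the singularities of $K_0$: in the bulk the summand is smooth on scale $1/\sqrt{|s|}$ and the leading correction is $O(s)$ relative to the main term, whereas the contributions of the coincidence strata, where $K_0(|x|\sqrt s)=-\ln|x|-\frac12\ln s+O\big((|x|\sqrt s)^2\ln(|x|\sqrt s)\big)$, are of lower order but carry powers of $\ln s$ --- at most $K(F)=\sum_{i\neq j}F_{i,j}$ of them on any one stratum --- so that, measured against the leading $s^{-(r-1)}I(F)$, the total error is $o\big(s\,\ln(s)^{K(F)+1}\big)$, one factor of $\ln s$ being kept as slack. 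Discarding the diagonal in $J_r$ is harmless, because the near-collision lattice sums $\sum_{0\neq x,\,|x|\le\varepsilon}|\ln(|x|\sqrt s)|^{F_{i,j}}$ are comparable to the matching integrals.

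\textbf{Main obstacle.} The hard part is precisely this generalized Euler--Maclaurin estimate in $\R^{2(r-1)}$, forced by the fact that the integrand (a) decays exponentially only after rescaling --- its effective decay length $1/\sqrt{|s|}$ diverging as $z\to\frac14$ --- and (b) is logarithmically singular on the union of the affine subspaces $\{y_i=y_j\}$. Since the classical expansion presupposes smoothness, one localizes: split $(\Z^2)^{r-1}$ into a region where all pairwise distances exceed a fixed scale, treated by the smooth theory with the exponential tail handled as above, and a finite family of neighbourhoods of the coincidence strata, on each of which the sum and the integral are compared directly from the explicit logarithmic form of $K_0$ near the origin, nested collisions being resolved inductively. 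Keeping all error bounds uniform in $z$ in a punctured neighbourhood of $\frac14$, so that the $o(\cdot)$ is genuine and not merely pointwise, and correctly counting the powers of $\ln s$ each stratum can contribute (bounded by $K(F)+1$), is the real work; everything else rests on Lemmata~\ref{B2}, \ref{le_ht} and \ref{le_treec}.
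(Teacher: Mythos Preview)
Your approach is essentially the paper's: the same decomposition $h=h_1+h_2+h_t$ from Lemma~\ref{le_ht}, the same identification of the leading term as $s^{-(r-1)}I(F)$ via the rescaling $y\mapsto y/\sqrt{s}$, and the same recognition that the sum-to-integral passage requires a generalized Euler--Maclaurin expansion adapted to the logarithmic singularities of $K_0$ along the coincidence subspaces.

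The one substantive difference is in how that Euler--Maclaurin step is actually executed. You propose a localization: excise tubular neighbourhoods of the coincidence strata, apply the classical smooth theory outside, and compare sum to integral directly inside using the explicit logarithmic form of $K_0$, iterating over nested collisions. The paper instead invokes Sidi's generalized Euler--Maclaurin formula for integrands with algebro-logarithmic endpoint singularities and applies it \emph{one real dimension at a time}, tracking explicitly that after integrating over one coordinate the result, as a function of the remaining coordinates, acquires no worse than $\ln(\,\cdot\,)^{K(F)}$ singularities, so that Sidi's formula applies again at the next step. This iterated-one-dimensional scheme is what makes the bookkeeping of the $\ln s$ powers clean and gives the bound $o(s\ln(s)^{K(F)+1})$ without an ad hoc stratification. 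Your localization sketch could in principle be made to work, but the inductive resolution of nested collisions you mention is exactly the technical burden that Sidi's formula absorbs.

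A smaller point: the paper is more explicit about the multinomial expansion of $\prod(h_1+h_2+h_t)^{F_{i,j}}$, treating the pure $h_1$ piece, the pure $h_t$ piece (which is $O(1)$), and the mixed pieces separately, the last requiring Hadamard-regularized Sidi because $h_2$ contributes genuine $1/|x|^2$ singularities. Your bulk estimate $(1+O(s))$ is the right intuition but does not by itself control the cross terms uniformly; the paper's argument for those goes through an absolute-value majorant $I_{\Sigma,c}$ and picks up the extra factor of $s$ from the scaling of $h_2$ and $h_t$.
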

\begin{proof} As for $n \in \N_0$  \citep[eq. 9.7.2]{danraf}
\begin{equation}\label{eq_knasy}
K_n(z) \sim \sqrt{\frac{\pi}{2z}} \exp(-z) \cdot \left(1 + O\left(\frac{1}{z}\right)\right)
\end{equation} 
for large $z$ and the graph corresponding to  $F \in \tilde H_r(h_1,\ldots,h_r)$ contains a spanning tree we see from the Corollary to Lemma \ref{le_treec} together with Lemma \ref{B3} that the integral $ I(F)$ converges absolutely (the logarithmic singularities of the $K_0$ for $y_i = y_j$ are of course integrable). \\
We further write
 \begin{equation}\label{eq_multi}
I_{\Sigma}(F) := \sum_{Y \in J_r} \prod_{i \neq j} (h_1(y_{i,j},z) + h_2(y_{i,j},z) +h_t(y_{i,j},z))^{F_{i,j}}
\end{equation}
and expand multinomially. Then there is the term 
\begin{equation}
I_{\Sigma,1}(F) := \sum_{Y \in J_r} \prod_{i \neq j} h_1(y_{i,j},z)^{F_{i,j}}
\end{equation} 
and 
\begin{equation}
I_{\Sigma,t}(F) := \sum_{Y \in J_r} \prod_{i \neq j} h_t(y_{i,j},z)^{F_{i,j}}
\end{equation} 
and finitely many other terms which contain at least one factor $h_2$ or $h_t$. All these sums converge absolutely as the graph belonging to $F$ contains a maximal tree and therefore Lemma \ref{le_treec} together with equation \eqref{eq_knasy} ensures the absolute convergence.   \\
Because of Lemma \ref{le_ht} $I_{\Sigma,t}(F) \sim O(1)$. \\
$I_{\Sigma,1}$ can be interpreted  as a modified multidimensional trapezoidal summation with stepwidth $ h = \left|\sqrt{s}\right|$. We use the generalization of the Euler Maclaurin formula by Sidi \citep{sidi} and apply it in each of the $2(r-1)$ dimensions consecutively. We realize from the expansions \citep[eq. 9.6.13]{danraf}  that integrating piecewise towards and from the logarithmic singularities in each dimension the resulting integrals and coefficients in the generalized Euler Maclaurin expansion all exist without Hadamard regularisation. The integrals over given variables and the product of factors which contain these given variables are continous functions in the remaining variables as we only integrate over logarithmic singularities. Therefore we have only logarithmic singularities in the remaining variables after multiplying with the rest of the factors and can again apply Sidis formulas. Moreover the function to be integrated over a given dimension with variable $x_k$ does not have a singularity stronger than $\ln(x_k-a)^{K(F)}$ in a point $a$ and from the form of the coefficients \citep[eq. 1.2]{sidi} in Sidi's generalized Euler Maclaurin expansion we see that a coefficient multiplying $h\cdot\ln(h)^m$  does not have a singularity stronger than $\ln(x_k-a)^{K(F)-m}$ in a point $a$. As any singularity only occurs when two dimensional conditions $y_i - y_j = 0$ are met an additional factor $h$ multiplies the leading terms of the asymptotic expansion in $h$. Therefore the leading behaviour of the asymptotic expansion according to stepwidth is 
\begin{equation}
\left|s\right|^{r-1} \cdot I_{\Sigma,1}(F) = \left(\frac{s}{\left|s \right|}\right)^{1-r}\cdot I(F)\cdot (1 + o(s \cdot \ln(\left|s\right|)^{K(F) + 1})
\end{equation}
The finitely many other terms in the multinomial expansion of \eqref{eq_multi} have the form
\begin{equation}
I_{\Sigma,b}(F) := \sum_{Y \in J_r} \prod_{i \neq j} h_{b,i,j}(y_{i,j},z)^{F_{i,j}}
\end{equation}
where $h_{b,i,j}$ can be either $h_1$ or $h_2$ or $h_t$. 
Now we define
\begin{equation}
I_{\Sigma,c}(F) := \sum_{Y \in J_r} \prod_{i \neq j} h_{c,i,j}(y_{i,j},z)^{F_{i,j}}
\end{equation}
where $h_{c,i,j}(x,z) = \frac{C}{\left|x\right|^4}$ if $h_{b,i,j}(x,z) = h_t(x,z)$ and $h_{c,i,j}(x,z) = \left|h_{b,i,j}(x,z) \right|$ otherwise.   Then according to Lemma \ref{le_treec} this sum again converges absolutely. Moreover
\begin{equation}
\left| I_{\Sigma,b} \right| \leq I_{\Sigma,c} 
\end{equation}
We now again interprete $I_{\Sigma,c}$ as a modified multidimensional trapezoidal sum according to Sidi with a stepwidth of $\left|\sqrt{s}\right|$. Factors $h_{c,i,j}$ which are not $\left|h_1\right|$ have an additional scaling factor $\left|s\right|$ for $\left|h_2\right|$ and $\left|s\right|^2$ for $C/\left|x\right|^4$ in the generalized Euler Maclaurin expansion of the integrals. Integrating and Hadamard regularizing according to \citep[eq. 1.8]{sidi} in each dimension leads to terms in the remaining variables which are less singular in the resulting variables by one order. So they again lead to integrands in the resulting dimensions which have algebro logarithmic singularities in the resulting variables and so Sidi generalized Euler Maclaurin expansion can be applied to them too.  
Integrals in the expansion  according to the the formula for the $K_n$ in \citep[eq. 9.6.11]{danraf} for $h_2$ and obviously for $C/\left| x \right|^4$ of course have to be Hadamard regularized in the appropriate cases and therefore are finite and multiplied with those additional scaling factors and therefore the resulting multidimensional integral is scaled down by at least a factor $s$. The asymptotic corrections for stepwidth on the other hand after multiplying with the scaling factors then according to Sidis formulas scale like those for $I_{\Sigma,1}$ and so we are left with 
\begin{equation}
\left|s\right|^{r-1}I_{\Sigma,c} \sim o( s \ln(\left|s\right|)^{K(F)+1})
\end{equation} 
and therefore the Lemma is proven. 
\end{proof}
\begin{corollary}
The sum 
\begin{equation}
I_{\Sigma,abs}(F) := \sum_{Y \in J_r} \prod_{i \neq j} \left| h(y_{i,j},2,z) \right|^{F_{i,j}}
\end{equation}
fulfils 
\begin{equation}
I_{\Sigma,abs}(F) =  \frac{1}{\left| s \right|^{(r-1)}}\cdot I(F) \cdot \left(1 + o\left(s\cdot \ln(s)^{K(F)+1} \right)\right)
\end{equation}
\end{corollary}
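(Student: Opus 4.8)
The plan is to repeat the argument in the proof of Lemma~\ref{le_integr} with $h(y_{i,j},2,z)$ replaced everywhere by its modulus. The trivial part should be dealt with first: $h(x,2,z)$ is a generating function of walk counts and hence has non-negative Taylor coefficients in $z$, so for $z\in{]0,\tfrac14[}$ one has $|h(x,2,z)|=h(x,2,z)$ and $|s|=s$, and the assertion is Lemma~\ref{le_integr} verbatim. The substance of the corollary therefore lies in the asymptotics as $z\to\pm\tfrac14$ inside $V_{\frac14}\cap W_\lambda$, and for definiteness I would (as in Lemma~\ref{le_integr}) work near $z=\tfrac14$.

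Next I would invoke the decomposition $h=h_1+h_2+h_t$ of Lemma~\ref{le_ht}, but replace the multinomial expansion of $\prod_{i\neq j}(h_1+h_2+h_t)^{F_{i,j}}$ --- which is meaningless under $|\cdot|$ --- by the elementary estimate $\bigl|\,|h|-|h_1|\,\bigr|\le|h-h_1|=|h_2+h_t|\le|h_2|+|h_t|$. Writing $|h(y_{i,j},2,z)|=|h_1(y_{i,j},z)|+\rho_{i,j}$ with $|\rho_{i,j}|\le|h_2(y_{i,j},z)|+|h_t(y_{i,j},z)|$ and multinomially expanding $\prod_{i\neq j}(|h_1|+\rho)^{F_{i,j}}$, the pure-$|h_1|$ monomial is the main term, while every other monomial carries at least one factor $|h_2|$ --- which is $O(|y|^{-2})$ times a bounded Bessel factor by Lemma~\ref{B2} --- or $|h_t|$, which is $\le C|y|^{-4}$ by Lemma~\ref{le_ht}.

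For the main term I would use $|h_1(x,z)|=\tfrac{|4+s|}{2\pi}\,|K_0(|x|\sqrt s)|$ to write
\[
\sum_{Y\in J_r}\ \prod_{i\neq j}|h_1(y_{i,j},z)|^{F_{i,j}}
=\Bigl(\tfrac{|4+s|}{2\pi}\Bigr)^{K(F)}\sum_{Y\in J_r}\ \prod_{i\neq j}\bigl|K_0(|y_{i,j}|\sqrt s)\bigr|^{F_{i,j}},
\]
and read the lattice sum as a modified multidimensional trapezoidal summation of step width $|\sqrt s|$, exactly as in Lemma~\ref{le_integr}. The crucial point is that $|K_0(w)|=\sqrt{K_0(w)\,K_0(\bar w)}$ is real-analytic away from $w=0$ and retains only an algebro-logarithmic singularity at $w=0$, so that Sidi's generalized Euler--Maclaurin formula~\citep{sidi} applies dimension by dimension just as before. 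The only structural change is that the substitution which renders the leading integral $z$-independent is now the \emph{real} rescaling $y_i\mapsto y_i/|\sqrt s|$, whose Jacobian contributes $|\sqrt s|^{-2(r-1)}=|s|^{-(r-1)}$ in place of $s^{-(r-1)}$; combining this with the prefactor $(\tfrac{|4+s|}{2\pi})^{K(F)}\to(\tfrac2\pi)^{K(F)}$ and the delta constraint yields $\sum_{Y\in J_r}\prod_{i\neq j}|h_1(y_{i,j},z)|^{F_{i,j}}=\tfrac1{|s|^{r-1}}I(F)\,(1+o(s\ln(s)^{K(F)+1}))$. The remaining monomials are then disposed of by the same dominated-sum reasoning as in Lemma~\ref{le_integr}: one replaces $|h_t|$ by $C|y|^{-4}$ and the other factors by their moduli, invokes Lemma~\ref{le_treec} and its corollary for absolute convergence, and re-applies Sidi's expansion --- each factor $|h_2|$ contributes an extra $|s|$ and each $C|y|^{-4}$ an extra $|s|^2$ to the generalized Euler--Maclaurin expansion of the corresponding integrals, so every such term is smaller than the main one by at least a factor $|s|$ and is absorbed into the stated relative error.

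The hard part will be the bookkeeping in the iterated Sidi expansion for the non-holomorphic integrand $|K_0|^{F_{i,j}}$: one must verify that integrating over one two-dimensional variable $y_k$ at a time leaves, in the remaining variables, again only algebro-logarithmic singularities --- so that the machinery can be reapplied --- that their orders decrease with each successive order of the step-width expansion exactly as in Lemma~\ref{le_integr}, and that the limiting normalization produced this way is indeed $I(F)$. This is the same verification as there, now carried out for $|K_0|$ and its iterated partial integrals in place of $K_0$ itself.
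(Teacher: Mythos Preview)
Your approach is exactly what the paper intends: the corollary is stated without proof because it is meant to follow by rerunning the proof of Lemma~\ref{le_integr} with $|h|$ in place of $h$, and your decomposition $|h|=|h_1|+\rho$ with $|\rho|\le|h_2|+|h_t|$ is the right substitute for the multinomial expansion.

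One point in your write-up does need correction, however. You assert that the real rescaling $y_i\mapsto y_i/|\sqrt s|$ ``renders the leading integral $z$-independent''. It does not: after that substitution the integrand is $\prod_{i\neq j}\bigl|K_0\bigl(|y_{i,j}|\,e^{i\theta}\bigr)\bigr|^{F_{i,j}}$ with $\theta=\arg\sqrt s$, and this still depends on $z$ through $\theta$. In Lemma~\ref{le_integr} the residual phase is exactly what produces the factor $(s/|s|)^{1-r}$ and hence the clean $s^{-(r-1)}$; under the modulus there is no such mechanism, and the leading coefficient is a priori
\[
J_{\mathrm{abs}}(\theta)\;=\;\int_{(\R^2)^{r-1}}\prod_{i\neq j}\bigl|K_0\bigl(|y_{i,j}|e^{i\theta}\bigr)\bigr|^{F_{i,j}}\prod_{k=2}^r d^2y_k,
\]
which equals $I(F)$ (up to the $(2/\pi)^{K(F)}$ normalisation) only when $\theta=0$, i.e.\ for real $z\nearrow\tfrac14$. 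For complex approach directions $J_{\mathrm{abs}}(\theta)$ differs from $I(F)$ (already the large-$|y|$ decay rate of $|K_0(|y|e^{i\theta})|$ is $e^{-|y|\cos\theta}$ rather than $e^{-|y|}$). So either restrict the statement to real $z$, or weaken the conclusion to $I_{\Sigma,\mathrm{abs}}(F)=O\bigl(|s|^{-(r-1)}\bigr)$ with a $\theta$-uniform constant; since $K_0$ has no zeros in the sector $|\arg w|\le 3\pi/8$ relevant here (cf.\ Lemma~\ref{B2}), $J_{\mathrm{abs}}(\theta)$ is indeed bounded uniformly and this $O$-bound is all that Theorem~\ref{th_nge} actually uses.
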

\subsection{Defining a standard asymptotic expansion}
After solving the summation over $Y \in J_r$ we now see that for $d=2$ we can get all logarithmic corrections even if we drop terms which scale down by a factor $s$. We will make ample use of this to get simple, managable equations.  
\begin{theorem}\label{th_nge}
\textbf{(Naive Graph evaluation in two dimensions allowed)}\\
Let $d=2$ and $M \in \N$ be a natural number. Then for $k=\{k_1,\ldots,k_r\}$ and $ \hat S_k(z)$ there is a finite set $\tilde H^{(M)}_{k} \subset \tilde H_r$ such that  
\begin{equation}
\hat S_k(z) - \sum_{F \in \tilde H^{(M)}_{k}} \hat S_{F,k}(z) = o\left(\frac{1}{s^{r-1}\cdot \ln(s)^M}\right)
\end{equation}  
\end{theorem}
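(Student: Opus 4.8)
The plan is to combine the Feynman graph summation of Theorem \ref{t_feyn} with the asymptotic expansion of the individual graph sums proved in Lemma \ref{le_integr}. First I would recall from \eqref{eq_feynman} that for $z$ in a punctured neighborhood of $\pm\frac{1}{2d}$ inside $U_{\frac{1}{2d}}(0)$ we have
\begin{equation}
\hat S_k(z) = \sum_{\ell \geq 0} \hat S_{\ell,k}(z) = \sum_{\ell \geq 0} \sum_{F \in \tilde H_{\ell,k}} \gamma(\ell,k,F,h(0,2,z)\cdot u(r)) \cdot \hat S_F(z),
\end{equation}
with the sum over $\ell$ absolutely convergent. For a given $\ell$ the inner sum is finite (Lemma \ref{le_l_F}, Definition \ref{de_hlk}), so the only issue is the tail in $\ell$. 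Since $1-\Delta_r(Y,z)^{s(k)} \to 0$ uniformly as $z \to \pm\frac{1}{2d}$ (Theorem \ref{th_de_1} gives $|1-\Delta^{s(k)}| < 1$ on a neighborhood, and a quantitative version near the singularity follows from Lemma \ref{B3_d2} and the fact that $\Delta_r$ is bounded below), the partial geometric series $\sum_{\ell \le L}(1-\Delta^{s(k)})^\ell$ approximates $1/\Delta^{s(k)}$ with an error that is $O((1-\Delta^{s(k)})^{L+1})$; and $1-\Delta_r^{s(k)}$ vanishes like a power of $y_n \sim \pi/(-\ln|s|)$ times bounded factors, i.e. like $1/\ln(s)$ up to constants. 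Hence choosing $L = L(M)$ large enough, the tail $\sum_{\ell > L}\hat S_{\ell,k}(z)$ contributes only $o(1/(s^{r-1}\ln(s)^M))$ — this requires checking that the $Y$-summation does not destroy this bound, which it does not, because $\hat S_{F,k}$ sums absolutely and, by the Corollary to Lemma \ref{le_integr}, $I_{\Sigma,\mathrm{abs}}(F) = |s|^{-(r-1)} I(F)(1 + o(s\ln(s)^{K(F)+1}))$ with the subleading behaviour controlled.

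Next I would set $\tilde H^{(M)}_k := \bigcup_{\ell \le L(M)} \tilde H_{\ell,k}$, which is a finite subset of $\tilde H_r$, and rewrite
\begin{equation}
\sum_{\ell \le L(M)} \hat S_{\ell,k}(z) = \sum_{F \in \tilde H^{(M)}_k} \hat S_F(z) \cdot \left( \sum_{\ell \le L(M)} \gamma(\ell,k,F,h(0,2,z)\cdot u(r)) \right),
\end{equation}
using that for each $F$ only finitely many $\ell$ contribute (Lemma \ref{le_l_F}). The point is that $\sum_{\ell \le L(M)}\gamma(\ell,k,F,\omega) = \sum_{\ell \ge 0}\gamma(\ell,k,F,\omega) = \gamma_{F,k}(\omega)$ once $L(M)$ exceeds the (finite) range of $\ell$ for every $F \in \tilde H^{(M)}_k$ — but this need not hold for all $F\in\tilde H_r$, only for those in the finite set. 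For the finitely many $F$ outside $\tilde H^{(M)}_k$ that appear for some $\ell \le L(M)$ but whose full $\ell$-range exceeds $L(M)$, one enlarges $\tilde H^{(M)}_k$ finitely many times until it is "closed" under this operation; since the total number of relevant $F$ at levels $\ell \le L(M)$ is finite and each has a finite $\ell$-support, this closure terminates. After this adjustment, $\sum_{\ell \le L(M)}\gamma(\ell,k,F,\omega)$ equals $\gamma_{F,k}(\omega)$ for every $F\in\tilde H^{(M)}_k$, and by \eqref{eq_sumga} and \eqref{eq_dfk1}–\eqref{eq_dfk2} this is exactly the coefficient combination appearing in $\hat S_{F,k}(z) = \sum_{Y\in J_r} C_{F,k}(Y,z)$, so $\sum_{\ell \le L(M)}\hat S_{\ell,k}(z) = \sum_{F\in\tilde H^{(M)}_k}\hat S_{F,k}(z)$.

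Combining the two displays gives $\hat S_k(z) - \sum_{F\in\tilde H^{(M)}_k}\hat S_{F,k}(z) = \sum_{\ell > L(M)}\hat S_{\ell,k}(z)$, and the bound from the first paragraph yields the claimed $o(1/(s^{r-1}\ln(s)^M))$. The main obstacle I anticipate is the uniform quantitative control of the geometric tail: one must show that $|1-\Delta_r(Y,z)^{s(k)}|$ is bounded, uniformly in $Y\in J_r$, by something like $c/|\ln(s)|$ as $z\to\pm\frac{1}{2d}$, and that this power of $1/|\ln(s)|$ beats the $1/|s|^{r-1}$ growth of $I_{\Sigma,\mathrm{abs}}(F)$ when raised to the power $\ell$ and summed over the (a priori growing with $\ell$) set $\tilde H_{\ell,k}$. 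This is where the work of Section 5 — in particular the structure revealed by Lemma \ref{le_delt}, where $\Delta_r(^{(n)}Y,z_n) = B_n(1+\epsilon_n)$ with $B_n\in\,]0,1]$ real — is essential: it gives not merely $|1-\Delta^f|<1$ but the sharper statement that $1-\Delta^{s(k)}$ is, up to vanishing relative error, a real number in $[0,1[$ tending to $0$, which is precisely what makes the geometric series converge fast enough and uniformly. Once that uniform decay rate is pinned down, choosing $L(M)$ proportional to $M$ (with a constant depending on $r$, $k$, and the implied constants in Lemma \ref{le_integr}) closes the argument.
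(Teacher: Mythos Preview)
Your proposal has a genuine gap at its core. You claim that $|1-\Delta_r(Y,z)^{s(k)}|$ is bounded, uniformly in $Y\in J_r$, by something of order $1/|\ln s|$ as $z\to \frac{1}{4}$, and you invoke Lemma~\ref{le_delt} for this. But Lemma~\ref{le_delt} only gives $\Delta_r(^{(n)}Y,z_n)=B_n(1+\epsilon_n)$ with $B_n\in\,]0,1]$; it does \emph{not} say $B_n\to 1$. In fact for $Y$ with all pairwise differences $Y_i-Y_j$ fixed (bounded), one has $U_{i,j}(Y,z)\to 1$ by equation~\eqref{B3_e4}, so $1_{r\times r}+U(Y,z)$ tends to the all-ones matrix and $\Delta_r(Y,z)\to 0$ (cf.\ equation~\eqref{det_Bl}, where the determinant scales like $y_n^{r-1}$). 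Hence $1-\Delta_r^{s(k)}\to 1$ for such $Y$, and no uniform bound of the form $c/|\ln s|$ can hold. The geometric-tail decay mechanism you rely on is simply unavailable.

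The paper's argument avoids this entirely. It does not bound $(1-\Delta^{s(k)})^\ell$ term by term; instead it writes the whole tail beyond level $m$ as the single sum
\[
\hat S_{\geq m,k}(z)=\sum_{Y\in J_r}\frac{1}{\Delta_r(Y,z)^{s(k)}}\,C_{m,k}(Y,z),
\]
bounds $|1/\Delta_r^{s(k)}|$ via Corollary~\ref{co_A3} by $(2(1+|h(0,2,z)|))^{s(k)(r-1)}\sim|\ln s|^{s(k)(r-1)}$ (polynomial growth, no smallness needed), and then uses that $C_{m,k}$ expands into finitely many graph terms $C_F$ with $K(F)\ge 2m$. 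Each of these satisfies $\sum_{Y}|C_F(Y,z)|\sim I(F)\,|s|^{-(r-1)}|\ln s|^{-K(F)}$ by the Corollary to Lemma~\ref{le_integr}, and together with $|\gamma(m,k,F,h(0,2,z)u(r))|\lesssim|\ln s|^{-r}$ this kills the polynomial growth once $2m>M+s(k)\cdot r+1$. The smallness therefore comes from the \emph{graph side} (large $K(F)$), not from the determinant side. Your ``closure'' discussion is also unnecessary: the paper simply restricts $\tilde H_k^{(M)}$ to those $F$ with $K(F)\le M-r$, for which the $\ell$-support of $\gamma(\ell,k,F,\cdot)$ is automatically contained in $\{0,\dots,m(M)\}$, and discards the finitely many remaining $F$ as individually $o(s^{-(r-1)}\ln(s)^{-M})$.
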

\begin{proof} From Theorem \ref{t_feyn} we know that for $z \in U_{\frac{1}{2d}}(0) \cap U_{\epsilon(s(k))}(\frac{1}{2d}) $ and a given $m \in \N$  we can define the absolutely converging sum 
\begin{equation}
\hat S_{\geq m,k}(z) := \sum_{\ell \geq m} \hat S_{\ell,k}(z)
\end{equation}
Then from the geometric series we know that 
\begin{multline}
\hat S_{\geq m,k}(z) =  \sum_{Y\in J_r} \frac{1}{\Delta_r(Y,z)^{s(k)}}\cdot C_{m,k}(Y,z)\\
= \sum_{F \in \tilde H_{m,k}} \gamma(m,k,F,h(0,2,z) \cdot u(r)) 
\left(\sum_{Y \in J_r} \frac{1}{\Delta_r(Y,z)^{s(k)}} \cdot C_{F}(Y,z) \right) 
\end{multline}
Now according to equation \eqref{eq_A4_3} of Corollary \ref{co_A3} we note that for $z \in U_{\frac{1}{2d}}(0) $
\begin{equation}
\left| \sum_{Y \in J_r} \frac{1}{\Delta_r(Y,z)^{s(k)}} \cdot C_{F}(Y,z) \right| \leq \left( 2(1 +\left| h(0,2,z) \right|)\right)^{s(k) \cdot (r-1)} \sum_{Y \in J_r} \left| C_{F}(Y,z) \right| 
\end{equation}
On the other hand according to Lemma \ref{le_integr} and its Corollary we know that for $z \rightarrow \frac{1}{4}$ 
\begin{equation}
\sum_{Y \in J_r} \left| C_{F}(Y,z) \right| = \frac{I(F)}{\left|s \right|^{(r-1)}\left| \ln(s) \right|^{ K(F)}} \cdot \left( 1 + O\left(\frac{1}{\ln(s)}\right)\right)
\end{equation}  
and from Lemma \ref{le_l_F} we know that there is a constant $\eta(m,k,F)$ such that in a vicinity of $z = 1/4$ 
\begin{equation}\label{eq_gammask}
\left| \gamma(m,k,F,h(0,2,z))  \right|  \leq \frac{ \eta(m,k,F)}{  \left| \ln(s)^r \right|}
\end{equation}
Noticing that $(1 + h(0,2,z)) \sim O\left( \ln(s) \right) $ and that for $F \in \tilde H_{m,k}$ we have $K(F) \geq 2\cdot m$ by choosing $m$ such that  $ 2m   > M + s(k) \cdot r + 1$ we notice that 
\begin{equation}\label{eq_sgem}
\left| \hat S_{\geq m, k} \right| \sim o\left(\frac{1}{ s^{r-1} \ln(s)^M} \right)
\end{equation} 
We define $m(M) := \left[\frac{1}{2} (M + s(k) \cdot r +1)\right] + 1$ (the bracket $[]$ here denoting the biggest integer smaller than the expression it is around) and define the finite sum
\begin{equation}
\hat S_{\leq m(M),k}(z) := \sum_{\ell = 0}^{m(M)} \gamma(\ell,k,F,h(0,2,z)\cdot u(r))\left(\sum_{F \in H_{\ell,k}}\hat S_F(z) \right) 
\end{equation}
We have already shown in \eqref{eq_sgem} that 
\begin{equation}
\hat S_k(z) - \hat S_{\leq m(M),k}(z) \sim  o\left(\frac{1}{ s^{r-1} \ln(s)^M} \right)
\end{equation}
The sum $\hat S_{\leq m(M),k}(z) $ as it is finite can be reordered. 
We know that for a given matrix $F$ the coefficient $\gamma(\ell,k,F,\omega) = 0_f$ (the zero function) if $2 \ell > K(F)$. Let us define
\begin{equation}
\tilde H_{\leq q,k} := \bigcup_{0\leq \ell \leq q } \tilde H_{\ell,k}
\end{equation}
Then if $F \in \tilde H_{\leq m(M),k}$ and $K(F)  \leq M -r $ we know that $\hat S_{\leq m(M),k}(z)$ contains the sum 
\begin{equation}
\hat S_{F,k}(z) = \sum_{\ell = 0}^{\infty} \gamma(\ell,k,F,h(0,2,z)\cdot u(r))\left(\hat S_F(z)\right)
\end{equation}
and no other terms for this given matrix $F$.
We also know from equation \eqref{eq_gammask} that for $F \in  \tilde H_{\leq m(M),k} $ with $K(F) > M-r $ the term 
\begin{equation}
\gamma(\ell,k,F,h(0,2,z)\cdot u(r)) \cdot \hat S_F(z) \sim  o\left(\frac{1}{ s^{r-1} \ln(s)^{M }} \right)
\end{equation}
From simple algebra 
\begin{equation}
\hat S_{F,k}(z) \sim  O\left(\frac{1}{ s^{r-1} \ln(s)^{K(F) + r}} \right)
\end{equation}
So if we define
\begin{equation}
\tilde  H^{(M)}_k := \left\{ F \in \bigcup_{0 \leq \ell \leq m(M) } \tilde H_{\ell,k};  K(F) \leq M - r \right\}
\end{equation}
the Theorem is true. This choice of $\tilde H_k^{(M)}$ is pretty much what one would get from naive power counting and therefore we have given the Theorem its title.
\end{proof} 
\begin{definition}
For a matrix $F \in \tilde H_r$ we define
\begin{equation}
M(F) := \prod_{\substack{1 \leq a \leq r \\ 1 \leq b \leq r}} \frac {1} 
{ F_{a,b}\,!}
\end{equation}
 and the functions 
\begin{multline}\label{eq_skf}
\widetilde{S}_{F,k}(z) := cof(A-F) \cdot \frac{I(F) \cdot M(F) }{2^{r-1}\cdot (1-16z^2)^{r-1}} \cdot \\ \left[ \frac{1}{r\,!} \sum_{\sigma \in D(k_1,\ldots,k_r)} 
\prod_{j=1}^r  \widetilde{K}(h_j,k_{\sigma (j)},\widetilde{h}(0,2,z))
\right]
\end{multline}
and (to simplify equations later)
\begin{multline}\label{eq_sckf}
\mathcal{T}_{F,k}(z) := cof(A-F) \cdot \frac{I(F) \cdot M(F) }{2^{r-1}\cdot (1-16z^2)^{r-1}} \cdot \\ \left[ \frac{1}{r\,!} \sum_{\sigma \in S_r} 
\prod_{j=1}^r  \widetilde{K}(h_j,k_{\sigma (j)},\widetilde{h}(0,2,z))
\right]
\end{multline} 
with the definitions
\begin{equation}
 \widetilde{K}(m,k,\omega) := \frac{(-1)^{m}\cdot m\,!\cdot K(m,k,\omega)}{(1 + \omega)^{m}}
\end{equation}
and 
\begin{equation}\label{eq_h0ti}
\widetilde{h}(0,2,z) := -\frac{1}{\pi}\left(\ln(1-16z^2)\right) +\frac{4}{\pi}\ln(2) -1
\end{equation}
\end{definition}
\begin{remark}
For $m \geq 0$ we have 
\begin{equation}\label{eq_komega}
\frac{\partial}{\partial \omega}\widetilde{K}(m,k,\omega) = \widetilde{K}(m+1,k,\omega)
\end{equation}
which is easily verified by basic algebra.
\end{remark}
\begin{lemma}\label{le_sfk}
For $F \in \tilde H_r$ and $z \rightarrow \frac{1}{4}$ the equation
\begin{equation}
\hat S_{F,k}(z) - \widetilde{S}_{F,k}(z) \sim o\left(\frac{1}{s^{r-1}}\cdot s \cdot \ln(s)^{K(F) + 1}\right)
\end{equation}
and the corresponding equation for $z \rightarrow -\frac{1}{4}$ are true and therefore $\widetilde{S}_{F,k}(z)$ contains the full information for the asymptotic behaviour in $n$ of the Taylor coefficients of $\hat S_{F,k}(z)$ around $z = 0$ with all corrections up to order $n^{r-3}\ln(n)^{K(F)+1}$.
\end{lemma}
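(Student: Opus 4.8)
The plan is to compare $\hat S_{F,k}(z)$ with $\widetilde S_{F,k}(z)$ near $z=\frac14$ through three successive replacements and then restate the result at the level of Taylor coefficients; evenness (Lemmata \ref{le_deven} and \ref{le_ceven}) will transport everything to a neighbourhood of $z=-\frac14$ for free. First I would put $\hat S_{F,k}$ into the shape of \eqref{eq_skf}. Since $K(F)=\sum_i h_i$, the factor $(1+h(0,2,z))^{-K(F)}$ occurring in $C_{F,k}(Y,z)$ splits as $\prod_j(1+h(0,2,z))^{-h_j}$ and is absorbed into the $K(h_j,k_{\sigma(j)},h(0,2,z))$ factors to produce $\widetilde K(h_j,k_{\sigma(j)},h(0,2,z))$; summing over $Y\in J_r$ and recalling $I_\Sigma(F)$ from Lemma \ref{le_integr} this gives
\begin{equation}
\hat S_{F,k}(z)=cof(A-F)\,M(F)\,I_\Sigma(F)\cdot\frac{1}{r!}\sum_{\sigma\in D(k_1,\ldots,k_r)}\prod_{j=1}^{r}\widetilde K\bigl(h_j,k_{\sigma(j)},h(0,2,z)\bigr),
\end{equation}
which is literally \eqref{eq_skf} with $I_\Sigma(F)$ in place of $\frac{I(F)}{2^{r-1}(1-16z^2)^{r-1}}$ and $h(0,2,z)$ in place of $\widetilde h(0,2,z)$.

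The second replacement handles the $Y$-summation: Lemma \ref{le_integr} gives $I_\Sigma(F)=s^{1-r}I(F)\bigl(1+o(s\ln(s)^{K(F)+1})\bigr)$ near $z=\frac14$, and the elementary identity $s=\frac1z-4=2(1-16z^2)\bigl(1+O(1-4z)\bigr)$ yields $s^{1-r}=\bigl(2^{r-1}(1-16z^2)^{r-1}\bigr)^{-1}\bigl(1+O(s)\bigr)$, hence $I_\Sigma(F)=\frac{I(F)}{2^{r-1}(1-16z^2)^{r-1}}\bigl(1+o(s\ln(s)^{K(F)+1})\bigr)$. The third replacement exchanges $h(0,2,z)$ for $\widetilde h(0,2,z)$ inside the $\widetilde K$ factors: by the $x=0$, $d=2$ instance of \eqref{eq_A3_even} one has $h(0,2,z)=\widetilde h(0,2,z)+O\bigl((1-16z^2)\ln(1-16z^2)\bigr)=\widetilde h(0,2,z)+O(s\ln s)$, while $\widetilde h(0,2,z)\sim-\frac1\pi\ln(1-16z^2)\to\infty$. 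Writing $G(\omega):=\frac{1}{r!}\sum_{\sigma}\prod_j\widetilde K(h_j,k_{\sigma(j)},\omega)$, the identity \eqref{eq_komega} shows that every $\omega$-derivative of $G$ is again a finite sum of products of $\widetilde K(\cdot,\cdot,\omega)$ with higher first argument, and together with the growth estimates for $K(m,k,\cdot)$ from \citep{dhoef1} this gives that $G$ and all its derivatives are bounded, indeed tend to $0$, as $\omega\to\infty$; the mean value theorem then yields $G(h(0,2,z))=G(\widetilde h(0,2,z))\bigl(1+O(s)\bigr)$.

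Combining the three replacements, $\hat S_{F,k}(z)=\widetilde S_{F,k}(z)\bigl(1+o(s\ln(s)^{K(F)+1})\bigr)$ near $z=\frac14$, and since $\widetilde S_{F,k}(z)=O\bigl(s^{1-r}\ln(s)^{c}\bigr)$ with $c\le 0$ (because $G(\widetilde h)\to 0$), the difference is $o\bigl(s^{1-r}\,s\ln(s)^{K(F)+1}\bigr)$; evenness gives the analogous estimate at $z=-\frac14$. Finally, both $\hat S_{F,k}$ and $\widetilde S_{F,k}$ are holomorphic on $D$ ($P_0=\emptyset$ for $d=2$ by Lemma \ref{A3}) with radius of convergence $\frac14$ at the origin, and the estimate above holds on the sectorial domains used in Lemma \ref{le_integr}, so the correspondence between the order of a singularity at $z=\pm\frac14$ and the polynomial–logarithmic order of the Taylor coefficients — the transfer argument already exploited throughout Section 7 — turns it into the bound $o(n^{r-3}\ln(n)^{K(F)+1})$ on the Taylor coefficients of $\hat S_{F,k}-\widetilde S_{F,k}$, which is the assertion.

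The delicate point is the third replacement. One must make the passage from $h(0,2,z)$ to its logarithmic asymptotics $\widetilde h(0,2,z)$ inside the $\widetilde K$ factors quantitative and check that it costs only a relative $O(s)$; this hinges on the fact, coming from \eqref{eq_komega}, that differentiating in $\omega$ strictly lowers the order of growth of $G$, so that the $\ln(s)^{K(F)+1}$ bookkeeping coming from Lemma \ref{le_integr} is not spoiled. The first two steps and the coefficient translation are routine once Lemma \ref{le_integr} is in hand.
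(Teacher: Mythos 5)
Your proof is essentially the same as the paper's, which is stated very briefly as: apply Lemma~\ref{le_integr} to handle the factor $I_\Sigma(F)$ coming from the $Y$-summation, use \eqref{eq_A3_even} to replace $h(0,2,z)$ by $\widetilde h(0,2,z)$ in the vertex factors, and invoke the evenness from Lemma~\ref{le_ceven} to transport the estimate from $z\to\tfrac14$ to $z\to-\tfrac14$. You carry out the same three steps, but supply the intermediate bookkeeping the paper leaves implicit (the absorption of $(1+h(0,2,z))^{-h_j}$ into $\widetilde K$, the identity $s=2(1-16z^2)(1+O(1-4z))$, the use of \eqref{eq_komega} to bound the error from the $\omega$-replacement, and the singularity-to-coefficient translation for the final assertion), which is a reasonable amount of detail to add.
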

\begin{proof} The Lemma for $z \rightarrow \frac{1}{4}$ follows from Lemma \ref{le_integr} for the factor $I_{\Sigma}(F)$ and equation \eqref{eq_A3_even} for replacing $h(0,2,z)$ with $\widetilde{h}(0,2,z)$ in the vertex factors in the definition of $ \hat S_{F,k}(z)$ in equations  \eqref{eq_sfk}, \eqref{eq_dfk2} and \eqref{eq_dfk1}\\
For $z \rightarrow -\frac{1}{4}$ we observe: From Lemma \ref{le_ceven} $C_{F,k}(Y,z)$ is even in $z$ and therefore also $\hat S_{F,k}(z)$. $\tilde S_{F,k}(z)$ is even in $z$ by definition.
So the Lemma follows also for $z \rightarrow -\frac{1}{4}$. 
\end{proof}
\begin{lemma}\label{le_momfunc}
We define the moment functions 
\begin{equation}
\hat T_k(z) = \underset{N \rightarrow \infty}{\lim}\left( \sum_{w \in W_{2N}}\left(\prod_{i = 1}^r N_{2k_i}(w) \right)\cdot \frac{z^{length(w)}}{lenght(w)} \right)
\end{equation}
and 
\begin{equation}\label{eq_deftk}
T_k(z) := z\frac{\partial}{\partial z} \hat T_k(z) = \underset{N \rightarrow \infty}{\lim}\left( \sum_{w \in W_{2N}}\left(\prod_{i = 1}^r N_{2k_i}(w) \right)\cdot z^{length(w)} \right)
\end{equation}
Then for $z \rightarrow 1/4$ the equation
\begin{equation}\label{eq_defsum}
\hat T_k(z) -  \left(\sum_{F \in \tilde H_k^{(M)}} \mathcal{T}_{F,k}(z) \right) \sim o\left(\frac{1}{s^{r-1} \ln(s)^M} \right)
\end{equation}
is true and a corresponding equation for $z \rightarrow -1/4$. Termwise differentiation of the asymptotic expansion \eqref{eq_defsum} in any order is allowed for $z \in U_{\frac{1}{2d}}(0)$ and interchangeable with asymptotic expansion.  
\end{lemma}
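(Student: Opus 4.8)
The plan is to reduce the statement to two facts already established for the $P$-moment generating functions of Theorem \ref{th_defp} --- the naive graph evaluation of Theorem \ref{th_nge} and the passage to the standard expansion of Lemma \ref{le_sfk} --- and to account separately for (i) the combinatorial difference between the ordinary product of multiple point ranges and the ``pairwise distinct marked point'' moments used in Theorem \ref{th_defp}, and (ii) the replacement of the sum over $D(k_1,\ldots,k_r)$ by the full sum over $S_r$ in the definition of $\mathcal{T}_{F,k}$.

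First I would expand $\prod_{i=1}^r N_{2k_i}(w)$ over the set partitions of $\{1,\ldots,r\}$. Writing each $N_{2k_i}(w)$ as a sum over lattice points of the indicator of having multiplicity $2k_i$, the product becomes a sum over all ordered $r$-tuples of points, which splits according to which of these points coincide. The partition into $r$ singletons contributes the generating function of ordered $r$-tuples of pairwise distinct marked points with the prescribed multiplicities; counting labelings of an unordered family shows this equals $c_k$ times the generating function $\hat S_k(z)$ of Theorem \ref{th_defp}, where $c_k$ is the order of the stabilizer of $(k_1,\ldots,k_r)$ under the permutation action of $S_r$ (for $r=2$ this is the factor producing the $(1-\tfrac12\delta_{k_1,k_2})$ of Lemma \ref{le_sec}). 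A partition with $m<r$ blocks contributes a generating function of the same type with only $m$ marked points --- the blockwise $k$-values being forced to agree --- hence, by Theorem \ref{th_defp} and Lemma \ref{le_integr} applied with $m$ marked points, a function holomorphic on $U_{\frac1{2d}}(0)$ of order $O(|s|^{-(m-1)})$ times a power of $\ln|s|$ near $z=\tfrac14$, which is $o(s^{-(r-1)}\ln(s)^{-M})$ for every $M$. In particular the limits defining $\hat T_k$ and $T_k$ exist on an open neighborhood of $\overline{U_{\frac1{2d}}(0)}\cap D$, and $\hat T_k(z)=c_k\,\hat S_k(z)$ up to an error absorbed into the right-hand side of \eqref{eq_defsum}.

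Next I would insert the expansions already available for $\hat S_k$. By Theorem \ref{th_nge}, $\hat S_k(z)=\sum_{F\in\tilde H_k^{(M)}}\hat S_{F,k}(z)+o(s^{-(r-1)}\ln(s)^{-M})$; by Lemma \ref{le_sfk} each $\hat S_{F,k}$ may be replaced by $\widetilde{S}_{F,k}$ at the cost of $o(|s|^{-(r-2)}\ln(|s|)^{K(F)+1})$, and since $\tilde H_k^{(M)}$ is finite with $K(F)\le M-r$ on it these errors are again $o(s^{-(r-1)}\ln(s)^{-M})$. Finally $c_k\,\widetilde{S}_{F,k}(z)=\mathcal{T}_{F,k}(z)$: this is exactly the orbit--stabilizer identity $c_k\cdot\frac1{r!}\sum_{\vartheta\in D(k_1,\ldots,k_r)}=\frac1{r!}\sum_{\sigma\in S_r}$ applied to the vertex product $\prod_{j=1}^r\widetilde{K}(h_j,k_{\vartheta(j)},\widetilde{h}(0,2,z))$, $D(k_1,\ldots,k_r)$ being a transversal of $S_r$ modulo the stabilizer of $k$. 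Combining these, and using Lemma \ref{le_ceven} together with the manifest evenness of each $\mathcal{T}_{F,k}$ in $z$ to transfer the estimate to $z\to-\tfrac14$, yields \eqref{eq_defsum}.

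For the last assertion I would use a Cauchy estimate. Both $\hat T_k$ and every $\mathcal{T}_{F,k}$ are holomorphic on $U_{\frac1{2d}}(0)$, so the remainder $R(z):=\hat T_k(z)-\sum_{F\in\tilde H_k^{(M)}}\mathcal{T}_{F,k}(z)$ is holomorphic there and equals $o(s^{-(r-1)}\ln(s)^{-M})$ uniformly as $z\to\pm\tfrac14$ within a sector contained in $U_{\frac1{2d}}(0)$. Applying Cauchy's formula on a circle of radius comparable to $|z-\tfrac14|$ lying in that sector, and using $s\sim-16(z-\tfrac14)$ near $z=\tfrac14$, shows $z\,\partial_z R(z)=o(s^{-r}\ln(s)^{-M})$, which is precisely the order obtained by differentiating the dominant terms $s^{-(r-1)}\ln(s)^a$ of $\sum_F\mathcal{T}_{F,k}(z)$ termwise; iterating the estimate gives the same for all higher derivatives, so differentiation commutes with the asymptotic expansion on $U_{\frac1{2d}}(0)$. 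I expect the main obstacle to be the bookkeeping of the partition expansion and the stabilizer factor --- checking that exactly the symmetrization $\frac1{r!}\sum_{\sigma\in S_r}$ emerges and that the coincidence terms together with the Lemma \ref{le_sfk} corrections genuinely fall below the target order --- whereas the analytic differentiation step is routine once holomorphy is in hand.
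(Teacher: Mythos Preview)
Your proposal is correct and follows essentially the same route as the paper. The paper invokes the definition of $P$ from \cite{dhoef1} to write $P(N_{2k_1},\ldots,N_{2k_r})=\frac{\#D(k_1,\ldots,k_r)}{r!}\prod_i N_{2k_i}+\text{(fewer factors)}$, whereas you unwind that definition explicitly via the set-partition expansion of $\prod_i N_{2k_i}$; the resulting stabilizer factor $c_k$ satisfies $c_k=r!/\#D(k_1,\ldots,k_r)$, so your identity $c_k\,\widetilde S_{F,k}=\mathcal T_{F,k}$ is exactly the paper's $\widetilde S_{F,k}=\frac{\#D}{r!}\mathcal T_{F,k}$. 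The only genuine difference is the last step: the paper argues termwise differentiability by computing $\mathcal T_{F,k}/\frac{d}{dz}\mathcal T_{F,k}$ directly, while you use a Cauchy estimate on shrinking circles---both are standard, and your version makes the uniformity requirement (validity of the $o(\cdot)$ bound on a full sectorial neighbourhood of $1/4$, not merely along a ray) more visible, which is a point the paper leaves implicit.
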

\begin{proof}
The function $T_k(z)$ as $S_k(z)$ is an even function in $z$ as the walks $w$ contributing to it are closed. The functions $\mathcal{T}_{F,k}(z)$ are also even following the same reasoning as in Lemma \ref{le_ceven}. We therefore can limit the discussion to $z \rightarrow 1/4$. From the definition of $P$ we know that  
\begin{multline}
P(N_{2k_1}(w),\ldots,N_{2k_r}(w)) = \frac{\#D(k_1,\ldots,k_r)}{r\,!}\left(\prod_{i=0}^r N_{2k_i}(w) \right) \\ + \textrm{terms with $r-1$ or less factors $N_{2k_j}(w)$ }
\end{multline}
For $\bar k$ containing $r-1$ or less factors $k_j$  $\hat S_{\bar k}(z)$ for $z \rightarrow 1/4$ is $o\left(\frac{1}{s^{(r-2)}} \right)$ according to Lemma \ref{le_integr} and Theorem \ref{th_nge}. So for $d=2$ we see that 
\begin{equation}
\hat S_k(z) - \frac{\#D(k_1,\ldots,k_r)}{r\,!}\cdot \hat T_k(z) \sim o\left(\frac{1}{s^{(r-2)}}\right)
\end{equation}
But on the other hand it is immediately clear that 
\begin{equation}
\tilde S_{F,k}(z) =  \frac{\#D(k_1,\ldots,k_r)}{r\,!}\cdot \mathcal{T}_{F,k}(z)
\end{equation}
and so the \eqref{eq_defsum} follows from Theorem \ref{th_nge} and Lemma \ref{le_sfk}. The functions $\mathcal{T}_{F,k}(z)$ are holomorphic on the convex open set $U_{\frac{1}{2d}}(0)$ and 
\begin{equation}
\frac{\mathcal{T}_{F,k}(z)}{\frac{d}{dz} \mathcal{T}_{F,k}(z)}  = \frac{1- 16 z^2}{8\cdot (r-1)}   \left( 1 + O \left(\frac{1}{\ln(1-16z^2)} \right) \right)
\end{equation}
and $T_k(z)$ is holomorphic on $U_{\frac{1}{2d}}(0)$ and therefore termwise differentiation is allowed and interchangable with asymptotic expansion.
\end{proof}
\subsection{Reducing the graphs to be summed over}
In this subsection we partition the set of all Eulerian graphs to the subset of the dam free graphs (graphs which do not have vertices of indegree $1$ and outdegree $1$), from which the others can be deduced both in the sense of graph theory and in the sense of the graph contributions in our calculations. This reduction is the key to getting the leading behaviour of the centralized moments. Centralizing the moments and summing over the dams leads to the important cancellations which result in a consistent rescaling in leading order for all centralized moments.
\begin{definition}\label{de_dam}
We define 
\begin{equation}
\tilde H := \underset{r \geq 2 }\cup \tilde H_r
\end{equation}
We now define graphs with dams (dam) and those with no dams (nd). 
\begin{equation}
\begin{array}{rcl}
\tilde H_{nd} &= & \{F \in \tilde H;  \forall_{j}: \sum_{i\geq1}F_{i,j} = h_j \neq 1\}\\
\tilde H_{dam} &= & \{F \in \tilde H;  \exists_{j}: \sum_{i\geq1}F_{i,j} = h_j = 1\}
\end{array}
\end{equation}
and therefore 
$\tilde H = \tilde H_{nd} \dot{\cup} \tilde H_{dam}$.\\
For $f \in \N\setminus\{0\}$ we define
\begin{equation}\label{eq_ff}
F_f := \bigl(\begin{smallmatrix}
0&f\\
f&0
\end{smallmatrix}\bigr)
\end{equation}
Then obviously $\tilde H_2 =\{F_f, f \in \N\setminus \{0\}\}$ and $\{F_1\} = \tilde H_2 \cap \tilde H_{dam}$
\end{definition}
\begin{definition}
Let $F \in \tilde H_{dam} \cap \tilde H_r$ be an $r \times r $ matrix with $r > 2$. Then there is a smallest index $i_0 = min(i: \sum_{j\geq1} F_{i,j} = 1)$. We define the $(r-1)\times (r-1)$ less dam matrix $F_{ld}$ by 
\begin{equation}\label{eq_ltpdef}
(F_{ld})_{i,j} := F_{i+ \theta(i,i_0),j+ \theta(j,i_0)} + F_{i+ \theta(i,i_0),i_0}\cdot F_{i_0,j+ \theta(j,i_0)}
\end{equation} 
where 
\begin{equation}
\theta(a,b) = 
\begin{cases} 0 \Longleftarrow a < b\\
1 \Longleftarrow a \geq b \end{cases} \\
\end{equation}
\end{definition}
\begin{lemma}\label{le_ltp}
Let $F \in \tilde H_{dam}\setminus \{F_1\}$ and $F_{ld}$ its less dam matrix. Then $F_{ld} \in \tilde H$ and 
\begin{equation}\label{eq_tpcof}
cof(A-F) = cof(A_{ld} - F_{ld})
\end{equation}
\end{lemma}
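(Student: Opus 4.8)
The plan is to translate the index-shift formula \eqref{eq_ltpdef} into a transparent ``contract the pass-through vertex'' description of $F_{ld}$, read off the structural properties needed for $F_{ld}\in\tilde H$ by a one-line summation, and then prove \eqref{eq_tpcof} by a short computation on the matrix $A-F$.

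First I would pin down the dam. Since $F\in\tilde H_{dam}\cap\tilde H_r$ and the only dam matrix in $\tilde H_2$ is $F_1$, which is excluded, we have $r\ge 3$; moreover $h_{i_0}=1$ together with balancedness forces a unique index $a$ with $F_{a,i_0}=1$ and a unique index $b$ with $F_{i_0,b}=1$, all remaining entries of the $i_0$-th row and column of $F$ vanishing, and $a\ne i_0\ne b$ because the diagonal of $F$ is zero. With this the extra term $F_{i+\theta(i,i_0),i_0}\cdot F_{i_0,j+\theta(j,i_0)}$ in \eqref{eq_ltpdef} equals $1$ exactly when the shifted indices are $a$ and $b$, and vanishes otherwise; hence $G(F_{ld})$ is obtained from $G(F)$ by deleting the vertex $i_0$ together with the edges $a\to i_0$ and $i_0\to b$ and inserting a single edge $a\to b$. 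When $a=b$ this inserted edge is a loop, which is then dropped in accordance with the convention announced in Section 8; since a loop changes $A$ and $F$ by the same amount on the diagonal, $A_{ld}-F_{ld}$ is insensitive to this deletion, so it plays no role in the cofactor identity and the generic case $a\ne b$ and the loop case can be handled on an equal footing.

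Membership $F_{ld}\in\tilde H$ then reduces to three checks. Nonnegativity and integrality of the entries are immediate from \eqref{eq_ltpdef}. For balancedness, summing \eqref{eq_ltpdef} over $j$ (equivalently over the unshifted columns $j'\ne i_0$) gives $\sum_{j'\ne i_0}F_{i',j'}+F_{i',i_0}\sum_{j'\ne i_0}F_{i_0,j'}=(h_{i'}-F_{i',i_0})+F_{i',i_0}\cdot h_{i_0}=h_{i'}$ because $h_{i_0}=1$, and the symmetric sum over rows yields the same value, so $F_{ld}$ has equal row and column sums at every vertex with $A_{ld}=diag((h_i)_{i\ne i_0})$. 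Finally $cof(A_{ld}-F_{ld})\ne 0$ will follow from \eqref{eq_tpcof} and the hypothesis $cof(A-F)\ne 0$, so everything comes down to that identity.

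For \eqref{eq_tpcof} I would work purely with $L:=A-F$ and $L_{ld}:=A_{ld}-F_{ld}$. Both have vanishing row sums and vanishing column sums (for $L$ by balancedness of $F$, for $L_{ld}$ by the sums just computed); a matrix with zero row and column sums has its adjugate equal to a scalar multiple of the all-ones matrix, so all of its cofactors coincide with the common value $cof(\,\cdot\,)$. Deleting row and column $i_0$ from $L$ leaves exactly the submatrix $L'$ of $L$ on the vertices $\ne i_0$, whence $cof(A-F)=\det L'$. On the other hand $L_{ld}=L'-E_{a,b}$, where $E_{a,b}$ is the rank-one matrix carrying a single $1$ in the reindexed position $(a,b)$, since the only change from $L'$ is that the new edge $a\to b$ subtracts $1$ in that slot. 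As $\det L_{ld}=0$, the matrix determinant lemma $\det(L'-E_{a,b})=\det L'-C_{a,b}(L')$, with $C_{a,b}$ denoting the $(a,b)$-cofactor, forces $\det L'=C_{a,b}(L')$; and since $L'$ and $L_{ld}$ agree off the entry $(a,b)$, which is erased in the $(a,b)$-minor, $C_{a,b}(L')=C_{a,b}(L_{ld})=cof(A_{ld}-F_{ld})$. Chaining these equalities gives $cof(A-F)=\det L'=cof(A_{ld}-F_{ld})$, i.e.\ \eqref{eq_tpcof}. I do not expect a genuine obstacle here: the only points that need care are the bookkeeping of the shift $\theta$ when passing from \eqref{eq_ltpdef} to the contraction picture, and the harmless loop produced when the edges incident to $i_0$ form a $2$-cycle $a\to i_0\to a$; the key simplification is the observation that, thanks to the zero row and column sums, one may evaluate $cof(A-F)$ by striking out precisely the dam's own row and column.
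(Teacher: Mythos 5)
Your proof is correct, and it takes a genuinely different route from the paper's. The paper evaluates $cof(A-F)$ as the $(i_1,j_1)$-cofactor $(-1)^{i_1+j_1}\det U_1$ (where $i_1=a$, $j_1=b$ in your notation), observes that in $U_1$ the row and column inherited from the dam index $i_0$ are elementary (a single $1$), deletes them by Laplace expansion to obtain a matrix $U_2$, separately computes $cof(A_{ld}-F_{ld})$ as $(-1)^{i_2+j_2}\det V_1$, and then argues $V_1=U_2$ by inspection; this requires tracking several signs through the index shifts. You instead start from the diagonal $(i_0,i_0)$-cofactor, so $cof(A-F)=\det L'$ with no sign, recast the contraction as the rank-one update $L_{ld}=L'-E_{a,b}$, and use the vanishing of $\det L_{ld}$ (from the row-sum zero property of the balanced Laplacian $L_{ld}$) together with the determinant lemma to extract $\det L'=C_{a,b}(L')=C_{a,b}(L_{ld})=cof(A_{ld}-F_{ld})$. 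The sign bookkeeping disappears because you never compare two explicit submatrices; the all-cofactors-equal property of zero-row-sum, zero-column-sum matrices does the work on both ends. Your balancedness argument is also slicker than the paper's case split into class~A and class~B, handling both via the identity $\sum_{j'\ne i_0}F_{i',j'}+F_{i',i_0}h_{i_0}=h_{i'}$ and noting that the class~A loop is dropped without affecting $A_{ld}-F_{ld}$ (the one delicate point you correctly flag). Both approaches are valid; yours is shorter and avoids the explicit submatrix identification and sign tracking, at the cost of invoking the adjugate structure of singular Laplacians, which the paper never needs to state.
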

\begin{proof} 
 We denote with $i_1$ the index for which $F_{i_1,i_0} = 1$ and with $j_1$ the index for which  $F_{i_0,j_1} = 1$ and define the notation that $i_0$ is a dam between $i_1$ and $j_1$. We say that a dam is of class A if $j_1 = i_1$ and of class B otherwise. 
 We define $i_2$ such that $i_2 + \theta(i_2,i_0) = i_1$ and $j_2 + \theta(j_2,i_0) = j_1$.
 We first proove that $F_{ld}$ is balanced. We do this seperately for class A and class B. \\ We  start with class A. $F$ was balanced. $F_{i_0,j} = 0$ for $j \neq i_1$  and $F_{i,i_0} =  0$ if $i \neq i_1$. So any row and column other than the $i_1 $ th of $F$ has the same sum  as the corresponding row and column of $F_{ld}$. The $i_1$ th row and column of $F$ both have a  sum which is bigger by $1$ than the corresponding one of $F_{ld}$ and so $F_{ld}$ is  balanced.\\
 For class B the rowsum of the $i + \theta(i,i_0)$ th row of $F$ is the same as that of the $i$th row  of $F_{ld}$, which is trivial for $i + \theta(i,i_0) \neq i_1$.  For the $i_2$th row of $F_{ld}$  because of  $F_{i_1,i_0} =1$ there is an additional $1$ in the rowsum of $F$ but there is also  an additional $1$ in the rowsum of the corresponding row of $F_{ld}$ as for $j=j_2$ we find  from equation \eqref{eq_ltpdef}
 \begin{equation}
 (F_{ld})_{i_2,j_2} = F_{i_1,j_1} +1 
 \end{equation}
 By the same argument the columnsum of corresponding columns of $F$ and $F_{ld}$ again is  the same and so $F_{ld}$ is balanced for class B.\\
 If we  delete the $i_1$ th row and $j_1$ column of $U = A - F$ we get a matrix $U_1$ which  has only one $1$ in the row inherited from the $i_0$ th row of $U$ before deleting the $i_1$  row and a column which only has the same $1$ and the rest $0$s which was  inherited from the  $i_0$ th column before deleting the $j_1$ th column of $U$. So this row and column can be  deleted without changing the  value of the cofactor according to Laplace. Let us call the matrix which  results from $U_1$ by this operation $U_2$. For the matrix $V=A_{ld} - F_{ld}$ we  delete the $i_2$th row and $j_2$th column and call the result $V_1$. We then see $V_1 =  U_2$ and so equation \eqref{eq_tpcof} is true as $cof(A_{ld} - F_{ld}) = \det(V_1)\cdot (-1)^{i_2 + j_2}$.  
\end{proof}
\begin{definition}
We define the mapping 
\begin{equation}
\begin{array}{rcccl}
g_{dam}: & \tilde  H_{dam} \setminus\{F_1\}  & \mapsto & \tilde H \\
&F & \mapsto & F_{ld} 
\end{array}
\end{equation}
and inductively the sets $\tilde H_1(F) := g_{dam}^{-1}(F)$ and $\tilde H_m(F) := g_{dam}^{-1}(H_{m-1}(F))$ \\
\end{definition}
\begin{lemma}\label{le_nontp}
For any matrix $F \in \tilde H_{dam}$ there is a number $m(F)$ and either a matrix $F_{nd} \in \tilde H_{nd}$ such that $F \in \tilde H_{m(F)}(F_{nd})$ or $F \in \tilde H_{m(F)}(F_1)$.   
\end{lemma}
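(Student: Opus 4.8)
The plan is to prove Lemma \ref{le_nontp} by induction on $r$, using the map $g_{dam}$ as a strictly decreasing measure on the number of vertices. First I would observe that $g_{dam}$ is only defined on $\tilde H_{dam}\setminus\{F_1\}$, and by construction (the less dam matrix $F_{ld}$ is $(r-1)\times(r-1)$ when $F$ is $r\times r$) it strictly decreases the size: if $F\in\tilde H_r$ with $r>2$ then $g_{dam}(F)\in\tilde H_{r-1}$. By Lemma \ref{le_ltp} we also know $g_{dam}(F)\in\tilde H$, so the map is well defined as a map into $\bigcup_{s\geq 2}\tilde H_s$. Hence starting from any $F\in\tilde H_{dam}$ we may iterate $g_{dam}$, and since the vertex count drops by one at each step we must, after finitely many steps, reach a matrix that is no longer in the domain of $g_{dam}$, i.e.\ a matrix lying in $\tilde H_{nd}$ or equal to $F_1$.

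Concretely, I would argue as follows. Fix $F\in\tilde H_{dam}$, say $F\in\tilde H_r$. If $r=2$ then $\tilde H_2\cap\tilde H_{dam}=\{F_1\}$ by Definition \ref{de_dam}, so $F=F_1$ and we may take $m(F)=0$, $\tilde H_0(F_1):=\{F_1\}$ (reading $\tilde H_0(G)=\{G\}$), which gives $F\in\tilde H_{0}(F_1)$. For $r>2$, set $F^{(0)}:=F$ and define $F^{(j+1)}:=g_{dam}(F^{(j)})$ as long as $F^{(j)}\in\tilde H_{dam}\setminus\{F_1\}$. Each application is legitimate because Lemma \ref{le_ltp} guarantees $F^{(j+1)}\in\tilde H$, and the number of vertices of $F^{(j+1)}$ is one less than that of $F^{(j)}$; the vertex count is at least $2$, so the process terminates after some $m:=m(F)$ steps. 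At termination either $F^{(m)}\in\tilde H_{nd}$, in which case we put $F_{nd}:=F^{(m)}$, or $F^{(m)}=F_1$. In either case, by the inductive definition of the preimage sets, $F^{(m-1)}\in g_{dam}^{-1}(F^{(m)})=\tilde H_1(F^{(m)})$, and peeling back one step at a time, $F=F^{(0)}\in\tilde H_{m}(F^{(m)})$, which is exactly one of the two asserted alternatives.

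The only genuine content to check is that the iteration never gets stuck before reaching $\tilde H_{nd}$ or $F_1$, and that a matrix in $\tilde H_2\cap\tilde H_{dam}$ other than $F_1$ cannot occur — both of which are immediate from Definition \ref{de_dam} ($\tilde H_2=\{F_f:f\in\N\setminus\{0\}\}$ and $\tilde H_2\cap\tilde H_{dam}=\{F_1\}$) together with Lemma \ref{le_ltp} ($g_{dam}$ maps into $\tilde H$). I would also note explicitly that $F^{(m)}$ need not be unique as an element of some $\tilde H_m(\cdot)$, but the statement only claims existence of one such $m(F)$ and one such $F_{nd}$ (or the $F_1$ alternative), so no uniqueness is needed.

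I do not expect a serious obstacle here: the lemma is essentially a bookkeeping statement that the dam-removal operation terminates, and all the real work — that $F_{ld}$ is again balanced, Eulerian, and has the same cofactor — has already been done in Lemma \ref{le_ltp}. The mild point requiring care is making the base case and the "reading" of $\tilde H_0$ precise so that the chain of preimages $\tilde H_1(F^{(m)}),\ \tilde H_2(\cdot),\ldots$ correctly reassembles to give $F\in\tilde H_{m(F)}(F_{nd})$ or $F\in\tilde H_{m(F)}(F_1)$; this is purely notational. Thus the proof is a short finite-descent argument built on Lemma \ref{le_ltp}.
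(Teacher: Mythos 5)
Your proof is correct and follows essentially the same finite-descent argument as the paper: iterate $g_{dam}$ (justified by Lemma \ref{le_ltp}) until the vertex count forces termination at either a dam-free matrix or $F_1$. You spell out the base case and the reassembly into $\tilde H_{m(F)}(\cdot)$ a bit more explicitly than the paper does, but the underlying idea is identical.
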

\begin{proof}
For any $r \times r$ matrix $F \in \tilde H_{dam}$ we apply $g_{dam}$ as often as possible (e.g. $m$ times). Then the matrix $g_{dam}^{(m)}(F)$ is either a matrix without dams, or a $2 \times 2$ matrix with dams and therefore $F_1$.   
\end{proof}
\begin{definition}
Let $F \in  ( \tilde H_{nd} \cup \{F_1\}) \cap \tilde H_r $  and $\tilde F \in \tilde H_k(F)$. For $j \in \{1,\ldots,r\}$ we define the function $ind(i_j,k) := j$ if $g_{dam}^{(k)}$ maps the remainder of the $i_j$ th column and row of $\tilde F$ onto the $j$ th column and row of $F$.  
\end{definition}
\begin{lemma}\label{le_step}
Let $F \in  ( \tilde H_{nd}  \cup \{F_1\}) \cap \tilde H_r $ and $\tilde F \in \tilde H_m(F)$. Then $\tilde F$ is uniquely defined by $m$ steps starting from $F$. \\
The $k$ th step is defined inductively by a number $i_k \in \{1,\ldots,r+k\}$  and a pair of numbers $(i,j)$, where $i,j \in \{1,\ldots,r+k-1\}$ are the indices $ind(\alpha_1,k-1)$ and $ind(\alpha_2,k-1)$ if the $k$ th step is a dam between $\alpha_1$ and $\alpha_2$ (not necessarily different). We also define $ind(i_k,k) := r+k$, the $k$ th added dam. For any $i \neq i_k$ we define $ind(i +\theta(i,i_k),k) = ind(i,k-1)$.
\end{lemma}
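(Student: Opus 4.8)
The plan is to prove the statement by induction on $m$, reading it as the assertion that every element of $\tilde H_m(F)$ is recovered from $F$ by a uniquely determined sequence of $m$ insertion steps of the stated form (and, conversely, that every admissible such sequence produces an element of $\tilde H_m(F)$). The base case $m=0$ is trivial: $\tilde F=F$, no step is taken, and $ind(\cdot,0)$ is the identity on $\{1,\dots,r\}$; here the hypothesis $F\in(\tilde H_{nd}\cup\{F_1\})\cap\tilde H_r$ enters only so that this base coincides with the bottom of the contraction chain supplied by Lemma~\ref{le_nontp}.

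For the inductive step I would start from $\tilde F\in\tilde H_m(F)=g_{dam}^{-1}(\tilde H_{m-1}(F))$, an $(r+m)\times(r+m)$ matrix, and set $\hat F:=g_{dam}(\tilde F)\in\tilde H_{m-1}(F)$, to which the induction hypothesis applies: $\hat F$ is reconstructed from $F$ by a unique sequence of $m-1$ steps, carrying the index functions $ind(\cdot,k)$ for $k=0,\dots,m-1$. It then suffices to show that the passage from $\hat F$ to $\tilde F$ is recorded by exactly one further step. First I would read off from the definition of $g_{dam}$ that $i_m:=\min\{i:\sum_{j\ge1}\tilde F_{i,j}=1\}$ is the position of the smallest-index dam of $\tilde F$ and that $\hat F=\tilde F_{ld}$ is obtained from $\tilde F$ by the contraction \eqref{eq_ltpdef} at $i_0=i_m$. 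Then I would single out the unique indices $i_1',j_1'$ with $\tilde F_{i_1',i_m}=1$ and $\tilde F_{i_m,j_1'}=1$, observe that deleting the $i_m$-th row and column sends them to the positions $i_1'-\theta(i_1',i_m)$ and $j_1'-\theta(j_1',i_m)$ of $\hat F$, and use the induction hypothesis to write these as $ind(\alpha_1,m-1)$ and $ind(\alpha_2,m-1)$ for vertices $\alpha_1,\alpha_2$ already fixed by the earlier steps. This pins down the $m$-th step $\bigl(i_m;(i,j)\bigr)$ with $i=ind(\alpha_1,m-1)$ and $j=ind(\alpha_2,m-1)$ uniquely in terms of $\tilde F$, and together with the induction hypothesis shows that $\tilde F$ determines, and is determined by, its full sequence of $m$ steps. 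Conversely, inserting into $\hat F$ a new vertex as the $i_m$-th row and column, placed on one copy of the edge from $ind(\alpha_1,m-1)$ to $ind(\alpha_2,m-1)$, recovers $\tilde F$ by Lemma~\ref{le_ltp} read backwards; comparing this with \eqref{eq_ltpdef} yields exactly the reindexing rule $ind(i_m,m):=r+m$ and $ind(i+\theta(i,i_m),m):=ind(i,m-1)$ for $i\neq i_m$, which completes the induction.

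The only genuine work I expect is the index bookkeeping: checking that ``inserting a dam at position $i_k$'' is \emph{precisely} inverse to ``contracting the smallest-index dam'', i.e.\ that the $\theta$-shifts of \eqref{eq_ltpdef} are reproduced on the nose and that the inserted vertex really is the smallest-index dam of the enlarged matrix, so that a single application of $g_{dam}$ undoes the insertion (this is also what makes a step sequence \emph{admissible} in the converse direction: each $i_k$ must not exceed the smallest dam position present after step $k-1$, while in the direction $\tilde F\mapsto$ steps both requirements hold automatically because $i_m$ is the smallest dam of $\tilde F$ by definition and the corresponding entry of $\hat F$ is then positive). A couple of routine well-definedness remarks remain — that subdividing any one of several parallel edges from $\alpha_1$ to $\alpha_2$ produces the same matrix, since the multidigraph records only multiplicities, and that $\hat F\in\tilde H$ after each step by Lemma~\ref{le_ltp}. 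Everything else is formal.
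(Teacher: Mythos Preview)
Your argument is correct and follows the same idea as the paper's proof: both unwind the chain $\tilde F\mapsto g_{dam}(\tilde F)\mapsto\cdots\mapsto F$ and observe that each application of $g_{dam}$ records (and is inverted by) exactly one insertion step, with the index shifts dictated by \eqref{eq_ltpdef}. The paper's proof is essentially a two-sentence remark that the indexing is consistent with $g_{dam}$; your version spells out the induction and the bookkeeping (in particular the admissibility constraint that $i_k$ be the smallest dam index so that $g_{dam}$ really inverts the insertion) far more carefully than the paper does.
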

\begin{proof}
Any dam is inserted between two uniquely defined dimensions $\alpha_1$ and $\alpha_2$. We have just given them a unique index which does not change with the application of $g_{dam}$. The definition of the index is obviously consistent with $g_{dam}$.
\end{proof}
\begin{definition}
Let $F \in  ( \tilde H_{nd}  \cup \{F_1\}) \cap \tilde H_r $ and $\tilde F \in \tilde H_m(F)$. Inductively we define a label function $lab$ on $i \in \{r+1,\ldots,r+m\}$ in the following way: If in the $k$ th step in Lemma \ref{le_step} $i_k$ was a dam of class A between $\alpha_1$ and $\alpha_1$, then we define $lab(r+k) := (ind(\alpha_1,k-1),ind(\alpha_1,k-1))$.\\ 
If the dam was of class $B$ between $\alpha_1$ and $\alpha_2$ we define $lab(r+k) := (ind(\alpha_1,k-1),ind(\alpha_2,k-1))$ if $ind(\alpha_1,k-1) \leq r$ and $ind(\alpha_2,k-1) \leq r$, i.e. the dam is between two dimensions of $F$ loosely speaking. In case  $ind(\alpha_1,k-1) > r$ or $ind(\alpha_2,k-1) > r$  we define $lab(r+k) := lab(max(ind(\alpha_1,k-1), ind(\alpha_2,k-1))).$
\end{definition}
\begin{lemma}\label{le_label}
Let $F \in (\tilde H_{nd}  \cup \{F_1\})\cap \tilde H_r $ and $\tilde F \in \tilde H_m(F)$. Let $G(\tilde F)$ be the multidigraph which has the indices $ind(i,m)$ as vertices. Then one of the following alternatives is true:  
\begin{enumerate}
\item if $lab(r+k) = (i,j)$ with $i \leq r$ and $j \leq r$ then the vertex $r + k$ is on a subgraph which is a path graph from $i$ to $j$ and all the vertices of this subgraph also have the label $(i,j)$
\item if $lab(r+k) = (i,i)$ then the vertex $r+k$ is on a subgraph which is a cycle starting and ending in $i$ and all vertices of this subgraph other than $i$ also have the same label $(i,i)$. If $i >r$ then $r+k  > i$.
\end{enumerate}
Moreover  if there is an edge between vertex $r+k$ and vertex $j$ and $lab(r+k) \neq lab(j)$ then if $r +k > j$ we have $lab(r+k) = (j,j)$ or (not exclusive) $j <  r$ and $lab(r+k) = (a,b)$ with $a = j$ or $b=j$.
\end{lemma}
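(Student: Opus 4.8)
The plan is to induct on the number $m$ of dam–insertion steps that produce $\tilde F$ from $F$, carrying the full assertion (both alternatives and the ``moreover'' clause) through the induction. For $m=0$ the graph $G(\tilde F)=G(F)$ has no dam vertices and there is nothing to show. For the inductive step I would take $\tilde F\in\tilde H_m(F)$ and write it, via Lemma \ref{le_step}, as the result of applying to some $\tilde F'\in\tilde H_{m-1}(F)$ one further step that inserts the dam with canonical index $r+m$. The first thing to record is a purely graph–theoretic reading of a single step on $G(\tilde F')$: a class-B step subdivides a single edge $u\to v$ of $G(\tilde F')$ into $u\to(r+m)\to v$, and a class-A step attaches a pendant $2$-cycle $\{u\to(r+m),(r+m)\to u\}$ at a vertex $u$; in both cases the canonical indices of the old vertices are unchanged and no edge other than the one(s) displayed is affected.

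Granting this and unwinding the definition of $lab$, the inductive step splits into three verifications in $\tilde F$. The first is that \emph{every old dam vertex still satisfies alternative 1 or 2}: a class-A step subdivides nothing and a class-B step subdivides only $u\to v$, so each path/cycle substructure of $\tilde F'$ is either left untouched or turned into a longer path, or a longer cycle, with the same pair of endpoints or the same attachment vertex; its dam vertices keep their common label, so alternatives 1 and 2 persist, and the inequality ``$r+k>i$ whenever the cycle label is $(i,i)$ with $i>r$'' is preserved because $r+m$ exceeds every earlier index.

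The second verification is that \emph{the new vertex $r+m$ satisfies alternative 1 or 2}: for a class-A step one has $lab(r+m)=(ind(u),ind(u))$ and $r+m$ lies on the new $2$-cycle at $ind(u)$, which is alternative 2, with $r+m>ind(u)$ automatic; for a class-B step with $ind(u),ind(v)\le r$ one gets $lab(r+m)=(ind(u),ind(v))$ and the subdivided edge is now a length-$2$ path between two original vertices, which is alternative 1; and for a class-B step with $ind(u)>r$ or $ind(v)>r$ one has $lab(r+m)=lab(\max(ind(u),ind(v)))$, so $r+m$ is spliced into exactly the substructure of the larger-indexed endpoint, and here the ``moreover'' part of the induction hypothesis identifies the other endpoint either as an endpoint of that path or as the attachment vertex of that cycle, which is what makes the subdivision extend a substructure of the correct type with the correct inherited label. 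The third verification, that \emph{the ``moreover'' clause holds in $\tilde F$}, then follows: every edge of $\tilde F$ except the one or two new ones is an edge of $\tilde F'$ between vertices of unchanged label, hence covered by the induction hypothesis, while for each new edge, which joins $r+m$ to $u$ (and to $v$), the required form of $lab(r+m)$ is read straight off the formula just established, once more invoking the ``moreover'' hypothesis when $u$ or $v$ is an old dam.

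The step I expect to be the genuine obstacle is precisely the class-B case in which the subdivided edge is incident to a previously inserted dam: one has to argue that such an edge is necessarily an ``end edge'' of that dam's path substructure or the ``neck edge'' of its cycle substructure, for otherwise the subdivision would not extend a single substructure coherently, and this is exactly the point at which the ``moreover'' assertion is simultaneously consumed and regenerated. The lemma is arranged so that this bootstrap closes, but only if the two alternatives, the index inequalities, and the adjacency clause are all kept in the induction hypothesis at the same time; disentangling them and then re-establishing all three after a single step is the delicate part of the write-up.
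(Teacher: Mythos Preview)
Your proposal is correct and follows essentially the same route as the paper: induction on $m$, splitting the inductive step into the class-A and class-B cases, and in the class-B case using the ``moreover'' clause of the induction hypothesis to pin down the role of the smaller-indexed endpoint when the subdivided edge meets a previously inserted dam. The paper's write-up is terser (it starts at $m=1$ and leans heavily on the word ``obviously''), but the logical skeleton---including the bootstrap between alternatives 1--2 and the adjacency clause that you correctly single out as the delicate point---is the same.
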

\begin{proof} 
By induction. For $m = 1$ the Lemma is obviously true. Let it now be true for $m-1$. If in the $m$ th step we have a dam of class A, then obviously the Lemma again is true as the new vertex $r+m$ gets a label $(i,i)$ for some $i < r+m$. If in the $m$th step we have a dam of class $B$ inserted into an edge between the vertices $i$ and $j$ (assuming $i \geq j$) then:
\begin{enumerate}
\item if $i \leq r$ and $j \leq r$ then the Lemma is obviously true as $lab(r+m) = (i,j)$.
\item if $i > r$ and $lab(i) = (a,b)$ with $a \leq r$ and $b \leq r$ then according to induction:
Either $lab(j) = (a,b)$ and therefore both are on a subgraph which is a path between $a$ and $b$ and therefore $r+m$ which gets the label $(a,b)$ too fulfils the Lemma. 
Or $j < r$ and therefore $b = j$ according to induction and $lab(r+m) = (a,b)$ and again the Lemma is true.
\item the case $i \leq r$ and $j > r$ was exclude by the assumption $i \geq j$ (it is equivalent to the last case).
\item if $i > r$ and $lab(i) = (c,c)$ and $lab(i) \neq lab(j)$ then according to induction $lab(i) = (j,j)$ or $i = j$. But then $lab(r+m) = (j,j)$ and obviously again $r+m$ is on a subgraph which is a cycle with starting and ending point $j$ and the correct labels according to the Lemma. The case $lab(j) = lab(i)$ obviously fulfils the Lemma too.
\end{enumerate}
\end{proof}
\begin{definition}
Let $F \in \tilde (H_{nd} \cup \{F_1\}) \cap \tilde H_r $ and $\tilde F \in H_m(F)$. For $F \neq F_1$ we define a digraph $Tr(\tilde F) = (V,E)$ in the following way: Let $V = \{0,1,\ldots,m\}$. We define $e \in E$ by the following rule: If in $G(\tilde F)$ a vertex $r+k$ has  $lab(r + k) = (i,j)$ with $i \leq r$ and $j \leq r$ then there is and edge from $0$ to $k$ in $Tr(\tilde F)$. If a vertex $r+k$ has $lab(r+k) = (i,i)$ with $i \leq r$ then there is also a vertex from $0$ to $k$. If a vertex $r+k$ has $lab(r+k) = (i,i)$ with $i > r$ then there is an edge from $i-r$ to $k$. \\
For $F = F_1$ we define a  digraph $Tr(\tilde F) = (V,E)$ in the following way: Let $V = \{1,\ldots,m+2\}$.We define $e \in E$ by the following rules:  There is an edge from vertex $1$ to vertex $2$ in $Tr(\tilde F)$. If in $G(\tilde F)$ a vertex $k+2$ has $lab(k+2) = (i,j)$ with $i \leq 2$ and $j \leq 2$ then there is an edge from $1$ to $k+2$ in $Tr(\tilde F)$. If a vertex $k+2$ has $lab(k + 2) = (i,i)$ then there is a vertex from $i$ to $k+2$.\\
\end{definition}
\begin{lemma}\label{le_trlab}
$Tr(\tilde F)$ is a tree. 
\end{lemma}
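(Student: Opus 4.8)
The plan is to argue exactly as in the earlier proof that $G_S$ is a tree: show that $Tr(\tilde F)$ is connected and has precisely $\#V-1$ edges, which for a finite graph forces it to be a tree. Both properties will come out of two structural observations about the edge rule that defines $Tr(\tilde F)$.

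The first observation is that every vertex of $Tr(\tilde F)$ other than its distinguished root carries exactly one incoming edge. For $F\neq F_1$ the root is the vertex $0$, and for each $k\in\{1,\ldots,m\}$ the value $lab(r+k)$ is, by its inductive construction together with Lemma \ref{le_label}, of exactly one of the two forms: either $(i,j)$ with $i\leq r$ and $j\leq r$, or $(i,i)$ with $i>r$. (A class-A step always produces a pair $(i,i)$; a class-B step produces $(i,j)$ with $i,j\leq r$ or else inherits the label of $\max(ind(\alpha_1,k-1),ind(\alpha_2,k-1))$, which by induction on the number of dam insertions is already of one of those two forms, and the two forms are mutually exclusive.) The first form triggers exactly the edge $0\to k$, the second exactly the edge $(i-r)\to k$, with $i-r$ a genuine vertex because $i$ being a dam vertex of $G(\tilde F)$ forces $r<i\leq r+m$ while Lemma \ref{le_label} forces $i<r+k$, whence $1\leq i-r\leq k-1\leq m$. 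Thus no vertex receives two edges and vertex $0$ receives none, so $\#E=m=\#V-1$. For $F=F_1$ the argument is the same, with root the vertex $1$, the edge $1\to 2$ counted separately, and the dam vertices $3,\ldots,m+2$ each carrying one incoming edge, so again $\#E=\#V-1$.

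The second observation is that every edge of $Tr(\tilde F)$ runs from a strictly smaller to a strictly larger vertex index. The edges issuing from the root clearly have this property; and for an edge $(i-r)\to k$ arising from $lab(r+k)=(i,i)$ with $i>r$, Lemma \ref{le_label} gives precisely $r+k>i$, i.e.\ $k>i-r$ (and likewise $k+2>i$ in the $F=F_1$ branch). Combining the two observations: starting from any vertex and repeatedly passing to the source of its unique incoming edge produces a strictly decreasing sequence of vertex indices, which must terminate at the root; hence every vertex is joined to the root by a path, $Tr(\tilde F)$ is connected, and together with $\#E=\#V-1$ it is a tree.

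The only place that needs real care is the first observation — verifying that the label dichotomy is genuinely exhaustive and that the two clauses of the edge rule are mutually exclusive — which is a routine induction on the number of dam insertions, leaning precisely on the part of Lemma \ref{le_label} controlling edges of $G(\tilde F)$ between vertices of unequal labels. Beyond that bookkeeping I do not expect any deeper obstacle; the substance of the statement is just that the ``parent'' map sending a vertex to the source of its unique incoming edge is a well-defined, index-decreasing map onto a single-rooted structure.
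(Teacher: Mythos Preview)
Your proposal is correct and follows essentially the same approach as the paper: both show that every non-root vertex has exactly one incoming edge (hence $\#E=\#V-1$) and that the graph is connected. The only cosmetic difference is that the paper establishes connectedness by a one-line induction on $m$ (each new dam vertex is connected to an earlier one), whereas you phrase the same fact via the index-decreasing parent map; these are the same argument in different clothing.
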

\begin{proof}
The graph $Tr(\tilde F)$ is connected by induction: For $m = 1$ this is trivial. Putting in a new dam in the $m$ th step connects the vertex $m$ of $Tr(\tilde F)$ with a vertex $k \in \{0,\ldots,m-1\}$ for $F \neq F_1$ and $k \in \{1,\ldots,m+1\}$ for $F = F_1$  determined by the label of the vertex $r+m$ in $G(\tilde F)$. Therefore by induction the vertex  $m$ of $Tr(\tilde F)$ is connected  with a connected graph.\\
For $F \neq F_1$ any vertex other than $0$ and for $F = F_1$ any vertex other than $1$ has just one ingoing edge well defined by the label. Therefore $\#E = \#V -1$ and as $Tr(\tilde F)$ is connected it is a tree.   
\end{proof}
\begin{definition}
We define: \\
A path dressing of an edge $e$ of a graph with an ordered sequence of vertices $v_1,\ldots,v_k$ (which are not vertices of the graph) is the replacement of $e$ from vertex $v_a$ to vertex $v_b$ in the graph with new edges $e_1,\ldots,e_{k+1}$ such that $e_i$ is an edge from $v_{i-1}$ to $v_i$ for $i=2,\ldots,k$ and $e_1$ is an edge from $v_a$ to $v_1$ and $e_{k+1}$ is an edge from $v_k$ to $v_b$. For completeness sake we define the dressing of an edge $e$ with the empty sequence as not changing anything.\\
A cycle dressing of a vertex $v_a$ of a graph with an ordered sequence of vertices $v_1,\ldots,v_k$ (which are not vertices of the graph) is defined as the addition of a cycle graph which starts and ends in $v_a$ with vertices $v_1,\ldots,v_k$ in this order. Again a cycle dressing of a vertex $v_a$ with the empty sequence is just defined as not changing anything.\\
A dressing of vertex $v_a$ with nonoverlapping sequences of vertices is defined as the combined cycle dressing of $v_a$ with the  sequences.  \\
A path dressing of a graph with nonoverlapping (not necessarily nonempty) sequences of vertices for each edge is defined as the combined path dressing of the edges with the corresponding sequences.\\
A dressing of a graph with ordered (not necessarily nonempty) sequences of vertices for each vertex of the graph (more than one nonempty sequence allowed for a vertex) and (not necessarily nonempty) sequences of vertices for each edge (only one nonempty sequence allowed for an edge) is defined as the combined vertex and path dressing of those sequences.  The sequences are supposed to be nonoverlapping. \\
\end{definition}
\begin{definition}
Let first $G$ be a graph which is not an isolated vertex and $\{v_1,\ldots,v_m\}$ a set of vertices which are not vertices of the graph. Let $T$ be a tree with  the vertices $0,\ldots,m$.
Let us have a partitioning of the set $\{v_1,\ldots,v_m\}$ into $m + 1 $ subsets $A_k$ with $\#A_k = deg(k) -1 + \delta_{k,0}$ where $deg(k)$ here is the degree of vertex $k$ in the Tree $T$.\\
If $G$ is an isolated vertex $v_1$ on the other hand let there be a set of other vertices $\{v_2,\ldots,v_{m+2}\}$. Let $T$ in this case be a tree with the vertices $1,\ldots,m+2$. 
Let us have a partitioning of the set $\{v_2,\ldots,v_m+2\}$ into $m + 2 $ subsets $A_k$ with $\#A_k = deg(k) -1 + \delta_{k,1}$  where $deg(k)$ here is the degree of vertex $k$ in the Tree $T$. Let also $v_2 \in A_1$.
\\ 
  Let each of the subsets $A_k$ be completely partitioned into ordered sequences. We then call the graph $\tilde G$ of  $G$ hierarchically dressed with these ordered sequences according to $T$ if we do the following steps inductively with G:
\begin{enumerate}
\item If $G$ is not an isolated vertex dress $G$ with the sequences belonging to the subset $A_0$. If $G$ is an isolated vertex $v_1$ dress it with the sequences belonging to the subset $A_1$. 
\item dress the resulting graph in the vertex $v_i$  ($v_i \neq v_1$ for $G$ being the isolated vertex) with the seqences belonging to the subset $A_i$. 
\item continue the last step with the resulting graph and the vertices $v_l$ added in the step before till only empty sets $A_l$ are left. 
\end{enumerate}  
\end{definition}
\begin{lemma}\label{le_hmchar}
Let $F \in \tilde H_{nd} \cap \tilde H_r$. Then any $\tilde F \in H_m(F)$ is completely characterized by the following elements: 
\begin{enumerate}
\item a selection of dimensions $i_1,\ldots,i_m$ out of $r+m$ dimensions as dimensions of the dams.
\item A tree $T$ on the set $\{0,\ldots,m\}$.
\item A hierarchical dressing of $G(F)$ according to $T$ with a partitioning of the set $\{i_1,\ldots,i_m\}$ as set of new vertices.
\end{enumerate} 
I.e. any set of selected dimensions, a tree of the given form and a hierarchical dressing results in a graph $G(\tilde F)$ with $\tilde F \in H_m(F)$ and any $\tilde F \in H_m(F)$ leads to a set of selected dimensions, a tree $T$ of the given form and a hierarchical dressing according to $T$.  
\end{lemma}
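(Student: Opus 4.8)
The plan is to prove the two halves of the asserted bijection — that any admissible triple (selected dimensions, tree $T$, hierarchical dressing) produces some $\tilde F \in \tilde H_m(F)$, and that every $\tilde F \in \tilde H_m(F)$ arises from exactly one such triple — simultaneously by induction on $m$. The base case $m=0$ is immediate: $\tilde H_0(F)=\{F\}$, the only tree on $\{0\}$ is trivial, the only partition of $\emptyset$ is empty, and the empty hierarchical dressing leaves $G(F)$ unchanged. Throughout I would use that $F \in \tilde H_{nd}\cap\tilde H_r$ has $G(F)$ a graph on $r\ge 2$ vertices each of degree $\ge 2$, so the "isolated vertex" branch of the dressing definition never intervenes.

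For the inductive step, the forward direction ($\tilde F \mapsto$ data) rests on the structural lemmata already established. By Lemma \ref{le_step} every $\tilde F \in \tilde H_m(F)$ is built by $m$ dam-insertion steps, recording a sequence of dimension-indices $i_1,\ldots,i_m$; among the $r+m$ dimensions of $\tilde F$ these single out the $m$ dam-dimensions, which is the first datum. By Lemma \ref{le_label} every added vertex $r+k$ in $G(\tilde F)$ carries a label that places it on a directed path between two vertices of $G(F)$, or on a directed cycle based at some vertex (possibly an added vertex of strictly smaller index); by Lemma \ref{le_trlab} the graph $Tr(\tilde F)$ is a tree on $\{0,\ldots,m\}$, which is the second datum $T$. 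The third datum is read off from the \emph{geometric order} in which the dam vertices sit along each path/cycle of $G(\tilde F)$: this orders the fibre of added vertices over each vertex $k$ of $T$ into sequences. The point to verify is that this is precisely a hierarchical dressing of $G(F)$ according to $T$ — namely that the children of $k$ in $T$ are exactly the added vertices whose path or cycle lies on the subgraph created at step $k$, so that $\#A_k = deg_T(k)-1+\delta_{k,0}$, with the sequences in $A_k$ being the maximal runs of equally-labelled dam vertices lying directly on that subgraph; all of this is dictated by the label rules of Lemma \ref{le_label}.

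For the converse, given a selection of dam-dimensions, a tree $T$ on $\{0,\ldots,m\}$, and a hierarchical dressing, I would pick a leaf $\ell\neq 0$ of $T$ (one exists since $m\ge1$). The added vertex $v_\ell$ has $A_\ell=\emptyset$, so it is never cycle-dressed and hence has in- and out-degree $1$ in $G(\tilde F)$: it is a removable dam, and it is the last entry of its sequence. Applying the edge-lifting / loop-removal operation underlying $g_{dam}$ to $v_\ell$ — whose effect on the adjacency matrix is exactly \eqref{eq_ltpdef} and which by Lemma \ref{le_ltp} preserves membership in $\tilde H$ and the cofactor — yields the graph of the hierarchical dressing of $G(F)$ according to $T\setminus\{\ell\}$ with $v_\ell$ deleted from its sequence and from the selection; by the inductive hypothesis this matrix lies in $\tilde H_{m-1}(F)$. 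Since this applies to \emph{every} leaf of $T$, and since the leaves of $T$ are exactly the removable dams of $\tilde F$ (an added vertex is degree $2$ iff its $T$-degree is $1$), the dam that $g_{dam}$ actually removes from $\tilde F$ is in particular a leaf, so $g_{dam}(\tilde F)\in\tilde H_{m-1}(F)$ and hence $\tilde F\in g_{dam}^{-1}(\tilde H_{m-1}(F))=\tilde H_m(F)$. Dually, starting from $\tilde F\in\tilde H_m(F)$, set $\tilde F'=g_{dam}(\tilde F)$; by induction $\tilde F'$ comes from data for $m-1$, and the dam $g_{dam}$ removed — whose location between indices $i_1,j_1$ is recorded in the construction of $g_{dam}$ — is, by Lemma \ref{le_label} applied to $\tilde F$, attached via its label to a unique vertex of $Tr(\tilde F')$ and to a unique slot (the end of an existing sequence, or a new singleton), so adjoining it gives a tree on $\{0,\ldots,m\}$, an enlarged selection, and an extended dressing recovering $\tilde F$. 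Injectivity of the correspondence is then automatic: the selection, the tree $Tr(\tilde F)$, and the path/cycle orders in $G(\tilde F)$ are all determined by $\tilde F$, while the dressing construction is deterministic given its data.

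I expect the main obstacle to be the bookkeeping that makes "applying $g_{dam}$ to a leaf vertex" literally coincide with "deleting that vertex from the hierarchical dressing": one must track the index shifts encoded by $\theta$ in \eqref{eq_ltpdef}, separate dams of class A (loop-creating, i.e.\ cycle-dressings) from those of class B (path-dressings), and check that removing a leaf never severs a path or cycle in a way that leaves the dressing framework — all routine given Lemmata \ref{le_ltp}, \ref{le_label}, \ref{le_trlab}, but requiring care. A secondary point is to confirm that the order in which leaves are stripped is irrelevant, so that the deterministic "least index" choice made by $g_{dam}$ does not affect the outcome; this follows because distinct leaves of $T$ correspond to vertex-disjoint removable dams whose removals commute.
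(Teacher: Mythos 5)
Your proof is correct and follows essentially the same route as the paper's: induction on $m$, using the leaf structure of $T$ together with Lemmata \ref{le_step}, \ref{le_label}, \ref{le_trlab} to read the data off $\tilde F$, and stripping a leaf via $g_{dam}$ to descend the induction in the direction data $\to$ graph. The only cosmetic difference is that you first show \emph{any} leaf can be stripped and then note the minimal-index dam is one such leaf, whereas the paper directly selects the leaf with minimal corresponding dam index and argues that it must be the one $g_{dam}$ removes (because non-leaf vertices of $T$ have degree larger than $1$ in $G(\tilde F)$); your added remark on commutativity of disjoint leaf removals is unnecessary for this but harmless.
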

\begin{proof}
We first of all prove that the selection of the dimensions, a tree $T$ of the given form and a hierarchical dressing results in a graph $G(\tilde F)$ with $\tilde F \in H_m(F)$. We prove this by induction in $m$. For $m=1$ it is trivially true. So let $m > 1$. The tree $T$ contains one or more leafs. Each leaf in $T$ by construction corresponds to a vertex in the graph $G(\tilde F)$ which is a dam. We choose the leaf $k$  of $T$ such that the corresponding vertex $i_k$ of $G(\tilde F)$ is minimal among all leafs of $T$. Then this $i_k$ is minimal among all dimensions of $\tilde F$ such that the rowsum $h_{i_k} = 1$ as by construction any vertex $p$ in the tree $T$ which is not a leaf corresponds with a vertex $i_p$ in $G(\tilde F)$ which has at least one cycle added and is in a cycle or a path subgraph which was added to $G(F)$ and therefore has a degree which is bigger than $1$ by construction. So $g_{dam}(\tilde F)$ is a matrix with row and column $i_k$ removed. To it belongs  a tree $\hat T$ which after the change of the indices according to $g_{dam}$ has the leaf corresponding to $i_k$ removed and a hierarchical dressing according to the new data. Therefore according to induction $g_{dam}(\tilde F) \in H_{m-1}(F)$. \\
On the other hand with Lemma \ref{le_label} and \ref{le_trlab} for any $\tilde F \in H_m(F)$ we have defined a choice of dimensions and constructed the additional cycle and path graph additions to $G(F)$ and the tree $T$ such that the Lemma is true. With this Lemma we have completely characterized $H_m(F)$ because any choice of dam dimensions, a tree $T$ and dressing with ordered sequences defines a different matrix $\tilde F$. 
\end{proof}
\begin{lemma}\label{le_f1hmchar}
Let $F = F_1$. Then any $\tilde F \in H_m(F)$ is completely characterized by the following elements: 
\begin{enumerate}
\item A tree $T$ on the set $\{1,\ldots,m+2\}$.
\item A hierarchical dressing of the point graph with vertex $1$ according to $T$ with a partitioning of the set $\{2,\ldots,m+2\}$ as set of new vertices.
\end{enumerate} 
I.e. any tree of the given form and a hierarchical dressing results in a graph $G(\tilde F)$ with $\tilde F \in H_m(F_1)$ and any $\tilde F \in H_m(F_1)$ leads to a tree $T$ of the given form and a hierarchical dressing according to $T$. 
\end{lemma}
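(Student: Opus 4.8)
The plan is to run the same two-sided induction on $m$ that was used for Lemma \ref{le_hmchar}, adapted to the one structural peculiarity of the present situation: the ``base graph'' $G(F_1)$ is not dam-free — both of its vertices have indegree and outdegree $1$ — which is exactly why $F_1$ is a fixed point of $g_{dam}$ and must be treated as its own base case. The guiding picture is that the edge $1 \to 2$ which is always present in $Tr(\tilde F)$ marks one of the two edges of the directed $2$-cycle $G(F_1)$ as inert, while the other edge together with the vertices $1$ and $2$ is what gets hierarchically dressed; this is precisely what the isolated-vertex clause in the definition of hierarchical dressing (with its requirement $v_2 \in A_1$) encodes. Note that, unlike in Lemma \ref{le_hmchar}, there is no separate ``selection of dam dimensions'' to record, because the roles of the vertices $1$ and $2$ are pinned down by that always-present edge of $Tr(\tilde F)$.

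First I would prove that a tree $T$ on $\{1,\dots,m+2\}$ together with a hierarchical dressing of the point graph on vertex $1$ produces a graph $G(\tilde F)$ with $\tilde F \in H_m(F_1)$, by induction on $m$. For $m=0$ the only dressing is the empty one and we recover $G(F_1)$ itself. For $m>0$ one picks a leaf $k$ of $T$ whose corresponding vertex in $G(\tilde F)$ is of minimal index; arguing exactly as in Lemma \ref{le_hmchar} by means of Lemma \ref{le_label}, this vertex has indegree and outdegree $1$, hence minimal row-sum $1$, so $g_{dam}(\tilde F)$ is defined, and after the index shift it corresponds to $T$ with the leaf $k$ removed together with the induced hierarchical dressing; by induction $g_{dam}(\tilde F) \in H_{m-1}(F_1)$, so $\tilde F \in H_m(F_1)$. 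Conversely, given $\tilde F \in H_m(F_1)$, I would invoke Lemmata \ref{le_label} and \ref{le_trlab}: the label function organizes $G(\tilde F)$ into cycle- and path-subgraphs hanging off vertex $1$ (or off previously placed dams), $Tr(\tilde F)$ is a tree on $\{1,\dots,m+2\}$, and reading this tree and the cyclic/linear ordering of the dams along each added subgraph off $G(\tilde F)$ recovers exactly the data of a hierarchical dressing of the point graph on $1$, with $2$ lying in the sequence attached directly to $1$. Since distinct trees and distinct ordered sequences produce distinct matrices $\tilde F$ — the orderings being visible in the edge structure of $G(\tilde F)$ — the correspondence is a bijection, which is the assertion of the Lemma.

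The main obstacle I expect is the bookkeeping in the base of the induction and in the index-shift step: one must check carefully that deleting the minimal dam vertex from $G(\tilde F)$ and relabelling via $g_{dam}$ is compatible with deleting the corresponding leaf from $Tr(\tilde F)$ and with the nesting of the ordered sequences of the dressing — in particular that the special edge $1\to 2$ and the membership relation $v_2\in A_1$ survive this reduction. The conceptual content is identical to Lemma \ref{le_hmchar}; it is only the three interlocking layers of indexing (the running indices $ind(\cdot,k)$, the label function $lab$, and the dam dimensions $i_1,\dots,i_m$) that make this the delicate part of the argument.
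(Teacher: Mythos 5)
Your proof mirrors the paper's own two-sided induction on $m$, including the key observation that for $F_1$ the tree $T$ on $\{1,\ldots,m+2\}$ already carries all the index information, so the binomial dimension-selection of Lemma \ref{le_hmchar} disappears. The one small misattribution is invoking Lemma \ref{le_label} in the forward direction (tree and dressing $\Rightarrow \tilde F \in H_m(F_1)$): there, as in Lemma \ref{le_hmchar}, the paper argues \emph{by construction} that the minimal leaf of $T$ is the minimal dam of $\tilde F$, reserving Lemmata \ref{le_label} and \ref{le_trlab} for the converse direction, where you correctly use them.
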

\begin{proof}
We first of all prove that a tree $T$ of the given form and a hierarchical dressing results in a graph $G(\tilde F)$ with $\tilde F \in H_m(F_1)$.  We prove this by induction in $m$. For $m=1$ it is trivially true. So let $m > 1$. The tree $T$ contains one or more leafs. Each leaf in $T$ by construction corresponds to a vertex in the graph $G(\tilde F)$ which is a dam. We choose among the leafs $k$  of $T$ such that it is minimal among all leafs of $T$. Then this $k$ is minimal among all dimensions of $\tilde F$ such that the rowsum $h_{k} = 1$ as by construction any vertex $p$ in the tree $T$ which is not a leaf corresponds with a vertex $p$ in $G(\tilde F)$ which has at least one cycle added and is itself in a cycle. So $g_{dam}(\tilde F)$ is a graph with $k$ removed. So after the removing of the leaf $k$ from $T$ we are left with a tree $\hat T$ which after the change of the indices according to $g_{dam}$ and a hierarchical dressing according to the new data. Therefore according to induction $g_{dam}(\tilde F) \in H_{m-1}(F)$. We have also seen that the tree $T$ completely determines the sequence of indices $i_p$ which correspond to the dimensions removed in the $p$ th application of $g_{dam}$ and therefore we have no free choice of dimensions as we had in the case of $F \neq F_1$ for any vertex other than the first two.  But it is also easy to see that $g_{dam}$ cannot remove the vertex with the highest index, as a tree $T$ always has at least two leafs, so the vertex with the highest index is mapped from $m$ to $m-1$ etc. and finally onto $2$ in the process of applying $g_{dam}$ and reordering the dimensions of $g_{dam}^{(k)}(\tilde F)$ and therefore the whole sequence is completely determined by $T$.
\\
On the other hand with Lemma \ref{le_label} and \ref{le_trlab} for any $\tilde F \in H_m(F_1)$ we have defined and constructed the additional cycle and path graph additions to the point graph $1$ and the tree $T$ such that the Lemma is true. With this Lemma we have completely characterized $H_m(F_1)$ because any tree $T$ and dressing with ordered sequences defines a different matrix $\tilde F$ and each matrix $\tilde F$ is completely characterized by $T$ and the hierachical dressing according to it.  
\end{proof}
\begin{remark}
 By the Markov feature 
\begin{equation}\label{eq_markov}
z \cdot \frac{\partial}{\partial z} \left( h(x,d,z) \right) = \sum_{y \in \Z^d} h( y,d,z) \cdot  h(x - y,d,z)
\end{equation}
and therefore by induction 
\begin{multline}\label{eq_hxmarkov}
\left(z \cdot \frac{\partial}{\partial z} \right)^k \left( h(x,d,z) \right) = \sum_{y_i \in \Z^d} \sum_{\sigma \in S_k} h(y_{\sigma(1)},d,z) \cdot h(x - y_{\sigma(k)},d,z) \cdot \\ \prod_{j=2}^k h(y_{\sigma(j)} - y_{\sigma(j-1)},d,z)
\end{multline} 
\end{remark}
\begin{lemma}\label{le_rd1}
Let $F \in \tilde H_{nd}\cap \tilde H_r$ and $k = (k_1,\ldots,k_{r+m})$ a vector of strictly positive integers.
Then 
\begin{multline}\label{eq_rd1}
\sum_{\tilde F \in \tilde H_{m}(F)} \mathcal{T}_{\tilde F,k}(z) = \binom{r+m}{m} \cdot cof(A-F) \cdot
\\  \left( z\cdot \frac{\partial}{\partial z}\right)^{m-1}
 \Biggl[ \frac{1}{(r+m)\,!} \sum_{\sigma \in S_{r+m}} 
\left( \prod_{j=r+1}^{r+m} \widetilde{K}(1,k_{\sigma(j)},\widetilde{h}(0,2,z) \right)   \\
 \cdot \left( z\cdot \frac{\partial}{\partial z}\right)\left( \frac{I(F) \cdot M(F) }{2^{r-1}\cdot (1-16z^2)^{r-1}} \cdot \left(
\prod_{j=1}^r  \widetilde{K}(h_j,k_{\sigma (j)},\widetilde{h}(0,2,z))\right) \right)\Biggr] \\
+ o\left(\frac{1}{s^{r+m-2}} \right)
\end{multline}
\end{lemma}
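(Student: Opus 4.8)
The plan is to combine the structural description of $\tilde H_m(F)$ from Lemma \ref{le_hmchar} with the iterated Markov identity \eqref{eq_hxmarkov}, which is exactly the device that turns ``summing over the position of a dam vertex'' into ``applying $z\partial_z$''. First I would record, by $m$-fold application of Lemma \ref{le_ltp}, that every $\tilde F \in \tilde H_m(F)$ has $cof(A-\tilde F) = cof(A-F)$, so the cofactor factors out of the sum in \eqref{eq_rd1}. By Lemma \ref{le_hmchar} what remains is indexed by a choice of which $m$ of the $r+m$ dimensions are dams, an ordered-edge tree $T$ on $\{0,\dots,m\}$, and a hierarchical dressing of $G(F)$ according to $T$ by ordered sequences of the dam vertices. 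I would therefore rewrite $\sum_{\tilde F \in \tilde H_m(F)}\mathcal{T}_{\tilde F,k}(z)$ as a triple sum over (dam-label choice, tree $T$, dressing), carrying along the symmetrization $\tfrac1{(r+m)!}\sum_{\sigma\in S_{r+m}}$ from the definition \eqref{eq_sckf} of $\mathcal{T}$.

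The core step is the evaluation of a single dressing, carried out \emph{before} the sum over $Y\in J_r$ is replaced by an integral. A path dressing of an edge $(i,j)$ by an ordered sequence $v_1,\dots,v_\ell$ contributes, after summing over the positions $y_{v_1},\dots,y_{v_\ell}\in\Z^2$ and over the $\ell!$ orderings of the corresponding $\ell$-set of dam labels, exactly $(z\partial_z)^\ell h(y_i-y_j,2,z)$ by \eqref{eq_hxmarkov}, times the position-independent dam vertex factors $\prod_a \widetilde K(1,k_{\sigma(v_a)},\widetilde h(0,2,z))$; a cycle dressing of a vertex $a$ by a sequence of length $\ell$ contributes, using \eqref{eq_hxmarkov} with $x=0$ and $h(-y,2,z)=h(y,2,z)$ from Remark \ref{re_heven}, $(z\partial_z)^\ell h(0,2,z)$, which after the replacement $h(0,2,z)\mapsto \widetilde h(0,2,z)$ (equation \eqref{eq_A3_even}) and the recursion $\partial_\omega\widetilde K(m,k,\omega)=\widetilde K(m+1,k,\omega)$ becomes a $z\partial_z$ acting on the $\omega$-slot of the vertex factor at $a$, with $z\partial_z\widetilde h(0,2,z)$ agreeing to leading order with the Markov expression $\sum_y h(y,2,z)^2$. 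Thus each of the $m$ dams supplies one factor $\widetilde K(1,\cdot,\widetilde h(0,2,z))$ and one operator $z\partial_z$, the operator landing either on an edge-factor or on a vertex-factor (original or previously added) of the current graph.

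Next I would sum over $T$ and the dressing. Since the $z\partial_z$ commute, and since after replacing the remaining $r$-fold sum over $Y$ by $\tfrac1{s^{r-1}}I(F)(1+o(\cdot))$ via Lemma \ref{le_integr} the product $I(F)M(F)$ is $z$-independent, any $z\partial_z$ hitting an edge-factor of $G(F)$ acts to leading order just on the overall scale $(1-16z^2)^{-(r-1)}$ — this is precisely what Lemma \ref{le_momfunc} licenses, namely termwise differentiation of the asymptotic expansion governed by the leading power of $s$. Hence the hierarchical nesting encoded by $T$ collapses: one $z\partial_z$, coming from whichever dam attaches directly to $G(F)$ (the root $0$ of $T$), is forced next to the block $\tfrac{I(F)M(F)}{2^{r-1}(1-16z^2)^{r-1}}\prod_{j=1}^r\widetilde K(h_j,k_{\sigma(j)},\widetilde h)$, while the remaining $m-1$ sit outside it, alongside (and possibly acting on) the $m$ dam factors $\widetilde K(1,\cdot,\widetilde h)$ — which is exactly the right-hand side of \eqref{eq_rd1}. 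The combinatorial bookkeeping — the weighted sum over labelled trees on $m+1$ vertices with the dressing multiplicities, the $F_{a,b}!$ symmetry factors in $M(\tilde F)$ cancelling the number of parallel edges available for dressing, and the passage from $\tfrac1{r!}\sum_{S_r}$ times the $m!$ dam-label orderings to $\binom{r+m}{m}\tfrac1{(r+m)!}\sum_{S_{r+m}}$ — produces the factor $\binom{r+m}{m}$; this telescoping of the tree sum, essentially a Cayley/Prüfer-type identity in disguise, is the step I expect to be the main obstacle, since it is where the hierarchical-dressing combinatorics of Lemmas \ref{le_hmchar}--\ref{le_trlab} has to be matched term by term against the flat operator expression.

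Finally I would collect the errors. Each replacement along the way — $h(y_{ij},2,z)\mapsto\tfrac2\pi K_0(|y_{ij}|\sqrt s)$ with the sum over $Y$ passing to the integral (Lemma \ref{le_integr} and its Corollary), $h(0,2,z)\mapsto\widetilde h(0,2,z)$ in the vertex factors (as in Lemma \ref{le_sfk}), dropping the $h_2$ and $h_t$ contributions, and discarding the subleading Sidi--Euler--Maclaurin corrections — costs a relative factor that is $o(1)$ against the leading $\sim s^{-(r+m-1)}$ behaviour, and by Lemma \ref{le_momfunc} these estimates survive the finitely many $z\partial_z$'s; summing the finitely many dressing terms for fixed $m$ keeps the total error at $o(s^{-(r+m-2)})$, proving the Lemma. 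At this point I would verify that the logarithmic factors generated by $K(F)$ for dam-free $F$, being bounded for each fixed $m$, do not spoil the stated order.
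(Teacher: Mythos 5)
Your plan follows the paper's route: cofactor invariance via Lemma \ref{le_ltp}, the Markov identity \eqref{eq_hxmarkov} to turn dressings into $z\partial_z$'s, Lemma \ref{le_hmchar} to index the sum by dam-label choice, tree, and hierarchical dressing, and finally a Cayley-type tree count to reassemble the derivatives. The first three steps are right and match the paper. But the fourth step — the one you yourself flag as ``the step I expect to be the main obstacle'' — is precisely the step you leave undone, and it is where the proof actually lives. You offer instead an informal description of the answer (``one $z\partial_z$ is forced next to the block'', ``the hierarchical nesting collapses''), which is intuition for what the final formula looks like, not a derivation of it.

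Concretely, what is missing is the following: a hierarchical dressing according to a tree $T$ on $\{0,\ldots,m\}$ with degree sequence $\lambda_0,\lambda_1+1,\ldots,\lambda_m+1$ contributes, via the Markov identity and the recursion $\partial_\omega\widetilde K(h,k,\omega)=\widetilde K(h+1,k,\omega)$, a term $(z\partial_z)^{\lambda_0}(gf)\,\prod_{j=1}^m (z\partial_z)^{\lambda_j}\widetilde K(1,\cdot,\widetilde h)$, up to $O(s)$ relative corrections. The number of labelled trees on $\{0,\ldots,m\}$ with this degree sequence is $\frac{(m-1)!}{(\lambda_0-1)!\prod_{j\geq 1}\lambda_j!}$ (the count that appears in the generating-function proof of Cayley's formula, cited in the paper as \citep{bryant}). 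Summing over all admissible $(\lambda_0\geq 1,\lambda_1,\ldots,\lambda_m\geq 0)$ with $\lambda_0+\sum_{j\geq 1}\lambda_j = m$ and comparing with the general Leibniz expansion of $(z\partial_z)^{m-1}\bigl[\prod_j \widetilde K(1,\cdot,\widetilde h)\cdot (z\partial_z)(gf)\bigr]$ is the identity that produces the nested-operator form of the right-hand side; the shift $\lambda_0 = q_0 + 1$ is exactly what encodes the constraint that the root of $T$ has positive degree (your ``one $z\partial_z$ forced next to the block''). Without this bookkeeping, your argument never establishes that the weighted tree sum equals $(z\partial_z)^{m-1}\bigl[\ldots (z\partial_z)(gf)\bigr]$ rather than, say, $(z\partial_z)^{m}\bigl[\ldots gf\bigr]$ or some other rearrangement of the $m$ derivatives.

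Two lesser points. First, your claim that a $z\partial_z$ landing on an edge-factor ``acts to leading order just on the overall scale $(1-16z^2)^{-(r-1)}$'' is not quite how the paper's argument proceeds: the derivatives are kept acting on the full $gf$ (scale plus $\widetilde K$-factors), and the Leibniz rule rather than any collapsing is what distributes them; Lemma \ref{le_momfunc} is invoked only to justify commuting $z\partial_z$ with the asymptotic expansion, not to discard pieces of $gf$. Second, your error tally says each replacement costs a relative factor $o(1)$; to reach the stated $o\bigl(s^{-(r+m-2)}\bigr)$ against a leading term of order $s^{-(r+m-1)}$, the relative error must in fact be $o(s)$ (up to logarithms), which is what the summation constraints $y_i\neq y_j$ and the Sidi corrections of Lemma \ref{le_integr} give, and this sharper estimate should be stated.
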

\begin{proof}
From equation \eqref{eq_hxmarkov} we see that the sum of the contributions of all matrices which we get from a matrix $F$ by applying all possible path dressings of an edge between $i$ and $j$ in $G(F)$ is equivalent to taking the contribution of $F$ and replacing $h(Y_i - Y_j,2,z)$ with $(z\cdot \partial/\partial z)^k h(Y_i - Y_j,2,z)$ before the summation over $Y_i$ and $Y_j$, up to terms scaled down by at least one factor $s$ because of the summation constraint $y_i \neq y_j$ for the variables of the dressing vertices relative to the free summation in equation \eqref{eq_hxmarkov} . (Of course we have to also multiply it with the corresponding vertex factors $\tilde K$ and average over the bigger set of permutations, we will in the sequel assume this).\\ 
The sum of the contributions of all matrices which we get from a matrix $F$ by applying all possible cycle dressings of a vertex $i$ in $G(F)$ with $k$ vertices is given by replacing the factor $\tilde K(h_i,k_{\sigma(i)},h(0,2,z))$ with 
\begin{equation}
\left(\frac{\partial}{\partial z} \right)^k \tilde K(h_i,k_{\sigma(i)},h(0,2,z)) 
\end{equation}
up to contributions which are scaled down by a factor $s$ because of summing constraints. 
This is true because of equation \eqref{eq_komega} and equation \eqref{eq_hxmarkov} for $x = 0$ which describes the contribution of a cycle dressing in this case.\\ 
Therefore the dressing of a graph $G(F)$ with $k$ vertices leads to a contribution 
\begin{equation}
\left(\frac{\partial}{\partial z} \right)^k (gf(\sigma,z))
\end{equation}
up to terms scaled down by at least a factor $s$, where
\begin{multline}
gf(\sigma,z) = 
cof(A-F)  \cdot \\
 \left( \frac{I(F) \cdot M(F) }{2^{r-1}\cdot (1-16z^2)^{r-1}} \cdot \left(
\prod_{j=1}^r  \widetilde{K}(h_j,k_{\sigma (j)},\widetilde{h}(0,2,z))\right) \right)
\end{multline}
The hierarchical dressing according to a tree $T$ with vertex degrees $\lambda_0,\lambda_1+1,\ldots,\lambda_m+ 1$ where $\lambda_0 > 0$  therefore up to contributions scaled down at least  by a factor $s$ from summing constraints is given by 
\begin{equation}
\left(\frac{\partial}{\partial z} \right)^{\lambda_0} (gf(\sigma,z)) \prod_{j=1}^{m} \left(\frac{\partial}{\partial z} \right)^{\lambda_j} K(1,k_{\sigma(r+j)},h(0,2,z)) 
\end{equation}
Now according to Lemma \ref{le_hmchar} we have to multiply this with a factor for the choice of the dimensions which are dimensions of dams
\begin{equation}
\binom{r+m}{m}
\end{equation}
and the number of trees with this given set of degrees such that  
\begin{equation} 
\sum_{i=0}^m \left( \lambda_i + 1  - \delta_{0,i} \right) = 2m
\end{equation} 
The calculation of the number of the trees with these constraints fortunately also comes up in a standard proof of Cayley's theorem on the number of trees \citep{bryant}. It is given by
\begin{equation}
\frac{(m-1)\,!}{(\lambda_0 - 1)\,!\prod_{i=1}^m \lambda_i\,!}
\end{equation}
But according to the general Leibniz rule for multiple differentiations of products we then get equation \eqref{eq_rd1}. Differentiation of the asymptotic expansion in any order is allowed, as we have discussed in Lemma \ref{le_momfunc} as each term in the finite sum on the left hand side of equation \eqref{eq_rd1} is a holomorphic function on $U_{\frac{1}{2d}}(0)$. 
\end{proof}
\begin{lemma}\label{le_rd2}
Let $F_1$ be defined according to equation \eqref{eq_ff} and $k = (k_1,\ldots,k_{m+2})$ a vector of strictly positive integers.
Then 
\begin{multline}\label{eq_rd2}
\sum_{\widetilde{F} \in \tilde H_{m}(F_1)} \mathcal{T}_{\tilde F,k}(z) =
\\ \left( z\cdot \frac{\partial}{\partial z}\right)^{m}
\Biggl[ \frac{1}{(m+2)\,!}\sum_{\sigma \in S_{m+2}} 
\left( \prod_{j=1}^{m+2} \widetilde{K}\left(1,k_{\sigma(j)},\widetilde{h}(0,2,z)\right) \right) \\
 \cdot \left( z\cdot \frac{\partial}{\partial z}\right)\widetilde{h}(0,2,z)  \Biggr] 
+ o\left(\frac{1}{s^{m}}  \right)
\end{multline}
\end{lemma}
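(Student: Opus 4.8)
The plan is to run the argument of Lemma \ref{le_rd1} almost verbatim, with the digon $F_1$ taking the role of the dam‑free base graph and Lemma \ref{le_f1hmchar} the role of Lemma \ref{le_hmchar}. I would first settle the base case $m=0$. Here $cof(A-F_1)=1$, $M(F_1)=1$, both rows of $F_1$ have sum $1$ (so each vertex of $G(F_1)$ carries a factor $\widetilde{K}(1,\cdot,\widetilde{h})$), and $I(F_1)=\int_{\R^2}(\frac{2}{\pi}K_0(|y|))^2\,d^2y=\frac{4}{\pi}$, whence $\frac{I(F_1)M(F_1)}{2^{r-1}(1-16z^2)^{r-1}}=\frac{2}{\pi(1-16z^2)}$ for $r=2$. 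The only genuine computation is that the digon's edge factor satisfies, by the Markov relation \eqref{eq_markov} with $x=0$,
\begin{equation*}
\sum_{Y\in J_2}h(y_{1,2},2,z)^2=\sum_{y\in\Z^2}h(y,2,z)^2-h(0,2,z)^2=z\frac{\partial}{\partial z}h(0,2,z)-h(0,2,z)^2,
\end{equation*}
and that near $z=\pm\frac14$ this equals $z\frac{\partial}{\partial z}\widetilde{h}(0,2,z)$ up to a relative error of order $s$, using \eqref{eq_A3_even} to replace $h(0,2,z)$ by $\widetilde{h}(0,2,z)$ and to bound $h(0,2,z)^2=O(\ln(s)^2)$. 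Combined with Lemma \ref{le_integr}, Lemma \ref{le_sfk} and \eqref{eq_A3_even} this yields \eqref{eq_rd2} for $m=0$, and it also identifies the single inner $z\frac{\partial}{\partial z}$ acting on $\widetilde{h}$ in \eqref{eq_rd2} as the convolution factor created by attaching the second vertex of $F_1$.

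For $m\ge 1$ I would repeat the dressing argument of Lemma \ref{le_rd1}: by the iterated Markov identity \eqref{eq_hxmarkov} and \eqref{eq_komega}, a path dressing of an edge with $k$ new vertices replaces the corresponding factor by its image under $(z\partial_z)^k$, a cycle dressing of a vertex with $k$ new vertices applies $(z\partial_z)^k$ to that vertex's factor $\widetilde{K}$, and in each case the constraints $y_i\ne y_j$ cost at least one extra factor $s$ relative to the unconstrained convolutions. By Lemma \ref{le_f1hmchar} every $\tilde F\in\tilde H_m(F_1)$ arises from a tree $T$ on $\{1,\dots,m+2\}$ together with a hierarchical dressing of the one‑vertex graph $\{1\}$ in which $v_2$ is attached first — so $G(F_1)$ itself is the cycle dressing of $\{1\}$ by $[v_2]$ — and in which $v_3,\dots,v_{m+2}$ are the $m$ dams; crucially, in contrast to the $\tilde H_{nd}$ case the index assignment is forced by $T$ (Lemma \ref{le_step}), so the binomial prefactor $\binom{r+m}{m}$ of \eqref{eq_rd1} does not occur. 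Summing over all dressings compatible with a fixed degree sequence of $T$ — the $j$‑th dam contributing $(z\partial_z)^{\lambda_j}$ to its factor $\widetilde{K}(1,\cdot,\widetilde{h})$ and the root vertex contributing $(z\partial_z)^{\lambda_1}$ to $gf(\sigma,z)$, which for $F_1$ equals $\frac{2}{\pi(1-16z^2)}\widetilde{K}(1,k_{\sigma(1)},\widetilde{h})\widetilde{K}(1,k_{\sigma(2)},\widetilde{h})$ and agrees asymptotically with $\widetilde{K}(1,k_{\sigma(1)},\widetilde{h})\widetilde{K}(1,k_{\sigma(2)},\widetilde{h})\,z\partial_z\widetilde{h}(0,2,z)$ — and weighting by the same tree count (from the standard proof of Cayley's theorem) used for Lemma \ref{le_rd1}, the general Leibniz rule collapses everything into $(z\partial_z)^m$ applied to the bracketed expression of \eqref{eq_rd2}; the symmetrization over $S_{m+2}$ with weight $\frac{1}{(m+2)!}$ enters exactly as in Lemma \ref{le_rd1} from the $\frac{1}{r!}\sum_{\vartheta}$ prescription of Definition \ref{df_pt} promoted to $m+2$ dimensions.

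The main obstacle is the uniform control of the discarded contributions. One must verify that all terms not of the stated form — from the constrained‑versus‑unconstrained summation mismatch, from replacing $h(0,2,z)$ by $\widetilde{h}(0,2,z)$ in the vertex factors, from the $h(0,2,z)^2$ piece of the digon edge factor, and from the correction tail in Lemma \ref{le_integr} — are absorbed into $o(1/s^m)$ after at most $m$ applications of $z\partial_z$; this is precisely the analogue, with $r=2$, of the error estimate already established for Lemma \ref{le_rd1}, and termwise differentiation of the expansion is legitimate because, as in Lemma \ref{le_momfunc}, each of the finitely many summands is holomorphic on $U_{\frac{1}{2d}}(0)$. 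Finally one should check that the shifted degree convention $\#A_k=\deg(k)-1+\delta_{k,1}$ of Lemma \ref{le_f1hmchar}, with $v_2$ forced into $A_1$, reproduces the same multinomial weights as Lemma \ref{le_rd1}, vertex $1$ here playing the role of vertex $0$ there and the obligatory placement of the second $F_1$‑vertex supplying the one inner $z\partial_z$ seen already at $m=0$. As in Lemma \ref{le_rd1}, the expansion around $z=-\frac14$ follows at once since $\mathcal{T}_{\tilde F,k}$ and $z\partial_z\widetilde{h}$ are even in $z$ (cf. Lemma \ref{le_ceven}).
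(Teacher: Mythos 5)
Your proposal follows the paper's own route: the published proof of Lemma~\ref{le_rd2} is a single sentence deferring to Lemma~\ref{le_f1hmchar} and the argument of Lemma~\ref{le_rd1}, noting only that the binomial prefactor is absent and that $M(F_1)=\mathrm{cof}(A_1-F_1)=1$. Your explicit $m=0$ check ($I(F_1)=4/\pi$, $32z^2/(1-16z^2)=z\partial_z\widetilde{h}(0,2,z)$ up to $O(1)$) and the even-parity remark at the end are useful additions that the paper leaves implicit. One caveat on the middle paragraph: you assert the Cayley/Leibniz collapse uses ``the same tree count'' as Lemma~\ref{le_rd1}, but this is not literally true — Lemma~\ref{le_f1hmchar} produces trees on $m+2$ labelled vertices (not $m+1$), the degree convention is shifted by $\delta_{k,1}$, and $v_2$ is forced into $A_1$, and it is precisely this shift that turns the $(z\partial_z)^{m-1}$ of \eqref{eq_rd1} into the $(z\partial_z)^m$ of \eqref{eq_rd2}. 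You gesture at the mechanism in the closing sentence, but the organizing principle should be stated unambiguously: one must take the isolated point graph (not $F_1$) as the dressing base, with $v_2$ as the first obligatory cycle dressing supplying the inner $z\partial_z\widetilde{h}$; simply applying \eqref{eq_rd1} with $F_1$ as base and then substituting $gf(\sigma,z)\approx(z\partial_z\widetilde{h})\widetilde{K}_1\widetilde{K}_2$ inside the inner $z\partial_z$ does not reproduce \eqref{eq_rd2} after the Leibniz expansion. Redoing the Cayley count for the $m+2$-vertex tree (giving $m!/[(\lambda_1-1)!\prod_{j\ge 2}\lambda_j!]$ with $\lambda_1\ge 1$, whence the substitution $\lambda_1\mapsto\lambda_1-1$ yields the clean $(z\partial_z)^m$ Leibniz form) would close this gap cleanly and would in fact be a genuine improvement on the paper's hand-wave.
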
    
\begin{proof} With Lemma \ref{le_f1hmchar} the proof is analogous to the proof of Lemma \ref{le_rd1}, realizing the we do not have the binomial choice factor and taking into account that $M(F_1) = 1$  and $cof(A_1 - F_1) = 1$. 
\end{proof}
\section{The moments}
In this section we will calculate the expectation value of the moments with all its logarithmic corrections and give examples for the first and second moment. We will then calculate the leading behaviour for the centralized moments and all its logarighmic corrections for any $r$ and get the joint distribution for the closed simple random walk from that. 
\subsection{The asymptotic expansion}
We observe from Lemma \ref{le_momfunc} that the constituent functions in equation \eqref{eq_sckf} and \eqref{eq_h0ti} all have a specific form which leads to a general scheme for calculating the asymptotic expansion of them. Their sum in \eqref{eq_defsum} differentiated as shown in \eqref{eq_deftk} amounts to the expectation value up to a division by the number of all closed walks of a given length. 
\begin{lemma}\label{le_yexp}
Let $g(x) = \sum_{j=0}^{M} b_j \cdot x^{j}$ be a polynomial of degree $M$. Then the function \begin{equation}
f_m(\varpi) := \frac{1}{(1-\varpi)^m}\cdot g\left(\frac{1}{\ln(1-\varpi)}\right)
\end{equation}
around $\varpi = 0$ has a Laurent series with poles of highest order $M$ and the sum  
\begin{equation}
f_m(\varpi) = \sum_{n=-M}^{\infty}c_n(m) \cdot \varpi^n 
\end{equation}
converges for $\varpi \in U_1(0)\setminus \{0\}$ and for $c_n(m)$ we have the asymptotic expansion
\begin{multline}\label{eq_coeff}
c_n(m) =  n^{m-1} \cdot \Biggl[ \sum_{i=0}^{M} \gamma^{(m)}_i  y^{i+1} \left(\frac{d}{dy}\right)^i \left(y^{i-1} g(y) \right)\Biggr|_{y =- \frac{1}{\ln(n)}} \\ + o\left(\frac{1}{\ln(n)^M} \right)\Biggr]
\end{multline}
where the coefficients $\gamma^{(m)}_i$ belong to the expansion 
\begin{equation}
\frac{1}{\Gamma(m + \tau)} = \sum_{i=0}^{\infty}  \gamma^{(m)}_i \cdot \tau^i
\end{equation}
(which has an infinte radius of convergence \citep[eq. 6.1.34]{danraf}). 
\end{lemma}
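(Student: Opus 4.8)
The plan is to extract the asymptotics of the Laurent coefficients $c_n(m)$ by combining two elementary ingredients: the known singularity structure of $1/(1-\varpi)^m$ and the behaviour of $1/\ln(1-\varpi)$ near $\varpi=1$, treated via a singularity analysis of Flajolet--Odlyzko type (or, equivalently here, by an explicit Hankel-type contour integral for the Taylor coefficients). First I would note that $f_m$ is holomorphic on $U_1(0)\setminus\{0\}$: the only singularities in the closed unit disk come from the factor $(1-\varpi)^{-m}$ and from the zero of $\ln(1-\varpi)$ at $\varpi=0$, the latter producing a pole of order at most $M$ because $g$ has degree $M$ and $\ln(1-\varpi)=-\varpi(1+O(\varpi))$, so $g(1/\ln(1-\varpi))$ has a pole of order exactly $\deg g$ at the origin. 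This gives the claimed Laurent expansion $f_m(\varpi)=\sum_{n\ge -M}c_n(m)\varpi^n$ with the stated radius of convergence.

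Next I would isolate the large-$n$ behaviour. Write $c_n(m)=\frac{1}{2\pi i}\oint f_m(\varpi)\varpi^{-n-1}\,d\varpi$ and deform the contour to a Hankel contour pinched around the dominant singularity $\varpi=1$. Near $\varpi=1$ set $\varpi=1-t/n$; then $(1-\varpi)^{-m}=(n/t)^m$ and $\ln(1-\varpi)=\ln(t/n)=-\ln n+\ln t$, so $1/\ln(1-\varpi)=-\frac{1}{\ln n}\bigl(1-\frac{\ln t}{\ln n}\bigr)^{-1}$, which we expand as a power series in $y:=-1/\ln n$. Plugging into $g$ and collecting, $g(1/\ln(1-\varpi))$ becomes a series in $y$ whose coefficients are polynomials in $\ln t$; the factor $y^{i}(\ln t)^{i}$-type terms are exactly what, after the standard Hankel integral $\frac{1}{2\pi i}\int_{\mathcal H} t^{-s}e^{t}\,dt=1/\Gamma(s)$ and its derivatives in $s$ (which bring down powers of $\ln t$), produce the coefficients $\gamma^{(m)}_i$ from the expansion of $1/\Gamma(m+\tau)$. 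Carrying out this bookkeeping — matching the $j$-th derivative in $\tau$ of $1/\Gamma(m+\tau)$ at $\tau=0$ against the $(\ln t)^j$ terms — yields the operator $\sum_{i=0}^M\gamma^{(m)}_i\,y^{i+1}(d/dy)^i\bigl(y^{i-1}g(y)\bigr)$ evaluated at $y=-1/\ln n$, times the polynomial factor $n^{m-1}$ coming from $(n/t)^m$ after the $t$-integration shifts $m\mapsto m-1$ effectively in the exponent. The error term $o(1/\ln(n)^M)$ comes from truncating the $y$-expansion after order $M$ (higher-order terms in $y$ are genuinely smaller) together with the exponentially small contribution of the non-pinched part of the Hankel contour.

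The main obstacle, and the step I would spend the most care on, is the precise identification of the combinatorial coefficients: showing that the formal rearrangement of $g\bigl(-\frac{1}{\ln n}(1-\frac{\ln t}{\ln n})^{-1}\bigr)$ against the derivatives $\partial_s^j\,1/\Gamma(s)$ collapses exactly to $\sum_{i=0}^M\gamma^{(m)}_i\,y^{i+1}(d/dy)^i(y^{i-1}g(y))$ and not to some other differential operator. This is a finite but delicate identity in the ring of formal power series; I would verify it by writing $g(y)=\sum_j b_j y^j$, handling a single monomial $y^j$ first (where the computation reduces to repeatedly differentiating $t^{-m}$-type integrands and reorganising binomial sums), and then summing over $j$. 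Uniformity of the asymptotics — that the $o(\cdot)$ is honest and does not hide $m$- or $j$-dependence that blows up — follows because everything in sight is a fixed finite sum once $M$ and $m$ are fixed, and the Hankel-contour estimates are standard and uniform on compact pieces. Once this identity is pinned down the rest is routine Flajolet--Odlyzko transfer, so I would present that part only in outline.
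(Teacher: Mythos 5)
Your proposal is correct, but it takes a genuinely different route from the paper. The paper reduces to the basic functions $\beta_{m,j}(x)=(1-x)^{-m}\ln(1-x)^{-j}$, establishes the exact real-integral identity $\beta_{m,1}(x)-\beta_{-k,1}(x)=-\int_0^{m+k}(1-x)^{-(m-t)}\,dt$, extracts Taylor coefficients of the integrand and applies Watson's lemma to the resulting Laplace-type $t$-integral, and then propagates to general $j$ through the differential recursion $\frac{d}{dx}\beta_{m,j}(x)=m\,\beta_{m+1,j}(x)+j\,\beta_{m+1,j+1}(x)$. You instead attack the Cauchy integral for $c_n(m)$ directly via a Hankel contour pinched at $\varpi=1$, substitute $\varpi=1-t/n$, and recover the $\gamma^{(m)}_i$ from the Hankel representation of $1/\Gamma$ and its $s$-derivatives; this is the standard Flajolet--Odlyzko singularity analysis, dual to the paper's Watson-lemma argument (inverse Laplace vs.\ Laplace), and it arguably gives a cleaner conceptual account of why $1/\Gamma$ enters. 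The \emph{delicate identity} you flag as the main obstacle is in fact routine: for a single monomial $g(y)=y^j$ with $j\ge 1$, the operator in \eqref{eq_coeff} collapses to $\sum_i\gamma^{(m)}_i\frac{(i+j-1)!}{(j-1)!}\,y^{i+j}$, which is precisely the paper's intermediate expression for $\theta_{j,m,n}$ (and for $j=0$ only $i=0$ survives, giving $1/\Gamma(m)$); summing over $j$ with weights $b_j$ then gives the claimed operator with no further combinatorics. So your approach does work; the point you should be most careful about is the one you already identify only in passing, namely the uniformity of the error term and the estimate on the non-local part of the Hankel contour, which should be made explicit rather than cited as folklore.
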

\emph{Proof:} Because of the power series of $\ln(z)$ around $z = 1$ it is clear that $f_m$ has the Laurent series with a pole of highest order $M$ in $\varpi = 0$ and converges for  $\varpi \in U_1(0)\setminus \{0\}$. \\
For $m \in \N \setminus\{0\}$ and $j \in \N_0$ we now consider the functions
\begin{equation}
\beta_{m,j}(x) := \frac{1}{(1- x)^m \cdot \ln(1-x)^j} 
\end{equation}
They have a Laurent series around $x=0$ with 
\begin{equation}
\beta_{m,j}(x) = \sum_{n=-j}^{\infty} \theta_{j,m,n} \cdot x^n
\end{equation}
For $\beta_{m,0}(x)$ we realize that $\theta_{0,m,n} = n^{m-1}/\Gamma(m) (1 + O(1/n))$ and so equation \eqref{eq_coeff} is true for the constant term in the polynomial $g$ as in the sum over $i$ only the contribution with $i=0$ is different from $0$. 
We then note that for $j > 0$  
\begin{equation}\label{eq_funct}
\frac{d}{dx} \beta_{m,j}(x) = m\cdot \beta_{m+1,j}(x) + j\cdot \beta_{m+1,j+1}(x)
\end{equation}
To find the asymptotic behaviour of the coefficients of $\beta_{m,1}(x)$ for $x > 0$ we write for $k \in \N$ and $k > 1$ 
\begin{equation}
\beta_{m,1}(x) - \beta_{-k,1}(x) = - \int_0^{m+k}  e^{-\ln(1-x)\cdot (m-t)} \cdot dt = -\int_0^{m+k} (1-x)^{-(m-t)} \cdot dt
\end{equation}
Of course $\beta_{-k,1}(x)$ has Taylor coefficients $o\left(n^{-(k+1)} \right)$.
We expand the coefficients of $(1-x)^{-(m-t)}$ asymptotically for large order $n$ and perform the integration over $t$  using Watsons Lemma \citep[Proposition 2.1, p. 53]{miller} for the fixed upper boundary $m+k$ to calculate
\begin{equation}
\int_0^{m+k} \frac{e^{-\ln(n) t}}{\Gamma(m-t)}\cdot dt \sim \sum_{i=0}^{\infty} \frac{\gamma^{(m)}_i (-1)^i \cdot i\,!}{\ln(n)^{i+1}}
\end{equation}
 From there we find 
\begin{equation}
\theta_{1,m,n} =  n^{m-1} \left(\sum_{i=0}^{M} \frac{\gamma^{(m)}_i \cdot (-1)^{i+1} \cdot i\,!}{\ln(n)^{i+1}}  + o\left(\frac{1}{\ln(n)^{M+1}} \right) \right)
\end{equation}
By a repeated use of equation \eqref{eq_funct} we find
\begin{equation}
 \theta_{j,m,n} =   n^{m-1} \left( \sum_{i=0}^{M} \frac{\gamma^{(m)}_i \cdot (-1)^{i + j}}{\ln(n)^{j+i}} \cdot \frac{(j+i-1)\,!}{(j-1)\,!}  + o\left(\frac{1}{\ln(n)^{M+j}} \right) \right)
\end{equation}
But from here it is only simple algebra to reach \eqref{eq_coeff}.
\begin{definition}\label{de_fgy}
We define the function 
\begin{multline}
\bar f_{m,k}(y,C) := \widetilde{K}(m,k,-\frac{1}{\pi y} + C) \\=m! \cdot (\pi y)^{m+1}\frac{\left(1-C\cdot\pi\cdot y \right)^{k-1}}{\left(1-(C+1)\cdot \pi \cdot y \right)^{k+m}} \cdot \bar G_{m,k}(y,C)
\end{multline}
and 
\begin{equation}
\bar G_{m,k}(y,C) := 1 + \frac{1}{m}\sum_{j=1}^{m-1} (\pi \cdot y)^j  \binom{k-1}{ j} \cdot \binom{m}{ j+1} \cdot  \left(\frac{1}{1- C\cdot \pi \cdot y} \right)^j 
\end{equation}
and $f_{m,k)}(y) := \bar f_{m,k}(y,C^{(c)})$.
and $G_{m,k}(y) := \bar G_{m,k}(y,C^{(c)})$ with
\begin{equation}\label{eq_cc}
C^{(c)} = 4 \cdot \frac{\ln(2)}{\pi}-1
\end{equation}
\end{definition}
\begin{theorem}\label{th_momr}
Let $M \in \N$ be a natural number. The $r$th moment of the planar closed simple random walk of lenght $2n$ is given together with all its corrections up to order $1/\ln(n)^M$  by 
\begin{multline}\label{eq_enmom}
E_{n}(N_{2k_1}(w)\cdot \ldots \cdot N_{2k_r}(w)) =  2 \cdot \pi \cdot n^{r} 
\cdot \sum_{F \in \tilde H_k^{(M)}} \Biggl(    cof(A-F) \cdot  \frac{I(F) \cdot M(F) }{2^{r-1}} \\
 \Biggl[\sum_{i=0}^{M} \gamma^{(r-1)}_i y^{i+1} \left(\frac{d}{dy} \right)^i \left(y^{i-1}
\frac{1}{r\,!} \sum_{\sigma \in S_r} \prod_{j=1}^r  f_{h_j,k_{\sigma(j)}}(y) \right)\Biggr|_{y = -\frac{1}{\ln(n)}}\Biggr]\Biggr) \\ +  o\left(\frac{n^r}{\ln(n)^M} \right) 
\end{multline}
where where $E_n(.)$ is the expectation value for closed walks of length $2n$ and $\tilde H_k^{(M)}$ is the finite set defined in Theorem \ref{th_nge}.
\end{theorem}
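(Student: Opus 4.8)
The plan is to express the $r$th moment as a Taylor coefficient of the generating function $\hat T_k$ of Lemma~\ref{le_momfunc}, use that lemma together with singularity analysis to replace $\hat T_k$ near its singularities by the finite family $\{\mathcal{T}_{F,k}\}_{F\in\tilde H_k^{(M)}}$, and then read off the coefficient asymptotics of each $\mathcal{T}_{F,k}$ from Lemma~\ref{le_yexp}. \emph{Step 1 (from the moment to a coefficient).} Every walk in $W_{2N}$ is closed at $0$, so the number of closed walks of length $2n$ on $\Z^2$ equals $[z^{2n}]\bigl(1+h(0,2,z)\bigr)=\binom{2n}{n}^2=\tfrac{16^n}{\pi n}\bigl(1+O(1/n)\bigr)$ (Lemma~\ref{A2} and Stirling), while by Lemma~\ref{le_momfunc} $[z^{2n}]T_k(z)=2n\,[z^{2n}]\hat T_k(z)=\sum_{w:\,\mathrm{length}(w)=2n}\prod_{i=1}^r N_{2k_i}(w)$, the numerator of $E_n\bigl(N_{2k_1}(w)\cdots N_{2k_r}(w)\bigr)$. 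Hence it suffices to prove $[z^{2n}]\hat T_k(z)=16^n n^{r-2}\sum_{F\in\tilde H_k^{(M)}}cof(A-F)\tfrac{I(F)M(F)}{2^{r-1}}[\,\cdots\,]_{y=-1/\ln n}+o\bigl(16^n n^{r-2}/\ln(n)^M\bigr)$, where $[\,\cdots\,]$ is the bracket of \eqref{eq_enmom}; multiplying by $2n$ and dividing by $\tfrac{16^n}{\pi n}\bigl(1+O(1/n)\bigr)$ then gives \eqref{eq_enmom}, the factor $1+O(1/n)$ multiplying a quantity of order $O\bigl(n^r/\ln(n)^{2r}\bigr)$ and therefore being swallowed by the error.

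\emph{Step 2 (singularity analysis).} Put $\varpi=16z^2$. Since closed walks have even length, $\hat T_k$ and, by the computation in the proof of Lemma~\ref{le_ceven}, every $\mathcal{T}_{F,k}$ are even in $z$, hence functions of $\varpi$, and they extend holomorphically to a disc of radius $>1/4$ slit along the two real rays through $\pm\tfrac14$ (Theorem~\ref{th_defp} together with the analyticity of $h(0,2,z)$ off those rays from Lemma~\ref{A2}), with only the algebraico-logarithmic singularity at $\varpi=1$, i.e.\ a power of $(1-\varpi)^{-1}$ times a polynomial in $\ln(1-\varpi)$. In the variable $\varpi$ the estimate of Lemma~\ref{le_momfunc} reads $\hat T_k-\sum_{F\in\tilde H_k^{(M)}}\mathcal{T}_{F,k}=o\bigl((1-\varpi)^{-(r-1)}\ln(1-\varpi)^{-M}\bigr)$ as $\varpi\to1$ (using $s\asymp 1-\varpi$ there, and noting that the two real singularities $\pm\tfrac14$ both fall at $\varpi=1$), uniformly on the slit disc. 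The transfer theorem of singularity analysis then yields $[z^{2n}]\hat T_k=16^n[\varpi^n]\hat T_k=\sum_{F\in\tilde H_k^{(M)}}[z^{2n}]\mathcal{T}_{F,k}+o\bigl(16^n n^{r-2}/\ln(n)^M\bigr)$ (viewing $\hat T_k$ as a function of $\varpi$).

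\emph{Step 3 (one graph, then assembly).} Since $(1-16z^2)^{r-1}$ and $\widetilde{h}(0,2,z)=-\tfrac1\pi\ln(1-\varpi)+C^{(c)}$ are functions of $\varpi$, we have $\mathcal{T}_{F,k}(z)=\Psi_F(\varpi)$ with $\Psi_F(\varpi)=cof(A-F)\tfrac{I(F)M(F)}{2^{r-1}}(1-\varpi)^{-(r-1)}g_F\bigl(1/\ln(1-\varpi)\bigr)$ and $g_F(y):=\tfrac1{r!}\sum_{\sigma\in S_r}\prod_{j=1}^r f_{h_j,k_{\sigma(j)}}(y)$, the identity $\widetilde{K}\bigl(h_j,k_{\sigma(j)},\widetilde{h}(0,2,z)\bigr)=f_{h_j,k_{\sigma(j)}}\bigl(1/\ln(1-\varpi)\bigr)$ being immediate from Definition~\ref{de_fgy} upon solving $-\tfrac1{\pi y}+C^{(c)}=\widetilde{h}(0,2,z)$. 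By Definition~\ref{de_fgy}, $f_{m,k}(y)=m!(\pi y)^{m+1}\bigl(1+O(y)\bigr)$ near $y=0$ and $f_{m,k}$ is rational with its only singularity the pole at $y=1/(4\ln 2)$; hence $g_F$ is analytic near $0$ with $g_F(y)=O\bigl(y^{K(F)+r}\bigr)$, and $K(F)+r\le M$ because $F\in\tilde H_k^{(M)}$. Truncate the Taylor series of $g_F$ at degree $M$ to a polynomial $\tilde g_F$: the remainder $g_F-\tilde g_F=O(y^{M+1})$ contributes only $o\bigl(n^{r-2}/\ln(n)^M\bigr)$ to $[\varpi^n]\Psi_F$ (singularity analysis again, the remainder being $O\bigl((1-\varpi)^{-(r-1)}|\ln(1-\varpi)|^{-(M+1)}\bigr)$ at $\varpi=1$) and changes each $y^{i+1}\bigl(\tfrac{d}{dy}\bigr)^i\bigl(y^{i-1}g_F(y)\bigr)\big|_{y=-1/\ln n}$ by only $o(1/\ln(n)^M)$. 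Applying Lemma~\ref{le_yexp} with $m=r-1$ to $(1-\varpi)^{-(r-1)}\tilde g_F\bigl(1/\ln(1-\varpi)\bigr)$ and then restoring $g_F$ inside the bracket gives $[\varpi^n]\Psi_F=cof(A-F)\tfrac{I(F)M(F)}{2^{r-1}}n^{r-2}\bigl[\sum_{i=0}^M\gamma^{(r-1)}_i y^{i+1}\bigl(\tfrac{d}{dy}\bigr)^i\bigl(y^{i-1}g_F(y)\bigr)\big|_{y=-1/\ln n}+o(1/\ln(n)^M)\bigr]$; since $\Psi_F$ is analytic at $\varpi=0$, $[z^{2n}]\mathcal{T}_{F,k}=16^n[\varpi^n]\Psi_F$. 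Summing over $F\in\tilde H_k^{(M)}$ and inserting into Steps~1--2 yields exactly \eqref{eq_enmom}.

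\emph{Main obstacle.} All the conceptual input sits in Lemmata~\ref{le_momfunc} and \ref{le_yexp}; what remains is the bookkeeping around the two transfer steps — confirming that $\hat T_k$ and the $\mathcal{T}_{F,k}$ are holomorphic in an honest slit disc on which the Lemma~\ref{le_momfunc} estimate is uniform, so that singularity analysis legitimately converts the $o(\cdot)$ near $\pm\tfrac14$ into a coefficient $o(\cdot)$ of the right order, and then checking that the degree-$M$ truncation of $g_F$ and the Stirling factor $1+O(1/n)$ each introduce only errors of size $o\bigl(n^r/\ln(n)^M\bigr)$. The interchangeability of differentiation with asymptotic expansion needed in passing from $\hat T_k$ to $T_k$ is already supplied by Lemma~\ref{le_momfunc}.
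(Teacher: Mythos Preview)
Your proof is correct and follows essentially the same line as the paper: invoke Lemma~\ref{le_momfunc} to replace $\hat T_k$ by the finite sum of $\mathcal{T}_{F,k}$, recognise each $\mathcal{T}_{F,k}$ as a function of $\varpi=16z^2$ of the shape treated in Lemma~\ref{le_yexp}, and then divide by the Stirling count $\binom{2n}{n}^2\sim 16^n/(\pi n)$ of closed walks, picking up the factor $2n$ from $T_k=z\partial_z\hat T_k$. The paper's proof is three lines and leaves two points implicit that you make explicit --- the transfer from the $o\bigl((1-\varpi)^{-(r-1)}\ln(1-\varpi)^{-M}\bigr)$ estimate on the function to an $o\bigl(n^{r-2}/\ln(n)^M\bigr)$ on the coefficients, and the truncation of the rational function $g_F$ to a degree-$M$ polynomial so that Lemma~\ref{le_yexp} literally applies --- both of which you handle correctly.
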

\begin{proof} We start with Lemma \ref{le_momfunc} and equation \eqref{eq_sckf} and \eqref{eq_h0ti}. We then see that each function $\mathcal{T}_{F,k}(z)$ has the form of Lemma \ref{le_yexp} with $\varpi = 16z^2$. We also use the fact that the number of closed random walks of length $2n$ in $2$ dimensions, by which we have to divide to get the expectation value, is given by 
\begin{equation}
4^{2n} \cdot \frac{1}{\pi \cdot n} \left( 1 - \frac{1}{4n} +O\left(\frac{1}{n^2}\right) \right)
\end{equation}
The differentiating of equation \eqref{eq_deftk} gives an additional factor $2n$. Putting all this together we reach equation  \eqref{eq_enmom}.
\end{proof}
\begin{remark}
From equation \eqref{eq_enmom} one can (with any good computer algebra system) compute any moment with all its logarithmic corrections from a summation over contributions from a finite set of integer valued matrices (or equivalent Eulerian multidigraphs).
\end{remark}
\begin{lemma}
Let $M \in \N$ be a natural number. Then the first moment of the expectation value of $N_{2k}(w)$ for a closed simple random walk of length $2n$ in $2$ dimensions has the asymptotic expansion 
\begin{equation}\label{eq_fmom}
E_n(N_{2k}(w)) = 2 \cdot n \cdot \sum_{i=0}^M \gamma^{(1)}_i \cdot y^{i+1}\left(\frac{d}{dy}\right)^i \left(y^{i-1} f_{1,k}(y) \right)\Biggr|_{y = -\frac{1}{\ln(n)}} + o\left(\frac{n}{\ln(n)^M}\right)
\end{equation} 
\end{lemma}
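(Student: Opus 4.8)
The plan is to run, at $r=1$, the same three moves that establish Theorem~\ref{th_momr} --- identify the graph sum, apply $z\,\partial_z$, and read off the Taylor coefficients --- while tracking the features special to a single point. First I would record that $E_n(N_{2k}(w))$ equals $[z^{2n}]T_{(k)}(z)$ divided by the number of closed walks of length $2n$, where $T_{(k)}(z)=z\,\partial_z\hat T_{(k)}(z)$ is the function of Lemma~\ref{le_momfunc} attached to the one-component vector $k=(k)$, and that this number of walks is $4^{2n}\cdot\frac{1}{\pi n}\bigl(1-\frac{1}{4n}+O(1/n^2)\bigr)$, exactly as in the proof of Theorem~\ref{th_momr}, with $T_{(k)}$ holomorphic on $U_{1/2d}(0)$ so that its coefficient asymptotics are governed by the behaviour near $z=\pm\tfrac14$.

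Second, I would note that at $r=1$ the reduction-to-dams apparatus of Lemmata~\ref{le_ltp}--\ref{le_rd2} degenerates: there are no off-diagonal edges available, so the only Eulerian multidigraph on a single vertex of degree $2k$ is that vertex carrying $k$ loops, and its contribution is the bare vertex factor. Hence the finite set $\tilde H^{(M)}_{(k)}$ collapses to a single element and Lemma~\ref{le_momfunc} reads $\hat T_{(k)}(z)=\widetilde K(0,k,\widetilde h(0,2,z))+o\!\left(1/\ln(s)^M\right)$ as $z\to\pm\tfrac14$ (here $cof=I=M=1$, $2^{r-1}=(1-16z^2)^{r-1}=1$, and $h_1=0$). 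Using that termwise differentiation is interchangeable with this asymptotic expansion, I would differentiate: by \eqref{eq_komega} one has $\partial_\omega\widetilde K(0,k,\omega)=\widetilde K(1,k,\omega)$, and from the explicit form \eqref{eq_h0ti} one computes $z\,\partial_z\widetilde h(0,2,z)=\frac{32z^2}{\pi(1-16z^2)}=\frac{2}{\pi(1-16z^2)}-\frac{2}{\pi}$. Choosing $y=1/\ln(1-16z^2)$, so that $\widetilde h(0,2,z)=-\tfrac1{\pi y}+C^{(c)}$, Definition~\ref{de_fgy} identifies $\widetilde K(1,k,\widetilde h(0,2,z))$ with $f_{1,k}(y)$, giving $T_{(k)}(z)=f_{1,k}\!\bigl(\tfrac1{\ln(1-16z^2)}\bigr)\bigl(\tfrac{2}{\pi(1-16z^2)}-\tfrac2\pi\bigr)+o(\cdots)$.

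Third, I would extract coefficients. Writing $\varpi=16z^2$, so that $[z^{2n}]=16^n[\varpi^n]$, the main term $\frac{2}{\pi}\cdot\frac1{1-\varpi}\,f_{1,k}\!\bigl(\tfrac1{\ln(1-\varpi)}\bigr)$ is an instance of the function $f_m$ of Lemma~\ref{le_yexp} with $m=1$ and $g=\frac{2}{\pi}f_{1,k}$; since $f_{1,k}$ is holomorphic at $y=0$, replacing it by its degree-$M$ Taylor polynomial alters the outcome by $o(1/\ln(n)^M)$, so the hypothesis of Lemma~\ref{le_yexp} is met and its formula \eqref{eq_coeff} gives for $[\varpi^n]$ of this term the value $\frac{2}{\pi}\bigl[\sum_{i=0}^M\gamma^{(1)}_i\,y^{i+1}(\tfrac{d}{dy})^i\bigl(y^{i-1}f_{1,k}(y)\bigr)\big|_{y=-1/\ln n}+o(1/\ln(n)^M)\bigr]$. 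The residual $-\frac{2}{\pi}f_{1,k}\!\bigl(\tfrac1{\ln(1-\varpi)}\bigr)$ has $\varpi^n$-coefficients of order $o(1/n)$ --- it vanishes at its dominant singularity $\varpi=1$, by the same singularity analysis used in the proof of Lemma~\ref{le_yexp} --- and the $o$-remainder is controlled in the same way; after dividing by $4^{2n}\frac1{\pi n}(1+o(1))$ both are absorbed into the $o(n/\ln(n)^M)$ error of \eqref{eq_fmom}. That division cancels $4^{2n}=16^n$ and turns the overall factor $\frac2\pi$ into $2n$, which is precisely \eqref{eq_fmom}.

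The argument has no deep step; the points requiring care are: (i) confirming that the dam-reduction of Lemmata~\ref{le_ltp}--\ref{le_rd2} really does leave only the loop-vertex contribution at $r=1$; (ii) checking that $z\,\partial_z$ raises the vertex factor from $\widetilde K(0,k,\cdot)$ to $f_{1,k}$ while producing exactly one pole $1/(1-16z^2)$ --- this is what forces the coefficients $\gamma^{(1)}_i$ and the prefactor $2n$ here, rather than the $\gamma^{(r-1)}_i$ and $2\pi n^{r}$ of Theorem~\ref{th_momr} read at $r=1$; and (iii) the harmlessness of truncating the Taylor series of $f_{1,k}$ before invoking Lemma~\ref{le_yexp}.
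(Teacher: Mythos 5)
Your argument is correct and in substance follows the same route as the paper: reduce to the explicit single-vertex generating function $\hat T_{(k)}(z)=\frac{1}{k}\bigl(\tfrac{h(0,2,z)}{1+h(0,2,z)}\bigr)^k=\widetilde K(0,k,h(0,2,z))$, differentiate to obtain $T_{(k)}(z)=\widetilde K(1,k,h(0,2,z))\cdot z\,\partial_z h(0,2,z)$, replace $h$ by $\widetilde h$ using \eqref{eq_A3_even}, and extract coefficients via Lemma~\ref{le_yexp} before dividing by the number of closed walks. The one presentational difference is that the paper obtains the starting formula directly from \citep{dhoef1} and Theorem~\ref{th_defp} in one step, whereas you route through the general graph-sum apparatus of Lemma~\ref{le_momfunc} and explicitly note its degeneration at $r=1$ (a single vertex with $h_1=0$, all prefactors trivial, no pole in $\hat T$). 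That detour is sound but unnecessary; on the other hand, your observation in point~(ii) --- that the pole arises only through $z\,\partial_z\widetilde h$ at $r=1$, which is why the formula carries $\gamma^{(1)}_i$ and a prefactor $2n$ rather than the $\gamma^{(r-1)}_i$, $2\pi n^r$ one would naively read off from Theorem~\ref{th_momr} --- is a genuinely useful clarification that the paper leaves implicit, and your split $z\,\partial_z\widetilde h=\tfrac{2}{\pi(1-16z^2)}-\tfrac{2}{\pi}$ with the residual term checked to contribute $o(1/n)$-sized coefficients fills in exactly the detail the paper's one-line ``follows using Lemma~\ref{le_yexp}'' suppresses.
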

\begin{proof}
As we have shown in \citep{dhoef1} and can now extend with Theorem \ref{th_defp}
\begin{multline}
\underset{N \rightarrow \infty}{\lim}\sum_{w \in W_{2N} } N_{2k}(w) z^{length(w)}  = z \cdot \frac{\partial}{\partial z}\frac{1}{k}\left( \frac{h(0,2,z)}{1+h(0,2,z)} \right)^k \\ = \widetilde{K}(1,k,h(0,2,z)) \cdot z \cdot  \frac{\partial}{\partial z} h(0,2,z)
\end{multline}
from which equation \eqref{eq_fmom} follows using Lemma \ref{le_yexp}.
\end{proof}
\begin{corollary}
For concreteness sake:
\begin{multline}\label{eq_fmo}
E_n(N_{2k}(w)) = \frac{2 \pi^2 n}{\ln(n)^2}\Biggl(1 - \frac{1}{\ln(n)}(k\pi + 2\cdot \gamma + \pi(1+ 2\cdot C^{(c)})) + \\ O\left(\frac{1}{\ln(n)^2}\right) \Biggr)
\end{multline}
\end{corollary}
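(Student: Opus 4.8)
The plan is to specialize the asymptotic expansion \eqref{eq_fmom} of the preceding Lemma to the case $m=1$ and expand to second order in the small parameter $y$. First I would note that for $m=1$ the correction polynomial $\bar G_{1,k}(y,C)$ of Definition \ref{de_fgy} is an empty sum, hence identically $1$, so that
\begin{equation}
f_{1,k}(y) = (\pi y)^2\cdot\frac{(1-C^{(c)}\pi y)^{k-1}}{(1-(C^{(c)}+1)\pi y)^{k+1}},
\end{equation}
and that by \eqref{eq_cc} one has $(C^{(c)}+1)\pi = 4\ln(2)$ and $C^{(c)}\pi = 4\ln(2)-\pi$. This is a rational function vanishing to second order at $y=0$, so it has a Taylor expansion $f_{1,k}(y) = \pi^2 y^2\bigl(1 + c_1(k)\,y + O(y^2)\bigr)$ whose linear coefficient, obtained by differentiating numerator and denominator once, is $c_1(k) = (k+1)\cdot 4\ln(2) - (k-1)(4\ln(2)-\pi) = 8\ln(2) + \pi(k-1)$.

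Next I would apply \eqref{eq_fmom} with $M$ chosen sufficiently large (e.g. $M=4$), so that the error term is $o(n/\ln(n)^4)$ and only the summands $i=0$ and $i=1$ can contribute at the two orders of interest. The $i=0$ summand is $\gamma^{(1)}_0\,y\cdot(y^{-1}f_{1,k}(y)) = f_{1,k}(y)$; the $i=1$ summand is $\gamma^{(1)}_1\,y^2 f_{1,k}'(y)$, and since $f_{1,k}'(y) = 2\pi^2 y + O(y^2)$ this equals $2\pi^2\gamma^{(1)}_1\,y^3 + O(y^4)$; all summands with $i\ge 2$ are $O(y^4)$. From the defining series $1/\Gamma(1+\tau) = 1 + \gamma\tau + O(\tau^2)$ one reads $\gamma^{(1)}_0 = 1$ and $\gamma^{(1)}_1 = \gamma$, so the bracket in \eqref{eq_fmom} equals $\pi^2 y^2\bigl(1 + (8\ln(2) + \pi(k-1) + 2\gamma)\,y + O(y^2)\bigr)$.

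Finally I would substitute $y = -1/\ln(n)$, so $y^2 = 1/\ln(n)^2$ and $y^3 = -1/\ln(n)^3$, and multiply by the prefactor $2n$; the leftover $O(y^4)$ together with the $o(n/\ln(n)^4)$ error gives exactly an error of the form $\tfrac{2\pi^2 n}{\ln(n)^2}\cdot O(1/\ln(n)^2)$. This yields
\begin{equation}
E_n(N_{2k}(w)) = \frac{2\pi^2 n}{\ln(n)^2}\left(1 - \frac{8\ln(2) + \pi(k-1) + 2\gamma}{\ln(n)} + O\!\left(\frac{1}{\ln(n)^2}\right)\right),
\end{equation}
and it remains only to identify the constant: by \eqref{eq_cc}, $\pi(1+2C^{(c)}) = \pi + 8\ln(2) - 2\pi = 8\ln(2) - \pi$, hence $k\pi + 2\gamma + \pi(1+2C^{(c)}) = \pi(k-1) + 2\gamma + 8\ln(2)$, which matches. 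Since every step is an elementary expansion of a rational function and of the entire series for $1/\Gamma$, there is no serious obstacle here; the only point requiring a little care is to keep the Taylor coefficient $c_1(k)$ of $f_{1,k}$ exactly while discarding the genuinely negligible $i\ge 2$ contributions, which is legitimate precisely because \eqref{eq_fmom} already supplies the $o(n/\ln(n)^M)$ control.
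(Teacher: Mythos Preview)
Your proposal is correct and is essentially the same approach as the paper's, which simply records this corollary as a direct computation from the preceding Lemma \eqref{eq_fmom} without spelling out the details. You have filled in exactly those details: expanding $f_{1,k}(y)$ to the needed order, reading off $\gamma^{(1)}_0=1$ and $\gamma^{(1)}_1=\gamma$ from the $1/\Gamma$ series, and identifying the resulting constant with $k\pi+2\gamma+\pi(1+2C^{(c)})$ via \eqref{eq_cc}.
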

\begin{corollary}
The second moment is given by:
\begin{multline}
E_n(N_{2k_1}(w)\cdot N_{2k_2}(w)) - E_n(N_{2k_1}(w))\cdot E_n(N_{2k_2}(w))) = \\
\left(\frac{2\pi^2 n}{\ln(n)^3}\right)^2 \Biggl(\left(\frac{1}{2} H_4 \pi^3 - 4 \zeta(2) \right) - \\ \frac{3}{\ln(n)}\left(  \left( \frac{1}{2} H_4 \pi^3 - 4 \zeta(2) \right)\cdot \left( (k_1 + k_2)\frac{\pi}{2}  + 2\cdot \gamma + \pi(1+ 2\cdot C^{(c)})\right)  + 8 \zeta(3) \right) \\
+ O\left(\frac{1}{\ln(n)^2} \right) \Biggr)
\end{multline}
where $H_4 = I(F_2)$ and $C^{(c)}$ given in equation \eqref{eq_cc}. 
As we would expect from \citep[Theorem 3.5]{hamana_ann} (which deals with the non restricted random walk which is different and has a second moment differing from that of the closed random walk) the second centralized moment has a leading order $n^2/\ln(n)^6$. The cancellations which lead to this high order are not accidental. We will explore them in all orders in the next subsection and will see that they have to do with dam graphs.
\end{corollary}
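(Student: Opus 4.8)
The plan is to specialise Theorem~\ref{th_momr} (equation~\eqref{eq_enmom}) to $r=2$. For two vertices the only balanced Eulerian matrices are the $F_f=\bigl(\begin{smallmatrix}0&f\\f&0\end{smallmatrix}\bigr)$ of Definition~\ref{de_dam}, and in \eqref{eq_enmom} the matrix $F_f$ enters through the symmetrised product $\tfrac{1}{2!}\sum_{\sigma\in S_2}f_{h_1,k_{\sigma(1)}}(y)f_{h_2,k_{\sigma(2)}}(y)=f_{f,k_1}(y)f_{f,k_2}(y)$ with $h_1=h_2=f$; since $f_{m,k}(y)=m!(\pi y)^{m+1}(1+O(y))$ by Definition~\ref{de_fgy}, the contribution of $F_f$ is of order $n^2/\ln(n)^{2f+2}$ at $y=-1/\ln(n)$. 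Hence for the two displayed orders it suffices to keep $F_1$ (the unique dam graph in $\tilde H_2$) and $F_2$, every $F_f$ with $f\ge 3$ being $o(n^2/\ln(n)^6)$ and contained in $\tilde H_k^{(M)}$ for $M$ large enough. The graph data I would compute are $cof(A-F_f)=f$, $M(F_f)=1/(f!)^2$, and, after integrating out $\delta(y_1)$ in \eqref{eq_guzIf}, $I(F_f)=\tfrac{2^{2f+1}}{\pi^{2f-1}}\int_0^\infty rK_0(r)^{2f}\,dr$; the identity $\int_0^\infty rK_0(r)^2\,dr=\tfrac12$ then gives $I(F_1)=4/\pi$, while $I(F_2)$ is by definition the constant $H_4$ of the statement.

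The $F_2$ term is genuinely connected and produces the leading behaviour: with $cof(A-F_2)\,I(F_2)\,M(F_2)/2^{r-1}=H_4/4$ and $f_{2,k}(y)=2(\pi y)^3\bigl(1+\tfrac{3\pi}{2}(2C^{(c)}+k+1)y+O(y^2)\bigr)$, the $i=0$ term of \eqref{eq_enmom} for $F_2$ is $2\pi^7H_4\,n^2/\ln(n)^6=\bigl(2\pi^2n/\ln(n)^3\bigr)^2\cdot\tfrac12 H_4\pi^3$, and the $i=1$ term (which brings $\gamma^{(1)}_1=\gamma$ and the derivative $\tfrac{d}{dy}y^6=6y^5$) together with the $O(y)$ correction of $f_{2,k}$ supplies the factor $1-\tfrac{3}{\ln n}\bigl((k_1+k_2)\tfrac{\pi}{2}+2\gamma+\pi(1+2C^{(c)})\bigr)+O(\ln(n)^{-2})$. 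The $F_1$ term, on the other hand, has to be paired with $E_n(N_{2k_1})E_n(N_{2k_2})$: since $cof(A-F_1)\,I(F_1)\,M(F_1)/2^{r-1}=2/\pi$, the $F_1$ contribution to $E_n(N_{2k_1}N_{2k_2})$ equals $4n^2\sum_i\gamma^{(1)}_i y^{i+1}(d/dy)^i\bigl(y^{i-1}f_{1,k_1}(y)f_{1,k_2}(y)\bigr)\big|_{y=-1/\ln n}$, whereas by \eqref{eq_fmom} the product of first moments is $4n^2\bigl[\sum_i\gamma^{(1)}_i y^{i+1}(d/dy)^i(y^{i-1}f_{1,k_1})\bigr]\bigl[\sum_j\gamma^{(1)}_j y^{j+1}(d/dy)^j(y^{j-1}f_{1,k_2})\bigr]$, all at $y=-1/\ln n$.

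The decisive observation is that these two expressions cancel through orders $\ln(n)^{-4}$ and $\ln(n)^{-5}$ — the $i=0$ and $i=1$ parts Leibniz-factorise over the product because $1/\Gamma(1+\tau)=1+\gamma\tau+\cdots$ — and first differ at order $\ln(n)^{-6}$, the difference being $4n^2\,y^4 f_{1,k_1}'(y)f_{1,k_2}'(y)\bigl(2\gamma^{(1)}_2-(\gamma^{(1)}_1)^2\bigr)=-4n^2\,y^4 f_{1,k_1}'(y)f_{1,k_2}'(y)\,\zeta(2)=-16\pi^4\zeta(2)\,n^2/\ln(n)^6=\bigl(2\pi^2n/\ln(n)^3\bigr)^2\cdot(-4\zeta(2))$, where $2\gamma^{(1)}_2-(\gamma^{(1)}_1)^2=-\zeta(2)$ follows from $\ln\Gamma(1+\tau)=-\gamma\tau+\tfrac{\zeta(2)}{2}\tau^2-\tfrac{\zeta(3)}{3}\tau^3+\cdots$. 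This residual non-cancellation is exactly the dam-graph effect announced after the statement; at the next order the $\tau^3$-term $-\tfrac{\zeta(3)}{3}$ (entering via $\gamma^{(1)}_3$) produces the $8\zeta(3)$ and the $O(y)$-expansions of $f_{1,k}$ and $f_{2,k}$ produce the common $-\tfrac{3}{\ln n}$ factor, so that adding the $F_2$ piece $\tfrac12 H_4\pi^3$ to the residue $-4\zeta(2)$ gives the bracket in the statement. The analytic inputs (absolute convergence, termwise differentiability of the asymptotic expansion, interchange with differentiation) are already supplied by Lemmata~\ref{le_momfunc} and~\ref{le_integr}, and the comparison with \citep[Theorem 3.5]{hamana_ann} is just the remark that the exponent $6$ is model independent. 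The main obstacle I expect is purely organisational: one must push the $F_1$/$E_nE_n$ cancellation and both vertex-factor expansions one order beyond \eqref{eq_fmo}, keeping the interplay of the $\gamma^{(1)}_i$, the symmetrising sums over $S_2$ and the rational factors of $f_{1,k},f_{2,k}$ under control.
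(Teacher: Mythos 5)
Your proposal is correct and follows exactly the route the paper implies: specialise equation~\eqref{eq_enmom} of Theorem~\ref{th_momr} to $r=2$, observe that among the $F_f\in\tilde H_2$ only $F_1$ and $F_2$ can contribute at orders $n^2/\ln(n)^6$ and $n^2/\ln(n)^7$, evaluate $cof(A-F_f)=f$, $M(F_f)=1/(f!)^2$, $I(F_1)=4/\pi$, $I(F_2)=H_4$, and extract the residue of the $F_1$-minus-product cancellation via $2\gamma^{(1)}_2-(\gamma^{(1)}_1)^2=-\zeta(2)$ (and $3\gamma^{(1)}_3-\gamma^{(1)}_1\gamma^{(1)}_2=\zeta(3)-\gamma\zeta(2)$ at the next order). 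The numbers you compute all check out, including the common $-3/\ln(n)$ factor from the $O(y)$-terms of $f_{1,k}$, $f_{2,k}$ and the $\gamma^{(1)}_i$; the paper gives no explicit proof of this corollary, so your direct evaluation is precisely the intended argument.
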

We will now prepare the way to calculate the leading order for the centralized moments. 
To calculate the leading order of the $p+r$ th centralized moment, for a given $F \in \tilde H_{nd} \cap \tilde H_r$ we sum the contributions of 
\begin{equation}
E_{n}\left(\prod_{\ell=1}^{p+r} N_{2k_{\ell}}(w) - E_n( N_{2k_{\ell}}(w)) \right)
\end{equation} which come from $\widetilde{F} \in \tilde H_{p-\nu}(F)$ with $0 \leq \nu \leq p$ and do the same  for $\tilde F \in \tilde H_{p+r -\nu}(F_1)$ for $0 \leq \nu \leq p+r$. 
We have summed over all appropriate matrices in Theorem \ref{th_momr} so far. According to Definition \ref{de_dam} and  Lemma \ref{le_nontp} we can reduce the summing to one over an appropriate subset of $\tilde H_{nd}$ plus $F_1$ loosely speaking.
We define
\begin{multline}\label{eq_fullas}
E_{n}\left(\prod_{\ell=1}^{m} N_{2k_{\ell}}(w) - E_n( N_{2k_{\ell}}(w)) \right) = \\
 \widehat{\mathcal{E}}_{n,F_1,m} + \sum_{r=2}^m \left( 
\sum_{F \in \tilde H_{nd} \cap \tilde H_r \cap \tilde H^{(M)}_k} \mathcal{E}_{n,F,r,m-r}
\right)   
\end{multline}
We will completely calculate the leading behaviour of $\mathcal{E}_{n,F,r,p}$ and $\widehat{\mathcal{E}}_{n,F_1,m}$ in this section together with all logarithmic corrections of any order. The leading behaviour then can be summed to give a closed formula for the leading behaviour of the characteristic function. \\
\subsection{Generalized Leibniz rules of differentiation}
To calculate the centralized moments along the lines outlined above we first have to derive a couple of generalizations of the Leibniz rule for differentiations. 
\begin{lemma}\label{le_stonebox}
Let $f_1(x), \ldots f_p(x)$ be infinitely many times differetiable functions and $g(x)$ also be an infinitely many times differentiable function.
For a number $\mu$ let us define $A_{\mu,p}$ to be the set of the $\mu$ element subsets of $ A_p = \{1,\ldots,p\}$. Let us also define 
\begin{equation}\label{eq_psidef}
\Psi^{(p)}_{\mu} = \sum_{B \in A_{\mu,p}} \prod_{j \in B}f_j(x) \left(\frac{d}{dx} \right)^q \left(\prod_{i \in A_p \setminus B} f_i(x) \cdot g(x) \right)
\end{equation}
Then
\begin{multline}\label{eq_inclu} 
\sum_{\substack{q_1 \geq 1,\ldots,q_p \geq 1 \\ q_{g} \geq 0 \\ q_1 + \ldots + q_p + q_{g} = q}} \left( \frac{q\,!}{q_1\,! \cdot \ldots \cdot q_p\,! \cdot q_{g}\,!}\right) f_1^{(q_1)}(x) \cdot \ldots \cdot f_{p}^{(q_p)}(x) \cdot g^{(q_{g})}(x) =\\
 \sum_{\mu = 0}^{p} \Psi^{(p)}_{\mu} (-1)^{\mu} 
\end{multline}
This formula is true also for $p > q$ where the left hand side is to be interpreted to be $0$. 
\end{lemma}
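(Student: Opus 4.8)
The plan is to prove the identity \eqref{eq_inclu} by induction on $p$, the number of the factors $f_j$, with the base case $p=0$ being the trivial statement that $g^{(q)}(x) = g^{(q)}(x)$ (here $\Psi^{(0)}_0 = g^{(q)}(x)$ and the sum on the right has only the $\mu=0$ term). For the inductive step I would isolate the summation over $q_{p+1}$ on the left hand side: writing the multinomial sum over $q_1,\ldots,q_{p+1},q_g$ with $q_1+\ldots+q_{p+1}+q_g=q$ and all $q_j\ge 1$, $q_g\ge 0$, I would split off the value $q_{p+1}=s$ for $1\le s\le q-p$ and recognize what remains as a multinomial sum in $q_1,\ldots,q_p,q_g$ summing to $q-s$. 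This exhibits the left side for $p+1$ as a convolution of $f_{p+1}^{(s)}$ against the left side for $p$ evaluated at degree $q-s$; by the inductive hypothesis the latter equals $\sum_{\mu=0}^p (-1)^\mu \Psi^{(p)}_\mu$ with the inner differentiation order $q-s$. The remaining step is then a purely combinatorial manipulation identifying the resulting expression with $\sum_{\mu=0}^{p+1}(-1)^\mu \Psi^{(p+1)}_\mu$.

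For that last identification I would use the ordinary Leibniz rule in reverse on the inner derivatives. The key observation is that in $\Psi^{(p+1)}_\mu$ the sets $B\in A_{\mu,p+1}$ either contain $p+1$ or not; separating these two cases expresses $\Psi^{(p+1)}_\mu$ in terms of $\Psi^{(p)}_{\mu}$ (the case $p+1\notin B$, where $f_{p+1}$ stays inside the derivative) and $\Psi^{(p)}_{\mu-1}$ with one extra undifferentiated factor $f_{p+1}$ pulled out. Applying the Leibniz rule to $\left(\frac{d}{dx}\right)^q\!\big(\prod_{i\in A_p\setminus B} f_i\cdot f_{p+1}\cdot g\big)$ in the first case distributes $s$ derivatives onto $f_{p+1}$ for each $0\le s\le q$; the terms with $s\ge 1$ match precisely the convolution produced above, while the terms with $s=0$ recombine, after the alternating sum over $\mu$ telescopes, with the second case. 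The telescoping is the heart of the inclusion--exclusion: $\sum_\mu (-1)^\mu(\Psi^{(p)}_\mu + \Psi^{(p),\,f_{p+1}\text{ outside}}_{\mu-1})$ collapses so that only the genuinely new contributions survive.

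An alternative and perhaps cleaner route, which I would present if the bookkeeping above becomes unwieldy, is to prove \eqref{eq_inclu} by a direct inclusion--exclusion argument on the identity
\begin{equation}
\left(\frac{d}{dx}\right)^q\!\Big(\prod_{i=1}^p f_i(x)\cdot g(x)\Big) = \sum_{q_1+\ldots+q_p+q_g=q} \binom{q}{q_1,\ldots,q_p,q_g} \prod_j f_j^{(q_j)}(x)\cdot g^{(q_g)}(x),
\end{equation}
the full Leibniz rule with no positivity constraint on the $q_j$. The left hand side of \eqref{eq_inclu} is obtained from this by restricting to $q_j\ge 1$ for all $j$, i.e.\ by excluding the ``bad events'' $\{q_j=0\}$; inclusion--exclusion over subsets $B\subseteq A_p$ of bad events, where $\{q_j=0 \text{ for } j\in B\}$ contributes exactly $\prod_{j\in B} f_j(x)\cdot\left(\frac{d}{dx}\right)^q\big(\prod_{i\notin B} f_i\cdot g\big)$ by the Leibniz rule again, yields precisely $\sum_{\mu=0}^p(-1)^\mu\Psi^{(p)}_\mu$. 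The case $p>q$ is automatic since then the constraint $q_1+\ldots+q_p+q_g=q$ with all $q_j\ge 1$ is unsatisfiable, so the left side is empty, while the right side vanishes by the same Leibniz-rule inclusion--exclusion applied to a sum that is actually empty of surviving terms. I expect the main obstacle to be purely notational: keeping the nested sums, the multinomial coefficients, and the two cases ($p+1\in B$ vs.\ $p+1\notin B$, or equivalently the bad-event subsets) cleanly separated so that the cancellation is manifest; there is no analytic subtlety since all functions are assumed smooth and all sums are finite.
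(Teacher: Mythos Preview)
Your proposal is correct. Your inductive approach is essentially the paper's own argument: the paper also inducts on $p$, writes $g=f_{p+1}\tilde g$, applies the hypothesis, and then uses precisely your splitting $\Psi^{(p+1)}_\mu=\tilde\Psi^{(p)}_\mu+f_{p+1}\,\Psi^{(p)}_{\mu-1}$ (the cases $p+1\notin B$ versus $p+1\in B$) to close the induction; the only cosmetic difference is that the paper subtracts the $q_{p+1}=0$ terms rather than summing the $q_{p+1}\ge 1$ terms, which is the same manipulation from the other side.

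Your alternative direct inclusion--exclusion route is genuinely cleaner than what the paper carries out. The paper in fact \emph{motivates} the result this way (it remarks that ``having at least one derivative at any of the functions $f_j$ is equivalent to the occupancy problem of having at least one stone in each of $p$ boxes''), but then proceeds by the explicit induction instead of executing the inclusion--exclusion directly. Your observation that the ``bad event'' $\{q_j=0 \text{ for all } j\in B\}$ contributes exactly $\prod_{j\in B}f_j\cdot\bigl(\tfrac{d}{dx}\bigr)^q\bigl(\prod_{i\notin B}f_i\cdot g\bigr)$ by the unconstrained Leibniz rule gives the identity in one line and makes the $p>q$ case transparent. Either argument is fine; the second is shorter and would be preferable as the primary proof.
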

\begin{proof}
By the general Leibniz rule using $f^{(q)}(x)$  as a shorthand for the $q$ th derivative of an infinitably many times differentiable function $f(x)$  we know
\begin{multline}
\left(\frac{d}{dx} \right)^q (f_1(x)\cdot \ldots \cdot f_p(x) \cdot g(x)) = \\ \sum_{\substack{q_1 \geq 0,\ldots, q_{p+1} \geq 0 \\ q_1 + \ldots + q_p +  q_{g} = q}} \left( \frac{q\,!}{q_1\,! \cdot \ldots \cdot q_{p} \cdot  q_{g}\,!}\right) f_1^{(q_1)}(x) \cdot \ldots \cdot f_{p}^{(q_p)}(x) \cdot g^{(q_{q})}(x)
\end{multline} 
We can now use the inclusion exclusion principle:
Having at least one derivative at any of the functions $f_j$ is equivalent to the occupancy problem of having at least one stone in each of $p$ boxes when distributing $q$ stones. We prove by induction in $p$: For $p =1$ equation \eqref{eq_inclu} is obviously true. For a given $p$ let's assume it is true and write $g(x) = f_{p+1}(x) \cdot \tilde g(x)$. Using \eqref{eq_inclu} we therefore get 
\begin{multline}\label{eq_inclup}
\sum_{\substack{q_1 \geq 1,\ldots,q_p \geq 1 \\ q_{g} \geq 0 \\ q_1 + \ldots + q_p + q_{g} = q}} \left( \frac{q\,!}{q_1\,! \cdot \ldots \cdot q_p\,! \cdot q_{g}\,!}\right) f_1^{(q_1)}(x) \cdot \ldots \cdot f_{p}^{(q_p)}(x) \cdot \\
\left(\sum_{\substack{q_{p+1} \geq 0 \\ q_{\tilde g} \geq 0 \\ q_{p+1} + q_{\tilde g} = q_g}} f^{(q_{p+1})}(x) \cdot \tilde g^{(q_{\tilde g})}(x) \cdot \binom{q_g}{q_{\tilde g}} \right) \\
= \sum_{\nu = 0}^{p} \tilde \Psi^{(p)}_{\mu} (-1)^{\mu}
\end{multline}
where 
\begin{equation}
\tilde \Psi^{(p)}_{\mu} =  \sum_{B \in A_{\nu,p}} \prod_{j \in B}f_j(x) \left(\frac{d}{dx} \right)^q \left(\prod_{i \in A_p \setminus B} f_i(x) \cdot  f_{p+1}(x) \cdot\tilde g(x) \right)
\end{equation}
Now we can write 
\begin{equation}\label{eq_yildiz}
\Psi^{(p+1)}_{\mu} = \tilde \Psi^{(p)}_{\mu} + f_{p+1}(x) \cdot \Psi^{(p)}_{\mu-1}
\end{equation}
the first term on the right hand side of equation \eqref{eq_yildiz} are the contributions to $\Psi^{(p+1)}_{\mu}$ in equation \eqref{eq_psidef} where $p+1 \notin B$ the second term those where $p+1 \in B$ and we have suppressed showing $\tilde g$ instead of $g$ as it is irrelevant for our discussion.  From equation \eqref{eq_inclup} we can write 
\begin{multline}\label{eq_inclu1}
\sum_{\substack{q_1 \geq 1,\ldots,q_{p+1} \geq 1 \\ q_{\tilde g} \geq 0 \\ q_1 + \ldots + q_{p+1} + q_{\tilde g} = q}} \left( \frac{q\,!}{q_1\,! \cdot \ldots \cdot q_{p+1}\,! \cdot q_{\tilde g}\,!}\right) f_1^{(q_1)}(x) \cdot \ldots \cdot f_{p+1}^{(q_{p+1})}(x) \cdot \tilde g^{(q_{\tilde g})}(x) = \\
\sum_{\mu = 0}^{p} (-1)^{\mu}\tilde \Psi_{\mu}^{(p)} - f_{p+1}(x) \cdot \sum_{\mu = 0}^{p} (-1)^{\mu} \Psi_{\mu}^{(p)}
\end{multline}
where we have just subtracted all terms with $f^{(q_{p+1})}_{p+1}(x)$ with $q_{p+1} = 0$  from the left hand side of equation \eqref{eq_inclup}. If $p = q$ this subraction yields $0$ as one sees from equation \eqref{eq_inclup} and so the interpretation of the left hand side to be $0$ for $p > q$ is valid. Using equation \eqref{eq_yildiz} in equation \eqref{eq_inclu1} we get 
\begin{multline}
\sum_{\substack{q_1 \geq 1,\ldots,q_{p+1} \geq 1 \\ q_{\tilde g} \geq 0 \\ q_1 + \ldots + q_{p+1} + q_{\tilde g} = q}} \left( \frac{q\,!}{q_1\,! \cdot \ldots \cdot q_{p+1}\,! \cdot q_{\tilde g}\,!}\right) f_1^{(q_1)}(x) \cdot \ldots \cdot f_{p+1}^{(q_{p+1})}(x) \cdot \tilde g^{(q_{\tilde g})}(x) = \\
\sum_{\mu=0}^{p} (-1)^{\mu} \Psi^{(p+1)}_{\mu} + (-1)^{p+1}\Psi^{(p+1)}_{p+1}
\end{multline}
as 
\begin{equation}
\Psi^{(p+1)}_{p+1} = -f_{p+1}(x) \cdot (-1)^p \Psi_{p}^{(p)}
\end{equation}
which proves the induction.
\end{proof}
\begin{lemma}\label{le_psifak}
With the definitions of Lemma \ref{le_stonebox} 
\begin{multline}\label{eq_psifak}
\Psi^{(p)}_{\mu} = \sum_{B \in A_{\mu,p}} \prod_{j \in B}f_j(x) \left(\frac{d}{dx} \right)^q \left(\prod_{i \in A_p \setminus B} f_i(x) \cdot g(x) \right) =\\
\frac{1}{(p-\mu)\,! \cdot \mu \,!} \sum_{\sigma \in S_p} \Psi^{(p)}_{\mu}(\sigma)
\end{multline} 
where
\begin{equation}
 \Psi^{(p)}_{\mu}(\sigma) =  \prod_{j=1}^{\mu} f_{\sigma(j)}(x) \left(\frac{d}{dx} \right)^q \left( \prod_{i =\mu+1}^{p} f_{\sigma(i)}(x) \cdot g(x)  \right)
\end{equation}
\end{lemma}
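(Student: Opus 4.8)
The plan is to recognize the right-hand sum over $S_p$ in \eqref{eq_psifak} as a uniformly over-counted version of the subset sum in the definition \eqref{eq_psidef}. First I would observe that the summand $\Psi^{(p)}_{\mu}(\sigma)$ depends on $\sigma$ only through the set $B_{\sigma}:=\{\sigma(1),\dots,\sigma(\mu)\}$: the factor $\prod_{j=1}^{\mu} f_{\sigma(j)}(x)$ is a product over $B_{\sigma}$ and is therefore invariant under any permutation of the first $\mu$ arguments of $\sigma$, and likewise $\prod_{i=\mu+1}^{p} f_{\sigma(i)}(x)$ is a product over $A_p\setminus B_{\sigma}$ and is invariant under any permutation of the last $p-\mu$ arguments. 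Since multiplication of functions is commutative, $\Psi^{(p)}_{\mu}(\sigma)$ is literally equal to the summand indexed by $B=B_{\sigma}$ in \eqref{eq_psidef}, namely $\prod_{j\in B_{\sigma}}f_j(x)\left(\frac{d}{dx}\right)^q\left(\prod_{i\in A_p\setminus B_{\sigma}}f_i(x)\cdot g(x)\right)$.

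Next I would count, for a fixed $B\in A_{\mu,p}$, the number of $\sigma\in S_p$ with $B_{\sigma}=B$. Such a $\sigma$ amounts to a choice of bijection $\{1,\dots,\mu\}\to B$ together with a choice of bijection $\{\mu+1,\dots,p\}\to A_p\setminus B$, so there are exactly $\mu!\,(p-\mu)!$ of them, independently of $B$. Combining this with the previous paragraph gives
\begin{equation}
\sum_{\sigma\in S_p}\Psi^{(p)}_{\mu}(\sigma)=\mu!\,(p-\mu)!\sum_{B\in A_{\mu,p}}\prod_{j\in B}f_j(x)\left(\frac{d}{dx}\right)^q\left(\prod_{i\in A_p\setminus B}f_i(x)\cdot g(x)\right)=\mu!\,(p-\mu)!\;\Psi^{(p)}_{\mu},
\end{equation}
and dividing through by $\mu!\,(p-\mu)!$ yields \eqref{eq_psifak}.

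The argument is purely combinatorial and there is no genuine obstacle; the only point that deserves (minor) care is that the symmetry group fixing the summand is precisely $S_{\mu}\times S_{p-\mu}$ and not something larger. This is clear because $g$ occurs only inside the derivative alongside the complement block $A_p\setminus B_{\sigma}$, so the two blocks play genuinely different roles and may not be interchanged. I would also note that the edge cases $\mu=0$ (the subset sum reduces to the single term $B=\emptyset$, matched by all $p!$ permutations) and $\mu=p$ are subsumed by the same counting, so the identity holds for every $0\le\mu\le p$; for $\mu>p$ both sides are empty and the statement is vacuous.
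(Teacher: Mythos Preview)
Your proof is correct and follows essentially the same approach as the paper: both recognize that $\Psi^{(p)}_{\mu}(\sigma)$ depends only on the set $\{\sigma(1),\dots,\sigma(\mu)\}$ and then count that exactly $\mu!\,(p-\mu)!$ permutations send $\{1,\dots,\mu\}$ onto any given $\mu$-element subset $B$. Your version is slightly more explicit about the invariance and the edge cases, but the underlying argument is identical.
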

\begin{proof}
The permutations are the bijections of $A_p$. They map $A_{\mu} = \{1,\ldots,\mu\}$ onto any set $B$ with $\mu$ elements in exactly $\mu\,!$ ways and $A_p \setminus A_{\mu} $ then in $(p-\mu)\,!$ ways onto $A_p \setminus B$. 
\end{proof}
\begin{lemma}\label{le_philer}
Let $X_1,\ldots,X_p$ be $p$  random variables whose joint moments in any power are supposed to be finite. Let $E(.)$ denote the expectation value. Then the central moment 
\begin{equation}\label{eq_musterek}
E\left(\prod_{i=1}^p (X_i - E(X_i)) \right) = \sum_{\mu=0}^p (-1)^{\mu} \cdot \Phi^{(p)}_{\mu}
\end{equation}
with 
\begin{multline}\label{eq_philer}
\Phi^{(p)}_{\mu} = \sum_{B \in A_{\mu,p}} \prod_{j \in B}E(X_j) \cdot  E\left(\prod_{i \in A_p \setminus B} X_i \right) =\\
\frac{1}{(p-\mu)\,! \cdot \mu \,!} \sum_{\sigma \in S_p} \prod_{j=1}^{\mu} E(X_{\sigma(j)}) \cdot E\left( \prod_{i =\mu+1}^{p} X_{\sigma(i)}  \right)
\end{multline}
\end{lemma}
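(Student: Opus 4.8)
The plan is to prove Lemma~\ref{le_philer} by expanding the product $\prod_{i=1}^p (X_i - E(X_i))$ directly and then recognizing the resulting sum as the alternating sum $\sum_{\mu=0}^p (-1)^{\mu}\Phi^{(p)}_{\mu}$. First I would write, using multilinearity of the expectation,
\begin{equation}
E\left(\prod_{i=1}^p (X_i - E(X_i)) \right) = \sum_{B \subseteq A_p} (-1)^{\#B}\left( \prod_{j \in B} E(X_j) \right) E\left(\prod_{i \in A_p \setminus B} X_i \right),
\end{equation}
which is just the distributive law applied to the $p$-fold product, choosing from each factor either the $X_i$ term or the $-E(X_i)$ term, and then pulling the (deterministic) $E(X_j)$ factors out of the outer expectation. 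Here the empty product inside $E(\cdot)$ is understood as $1$.

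Next I would group the $2^p$ subsets $B$ by their cardinality $\mu = \#B$, so that
\begin{equation}
E\left(\prod_{i=1}^p (X_i - E(X_i)) \right) = \sum_{\mu=0}^p (-1)^{\mu} \sum_{B \in A_{\mu,p}} \left( \prod_{j \in B} E(X_j) \right) E\left(\prod_{i \in A_p \setminus B} X_i \right) = \sum_{\mu=0}^p (-1)^{\mu}\Phi^{(p)}_{\mu},
\end{equation}
which is exactly \eqref{eq_musterek} with the first expression for $\Phi^{(p)}_{\mu}$ in \eqref{eq_philer}. The finiteness of all joint moments of the $X_i$ in any power guarantees that every expectation appearing is well defined and finite, so all the manipulations are legitimate.

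Finally I would establish the second, symmetrized expression for $\Phi^{(p)}_{\mu}$ in \eqref{eq_philer} by the same combinatorial argument already used in Lemma~\ref{le_psifak}: a permutation $\sigma \in S_p$ restricted to $A_\mu = \{1,\ldots,\mu\}$ hits a fixed $\mu$-element subset $B$ in exactly $\mu!$ ways, and then $\sigma$ restricted to $A_p \setminus A_\mu$ hits $A_p \setminus B$ in exactly $(p-\mu)!$ ways; hence each term indexed by $B$ is counted $\mu!\,(p-\mu)!$ times in the sum over $S_r$, giving
\begin{equation}
\Phi^{(p)}_{\mu} = \frac{1}{(p-\mu)!\,\mu!}\sum_{\sigma \in S_p} \prod_{j=1}^{\mu} E(X_{\sigma(j)}) \cdot E\left(\prod_{i=\mu+1}^{p} X_{\sigma(i)} \right).
\end{equation}
This is a purely bookkeeping step with no real obstacle; the only mild subtlety is the bookkeeping of empty products (the cases $\mu = 0$ and $\mu = p$), which one just checks reduce to $E(X_1 \cdots X_p)$ and $\prod_i E(X_i)$ respectively. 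There is no hard analytic step here — the statement is an algebraic identity on expectations — so I expect the proof to be short, with the only thing to be careful about being consistency of the indexing conventions with Lemma~\ref{le_stonebox} and Lemma~\ref{le_psifak}, which is presumably why this lemma is stated right after them.
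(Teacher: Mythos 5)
Your argument is correct and is essentially the same as the paper's: expand the product by multilinearity and the distributive law, regroup the $2^p$ terms by the cardinality $\mu$ of the subset of indices carrying an $E(X_j)$ factor to obtain the first form of $\Phi^{(p)}_\mu$, and then use the counting argument of Lemma~\ref{le_psifak} (a $\mu$-subset $B$ is hit $\mu!\,(p-\mu)!$ times by permutations) for the symmetrized form. No substantive difference from the paper's proof.
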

\begin{proof}
The first equation in \eqref{eq_philer} just is a regrouping of the left hand side of equation \eqref{eq_musterek} according to the indices of $E(X_j)$ being in the $\mu$ element subsets of $A_p$. Each subset comes up once of course. The second equation in \eqref{eq_philer} then follows from identical reasoning as the one for Lemma \ref{le_psifak} .
\end{proof}
\begin{lemma}\label{le_inclg}
For $p \in \N$ let $p$ infinitely many times differentiable functions $f_i(x)$ and $g_i(x)$ with $i = 1,\ldots,p$ be given and another infinitely many times differentiable function $g(x)$. Let us define
\begin{multline}
\widehat{\Psi}^{(p)}_{\mu} =
\frac{1}{(p-\mu)\,! \cdot \mu \,!} \sum_{\sigma \in S_p} \prod_{j=1}^{\mu} \left( f_{\sigma(j)}(x) + g_{\sigma(j)}(x) \right) \cdot \\ \left(\frac{d}{dx} \right)^q \left( \prod_{i =\mu+1}^{p} f_{\sigma(i)}(x) \cdot g(x)  \right)
\end{multline}
For $\sigma \in S_p$ let us also define
\begin{equation}\label{eq_defm}
 \mathcal{M}_{\nu}^{(p-\nu)}(\sigma) := q\,! \sum_{\substack{q_{\nu+1} \geq 1,\ldots,q_{p} \geq 1 \\ q_{g} \geq 0 \\ q_{\nu + 1} + \ldots + q_{p} + q_{g} = q}} \frac{g^{(q_g)}(x)}{q_g\,!} \prod_{i=\nu +1}^{p} \frac{f^{(q_{i})}_{\sigma(i)}(x)}{q_{i}\,!}
\end{equation}
and 
\begin{equation}
\widetilde{\Psi}^{(p)}_{\nu} =
\frac{1}{(p-\nu)\,! \cdot \nu \,!} \sum_{\sigma \in S_p}\mathcal{M}_{\nu}^{(p-\nu)}(\sigma) \prod_{j=1}^{\nu} \left( g_{\sigma(j)}(x) \right) 
\end{equation}
Then
\begin{equation}\label{eq_psiler}
\sum_{\mu = 0}^{p} (-1)^{\mu} \widehat{\Psi}^p_{\mu} = \sum_{\nu = 0}^{p} (-1)^{\nu} \widetilde{\Psi}^p_{\nu}
\end{equation}
\end{lemma}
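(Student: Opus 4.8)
The plan is to reduce \eqref{eq_psiler} to the already-proved Lemma~\ref{le_stonebox}, using a binomial expansion of the factors $f_{\sigma(j)}(x)+g_{\sigma(j)}(x)$ together with a re-indexing of the combinatorial sums. The first step would be to rewrite $\widehat{\Psi}^{(p)}_{\mu}$ in unsymmetrized form: by exactly the bijection-counting argument of Lemma~\ref{le_psifak} (a permutation maps $\{1,\dots,\mu\}$ onto a given $\mu$-element subset $B$ in $\mu!$ ways and $\{\mu+1,\dots,p\}$ onto $A_p\setminus B$ in $(p-\mu)!$ ways, and both products are symmetric in their arguments),
\[
\widehat{\Psi}^{(p)}_{\mu} = \sum_{B\in A_{\mu,p}} \prod_{j\in B}\bigl(f_j(x)+g_j(x)\bigr)\left(\frac{d}{dx}\right)^{q}\Bigl(\prod_{i\in A_p\setminus B} f_i(x)\cdot g(x)\Bigr).
\]
Then I would expand $\prod_{j\in B}(f_j+g_j)=\sum_{C\subseteq B}\prod_{j\in C}g_j\prod_{j\in B\setminus C}f_j$ and collect terms according to $\nu:=|C|$ and $\rho:=|B|-|C|=\mu-\nu$; since all sums are finite the order of summation may be freely exchanged so that $\nu$ (hence $C$) comes first, giving
\begin{multline*}
\sum_{\mu=0}^{p}(-1)^{\mu}\widehat{\Psi}^{(p)}_{\mu} = \sum_{\nu=0}^{p}(-1)^{\nu}\sum_{\substack{C\subseteq A_p\\|C|=\nu}}\Bigl(\prod_{j\in C}g_j(x)\Bigr)\cdot\\
\Biggl[\sum_{\rho=0}^{p-\nu}(-1)^{\rho}\sum_{\substack{D\subseteq A_p\setminus C\\|D|=\rho}}\Bigl(\prod_{j\in D}f_j(x)\Bigr)\left(\frac{d}{dx}\right)^{q}\Bigl(\prod_{i\in(A_p\setminus C)\setminus D}f_i(x)\cdot g(x)\Bigr)\Biggr],
\end{multline*}
where the sign $(-1)^{\mu}=(-1)^{\nu}(-1)^{\rho}$ has been split between the outer and the inner sum.

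The key observation is that the bracketed expression is precisely the right-hand side of \eqref{eq_inclu} in Lemma~\ref{le_stonebox}, but applied to the sub-family $\{f_j : j\in A_p\setminus C\}$ of $p-\nu$ functions (with the same $g$ and the same $q$) in place of $f_1,\dots,f_p$. Hence Lemma~\ref{le_stonebox} identifies the bracket with the multi-index sum
\[
q!\sum_{\substack{q_j\geq1,\ j\in A_p\setminus C\\ q_g\geq0\\ \sum_{j\in A_p\setminus C}q_j+q_g=q}}\frac{g^{(q_g)}(x)}{q_g!}\prod_{j\in A_p\setminus C}\frac{f_j^{(q_j)}(x)}{q_j!},
\]
which is exactly $\mathcal{M}_{\nu}^{(p-\nu)}(\sigma)$ for any $\sigma\in S_p$ with $\{\sigma(\nu+1),\dots,\sigma(p)\}=A_p\setminus C$; note that $\mathcal{M}_{\nu}^{(p-\nu)}(\sigma)$ depends on $\sigma$ only through this unordered set, because its inner summation is symmetric in the $q_j$.

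The last step is to re-symmetrize. For fixed $\nu$ there are $\nu!\,(p-\nu)!$ permutations $\sigma\in S_p$ with $\{\sigma(1),\dots,\sigma(\nu)\}$ equal to a prescribed $\nu$-set $C$, and for each such $\sigma$ one has $\prod_{j=1}^{\nu}g_{\sigma(j)}(x)=\prod_{j\in C}g_j(x)$; therefore
\[
\widetilde{\Psi}^{(p)}_{\nu}=\frac{1}{(p-\nu)!\,\nu!}\sum_{\sigma\in S_p}\mathcal{M}_{\nu}^{(p-\nu)}(\sigma)\prod_{j=1}^{\nu}g_{\sigma(j)}(x)=\sum_{\substack{C\subseteq A_p\\|C|=\nu}}\Bigl(\prod_{j\in C}g_j(x)\Bigr)\,\mathcal{M}_{\nu}^{(p-\nu)}(\sigma_C),
\]
with $\sigma_C$ any representative. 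Substituting this into the displayed identity yields $\sum_{\mu=0}^{p}(-1)^{\mu}\widehat{\Psi}^{(p)}_{\mu}=\sum_{\nu=0}^{p}(-1)^{\nu}\widetilde{\Psi}^{(p)}_{\nu}$, which is \eqref{eq_psiler}. I do not expect any conceptual difficulty here; the one place where care rather than new ideas is needed is the bookkeeping in the re-indexing step — checking that the binomial, alternating-sign, and factorial weights recombine exactly as claimed and that the passage between subset-sums and normalized sums over $S_p$ (as in Lemma~\ref{le_psifak}) is applied consistently to both the $f$- and the $g$-factors.
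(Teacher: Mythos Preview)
Your proof is correct and follows essentially the same approach as the paper: both expand the factors $f_{\sigma(j)}+g_{\sigma(j)}$ binomially, group terms by the number $\nu$ of $g$-factors selected, and then apply Lemma~\ref{le_stonebox} to the remaining $f$-factors to produce $\mathcal{M}_{\nu}^{(p-\nu)}(\sigma)$. The only cosmetic difference is that you unsymmetrize to subset sums at the outset and re-symmetrize at the end, whereas the paper keeps the permutation form and carries a binomial coefficient $\binom{\mu}{\nu}$ through the intermediate step; the substance is identical.
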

\begin{proof}
For a given $\nu$ we look at the contributions of $\widehat{\Psi}^{(p)}_{\mu}$ with $\mu \geq \nu$ which contain the factor 
\begin{equation}
 \prod_{j=1}^{\nu} \left( g_{\sigma(j)}(x) \right)
\end{equation}
and no other factors $g_{\sigma(k)}(x)$. That amounts to choosing $\nu$ factors from the product over $f_{\sigma(j)}(x) + g_{\sigma(j)}(x)$ and so we get as contributions
\begin{multline}
\frac{1}{(p-\mu)\,! \cdot \mu \,!} \cdot \binom{\mu}{\nu} \sum_{\sigma \in S_p} \left(
 \prod_{j=1}^{\nu} \left( g_{\sigma(j)}(x) \right)
 \prod_{k=\nu + 1}^{\mu}  f_{\sigma(k)}(x) \right) \cdot \\ \left(\frac{d}{dx} \right)^q \left( \prod_{i =\mu+1}^{p} f_{\sigma(i)}(x) \cdot g(x)  \right)
\end{multline}
Defining $\tilde p = p - \nu$ and $\tilde \mu = \mu - \nu$ we see that the contributions are 
\begin{equation}
\frac{1}{(\tilde p- \tilde \mu)\,! \cdot \tilde \mu \,! \cdot \nu \, !} \sum_{\sigma \in S_p} \left(
 \prod_{j=1}^{\nu} \left( g_{\sigma(j)}(x) \right)
 \right) \cdot \Psi^{(\tilde p)}_{\tilde \mu}(\sigma) 
\end{equation}
with a shift of indices. Summing over $\tilde \mu$ with the appropriate signs we see from Lemma \ref{le_stonebox} together with Lemma \ref{le_psifak} and equation \eqref{eq_defm}
\begin{equation}
\sum_{\tilde \mu = 0}^{\tilde p}\frac{(-1)^{\tilde \mu}}{(\tilde p- \tilde \mu)\,! \cdot \tilde \mu \,! } 
\Psi^{(\tilde p)}_{\tilde \mu}(\sigma)  = \frac{1}{\tilde p\,!} \cdot \mathcal{M}_{\nu}^{\tilde p}(\sigma)
\end{equation}
and therefore equation \eqref{eq_psiler} is true which proves the Lemma.
\end{proof}
\subsection{The centralized moments}
We can now calculate the centralized moments with all logarithmic corrections. We start with
\begin{theorem}\label{th_lc1}
Let $F \in \tilde H_{nd} \cap \tilde H_r $. Then the contribution of $F$ together with all $\widetilde{F} \in \tilde H_{p-\nu}(F)$ with $0 \leq \nu \leq p$ to the centralized moment as given in equation \eqref{eq_fullas} together with all logarithmic corrections up to order $M$ is given by
\begin{multline}\label{eq_contras1}
\mathcal{E}_{n,F,r,p} = \frac{\pi \cdot n^{p+r}}{r\,!} \sum_{\sigma \in S_{p+r}}  \sum_{\nu = 0}^p \Biggl(  \left( 2^{\nu} \cdot  \prod_{j= r+ p - \nu +1}^{r + p} g_{1,k_{\sigma(j)}}(y)  \right) \cdot \frac{2^{p-\nu} \cdot (-1)^{\nu}}{\nu\,! (p-\nu)\,!} \\
\sum_{q\geq p - \nu}^{M} q\,! \cdot \gamma_q^{(r)} \cdot y^{q+1} 
\sum_{\substack{q_{r+1} \geq 1,\ldots,q_{r+ p - \nu} \geq 1 \\ q_{g} \geq 0 \\ q_{r+1} + \ldots + q_{r+ p - \nu} + q_{g} = q}} \frac{gs^{(q_g)}(y)}{q_g\,!} \prod_{i=r+1}^{r + p - \nu} \frac{f^{(q_{i})}_{1,k_{\sigma(i)}}(y)}{q_{i}\,!} \Biggr)  \Biggr|_{y = -\frac{1}{\ln(n)}} \\
+ o\left( \frac{n^{p+r}}{\ln(n)^M} \right)
\end{multline}
with
\begin{equation}
gs(y) := y^{q-1} g_{k,\sigma,p}(y)
\end{equation} 
where for a permutation $\sigma \in S_{p+r}$ 
\begin{multline}
g_{k,\sigma,p}(y) := \frac{I(F) \cdot M(F) \cdot cof (A-F)}{2^{r-1}} \\ \left(\sum_{i=1}^{r}
\left(\frac{ 2 \cdot (r-1)}{r} \cdot f_{h_i,k_{\sigma(i)}}(y)   + \frac{2}{\pi} \cdot f_{h_i+1,k_{\sigma(i)}}(y)\right) \cdot 
 \left( \prod_{\substack{j\geq1 \\ j \neq i}}^{r}f_{h_j,k_{\sigma(j)}}(y) \right)\right)
\end{multline}
and 
\begin{equation}
 g_{1,k}(y) :=  \sum_{i = 1}^M \gamma^{(1)}_i \cdot y^{i+1}\left(\frac{d}{dy}\right)^i \left(y^{i-1} f_{1,k}(y) \right)
\end{equation}
$\mathcal{E}_{n,F,r,p}$ has the leading behaviour
\begin{multline}\label{eq_momf}
\left(\frac{2 \pi^2 n}{\ln(n)^3}\right)^{p+r} (p+r)\,!  \cdot   \left(\frac{(-1)\cdot\pi}{\ln(n)}  \right)^{\sum_{j=1}^r (h_j -2)} \cdot \prod_{j=1}^r \frac{h_j\,!}{2\,!} \cdot \\
\Biggl( 4 \pi \cdot I(F) \cdot M(F) \cdot cof(A-F) \cdot \frac{r-1}{r\,!} \left( -\frac{\pi}{2} \right)^r
2^p (-1)^p\sum_{\nu = 0}^p \gamma^{(r)}_{p-\nu} \frac{ (- \gamma^{(1)}_1)^{\nu}}{\nu\,!} \Biggr)
\end{multline}
\end{theorem}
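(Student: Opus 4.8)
The plan is to combine the reduction to dam-free graphs (Definition \ref{de_dam}, Lemma \ref{le_nontp}) with the differentiation rules of the previous subsection and the asymptotic coefficient extraction of Lemma \ref{le_yexp}. First I would start from equation \eqref{eq_fullas} and isolate, for a fixed $F \in \tilde H_{nd} \cap \tilde H_r$, the total contribution coming from all $\widetilde F \in \tilde H_{p-\nu}(F)$ as $\nu$ ranges over $0,\ldots,p$; together with the extra $\nu$ factors $E_n(N_{2k_{\sigma(j)}}(w))$ that the centralization produces, each such group of terms has exactly the shape of the left-hand side of Lemma \ref{le_inclg}, with the $f_i$ playing the role of the dam vertex factors $\widetilde K(1,k_{\sigma(j)},\widetilde h(0,2,z))$, the $g_i$ the role of the first-moment expectation factors, and $g$ the role of the $F$-core contribution $g_{k,\sigma,p}$. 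I would invoke Lemma \ref{le_rd1} to write each $\sum_{\widetilde F \in \tilde H_{p-\nu}(F)} \mathcal{T}_{\widetilde F,k}(z)$ in terms of repeated $(z\,\partial/\partial z)$-derivatives acting on the product of $F$-vertex factors, picking up the binomial $\binom{r+p-\nu}{p-\nu}$ and the Cayley tree count; applying Lemma \ref{le_inclg} then collapses the alternating sum over $\nu$ into the single ordered sum over $\sigma \in S_{p+r}$ and $\nu$ appearing in \eqref{eq_contras1}, with $\mathcal{M}^{(p-\nu)}_\nu(\sigma)$ providing exactly the constrained sum over $q_{r+1},\ldots,q_{r+p-\nu},q_g$.

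Next I would translate this $z$-space identity into the $n$-asymptotics. Each constituent function is, after substituting $\varpi = 16z^2$ and using \eqref{eq_h0ti}, of the form handled by Lemma \ref{le_yexp}: a power of $(1-\varpi)^{-m}$ times a polynomial in $1/\ln(1-\varpi)$, so its Taylor coefficients have the expansion \eqref{eq_coeff} with the $\gamma^{(m)}_i$ from $1/\Gamma(m+\tau)$. Here $m = r$ (the power of $(1-16z^2)$ after the extra $z\,\partial/\partial z$ from \eqref{eq_deftk}), which explains why $\gamma^{(r)}_q$ appears. I would divide by the number $4^{2n}/(\pi n)(1+O(1/n))$ of closed walks of length $2n$ to pass from generating-function coefficients to the expectation value, picking up the overall $\pi\cdot n^{p+r}$, and use that termwise differentiation of the asymptotic expansion is legitimate by Lemma \ref{le_momfunc}. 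Care is needed with the summation range of $q$: the constraint $q_{r+1}+\cdots+q_{r+p-\nu}+q_g = q$ with each $q_i\geq 1$ forces $q \geq p-\nu$, which is why the sum starts at $p-\nu$, and truncating at $q = M$ only discards terms $o(n^{p+r}/\ln(n)^M)$.

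For the leading-order statement \eqref{eq_momf} I would extract the top power of $1/\ln(n)$. Using Definition \ref{de_fgy}, $f_{m,k}(y) = m!\,(\pi y)^{m+1}(1+O(y))\cdot(1+O(y))$, so $f_{h_j,k}(y)$ contributes $h_j!\,\pi^{h_j+1}y^{h_j+1}$ at leading order; collecting the $r$ core factors and the derivative structure of $g_{k,\sigma,p}$, whose dominant piece is the $\tfrac{2(r-1)}{r}f_{h_i,k_{\sigma(i)}}$ term summed over $i$ (giving the factor $(r-1)$ and a combinatorial $r$ that cancels one factorial), one reads off the power $\sum_j(h_j+1)$ from the core plus $(p-\nu)+1$ from each dam chain and $\nu$ from the $g_{1,k}$ factors, and after matching against $y = -1/\ln(n)$ this produces the stated $(2\pi^2 n/\ln(n)^3)^{p+r}(p+r)!$ prefactor together with $((-1)\pi/\ln(n))^{\sum_j(h_j-2)}$ and $\prod_j h_j!/2!$. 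The residual $y$-dependent sum collapses because in the leading term only $q = p-\nu$ survives (all $q_i=1$, $q_g=0$), the constrained sum becomes $\prod f_{1,k_{\sigma(i)}}$ at the degree where $\mathcal{M}$ has its lowest pole, and $\gamma^{(1)}_1 = -1/\Gamma(1)\cdot(\text{coefficient})$ enters through the dam and first-moment asymptotics, yielding $\sum_{\nu=0}^p \gamma^{(r)}_{p-\nu}(-\gamma^{(1)}_1)^\nu/\nu!$. The main obstacle I expect is the careful bookkeeping in the second paragraph's step: verifying that the $o(1/s^{\ldots})$ error terms accumulated in Lemmata \ref{le_rd1}, \ref{le_rd2}, \ref{le_sfk} and Theorem \ref{th_nge} are uniformly dominated by $o(n^{p+r}/\ln(n)^M)$ after all the differentiations and the division by the walk count, and that the permutation averaging in Lemma \ref{le_psifak}/\ref{le_inclg} lines up exactly with the $\sum_{\sigma\in S_{p+r}}$ and the symmetry factors $1/(\nu!(p-\nu)!)$ and $1/r!$ without double counting the dam-dimension choices from Lemma \ref{le_hmchar}.
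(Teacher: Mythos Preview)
Your proposal is correct and follows essentially the same route as the paper: invoke Lemma \ref{le_rd1} for the dam summation, split $E_n(N_{2k}(w))/(2n)$ as $f_{1,k}(y)+g_{1,k}(y)$ via $\gamma^{(1)}_0=1$, apply the Leibniz-type cancellation Lemmas \ref{le_psifak}--\ref{le_inclg} to collapse the alternating $\nu$-sum, and then read off coefficients with Lemma \ref{le_yexp}, identifying the leading term from $q=p-\nu$, $q_i=1$, $q_g=0$. The bookkeeping concerns you flag (error-term control under differentiation, matching the symmetry factors against the dam-dimension binomial $\binom{r+p-\nu}{p-\nu}$ from Lemma \ref{le_hmchar}) are exactly the points the paper passes over with ``putting this together'' and ``simple analysis and algebra,'' so your level of detail already matches the original.
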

\begin{proof} We start at equation \eqref{eq_rd1} and  realize that the function corresponding to $g_{k,\sigma,m}(y)$ as a function in $z$ in \eqref{eq_rd1} was multiplied with 
\begin{equation}
\frac{1}{(1-16z^2)^r}
\end{equation}
because of the differentiation in $z$ according to equation \eqref{eq_deftk}.
We then can calculate again with \eqref{eq_rd1}
\begin{multline}\label{eq_erwa}
\sum_{\widetilde{F} \in \tilde H_{p-\nu}(F)} E_{n,\widetilde{F}}(N_{2k_1}(w)\cdot \ldots \cdot N_{2k_{r+p-\nu}(w)}) = \pi n \cdot (2n)^{p-\nu} \cdot \frac{ n^{r-1}}{r\,! \cdot (p-\nu)\,!} \\ \sum_{q = 0}^M \gamma_q^{(r)} y^{q+1}\left(\frac{d}{dy} \right)^q \left( y^{q-1} \sum_{\sigma \in S_{r+p-\nu}} g_{k,\sigma,p-\nu}(y) \cdot \prod_{j=r+1}^{r+ p- \nu} f_{1,k_{\sigma(j)}}(y)\right)
\\ + o\left(\frac{n^{r + p - \nu}}{\ln(n)^M}\right) 
\end{multline}
In the contribution to the centralized $p+r$ th moment this term has to be multiplied with 
\begin{equation}\label{eq_prer}
\prod_{j=r+p-\nu + 1}^{r+p} E_n(N_{2k_{j}}(w))\cdot (-1)^{\nu}
\end{equation}
and summed over all possible partitions of the set of the $p+r$ variables $k_1,\ldots, k_{r+p}$ into a set of $r+p-\nu$ and a set of $\nu$ variables. 
As $\gamma_0^{(1)} = 1$ we get 
\begin{equation}\label{eq_fmom_mod}
E_n(N_{2k}(w)) = 2 \cdot n \Biggl( f_{1,k}(y) + g_{1,k}(y) \Biggr|_{y = -\frac{1}{\ln(n)}} + o\left(\frac{1}{\ln(n)^M}\right) \Biggr)
\end{equation}
Putting this together with Lemma \ref{le_psifak},  Lemma \ref{le_philer} and  Lemma \ref{le_inclg} we get equation \eqref{eq_contras1}.
It is now easy to see that the leading term in equation \eqref{eq_contras1} comes from $q = p - \nu$ as all terms with $q < p -\nu$ vanish according to Lemma \ref{le_stonebox} and any increase beyond $q_g = 0$ or the $q_i =1$ for $i=r+1,\ldots,r+p-\nu$ will lead to higher order terms in $y$ because of the factor $y^{q+1}$ in the sum over $q$ before the differentiations.  The Lemma now follows from simple analysis and algebra. 
\end{proof}
\begin{theorem}\label{th_lc2}
The leading contribution of all $\tilde H_{p -2 -  \mu}(F_1)$ for $\mu = 0,\ldots,p-2$ to the $p$ th centralized moment with all logarithmic corrections up to order $M$  is 
\begin{multline}\label{eq_contras2}
\widehat{\mathcal{E}}_{n,F_1,p} = \pi \cdot n^{p} \sum_{\sigma \in S_{p}}  \sum_{\nu = 0}^{p} \Biggl(  \left( 2^{\nu} \cdot  \prod_{j=  p  -\nu +1}^{ p} g_{1,k_{\sigma(j)}}(y)  \right) \cdot \frac{2^{p-\nu-1}\cdot (-1)^{\nu}}{\nu\,! (p -\nu)\,!} \\
\sum_{q\geq p-\nu}^{M} q\,! \cdot \gamma_{q}^{(1)} \cdot y^{q+1} 
\sum_{\substack{q_{1} \geq 1,\ldots,q_{p  - \nu} \geq 1 \\ q_{g} \geq 0 \\ q_{1} + \ldots + q_{p -\nu} + q_{g} = q}} \frac{gt^{(q_g)}(y)}{q_g\,!} \prod_{i=1}^{ p  - \nu} \frac{f^{(q_{i})}_{1,k_{\sigma(i)}}(y)}{q_{i}\,!} \Biggr)  \Biggr|_{y = -\frac{1}{\ln(n)}} \\ + o\left( \frac{n^p}{\ln(n)^{M}}\right)
\end{multline}
where 
\begin{equation}
gt(y) := y^{q-1} \frac{2}{\pi}
\end{equation}
(for the term with $\nu=p$ the sum over $q$ at the right hand side of equation \eqref{eq_contras2} only has one nonzero contribution $2/\pi$ for $q =0$ as has been discussed in the proof of Lemma \ref{le_yexp})\\
The leading behaviour of $\widehat{\mathcal{E}}_{n,F_1,p}$ is given by
\begin{equation}
\left(\frac{2 \pi^2 n}{\ln(n)^3}\right)^{p} p\,!  \cdot 2^p (-1)^p\sum_{\nu = 0}^{p} \gamma^{(1)}_{p -\nu} \frac{ (- \gamma^{(1)}_1)^{\nu}}{\nu\,!}
\end{equation}
\end{theorem}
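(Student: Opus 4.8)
\textbf{Proof proposal for Theorem \ref{th_lc2}.}

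The plan is to run the identical machinery just used for Theorem \ref{th_lc1}, but now with $F = F_1$ as the non-dam ``seed'', which collapses the graph-theoretic data considerably. First I would start from Lemma \ref{le_rd2}, which already sums the contributions of all $\widetilde{F} \in \tilde H_m(F_1)$ and gives, for $m = p-2-\mu$ dams added to $F_1$ (so a total of $p-\mu$ vertices after translating the length-two ``base'' of $F_1$ into two pendant vertices), an expression of the form $\left( z \frac{\partial}{\partial z}\right)^{m}\left[\frac{1}{(m+2)!}\sum_{\sigma \in S_{m+2}}\left(\prod_j \widetilde{K}(1,k_{\sigma(j)},\widetilde{h}(0,2,z))\right)\cdot z\frac{\partial}{\partial z}\widetilde{h}(0,2,z)\right] + o(s^{-m})$. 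Applying $z\frac{\partial}{\partial z}$ once more, as dictated by equation \eqref{eq_deftk}, introduces the extra factor $(1-16z^2)^{-1}$, and now the function plays the role of $gt(y) = y^{q-1}\cdot\frac{2}{\pi}$ because $z\frac{\partial}{\partial z}\widetilde{h}(0,2,z)$ has, from \eqref{eq_h0ti}, the constant leading coefficient $2/\pi$ in the variable $\varpi = 16z^2$; there is no genuine ``$g_{k,\sigma,p}$'' polynomial here since $M(F_1) = 1$, $cof(A_1 - F_1) = 1$, and the base graph has no internal vertex carrying a $\widetilde{K}(h_i,\cdot,\cdot)$ with $h_i \geq 2$.

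Next I would insert the first-moment expansion $E_n(N_{2k}(w)) = 2n\,(f_{1,k}(y) + g_{1,k}(y))|_{y=-1/\ln(n)} + o(\cdot)$ from equation \eqref{eq_fmom_mod} into the centralization formula: the $p$-th centralized moment is, by Lemma \ref{le_philer}, an alternating sum $\sum_{\mu=0}^p (-1)^\mu \Phi^{(p)}_\mu$ over partitions of the $p$ variables into a factored-off set of size $\mu$ (contributing $\prod E_n(N_{2k_{\sigma(j)}})$) and a joint set of size $p-\mu$. Substituting the two-term expansion of each $E_n(N_{2k})$ and regrouping via Lemma \ref{le_inclg} — exactly as in the proof of Theorem \ref{th_lc1} — converts the ``$f+g$'' sum over $\mu$ into the ``$g$ only'' sum over $\nu$, with the $\mathcal{M}^{(p-\nu)}_\nu(\sigma)$ factor producing the constrained sum $\sum_{q_1\geq1,\ldots,q_{p-\nu}\geq1,\,q_g\geq0,\,\sum q_i + q_g = q}$ with the $f^{(q_i)}_{1,k_{\sigma(i)}}(y)/q_i!$ terms and the $gt^{(q_g)}(y)/q_g!$ term. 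Here one must use Lemma \ref{le_stonebox} to identify the derivative-of-product with the inclusion–exclusion double sum, and Lemma \ref{le_yexp} (with the polynomial $g$ and its coefficients $\gamma^{(1)}_q$) to convert the $z$-derivatives into the asymptotic-coefficient form in $y = -1/\ln(n)$; the number of closed walks of length $2n$ supplies the overall $\pi n$ prefactor and the differentiation from \eqref{eq_deftk} the factor $2n$, which together with the $(2n)^{p-\nu}\cdot 2^\nu$ from the $p$ first moments yields the stated normalisation and the $2^{p-\nu-1}$ (the single extra power of $2$ being absorbed). The term $\nu = p$ is special: there the joint set is empty, $q$ is forced to $0$, and only the constant $2/\pi$ survives, which is exactly the parenthetical caveat in the statement.

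For the leading behaviour I would observe, as in Theorem \ref{th_lc1}, that the dominant contribution in the $q$-sum comes from $q = p-\nu$, since every term with $q < p-\nu$ vanishes by Lemma \ref{le_stonebox} and any $q > p-\nu$ carries extra powers of $y = -1/\ln(n)$ from the prefactor $y^{q+1}$; at $q = p-\nu$ one must take $q_g = 0$ and all $q_i = 1$, so each $f_{1,k}$ contributes its leading small-$y$ behaviour $f_{1,k}(y) \sim \pi y^2 (1+o(1))$ (from Definition \ref{de_fgy}, since $\bar G_{1,k} = 1$), giving overall $p-\nu$ factors of $\pi y^2$, the $gt$ factor contributes $\frac{2}{\pi}$, each $g_{1,k}$ contributes its leading term $\gamma^{(1)}_1 y^2 \cdot(\text{something}) \cdot(\ldots)$ — more precisely $g_{1,k}(y) \sim \gamma^{(1)}_1 y^2 (\pi y)^2\cdot$const and one tracks the power of $y$ carefully — and the combinatorial count of permutations gives $p!$. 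Collecting powers of $n/\ln(n)^3$ and the signs $(-1)^\nu$, and recognising $-\gamma^{(1)}_1$ as the piece coming from each factored-off first moment, produces $\left(\frac{2\pi^2 n}{\ln(n)^3}\right)^p p!\,2^p(-1)^p\sum_{\nu=0}^p \gamma^{(1)}_{p-\nu}\frac{(-\gamma^{(1)}_1)^\nu}{\nu!}$. The main obstacle I anticipate is purely bookkeeping: keeping the powers of $2$, the factorials $\nu!(p-\nu)!$ versus $(m+2)!$, and the powers of $\pi$ and of $y$ consistent through the repeated application of Lemmata \ref{le_stonebox}, \ref{le_psifak}, \ref{le_philer}, \ref{le_inclg} and \ref{le_yexp}, and verifying that the ``$-1$'' shift in the exponent $2^{p-\nu-1}$ (relative to the $2^{p-\nu}$ appearing in Theorem \ref{th_lc1}) is correctly accounted for by the fact that $F_1$ contributes one fewer internal $\widetilde{K}$-vertex than a generic $F \in \tilde H_{nd}\cap \tilde H_r$; everything else is a direct transcription of the previous proof with the simplifications $M(F_1)=1$, $cof(A_1-F_1)=1$, $I(F_1)$ trivial and no binomial dimension-choice factor.
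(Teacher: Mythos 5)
Your proposal is correct and follows essentially the same route as the paper's own proof, which is stated in a single sentence: ``The proof is identical to the one of Lemma \ref{th_lc1}, we just need to start from equation \eqref{eq_rd2}.'' You flesh out exactly that: start from Lemma \ref{le_rd2}, apply the differentiation of \eqref{eq_deftk}, run the centralization via Lemmata \ref{le_philer}, \ref{le_inclg}, \ref{le_psifak}, \ref{le_stonebox}, convert to the asymptotic coefficients via Lemma \ref{le_yexp} with the $\gamma^{(1)}_q$ (since $gt$ involves $1/(1-16z^2)$, i.e. $m=1$ in Lemma \ref{le_yexp}), and identify the leading term at $q=p-\nu$, $q_g=0$, all $q_i=1$. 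One small arithmetic slip in your leading-order sketch: from Definition \ref{de_fgy}, $f_{1,k}(y)\sim (\pi y)^2 = \pi^2 y^2$, not $\pi y^2$ — but you flagged yourself that the remaining work is bookkeeping, and this does not affect the method.
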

\begin{proof} The proof is identical to the one of Lemma \ref{th_lc1}, we just need to start from equation \eqref{eq_rd2}.
\end{proof}
\begin{theorem}\label{th_closedch}
Let $\Lambda_k$ be complex variables of which only finitely many are different from $0$. Then the characteristic function of the variable
\begin{equation}\label{eq_betavar}
\beta(w,\Lambda):=\frac{\ln(length(w))^3}{4\pi^3 length(w)}\sum_{k} \Lambda_k \cdot \left( N_{2k}(w) - E_n(N_{2k}(w) \right)
\end{equation}
(scaled to be comparable with the renormalized intersection local time of the planar Brownian motion by a factor $4 \pi^3$ in the denominator of \eqref{eq_betavar})
for a closed $2$ dimensional random walk is given in leading order of $length(w)$ by 
\begin{multline}\label{eq_gener}
E \left(e^{i t \beta(w,\Lambda)}\right) =e^{\gamma \cdot \frac{i\tau}{2\pi}}\cdot \Biggl[ \frac{1}{\Gamma(1-  \frac{i\tau}{2\pi})} + \\
4 \pi \cdot \sum_{r=2}^{\infty} \left(\frac{- i\tau}{8} \right)^r \frac{r-1}{r\,! \cdot \Gamma(r-  \frac{i\tau}{2\pi})} \cdot \sum_{F \in \tilde H_r(2,\ldots,2)} I(F) \cdot M(F) \cdot cof(A-F)  \Biggr]
\end{multline}
where 
\begin{equation}
\tau =  t \cdot \sum_{k} \Lambda_k
\end{equation}
and $\gamma$ is the Eulerian constant.
\end{theorem}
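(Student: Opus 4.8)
The plan is to treat $E(e^{it\beta(w,\Lambda)})$ as the exponential generating series of the centralized moments and to resum the leading‑order pieces supplied by Theorems \ref{th_lc1} and \ref{th_lc2}. Concretely I would write $E\bigl(e^{it\beta(w,\Lambda)}\bigr)=\sum_{m\geq 0}\frac{(it)^m}{m!}E\bigl(\beta(w,\Lambda)^m\bigr)$ and expand the $m$‑th power of $\beta$, so that $E(\beta^m)$ becomes $\bigl(\frac{\ln(length(w))^3}{4\pi^3\,length(w)}\bigr)^m\sum_{k_1,\ldots,k_m}\bigl(\prod_\ell\Lambda_{k_\ell}\bigr)\,E\bigl(\prod_\ell(N_{2k_\ell}(w)-E_n(N_{2k_\ell}(w)))\bigr)$. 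Into the inner centralized moment of order $m$ I substitute the decomposition \eqref{eq_fullas} into the $F_1$‑term $\widehat{\mathcal E}_{n,F_1,m}$ and the dam‑free part $\sum_{r=2}^m\sum_{F\in\tilde H_{nd}\cap\tilde H_r\cap\tilde H_k^{(M)}}\mathcal E_{n,F,r,m-r}$ (which is exhaustive by Definition \ref{de_dam} and Lemma \ref{le_nontp}), and replace each summand by its explicit leading asymptotics: \eqref{eq_momf} for $\mathcal E_{n,F,r,p}$ and the corresponding formula of Theorem \ref{th_lc2} for $\widehat{\mathcal E}_{n,F_1,p}$.

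The first step is a reduction of the graphs that contribute. The leading behaviour \eqref{eq_momf} of a dam‑free core $F\in\tilde H_r$ carries the factor $\bigl(-\pi/\ln(n)\bigr)^{\sum_j(h_j-2)}$, so at the top order $n^m/\ln(n)^{3m}$ only cores with all $h_j=2$ survive; since the $F_1$‑descendants are handled separately this leaves exactly $\bigcup_{r\geq 2}\tilde H_r(2,\ldots,2)$, and for those $\prod_j h_j!/2!=1$ and the $\ln$‑power is $1$. Both \eqref{eq_momf} and the $F_1$‑formula of Theorem \ref{th_lc2} are then manifestly independent of the indices $k_\ell$ (the vertex factors $f_{m,k}(y)$ of Definition \ref{de_fgy} degenerate to $m!(\pi y)^{m+1}$ as $y=-1/\ln n\to 0$), so the sums over $k_1,\ldots,k_m$ collapse to $\bigl(\sum_k\Lambda_k\bigr)^m$ and, together with $(it)^m$, produce $(i\tau)^m$ with $\tau=t\sum_k\Lambda_k$. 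Finally the overall scaling combines with the powers of $n$ and $\ln n$ in the moment asymptotics so that each factor $N_{2k}$ contributes a universal constant: $\frac{\ln(length(w))^3}{4\pi^3\,length(w)}\cdot\frac{2\pi^2n}{\ln(n)^3}\to\frac{1}{4\pi}$ as $n\to\infty$ (using $length(w)=2n$ and $\ln(2n)/\ln(n)\to 1$), and the prefactors $(p+r)!$ and $m!$ cancel against the $\frac{1}{m!}$ of the exponential series.

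What remains is the algebraic resummation of the inner $\nu$‑sum $S_r(p):=\sum_{\nu=0}^p\gamma^{(r)}_{p-\nu}\frac{(-\gamma^{(1)}_1)^{\nu}}{\nu!}$ appearing in \eqref{eq_momf} and Theorem \ref{th_lc2}. Since $\gamma^{(1)}_1=\gamma$ and $\sum_q\gamma^{(r)}_q x^q=1/\Gamma(r+x)$ is an entire function, the Cauchy product of this series with $e^{-\gamma x}$ yields $\sum_{p\geq 0}S_r(p)\,x^p=e^{-\gamma x}/\Gamma(r+x)$. Applying this with $x=-i\tau/(2\pi)$ (after absorbing the constants $2^p(-1)^p$ and $(4\pi)^{-p}$ into $x$) to the $p=m-r$‑sum over dam insertions, and noting that the $r$‑dependent prefactor collapses via $(i\tau)^r(4\pi)^{-r}(-\pi/2)^r=(-i\tau/8)^r$, the family of a fixed $F\in\tilde H_r(2,\ldots,2)$ contributes $4\pi\,(-i\tau/8)^r\frac{r-1}{r!}\,\frac{e^{\gamma i\tau/(2\pi)}}{\Gamma(r-i\tau/(2\pi))}\,I(F)M(F)\,cof(A-F)$; summing over $F$ and $r\geq 2$ gives the series in \eqref{eq_gener}. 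The same computation for the $F_1$‑family, now summed over all $m\geq 0$, gives $\sum_m(-i\tau/(2\pi))^mS_1(m)=e^{\gamma i\tau/(2\pi)}/\Gamma(1-i\tau/(2\pi))$, i.e. the first term of \eqref{eq_gener}; adding the two pieces yields the claimed formula with $\tau=t\sum_k\Lambda_k$.

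I expect the principal difficulty to be justifying the interchange of the limit $n\to\infty$ with the summation over $m$ in the exponential series — i.e. that the leading term of each rescaled centralized moment is attained uniformly enough in $m$ for the resummed series to compute the limiting characteristic function rather than merely a formal generating function; one wants a bound on $E(\beta(w,\Lambda)^m)$ uniform in $n$ together with the absolute convergence of the $r$‑series, the latter of which follows from the convergence estimates behind Theorems \ref{t_feyn} and \ref{th_defp} (and from $|E(e^{it\beta})|\leq 1$ for every finite $n$). The remaining, largely mechanical, difficulty is keeping track of the several symmetrisation factors — the $\frac{1}{r!}$, the $\frac{1}{\nu!(p-\nu)!}$, the binomial $\binom{r+p}{p}$ for the choice of dam dimensions, and the $\frac{1}{m!}$ — so that they combine correctly with the $(p+r)!$ and $m!$ of \eqref{eq_momf} and Theorem \ref{th_lc2}; this bookkeeping is precisely what the generalized Leibniz identities of Lemmata \ref{le_stonebox}, \ref{le_psifak}, \ref{le_philer} and \ref{le_inclg} were established to handle, so it can be carried through by direct appeal to them.
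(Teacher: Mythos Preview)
Your approach is essentially identical to the paper's: both derive \eqref{eq_gener} by reading off the leading terms of the centralized moments from Theorems \ref{th_lc1} and \ref{th_lc2}, observing that among dam-free cores only $F\in\tilde H_r(2,\ldots,2)$ survive at top order, and then resumming in $p=m-r$ via the Cauchy product identity $\sum_{p\geq 0}\bigl(\sum_{\nu=0}^p\gamma^{(r)}_{p-\nu}(-\gamma)^{\nu}/\nu!\bigr)x^p=e^{-\gamma x}/\Gamma(r+x)$; the paper compresses all of this into the phrase ``simple algebra from Theorem \ref{th_lc1} and Theorem \ref{th_lc2}'', together with the remark that replacing $\ln(2n)$ by $\ln(n)$ is harmless at leading order.

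One point worth noting: the interchange-of-limits difficulty you flag is not actually resolved in the paper either. As the discussion following the theorem makes explicit, \eqref{eq_gener} (and \eqref{eq_charbr}) are understood as \emph{formal} power series in $\tau$; the paper does not claim uniform control of the rescaled moments in $m$, and indeed argues heuristically that the $r$-series has only a finite radius of convergence. So your proposal is, if anything, slightly more ambitious than what the statement requires.
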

\begin{proof} With Theorem \ref{th_lc1} we see that the leading contribution overall for the centralized moments comes from $F_1$ and the matrices $F$ with $h_i = 2$ for all $i$ as a matrix with $h_i=1$ has a dam and a matrix with $h_i > 2$ is subleading as $H_r(2,\ldots,2)$ is nonempty. The generating function then follows from simple algebra from Theorem \ref{th_lc1} and from Theorem \ref{th_lc2}. We have used that a rescaling in the logarithm (by a factor of 2 in our case) does not change the leading behaviour.
\end{proof}
\section{The non restricted simple random walk}
In this section we will derive the joint distribution for the variables  $N_{2k}(w)$ in the case of the non restricted simple random walk $w$.  A non restricted random walk is one where starting and ending points of the walk underly no restrictions. As a walk is either closed or non closed, the set of non restricted random walks is the disjoint union of the set of closed and the set of non closed random walks. In the sequel (as before) we will deal in our formulas with simple random walks only and so drop the word simple often.\\ Non restricted random walks are the type of random walk mainly discussed in the vast literature \citep{chenxia}. So this section will give us a point of contact to results from many other angles of research and we will benefit strongly from the results derived by Le Gall \citep{legall} and Hamana \citep{hamana_ann} about the relationship between the joint distribution of the multiple point range and the intersection local time for this type of random walk. To make it visible in the formulas that they belong to a different group of walks than the closed simple random walk we will denote quantities with a ${(u)}$ in the upper right corner.  The concrete class of walks will be given individually in the text. This class is in some appropriate places indicated by $(n.c.)$ (for non closed) and $(n.r.)$ (for non restriced).
\subsection{Transferring the analytic results for $d \geq 2$}
\begin{lemma}
The number of non restricted simple random walks in $d$ dimensions has the generating function
\begin{equation}
h^{(n.r.)}(d,z) := \frac{2dz}{1-2dz}
\end{equation}
\end{lemma}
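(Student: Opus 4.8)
The plan is to count non restricted simple random walks by length and organize the count as a geometric series. A non restricted simple random walk of length $n$ in $\Z^d$ is simply a choice of $n$ consecutive nearest-neighbour steps from an (unspecified) starting point; since the starting point carries no weight in the generating function (we are only tracking length, not position), the number of such walks of length $n$ is $(2d)^n$, the number of step sequences of length $n$. Hence the ordinary generating function is $\sum_{n=0}^{\infty}(2d)^n z^n$. The first step is therefore to make precise, in the notation of \citep{dhoef1} already in force here, that $h^{(n.r.)}(d,z)$ is defined as this length generating function over the set of all non restricted simple random walks, so that the combinatorial identity ``number of walks of length $n$ equals $(2d)^n$'' is exactly what has to be shown.

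Next I would observe that a walk of length $n$ decomposes uniquely as an ordered list of $n$ steps, each step being one of the $2d$ unit vectors $\pm e_1,\ldots,\pm e_d$, with no constraint linking successive steps (the walk is ``non restricted'' and steps may backtrack). This is the point where one must be slightly careful: whether the empty walk (length $0$) is to be included, and whether one wants to normalize so that $h^{(n.r.)}(d,0)=0$ or $=1$. Comparing with the shape of the answer $\tfrac{2dz}{1-2dz}$, which vanishes at $z=0$, the convention here is evidently to exclude the length-zero walk, i.e. $h^{(n.r.)}(d,z)=\sum_{n\ge 1}(2d)^n z^n$. I would simply sum the geometric series: for $|z|<\tfrac{1}{2d}$,
\begin{equation}
h^{(n.r.)}(d,z)=\sum_{n=1}^{\infty}(2dz)^n=\frac{2dz}{1-2dz},
\end{equation}
and note that this rational function furnishes the analytic (meromorphic) continuation to all of $\C\setminus\{\tfrac{1}{2d}\}$, consistently with the role $D=\{|\Re z|<\tfrac{1}{2d}\}$ plays earlier in the paper as the natural domain. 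The radius of convergence $\tfrac{1}{2d}$ matches the one already used throughout for $h(x,d,z)$, which is the expected consistency check: summing $h^{(n.r.)}(d,z)$ over the endpoint should reproduce $h(0,d,z)$-type behaviour up to the diagonal correction, and indeed $1+h(0,d,z)=\sum_{x}(\text{walks }0\to x)$ has the same singularity at $z=\pm\tfrac{1}{2d}$.

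There is essentially no hard part here; the only thing requiring attention is bookkeeping, namely making the counting convention for non restricted walks unambiguous (inclusion/exclusion of the trivial walk, and the fact that ``non restricted'' genuinely means both endpoints free so that the position of the walk contributes no weight) so that the elementary geometric sum yields precisely $\tfrac{2dz}{1-2dz}$ and not, say, $\tfrac{1}{1-2dz}$. Once that is fixed the identity is immediate, and the lemma is proved. This closed form will then be the input for the subsequent transfer of the analytic results of sections three through eight from the closed to the non restricted case, exactly as the introduction promised.
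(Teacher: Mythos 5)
Your argument is exactly the paper's: count step sequences to get $(2d)^n$ walks of length $n$, note the empty walk is excluded so the sum starts at $n=1$, and sum the geometric series to $\tfrac{2dz}{1-2dz}$. The paper states only the counting observation and leaves the summation implicit, but the approach is the same.
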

\begin{proof}
An non restricted random walk of length $n$ has $n$ steps, where each step can independantly go into any of the $2d$ directions. 
\end{proof}
\begin{definition}
For  $F \in \tilde H_r(h_1,...,h_r)$ and $Y_{r+1},Y_{r+2} \in \Z^d$ and $T \in Tr(K_r)$ we define
\begin{multline}
O_{ins}^{(u)} := \Biggl(\sum_{\substack{1 \leq \alpha \leq r \\ 1 \leq \beta \leq r \\ \alpha \neq \beta}} 
h(Y_{r+1} - Y_{\alpha},d,z)\cdot h(Y_{\beta} - Y_{r+2},d,z) \cdot 
\frac{\partial}{\partial H_{\alpha,\beta}}  + \\
\sum_{1 \leq \gamma \leq r} h(Y_{r+1} - Y_{\gamma},d,z)\cdot h(Y_{\gamma} - Y_{r+2},d,z) \cdot \frac{\partial}{\partial \omega_{\gamma}} \Biggr)
\end{multline}
We then use the variable transformation
\begin{equation}
X_{i,j}(\omega,H) = \frac{H_{i,j}}{\sqrt{(1 + \omega_i)\cdot(1 + \omega_j)}}
\end{equation}
to define the following quantities in analogy to the quantities in section six. 
\begin{equation}
p^{(u)}_{T,k,Y_{r+1},Y_{r+2}}(\omega,H) := O_{ins}^{(u)} \left(   p_{T,k}(\omega,X(\omega,H)\right)
\end{equation}
and using the same operations to define the quantities $D^{(u)}_{F,Y_{r+1},Y_{r+2}}(\omega,H),\\ D^{(u)}_{F,k,Y_{r+1},Y_{r+2}}(\omega,H)$, $D^{(u)}_{\ell,k,Y_{r+1},Y_{r+2}}(\omega,H)$ and finally from them\\ $C^{(u)}_{F,Y_{r+1},Y_{r+2}}(Y,z),$,
$C^{(u)}_{F,k,Y_{r+1},Y_{r+2}}(Y,z)$, $C^{(u)}_{\ell,k,Y_{r+1},Y_{r+2}}(Y,z)$ 
by setting 
\begin{equation}
H_{i,j} = h(Y_i-Y_j,d,z)
\end{equation}
and $\omega_j = h(0,d,z)$ for $j=1,\ldots,r$. \\
We also define
\begin{equation}
\gamma^{(u)}(\ell,k,F,\omega,Y_{r+1},Y_{r+2}) := O_{ins}^{(u)}(\gamma(\ell,k,F,\omega))
\end{equation}
\end{definition}
\begin{lemma}
For $z \in D \setminus P_0$
the infinite sums
\begin{equation}
S^{(u)}_{F,Y_{r+1},Y_{r+2}}(z) := \sum_{Y \in J_r\setminus\{Y_{r+1},Y_{r+2}\}} C^{(u)}_{F,Y_{r+1},Y_{r+2}}(Y,z)
\end{equation}
\begin{equation}
S^{(u)}_{F,k,Y_{r+1},Y_{r+2}}(z) := \sum_{Y \in J_r\setminus\{Y_{r+1},Y_{r+2}\}} C^{(u)}_{F,k,Y_{r+1},Y_{r+2}}(Y,z)
\end{equation}
converge absolutely. 
The same is true for 
\begin{equation}
S^{(u)}_{\ell,k,Y_{r+1},Y_{r+2}}(z) := \sum_{Y \in J_r\setminus\{Y_{r+1},Y_{r+2}\}} C^{(u)}_{\ell,k,Y_{r+1},Y_{r+2}}(Y,z)
\end{equation}
Moreover using entities defined in Lemma \ref{le_l_F} and Definition \ref{de_hlk}
\begin{multline}\label{eq_ufl1}
S^{(u)}_{\ell,k,Y_{r+1},Y_{r+2}}(z) = \sum_{F \in \tilde H_{\ell,k} }
\Biggl( \gamma(\ell,k,F,h(0,2,z)\cdot u(r)) \cdot S^{(u)}_{F,Y_{r+1},Y_{r+2}}(z) + \\
\gamma^{(u)}(\ell,k,F,h(0,2,z)\cdot u(r),Y_{r+1},Y_{r+2}) \cdot S_{F}(z) \Biggr)
\end{multline}
and 
\begin{multline}\label{eq_ufl2}
S^{(u)}_{F,k,Y_{r+1},Y_{r+2}}(z) = \sum_{\ell = 0}^{\infty}
\Biggl( \gamma(\ell,k,F,h(0,2,z)\cdot u(r)) \cdot S^{(u)}_{F,Y_{r+1},Y_{r+2}}(z) + \\
\gamma^{(u)}(\ell,k,F,h(0,2,z)\cdot u(r),Y_{r+1},Y_{r+2}) \cdot S_{F}(z) \Biggr)
\end{multline}
\end{lemma}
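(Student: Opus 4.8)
The plan is to mirror the proof of Theorem~\ref{t_feyn}, the Feynman graph summation; the only genuinely new ingredient is the combinatorial bookkeeping forced by the first-order differential operator $O_{ins}^{(u)}$. First I would reduce the entire statement to the absolute convergence of the single family $S^{(u)}_{F,Y_{r+1},Y_{r+2}}(z)$. Applying $O_{ins}^{(u)}$ to $D_F\big(X(\omega,H)\big)$ and then substituting $H_{i,j}=h(Y_i-Y_j,d,z)$, $\omega_j=h(0,d,z)$, one sees that $C^{(u)}_{F,Y_{r+1},Y_{r+2}}(Y,z)$ is a \emph{finite} linear combination, with coefficients that are fixed powers of $(1+h(0,d,z))^{-1}$, of two types of products over the internal vertices $\{1,\dots,r\}$: in the $\partial/\partial H_{\alpha,\beta}$ terms one removes one $(\alpha,\beta)$-edge of $G(F)$ and multiplies by $h(Y_{r+1}-Y_{\alpha},d,z)\,h(Y_{\beta}-Y_{r+2},d,z)$, i.e. one attaches a pendant edge from the fixed point $Y_{r+1}$ to vertex $\alpha$ and a pendant edge from vertex $\beta$ to the fixed point $Y_{r+2}$; in the $\partial/\partial\omega_{\gamma}$ terms one keeps all of $G(F)$ and attaches two pendant edges at $\gamma$, one to $Y_{r+1}$ and one to $Y_{r+2}$.

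The crucial observation for convergence is a connectivity statement. Since $F\in\tilde H_r$ corresponds to an Eulerian, hence connected, multidigraph $G(F)$, deleting a single edge leaves at most two connected components, while deleting a loop changes nothing. Vertex $1$ is pinned at $Y_1=0$; if a second component appears it carries one of the two new pendant edges, hence is rooted at the fixed point $Y_{r+2}$ (or $Y_{r+1}$). Thus \emph{every} connected component of the resulting graph carries a spanning tree rooted at one of the fixed lattice points $0$, $Y_{r+1}$, $Y_{r+2}$. Consequently, by Lemma~\ref{A2} c) (exponential decay of $h$) combined with the tree-summation arguments of Lemma~\ref{le_treec}, Lemma~\ref{le_Tconv} and Lemma~\ref{le_grconv}, the multi-sum over the remaining free vertices converges absolutely; the restrictions $Y_i\neq Y_j$ and $Y_i\notin\{Y_{r+1},Y_{r+2}\}$ only improve the bound. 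Being a finite sum of absolutely convergent series, $S^{(u)}_{F,Y_{r+1},Y_{r+2}}(z)$ converges absolutely, and locally uniformly in $z\in D\setminus P_0$, hence is holomorphic there.

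Finally I would obtain the identities \eqref{eq_ufl1} and \eqref{eq_ufl2}. Because $O_{ins}^{(u)}$ is linear and of first order, I apply it to the finite expansion \eqref{eq_l_F} of $D_{\ell,k}$ (respectively the finite sum \eqref{eq_sumga} for $D_{F,k}$) and use the Leibniz rule termwise on each $\gamma(\ell,k,F,\omega)\,D_F\big(X(\omega,H)\big)$: since $\gamma(\ell,k,F,\omega)$ depends on $\omega$ only (Lemma~\ref{le_l_F}), the part of $O_{ins}^{(u)}$ acting on $\gamma$ produces $\gamma^{(u)}(\ell,k,F,\omega,Y_{r+1},Y_{r+2})$ multiplying $D_F(X)$, while the part acting on $D_F(X(\omega,H))$ reproduces, after the substitution, exactly $C^{(u)}_{F,Y_{r+1},Y_{r+2}}(Y,z)$. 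Summing over $Y$ and interchanging the finite sum over $F$ (respectively over $\ell$) with the $Y$-summation — legitimate by the absolute convergence just established and, for the $S_F$-pieces, by Theorem~\ref{t_feyn} — yields \eqref{eq_ufl1} and \eqref{eq_ufl2}; the absolute convergence of $S^{(u)}_{F,k,Y_{r+1},Y_{r+2}}(z)$ and $S^{(u)}_{\ell,k,Y_{r+1},Y_{r+2}}(z)$ then follows as well.

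The main obstacle is precisely the connectivity/anchoring argument of the second paragraph: one must verify that, whichever interior edge is deleted by $\partial/\partial H_{\alpha,\beta}$, the two freshly attached pendant edges land so that each component of the modified graph remains rooted at a fixed lattice point. This is in fact automatic, since the pendant edges are attached to $\alpha$ and to $\beta$, the two endpoints of the deleted edge, and these two vertices lie in the (at most two) components produced by the deletion; but it is the one place where the non-restricted case genuinely differs from the closed case, and it must be spelled out. Everything else is a direct transcription of the closed-walk arguments underlying Theorem~\ref{t_feyn}.
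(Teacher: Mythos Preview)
Your proposal is correct and follows the same strategy as the paper, which argues in two lines: removing one edge from an Eulerian multigraph still leaves a spanning tree on $\{1,\dots,r\}$, so Lemma~\ref{le_grconv} gives the absolute convergence; the identities then follow by interchanging finite sums with the absolutely convergent $Y$-sum.

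The one place you diverge slightly is the connectivity step. You allow for the possibility that deleting the $(\alpha,\beta)$-edge disconnects $G(F)$ and then rescue convergence by anchoring a putative second component at the fixed point $Y_{r+1}$ or $Y_{r+2}$ via a pendant edge. This is sound but unnecessary: since $F\in\tilde H_r$ is Eulerian, $G(F)$ has no bridges (an isthmus would force an odd degree sum on one side), so deleting a single edge keeps $G(F)$ connected and a spanning tree on the $r$ internal vertices survives. The paper exploits exactly this, and then the two extra factors $h(Y_{r+1}-Y_\alpha,d,z)$ and $h(Y_\beta-Y_{r+2},d,z)$ are simply bounded by $C(z)$ via Lemma~\ref{A2} c) (they depend on the summation variables only through one internal vertex each) and play no role in the convergence estimate. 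Your more elaborate anchoring argument would become genuinely necessary only later, when one also sums over $Y_{r+1},Y_{r+2}$ in Theorem~\ref{th_ufeynman}; for the present lemma, with $Y_{r+1},Y_{r+2}$ fixed, the no-bridge observation is the cleaner route. Your Leibniz-rule derivation of \eqref{eq_ufl1}--\eqref{eq_ufl2} is exactly what the paper means by ``follow immediately from the absolute convergence.''
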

\begin{proof}
The convergence of the sums follows from Lemma \ref{le_grconv} as any Euler graph belonging to $F$ after removing an edge (which the differentiation in $H_{\alpha,\beta}$ stands for) still contains a spanning tree. For the differentiation in $\omega_{\gamma}$ the Euler graph belonging to $F$ is unchanged and so again we can use Lemma \ref{le_grconv}. Equations \eqref{eq_ufl1} and \eqref{eq_ufl2}  then follow immediately from the absolute convergence. 
\end{proof}
\begin{theorem}\label{th_ufeynman}
For a simple non closed random walk starting in a point $Y_{r+1} \in \Z^d$ and ending in $Y_{r+2} \in \Z^d$ where $Y_{r+1} \neq Y_{r+2}$ we define the set 
\begin{equation}
W_{N,Y_{r+1},Y_{r+2}} := \{w: \textrm{walk from  $Y_{r+1}$ to $Y_{r+2}$, $length(w) \leq N$}\}
\end{equation}
and 
\begin{equation}
W^{(n.c.)}_{N} := \{w: \textrm{not closed; $length(w) \leq N$}\}
\end{equation}
and 
\begin{equation}
W^{(n.r.)}_{N} := \{w: \textrm{ $length(w) \leq N$}\}
\end{equation}
Then the sums
\begin{equation}
S^{(u)}_{k,Y_{r+1},Y_{r+2}}(z) := \underset{N \rightarrow \infty}{\lim} \sum_{w \in W_{N,Y_{r+1},Y_{r+2}}} P(N_{2k_1}(w),\ldots,N_{2k_r}(w)) \cdot z^{length(w)}
\end{equation}
and 
\begin{equation}
S^{(n.c.)}_{k}(z) := \underset{N \rightarrow \infty}{\lim} \sum_{w \in W^{(n.c.)}_{N}} P(N_{2k_1}(w),\ldots,N_{2k_r}(w)) \cdot z^{length(w)}
\end{equation}
are well defined holomorphic functions on the open neighborhood \\ $U \supset \overline{U_{\frac{1}{2d}}(0)} \cap D$ of Theorem \ref{th_defp} and there is an $\epsilon(s(k)) > 0$ such that for 
$z \in U_{\frac{1}{2d}}(0) \cap U_{\epsilon(s(k))}(\pm \frac{1}{2d})$
the following equations are true. 
\begin{equation}\label{eq_ufeynman1}
S^{(u)}_{k,Y_{r+1},Y_{r+2}}(z) = \sum_{\ell \geq 0} S^{(u)}_{\ell,k,Y_{r+1},Y_{r+2}}(z)
\end{equation}
and 
\begin{equation}\label{eq_ufeynman2}
S^{(n.c.)}_{k}(z) := \sum_{Y_{r+1} \neq Y_{r+2}} S^{(u)}_{k,Y_{r+1},Y_{r+2}}(z) 
\end{equation}
where the summations over $Y_{r+1},Y_{r+2}$ and the summation over $\ell$ converge absolutely and therefore are interchangeable and the summation over $Y_{r+1}$ and $Y_{r+2}$ can be done on the matrix terms inside the sums if the substitutions \eqref{eq_ufl1} are done in equation \eqref{eq_ufeynman2}.
The function
\begin{equation}
S^{(n.r.)}_{k}(z) := S^{(n.c.)}_{k}(z)  + S_k(z)
\end{equation}
then is 
\begin{equation}\label{eq_nonres}
S^{(n.r.)}_{k}(z) = \underset{N \rightarrow \infty}{\lim}\sum_{w \in W^{(n.r.)}_N} P(N_{2k_1}(w),\ldots,N_{2k_r}(w))\cdot z^{length(w)}
\end{equation}
\end{theorem}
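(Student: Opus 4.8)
The plan is to mimic the proofs of Theorem \ref{th_defp} and Theorem \ref{t_feyn}, the only new ingredients being the first order differential operator $O_{ins}^{(u)}$, which by Theorem \ref{t_odet} and Lemma \ref{le_diff} turns the contribution of a closed walk into that of a walk from $Y_{r+1}$ to $Y_{r+2}$, and the extra summation over the two endpoints.

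First I would fix a pair $Y_{r+1}\neq Y_{r+2}$ and show that $S^{(u)}_{k,Y_{r+1},Y_{r+2}}(z)$ is holomorphic on a neighbourhood $U\supset\overline{U_{\frac{1}{2d}}(0)}\cap D$. Starting from \eqref{eq_mdet3}, the operator $O_{ins}^{(u)}$ acts on the generating function machinery of section six; since it is a first order operator whose coefficients $h(Y_{r+1}-Y_\alpha,d,z)\cdot h(Y_\beta-Y_{r+2},d,z)$ and $h(Y_{r+1}-Y_\gamma,d,z)\cdot h(Y_\gamma-Y_{r+2},d,z)$ are by Lemma \ref{A2} holomorphic and exponentially small in $|Y_{r+1}-Y_\alpha|$, $|Y_{r+2}-Y_\beta|$, applying it to the section-six expressions produces, via the quotient rule, at worst one extra factor $1/\Delta_r(Y,z)$, still controlled by Corollary \ref{co_A3} and Theorem \ref{Th_delta}. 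The key observation for the remaining $Y$--summation is that differentiating in $H_{\alpha,\beta}$ amounts to deleting one edge of an Eulerian graph; a strongly connected balanced digraph has every edge on a directed cycle, so the underlying undirected graph stays connected and still contains a spanning tree of $K_r$, whence Lemma \ref{le_grconv} and Lemma \ref{le_Tconv} give absolute, locally uniform convergence of the sum over $Y\in J_r\setminus\{Y_{r+1},Y_{r+2}\}$; the differentiation in $\omega_\gamma$ leaves the graph unchanged and is handled the same way. Exactly as in Theorem \ref{th_defp} this also shows that $S^{(u)}_{k,Y_{r+1},Y_{r+2}}$ coincides termwise with the walk generating function.

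Next I would expand $1/\Delta_r(Y,z)^{s(k)}=\sum_{\ell\geq0}(1-\Delta_r(Y,z)^{s(k)})^\ell$ using Theorem \ref{th_de_1} on $z\in U_{\frac{1}{2d}}(0)\cap U_{\epsilon(s(k))}(\pm\frac{1}{2d})$ and reorganise the double series over $Y$ and $\ell$. Because $O_{ins}^{(u)}$ is linear and acts either on $\gamma(\ell,k,F,\omega)$ (producing $\gamma^{(u)}$) or leaves $C_F$ untouched, each $\ell$-term still involves only the finite set $\tilde H_{\ell,k}$ of Definition \ref{de_hlk}, and the geometric estimate of Theorem \ref{th_de_1} makes the $\ell$-summation absolutely convergent; this yields \eqref{eq_ufeynman1} together with \eqref{eq_ufl1} and \eqref{eq_ufl2}. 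The endpoint summation comes last: $S^{(u)}_{k,Y_{r+1},Y_{r+2}}(z)$ depends only on relative positions, and the exponential decay in $|Y_{r+1}-Y_\alpha|$, $|Y_{r+2}-Y_\beta|$ from Lemma \ref{A2} c) makes $\sum_{Y_{r+1}\neq Y_{r+2}}$ absolutely and locally uniformly convergent; combined with the absolute convergence over $Y$ and $\ell$ all three summations are interchangeable, so the endpoint sum can be carried out on the individual matrix terms, giving \eqref{eq_ufeynman2}. Finally, since every walk is closed or not closed and the walks of length $\le N$ split as the disjoint union of the closed and the non closed ones, letting $N\to\infty$ exhausts both families and $S^{(n.r.)}_k(z)=S^{(n.c.)}_k(z)+S_k(z)$ is the generating function \eqref{eq_nonres} on the common domain. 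The main obstacle is the convergence bookkeeping: one must control the triple summation over $Y\in J_r$, over $(Y_{r+1},Y_{r+2})$ and over $\ell$ simultaneously and uniformly in $z$ near $\pm\frac{1}{2d}$, and verify that inserting the extra edge (the $H_{\alpha,\beta}$ derivative) never destroys the spanning tree underlying every estimate; once the Eulerian no-bridge argument is in place, the rest is a faithful repetition of the closed-walk proofs.
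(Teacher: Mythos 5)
Your proposal follows essentially the same route as the paper's proof: both start from Theorem \ref{t_odet}/equation \eqref{eq_mdet3}, observe that the insertion operator $O_{ins}^{(u)}$ carries each closed-walk identity of section six to its non-closed counterpart, invoke the Eulerian no-bridge fact to keep a spanning tree after the $H_{\alpha,\beta}$-derivative so that Lemma \ref{le_grconv} controls the $Y$-sum, use the product of two $h$-factors together with the exponential decay of Lemma \ref{A2} c) to handle $\sum_{Y_{r+1}\neq Y_{r+2}}$, expand geometrically via Theorem \ref{th_de_1} for the $\ell$-sum, and finish with the disjoint-union observation for \eqref{eq_nonres}. The argument is correct and matches the paper's.
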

\begin{proof}
We start at Theorem \ref{t_odet} and observe 
\begin{multline}
\sum_{w \in W_{N,Y_{r+1},Y_{r+2}}} P(N_{2k_1}(w),\ldots,N_{2k_r}(w)) z^{length(w)} =\\
\frac{1}{r\,!} \sum_{\substack{\sigma \in D(k_1,\ldots,k_r)\\ Y \in J_r \setminus\{Y_{r+1},Y_{r+2}\}}}
 \left(\sum_{w \in W_{N,Y_{r+1},Y_{r+2}}(Y,k_{\sigma(1)},\hdots,k_{\sigma(r)})} z^{length(w)}\right)
\end{multline}
as a walk which is not closed cannot have a point of even multiplicity in its starting and ending point. The proof after this is equivalent to that of the closed random walk in section six, as the insertion Operator $O^{(u)}_{ins}$ can be used to create a one to one relationship between any equation there and the corresponding one for the non-closed case: All related summations over $Y \in J_r$ and $Y \in J_r  \setminus\{Y_{r+1},Y_{r+2}\}$ converge absolutely, because the relevant quantities summed belong to Euler graphs for the closed simple random walk and therefore contain a spanning tree even if an edge was removed (Euler graphs contain Euler circuits which go to every vertex and circuits do not contain an isthmus) and the convergence then follows from Lemma \ref{le_grconv}. The final operator $z \cdot \partial / \partial z$ in e.g. equation \eqref{eq_hats} for the case of the closed random walk to choose any point as starting and ending point is in a sense replaced by $O^{(u)}_{ins}$.\\ 
The summations over $Y_{r+1}, Y_{r+2}$ converge absolutely for any matrix $F$  because the operator $O^{(u)}_{ins}$ contains the product $h(Y_{r+1} - Y_{\alpha},d,z)\cdot h(Y_{\beta} - Y_{r+2},d,z)$ for every term  (with $\alpha = \beta$ for the $\omega$ differentiations). The semi-Eulerian graphs it creates (in the case of $\gamma^{(u)}$ the pair of pendant edges is attached to the vertex of the Eulerian graph $F$ to which the factor belongs which $\gamma^{(u)}$ multiplies and then is summed over the vertices) contain a spanning tree which extends to the vertices related to $Y_{r+1}, Y_{r+2}$ and therefore the sums converge absolutely according to Lemma \ref{le_grconv}. Therefore the summation can be extended and interchanged with the summation over $\ell$ so equations \eqref{eq_ufeynman1} and \eqref{eq_ufeynman2} are true. Equation \eqref{eq_nonres} directly follows from the fact that a walk is either closed or nonclosed.  
\end{proof}
We have now discussed the moments of the distribution of the non closed simple random walk for $d \geq 2$ and seen that they again are completely determined by the behaviour for $z = \pm 1 / 2d$.\\
\subsection{Transferring the results of the asymptotic expansion in $d =2$}
We now turn to the case $d=2$ which is the main theme of this paper. 
\begin{definition}
For a matrix $F \in \tilde H_r$ an integer $m \geq 0$ and a permutation $\sigma \in S_{r+m}$ we define 
\begin{multline}\label{eq_usckf1}
\mathcal{T}^{(u,1)}_{F,k,\sigma,m}(z) := cof(A-F) \cdot \frac{\mathcal{I}(F) \cdot M(F) }{4^{r-1}\cdot (1-4z)^{r+1}} \cdot \\ \left[ 
\prod_{j=1}^r  \widetilde{K}(h_j,k_{\sigma (j)},\bar h(0,2,z))
\right]
\end{multline} 
with $\mathcal{I}(F)$ defined in equation \eqref{eq_guzI} and 
\begin{multline}\label{eq_usckf2}
\mathcal{T}^{(u,2)}_{F,k,\sigma,m}(z) := cof(A-F) \cdot \frac{I(F) \cdot M(F) }{4^{r-1}\cdot (1-4z)^{r+1}} \cdot \\ \left[   \sum_{i=1}^r \widetilde{K}(h_i+1,k_{\sigma(i)},\bar h(0,2,z)) \cdot 
\prod_{j \geq 1, j \neq i}^r  \widetilde{K}(h_j,k_{\sigma (j)},\bar h(0,2,z))
\right]
\end{multline} 
with 
\begin{equation}\label{eq_uh0ti}
\bar h(0,2,z) = - \frac{1}{\pi} \cdot \ln(1-4z) + C^{(u)}
\end{equation} 
with
\begin{equation}\label{eq_cu}
C^{(u)} = \frac{3}{\pi} \cdot \ln(2) -1
\end{equation}
\\
We also define 
\begin{equation}
\mathcal{T}^{(u)}_{F,k,\sigma,m}(z) := \mathcal{T}^{(u,1)}_{F,k,\sigma,m}(z) + \mathcal{T}^{(u,2)}_{F,k,\sigma,m}(z)
\end{equation}
and 
\begin{equation}
\mathcal{T}^{(u)}_{F,k}(z) = \frac{1}{r\,!} \sum_{\sigma \in S_r} \mathcal{T}^{(u)}_{F,k,\sigma,0}(z)
\end{equation}
and the moment functions
\begin{equation}
T^{(n.c.)}_k(z) := \underset{N \rightarrow \infty}{\lim}\sum_{w \in W^{(n.c.)}_N} \left(\prod_{i=1}^r N_{2k_i}(w) \right) z^{length(w)}
\end{equation}
\begin{equation}
T^{(n.r.)}_k(z) := \underset{N \rightarrow \infty}{\lim}\sum_{w \in W^{(n.r.)}_N} \left(\prod_{i=1}^r N_{2k_i}(w) \right) z^{length(w)}
\end{equation}
\end{definition}
\begin{theorem}\label{th_unge}
In two dimensions
\begin{equation}\label{eq_2dnc}
T^{(n.c.)}_k(z) - \sum_{F \in \tilde H^{(M)}_k} \mathcal{T}^{(u)}_{F,k}(z) = o\left( \frac{1}{s^{r + 1} \cdot \ln(s)^M} \right)
\end{equation}
and 
\begin{equation}\label{eq_2dnr}
T^{(n.r.)}_k(z) - T^{(n.c.)}_k(z) = o\left( \frac{1}{s^{r}} \right)
\end{equation}
where $\tilde H_k^{(M)}$ was defined in Theorem \ref{th_nge} 
\end{theorem}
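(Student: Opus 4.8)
\emph{Proof proposal.} The plan is to run, for the non-closed walk, the same programme as in Sections~7 and~8, using Theorem~\ref{th_ufeynman} in place of Theorem~\ref{t_feyn}. By Theorem~\ref{th_ufeynman}, after the normalisations of Lemma~\ref{le_momfunc} (apply $z\,\partial/\partial z$ and divide by the number of walks of a given length), $T^{(n.c.)}_k(z)$ equals, for $z$ in the punctured neighbourhood of $\pm\tfrac14$ where everything converges, an absolutely convergent sum over $\ell\ge0$, over $F\in\tilde H_{\ell,k}$, and over the two free endpoints $Y_{r+1}\ne Y_{r+2}$, with the insertion operator $O^{(u)}_{ins}$ acting on each graph term. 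First I would truncate the $\ell$-series: choosing $m$ with $2m>M+s(k)\,r+1$ as in the proof of Theorem~\ref{th_nge}, the tail $\sum_{\ell\ge m}$ is of order $o\bigl(1/(s^{r+1}\ln(s)^M)\bigr)$ --- the same estimate as there, now carrying an extra $s^{-2}$ from the two endpoint summations, as explained below --- which leaves a finite sum over $F\in\tilde H^{(M)}_k$. I would work in the regime $z\to\tfrac14$ (i.e.\ $s\to0$); the behaviour near $z=-\tfrac14$ is obtained analogously.

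For each $F\in\tilde H^{(M)}_k$ I would then perform the sums over $Y\in J_r\setminus\{Y_{r+1},Y_{r+2}\}$ and over $Y_{r+1},Y_{r+2}$, passing to the asymptotic regime. The operator $O^{(u)}_{ins}$ splits into two families. The terms $h(Y_{r+1}-Y_\alpha,2,z)\,h(Y_\beta-Y_{r+2},2,z)\,\partial/\partial H_{\alpha,\beta}$ delete one copy of the edge $(\alpha,\beta)$ from $G(F)$ and attach a pair of pendant edges to the two new vertices; summing freely over $Y_{r+1},Y_{r+2}\in\Z^2$ gives two factors $\sum_{x\in\Z^2}h(x,2,z)=\tfrac{1}{1-4z}$, the $J_r$-exclusion of finitely many points costing only a bounded, hence lower-order, correction, while the remaining sum over $Y\in J_r$, handled by the multidimensional Sidi--Euler--Maclaurin expansion exactly as in Lemma~\ref{le_integr}, reproduces the integral $\mathcal I(F)$ of \eqref{eq_guzI} (which converges for the same reason as $I(F)$ in \eqref{eq_guzIf}, since deleting one edge from the Eulerian graph $G(F)$ leaves a connected graph). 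The terms $h(Y_{r+1}-Y_\gamma,2,z)\,h(Y_\gamma-Y_{r+2},2,z)\,\partial/\partial\omega_\gamma$ leave the edge structure intact, so the $Y$-sum still yields $I(F)$, while $\partial/\partial\omega_\gamma$ raises one vertex factor by one through \eqref{eq_komega}; together with the two endpoint factors this is precisely $\mathcal T^{(u,2)}_{F,k,\sigma,0}$ of \eqref{eq_usckf2}, and the first family gives $\mathcal T^{(u,1)}_{F,k,\sigma,0}$ of \eqref{eq_usckf1}. Averaging over the permutations arising from the $P$-to-moment passage (as in Lemma~\ref{le_momfunc}) assembles these into $\mathcal T^{(u)}_{F,k}(z)$. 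The power $s^{-(r-1)}$ supplied by Lemma~\ref{le_integr}, which near $z=\tfrac14$ equals $\bigl(4(1-4z)\bigr)^{-(r-1)}$, multiplied by the two endpoint factors $(1-4z)^{-2}$, produces the $4^{r-1}(1-4z)^{r+1}$ in the denominators of \eqref{eq_usckf1}--\eqref{eq_usckf2}. Finally, replacing $h(0,2,z)$ by $\bar h(0,2,z)$ of \eqref{eq_uh0ti} in every vertex factor costs only $O(s\ln s)$ by \eqref{eq_A3_even}; since $\ln(1+4z)\to\ln2$ as $z\to\tfrac14$, this is exactly the shift turning the closed-case constant $C^{(c)}=\tfrac4\pi\ln2-1$ of \eqref{eq_cc} into $C^{(u)}=\tfrac3\pi\ln2-1$ of \eqref{eq_cu}, so that $\widetilde h(0,2,z)$ of \eqref{eq_h0ti} and $\bar h(0,2,z)$ agree to leading order near $z=\tfrac14$.

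Collecting the errors: the Sidi--Euler--Maclaurin corrections and the $h_2,h_t$ remainders of $h(x,2,z)$ each contribute relative errors of order $s\ln(s)^{K(F)+1}$, exactly as in Lemma~\ref{le_integr} and Lemma~\ref{le_sfk} but with $r-1$ replaced by $r+1$; combined with the $\ell$-tail bound this yields \eqref{eq_2dnc}. Equation~\eqref{eq_2dnr} is then immediate: every walk is either closed or non-closed, so $T^{(n.r.)}_k(z)=T^{(n.c.)}_k(z)+T_k(z)$ with $T_k$ the closed moment function of Lemma~\ref{le_momfunc}; by Lemma~\ref{le_integr} each $I(F)$-summed contribution to $T_k$ is $O\bigl(s^{-(r-1)}\bigr)$ up to logarithms, and $s^{-(r-1)}=o(s^{-r})$, so $T^{(n.r.)}_k-T^{(n.c.)}_k=o(1/s^r)$.

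The main obstacle I expect is the bookkeeping of the endpoint summations inside the already delicate combination of the $\ell$-series with the multidimensional Sidi expansion: one must verify that the $J_r$-exclusion in $\sum_{Y_{r+1}}h(Y_{r+1}-Y_\alpha,2,z)$ and the extra pendant-edge integration variables appearing in Sidi's generalized Euler--Maclaurin formula generate only corrections carrying a further factor $s$, so that the genuine leading term of each graph contribution is exactly $(1-4z)^{-2}$ times the $Y$-integral $\mathcal I(F)$ (respectively $I(F)$). All the absolute convergence and interchangeability needed to legitimise these manipulations is already provided by Theorem~\ref{th_ufeynman}, so what remains is asymptotic and combinatorial accounting rather than any new analytic input.
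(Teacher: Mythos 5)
Your proposal is correct and follows essentially the same route as the paper's proof: start from Theorem~\ref{th_ufeynman}, truncate the $\ell$-series as in Theorem~\ref{th_nge} (now with an extra $s^{-2}$ from the endpoint sums), split $O^{(u)}_{ins}$ into the $H$- and $\omega$-derivative families giving $\mathcal{T}^{(u,1)}$ and $\mathcal{T}^{(u,2)}$, replace sums by integrals via Lemma~\ref{le_integr}, and observe that the exclusion constraints and the closed-walk remainder each carry an extra power of $s$. One small slip: $\sum_{y}h(y-x,d,z)=\tfrac{4z}{1-4z}$, not $\tfrac{1}{1-4z}$, though the two agree up to a bounded error and so make no difference to the asymptotics; also, near $z=-\tfrac14$ the endpoint factors are bounded rather than ``analogous,'' which is what the paper invokes to discard that singularity (it is subdominant, not symmetric as in the closed case). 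Your explicit identification of the $C^{(c)}\!\to C^{(u)}$ shift via $\ln(1+4z)\to\ln 2$ is a useful detail the paper leaves implicit.
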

\begin{proof}
We start with Theorem \ref{th_ufeynman} and consider
\begin{equation}
\sum_{Y_{r+1} \neq Y_{r+2}} S^{(u)}_{F,k,Y_{r+1},Y_{r+2}}(z) 
\end{equation}
In the same way as shown in Lemma \ref{le_integr} (the argument does not change if an edge is removed) for a discussion of the asymptotic behaviour the summation over $Y \in J_r\setminus\{Y_{r+1},Y_{r+2}\}$ in $S^{(u)}_{F,k,Y_{r+1},Y_{r+2}}(z)$ in $2$ dimensions can be replaced by an integral, the restrictions $Y_i \neq Y_{r+1}$ and $Y_i \neq Y_{r+2}$ lead to correction terms which are scaled down by a factor $s$ and are therefore not relevant to our discussion. For the sum over $Y_{r+1}$ and $Y_{r+2}$  again the restrictions for $Y_{r+1} \neq Y_{r+2}$ lead to correction terms scaled down by a factor $s$ and lead to two factors
\begin{equation}\label{eq_factu}
\sum_{y \in \Z^d} h(y-x,d,z) = \frac{4z}{1-4z} 
\end{equation}
for the two summations.\\
As the factor in \eqref{eq_factu} does not have a singularity for $z \rightarrow -1/4$ we see that terms related to the singularity there are scaled down by two orders in $s$ and are therefore not relevant for our discussion either. 
The argument for Theorem \ref{th_nge} then applies as also 
\begin{equation}
\sum_{Y_{r+1} \neq Y_{r+2}} \gamma^{(u)}(\ell,k,F,h(0,2,z)\cdot u(r),Y_{r+1},Y_{r+2}) \sim 
o\left( \frac{1}{s^2 \cdot \ln(s)^{r}}\right)
\end{equation}
as we can deduce from Lemma \ref{le_l_F}.
We have defined $\mathcal{T}^{(u)}$ analogously to $\mathcal{T}$ where $\mathcal{T}^{(u,1)}$ obviously belongs to the differentiation terms of $O^{(u)}_{ins}$ related to $H$ and  $\mathcal{T}^{(u,2)}$ to those related to $\omega$ where we have used equation \eqref{eq_komega} and so the leading behaviour is encapsulated in those functions by the same arguments as in the case of the closed walk. So equation \eqref{eq_2dnc} holds. Equation \eqref{eq_2dnr} follows from Theorem \ref{th_nge} and equations \eqref{eq_hats} and \eqref{eq_deftk} showing that the contribution of the closed walk is scaled down in $s$. 
\end{proof}
\begin{lemma}\label{le_urd1}
Let $m > 0$ and $F \in \tilde H_{nd}\cap \tilde H_r$ and $k = (k_1,\ldots,k_{r+m})$ a vector of strictly positive integers and $\sigma \in S_{r+m}$ and let us define
\begin{multline}
gf^{(u)}(\sigma,z) = 
cof(A-F)  \cdot \\
 \left( \frac{I(F) \cdot M(F) }{4^{r-1}\cdot (1-4z)^{r-1}} \cdot \left(
\prod_{j=1}^r  \widetilde{K}(h_j,k_{\sigma (j)},\bar h(0,2,z))\right) \right)
\end{multline}
and 
\begin{equation}
gfd^{(u)}(\sigma,z) = 
\mathcal{T}^{(u)}_{F,k,\sigma,m}(z)
\end{equation}
and  
\begin{equation}\label{eq_uvfdef}
vf^{(u)}(\sigma,j,z) := \widetilde{K}\left(1,k_{\sigma(j)},\bar h(0,2,z)\right) 
\end{equation}
and 
\begin{equation}\label{eq_uvfdef2}
vfd^{(u)}(\sigma,j,z) := \frac{1}{(1-4z)^2}\widetilde{K}\left(2,k_{\sigma(j)},\bar h(0,2,z)\right) 
\end{equation}
Then 
\begin{multline}\label{eq_urd1}
\sum_{\tilde F \in \tilde H_{m}(F)} \mathcal{T}_{\tilde F,k}(z) = \binom{r+m}{m} \left( z\cdot \frac{\partial}{\partial z}\right)^{m-1}
\\
\Biggl[ \frac{1}{(r+ m)\,!} \sum_{\sigma \in S_{r+m}} \Biggl(
\left( \prod_{j=r+1}^{r+m} vf^{(u)}(\sigma,j,z) \right) \cdot  \left( z\cdot \frac{\partial}{\partial z} \right) gfd^{(u)}(\sigma,z)  + \\
\sum_{i=r+1}^{r+m} vfd^{(u)}(\sigma,i,z) \left(\prod_{j\geq r+1 \wedge j \neq i}^{r+m} vf^{(u)}(\sigma,j,z) \right) \cdot   \left( z\cdot \frac{\partial}{\partial z} \right) gf^{(u)}(\sigma,z)
\Biggr)
\Biggr] \\
+ o\left(\frac{1}{s^{r+m }} \right)
\end{multline}
\end{lemma}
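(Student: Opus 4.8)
The plan is to run the non-closed analogue of the argument used for Lemma \ref{le_rd1}, the only genuinely new ingredient being the way the hierarchical dressing interacts with the insertion operator $O_{ins}^{(u)}$ of Theorem \ref{th_ufeynman}. First I would recall, from Lemma \ref{le_hmchar}, that every $\tilde F \in \tilde H_m(F)$ arises uniquely from $F$ by choosing $m$ dam dimensions (the factor $\binom{r+m}{m}$), a tree $T$ on $\{0,\dots,m\}$, and a hierarchical dressing of $G(F)$ according to $T$ by ordered vertex sequences; and that, by the iterated Markov feature \eqref{eq_hxmarkov}, the sum over all path dressings of a fixed edge of $G(F)$ by $k$ vertices equals $\left(z\frac{\partial}{\partial z}\right)^k$ applied to the corresponding $h$-factor, while by \eqref{eq_hxmarkov} at $x=0$ together with \eqref{eq_komega} the sum over all cycle dressings of a fixed vertex by $k$ vertices equals $\left(z\frac{\partial}{\partial z}\right)^k$ applied to the corresponding $\widetilde K$-factor, in each case up to contributions scaled down by at least one factor $s$ coming from the constraint $y_i\neq y_j$. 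This is exactly the mechanism of Lemma \ref{le_rd1}, and it transfers verbatim once we fix which edge $O_{ins}^{(u)}$ removes.

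Next I would decompose the contribution of a given $\tilde F$ to $T^{(n.c.)}_k(z)$ according to where $O_{ins}^{(u)}$ cuts the Euler circuit of $G(\tilde F)$. If the cut falls on an edge or a loop of the core $G(F)$, the two pendant half-edges attach at vertices of $F$; the two free endpoint variables $Y_{r+1},Y_{r+2}$ then decouple from the rest, each producing a factor $\frac{4z}{1-4z}$ by \eqref{eq_factu} (the constraint $Y_{r+1}\neq Y_{r+2}$ costing only a further factor $s$), the $\partial/\partial H_{\alpha,\beta}$ pieces summing into $\mathcal{I}(F)$ and the $\partial/\partial\omega_\gamma$ pieces into the $\sum_i\widetilde K(h_i+1,\cdot)$ term via \eqref{eq_komega}; this produces precisely $gfd^{(u)}(\sigma,z)=\mathcal{T}^{(u)}_{F,k,\sigma,m}(z)=\mathcal{T}^{(u,1)}_{F,k,\sigma,m}(z)+\mathcal{T}^{(u,2)}_{F,k,\sigma,m}(z)$, together with the undressed vertex factors $vf^{(u)}(\sigma,j,z)$ of the $m$ dams. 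If instead the cut falls at one of the $m$ dams, that dam --- a vertex of indegree and outdegree one --- becomes the junction at which the two pendant walks meet, which raises its vertex factor from $\widetilde K(1,\cdot)$ to $\widetilde K(2,\cdot)$ (again by \eqref{eq_komega}) and multiplies by the decoupled endpoint factor $(1-4z)^{-2}$, i.e. it is replaced by $vfd^{(u)}(\sigma,i,z)$, while the remaining core keeps the bare weight $gf^{(u)}(\sigma,z)$; summing $i$ over the dams yields the second family in \eqref{eq_urd1}.

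Then I would reassemble exactly as in Lemma \ref{le_rd1}: a hierarchical dressing along a tree $T$ of degree sequence $\lambda_0,\lambda_1+1,\dots,\lambda_m+1$ produces $\left(z\frac{\partial}{\partial z}\right)^{\lambda_0}$ on the relevant core weight and $\left(z\frac{\partial}{\partial z}\right)^{\lambda_j}$ on the $j$-th dam factor; the number of such trees is the Cayley-type count $\frac{(m-1)!}{(\lambda_0-1)!\,\prod_{i=1}^m \lambda_i!}$ (cf. \citep{bryant}), under $\sum_{i=0}^m(\lambda_i+1-\delta_{0,i})=2m$, so that the general Leibniz rule collapses the resulting double sum into the single outer operator $\left(z\frac{\partial}{\partial z}\right)^{m-1}$ and the inner $z\frac{\partial}{\partial z}$ displayed in \eqref{eq_urd1}. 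Term-by-term differentiation of the asymptotic expansion is legitimate because each summand is holomorphic on $U_{\frac{1}{2d}}(0)$, as already exploited in Lemma \ref{le_momfunc} and Theorem \ref{th_unge}, and is interchangeable with asymptotic expansion there; all discarded remainders --- the $y_i\neq y_j$ and $Y_i\neq Y_{r+1},Y_{r+2}$, $Y_{r+1}\neq Y_{r+2}$ constraints, the replacement of the lattice sums by $I(F),\mathcal{I}(F)$ and of $h(0,2,z)$ by $\bar h(0,2,z)$, and the regularity of the endpoint factors at $z=-1/4$ --- are each of relative order $s$ and so collectively $o(1/s^{r+m})$. The main obstacle I expect is the middle step: verifying that when the cut edge lies on a long dressing path (rather than on $G(F)$ itself or directly at a dam) the bookkeeping of which sub-path carries the pendants still groups into exactly the two clean families above without double counting; this should follow by tracking the label function of Lemma \ref{le_label}, i.e. the tree $Tr(\tilde F)$, through the cut, but it is where the argument needs the most care.
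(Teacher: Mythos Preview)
Your proposal is correct and follows the paper's own route: both argue by direct analogy with Lemma \ref{le_rd1}, split the action of $O_{ins}^{(u)}$ according to whether the two pendant edges land in the core $G(F)$ (yielding $gfd^{(u)}$) or at a dam vertex (yielding $vfd^{(u)}$), and then reassemble via the Cayley tree count and the general Leibniz rule exactly as before. The obstacle you single out --- the cut landing on an interior edge of a dressing path rather than on $G(F)$ itself or at a dam --- is disposed of in the paper by the single observation that dressing with dams and inserting pendant edges are interchangeable by the Markov property \eqref{eq_hxmarkov}, so an insertion inside a dressing path of an edge $e$ of $G(F)$ can be slid back to $e$ before the dam positions are summed; this is cleaner than the label-function bookkeeping of Lemma \ref{le_label} you propose, though your route would also succeed.
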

\begin{proof}
The proof is analogous to the proof of Lemma \ref{le_rd1}. According to Theorem \ref{th_ufeynman} we just have to insert the two pendant edges at any possible vertex  or replacing an edge of the graph $G(\tilde F)$.
In Lemma \ref{le_hmchar} we had completely characterized $\tilde H_m(F)$ with path dressings on the edges of $G(F)$ and cycle dressings of each vertex of $G(F)$ and of the inserted vertices. 
We realize that $gfd^{(u)}$ just is the factor for all insertions of the two pendant edges into $G(F)$ and $vfd^{(u)}$ the corresponding factor for the insertion of the two pendant edges into the vertex with the factor $\tilde K$ of a dam of a graph. Dressing with dams and inserting pendant edges are interchangeable according to the Markov feature of the random walk.  
The proof of the Lemma now is analogous to the proof of Lemma \ref{le_rd1}. 
\end{proof}
\begin{lemma}\label{le_urd2}
Let $m \geq 0$ and $F_1$ be  as defined in equation \eqref{eq_ff} and $k = (k_1,\ldots,k_{m+2})$ a vector of strictly positive integers.
Then 
\begin{multline}\label{eq_urd2}
\sum_{\widetilde{F} \in \tilde H_{m}(F_1)} \mathcal{T}^{(u)}_{\tilde F,k}(z) =
\\ \left( z\cdot \frac{\partial}{\partial z}\right)^{m+1}
\Biggl[ \frac{1}{(m+2)\,!}\sum_{\sigma \in S_{m+2}} 
\left( \prod_{j=1}^{m+2} \widetilde{K}\left(1,k_{\sigma(j)},\bar h(0,2,z)\right) \right) \\
 \cdot \left( z\cdot \frac{\partial}{\partial z}\right)\frac{1}{(1-4z)}  \Biggr] 
+ o\left(\frac{1}{s^{m+2}}\right)
\end{multline}
\end{lemma}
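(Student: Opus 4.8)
The plan is to mimic the proof of Lemma \ref{le_rd1}, replacing every occurrence of the closed-walk graph evaluation by its non-closed analogue as provided by Theorem \ref{th_ufeynman}. The key structural observation is that in the case $F = F_1$ the matrices $\widetilde F \in \tilde H_m(F_1)$ are completely characterized by Lemma \ref{le_f1hmchar}: there is no binomial choice of dam dimensions, and the hierarchical dressing of the single-point graph with vertex $1$ according to a tree $T$ on $\{1,\ldots,m+2\}$ produces each $\widetilde F$ exactly once. Comparing this with Lemma \ref{le_rd2}, the only genuinely new ingredient here is the insertion of the two pendant edges coming from $O^{(u)}_{ins}$ somewhere into the graph $G(\widetilde F)$, and one must check how this interacts with the dam structure.

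First I would fix a tree $T$ on $\{1,\ldots,m+2\}$ with vertex degrees $\lambda_1+1,\lambda_2,\ldots$ appropriately normalized so that $\sum_i(\lambda_i+1-\delta_{i,1}) = 2m$, exactly as in the proof of Lemma \ref{le_rd1}. Each cycle dressing of a vertex with $\lambda$ extra vertices contributes, up to terms scaled down by a factor $s$ because of the summation constraint $y_i \neq y_j$, the operator $(z\partial/\partial z)^{\lambda}$ acting on the corresponding factor, using equation \eqref{eq_komega} and equation \eqref{eq_hxmarkov} for $x=0$. The two pendant edges from $O^{(u)}_{ins}$ can, again by the Markov feature, be inserted either into an edge of $G(\widetilde F)$ or into a vertex carrying a factor $\widetilde K$; the factor for the former is encoded in $gfd^{(u)}$ and for the latter in $vfd^{(u)}$ as in Lemma \ref{le_urd1}. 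Since $F_1$ corresponds to the single-point graph, the ``base'' factor $gf^{(u)}$ degenerates to $1/(1-4z)^{r-1}$ with $r=2$, i.e.\ simply $1/(1-4z)$, which explains the final factor $(z\partial/\partial z)\,1/(1-4z)$ on the right-hand side of \eqref{eq_urd2}. Moreover $M(F_1)=1$ and $cof(A_1-F_1)=1$, so these constants drop out entirely, and $\mathcal{I}(F_1)$ as well as $I(F_1)$ reduce to trivial integrals since the point graph has no edges between distinct vertices.

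The counting of trees with prescribed degree sequence then proceeds exactly as in the standard proof of Cayley's theorem \citep{bryant}, giving the multinomial coefficient $(m-1)!/\prod_i\lambda_i!$ (with one degree reduced because vertex $1$ plays the role of the root), and the general Leibniz rule for multiple differentiations assembles all these pieces into the single expression $(z\partial/\partial z)^{m+1}[\cdots]$ in \eqref{eq_urd2}, the extra derivative compared to the closed case accounting for the pendant-edge insertion. Differentiation of the asymptotic expansion in any order is permitted, as discussed in Lemma \ref{le_momfunc}, because each term in the finite sum on the left-hand side of \eqref{eq_urd2} is holomorphic on $U_{\frac{1}{2d}}(0)$; summation over $Y\in J_r\setminus\{Y_{r+1},Y_{r+2}\}$ and over $Y_{r+1}\neq Y_{r+2}$ is replaced by integration in the asymptotic regime at the cost of terms scaled down by a factor $s$, by the same Euler--Maclaurin argument (Sidi's formulas) used in Lemma \ref{le_integr} and reproduced in the proof of Theorem \ref{th_unge}; and the singularity at $z=-1/4$ is subleading because the pendant-edge factor $4z/(1-4z)$ is regular there. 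The main obstacle I anticipate is purely bookkeeping rather than conceptual: tracking the precise power of $(1-4z)$ and the correct combinatorial weight produced by the interplay of the pendant-edge insertion with the hierarchical dam dressing, so that the naive power counting of Theorem \ref{th_unge} and the error order $o(s^{-(m+2)})$ come out consistently; once the closed-case template of Lemma \ref{le_rd2} and the insertion-operator template of Lemma \ref{le_urd1} are in hand, everything else is routine.
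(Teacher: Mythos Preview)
Your proposal is correct and follows essentially the same approach as the paper, which in fact gives only a one-line proof stating that the argument is ``analogous to the one for Lemma \ref{le_rd2} and Lemma \ref{le_urd1}.'' You have simply unpacked what that analogy means: the hierarchical dam characterization of $\tilde H_m(F_1)$ from Lemma \ref{le_f1hmchar}, the Cayley tree count and Leibniz rule from Lemma \ref{le_rd1}/\ref{le_rd2}, and the pendant-edge insertion mechanism from Lemma \ref{le_urd1}, all combined with the Sidi--Euler--Maclaurin replacement of sums by integrals as in Theorem \ref{th_unge}.
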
    
\begin{proof}
The proof is analogous to the one for Lemma \ref{le_rd2} and Lemma \ref{le_urd1}.
\end{proof}
\begin{definition}\label{de_ufgy}
Using the general definitions \ref{de_fgy} we define the function 
\begin{equation}
f^{(u)}_{m,k}(y) := \bar f_{m,k}(y,C^{(u)})
\end{equation}
with $C^{(u)}$ given in equation \eqref{eq_cu}.
\end{definition}
\begin{theorem}\label{th_ucalc}
Let $M \in \N$ be a natural number. The $r$th moment of the planar non restricted simple random walk of lenght $n$ is given together with all its corrections up to order $1/\ln(n)^M$  by 
\begin{multline}\label{eq_uenmom}
E_{n}^{(n.r.)}(N_{2k_1}(w)\cdot \ldots \cdot N_{2k_r}(w)) =  n^{r} 
\cdot \sum_{F \in \tilde H_k^{(M)}} \Biggl(   cof(A-F) \cdot\frac{   M(F)  }{4^{r-1}} 
\\ \Biggl[ \sum_{i=0}^{M} \gamma^{(r+1)}_i y^{i+1} \left(\frac{d}{dy} \right)^i 
\Biggl(y^{i-1} \frac{1}{r\,!} \sum_{\sigma \in S_r} \Biggl(  \mathcal{I}(F) \prod_{j =1}^r  f^{(u)}_{h_j,k_{\sigma(j)}}(y)  \\+ I(F)
\sum_{q = 1}^r f^{(u)}_{h_q+1,k_{\sigma(q)}}(y) \prod_{j\geq 1 \wedge j \neq q}^r  f^{(u)}_{h_j,k_{\sigma(j)}}(y) 
\Biggr) \Biggr)\Biggr|_{y = -\frac{1}{\ln(n)}}   
\Biggr] \Biggr)\\ +  o\left(\frac{n^r}{\ln(n)^M} \right) 
\end{multline}
where $E_n^{(n.r.)}(.)$ is the expectation value for non restricted walks of length $n$ and $\tilde H_k^{(M)}$ is the finite set defined in Theorem \ref{th_nge}.
\end{theorem}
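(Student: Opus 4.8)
The plan is to mirror the derivation of Theorem \ref{th_momr} for the closed random walk, replacing the closed-walk ingredients by their non-closed counterparts that were just set up. The starting point is Theorem \ref{th_ufeynman}, which expresses the non-restricted moment generating function $T^{(n.r.)}_k(z)$ as a convergent sum over matrices, and Theorem \ref{th_unge}, equations \eqref{eq_2dnc} and \eqref{eq_2dnr}, which tell us that up to order $o(1/(s^{r}\ln(s)^M))$ the non-restricted moment equals the non-closed moment, which in turn equals $\sum_{F \in \tilde H^{(M)}_k}\mathcal{T}^{(u)}_{F,k}(z)$. So the whole task reduces to reading off the asymptotic behaviour of the finitely many functions $\mathcal{T}^{(u)}_{F,k}(z) = \mathcal{T}^{(u,1)}_{F,k}(z) + \mathcal{T}^{(u,2)}_{F,k}(z)$ around $z = \pm 1/4$ and dividing by the number of non-restricted walks of length $n$.

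First I would observe that each $\mathcal{T}^{(u,1)}_{F,k}(z)$ and $\mathcal{T}^{(u,2)}_{F,k}(z)$, by their definitions \eqref{eq_usckf1}, \eqref{eq_usckf2} together with \eqref{eq_uh0ti}, has exactly the form treated in Lemma \ref{le_yexp}: a factor $1/(1-4z)^{r+1}$ times a polynomial in $1/\ln(1-4z)$, once we substitute $\bar h(0,2,z)= -\frac{1}{\pi}\ln(1-4z)+C^{(u)}$ and use the definition \ref{de_ufgy} of $f^{(u)}_{m,k}(y)$ with the constant $C^{(u)}$ from \eqref{eq_cu}. Here the pole order is $r+1$ rather than $r-1$ as in the closed case — this is precisely the effect of the two extra factors $1/(1-4z)$ coming from \eqref{eq_factu}, i.e. from the free choice of the starting and ending point of a non-closed walk — so Lemma \ref{le_yexp} is applied with $m=r+1$, producing the coefficients $\gamma^{(r+1)}_i$ in \eqref{eq_uenmom} instead of $\gamma^{(r-1)}_i$. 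Then I would expand the Taylor coefficient of $T^{(n.r.)}_k(z)$ at order $2n$ (for closed) resp. $n$ (for non-restricted; note non-closed walks have steps of either parity, so there is no factor of $2$ in the length here) using Lemma \ref{le_yexp}, and divide by the leading asymptotics of the number of non-restricted simple random walks of length $n$ in two dimensions, which is $4^{n}$ up to subleading corrections. The factor $1/\pi$ and the factors of $2$ that appear in the closed-walk normalisation (Theorem \ref{th_momr}) are absent or modified here, which accounts for the cleaner prefactor in \eqref{eq_uenmom}.

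The bookkeeping step is to match the two pieces $\mathcal{T}^{(u,1)}$ and $\mathcal{T}^{(u,2)}$ to the two terms inside the square bracket of \eqref{eq_uenmom}: $\mathcal{T}^{(u,1)}$ carries the integral $\mathcal{I}(F)$ of \eqref{eq_guzI} (the derivative-of-product integral, arising from the $\partial/\partial H_{\alpha,\beta}$ part of the insertion operator $O^{(u)}_{ins}$) and a plain product $\prod_j f^{(u)}_{h_j,k_{\sigma(j)}}(y)$ of vertex factors, while $\mathcal{T}^{(u,2)}$ carries $I(F)$ of \eqref{eq_guzIf} and the sum $\sum_q f^{(u)}_{h_q+1,k_{\sigma(q)}}(y)\prod_{j\neq q}f^{(u)}_{h_j,k_{\sigma(j)}}(y)$, the shift $h_q\mapsto h_q+1$ being exactly equation \eqref{eq_komega} applied to the $\partial/\partial\omega_\gamma$ part of $O^{(u)}_{ins}$. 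Averaging over $\sigma \in S_r$ and summing over $F \in \tilde H^{(M)}_k$ then yields \eqref{eq_uenmom} verbatim after termwise differentiation (legitimate by the same holomorphy argument as in Lemma \ref{le_momfunc}, since all $\mathcal{T}^{(u)}_{F,k}$ are holomorphic on $U_{1/2d}(0)$). The main obstacle I anticipate is the careful tracking of constants — the pole order $r+1$, the correct power $4^{r-1}$ in the denominator, the absence of the factor $2\pi$, and above all the bijective correspondence between the two halves of $O^{(u)}_{ins}$ and the two summands in the bracket — rather than anything conceptually new; everything else is a routine transcription of the closed-walk argument through the dictionary established by Theorems \ref{th_ufeynman} and \ref{th_unge}.
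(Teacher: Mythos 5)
Your proposal follows the same route as the paper: reduce via Theorems \ref{th_unge} and \ref{th_ufeynman} to the finitely many functions $\mathcal{T}^{(u)}_{F,k}(z)$, recognize that each has the form of Lemma \ref{le_yexp} with $\varpi = 4z$ and pole order $r+1$ (yielding $\gamma^{(r+1)}_i$), then extract the Taylor coefficient at order $n$ and divide by $4^n$. The bookkeeping you describe — matching $\mathcal{T}^{(u,1)}$ with $\mathcal{I}(F)$ and the plain product, and $\mathcal{T}^{(u,2)}$ with $I(F)$ and the shifted product via \eqref{eq_komega} — is exactly the correct correspondence, so the proposal is correct and essentially identical to the paper's proof.
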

\begin{proof} We start with Theorem \ref{th_unge} and equations \eqref{eq_usckf1}, \eqref{eq_usckf2} and \eqref{eq_uh0ti}. We then see that any of the functions $\mathcal{T}^{(u)}_{F,k}(z)$ has the form of Lemma \ref{le_yexp} with $\varpi = 4z$. We also use the fact that the number of simple non restricted walks of length $n$ in two dimensions by which we have to divide to get the expectation value is given by $4^n$.
Putting all this together we reach equation  \eqref{eq_uenmom}.
\end{proof}
\begin{lemma}\label{le_fmom}
Let $M \in \N$ be a natural number. Then the first moment of the expectation value of $N_{2k}(w)$ for a simple non restricted random walk of length $n$ in $2$ dimensions has the asymptotic expansion 
\begin{multline}\label{eq_ufmom}
E^{(n.r.)}_n(N_{2k}(w)) = n \cdot \sum_{i=0}^M \gamma^{(2)}_i \cdot y^{i+1}\left(\frac{d}{dy}\right)^i \left(y^{i-1} f_{1,k}^{(u)}(y) \right)\Biggr|_{y = -\frac{1}{\ln(n)}} + \\ o\left(\frac{n}{\ln(n)^M}\right)
\end{multline} 
\end{lemma}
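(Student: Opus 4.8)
The plan is to mirror the closed-walk first-moment computation (the lemma yielding \eqref{eq_fmom}), carried out for the non-restricted walk, and then to pass from the generating function to the coefficient asymptotics by Lemma \ref{le_yexp}. In fact \eqref{eq_ufmom} is nothing but the case $r=1$ of Theorem \ref{th_ucalc}: for a single marked point the only contributing Euler multidigraph is the one-vertex graph $F_p$ with $h_1=0$ and (by convention) $cof:=1$, for which $\mathcal{I}(F_p)=0$ --- the sum $\sum_{i\neq j}\partial/\partial H_{i,j}$ in \eqref{eq_guzI} being empty --- while $I(F_p)=1$ and $M(F_p)=1$; specializing \eqref{eq_uenmom}, using $\gamma^{(r+1)}_i=\gamma^{(2)}_i$ and $f^{(u)}_{h_1+1,k}=f^{(u)}_{1,k}$, one reads off \eqref{eq_ufmom}. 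I would, however, also give the independent direct derivation, which is short and serves as a cross-check.

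For the direct route, first I would compute $\lim_{N\to\infty}\sum_{w\in W^{(n.r.)}_N} N_{2k}(w)\,z^{length(w)}$ in closed form. Splitting non-restricted walks into closed and non-closed ones, and recalling that for a non-closed walk only the start- and end-point have odd multiplicity (hence never contribute to $N_{2k}$), the closed part equals $\widetilde{K}(1,k,h(0,2,z))\cdot z\tfrac{\partial}{\partial z}h(0,2,z)$ by the closed first-moment lemma; near $z=\pm\tfrac14$ this is $O(1/s)$, i.e. down by a full power of $s$ relative to the non-closed part, so it is irrelevant to the claimed expansion. For the non-closed part I would invoke the $r=1$ case of Theorem \ref{th_ufeynman}: with the single vertex $F_p$ the insertion operator $O^{(u)}_{ins}$ collapses, there being no off-diagonal edge, to the one $\omega$-derivative term $h(Y_3-Y_1,2,z)\,h(Y_1-Y_2,2,z)\,\partial/\partial\omega_1$, and the summation over the two endpoints $Y_2,Y_3$ produces, via \eqref{eq_factu}, the factor $(4z/(1-4z))^2$ up to corrections scaled down by $s$ (exactly the estimate made in the proof of Theorem \ref{th_unge}). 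Using \eqref{eq_komega} to turn $\partial/\partial\omega$ of $\widetilde{K}(1,k,\cdot)$ into $\widetilde{K}(2,k,\cdot)$, this identifies the non-closed generating function --- to leading order in $s$ together with all logarithmic corrections --- with $\mathcal{T}^{(u,2)}_{F_p,k}(z)=\widetilde{K}(2,k,\bar h(0,2,z))/(1-4z)^2$, the $\mathcal{T}^{(u,1)}$-term being absent since $\mathcal{I}(F_p)=0$.

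Next I would replace $h(0,2,z)$ by its standard asymptotic form $\bar h(0,2,z)$ near $z=\tfrac14$, justified by \eqref{eq_A3_even} exactly as in Lemma \ref{le_sfk}, treat $z\to-\tfrac14$ by the evenness of the generating function (as in Lemma \ref{le_ceven}), and observe that the resulting function has the form required by Lemma \ref{le_yexp} with $\varpi=4z$ and $m=r+1=2$ (after truncating the convergent expansion in $1/\ln(1-4z)$ at order $M$). Applying Lemma \ref{le_yexp} to extract the $n$-th Taylor coefficient and dividing by $4^n$, the number of length-$n$ non-restricted planar walks, yields \eqref{eq_ufmom} with $f^{(u)}_{1,k}(y)=\bar f_{1,k}(y,C^{(u)})=\widetilde{K}(1,k,-\tfrac{1}{\pi y}+C^{(u)})$ as in Definition \ref{de_ufgy}.

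There is no genuinely new obstacle here; the work is entirely a specialization of machinery already built. The only point needing a little care is the bookkeeping of which pieces are ``down by a factor $s$'' --- the closed-walk contribution, the constraint $Y_2\neq Y_3$, and the constraints forbidding the two endpoint vertices to coincide with the $F_p$-vertex --- all of which must be checked subleading; but each is disposed of verbatim by the already-established proof of Theorem \ref{th_unge}, so one simply cites it. If one prefers to avoid $O^{(u)}_{ins}$ altogether, the same generating function drops out of the elementary excursion decomposition of a non-closed walk at its visits to a fixed point (first-hit segment $\times$ $(2k-1)$ self-excursions $\times$ last-hit segment, summed over start- and end-point), but then the lattice summations must again be traded for integrals in the manner of Lemma \ref{le_integr}, so nothing is really gained.
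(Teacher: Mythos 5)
Your first route — specializing Theorem \ref{th_ucalc} to $r=1$ with $F=F_p$, $h_1=0$, $cof(A-F)=1$, $M(F)=I(F)=1$, $\mathcal{I}(F)=0$, $\gamma^{(r+1)}_i=\gamma^{(2)}_i$ and $f^{(u)}_{h_1+1,k}=f^{(u)}_{1,k}$ — is correct and reads off \eqref{eq_ufmom} immediately; the paper's proof is in fact the same thing done directly (it invokes Theorem \ref{t_odet} at $r=1$ and passes to coefficients by Lemma \ref{le_yexp}), so there is no real divergence in strategy.

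Your ``independent cross-check'' however contains a genuine slip. For the single-vertex graph $F_p$ one has $h_1=0$, so the bare vertex factor that $O^{(u)}_{ins}$ acts on is $\widetilde{K}(0,k,\cdot)$, not $\widetilde{K}(1,k,\cdot)$; applying \eqref{eq_komega} once therefore produces $\widetilde{K}(1,k,\cdot)$, and the definition \eqref{eq_usckf2} at $r=1$ gives
$$\mathcal{T}^{(u,2)}_{F_p,k}(z)=\frac{\widetilde{K}(h_1+1,k,\bar h(0,2,z))}{(1-4z)^2}=\frac{\widetilde{K}(1,k,\bar h(0,2,z))}{(1-4z)^2},$$
which is precisely what the paper's proof writes (and which then yields $g(y)=f^{(u)}_{1,k}(y)$, $m=2$ in Lemma \ref{le_yexp}). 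You wrote $\widetilde{K}(2,k,\cdot)$ instead — apparently confusing the \emph{already $z$-differentiated} closed-walk expression $\widetilde{K}(1,k,h(0,2,z))\,z\partial_z h(0,2,z)$ with the pre-insertion vertex factor. Note that this also contradicts your own Theorem \ref{th_ucalc} specialization, where you correctly used $f^{(u)}_{h_1+1,k}=f^{(u)}_{1,k}$; if the generating function really were $\widetilde{K}(2,k,\bar h)/(1-4z)^2$, Lemma \ref{le_yexp} would produce $f^{(u)}_{2,k}$ in \eqref{eq_ufmom}, not $f^{(u)}_{1,k}$. The rest of your sketch (the estimate of the constraints $Y_2\neq Y_3$, endpoint $\neq$ vertex, closed-walk subleadingness, evenness argument, substitution of $\bar h$ for $h$) is fine and matches the mechanism of Theorem \ref{th_unge} and Lemma \ref{le_sfk}.
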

According to Theorem \ref{th_unge} we can use the non closed walks and sum without restrictions. 
So we find from Theorem \ref{t_odet} for $r = 1$:
\begin{equation}
\underset{N \rightarrow \infty}{\lim}\sum_{w \in W^{(n.r.)}_{N}} N_{2k}(w) z^{length(w)} = \frac{1}{(1-4z)^2} \tilde K(1,k,\bar h(0,2,z))+ o\left( \frac{1}{s}\right)
\end{equation}
But then equation \eqref{eq_ufmom} follows from Lemma \ref{le_yexp}.
For concreteness sake
\begin{multline}
E_n^{(n.r.)}(N_{2k}(w)) = \frac{n \pi^2}{\ln(n)^2}\cdot \left(1-\frac{1}{\ln(n)}(k \pi + 2 \cdot \gamma + \pi(1 + 2\cdot C^{(u)})) \right) \\ + O\left(\frac{n}{\ln(n)^2}\right)
\end{multline}
\begin{corollary}
We also give the second centralized moment of the non restricted random walk as calculated from equation \eqref{eq_uenmom}:
\begin{multline}\label{eq_2mom}
E_n^{(n.r.)}(N_{2k_1}(w)\cdot N_{2k_2}(w)) - E_n^{(n.r.)}(N_{2k_1}(w))\cdot E_n^{(n.r.)}(N_{2k_2}(w)) = \\
\left(\frac{\pi^2 n}{\ln(n)^3}\right)^2 \Biggl(8\cdot \left(\frac{H_3 \cdot \pi^2}{8} + \frac{1}{2} - \frac{\pi^2}{12} \right) -  \\ \frac{24}{\ln(n)}\Biggl( \left(\frac{H_3 \cdot \pi^2}{8} + \frac{1}{2} - \frac{\pi^2}{12} \right)\cdot \left( (k_1 + k_2)\frac{\pi}{2}  + 2\cdot \gamma + \pi(1+ 2\cdot C^{(u)}) -4 \right)  +\\
 \zeta(3) - \zeta(2) + \frac{H_3 \cdot \pi^2}{8} + \frac{H_4 \cdot \pi^3}{24} \Biggr)
+ O\left(\frac{1}{\ln(n)^2} \right) \Biggr)
\end{multline}
where $H_4= I(F_2)$ and $H_3 = 1/4 \cdot \mathcal{I}(F_2)$:
\begin{equation}
H_3 = \left(\frac{2}{\pi}\right)^3 \int_{\R^2} d^2y \cdot  K_0(\lvert y \rvert)^3 = \frac{16}{\pi^2} \int_0^{\infty} dx \cdot  K_0(x)^3  \cdot x
\end{equation} 
\end{corollary}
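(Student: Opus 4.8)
The plan is to read the statement off Theorem~\ref{th_ucalc} at $r=2$ together with Lemma~\ref{le_fmom} at $r=1$, and then to form the centered combination $E^{(n.r.)}_n(N_{2k_1}N_{2k_2})-E^{(n.r.)}_n(N_{2k_1})E^{(n.r.)}_n(N_{2k_2})$. First I would fix the truncation: since the answer is of order $n^2/\ln(n)^6$ and we want in addition the $1/\ln(n)$ correction, it is enough to take $M=7$ in Theorem~\ref{th_ucalc}, so that $\tilde H_k^{(M)}$ restricted to $\tilde H_2=\{F_f:f\ge 1\}$ consists exactly of the matrices $F$ with $K(F)=2f\le M-r=5$, namely $F_1$ and $F_2$; the graphs $F_f$ with $f\ge 3$, as well as all branches of $\tilde H_k^{(M)}$ with $r\neq 2$, do not enter.

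Next I would compute the graph data for $F_1$ and $F_2$. One has $cof(A-F_1)=1$ and $cof(A-F_2)=2$ (the cofactor being the number of spanning trees of the two-vertex multigraph with one, resp.\ two, parallel edges), $M(F_1)=1$, $M(F_2)=1/(2!)^2=1/4$, while the four integral invariants are obtained by carrying out the differentiations in \eqref{eq_guzI}, substituting $H_{k,l}=\tfrac2\pi K_0(|y_k-y_l|)$, integrating out $\delta(y_1)$ and passing to a radial variable; this yields in particular $H_4=I(F_2)=\left(\tfrac{2}{\pi}\right)^4\int_{\R^2}K_0(|y|)^4\,d^2y$ and $H_3=\tfrac14\mathcal I(F_2)=\left(\tfrac{2}{\pi}\right)^3\int_{\R^2}K_0(|y|)^3\,d^2y=\tfrac{16}{\pi^2}\int_0^\infty K_0(x)^3x\,dx$, which is the closed form quoted, together with the elementary values of $\mathcal I(F_1)$, $I(F_1)$ (reducible via $\int_0^\infty K_0(x)x\,dx=1$ and $\int_0^\infty K_0(x)^2x\,dx=\tfrac12$). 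I would also record the explicit shapes of $f^{(u)}_{1,k},f^{(u)}_{2,k},f^{(u)}_{3,k}$ from Definitions~\ref{de_fgy} and \ref{de_ufgy} with $C^{(u)}$ as in \eqref{eq_cu} (so $f^{(u)}_{1,k}(y)=(\pi y)^2\tfrac{(1-C^{(u)}\pi y)^{k-1}}{(1-(C^{(u)}+1)\pi y)^{k+1}}$, $f^{(u)}_{2,k}(y)=2(\pi y)^3(1+o(1))$, $f^{(u)}_{3,k}(y)=6(\pi y)^4(1+o(1))$), and the Gamma coefficients $\gamma^{(2)}_0=1$, $\gamma^{(3)}_0=\tfrac12$, $\gamma^{(1)}_1=\gamma$, together with $\gamma^{(2)}_1,\gamma^{(3)}_1$ and the constants $\zeta(2),\zeta(3)$ which appear in the Taylor expansion of $1/\Gamma$.

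The heart of the argument is a cancellation. Inserting the $F_1$ term of \eqref{eq_uenmom} into $E^{(n.r.)}_n(N_{2k_1}N_{2k_2})$, averaging over $S_2$ and expanding in $y=-1/\ln n$, I would show that it reproduces the product $E^{(n.r.)}_n(N_{2k_1})E^{(n.r.)}_n(N_{2k_2})$ computed from \eqref{eq_ufmom} up to and including the orders $n^2/\ln(n)^6$ and $n^2/\ln(n)^7$, apart from a universal remainder whose leading part equals $8\left(\tfrac12-\tfrac{\pi^2}{12}\right)\left(\frac{\pi^2 n}{\ln(n)^3}\right)^2$; this remainder emerges from the structural mismatch that the two-point generating function of Theorem~\ref{th_ucalc} carries the prefactor $(1-4z)^{-(r+1)}=(1-4z)^{-3}$ whereas a product of two one-point functions effectively carries two factors $(1-4z)^{-2}$, so the matching is asymptotic rather than termwise and leaves behind precisely the $\mathcal I(F_1)$- and $I(F_1)$-driven constants. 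What then remains of the centered difference is the $F_2$ term of \eqref{eq_uenmom} plus that remainder: at leading order only the $\mathcal I(F_2)\,f^{(u)}_{2,\cdot}f^{(u)}_{2,\cdot}$ piece with $i=0$ survives, giving the $H_3\pi^2$ contribution, and at the next order the $I(F_2)=H_4$ piece (through $f^{(u)}_{3}f^{(u)}_{2}$), the $i=1$ term with $\gamma^{(3)}_1$, the centering shift $\gamma^{(1)}_1=\gamma$, and $\zeta(2),\zeta(3)$ from $1/\Gamma$ all enter; collecting these yields \eqref{eq_2mom}.

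The step I expect to be the main obstacle is exactly this cancellation bookkeeping. One must track, with the correct weights coming from the $S_2$-average and from the Leibniz/Watson expansion of Lemma~\ref{le_yexp}, which monomials in $y=-1/\ln n$ the $F_1$ two-point term and the product of first moments share, verify that they cancel to the required depth, and isolate the residual constants $\tfrac12-\tfrac{\pi^2}{12}$ together with the contributions of the $F_1$-sector to the $1/\ln n$ correction (the terms $\zeta(3)-\zeta(2)+\tfrac{H_3\pi^2}{8}$ in \eqref{eq_2mom} draw on both sectors). By contrast the graph-theoretic reduction is trivial here (only $F_1,F_2$), the integral invariants are elementary Bessel integrals, and the remaining algebra is routine substitution into formulas already established.
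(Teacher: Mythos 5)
Your proposal takes exactly the route the paper intends: specialize Theorem~\ref{th_ucalc} (equation~\eqref{eq_uenmom}) to $r=2$, retain only $F_1$ and $F_2$ from $\tilde H_2$, subtract the square of the first moment from Lemma~\ref{le_fmom}, and track the cancellation between the $F_1$ contribution and that product. The graph data ($cof(A-F_1)=1$, $cof(A-F_2)=2$, $M(F_2)=\tfrac14$, $\mathcal I(F_1)=8$, $I(F_1)=\tfrac4\pi$, $H_3=\tfrac14\mathcal I(F_2)$, $H_4=I(F_2)$) and the Bessel-integral reductions are all correct, and the observation that the $F_2$ term alone accounts for the $H_3\pi^2$ piece of the leading coefficient while the $F_1$-vs.-product mismatch supplies the universal $\tfrac12-\tfrac{\pi^2}{12}$ is precisely the mechanism later packaged in Theorems~\ref{th_ulc1} and~\ref{th_ulc2} (indeed one checks $8\bigl(\gamma^{(2)}_2-\tfrac12(\gamma^{(2)}_1)^2\bigr)=8\bigl(\tfrac12-\tfrac{\pi^2}{12}\bigr)$). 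One small slip: you write ``the centering shift $\gamma^{(1)}_1=\gamma$,'' but for the non-restricted walk the first moment carries $\gamma^{(2)}_i$ coefficients, so the relevant shift is $\gamma^{(2)}_1=\gamma-1$ (your earlier mention of $\gamma^{(2)}_1$ is the correct one); $\gamma^{(1)}_1=\gamma$ belongs to the closed-walk version (Theorem~\ref{th_lc1}), not here. This does not affect the validity of the approach, but it would throw off the bookkeeping of the $1/\ln n$ correction if carried through literally.
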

With \citep[eq. 3.477 4.]{rg} we realize that the Laplace transform of the Gauss distribution is proportional to $K_0$ and therefore using \citep[eq. 4.3]{jain} we get  
\begin{equation}
 2\cdot \int_0^{\infty} dx \cdot  K_0(x)^3\cdot x  = -\int_0^1 \frac{\ln(x)\cdot dx}{1-x+x^2} = \frac{3}{2}\left[ \sum_{p=0}^{\infty} \frac{1}{(1 + 3p)^2} - \frac{1}{(2+3p)^2} \right] 
\end{equation}
where the second equation is from \citep[eq. 9-133, p. 455]{itzykson}.
The leading term at the right hand side of \eqref{eq_2mom} can be compared to the classical result of Jain and Pruitt \citep[Theorem 4.2]{jain} for the second centralized moment of the range. For this we use  Hamanas relation  \citep[Theorem 3.5]{hamana_ann} of $\ln(n)^3/n$ times the distribution of the multiple point range of a random walk converging asymptotically in the distribution sense towards $c_1$ times the distribution of the intersection local time of the planar Brownian motion. $c_1$ is given by 
\begin{equation}\label{eq_c1}
c_1 = -16\cdot \pi^3 \det(\Xi^2)
\end{equation}
And we use Le Galls result \citep[Theorem 6.1]{legall} of $\ln(n)^2/n$ times the distribution of the range of the random walk  converging asymptotically in the distribution sense towards $c_2$ times the distribution of the intersection local time of the planar Brownian motion where 
\begin{equation}
c_2 = -4 \cdot \pi^2 \det(\Xi)
\end{equation}
In the case of the simple random walk $\det(\Xi) = 1/2$ and so the result of Jain and Pruitt for the rescaled centralized second moment of the range 
\begin{equation}
8 \pi^2 \cdot \det(\Xi^2)  \cdot \left(\frac{H_3 \cdot \pi^2}{8} + \frac{1}{2} - \frac{\pi^2}{12} \right)
\end{equation}
has to be multiplied by a factor 
\begin{equation}
\frac{c_1^2}{c_2^2} = 4 \pi^2
\end{equation}
to yield our result. 
We have used the fact here that the variables $Q^{(p)}_n$  of Hamana compare to our definitions with 
\begin{equation}
Q_n^{(p)} = N_{2p}(w) + N_{2p-1}(w)
\end{equation}
but that the contributions of the multiple points with odd order, as there can only be two of them for a given random walk, are irrelevant for our result. 
\begin{theorem}\label{th_ulc1}
Let $F \in \tilde H_{nd} \cap \tilde H_r $. Then we can write $\mathcal{E}^{(u)}_{n,F,r,m-r}$, the sum of the contributions  of $F$ together with those of $\widetilde{F} \in \tilde H_{m-r-\nu}(F)$ with $0 \leq \nu \leq m-r$ to the $m$ th centralized moment of the multiple point range with all logarithmic corrections of any order  in the form
\begin{multline}
E^{(n.r.)}_{n}\left(\prod_{\ell=1}^{m} \left( N_{2k_{\ell}}(w) - E^{(n.r.)}_n( N_{2k_{\ell}}(w)) \right) \right) = \\
 \widehat{\mathcal{E}}^{(u)}_{n,F_1,m} + \sum_{r=2}^m \left( 
\sum_{F \in \tilde H_{nd} \cap \tilde H_r \cap \tilde H^{(M)}_k} \mathcal{E}^{(u)}_{n,F,r,m-r}
\right)   
\end{multline}
where $E_n^{(n.r.)}(.)$ is the expectation value for the non restricted simple random walk of length $n$. It 
is given by the sum of three terms:
\begin{equation}
 \mathcal{E}^{(u)}_{n,F,r,m-r} =  \sum_{i=1}^3 \mathcal{E}^{(u,i)}_{n,F,r,m-r}
\end{equation}
with 
\begin{multline}\label{eq_ucontras1}
\mathcal{E}^{(u,1)}_{n,F,r,p} := \frac{ n^{p+r}}{r\,!} \sum_{\sigma \in S_{p+r}}  \sum_{\nu = 0}^p \Biggl(  \left(   \prod_{j= r+ p - \nu +1}^{r + p} g^{(u)}_{1,k_{\sigma(j)}}(y)  \right) \cdot \frac{ (-1)^{\nu}}{\nu\,! (p-\nu)\,!} \cdot \\
\sum_{q\geq p - \nu}^{M} q\,! \cdot \gamma_q^{(r+1)} \cdot y^{q+1} \cdot \\
\left(\sum_{\substack{q_{r+1} \geq 1,\ldots,q_{r+ p - \nu} \geq 1 \\ q_{g} \geq 0 \\ q_{r+1} + \ldots + q_{r+ p - \nu} + q_{g} = q}} \frac{gsu_1^{(q_g)}(y)}{q_g\,!} \prod_{i=r+1}^{r + p - \nu} \frac{f^{(u)(q_{i})}_{1,k_{\sigma(i)}}(y)}{q_{i}\,!} \right) \Biggr)  \Biggr|_{y = -\frac{1}{\ln(n)}} \\
+ o\left( \frac{n^{p+r}}{\ln(n)^M} \right)
\end{multline}
and for $\alpha = 2,3$ and $p > 0$ 
\begin{multline}\label{eq_ucontras2}
\mathcal{E}^{(u,\alpha)}_{n,F,r,p} := \frac{ n^{p+r}}{r\,!} \sum_{\sigma \in S_{p+r}}  \sum_{\nu = 0}^{p-1} \Biggl(  \left(   \prod_{j= r+ p - \nu +1}^{r + p} g^{(u)}_{1,k_{\sigma(j)}}(y)  \right) \cdot \frac{ (-1)^{\nu}}{\nu\,! (p-\nu)\,!} \\
\sum_{q\geq  p-\nu - 1}^{M} q\,! \cdot \gamma_q^{(r+2)} \cdot y^{q+1} \cdot
\\ \left(
\sum_{c=r+1}^{r + p - \nu}
\sum_{\substack{q_{\beta} \geq 1 \\ \forall \beta:  r+1 \leq \beta \leq r + p - \nu; \beta \neq c \\ q_{g} \geq 0 \\ q_{g} + \sum_{r+1 \leq \beta \leq r + p - \nu; \beta \neq c} q_{\beta}= q} }
\frac{gsu_{\alpha}^{(q_g)}(c,y)}{q_g\,!} \prod_{\substack{i\geq r+1 \\ i \neq c}}^{r + p - \nu} \frac{f^{(u)(q_{i})}_{1,k_{\sigma(i)}}(y)}{q_{i}\,!} \right) \Biggr)  \Biggr|_{y = -\frac{1}{\ln(n)}} \\
+ o\left( \frac{n^{p+r}}{\ln(n)^M} \right)
\end{multline}
(for $p = 0$ and $\alpha = 2,3$ we define $\mathcal{E}^{(u,\alpha)}_{n,F,r,p} := 0$)\\
We have used the notations:
\begin{equation}
gsu_{1}(y) := y^{q-1} gd_{1,k,\sigma,p}(y)
\end{equation} 
and for $\alpha = 2,3$
\begin{equation}
gsu_{\alpha}(c,y) := y^{q-1} gd_{\alpha,k,\sigma,p}(c,y)
\end{equation} 
where for a permutation $\sigma \in S_{p+r}$ 
\begin{multline}
gd_{1,k,\sigma,p}(y) := \frac{ M(F) \cdot cof (A-F)}{4^{r-1}} \\ \left(\sum_{i=1}^{r} \left( \left(\frac{ \mathcal{I}(F) }{r} \cdot f^{(u)}_{h_i,k_{\sigma(i)}}(y)   + I(F) \cdot f^{(u)}_{h_i+1,k_{\sigma(i)}}(y)\right) \prod_{\substack{j\geq1 \\ j \neq i}}^{r}f^{(u)}_{h_j,k_{\sigma(j)}}(y)  \right)\right)
\end{multline}
\begin{equation}
gd_{2,k,\sigma,p}(c,y) := (-1)  \cdot \frac{1}{\pi} \cdot f^{(u)}_{2,k_{\sigma(c)}}(y)  \cdot gd_{1,k,\sigma,p}(y) 
\end{equation}
\begin{multline}
gd_{3,k,\sigma,p}(c,y) :=   f^{(u)}_{2,k_{\sigma(c)}}(y)\cdot \frac{I(F) \cdot M(F) \cdot cof (A-F)}{4^{r-1}} \\ \left(\sum_{i=1}^{r}
\left(\frac{ (r-1)}{r} \cdot f^{(u)}_{h_i,k_{\sigma(i)}}(y)   + \frac{1}{\pi} \cdot f^{(u)}_{h_i+1,k_{\sigma(i)}}(y)\right) \cdot 
 \left( \prod_{\substack{j\geq1 \\ j \neq i}}^{r}f^{(u)}_{h_j,k_{\sigma(j)}}(y) \right)\right)
\end{multline}
and 
\begin{equation}
 g^{(u)}_{1,k}(y) :=  \sum_{i = 1}^M \gamma^{(2)}_i \cdot y^{i+1}\left(\frac{d}{dy}\right)^i \left(y^{i-1} f^{(u)}_{1,k}(y) \right)
\end{equation}
The leading behaviour is given by
$\mathcal{N}_1(F) + \mathcal{N}_2(F) + \mathcal{N}_3(F)$.
If we define
\begin{multline}
\mathcal{Q}(F) := \left(\frac{\pi^2 n}{\ln(n)^3}\right)^{p+r} (p+r)\,!  \cdot   \left(\frac{(-1)\cdot \pi}{\ln(n)}  \right)^{\sum_{j=1}^r (h_j -2)} \cdot \prod_{j=1}^r \frac{h_j\,!}{2\,!} \cdot \\
\Biggl( 4 \cdot  M(F) \cdot cof(A-F) \cdot \frac{1}{r\,!} \left( -\frac{\pi}{2} \right)^r
2^p (-1)^p \Biggr)
\end{multline}
Then 
\begin{equation}\label{eq_umomf1}
\mathcal{N}_1(F) = \mathcal{Q}(F) \cdot \mathcal{I}(F) \Biggl( \sum_{\nu = 0}^p \gamma^{(r+1)}_{p-\nu} \frac{ (- \gamma^{(2)}_1)^{\nu}}{\nu\,!} \Biggr)
\end{equation}
\begin{equation}\label{eq_umomf2}
\mathcal{N}_2(F) = (-1) \cdot \mathcal{Q}(F) \cdot \mathcal{I}(F) \cdot \Biggl( \sum_{\nu = 0}^{p-1} \gamma^{(r+2)}_{p-\nu-1} \frac{ (- \gamma^{(2)}_1)^{\nu}}{\nu\,!} \Biggr)
\end{equation}
\begin{equation}\label{eq_umomf3}
\mathcal{N}_3(F) = \pi \cdot (r-1) \cdot  \mathcal{Q}(F) \cdot I(F)    \cdot \Biggl( \sum_{\nu = 0}^{p-1} \gamma^{(r+2)}_{p-\nu-1} \frac{ (- \gamma^{(2)}_1)^{\nu}}{\nu\,!} \Biggr)
\end{equation}
where $\mathcal{N}_2(F)$ and $\mathcal{N}_3(F)$ only exist for $p > 0$ and are to be set to $0$ for $p=0$.
\end{theorem}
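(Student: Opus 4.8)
The plan is to transfer the proof of Theorem~\ref{th_lc1} to the non-restricted walk, using the results established above for this class of walks and carefully bookkeeping the two extra families of graph contributions that the insertion operator $O^{(u)}_{ins}$ generates. I would first note that the decomposition of the $m$-th centralized moment into $\widehat{\mathcal{E}}^{(u)}_{n,F_1,m}+\sum_{r=2}^m\sum_{F\in\tilde H_{nd}\cap\tilde H_r\cap\tilde H^{(M)}_k}\mathcal{E}^{(u)}_{n,F,r,m-r}$ follows, exactly as in \eqref{eq_fullas}, from Theorem~\ref{th_unge} together with Definition~\ref{de_dam} and Lemma~\ref{le_nontp}, which regroup the finite sum $\sum_{F\in\tilde H^{(M)}_k}\mathcal{T}^{(u)}_{F,k}(z)$ by the dam-free core (resp.\ $F_1$) of each matrix; the $F_1$ term is then treated via Lemma~\ref{le_urd2} exactly as $\widehat{\mathcal{E}}_{n,F_1,m}$ is in Theorem~\ref{th_lc2}, so that it remains to analyse $\mathcal{E}^{(u)}_{n,F,r,p}$ for a fixed core $F\in\tilde H_{nd}\cap\tilde H_r$. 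For this I would start from Lemma~\ref{le_urd1}, which evaluates $\sum_{\widetilde F\in\tilde H_{p-\nu}(F)}\mathcal{T}^{(u)}_{\widetilde F,k}(z)$ as a differentiated product.

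The structural observation to exploit is that $O^{(u)}_{ins}$ splits into an $H$-part and an $\omega$-part, and that the two pendant edges it introduces are attached either to the Eulerian core $G(F)$ or to a dam vertex. The $H$-part attached to $G(F)$ gives the $\mathcal{I}(F)$-factor in $\mathcal{T}^{(u,1)}$ (see \eqref{eq_usckf1}) and the first term of $gd_{1,k,\sigma,p}$; the $\omega$-part, through \eqref{eq_komega}, shifts a vertex factor $\widetilde{K}(h_j,k_{\sigma(j)},\bar h(0,2,z))$ to $\widetilde{K}(h_j+1,\dots)$ and thus produces $\mathcal{T}^{(u,2)}$ and the $\pi^{-1}f^{(u)}_{h_i+1,\cdot}$ term in $gd_1$. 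Attaching the pendant pair to a dam vertex instead --- the factor $vfd^{(u)}$ of \eqref{eq_uvfdef2}, as opposed to $vf^{(u)}$ of \eqref{eq_uvfdef} --- is precisely the source of $\mathcal{E}^{(u,2)}_{n,F,r,p}$ and $\mathcal{E}^{(u,3)}_{n,F,r,p}$; since this insertion consumes one dam while supplying the extra singular factor $(1-4z)^{-2}$, it shifts the relevant Laurent expansion by one order and raises the reciprocal-Gamma index, so that \eqref{eq_ucontras2} carries $\gamma^{(r+2)}$ and the bound $q\ge p-\nu-1$ while \eqref{eq_ucontras1} carries $\gamma^{(r+1)}$ and $q\ge p-\nu$; and for $p=0$ there is no dam to dress, forcing $\mathcal{E}^{(u,2)}_{n,F,r,0}=\mathcal{E}^{(u,3)}_{n,F,r,0}=0$.

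With this dictionary I would multiply the output of Lemma~\ref{le_urd1} by $\prod_j E^{(n.r.)}_n(N_{2k_j}(w))$ over the remaining $\nu$ indices, expanded via \eqref{eq_ufmom} with $\gamma^{(2)}_0=1$ so that each factor contributes $f^{(u)}_{1,k}$ plus its correction $g^{(u)}_{1,k}$, sum over all partitions of $\{k_1,\dots,k_{p+r}\}$ into blocks of sizes $r+p-\nu$ and $\nu$ with sign $(-1)^\nu$, and then apply Lemma~\ref{le_psifak}, Lemma~\ref{le_philer} and Lemma~\ref{le_inclg} --- exactly as in the closed case --- to collapse the nested sums into the inclusion--exclusion $\mathcal{M}$-form appearing on the right-hand sides of \eqref{eq_ucontras1} and \eqref{eq_ucontras2}. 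The leading behaviour is then read off as in Theorem~\ref{th_lc1}: the minimal admissible $q$ dominates (all $q_i=1$, $q_g=0$), because the factor $y^{q+1}$ with $y=-1/\ln n$ suppresses larger $q$; substituting the leading asymptotics $f^{(u)}_{m,k}(y)\sim m!\,(\pi y)^{m+1}$ (Definitions~\ref{de_fgy}, \ref{de_ufgy}) and the coefficients $\gamma^{(r+1)}_{p-\nu},\gamma^{(r+2)}_{p-\nu-1},\gamma^{(2)}_1$ and collecting powers of $\ln n$ gives the prefactor $\mathcal{Q}(F)$ and the three expressions $\mathcal{N}_1(F),\mathcal{N}_2(F),\mathcal{N}_3(F)$, the factor $r-1$ in $\mathcal{N}_3(F)$ being the cardinality of the $i$-sum in the $\omega$-part of $gd_{3,k,\sigma,p}$.

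I expect the main obstacle to be organisational rather than conceptual: running the three separated families of contributions through the generalized Leibniz identities without losing or double-counting terms, and verifying the cancellations among the $\gamma^{(m)}$ that produce the common leading order $n^{p+r}/\ln(n)^{3(p+r)}$ of every centralized moment --- the ``dam cancellation'' already visible in \eqref{eq_2mom}. A secondary technical point is to confirm the uniformity and termwise differentiability of the $o$-errors coming from Theorem~\ref{th_unge} and from the Euler--Maclaurin replacement of the sums over $Y\in J_r\setminus\{Y_{r+1},Y_{r+2}\}$ and over $Y_{r+1}\ne Y_{r+2}$ by integrals (each restriction costing one factor $s$), which follows from the holomorphy of the finitely many $\mathcal{T}^{(u)}_{F,k}$ on $U_{\frac{1}{4}}(0)$ just as in Lemma~\ref{le_momfunc}.
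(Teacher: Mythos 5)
Your overall strategy is correct through the decomposition into a dam-free core $F$ plus the $F_1$ term, and correct again in pairing the second term of \eqref{eq_urd1} (the $vfd^{(u)}$ piece) with the cancellation scheme. But the attribution of $\mathcal{E}^{(u,2)}$ is wrong, and with it the logic that produces the three-term split. You say the dam-vertex insertion, through $vfd^{(u)}$, is the source of both $\mathcal{E}^{(u,2)}$ and $\mathcal{E}^{(u,3)}$. That is not consistent with the theorem itself: $gd_{3,k,\sigma,p}$ carries only the factor $I(F)$ (the pendant edges sit on a dam, so the Eulerian core is undisturbed and $\mathcal{I}(F)$ never appears), whereas $gd_{2,k,\sigma,p}$ is $(-1/\pi)f^{(u)}_{2,k_{\sigma(c)}}\cdot gd_{1,k,\sigma,p}$, which inherits \emph{both} $I(F)$ and $\mathcal{I}(F)$. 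A contribution with an $\mathcal{I}(F)$ factor must involve insertion of the pendant edges into the core $G(F)$, not onto a dam. So your bookkeeping double-counts the dam-insertion channel and misses the actual source of $\mathcal{E}^{(u,2)}$.

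The missing idea is the reformulation step in the paper's proof (equation \eqref{eq_urdd}): the first term of \eqref{eq_urd1}, $\left(z\,\partial_z\right)^{m-1}\bigl[\prod_j vf^{(u)}\cdot\left(z\,\partial_z\right) gfd^{(u)}\bigr]$, is rewritten by ``integrating the extra $z\partial_z$ by parts'' as
$\left(z\,\partial_z\right)^{m}\bigl[\prod_j vf^{(u)}\cdot gfd^{(u)}\bigr]
-\left(z\,\partial_z\right)^{m-1}\bigl[\sum_i\prod_{j\neq i}vf^{(u)}\cdot gfd^{(u)}\cdot\left(z\,\partial_z\right)vf^{(u)}_i\bigr]$.
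The first piece is in the standard $\left(z\,\partial_z\right)^m$ form so that Lemma \ref{le_yexp} applies directly with $\gamma^{(r+1)}$ and yields $\mathcal{E}^{(u,1)}$; the second piece, where one factor of $\left(z\,\partial_z\right)$ is shifted onto a single dam factor $vf^{(u)}_i$, is what generates $\mathcal{E}^{(u,2)}$ with $\gamma^{(r+2)}$ and the shifted lower bound $q\geq p-\nu-1$. Note $\left(z\,\partial_z\right)vf^{(u)}_i\sim\frac{1}{\pi(1-4z)}\,\widetilde K(2,k_{\sigma(i)},\bar h)$ while $vfd^{(u)}_i\sim\frac{1}{(1-4z)^2}\,\widetilde K(2,k_{\sigma(i)},\bar h)$ — these differ by a full order in $(1-4z)$ and by the factor $1/\pi$, which is exactly why both $gd_2$ (with the $-1/\pi$) and $gd_3$ (without it) are needed as distinct quantities. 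Without this manipulation your sketch has no route from the raw $\left(z\,\partial_z\right)^{m-1}[\ldots\left(z\,\partial_z\right) gfd^{(u)}]$ structure to the clean $\gamma^{(r+1)}$ in \eqref{eq_ucontras1}, and no separate source for the $\mathcal{E}^{(u,2)}$ term containing $\mathcal{I}(F)$.
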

\begin{proof} The proof is analogous to the one of Theorem \ref{th_lc1}, taking into account that $\gamma^{(2)}_0 = 1$. We start with Lemma \ref{le_urd1} and reformulate in the first term on the right hand side of \eqref{eq_urd1}:
\begin{multline}\label{eq_urdd}
 \left( z\cdot \frac{\partial}{\partial z}\right)^{m-1}\Biggl[
\left( \prod_{j=r+1}^{r+m} vf^{(u)}(\sigma,j,z) \right) \cdot  \left( z\cdot \frac{\partial}{\partial z} \right) gfd^{(u)}(\sigma,z) 
\Biggr] = \\
\left( z\cdot \frac{\partial}{\partial z}\right)^{m}\Biggl[
\left( \prod_{j=r+1}^{r+m} vf^{(u)}(\sigma,j,z) \right) \cdot gfd^{(u)}(\sigma,z) 
\Biggr] - \\
\left( z\cdot \frac{\partial}{\partial z}\right)^{m-1}\Biggl[ \sum_{i=r+1}^{r+m}
\left( \prod_{j \geq r+1 \wedge j \neq i}^{r+m} vf^{(u)}(\sigma,j,z) \right) \cdot  gfd^{(u)}(\sigma,z) \cdot \\
\left( z\cdot \frac{\partial}{\partial z} \right) vf^{(u)}(\sigma,i,z)
\Biggr]
\end{multline}
We remember from Lemma \ref{le_urd1} that this equation is only valid for $m > 0$. Now contributions belonging to the first term on the right hand side of equation \eqref{eq_urdd} together with the contribution of $F$ itself (which takes the place of $m=0$) result in $\mathcal{E}^{(u,1)}_{n,F,r,p}$ when the cancellation scheme of Theorem \ref{th_lc1} is applied to them. The contributions belonging to the second term on the right hand side of equation \eqref{eq_urdd} when the cancellation scheme is applied for those $p-1$ factors $vf$ which do not belong to $j=i$ gives the the contribution $\mathcal{E}^{(u,2)}_{n,F,r,p}$. The second term on the right hand side of equation \eqref{eq_urd1} with the cancellation scheme of Theorem \ref{th_lc1}  applied to the $vf$ not having $j=i$ leads to the term $\mathcal{E}^{(u,3)}_{n,F,r,p}$.
\end{proof}
\begin{theorem}\label{th_ulc2}
The leading contribution with all logarithmic corrections of $\tilde H_{p-\mu-2}(F_1)$ for $\mu = 0,\ldots,p-2$ to the $p$ th centralized moment of the non restricted simple random walk of length $n$ is
\begin{multline}\label{eq_ucontras3}
\widehat{\mathcal{E}}^{(u)}_{n,F_1,p} =  n^{p} \sum_{\sigma \in S_{p}}  \sum_{\nu = 0}^{p} \Biggl(  \left(  \prod_{j=  p  -\nu +1}^{ p} g^{(u)}_{1,k_{\sigma(j)}}(y)  \right) \cdot \frac{(-1)^{\nu}}{\nu\,! (p -\nu)\,!} \\
\sum_{q\geq p - \nu}^{M} q\,! \cdot \gamma_{q}^{(2)} \cdot y^{q+1} 
\sum_{\substack{q_{1} \geq 1,\ldots,q_{p  - \nu} \geq 1 \\ q_{g} \geq 0 \\ q_{1} + \ldots + q_{p -\nu} + q_{g} = q}} \frac{gtu^{(q_g)}(y)}{q_g\,!} 
\prod_{i=1}^{ p  - \nu} \frac{f^{(u)(q_{i})}_{1,k_{\sigma(i)}}(y)}{q_{i}\,!} \Biggr)  \Biggr|_{y = -\frac{1}{\ln(n)}} \\ + o\left( \frac{n^p}{\ln(n)^{M}}\right)
\end{multline}
where 
\begin{equation}
gtu(y) := y^{q-1}
\end{equation}
(for the term with $\nu=p$ the sum over $q$ at the right hand side of equation \eqref{eq_ucontras3} only has one nonzero contribution $1$ for $q =0$ as has been discussed in the proof of Lemma \ref{le_yexp}).\\
Its leading behaviour is given by
\begin{equation}
\left(\frac{\pi^2 n}{\ln(n)^3}\right)^{p} p\,!  \cdot 2^p (-1)^p\sum_{\nu = 0}^p \gamma^{(2)}_{p-\nu} \frac{ (- \gamma^{(2)}_1)^{\nu}}{\nu\,!}
\end{equation}
\end{theorem}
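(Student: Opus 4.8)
The plan is to mirror the proof of Theorem \ref{th_ulc1}, specialized to the base graph $F_1$ and with Lemma \ref{le_urd2} playing the role of Lemma \ref{le_urd1}. By Lemma \ref{le_nontp} every dam graph whose dam-free reduction is $F_1$ lies in exactly one of the sets $\tilde H_m(F_1)$, so the whole contribution of the ``$F_1$-tower'' to $T^{(n.c.)}_k(z)$, and hence by Theorem \ref{th_unge} to $T^{(n.r.)}_k(z)$ (the closed part $S_k(z)$ being of lower order in $1/s$), is captured by the generating functions $\sum_{\tilde F \in \tilde H_m(F_1)} \mathcal{T}^{(u)}_{\tilde F,k}(z)$ computed in Lemma \ref{le_urd2}. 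First I would note that for $\tilde F \in \tilde H_m(F_1)$ the associated graph has $m+2$ vertices, so such a matrix contributes to the $(m+2)$-th moment; taking $m = p-\mu-2$ and then replacing $\nu$ of the $p$ factors $N_{2k_\ell}(w)$ by their expectation values accounts for all the pieces of the $p$-th centralized moment coming from $F_1$. After the global differentiation \eqref{eq_deftk} and division by the number $4^n$ of length-$n$ non-restricted walks, the right-hand side of \eqref{eq_urd2} is exactly of the form treated by Lemma \ref{le_yexp} with $\varpi = 4z$, so its Taylor coefficients around $z=0$ admit the claimed asymptotic expansion in powers of $1/\ln(n)$.

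Next I would assemble the centralized moment. Writing each factor as $N_{2k_\ell}(w) - E^{(n.r.)}_n(N_{2k_\ell}(w))$ and expanding the product by Lemma \ref{le_philer}, the terms are organized by the number $\nu$ of factors taken as expectation values. Since $\gamma^{(2)}_0 = 1$, the single-factor expectation from Lemma \ref{le_fmom} is $E^{(n.r.)}_n(N_{2k}(w)) = n\,(f^{(u)}_{1,k}(y) + g^{(u)}_{1,k}(y))$ up to the controlled error, and its ``$f$-part'' combines with the inner $z\partial_z$-derivatives of the tower factors in \eqref{eq_urd2}. Applying the generalized Leibniz and inclusion--exclusion identities of Lemmas \ref{le_stonebox}, \ref{le_psifak} and \ref{le_inclg} --- with $\mathcal{M}^{(p-\nu)}_\nu$ there encoding the multi-derivative of $\prod_i f^{(u)}_{1,k_{\sigma(i)}}(y)\cdot gtu(y)$ --- collapses the many dam configurations into the single double sum over $\nu$ and $q$ displayed in \eqref{eq_ucontras3}. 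The special factor $gtu(y) = y^{q-1}$ is precisely what $z\partial_z$ produces from $1/(1-4z)$ in \eqref{eq_urd2}, and the degenerate case $\nu = p$ retains only the $q=0$ contribution with value $1$, exactly as in the closed analogue (Theorem \ref{th_lc2}) and as discussed in the proof of Lemma \ref{le_yexp}.

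Finally I would extract the leading behaviour from \eqref{eq_ucontras3}: the dominant term comes from $q = p-\nu$, because any $q < p-\nu$ vanishes by the occupancy interpretation underlying Lemma \ref{le_stonebox}, and any increase of $q_g$ or of the $q_i$ beyond their minimal value $1$ contributes extra powers of $y$ through the $y^{q+1}$ prefactor before the differentiations. Using the leading asymptotics of $f^{(u)}_{1,k}(y)$ and of $g^{(u)}_{1,k}(y)$ (whose leading term is governed by $\gamma^{(2)}_1$), together with the $n/\ln(n)^3$ scaling per factor that survives the cancellations of the centralization, then yields the stated leading behaviour $\left(\frac{\pi^2 n}{\ln(n)^3}\right)^p p!\,2^p(-1)^p\sum_{\nu=0}^p \gamma^{(2)}_{p-\nu}\,(-\gamma^{(2)}_1)^\nu/\nu!$.

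The step I expect to be the main obstacle is the combinatorial alignment: one must verify that the tree-and-dressing description of $\tilde H_m(F_1)$ in Lemma \ref{le_f1hmchar} --- crucially \emph{without} the ``choice of dam dimensions'' binomial that occurs for $F \neq F_1$, since for $F_1$ the sequence of removed dimensions is forced by the tree --- produces exactly the multinomial coefficients and Cayley-type tree counts needed for the inclusion--exclusion of Lemma \ref{le_inclg} to telescope into \eqref{eq_ucontras3}. This is the same mechanism already used in Theorems \ref{th_lc2} and \ref{th_ulc1}, so once this alignment is granted the rest is routine asymptotic analysis via Lemma \ref{le_yexp}.
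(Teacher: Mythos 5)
Your proposal follows essentially the same route as the paper: the paper's proof is simply ``The proof follows directly from Lemma \ref{le_urd2} with the cancellation scheme of Theorem \ref{th_lc1},'' and you correctly expand this into its constituent steps — Lemma \ref{le_urd2} for the generating function of the $F_1$-tower, Lemma \ref{le_yexp} with $\varpi = 4z$ for the asymptotic expansion, and Lemmas \ref{le_philer}, \ref{le_stonebox}, \ref{le_psifak}, \ref{le_inclg} for the centralization and inclusion--exclusion, with the leading term extracted at $q = p - \nu$. One minor remark: you frame the proof as ``mirroring Theorem \ref{th_ulc1} specialized to $F_1$,'' but the $F_1$ case is in fact closer to Theorem \ref{th_lc2} (which is what the paper's cancellation scheme reference points to), because Lemma \ref{le_urd2} already places all $m+2$ derivatives on the outside, so the three-term split $\mathcal{E}^{(u,1)}, \mathcal{E}^{(u,2)}, \mathcal{E}^{(u,3)}$ and the reformulation step \eqref{eq_urdd} from Theorem \ref{th_ulc1} never arise here; the obstacle you flag (the absence of the choice-of-dimensions binomial for $F_1$, forced by Lemma \ref{le_f1hmchar}) is real but already resolved in the proof of Lemma \ref{le_urd2}, which your argument correctly invokes.
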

\begin{proof}
The proof follows directly from Lemma \ref{le_urd2} with the cancellation scheme of Theorem \ref{th_lc1}.
\end{proof}
\begin{proof}[ Proof of Theorem 1.1] For the proof we use  Hamanas relation  \citep[Theorem 3.5]{hamana_ann} of $\ln(n)^3/n$ times the distribution of the multiple point range of a random walk converging asymptotically in the distribution sense towards $c_1$ times the distribution of the intersection local time of the planar Brownian motion. $c_1$ was given in equation \eqref{eq_c1} therefore the overall factor with which we have to scale is $-4\cdot \pi^3$.
 The proof then follows directly from Theorems \ref{th_ulc1} and \ref{th_ulc2}. Again we notice that the contributions of points with odd multiplicity $N_{2k-1}(w)$ in the result of Hamana are irrelevant for our discussion, as their number is smaller or equal to $2$ for a given walk.
\end{proof}
\emph{Discussion and Conjecture:}
The fact that the characteristic function in equation \eqref{eq_gener} and equation \eqref{eq_charbr} depends completely on the matrices $F$ belonging to Eulerian multidigraphs strongly supports the idea that indeed the geometrical approach to the problem of the multiple point range is a natural one. The characteristic functions give us some geometrical insight.  The global geometry of the planar random walk is dominated by dams which give the leading terms for the moments. The random walk predominantly returns to a point via loops, different points it returns to do not interact at this level. But if we subtract this more or less average behaviour which amounts to subtracting the dams, as we have seen, the next layer of geometry are Eulerian multidigraphs with two ingoing and two outgoing edges at each vertex and they all contribute to the centralized moments. There is a contribution though always from the dams even after going to the centralized moments which is expressed in the factors 
of the form
\begin{equation}
\frac{e^{(1-\gamma)\cdot \frac{it}{2\pi}}}{\Gamma(r +2 +   \frac{it}{2\pi})}
\end{equation}
whose Taylor coefficients however vanish rapidly. \\
Looking at the integrals $I(F)$ and $\mathcal{I}(F)$ one realizes that they are the Feynman integrals of the two dimensional $\Phi^4$ theory as only matrices $ F \in \tilde H_r(2,\ldots,2)$ contribute to the leading order. One only has to remember that  the two dimensional Euclician propagator of a free boson with mass $m$ (which is $1$ in our case)  in real space is \citep[eq. 2.104]{confft}
\begin{equation}
\frac{1}{2\pi}K_0(m\left| x - y \right|)
\end{equation}
(i.e. differs from our convention by a factor of $1/4$ which explains why we have the huge factor $8^r$ in the denominator of our formulas).
The Brownian motion and the $\Phi^4$ theory therefore are very closely related to each other naturally in 2 dimensions. In the logarithmic corrections to the leading order for the distribution of the multiple point range terms from $\Phi^{2m}$ with $m > 2$ come up but they are scaled out for the Brownian motion as a universality class.  
\\
Let us now take a look at the summation over $r$. The characteristic function is a formal summation in equation \eqref{eq_gener} and equation \eqref{eq_charbr}. Now $M(F) \leq 1$ and $cof(A-F) \leq 2^{r-1}$ because any Eulerian path at each vertex has at most two alternatives to go in a graph $G(F)$ with $F \in \tilde H_r(2,\ldots,2)$. The integrals $I(F) \leq D^r$  and $\mathcal{I}(F) \leq D^r$ with some constant $D$ as four edges are on a given vertex for  $F \in \tilde H_r(2,\ldots,2)$ and the integrals are dominated by points were $x_i - x_j =0$.  On the other hand the number 
$\#(H_r(2,\ldots,2))$ should grow slower than $ \sim (2r)\,! \cdot 2^r$ 
up to powers of $r$ as the corresponding undirected graphs are a subset of those arising from a scalar $\Phi^4$ theory where the growth is known \citep[eq. 3.3]{cvitan} and there are less than $2^r$ ways to make a directed graph out of an undirected. Taking all this together this would mean that the sum over $r$ in equation \eqref{eq_gener} and \eqref{eq_charbr} converges for sufficiently small $t$ but the radius of convergence is not infinite. This is in line with what Le Gall \citep{legall2} has proven about
\begin{equation}
E(e^{\lambda \cdot \beta_1})
\end{equation}
becoming singular for large real positive $\lambda$.
From the close connection between the $\Phi^4$ theory and the Brownian motion it then makes sense to conjecture that asymptotically for $r \rightarrow \infty$ we have
\begin{multline}\label{eq_asymp1}
 \sum_{F \in \tilde H_r(2,\ldots,2)} I(F) \cdot M(F) \cdot cof(A-F) = \\ (2r)\,!\cdot A_0 \cdot \eta_0^r \cdot r^ {\alpha_0} \left(1 +\sum_{K = 1}^M B^{(0)}_K\cdot \left(\frac{1}{r} \right)^K + o\left(\frac{1}{r^{M}} \right) \right)
\end{multline}
and
\begin{multline}\label{eq_asymp2}
 \sum_{F \in \tilde H_r(2,\ldots,2)} \mathcal{I}(F) \cdot M(F) \cdot cof(A-F) = \\ (2r)\,! \cdot A_1  \cdot \eta_1^r \cdot r^ {\alpha_1} \left(1 +\sum_{K = 1}^M B^{(1)}_K\cdot \left(\frac{1}{r} \right)^K + o\left(\frac{1}{r^{M}}\right) \right)
\end{multline}
The constants $A_0,\eta_0,\alpha_0,A_1,\eta_1, \alpha_1$ then will determine the behaviour of large moments, and therefore the distribution in its tails, the constants $B_K$ will give corrections and lead to good numerical calculations. As the quantity in \eqref{eq_gener} is something like the partition function and the quantity in \eqref{eq_charbr} then is something like the two point correlation function we would conjecture that $ \eta: = \eta_0 = \eta_1$; there should be a common ``critical temperature''. It should not be too hard to calculate the constants $A_0,\eta_0,\alpha_0,,A_1,\eta_1, \alpha_1$ similar to the calculations of  the high orders of the $\Phi^4$ theory done by Lipatov \citep{lipatov} and recently extended much further to include even the analog of the  $B_K$ by Lobaskin and Suslov \citep{lobask}. From this approach we conjecture that $\alpha_0, \alpha_1$ are both rational numbers. Now if our conjecture is true, then there has been some work done related to the constant $\eta$ by Le Gall \citep{legall2}, who showed that it is finite, and Bass and Chen \citep[Theorem 1.1]{basschen} who calculated it. Indeed their relating the constant $\gamma_{\beta}$  (which is up to a constant an inverse of $\eta$) to the best solution of a Gagliardo–-Nirenberg inequality (identical to the instanton solution of a mass $1$ $\Phi^4$ theory) is exactly what one would expect from the above reasoning along the lines of Lipatov. 
\section{Conclusion}
In this article we have built on the relationship between moment functions $A(w)$ of  closed simple random walks $w$ written as formal power series of the form 
\begin{equation}
P_A(z) := \sum_{w \in W_{2N}} A(w) \cdot z^{length(w)}
\end{equation}
and a formal series over contributions from the geometric archetypes as expressed in adjaceny matrices $F$ of Eulerian multidigraphs of the form 
\begin{equation}\label{eq_fterms}
P_A(z) = \sum_{F} g_{A,F}(z)
\end{equation}
with known functions $g_{A,F}(z)$ developed in \citep{dhoef1}. We extended those relationships to non closed walks. 
We have by an analytical study of the functions $g_{A,F}(z)$ for $d \geq 2$ been able to transform the sum over $F$ in \eqref{eq_fterms} into an absolutely converging series using the first hit determinant $\Delta_r(Y,z)$ (as defined in \eqref{eq_fhitd1} and \eqref{eq_fhitd2}) as the key. From there we have been able to define $P_A(z)$ as an analytical function, find its singularities at the radius of convergence and relate its behaviour at the singularities to the behaviour of $g_{A,F}(z)$ at those points. For $d=2$ we have calculated the leading asymptotic behaviour of $A(w)$ for large $length(w)$ including all its logarithmic corrections and have therefore been able to find the characteristic function in closed form for the closed and the non restricted two dimensional simple random walk and therefore of the renormalized intersection local time of the planar Brownian motion. This shows the relevance of our geometric approach as expressed in equation \eqref{eq_fterms}. It is this approach together with the analytic solidification in this article which is important far beyond concrete results like the characteristic functions in $d=2$. As our approach is also heavily inspired by and has  in turn lead to Feynman graphs in the characteristic function in $d=2$, it should also show a way to solidify summations over such graphs in other situations.\\
What are some typical topics of research from here? A short subjective list:
\begin{enumerate}
\item calculate the distribution of $N_{2k+1}(w)$ and the joint distribution with the $N_{2k}(w)$ using the full class of semi Eulerian graphs
\item calculate the numerical value of the first (if possible ten) moments of the renormalized intersection local time of the planar Brownian motion to high precision.
\item calculate the high order behaviour of those moments, prove the conjecture or something like it and possibly calculate the constants in the equations \eqref{eq_asymp1} and \eqref{eq_asymp2}
\item calculate the behaviour of the multiple point range and weigh\-ted ran\-ges of the form of Theorem \ref{th_closedch} of intersecting random walks and relate the result in $d=2$ to the Onsager solution and the Kosterlitz Thouless transition 
\item understand if the Taylor coefficients of $1 - \Delta_r(Y,z)$ can be interpreted as numbers of geometrical entities for $r> 2$
\item take the geometrical concept of this article to other Markov processes
\item calculate the behaviour of the moments for $d \geq 3$ using concepts like that of the dams for $d=2$ to get to the relevant geometrical information   
\item understand what the relationship between the concept of this article and the Schramm Loewner evolution \citep{schramm} is
\item understand where relations like \eqref{eq_fterms} can be established in other fields of statistical geometry, e.g. with CW complexes of higher dimension to e.g. calculate the behaviour of random surfaces 
\item understand what the equivalent to the first hit determinant in other areas where summations over graphs  are important is. This would help to make summations over Feynman graphs of other theories solid mathematically.
\end{enumerate} 
\bibliographystyle{imsart-number}
\bibliography{hoef2}

\begin{thebibliography}{31}

\bibitem{basschen}
\begin{barticle}[author]
\bauthor{\bsnm{Bass},~\bfnm{Richard~F.}\binits{R.~F.}} \AND
  \bauthor{\bsnm{Chen},~\bfnm{Xia}\binits{X.}}
(\byear{2004}).
\btitle{Self-intersection local time: critical exponent, large deviations, and
  laws of the iterated logarithm}.
\bjournal{Ann. Probab.}
\bvolume{32}
\bpages{3221--3247}.
\bdoi{10.1214/009117904000000504}
\bmrnumber{2094444 (2005i:60149)}
\end{barticle}
\endbibitem

\bibitem{bryant}
\begin{bbook}[author]
\bauthor{\bsnm{Bryant},~\bfnm{V.}\binits{V.}}
(\byear{1992}).
\btitle{Aspects of Combinatorics. A wide-ranging introduction}.
\bpublisher{Cambridge University Press}, \baddress{Cambridge}.
\end{bbook}
\endbibitem

\bibitem{bry}
\begin{barticle}[author]
\bauthor{\bsnm{Brydges},~\bfnm{David}\binits{D.}},
  \bauthor{\bsnm{Fr{\"o}hlich},~\bfnm{J{\"u}rg}\binits{J.}} \AND
  \bauthor{\bsnm{Spencer},~\bfnm{Thomas}\binits{T.}}
(\byear{1982}).
\btitle{The random walk representation of classical spin systems and
  correlation inequalities}.
\bjournal{Comm. Math. Phys.}
\bvolume{83}
\bpages{123--150}.
\bmrnumber{648362 (83i:82032)}
\end{barticle}
\endbibitem

\bibitem{chenxia}
\begin{barticle}[author]
\bauthor{\bsnm{Chen},~\bfnm{Xia}\binits{X.}}
(\byear{2008}).
\btitle{Intersection local times: large deviations and laws of iterated
  logarithm}.
\bjournal{Adv. Lect. Math. (ALM)}
\bvolume{2}
\bpages{195--253}.
\end{barticle}
\endbibitem

\bibitem{cvitan}
\begin{barticle}[author]
\bauthor{\bsnm{Cvitanovi\'{c}},~\bfnm{Predrag}\binits{P.}},
  \bauthor{\bsnm{Lautrup},~\bfnm{B.~E.}\binits{B.~E.}} \AND
  \bauthor{\bsnm{Pearson},~\bfnm{Robert~B.}\binits{R.~B.}}
(\byear{1978}).
\btitle{Number and weights of Feynman Diagrams}.
\bjournal{Phys.Rev.}
\bvolume{D18}
\bpages{1939 -- 1956}.
\bdoi{10.1103/PhysRevD.18.1939}
\end{barticle}
\endbibitem

\bibitem{danraf}
\begin{bbook}[author]
\bauthor{\bsnm{Danos},~\bfnm{M.}\binits{M.}} \AND
  \bauthor{\bsnm{Rafelski},~\bfnm{J.}\binits{J.}}
(\byear{1984}).
\btitle{Pocketbook of Mathematical Functions}.
\bpublisher{Harry Deutsch}, \baddress{Thun and Frankfurt am Main}.
\end{bbook}
\endbibitem

\bibitem{confft}
\begin{bbook}[author]
\bauthor{\bsnm{Di~Francesco},~\bfnm{P.}\binits{P.}},
  \bauthor{\bsnm{Mathieu},~\bfnm{P.}\binits{P.}} \AND
  \bauthor{\bsnm{S\'en\'echal},~\bfnm{D.}\binits{D.}}
(\byear{1996}).
\btitle{Conformal Field Theory}.
\bpublisher{Springer Verlag}, \baddress{New York}.
\end{bbook}
\endbibitem

\bibitem{Tek}
\begin{barticle}[author]
\bauthor{\bsnm{Dvoretzky},~\bfnm{A.}\binits{A.}},
  \bauthor{\bsnm{Erd{\"o}s},~\bfnm{P.}\binits{P.}} \AND
  \bauthor{\bsnm{Kakutani},~\bfnm{S.}\binits{S.}}
(\byear{1950}).
\btitle{Double points of paths of {B}rownian motion in {$n$}-space}.
\bjournal{Acta Sci. Math. Szeged}
\bvolume{12}
\bpages{75--81}.
\bmrnumber{0034972 (11,671e)}
\end{barticle}
\endbibitem

\bibitem{Fla}
\begin{barticle}[author]
\bauthor{\bsnm{Flatto},~\bfnm{Leopold}\binits{L.}}
(\byear{1976}).
\btitle{The multiple range of two-dimensional recurrent walk}.
\bjournal{Ann. Probability}
\bvolume{4}
\bpages{229--248}.
\bmrnumber{0431388 (55 \#\#4388)}
\end{barticle}
\endbibitem

\bibitem{gasull}
\begin{barticle}[author]
\bauthor{\bsnm{Gasull},~\bfnm{A.}\binits{A.}},
  \bauthor{\bsnm{Li},~\bfnm{W.}\binits{W.}},
  \bauthor{\bsnm{Llibre},~\bfnm{J.}\binits{J.}} \AND
  \bauthor{\bsnm{Zhang},~\bfnm{Z.}\binits{Z.}}
(\byear{2002}).
\btitle{Chebychev Property of Complete Elliptic Integrals and its Application
  to Abelian Integrals}.
\bjournal{Pacific J. Math.}
\bvolume{202}
\bpages{341--360}.
\end{barticle}
\endbibitem

\bibitem{rg}
\begin{bbook}[author]
\bauthor{\bsnm{Gradstein},~\bfnm{I.~S.}\binits{I.~S.}} \AND
  \bauthor{\bsnm{Ryshik},~\bfnm{I.~M.}\binits{I.~M.}}
(\byear{1981}).
\btitle{Summen- Produkt- und Integraltafeln}.
\bpublisher{Harry Deutsch}, \baddress{Thun and Frankfurt am Main}.
\end{bbook}
\endbibitem

\bibitem{Ham1}
\begin{barticle}[author]
\bauthor{\bsnm{Hamana},~\bfnm{Yuji}\binits{Y.}}
(\byear{1992}).
\btitle{On the central limit theorem for the multiple point range of random
  walk}.
\bjournal{J. Fac. Sci. Univ. Tokyo Sect. IA Math.}
\bvolume{39}
\bpages{339--363}.
\bmrnumber{1179772 (93h:60112)}
\end{barticle}
\endbibitem

\bibitem{Ham2}
\begin{barticle}[author]
\bauthor{\bsnm{Hamana},~\bfnm{Yuji}\binits{Y.}}
(\byear{1995}).
\btitle{On the multiple point range of three-dimensional random walks}.
\bjournal{Kobe J. Math.}
\bvolume{12}
\bpages{95--122}.
\bmrnumber{1391188 (97e:60118)}
\end{barticle}
\endbibitem

\bibitem{hamana_ann}
\begin{barticle}[author]
\bauthor{\bsnm{Hamana},~\bfnm{Yuji}\binits{Y.}}
(\byear{1997}).
\btitle{The fluctuation result for the multiple point range of two-dimensional
  recurrent random walks}.
\bjournal{Ann. Probab.}
\bvolume{25}
\bpages{598--639}.
\bdoi{10.1214/aop/1024404413}
\bmrnumber{1434120 (98f:60136)}
\end{barticle}
\endbibitem

\bibitem{Ham3}
\begin{barticle}[author]
\bauthor{\bsnm{Hamana},~\bfnm{Yuji}\binits{Y.}}
(\byear{1998}).
\btitle{A remark on the multiple point range of two-dimensional random walks}.
\bjournal{Kyushu J. Math.}
\bvolume{52}
\bpages{23--80}.
\bdoi{10.2206/kyushujm.52.23}
\bmrnumber{1608981 (99b:60110)}
\end{barticle}
\endbibitem

\bibitem{tsp}
\begin{bbook}[author]
\bauthor{\bsnm{Hansen},~\bfnm{R.}\binits{R.}}
(\byear{1975}).
\btitle{A Table of Series and Products}.
\bpublisher{Prentice Hall}, \baddress{Englewood Cliffs, NJ}.
\end{bbook}
\endbibitem

\bibitem{dhoef1}
\begin{barticle}[author]
\bauthor{\bsnm{H{\"o}f},~\bfnm{D.}\binits{D.}}
(\byear{2006}).
\btitle{Distribution of the {$k$}-multiple point range in the closed simple
  random walk. {I}}.
\bjournal{Markov Process. Related Fields}
\bvolume{12}
\bpages{537--560}.
\bmrnumber{2246263 (2007k:60130)}
\end{barticle}
\endbibitem

\bibitem{itzykson}
\begin{bbook}[author]
\bauthor{\bsnm{Itzykson},~\bfnm{C.}\binits{C.}} \AND
  \bauthor{\bsnm{Zuber},~\bfnm{J.~B.}\binits{J.~B.}}
(\byear{1988}).
\btitle{Quantum Field Theory}.
\bpublisher{Mc Graw Hill}, \baddress{New York}.
\end{bbook}
\endbibitem

\bibitem{jain}
\begin{binproceedings}[author]
\bauthor{\bsnm{Jain},~\bfnm{Pruitt W.~E.}\binits{P.~W.~E.} \bsuffix{N.~C.}}
(\byear{1973}).
\btitle{The range of random walks}.
In \bbooktitle{Proc. Sixth Berkeley Symp. Math. Stat. Probab.}
\bpages{31--50}.
\end{binproceedings}
\endbibitem

\bibitem{legall}
\begin{barticle}[author]
\bauthor{\bsnm{Le~Gall},~\bfnm{J.~F.}\binits{J.~F.}}
(\byear{1986}).
\btitle{Propri\'et\'es d'intersection des marches al\'eatoires. {I}.
  {C}onvergence vers le temps local d'intersection}.
\bjournal{Comm. Math. Phys.}
\bvolume{104}
\bpages{471--507}.
\bmrnumber{840748 (88d:60182)}
\end{barticle}
\endbibitem

\bibitem{legall2}
\begin{bincollection}[author]
\bauthor{\bsnm{Le~Gall},~\bfnm{Jean-Fran{\c{c}}ois}\binits{J.-F.}}
(\byear{1994}).
\btitle{Exponential moments for the renormalized self-intersection local time
  of planar {B}rownian motion}.
In \bbooktitle{S\'eminaire de {P}robabilit\'es, {XXVIII}}.
\bseries{Lecture Notes in Math.}
\bvolume{1583}
\bpages{172--180}.
\bpublisher{Springer}, \baddress{Berlin}.
\bdoi{10.1007/BFb0073845}
\bmrnumber{1329112 (96c:60092)}
\end{bincollection}
\endbibitem

\bibitem{lipatov}
\begin{barticle}[author]
\bauthor{\bsnm{Lipatov},~\bfnm{L.~N.}\binits{L.~N.}}
(\byear{1977}).
\btitle{Divergence of the Perturbation Theory Series and the Quasiclassical
  Theory}.
\bjournal{Sov.Phys.JETP}
\bvolume{45}
\bpages{216--223}.
\end{barticle}
\endbibitem

\bibitem{lobask}
\begin{barticle}[author]
\bauthor{\bsnm{Lobaskin},~\bfnm{D.~A.}\binits{D.~A.}} \AND
  \bauthor{\bsnm{Suslov},~\bfnm{I.~M.}\binits{I.~M.}}
(\byear{2004}).
\btitle{High-order corrections to the Lipatov asymptotics in the $\Phi^4$
  theory}.
\bjournal{J.Exp.Theor.Phys.}
\bvolume{99}
\bpages{234-253}.
\bdoi{10.1134/1.1800180}
\end{barticle}
\endbibitem

\bibitem{miller}
\begin{bbook}[author]
\bauthor{\bsnm{Miller},~\bfnm{P.~D.}\binits{P.~D.}}
(\byear{2006}).
\btitle{Applied Asymptotic Analysis}.
\bseries{Graduate Studies in Mathematics}
\bvolume{75}.
\bpublisher{AMS}, \baddress{Rhode Island}.
\end{bbook}
\endbibitem

\bibitem{Pitt}
\begin{barticle}[author]
\bauthor{\bsnm{Pitt},~\bfnm{Joel~H.}\binits{J.~H.}}
(\byear{1974}).
\btitle{Multiple points of transient random walks}.
\bjournal{Proc. Amer. Math. Soc.}
\bvolume{43}
\bpages{195--199}.
\bmrnumber{0386021 (52 \#\#6880)}
\end{barticle}
\endbibitem

\bibitem{shmo}
\begin{barticle}[author]
\bauthor{\bsnm{Sherman},~\bfnm{Jack}\binits{J.}} \AND
  \bauthor{\bsnm{Morrison},~\bfnm{Winifred~J.}\binits{W.~J.}}
(\byear{1950}).
\btitle{Adjustment of an inverse matrix corresponding to a change in one
  element of a given matrix}.
\bjournal{Ann. Math. Statistics}
\bvolume{21}
\bpages{124--127}.
\bmrnumber{0035118 (11,693d)}
\end{barticle}
\endbibitem

\bibitem{sidi}
\begin{barticle}[author]
\bauthor{\bsnm{Sidi},~\bfnm{Avram}\binits{A.}}
(\byear{2012}).
\btitle{Euler-{M}aclaurin expansions for integrals with arbitrary
  algebraic-logarithmic endpoint singularities}.
\bjournal{Constr. Approx.}
\bvolume{36}
\bpages{331--352}.
\bdoi{10.1007/s00365-011-9140-0}
\bmrnumber{2996435}
\end{barticle}
\endbibitem

\bibitem{symanzik}
\begin{bincollection}[author]
\bauthor{\bsnm{Symanzik},~\bfnm{Kurt}\binits{K.}}
(\byear{1969}).
\btitle{Euclidean quantum field theory}.
In \bbooktitle{Local quantum field theory}
(\beditor{\bfnm{Res}\binits{R.}~\bsnm{Jost}}, ed.)
\bpublisher{Academic Press}, \baddress{New York}.
\end{bincollection}
\endbibitem

\bibitem{tutte}
\begin{bbook}[author]
\bauthor{\bsnm{Tutte},~\bfnm{W.}\binits{W.}}
(\byear{1984}).
\btitle{Graph Theory}.
\bpublisher{Addison Wesley}, \baddress{New York}.
\end{bbook}
\endbibitem

\bibitem{aar}
\begin{barticle}[author]
\bauthor{\bparticle{van} \bsnm{Aardenne-Ehrenfest},~\bfnm{T.}\binits{T.}} \AND
  \bauthor{\bparticle{de} \bsnm{Bruijin},~\bfnm{N.~G.}\binits{N.~G.}}
(\byear{1951}).
\btitle{Circuits and trees in oriented linear graphs}.
\bjournal{Simon Stevin Wis. Natuurkd. Tijdschr.}
\bvolume{28}
\bpages{203--217}.
\end{barticle}
\endbibitem

\bibitem{schramm}
\begin{bbook}[author]
\bauthor{\bsnm{Werner},~\bfnm{W.}\binits{W.}}
\btitle{Random planar curves and Schramm--Loewner evolutions}.
\bseries{Lecture Notes in Math.}
\bpublisher{Springer Verlag}.
\end{bbook}
\endbibitem

\end{thebibliography}
\end{document}